\def\BState{\State\hskip-\ALG@thistlm}
\newcommand{\ba}{\begin{eqnarray}}
\newcommand{\ea}{\end{eqnarray}}
\newcommand{\nn}{\nonumber}
\newcommand{\dawesspace}{\ \hspace{-5.0cm}}
\renewcommand\nomgroup[1]{%
  \item[\bfseries
  \ifstrequal{#1}{A}{Stochastic Dynamical Systems}{%
  \ifstrequal{#1}{B}{Deterministic Dynamical Systems}{%
  \ifstrequal{#1}{C}{Echo State Networks and reservoir maps}{%
  \ifstrequal{#1}{D}{Partial Differential Equations}{
  }}}}%
]}
\DeclareMathOperator*{\argmin}{arg\,min}
\newtheorem{theorem}{Theorem}
\newtheorem{lemma}[theorem]{Lemma}
\newtheorem{corollary}[theorem]{Corollary}
\theoremstyle{definition}
\newtheorem{defn}[theorem]{Definition}
\newtheorem{proc}[theorem]{Procedure}
\newtheorem{remark}[theorem]{Remark}
\numberwithin{theorem}{section}
\title{Reservoir Computing with Dynamical Systems}
\author{Allen G Hart}
\begin{document}

\maketitle

\chapter*{Acknowledgements}

Thank you everyone who supported me over the last four years, and for all the conversations about reservoir computing and dynamical systems. 

\begin{abstract}

A reservoir computer is a special type of neural network, where most of the weights are randomly fixed and only a subset are trained.

In this thesis we prove results about reservoir computers trained on deterministic dynamical systems, and stochastic processes. We focus mostly on a special type of reservoir computer called an Echo State Network (ESN).

In the deterministic case, we prove (under some assumptions) that if a reservoir computer has the Echo State Property (ESP), then there is a $C^1$ generalised synchronisation between the input dynamical system and the dynamics in the reservoir space. Furthermore, we prove that a reservoir computer with the local ESP in several disjoint subsets of the reservoir space will admit several distinct generalised synchronisations. In the special case that the reservoir map is linear, and has the ESP, we prove that the generalised synchronisation is generically an embedding. This result admits Takens' embedding Theorem as a special case.

We go to show that ESNs trained on scalar observations of an ergodic dynamical system can approximate an arbitrary target function, including the next step map used in time series forecasting. This universal approximation property holds despite the training process being entirely linear.

We prove analogous results for ESNs trained on observations of a stochastic process, which are not be Markovian in general. We use these results to develop supervised learning, and reinforcement learning algorithms supported by an ESN.

In the penultimate chapter of this thesis, we use a reservoir computer to numerically solve linear PDEs. In the final chapter, we conclude and discuss directions for future work.

\end{abstract}

\chapter*{List of Publications and Preprints}

This thesis is largely derived from the following publications and preprints.

\begin{enumerate}[leftmargin=*]  % removes the indentation of the enumerate environment. Requires the enumitem package, see LaTeX preamble.
    
\item Hart, A., Hook, J. \& Dawes, J. H. P. (2020), ‘Embedding and approximation theorems for echo state networks’, Neural Networks 128, 234–247.
    
Results from this paper appear in Chapters \ref{chapter::computational_methods}, \ref{chapter::SSMs}, and \ref{chapter::universal_approximation}. A G Hart is the lead author with J L Hook and J H P Dawes having supervisory roles.
    
\item Grigoryeva, L., Hart, A. \& Ortega, J.-P. (2021), ‘Chaos on compact manifolds:  Differentiable synchronizations beyond the Takens theorem’, Phys. Rev. E103, 062204.
    
The paper forms the basis of Chapter \ref{chapter::SSMs}. All authors contributed equally.

\item Grigoryeva, L., Hart, A. \& Ortega, J.-P. (2021), ‘Learning Strange Attractors with reservoir maps’, \emph{arXiv:2108.05024}. [PREPRINT]
    
This paper forms the basis of Chapter \ref{chapter::embedding}. All authors contributed equally.
    
\item Hart,  A.  G.,  Hook,  J.  L.  \&  Dawes,  J. H. P.  (2021),  ‘Echo  state  networks  trained  by Tikhonov least squares are $L^2(\mu)$ approximators of ergodic dynamical systems’, Physica D: Nonlinear Phenomena p. 132882.
    
    Results from this paper appear in Chapter \ref{chapter::universal_approximation}. A G Hart is the lead author with J L Hook and J H P Dawes having supervisory roles.
    
\item Hart, A. G., Olding, K. R., Cox, A. M. G., Isupova, O. \& Dawes, J. H. P. (2021), ‘Using Echo State Networks to Approximate Value Functions for Control’, \emph{arXiv:2004.08170}. [PREPRINT]
    
    This paper forms the basis of Chapter \ref{chapter::stochastic}. A G Hart was primarily responsible for the ESN approximation results, K R Olding was primarily responsible for derivations surrounding the market making problem, while J H P Dawes, A M G Cox, and O Isupova had supervisory roles.   
    
\end{enumerate}

\tableofcontents

\nomenclature[C]{$N \in \mathbb{N}$}{Number of neurons / dimension of the reservoir}
\nomenclature[C]{$d \in \mathbb{N}$}{Number of input channels / dimension of the input}
\nomenclature[C]{$\ell \in \mathbb{N}$}{Number of sample points}
\nomenclature[C]{$x_k \in \mathbb{R}^N$}{Reservoir state at time $k \in \mathbb{Z}$}
\nomenclature[C]{$z_k \in \mathbb{R}^d$}{Input at time $k \in \mathbb{Z}$}
\nomenclature[C]{$A \in \mathbb{M}_{N \times N}(\mathbb{R})$}{Reservoir matrix}
\nomenclature[C]{$C \in \mathbb{M}_{N \times d}(\mathbb{R})$}{Input matrix}
\nomenclature[C]{$b \in \mathbb{R}^N$}{Bias vector}
\nomenclature[C]{$\sigma : \mathbb{R}^N \to \mathbb{R}^N$}{Activation function}
\nomenclature[C]{$W \in \mathbb{R}^N$}{Output vector / linear readout layer}
\nomenclature[C]{$f_{(\phi,\omega,F)} : M \to \mathbb{R}^N$}{Generalised Synchronisation}
\nomenclature[C]{$F : \mathbb{R}^N \times \mathbb{R}^d \to \mathbb{R}^N$}{reservoir map}
\nomenclature[C]{$\mathcal{F}$}{Class of linear universal approximators}

\nomenclature[B]{$q \in \mathbb{N}$}{Dimension of the manifold $M$}
\nomenclature[B]{$M$}{$q$-dimensional manifold}
\nomenclature[B]{$\phi: M \to M$}{Evolution operator on $M$}
\nomenclature[B]{$m \in M$}{Point on $M$}
\nomenclature[B]{$T_m$}{Tangent space at $m \in M$}
\nomenclature[B]{$T_m\phi : T_m \to T_{\phi(m)}$}{Tangent map for $\phi$ at $m \in M$.}
\nomenclature[B]{$D\omega(m) : T_m \to \mathbb{R}^d$}{The differential of $\omega$ at $m \in M$}
\nomenclature[B]{$\omega : M \to \mathbb{R}^d$}{Observation function}
\nomenclature[B]{$u : M \to \mathbb{R}$}{Target function}
\nomenclature[B]{$\text{Diff}^n(M)$}{Diffeomorphism of order $n \in \mathbb{N}$ on $M$}
\nomenclature[B]{$C^n(M,\mathbb{R}^N)$}{The $n$-times differentiable maps from $M$ to $\mathbb{R}^N$}

\nomenclature[A]{$(\mathbb{R}^N)^{\mathbb{Z}}$}{The set of bi-infinite $\mathbb{R}^N$ valued sequences, i.e. the set of maps $\mathbb{Z} \mapsto \mathbb{R}^N$}
\nomenclature[A]{$T : (\mathbb{R}^N)^{\mathbb{Z}} \to (\mathbb{R}^N)^{\mathbb{Z}}$}{Time shift operator $T(z)_k := z_{k+1}$}
\nomenclature[A]{$V : (\mathbb{R}^d)^{\mathbb{Z}} \to \mathbb{R}$}{Value functional}
\nomenclature[A]{$\mathcal{R} : (\mathbb{R}^d)^{\mathbb{Z}} \to \mathbb{R}$}{Reward functional}
\nomenclature[A]{$\gamma \in [0,1)$}{Discount factor}

\nomenclature[D]{$\Delta$}{The Laplace operator $\Delta: C^2(\mathbb{R}^d, \mathbb{R}) \to C^0(\mathbb{R}^d, \mathbb{R})$}
\nomenclature[D]{$Z \in \Omega$}{Random interior point}
\nomenclature[D]{$Z' \in \partial \Omega$}{Random boundary point}
\nomenclature[D]{$\phi : \Omega \to \mathbb{R}$}{Solution to the boundary value problem}
\nomenclature[D]{$h : \partial \Omega \to \mathbb{R}$}{Boundary data}

\printnomenclature

\chapter{Introduction}
\label{chapter::introduction}

\section{Preamble}

A single layer feedforward neural network $f : \mathbb{R}^d \to \mathbb{R}$ is a map the form 
\begin{align*}
    f(z) = \sum_{k = 1}^N W_k \sigma( C_k^{\top} z + b_k )
\end{align*}
paramatrised by weights $C_k \in \mathbb{R}^d$, $W_k \in \mathbb{R}$ and biases $b_k \in \mathbb{R}$ \citep{higham2019deep}. The map $\sigma : \mathbb{R} \to \mathbb{R}$ is called the activation function. Usual choices of the activation function include
\begin{itemize}
    \item the rectified linear unit (ReLU) defined coordinate wise $\sigma(z)_i = \max(z_i,0)$
    \item the hyperbolic tangent also defined coordinate wise $\sigma(z)_i = \tanh(z_i)$.
\end{itemize}

A multilayer (or deep) neural network can be created by composing several feedforward layers together. A composition of $n$ feedforward neural networks is called an $n$-layer neural network \citep{higham2019deep}.
Both deep and shallow (single-layer) neural networks are dense in some appropriate function space so that for a function $g$ in the appropriate space, and given sufficiently many neurons $N$, we can choose weights $W_k, C_k$ and biases $b_k$ such that the feedforward neural network $f$ approximates $g$. This is made formal in the universal approximation theorem for feedforward neural networks (discussed in Chapter \ref{chapter::universal_approximation}).

We can use feedforward neural networks to solve supervised learning problems.
These problems involve a supervisor, an agent and a collection of data. The data are labelled by the supervisor, and the agent is tasked with learning the relationship between the data and its labels, in such a way that the relationship can be generalised to unlabelled data. Learning this relationship is called training. The success of the training is measured by how accurately the agent can correctly label a set of unlabelled data.

There are 2 major categories of supervised learning: regression and classification. In the former, the agent seeks a map from the data (observations) to the labels (targets) such that any unlabelled datum (observation) on a continuum can be assigned a label (target) also on a continuum. The problem of time series forecasting is often formulated as a regression problem. Classification is similar, except that the labels do not lie on a continuum but instead lie in a finite set of categories. A classification algorithm may for example classify a set of images as either being cats or dogs.

When we use a feedforward neural network to solve a supervised learning problem, the data are denoted $\{z_i \in \mathbb{R}^d \}_{i \in I}$ for some finite index $I$. We assume the existence of a relationship $g : \mathbb{R}^d \to \mathbb{R}$ from the data to the targets $\{g(z_i) \}_{i \in I}$. Then our goal is to find weights $W_k, C_k$ and biases $b_k$ such that the feedforward neural network $f$ approximates $g$ by finding $W_k, C_k, b_k$ that minimise the so called loss $\mathcal{L}$ (sometimes) defined
\begin{align*}
    \mathcal{L} = \sum_{i = 1}^{| I |} \lVert f(z_i) - g(z_i) \rVert^2.
\end{align*}
We can view the loss $\mathcal{L}$ as a smooth function of the weights $W_k,C_k$ and biases $b_k$, so we can try to minimise the loss with smooth optimisation techniques like gradient descent or stochastic gradient descent. This optimisation problem is generally non-convex, so finding a global minimum is generally difficult.

Many supervised learning problems involve data drawn from a time series where the data has an important temporal structure. Such problems encompass speech recognition, the forecasting of chaotic systems, and decision making in robotics.
For such problems, it is often better to replace the feedforward neural network with a \emph{reservoir computer}.

In this thesis we focus on mathematical results that hold for reservoir computers. We focus on reservoir computers trained on a discrete, equally spaced time series that is either:
\begin{enumerate}
    \item A (partial) observation of a system of ODEs, or
    \item A realisation of a stochastic process.
\end{enumerate}

A reservoir computer is a pair of maps $(F,h)$ where $F : \mathbb{R}^N \times \mathbb{R}^d \to \mathbb{R}^N$ is called the reservoir map and takes as input a reservoir state vector $x \in \mathbb{R}^N$ and observation vector $z \in \mathbb{R}^d$, then returns a new reservoir state vector $x \in \mathbb{R}^N$. Given an initial state $x_0 \in \mathbb{R}^N$ and a time series of observations $z_1, z_2, \ldots \in \mathbb{R}^d$ the reservoir computer creates a sequence of states $x_1, x_2, \ldots \in \mathbb{R}^N$ like so
\begin{align*}
    x_{k+1} = F(x_k,z_k).
\end{align*}
The state $x_{k+1}$ then depends on all observations $\ldots, z_{k-2}, z_{k-1}, z_k$ prior to and including timestep $k$. The second part of the reservoir computer is a map $h : \mathbb{R}^N \to \mathbb{R}^s$ which takes a reservoir state $x \in \mathbb{R}^N$ and returns an output. The map $h$ could be a feedforward neural network or simply a row vector $W^\top$. The maps $F$ and $h$ together form a reservoir computer, which takes a time series $z_1, z_2, \ldots \in \mathbb{R}^d$ of observations and computes an output. We can define a reservoir computer formally as follows:
\begin{defn} (Reservoir Computer)
    We call $F : \mathbb{R}^N \times \mathbb{R}^d \to \mathbb{R}^N$ a reservoir map, $h : \mathbb{R}^N \to \mathbb{R}^s$ a readout map, and the pair $(F,h)$ a reservoir computer.
\end{defn}

Reservoir computing (RC) exploded in popularity at the start of the early 2000s after the seminal papers of \cite{Jaeger2001} and \cite{doi:10.1162/089976602760407955}. RC has since been applied to both signal processing and machine learning, and has captured the interest of researchers in computer science and robotics \citep{10.1007/978-3-540-25940-4_14}, physics \citep{Inubushi2017} \citep*{chaos_on_compacta}, mathematics \citep{GRIGORYEVA2018495}, \citep{CENI2020132609}, and electrical engineering \citep{TANAKA2019100}. Furthermore, there is some biology and neuroscience literature suggesting that a sensory input stimulating a nervous system can be described as a reservoir computer driven by a time series \citep{izhikevich2007dynamical}, \citep{TANAKA2019100}. Rather ambitious applications of reservoir computing include ``building by 2050, a team of fully autonomous
humanoid robots to beat the human winning team of the FIFA Soccer World Cup''
\citep{10.1007/978-3-540-25940-4_14}, predicting magnetic storms \citep{kataokareconstructing}, and forecasting electricity sales \citep{li2020research}.

Reservoir computing can be performed on an ordinary digital computer, which is often referred to reservoir computing \emph{in silico}. Alternatively, reservoir computing can be performed on purpose build hardware; often known as reservoir computing \emph{in materio}. Reservoirs have been physically built from magnets, photonic node arrays \citep{TANAKA2019100}, and field programmable gate arrays \citep{patent} \citep{Apostel2021}, and (possibly) occur naturally in the form of biological brains. In this thesis, all reservoir computing experiments are performed in silico, though the mathematical results hold more generally. Some authors have argued that there is potential to develop extremely efficient reservoir computers in materio, which could eventually outperform ordinary computers \citep{9106624}.

The goal in this thesis is to further develop a rigorous mathematical description of reservoir computing on time series, which we hope will serve the highly multidisciplinary reservoir computing community.

\section{ESNs and dynamical systems}

In this section we will go through a numerical experiment which was undertaken at the start of the PhD. The outcome of the experiment is illuminating, and motivates many of the results that appear in subsequent chapters in this thesis.

Suppose we have a dynamical system, but we do not know its governing equations, and instead have access only to a low dimensional observation of a trajectory. We could have for instance the celebrated Lorenz system \citep{doi:10.1175/1520-0469(1963)020<0130:DNF>2.0.CO;2}
\begin{align}
    \dot{\xi} &= 10(\upsilon - \xi) \nonumber \\ 
    \dot{\upsilon} &= \xi(28 - \zeta) - \upsilon \label{eqn::Lorenz} \\
    \dot{\zeta} &= \xi \upsilon - (8/3)\zeta \nonumber
\end{align}
(where we have fixed the parameters to their `usual values') and have access to only the $\xi$ component, at regularly spaced time points $t_0, t_1, \ldots t_{\ell-1}$. Given this finite time series, the key question is: can we predict the future trajectory without knowledge of the underlying equations? We note that the observation at time $t_\ell$ is not determined from the observation at time $t_{\ell-1}$ alone, so we will need to use many observations from the past $\ldots , t_{\ell-2}, t_{\ell-1}$ to have any hope of estimating the observation at $t_\ell$.

We approach this time series forecasting problem with a reservoir computer called an Echo State Network (ESN). An ESN takes a time series of observations $z_0, z_1, \ldots \in \mathbb{R}^d$ and produces a sequence of reservoir state vectors $x_0, x_1, \ldots \in \mathbb{R}^N$ defined like so:
\begin{align}
    x_{k+1} = F(x_k,z_k) := \sigma(Ax_k + Cz_k + b) \label{eqn::ESN}
\end{align}
where 
\begin{itemize}
    \item $\sigma : \mathbb{R}^N \to \mathbb{R}^N$ is the activation function defined to act coordinate-wise: $\sigma_i(x) = \tanh(x_i)$ for $i = 1, \ldots N$
    \item $A \in \mathbb{M}_{N \times N}(\mathbb{R})$ is the $N \times N$ real reservoir matrix
    \item $C \in \mathbb{M}_{N \times d}(\mathbb{R})$ is the $N \times d$ real input matrix 
    \item $b \in \mathbb{R}^N$ is the real bias vector.
\end{itemize}
We will sometimes call the Lorenz system the \emph{drive system} and the dynamics of the reservoir states the \emph{response system}. 

The recursive nature of \eqref{eqn::ESN} ensures that the reservoir state $x_{k+1}$ depends on all past observations $\ldots , z_{k-1} , z_k$. The ESN therefore has a sort of memory, because its representation of the world at time step $k$ is $x_k$ which depends on everything it has previously seen. Equation \eqref{eqn::ESN} is sometimes called a single layer (or shallow) ESN in contrast to multilayer (or deep) ESNs studied by \cite{gallicchio2018design} and others.

It seems natural that observations far in the past should have less impact on the reservoir state $x_k$ than those observations in the near past. This is called the Fading Memory Property (FMP) defined in \cite{GRIGORYEVA2018495}. The FMP is closely related to the Echo State Property (ESP) presented by \cite{Jaeger2001} and is introduced and discussed in Chapters \ref{chapter::SSMs} and \ref{chapter::stochastic}. We can ensure that an ESN has the ESP by demanding that the activation function $\sigma$ and reservoir matrix $A$ are contracting. The matrix $A$ is called the reservoir matrix (and the field called \emph{reservoir computing}) because the entries of $A$ are fixed and represent a sort of computational reservoir that is capable of encoding a rich variety of inputs with a reservoir state vector $x$.

ESNs are often used to approximate the relationship between a time series of observations $z_0, z_1 , \ldots \in \mathbb{R}^d$ and a time series of targets $u_0, u_1, \ldots \in \mathbb{R}$. In the special case that $d=1$ and the targets $u_k = z_k$ then the problem is time series forecasting. There is substantial numerical evidence in the literature suggesting that ESNs are very good at this. They have performed remarkably well on problems ranging from seizure detection, to robot control,
handwriting recognition, and financial forecasting, where ESNs have won competitions \citep{LUKOSEVICIUS2009127},  \citep{LukoseviciusMantas2012RCT,RodanA2011MCES,NIPS2010_4056}. Impressively, ESNs outperformed recurrent neural networks (RNNs) and long short term memory networks (LSTMs) at a chaotic time series prediction task by a factor of over 2400 \citep{Jaeger78}. ESNs have also proved themselves competitive in reinforcement learning \citep{10.1007/11840817_86} and control \citep{Pietz_2021}.

To approximate the relationship between the observations and the targets, we seek a readout vector $W^*$ that minimises (over $W$) the regularised least squares difference between the reservoir states (which represent observations) and targets, so
\begin{align*}
    W^* = \argmin_{W \in \mathbb{R}^N} \bigg( \sum_{k = 0}^{\ell - 1} \rVert W^{\top}x_k - u_k \lVert^2 + \lambda \lVert W \rVert^2 \bigg)
\end{align*}
where $\lambda > 0$ is the Tikhonov regularisation parameter. We can find the optimal $W^*$ easily using linear regression.
\label{subsec} 
 
\section{Training an ESN on the Lorenz system}

\subsection{Creating the reservoir states}

To demonstrate everything we have discussed in section \ref{subsec}, we will present two numerical experiments. In section \ref{zeta_from_xi}, we use an ESN to learn a mapping from the discrete time series of values of the $\xi$ component of a trajectory in the Lorenz attractor (observations) to the $\zeta$ component (targets). In section \ref{predict_the_future}, we learn a mapping from the $\xi$ component of the same discrete trajectory in the Lorenz attractor (observations) to the next value of the discretely-sampled $\xi$ component (targets). We then use this next step map to generate a future trajectory for the $\xi$ components. To this end, let $\phi:\mathbb{R}^3 \to \mathbb{R}^3$ denote a discretisation of the Lorenz system \eqref{eqn::Lorenz} with time step $\tau$ i.e.
a discrete-time map of the form
\begin{align*}
    \phi(\xi,\upsilon,\zeta) = (\xi,\upsilon,\zeta) + \int_0^{\tau} (\dot{\xi},\dot{\upsilon},\dot{\zeta}) \ dt.
\end{align*}
We set the timestep $\tau = 0.01$ and initial condition $(\xi_0,\upsilon_0,\zeta_0) = (0, 1.0, 1.05)$. The timestep is small in comparison to the maximal Lyapunov exponent (see Section \ref{section::lyapunov}) of the Lorenz system, $\lambda_{\text{max}} \approx 0.9056$ \citep{Sprott_2003} and the initial condition is close to the attractor. For these initial conditions and the parameter values as in \eqref{eqn::Lorenz}, we computed a trajectory for a 200 time units (i.e. $\ell = 20000$ timesteps), illustrated in Figure~\ref{Lorenz_fig}.

\begin{figure}
  \centering
    \includegraphics[width=0.95\textwidth]{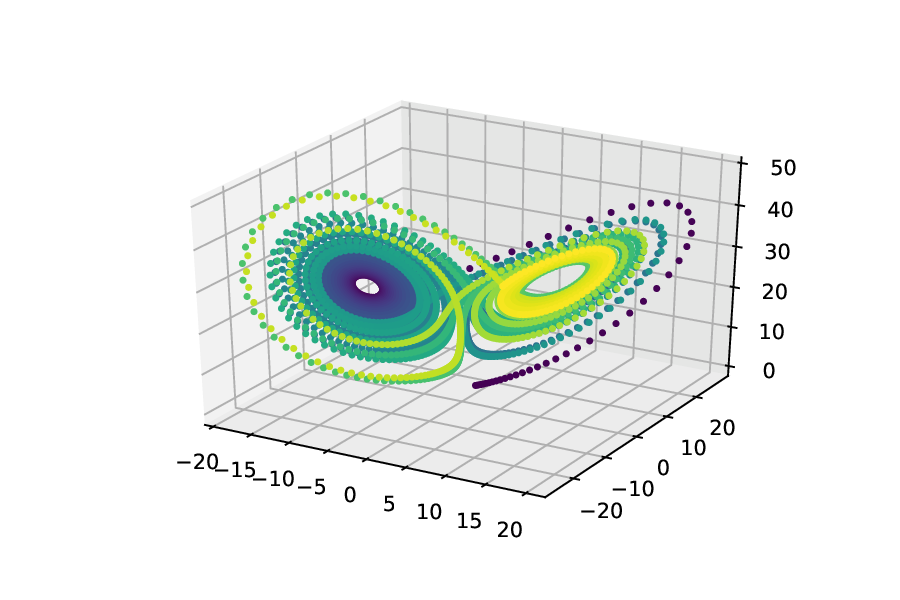}
    \caption{A typical trajectory of the Lorenz system \eqref{eqn::Lorenz} computed for 4000 timesteps, represented by the individual dots at time intervals $\tau=0.01$. Colour indicates the direction of travel along the trajectory: darkest colours (blue) at the earliest times and lightest colours (yellow) at the most recent times.}
    \label{Lorenz_fig}
\end{figure}

The observation and target functions are the first and third components of the Lorenz system $\xi$ and $\zeta$ i.e. we have an observation function $\omega(\xi,\upsilon,\zeta) = \xi$ so that the observations $z_k$ are the $\xi$ components of the trajectory at the sampled time points $k\tau$, so that
\begin{align*}
    z_k = \omega\circ\phi^k(\xi_0,\upsilon_0,\zeta_0).
\end{align*}
The target function is $u(\xi,\upsilon,\zeta) = \zeta$ so the targets $u_k$ are the $\zeta$ components of the trajectory:
\begin{align*}
    u_k = u\circ\phi^k(\xi_0,\upsilon_0,\zeta_0).
\end{align*}
The trajectories of these two components of observations and targets are shown in Figure \ref{xz_fig}(a) and (b), respectively. 
\begin{figure}
    \centering
    \begin{subfigure}[b]{0.95\textwidth}
        \includegraphics[width=\textwidth]{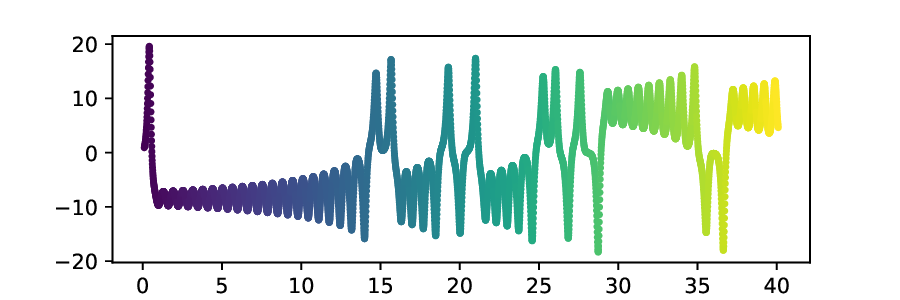}
        \caption{The $\xi$-component of the Lorenz trajectory (vertical axis) plotted against time (horizontal axis).}
    \end{subfigure}
    ~ %add desired spacing between images, e. g. ~, \quad, \qquad, \hfill etc. 
      %(or a blank line to force the subfigure onto a new line)
    \begin{subfigure}[b]{0.95\textwidth}
        \includegraphics[width=\textwidth]{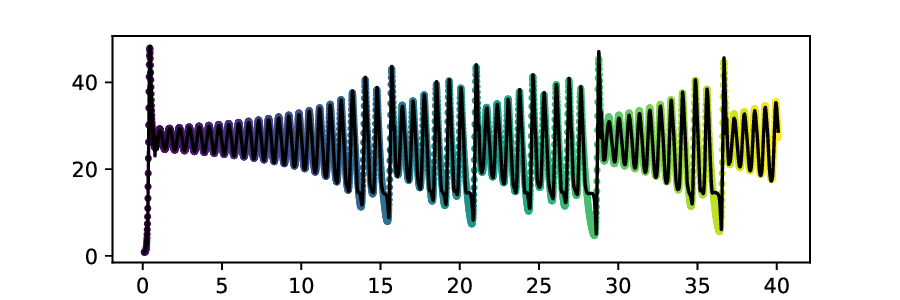}
        \caption{The $\zeta$-component of the Lorenz trajectory (vertical axis) plotted against time (horizontal axis). The black line at the $k^\mathrm{th}$ timestep indicates the approximation to this target time series given by $W^{*\top}x_k$.}
    \end{subfigure}
    \caption{Observations $z_k$ and targets $u_k$ drawn from the Lorenz trajectory.}
    \label{xz_fig}
\end{figure}

Our goal is to use an ESN to predict the targets based on the observations. So, we set up an ESN with the following parameters:
\begin{itemize}
    \item Reservoir size: $N = 300$,
    \item Input matrix $C$ and bias vector $\zeta$: i.i.d uniform random variables $\sim U[-0.05,0.05]$,
    \item Reservoir matrix $A$: i.i.d uniform random variables rescaled so that the matrix 2-norm $\lVert A \rVert_2 = 1$,
    \item Regularisation parameter $\lambda = 10^{-9}$,
\end{itemize}
which we obtained by hand tuning so that the experiment went smoothly. We have (so far) little theory for choosing the parameters.

Iterating the ESN with observations $z_k$ creates a discrete-time sequence of reservoir states $x_k$, illustrated in Figure \ref{Reservoir_Lorenz_fig}, which shows a projection of the reservoir states onto their first 3 principal components. To compute the components, we created a matrix $X^{\top} \in \mathbb{M}_{N \times \ell}(\mathbb{R})$ with $k$th column $x_k$, and took the singular value decomposition (SVD) $X = U \Sigma V^{\top}$. The first 3 principal components are the first 3 columns of $V$ which we denote $V_1, V_2, V_3$. Then $(V_1^{\top}X, V_2^{\top}X, V_3^{\top}X)$ is the projection of the reservoir states onto the first 3 principal components.

\begin{figure}
  \centering
    \includegraphics[width=0.9\textwidth]{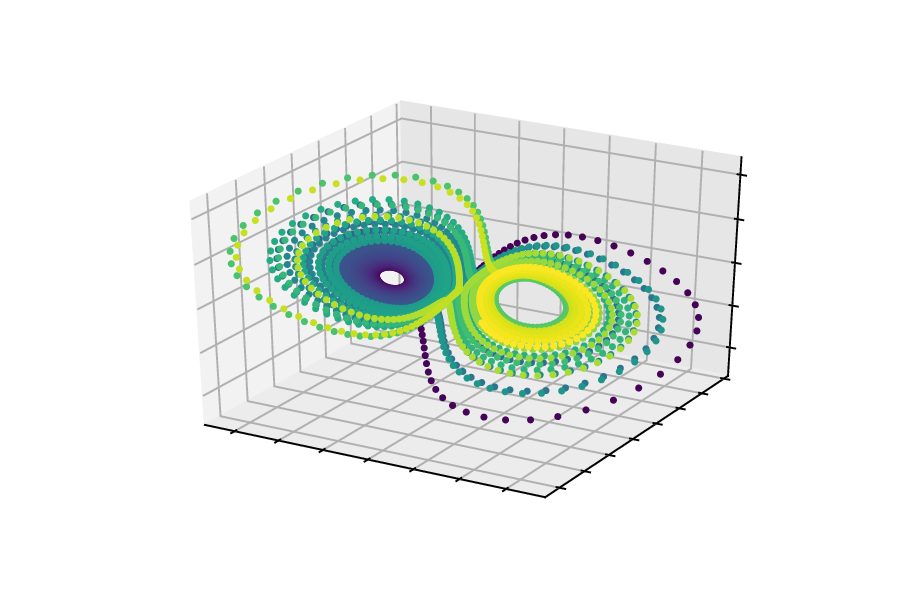}
    \caption{Illustration of the reservoir states of the ESN driven by inputs $z_k$ being the discrete-time samples observed from a trajectory of the Lorenz system. The figure shows the projection of the reservoir states onto their first 3 principal components.}
    \label{Reservoir_Lorenz_fig}
\end{figure}

The Lorenz system in Figure \ref{Lorenz_fig} and reservoir dynamics in Figure \ref{Reservoir_Lorenz_fig} look rather similar. It appears that the ESN has mapped the Lorenz attractor $\mathcal{A}$ directly into the reservoir space via a map $f :(\mathcal{A} \subset \mathbb{R}^3) \to \mathbb{R}^N$. The smoothness of the Lorenz system, ESN, and reservoir dynamics together suggest that $f$ is at least continuously differentiable ($C^1$). We show in Chapter \ref{chapter::SSMs} that $f$ is a continuously differentiable mapping of the Lorenz system into the reservoir space. Consequently, the Lorenz dynamics are synchronised to the reservoir dynamics, via the map $f$, which is called a generalised synchronisation (GS), in the sense of \cite{PhysRevLett.76.1816}. The map $f$ is essentially `learned' during training, and this is discussed in detail in Chapter \ref{chapter::SSMs}.

The image of the GS $f$ appears to smoothly and injectively reproduce the Lorenz dynamics. This suggests that properties such as Lyapunov spectra, eigenvalues of fixed points, and homology groups may be preserved under $f$ and replicated in the reservoir dynamics. We explore these claims numerically using computational methods detailed in Chapter \ref{chapter::computational_methods}. From a purely mathematical perspective, we know these properties are preserved under diffeomorphism. A map which is diffeomorphic to its image is called an embedding. Since the GS $f$ appears to preserve these properties we propose that $f$ is an embedding, and analyse this claim in Chapter \ref{chapter::embedding}.

These observations appear to hold over many different realisations of the ESN (which has random weights and biases) suggesting the properties hold with probability close to 1 or perhaps exactly 1. This possibility is studied in Chapter \ref{chapter::embedding}.

\subsection{Learning the targets $\zeta$ from the observations $\xi$}
\label{zeta_from_xi}

Having made our observations about the reservoir dynamics, we proceed to solve the least squares problem
\begin{align}
    \argmin_{W \in \mathbb{R}^N}\sum_{k=0}^{\ell-1} \lVert W^{\top} x_{k} - u_k \rVert^2 + \lambda\lVert W \rVert^2 \label{eqn::ls}
\end{align}
to determine the output layer $W^*$ using the SVD once again. In particular, if we denote the $k$th singular value by $\sigma_k$, the $k$th column of $U$ by $U_k$ and define a vector $Y \in \mathbb{R}^\ell$ with $\ell$th component $u_\ell$ then it is a standard calculation \citep{Deblurring_2006} that 
\begin{align}
    W^* = \sum_{k=1}^N \frac{\sigma_k U_k^{\top}Y}{\sigma_k^2 + \lambda}. \label{eqn::SVD}
\end{align}
With this $W^*$, we found a good approximation of a mapping from the $\xi$ component to the $\zeta$ component, and the results are shown in Figure \ref{xz_fig}.

The good fit of the nonlinear relationship between the observations and the targets suggests that the ESN has a sort of universal approximation property, despite the fitting process being entirely linear. This remarkable feature of ESNs is analysed in Chapter \ref{chapter::universal_approximation}.

\subsection{Predicting the future}
\label{predict_the_future}

Having found a mapping from $\xi$ to $\zeta$, we are now interested in finding the next step map for time series forecasting. We can feed our predicted next step back into the ESN to recursively generate a trajectory into the future. To make this more explicit we define the ESN autonomous phase 
\begin{align}
    x_{k+1} = \sigma(Ax_k + C(W^{*\top}x_k) + b) \label{eqn::auto}
\end{align}
and recognise that if the ESN is `well trained' (i.e. we have made a good choice of $W^*$) then $W^{*\top}x_k \approx u_k = z_k$ for some sufficiently long future time, yielding a good forecast for the time series. So we define the target function 
\begin{align*}
    u(\xi,\upsilon, \zeta) := \omega\circ\phi(\xi,\upsilon,\zeta)
\end{align*}
where the observation function remains $\omega(\xi,\upsilon,\zeta) = \xi$. We solve the least squares problem \eqref{eqn::ls} using the SVD \eqref{eqn::SVD} once again, then generated a 4000 timestep (40 time units) future trajectory, starting at $t=40$, using the ESN autonomous phase \eqref{eqn::auto}. The future $\xi$ components and reservoir states generated by the autonomous ESN are shown in Figures \ref{fig::future_states} and \ref{fig::future} respectively.

\begin{figure}
    \centering
        \begin{subfigure}[b]{1.0\textwidth}
        \hspace{-0.5cm}\includegraphics[width=1.1\textwidth]{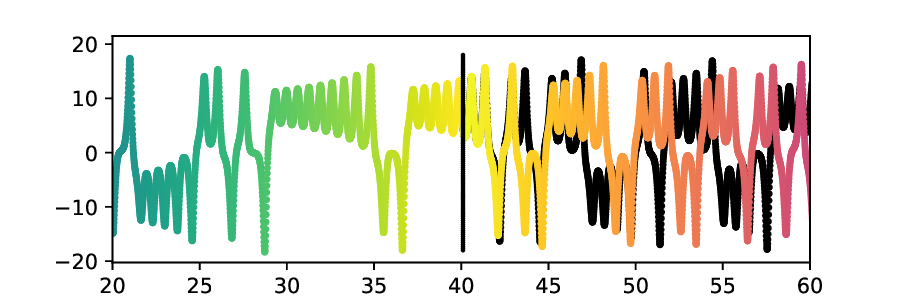}
        \caption{$\xi$ component of the Lorenz system (vertical axis) evolves in time (horizontal axis) up until time $t = 40$. After this point, the autonomous ESN predicts a future trajectory (multicoloured), which is compared to the true future trajectory (black).}
        \label{fig::future}
    \end{subfigure}
    ~
    \begin{subfigure}[b]{0.95\textwidth}
        \includegraphics[width=\textwidth]{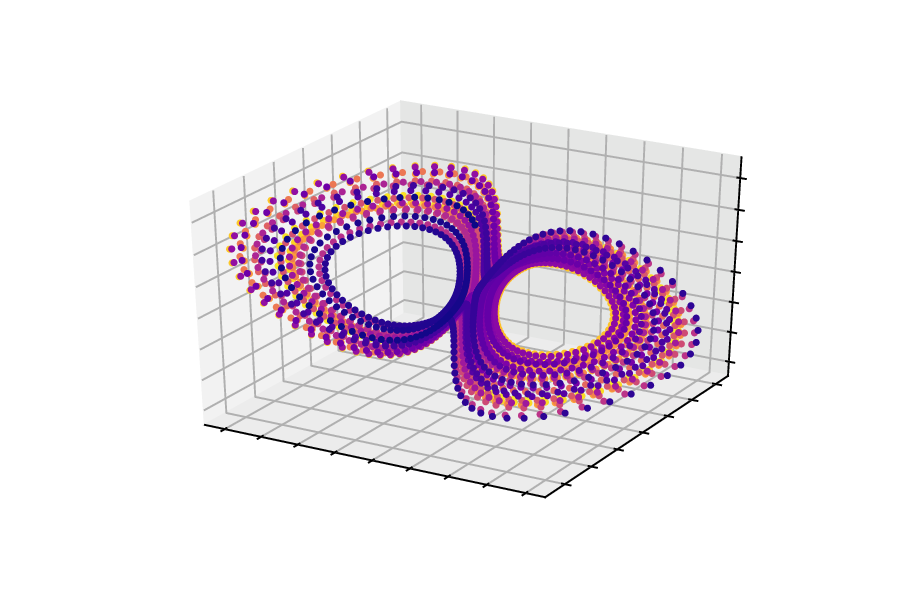}
        \caption{Illustration of the reservoir dynamics of the autonomous ESN, projected onto the first 3 principal components of the (non-autonomous) reservoir dynamics.}
        \label{fig::future_states}
    \end{subfigure}
        \caption{ESN autonomous phase.}
    \label{fig::auto}
\end{figure}

The ESN autonomous dynamics appear to have replicated a trajectory that is diffeomorphic to the Lorenz dynamics, even as the trajectories diverge in absolute value. 

This diffeomorphism is not guaranteed by a universal approximation result, which does not rule out the possibility of small errors accumulating until the autonomous dynamics no longer resemble the Lorenz dynamics at all. We explore in Chapter \ref{chapter::universal_approximation} how the \emph{structural stability} of the Lorenz system ensures that qualitative features of the dynamics are preserved even as the trajectories diverge.

\section{The remainder of the thesis}

Predicting the future trajectory of the Lorenz system is a supervised learning problem. In almost all real applications of supervised learning, the data is noisy and there is uncertainty. This is in contrast to the deterministic evolution of the Lorenz system we considered in the previous sections. Thus, in Chapter \ref{chapter::stochastic} we develop stochastic analogues for many of the deterministic results that appear in Chapters \ref{chapter::SSMs}-\ref{chapter::universal_approximation} and that were touched on here. 

In Chapter \ref{chapter::stochastic} we extend the stochastic set-up to encompass reinforcement learning problems. In the reinforcement learning paradigm an agent explores its environment by executing actions and collects rewards influenced by the environment and its actions. The goal of the agent is to learn the impact of its actions on the environment, and develop a policy that maximises its expected sum of future rewards. Learning about the environment and refining the policy often happen at the same time. We prove that ESNs are especially well suited to reinforcement learning problems where the environment evolves randomly, in a generally non-Markovian manner, and the ESN's memory of the past is crucial for making good decisions.

In Chapter \ref{chapter::PDEs}, we use reservoir computers to solve linear PDEs. The assumptions made in this Chapter are rather different to those made in the preceding Chapters, where we imagine that our agent does not know the equations that govern the environment. In previous chapters, the agent must learn a model from scratch using a reservoir computer trained on data. However in Chapter \ref{chapter::PDEs} we allow the agent access to the PDE explicitly, and use a reservoir computer to approximate the solution.

\chapter{Computational Methods}

\label{chapter::computational_methods}

The numerical results in Chapter \ref{chapter::introduction} suggests that under appropriate conditions the ESN autonomous dynamics are diffeomorphic to the dynamics of the drive system. In this case, the autonomous dynamics should inherit qualitative properties of the drive system, for example the existence of fixed points, eigenvalues of linearisations around them, and for chaotic attractors, their Lyapunov spectra and homology groups. Properties like these can be constructed from a time series using an assortment of techniques. In particular the computation of homology groups is a central idea in the field known loosely as \emph{computational topology} also known as \emph{algorithmic topology} or \emph{applied topology}. We will demonstrate some of these methods by recovering the topological properties of the Lorenz system using an ESN. The presentation in this chapter closely follows the numerical experiments in \cite{embedding_and_approximation_theorems}. To demonstrate these computational methods, we set up an ESN with the following parameters:

\begin{itemize}
    \item Reservoir size: $N = 300$,
    \item Elements of the input matrix $C$ and bias vector $\zeta$: i.i.d uniform Gaussian variables $\sim \mathcal{N}(0,0.1^2)$,
    \item Reservoir matrix $A$: Erd\H{o}s-R\'{e}nyi matrix with mean $6$ and connection weights (where they are non-zero) i.i.d Gaussian, re-scaled such that the spectral radius $\rho = 1$.
    \item Regularisation parameter $\lambda = 10^{-6}$.
\end{itemize}

\section{Fixed Points}

If the Generalised Synchronisation (GS) $f$ is an embedding, then $f$ will embed the fixed points of the Lorenz system into the reservoir space. Moreover if the autonomous ESN approximates the embedded Lorenz dynamics sufficiently well, the autonomous dynamics will have fixed points with the same linearisation as the Lorenz system. To verify this, we searched for the autonomous ESN's fixed points using Newton's method.

In particular, we found the fixed point of
\begin{align}
    \psi(x) := \sigma(Ax + C(W^{\top}x) + b) \label{autonomous_ESN}
\end{align}
using Newton iterations 
\begin{align*}
    x_{k+1} = x_k - (J(x_k)-I)^{-1}(\psi(x_k) - x_k)
\end{align*}
where $J(x_k)$ is the Jacobian of $\psi$ evaluated at $x_k$. To reduce numerical instability and computational time we do not find $(J(x_k) - I)^{-1}$ explicitly but instead solve
\begin{align*}
    (J(x_k) - I)(x_{k+1} - x_k) = - \psi(x_k) + x_k
\end{align*}
for $(x_{k+1} - x_k)$. The iterates are shown in Figure~\ref{fixed_point_full.fig}, and it appears by eye the algorithm was successful and a fixed point was found.

\begin{figure}
  \centering
    \includegraphics[width=0.9\textwidth]{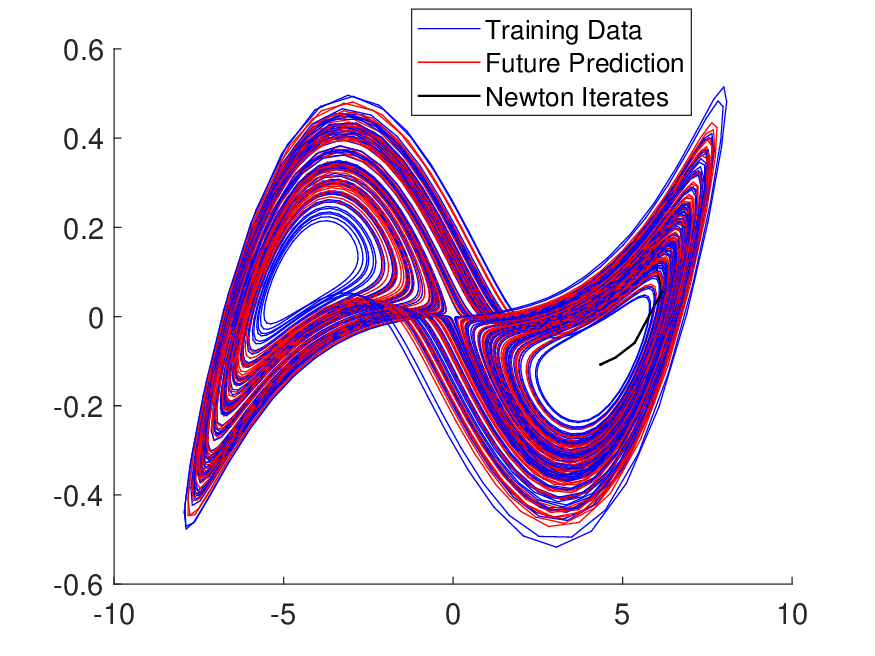}
    \caption{The driven reservoir dynamics are plotted in blue and autonomous dynamics are plotted in red. Both were projected onto the first three principal components of the driven dynamics, then the axes are rotated such that the projection appears on the first 2 components. The black line indicates the iterates of Newton's method, used to locate a fixed point - the method eventually converges to a fixed point in the middle of the right wing of the figure. We can see by eye that the reservoir dynamics appear by eye to be diffeomorphic to the Lorenz system. }
    \label{fixed_point_full.fig}
\end{figure}

Further, if the GS $f$ is a $C^1$ embedding of the original dynamics, we expect $f$ to preserve the stability of fixed points, i.e. we expect the eigenvalues of the linearisation of the autonomous phase to be preserved at every fixed point. Now, comparing the eigenvalues of the linearisation of the Lorenz system and autonomous phase at the respective fixed points requires some subtlety, because the Lorenz system is a continuous time flow, while the autonomous phase is a discrete time map. So, we began by considering one of the known fixed points found in the Lorenz attractor's wings
\begin{align}
    m^* = (6\sqrt{2},6\sqrt{2},27) \nn
\end{align}
 and noted the Jacobian $J$ of the continuous time Lorenz system evaluated at the fixed point $m^*$ is therefore
\begin{align}
J\Big|_{m^*} =
\begin{bmatrix}
    -10 & 10 & 0 \\
    1 & -1 & -6\sqrt{2} \\
    6\sqrt{2} & 6\sqrt{2} & -8/3
\end{bmatrix}. \nn
\end{align}
So the discretised Lorenz system
\begin{align*}
    \phi(\xi,\upsilon,\zeta) = (\xi,\upsilon,\zeta) + \int_0^{\tau} (\dot{\xi},\dot{\upsilon},\dot{\zeta}) \ dt.
\end{align*}
admits a linearisation $\phi^*$ about the fixed point $m^*$
\begin{align*}
    \phi^*(\xi,\upsilon,\zeta) = \exp\bigg(J\Big|_{m^*}\tau\bigg)
    \begin{bmatrix}
        \xi \\ 
        \upsilon \\
        \zeta
    \end{bmatrix}
    , 
\end{align*}
where $\exp : \mathbb{M}_{3 \times 3}(\mathbb{R}) \to \mathbb{M}_{3 \times 3}(\mathbb{R})$ denotes the matrix exponential defined 
\begin{align*}
    \exp(Q) = \sum_{n = 0}^{\infty} \frac{Q^n}{n!}.
\end{align*}
 The matrix $\exp(J\rvert_{m^*} \tau)$ has 3 eigenvalues, which we have compared with the ESN autonomous eigenvalues in Figure \ref{eigenvalues-fig}.

\begin{figure}
  \centering
    \includegraphics[width=0.9\textwidth]{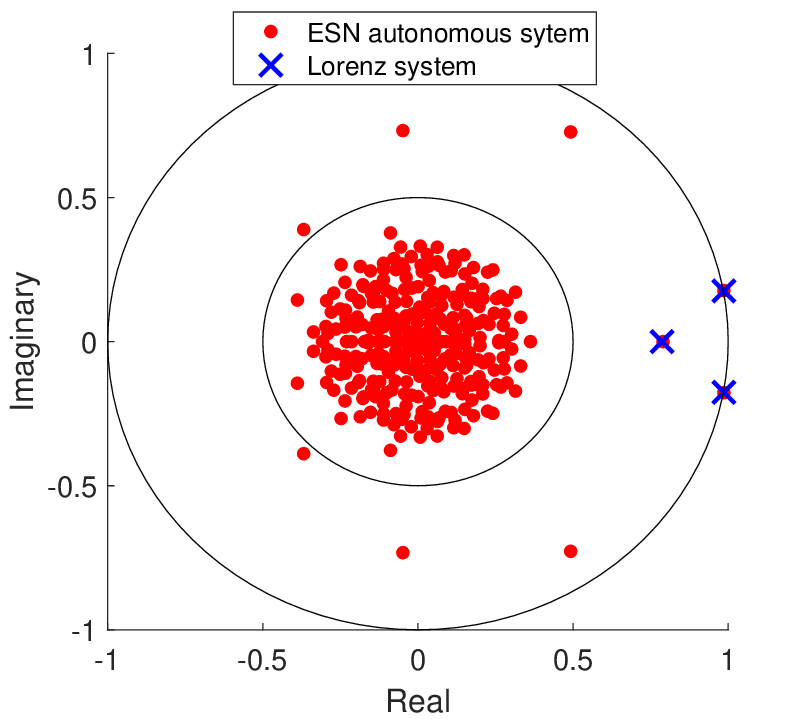}
      \caption{Comparison of the eigenvalues of the discrete time evolution operator for the Lorenz system and the autonomous ESN \eqref{autonomous_ESN}. The three eigenvalues of the linearisation of the Lorenz system on the fixed point inside one of the Lorenz attractor's wings are represented by blue crosses. The 300 eigenvalues of the linearisation of the ESN autonomous system at the fixed point found with Newton's method are represented by red dots. The concentric grey circles have radius 1/2 and 1 respectively.}
      \label{eigenvalues-fig}
\end{figure}

If the GS $f$ is indeed a $C^1$ embedding, the dynamics of the autonomous phase are diffeomorphic to the discrete time Lorenz system on some three dimensional submanifold. This manifold (linearised at the fixed point $m^*$) is spanned by three eigenvectors, each with an associated eigenvalue, which will coincide with the eigenvalues of the linearisation of the Lorenz system at the fixed point. Figure \ref{eigenvalues-fig} appears to show three overlapping eigenvalues, suggesting that the autonomous phase is diffeomorphic to the Lorenz system (at least in a neighbourhood of $m^*$). This is particularly remarkable because $m^*$ is distant from the training data. It appears that the ESN has inferred the existence, position and eigenvalues of a fixed point from training data which contains no fixed points. In machine learning parlance, we might say that the ESN has generalised patterns in the training data to an unseen region of the phase space. The remaining 297 eigenvalues of the ESN autonomous system have absolute value less than 1, suggesting that the fixed point is attractive with respect to them.

\section{Lyapunov Spectra}
\label{section::lyapunov}
Another invariant of the Lorenz system preserved under diffeomorphism is the Lyapunov spectrum, which captures how quickly very close trajectories diverge from eachother, and is used as a measure of chaos. To define the spectrum, let $J$ be the Jacobian of the evolution operator of a continuous time dynamical system. Let $Y$ be the solution of the ODE $\dot Y = JY$ with initial condition $Y(0) = m_0$. Then the Lyapunov spectrum of the invariant set containing $m_0$ is the spectrum of the matrix $\Lambda$ defined
    \begin{align}
        \Lambda = \lim_{t \to \infty} \frac{1}{2t} Y Y^{\top}. \nn
    \end{align}
 Each eigenvalue in the spectrum is called a \emph{Lyapunov exponent} to signify that two initially close trajectories diverge or converge exponentially fast with exponentiation constant in the direction of each eigenvector of $J$ given by a Lyapunov exponent. Further details and discussion is given in \cite{DARBYSHIRE1996287}. The Lyapunov spectrum for the Lorenz system was estimated by \cite{Sprott_2003} as $\{0.9056, 0, -14.5723\}$. In order to compare the Lorenz spectrum to the spectrum of the autonomous ESN, we computed the autonomous system's spectrum using the discrete time $QR$ method discussed in \cite{DARBYSHIRE1996287} and plotted each Lyapunov exponent against the known exponents of the Lorenz system in Figure \ref{Lorenz_Lyapunov}.
 The method is described by the following algorithm 
 
\begin{algorithm}
\caption{Compute the Lyapunov spectrum}
\begin{algorithmic}[1]
\State Let $Q_0$ be an arbitrary $N \times N$ orthogonal matrix
\State Let $J\lvert_{m_0}$ be the Jacobi matrix of the autonomous ESN evaluated at $m_0$
\State \textbf{for} each $j$ from $0$ to $n$
\Indent 
    \State Take the $QR$ decomposition $Q_{j+1} R_{j+1} = Q_j J\lvert_{\phi^j(m_0)}$
    \State Compute $\Lambda_{j+1} = \log(\text{diag}(R_{j+1}))$ where $\text{diag}(A)$ creates a vector from the diagonal entries of $A$)
\EndIndent
\State Compute $\Lambda = \frac{1}{n}\sum_{j=1}^{n} \Lambda_j$. The exact spectrum is obtained as $n \to \infty$
\end{algorithmic}
\end{algorithm}
 
 We found that the largest 2 exponents of the Lorenz system and the ESN map are in good agreement with each other while there was a significant difference between the next-largest in each case. This problem was also noted and encountered by \cite{Pathak2017}, and we do not have a satisfactory explanation for this.

\begin{figure}
  \centering
    \includegraphics[width=0.9\textwidth]{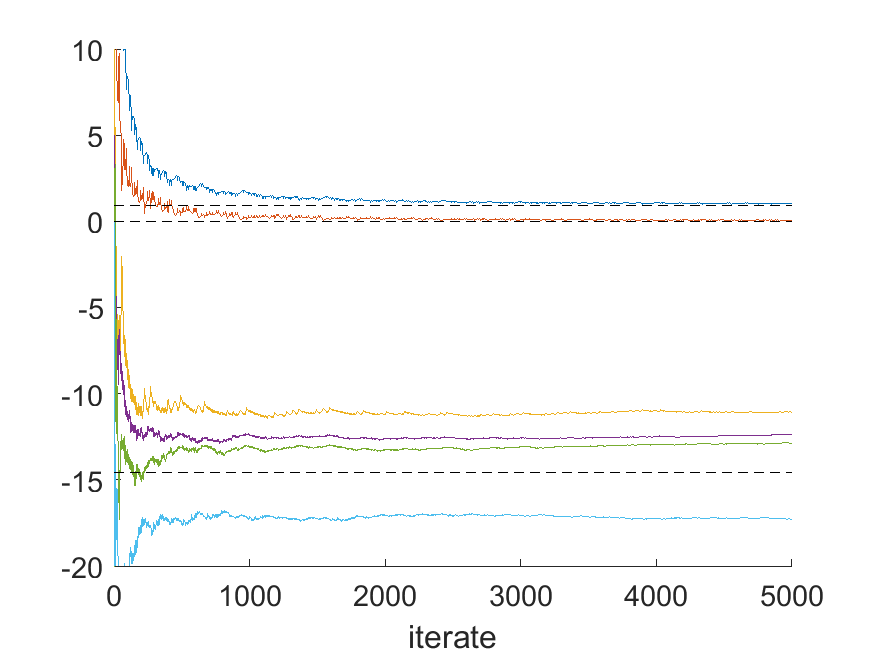}
    \caption{The Lyapunov spectrum of the autonomous phase as the iterates increases is shown. The true Lyapunov exponents of the autonomous phase is given by the limit of these exponents as the iterations tend to infinity. These autonomous exponents are compared to the black dotted lines representing the 3 exponents of the Lorenz system.}
    \label{Lorenz_Lyapunov}
\end{figure}

In the next section we will consider a somewhat different invariant that is preserved under the embedding $f$: the homology groups of the attractor (for a finite length of time intergration). We note that the true attractor has a fractal structure, and that the closure of the true attractor contains the fixed points at the origin and those in the middle of each wing. We therefore proceed carefully in the upcoming sections. We will compute the these groups using \emph{persistent homology}.

\section{Persistent homology}

\subsection{Introduction}

The objective of an unsupervised learning algorithm is to reveal the hidden relations and structures within a dataset, without the use of a training set. A classic unsupervised learning algorithm is principal component analysis (PCA), which reveals linear relations in high dimensional data. Another is $k$-means clustering which partitions a data set into $k$ clusters. In this section, we will explore a completely different kind of structure that a data set might have: homology.

Roughly speaking, homology is the study of objects under continuous deformation, which preserves their connected components, holes and voids. The sphere is made up of 1 connected component, and encloses a 3 dimensional void. The torus is also made up of 1 connected component, and also encloses a 3 dimensional void, but unlike the sphere contains a 2 dimensional hole, or tunnel. It is this distinction between the sphere and the torus that distinguish their homology. Given a constellation of data points then, what does it mean to say it has a hole or encloses a void? Astronomers have for thousands of years looked into the night sky, and in their minds' eye, connected the stars to create shapes with holes and voids. The emerging field of persistent homology offers a mathematical formalism for obtaining similar results. This subsection will outline the main results of persistent homology and explain how to practically compute the homology of a real data set. We draw heavily on the work of \cite{Ghrist2007} and \cite{Ghrist2014}.

\subsection{Theory}

\subsubsection{Simplicial complexes}

A data set can be transformed into a geometric object called a simplicial complex with computable homology. Every singleton set containing a single point is a point, also known as a 0-simplex. Any pair of points in the data set can be connected together to form a line, or 1-simplex. Any triple of points can be collected to form a triangular face, also known as a 2-simplex. Any quadruple of points can together form a tetrahedron, (a 3-simplex) and so on. A set of simplices form a simplicial complex. To make this idea rigorous we will introduce a few definitions.

\begin{defn}
    (Closed under restriction) A collection of sets $S$ is closed under restriction if any subset of a set in $S$ is also in $S$. 
\end{defn}

\begin{defn}
    (Simplicial Complex) Let $X$ be a finite set. A simplicial complex $S$ is a set of subsets of $X$ that is closed under restriction. 
\end{defn}

\begin{defn}
    (Simplex) Let $S$ be a simplicial complex. Then a non-empty $\sigma \in S$ containing $k$ elements is called a $k-1$ simplex.
\end{defn}

Given a data set, how do we decide whether a pair of points ought to be connected to form a 1-simplex? Or indeed how do we decide whether some set of $k$ points ought to be brought together to form a $k-1$ simplex? One method developed by Leopold \cite{Vietoris1927} and Eliyahu Rips supposes every data point is the centre of a ball of radius $\epsilon$. If a set of $k$ points are each contained in the ball of every other, then they form a $k-1$ simplex, otherwise they do not.

\begin{defn}
    (Data set) A data set $X$ is a finite list of elements in $\mathbb{R}^n$. 
\end{defn}

\begin{defn}
    (Vietoris-Rips Complex) The Vietoris-Rips (Rips for short) complex $\mathcal{R}_\epsilon$ of a data set $X$ is the set of subsets of $X$ defined as those subsets $\sigma \subset X$ with members whose pairwise distance (in the Euclidean metric) is less than $\epsilon$. 
\end{defn}

Another method of connecting the dots is called the \u{C}ech Complex, and supposes every data point is the centre of a ball of radius $\epsilon/2$. The division by $2$ is a useful convention for reasons that will soon be made clear. This time, we say a set of $k$ data points form a $k-1$ simplex if and only if their balls have a common intersection. 
\begin{defn}
    (\u{C}ech Complex) The \u{C}ech complex $\mathcal{C}_\epsilon$ of a data set $X$ is a set of subsets of $X$ defined by the statement that $\sigma \subset X$ is in $\mathcal{C}_\epsilon$ if and only if 
    \begin{align*}
        \bigcap\limits_{x\in \sigma} B_{\epsilon/2}(x) \neq \emptyset.
    \end{align*}
\end{defn}

\begin{remark}
The Rips complex and \u{C}ech Complex are clearly closed under restriction and are therefore simplicial complexes.
\end{remark}

The \u{C}ech Complex has the elegant feature of being homotopy equivalent to the union of $\epsilon/2$ balls centred at each data point. 

\begin{defn}
    (Homotopy) Let $X,Y$ be topological spaces and $f,g : X \to Y$ continuous maps. A homotopy between $f,g$ is a continuous function $H : X \times [0,1] \to Y$ such that $H(x,0) = f(x)$ and $H(x,1) = g(x)$.
\end{defn}

\begin{defn}
    (Homotopy Equivalent) Two topological spaces $X$, $Y$ are homotopy equivalent if there is a pair of continuous maps $f : X \to Y$ and $g : Y \to X$ such that $f \circ g$ is homotopic to the identity $\text{id}_{Y}$ and $g \circ f$ is homotopic to the identity $\text{id}_X$.
\end{defn}

One could say the \u{C}ech Complex is the nervous system found inside a muscular subspace of $\mathbb{R}^n$ composed of $\epsilon$-balls. To put this more formally we introduce homotopy and the Nerve Lemma. An example is shown in Figure \ref{nerve_fig}.

\begin{lemma}
    (Nerve Lemma) The \u{C}ech Complex $\mathcal{C}_\epsilon$ of a data set $X$ is homotopic to
    \begin{align*}
        \bigcup\limits_{x\in X} B_{\epsilon/2} (x) \subset \mathbb{R}^n.
    \end{align*}
\end{lemma}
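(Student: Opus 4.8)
The plan is to recognise that the \u{C}ech complex $\mathcal{C}_\epsilon$ is, by construction, the \emph{nerve} of the finite open cover $\mathcal{U} = \{B_{\epsilon/2}(x) : x \in X\}$ of the space $U := \bigcup_{x \in X} B_{\epsilon/2}(x) \subset \mathbb{R}^n$, and then to invoke the classical Nerve Theorem: the nerve of a \emph{good} open cover of a paracompact space is homotopy equivalent to the union of the cover, where a cover is \emph{good} if every non-empty finite intersection of its members is contractible. (The lemma says ``homotopic''; since a simplicial complex and a subspace of $\mathbb{R}^n$ are not maps, this is to be read as ``homotopy equivalent'' in the sense of the definition given above.)

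First I would make the nerve identification precise. A subset $\sigma = \{x_0, \dots, x_k\} \subseteq X$ is a $k$-simplex of $\mathcal{C}_\epsilon$ exactly when $\bigcap_{i=0}^{k} B_{\epsilon/2}(x_i) \neq \emptyset$, which is verbatim the condition defining the $k$-simplices of the nerve $N(\mathcal{U})$. Hence $\mathcal{C}_\epsilon = N(\mathcal{U})$ as abstract simplicial complexes, and it remains only to check that $\mathcal{U}$ satisfies the hypotheses of the Nerve Theorem.

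Next I would verify the good-cover condition, which here is elementary. Each $B_{\epsilon/2}(x)$ is an open Euclidean ball, hence convex; an arbitrary intersection of convex sets is convex, so every finite intersection $\bigcap_i B_{\epsilon/2}(x_i)$ is convex; and a non-empty convex subset $K \subseteq \mathbb{R}^n$ is contractible via the straight-line homotopy $H(y,t) = (1-t)y + tp$ for any fixed $p \in K$. Thus all non-empty finite intersections of members of $\mathcal{U}$ are contractible. Moreover $U$ is open in $\mathbb{R}^n$, hence metrizable and therefore paracompact, and $X$ is finite so the cover is finite; the hypotheses of the Nerve Theorem are met, and it yields the homotopy equivalence $\mathcal{C}_\epsilon = N(\mathcal{U}) \simeq U$, which is the assertion.

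The main obstacle is the Nerve Theorem itself rather than its application here. If one is not permitted to cite it, the standard construction is to pick a partition of unity $\{\rho_x\}_{x \in X}$ subordinate to $\mathcal{U}$, define $\Phi : U \to |N(\mathcal{U})|$ by sending $y$ to the point with barycentric coordinates $(\rho_x(y))_{x \in X}$, and prove $\Phi$ is a homotopy equivalence by induction on the (finite) number of sets in the cover, comparing successive stages with a Mayer--Vietoris / gluing argument that uses the contractibility of the intersections (equivalently, by identifying both $U$ and $|N(\mathcal{U})|$ with the homotopy colimit of the diagram of finite intersections). The bookkeeping of overlaps in that induction is the delicate part; everything specific to the \u{C}ech complex, namely ``convex $\Rightarrow$ contractible'', is trivial.
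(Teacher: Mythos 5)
Your proposal is correct. The paper does not in fact supply an argument for this lemma at all --- its ``proof'' is the single citation \cite{Alexandroff1928} --- so there is no internal proof to compare against; what you have written is precisely the standard argument behind that citation. Your reduction is accurate: the paper's definition of $\mathcal{C}_\epsilon$ (a subset $\sigma \subset X$ is included exactly when $\bigcap_{x \in \sigma} B_{\epsilon/2}(x) \neq \emptyset$) is verbatim the nerve of the cover $\{B_{\epsilon/2}(x)\}_{x \in X}$, the good-cover hypothesis follows from convexity of finite intersections of balls, and finiteness of $X$ together with paracompactness of the union makes the Nerve Theorem applicable; reading ``homotopic'' as ``homotopy equivalent'' is also the right interpretation of the statement. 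Your closing sketch of how one would prove the Nerve Theorem itself (partition of unity, barycentric map to $|N(\mathcal{U})|$, induction over the finite cover or a homotopy-colimit comparison) is sound and goes beyond what the thesis offers. The only point worth a passing remark is that the paper leaves open whether the balls $B_{\epsilon/2}(x)$ are open or closed; you treat them as open, which is the convenient choice for the open-cover form of the Nerve Theorem, and in the closed case one either invokes the version for finite closed convex covers or shrinks the radii slightly, so nothing essential changes.
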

\begin{proof}
    \cite{Alexandroff1928}.
\end{proof}

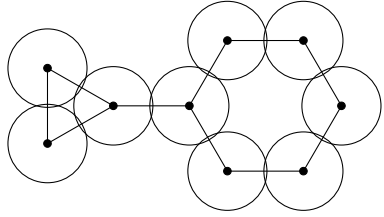
\begin{figure}
  \centering
    \begin{tikzpicture}
   \newdimen\R
   \R=1.0cm
   \newdimen\radius
   \radius=0.52cm
   \draw (0:\R) \foreach \x in {60,120,...,360} { --  (\x:\R) };
   \foreach \x/\l/\p in
     { 60/{1}/above,
      120/{6}/above,
      180/{5}/left,
      240/{4}/below,
      300/{3}/below,
      360/{2}/right
     }
     \node[inner sep=1pt,circle,draw,fill] at (\x:\R) {};
     \draw (-1,0) -- (-2,0) -- (-2.866,0.5) -- (-2.866,-0.5) -- (-2,0);
     \node[inner sep=1pt,circle,draw,fill] at (-2,0) {};
     \node[inner sep=1pt,circle,draw,fill] at (-2.866,0.5) {};
     \node[inner sep=1pt,circle,draw,fill] at (-2.866,-0.5) {};
     \draw (-1,0) circle [radius=\radius];
     \draw (-0.5,0.866) circle [radius=\radius];
     \draw (0.5,0.866) circle [radius=\radius];
     \draw (1,0) circle [radius=\radius];
     \draw (0.5,-0.866) circle [radius=\radius];
     \draw (-0.5,-0.866) circle [radius=\radius];
     \draw (-2,0) circle [radius=\radius];
     \draw (-2.866,0.5) circle [radius=\radius];
     \draw (-2.866,-0.5) circle [radius=\radius];
    \end{tikzpicture}
    \caption{The \u{C}ech complex is homotopic to the union of $\epsilon$-balls, which in this figure, is a genus 2 surface.}
    \label{nerve_fig}
\end{figure}

\subsubsection{Persistence complexes}

Given a data set and a particular $\epsilon$, we can create a Rips or \u{C}ech complex with holes and voids, and therefore a homology we can scrutinise. But the question remains as to how to chose $\epsilon$. A sufficiently small $\epsilon$ will not connect any points and result in a boring list of $0$ dimensional simplices, while a sufficiently large $\epsilon$ will connect every point to every other forming a single gigantic simplex. In persistent homology we consider all values of $\epsilon$, and observe which homological features of the data persist over long intervals of $\epsilon$ and which flutter rapidly in and out of existence. Those features which persist are considered real, while their more transient companions are dismissed as noise.

To make this idea concrete, we consider a data set $X$ and create a sequence of complexes called a \emph{persistence complex} as follows. The first complex $\mathcal{R}_{\epsilon_0}$ is a set of $0$-simplices; no data point is connected to any other. Starting from $\epsilon = 0$ we let $\epsilon$ grow until some pair of points become connected to form a $1$-simplex, giving rise to the next complex in the sequence $\mathcal{R}_{\epsilon_1}$. We let $\epsilon$ grow some until a new pair of points become connected, giving rise to the third complex in the sequence $\mathcal{R}_{\epsilon_2}$. We continue to add terms to the sequence until $\epsilon$ is large enough that every point in the data set belongs to the same simplex.
Whether some set of $k$ points are connected and form a $k-1$ simplex depends of course on whether we are considering a sequence of Rips complexes or \u{C}ech complexes. The following definitions formalise these ideas

\begin{defn}
    (Filtration) A filtration is a collection of sets $X_i$ with the property that if $i < j$, then $X_i \subset X_j$.
\end{defn}

\begin{defn}
    (Rips Persistence complex) Let $X$ be a data set and notice that the subsets of
    $\{ \mathcal{R}_\epsilon | \epsilon > 0 \}$ form a filtration $\mathcal{R}_{\epsilon_0} \subset \mathcal{R}_{\epsilon_1} \subset \mathcal{R}_{\epsilon_2} \subset \ldots \subset \mathcal{R}_{\epsilon_m}$ for $\epsilon_0 < \epsilon_1 < \ldots < \epsilon_m$. The sequence of inclusion maps
    \begin{align*}
        \mathcal{R}_{\epsilon_0} \xhookrightarrow{\iota_1} \mathcal{R}_{\epsilon_1} \xhookrightarrow{\iota_2} \mathcal{R}_{\epsilon_2} \xhookrightarrow{\iota_3} \ldots \xhookrightarrow{\iota_m} \mathcal{R}_{\epsilon_m}
    \end{align*}
    is called the Rips persistence complex.
\end{defn}

\begin{defn}
    (\u{C}ech Persistence complex) Let $X$ be a data set and consider that the subsets of
    $\{ \mathcal{C}_\epsilon | \epsilon > 0 \}$ form a filtration $\mathcal{C}_{\epsilon_0} \subset \mathcal{C}_{\epsilon_1} \subset \mathcal{C}_{\epsilon_2} \subset \ldots \subset \mathcal{C}_{\epsilon_m}$. The sequence of inclusion maps
    \begin{align*}
        \mathcal{C}_{\epsilon_0} \xhookrightarrow{\iota_1} \mathcal{C}_{\epsilon_1} \xhookrightarrow{\iota_2} \mathcal{C}_{\epsilon_2} \xhookrightarrow{\iota_3} \ldots \xhookrightarrow{\iota_m} \mathcal{C}_{\epsilon_m}
    \end{align*}
    is called the \u{C}ech persistence complex.
\end{defn}
For a given data set $X$ the Rips and \u{C}ech chain complexes are not the same in general, but we can always squeeze a \u{C}ech complex between two Rips complexes as follows.
\begin{lemma}
    (Squeezing Lemma) For any $\epsilon > 0$ there is a chain of inclusion maps
    \begin{align*}
        \mathcal{R}_{\epsilon} \xhookrightarrow{} \mathcal{C}_{\epsilon \sqrt{2}} \xhookrightarrow{} \mathcal{R}_{\epsilon \sqrt{2}}.
    \end{align*}
\end{lemma}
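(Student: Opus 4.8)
The plan is to prove the two containments of simplex sets $\mathcal{R}_\epsilon \subseteq \mathcal{C}_{\epsilon\sqrt2}$ and $\mathcal{C}_{\epsilon\sqrt2} \subseteq \mathcal{R}_{\epsilon\sqrt2}$, and then observe that, because all three complexes share the vertex set $X$ and the face relation is set inclusion, each containment is realised by the simplicial map induced by $\mathrm{id}_X$; composing these gives the asserted chain of inclusion maps.

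I would dispatch the second containment first, since it is immediate from the triangle inequality. If $\sigma \in \mathcal{C}_{\epsilon\sqrt2}$ there is a point $p \in \bigcap_{x\in\sigma}B_{\epsilon\sqrt2/2}(x)$, and then for any $x,y\in\sigma$ we have $\lVert x-y\rVert \le \lVert x-p\rVert + \lVert p-y\rVert < \epsilon\sqrt2/2 + \epsilon\sqrt2/2 = \epsilon\sqrt2$, so every pairwise distance in $\sigma$ is below $\epsilon\sqrt2$ and $\sigma\in\mathcal{R}_{\epsilon\sqrt2}$. This in fact shows $\mathcal{C}_\delta\subseteq\mathcal{R}_\delta$ for every $\delta>0$; the whole content of the $\sqrt2$ factor lies in the other inclusion.

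The first containment is the crux. Let $\sigma = \{x_0,\dots,x_k\} \in \mathcal{R}_\epsilon$, so $\mathrm{diam}(\sigma) < \epsilon$; I need a single point within distance $\epsilon\sqrt2/2$ of every vertex, i.e.\ a point of $\bigcap_i B_{\epsilon\sqrt2/2}(x_i)$. The naive choice of the barycentre of $\sigma$ only yields the bound $\epsilon$ via the triangle inequality, which is a factor $\sqrt2$ too large, so instead I would invoke the classical theorem of Jung: a subset of a Euclidean space spanning an affine subspace of dimension $n$ and having diameter $D$ lies in a closed ball of radius at most $D\sqrt{n/(2(n+1))}$. The vertices of $\sigma$ span an affine subspace of dimension at most $k$, so $\sigma$ lies in a ball of radius at most $\epsilon\sqrt{k/(2(k+1))} < \epsilon/\sqrt2 = \epsilon\sqrt2/2$, where the strict inequality holds because $\sqrt{k/(2(k+1))} < 1/\sqrt2$ for every finite $k$ (the ratio approaches, but never reaches, $1/\sqrt2$, and the bound is uniform in the number of vertices). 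Taking $p$ to be the centre of that ball gives $p\in\bigcap_i B_{\epsilon\sqrt2/2}(x_i)$, hence $\sigma\in\mathcal{C}_{\epsilon\sqrt2}$.

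The only genuine obstacle is the sharp constant in the first inclusion, which is exactly Jung's theorem; the subtlety to watch is that the extremal configuration is the regular simplex, for which the barycentre is the optimal centre, whereas for a general simplex the barycentre is suboptimal (e.g.\ a degenerate simplex with two coincident vertices), so a self-contained argument using the identity $\sum_{i<j}\lVert x_i-x_j\rVert^2 = (k+1)\sum_j\lVert x_j-c\rVert^2$ for the barycentre $c$ must still pass through the minimal enclosing ball rather than $c$ itself. I would close by remarking that both inclusions are compatible with faces, hence induce maps on homology, which is the form in which the Squeezing Lemma feeds into the interleaving arguments that follow.
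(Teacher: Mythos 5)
Your proof is correct, and it is essentially the standard argument: the paper does not prove this lemma itself but only cites de Silva (the de Silva--Ghrist interleaving result), and the proof there rests on exactly the two steps you give — the triangle inequality for $\mathcal{C}_{\epsilon\sqrt2}\subseteq\mathcal{R}_{\epsilon\sqrt2}$ and Jung's theorem (applied in the affine hull of the vertex set, giving the dimension-free constant $\sqrt{2}$) for $\mathcal{R}_{\epsilon}\subseteq\mathcal{C}_{\epsilon\sqrt2}$. Your added remark that the barycentre is insufficient and one must use the minimal enclosing ball is accurate and is precisely why Jung's theorem, rather than an averaging identity, is needed.
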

\begin{proof}
    \cite{diSilva}
\end{proof}
It follows from this Lemma that any homological feature of the Rips complex that persists from $\epsilon$ to $\epsilon \sqrt{2}$ is also a feature of the \u{C}ech complex $\mathcal{C}_{\epsilon \sqrt{2}}$. An illustration of this squeezing phenomenon is shown in Figure \ref{squeezing_fig}.

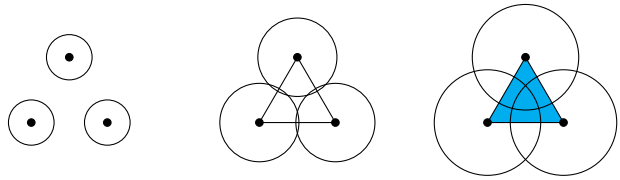
\begin{figure}
  \centering
    \begin{tikzpicture}
    \draw (0,0) circle [radius=0.3];
    \draw (1,0) circle [radius=0.3];
    \draw (0.5,0.866) circle [radius=0.3];
    \node[inner sep=1pt,circle,draw,fill] at (1,0) {};
    \node[inner sep=1pt,circle,draw,fill] at (0.5,0.866) {};
    \node[inner sep=1pt,circle,draw,fill] at (0,0) {};
    \draw (3,0) -- (4,0) -- (3.5,0.866) -- (3,0);
    \draw (3,0) circle [radius=0.52];
    \draw (4,0) circle [radius=0.52];
    \draw (3.5,0.866) circle [radius=0.52];
    \node[inner sep=1pt,circle,draw,fill] at (3,0) {};
    \node[inner sep=1pt,circle,draw,fill] at (3.5,0.866) {};
    \node[inner sep=1pt,circle,draw,fill] at (4,0) {};
    \draw [fill = cyan] (6,0) -- (7,0) -- (6.5,0.866) -- (6,0);
    \draw (6,0) circle [radius=0.7];
    \draw (7,0) circle [radius=0.7];
    \draw (6.5,0.866) circle [radius=0.7];
    \node[inner sep=1pt,circle,draw,fill] at (7,0) {};
    \node[inner sep=1pt,circle,draw,fill] at (6.5,0.866) {};
    \node[inner sep=1pt,circle,draw,fill] at (6,0) {};
    \end{tikzpicture}
    \caption{A filtration of \u{C}ech complexes for increasing $\epsilon$. We can see that for $\epsilon < 1$ the \u{C}ech complex is a collection of $3$ points, for $1 < \epsilon < \sqrt{2}$ the \u{C}ech complex forms a 1 dimensional loop, and for $\epsilon > 1$ the complex is a triangle. The Rips complex for the same 3 points is $3$ isolated points when $\epsilon < 1$ and a triangle otherwise. The one dimensional loop in the sequence of \u{C}ech complexes is squeezed between 2 Rips complexes; 3 isolated points and a triangle.}
    \label{squeezing_fig}
\end{figure}

\subsubsection{Simplicial homology}

We first seek the homology of a simplicial complex defined at a particular value of $\epsilon$. To this end, we note that a simplicial complex is composed of some $m$-simplices, $(m-1)$-simplices, $(m-2)$-simplices and so on, until a final set of $0$-simplices. Each $k$ simplex has a boundary, which is composed of $(k-1)$-simplices. The boundary of a 1-simplex (line) is composed of the two $0$-simplices (points) with one at each end. The boundary of a $2$-simplex (triangle) is composed of the three $1$-simplices (lines) that enclose it. The boundary of a $3$-simplex (a tetrahedron) is composed of the four $2$-simplices (triangles) that enclose it and so on. The boundary of a $0$-simplex is the empty set.

\begin{figure}[b!]
  \centering
    \begin{tikzpicture}
    \draw [fill = cyan] (0,0) -- (1,0) -- (1,1) -- (0,1) -- (0,0);
        \draw [fill = cyan] (1,0) -- (2,0) -- (2,1) -- (1,1) -- (1,0);
    \node[inner sep=1pt,circle,draw,fill,label={1}] at (0,1) {};
    \node[inner sep=1pt,circle,draw,fill,label={2}] at (1,1) {};
    \node[inner sep=1pt,circle,draw,fill,label={3}] at (2,1) {};
    \draw[fill] (2,0) circle [radius = 0.05];
    \draw[fill] (1,0) circle [radius = 0.05];
    \draw[fill] (0,0) circle [radius = 0.05];
    \node[below] at (2,0) {4};
    \node[below] at (1,0) {5};
    \node[below] at (0,0) {6};
    \end{tikzpicture}
    \caption{The boundary of this 2-complex is the set of 1-simplices $\{ (12),(23),(34),(45),(56),(61) \}$}
    \label{boundary_fig}
\end{figure}
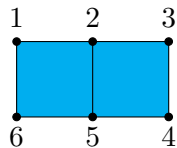

More generally, a set of $k$-simplices has a boundary composed of $(k-1)$-simplices. For example, a pair of triangles welded together along one side has a boundary composed of four $1$-simplices. A pair of squares welded together along one edge is shown in Figure \ref{boundary_fig}. A pair of tetrahedra welded together along one face has a boundary comprised of $6$ triangular faces. This operation of welding simplices together will be denoted with a $+$. Under the operation $+$ the $k$ simplices form a group.

\begin{defn}
    (Simplex Group) Let $s_k$ be the set of all $k$-simplices in a simplicial complex. Let $S_k = P(s_k)$: the powerset of $s_k$. Let $+$ be a binary operation on the elements of $S_k$ defined
    \begin{align*}
        \sigma + \tau = (\sigma \cup \tau) / (\sigma \cap \tau)
    \end{align*}
    for $H,G \in S_k$. Under the operation $+$ the set $S_k$ forms an Abelian group with identity $\emptyset$ called the simplex group.
\end{defn}

It therefore makes sense for each $k$ to define a map $\partial_k$ which sends a set of $k$ simplices to the set of $(k-1)$ simplices which comprise its boundary. It turns out this map is a group homomorphism.

\begin{lemma}
    Let $s_k$ be the set of all $k$-simplices in a simplicial complex. Let $S_k = P(s_k)$. Let $\partial_k : S_k \to S_{k-1}$ be defined 
    \begin{align*}
        \partial_k (\sigma) = s_{k-1} \cap \sum_{\varsigma \in \sigma} \varsigma 
    \end{align*}
    then $\partial_k$ is a homomorphism.
\end{lemma}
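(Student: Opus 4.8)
The plan is to reduce the claim to a parity count modulo $2$, exploiting that $(S_k,+)$ is exactly the $\mathbb{F}_2$-vector space with basis $s_k$: a group homomorphism out of it is the same thing as an $\mathbb{F}_2$-linear map, and once we know how $\partial_k$ acts on basis elements and that it is additive, the homomorphism property follows. First I would unwind the definition on a single $k$-simplex $\varsigma$ (a set of $k+1$ vertices): by the formula $\partial_k(\{\varsigma\})$ is the set of $(k-1)$-simplices $\rho$ with $\rho\subset\varsigma$, i.e.\ the codimension-one faces of $\varsigma$ (these lie in $s_{k-1}$ because the complex is closed under restriction). Here I would flag the one genuinely delicate point: the reading of ``$\sum_{\varsigma\in\sigma}\varsigma$'' in the definition. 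Each summand must be understood as the family of faces of $\varsigma$ and ``$\sum$'' as symmetric difference (the operation $+$), so that intersecting with $s_{k-1}$ extracts precisely the codimension-one faces counted with parity; this is the only place where the statement is notationally, rather than mathematically, subtle.

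Next I would use the distributivity of intersection over symmetric difference, $A\cap(B+C)=(A\cap B)+(A\cap C)$ and its finite iterate $A\cap\sum_i B_i=\sum_i(A\cap B_i)$, to rewrite $\partial_k(\sigma)=\sum_{\varsigma\in\sigma}\partial_k(\{\varsigma\})$, so that $\partial_k$ is ``the symmetric difference of the boundaries of its members''. The homomorphism property then becomes a purely combinatorial identity. For a fixed $(k-1)$-simplex $\rho$, let $n_\sigma(\rho)$ be the number of $\varsigma\in\sigma$ having $\rho$ as a face; then $\rho\in\partial_k(\sigma)$ iff $n_\sigma(\rho)$ is odd. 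Since $\sigma+\tau$ is the symmetric difference, a $k$-simplex lies in $\sigma+\tau$ iff it lies in exactly one of $\sigma,\tau$, so $n_{\sigma+\tau}(\rho)=n_\sigma(\rho)+n_\tau(\rho)-2\,n_{\sigma\cap\tau}(\rho)\equiv n_\sigma(\rho)+n_\tau(\rho)\pmod 2$. Hence $\rho\in\partial_k(\sigma+\tau)$ iff exactly one of $\rho\in\partial_k(\sigma)$, $\rho\in\partial_k(\tau)$ holds, i.e.\ iff $\rho\in\partial_k(\sigma)+\partial_k(\tau)$. As $\rho$ is arbitrary, $\partial_k(\sigma+\tau)=\partial_k(\sigma)+\partial_k(\tau)$; preservation of the identity ($\partial_k(\emptyset)=\emptyset$) is immediate, and inverses come for free since every element of $S_k$ is its own inverse.

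I do not expect a deep obstacle: the mathematical content is just the mod-$2$ count above, and the rest is bookkeeping — fixing the meaning of the sum in the definition (the main point of care), applying the finite distributive law $A\cap\sum_i B_i=\sum_i(A\cap B_i)$ correctly, and noting $\partial_k(\sigma)\subseteq s_{k-1}$ so that $\partial_k$ indeed lands in $S_{k-1}$ (immediate, being an intersection with $s_{k-1}$). If one prefers to avoid element-chasing, the argument repackages cleanly as: for any function $g\colon s_k\to S_{k-1}$ the map $\sigma\mapsto\sum_{\varsigma\in\sigma}g(\varsigma)$ is a homomorphism $(S_k,+)\to(S_{k-1},+)$ — this is the $\mathbb{F}_2$-linearity of summing over an index set in the index set — applied to $g(\varsigma)=\partial_k(\{\varsigma\})$.
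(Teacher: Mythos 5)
Your proof is correct, but it takes a different route from the paper's. The paper argues by a single chain of set identities applied to the defining formula: it writes $\partial_k(\sigma)+\partial_k(\tau)$ as $\bigl(\partial_k(\sigma)\cup\partial_k(\tau)\bigr)/\bigl(\partial_k(\sigma)\cap\partial_k(\tau)\bigr)$, identifies the union and intersection with $s_{k-1}\cap\sum_{\varsigma\in\sigma\cup\tau}\varsigma$ and $s_{k-1}\cap\sum_{\varsigma\in\sigma\cap\tau}\varsigma$, and then merges these into a single sum over $(\sigma\cup\tau)/(\sigma\cap\tau)=\sigma+\tau$. You instead first decompose $\partial_k(\sigma)=\sum_{\varsigma\in\sigma}\partial_k(\{\varsigma\})$ using distributivity of intersection over symmetric difference, and then check the homomorphism identity pointwise by the parity count $n_{\sigma+\tau}(\rho)\equiv n_\sigma(\rho)+n_\tau(\rho)\pmod 2$ — in effect proving $\mathbb{F}_2$-linearity on the basis $s_k$. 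What your version buys is rigour at exactly the delicate spot: the paper's intermediate identities (for instance, equating $\partial_k(\sigma)\cup\partial_k(\tau)$ with the sum over $\sigma\cup\tau$) are not literally true for symmetric differences — a face occurring once in $\sigma$ and once in $\tau$ lies in the union of the boundaries but cancels in the combined sum — and only the full chain, read with the mod-2 conventions, gives the right answer; your element-wise count sidesteps this, and you also correctly flag and resolve the ambiguity in reading $\sum_{\varsigma\in\sigma}\varsigma$ as a symmetric difference of face families. What the paper's version buys is brevity: it never leaves the level of the defining expression and needs no auxiliary counting function. Your closing remark, that $\sigma\mapsto\sum_{\varsigma\in\sigma}g(\varsigma)$ is a homomorphism for any $g:s_k\to S_{k-1}$, is a clean general formulation that the paper does not state.
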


\begin{proof}
    The proof proceeds directly
    \begin{align*}
        \partial_k (\sigma) + \partial_k (\tau) = \big( \partial_k (\sigma) \cup \partial_k (\tau) \big) / \big( \partial_k (\sigma) \cap \partial_k (\tau) \big) \\
        = \Bigg( s_{k-1} \cap \sum_{\varsigma \in (\sigma \cup \tau)} \varsigma \Bigg) \Bigg/ \Bigg( s_{k-1} \cap \sum_{\varsigma \in ( \sigma \cap \tau )} \varsigma \Bigg) \\
        = s_{k-1} \cap \sum_{\varsigma \in (\sigma \cup \tau) / (\sigma \cap \tau)} \varsigma \\
        = s_{k-1} \cap \sum_{\varsigma \in (\sigma + \tau)} \varsigma \\
        = \partial_k (\sigma + \tau).
    \end{align*}
\end{proof}

A set of $k$ simplices that have an empty boundary is called a cycle. For example a set of $1$-simplices arranged in a pentagon form a loop, which has no $0$ dimensional boundary. Similarly a set of $2$-simplices welded together to form a cube form a closed surface, which has no $1D$ boundary. Formally, a set of $k$ simplices is a cycle if the boundary map $\partial_k$ sends the set to the empty set $\emptyset$. This motivates the following definition.
\begin{defn}
    (Cycle Group) Let $S$ be a simplicial complex. The group of $k$-cycles of $S$ is
    \begin{align*}
        Z_k = \text{ker}(\partial_k).
    \end{align*}
\end{defn}
Since the boundary map returns boundaries, the image of the $(k+1)$ boundary map is the group of boundaries among the group of $k$-simplices. This is formalised in the following definition.
\begin{defn}
    (Boundary Group) Let $S$ be a simplicial complex. The group of $k$-boundaries of $S$ is 
    \begin{align*}
        B_k = \text{Im}(\partial_{k+1})
    \end{align*} 
\end{defn}

We say that 2 cycles are equivalent if we can weld them together to form a boundary. Under this equivalence relation, the equivalence class of $k$-cycles can be interpreted as a $(k+1)$-dimensional hole. For example an equivalence class of $1$-cycles represents a $2$ dimensional hole - exactly the type of hole that distinguishes the torus from the sphere. An equivalence class of $2$-cycles can be interpreted as a $3$ dimensional void, like the volume enclosed by the torus, or the volume enclosed by the sphere. The equivalence classes of cycles form a group as defined below.

\begin{defn}
    (Homology Group) The $k$th homology group is the quotient of the cycles group by the boundary group:
    \begin{align*}
        H_k = \text{ker}(\partial_k) / \text{Im}(\partial_{k+1}) = Z_k / B_k.
    \end{align*}
\end{defn}

\begin{defn}
    (Betti Numbers) The $k$th Betti number is
    \begin{align*}
        \beta_k = \text{rank}(H_k).
    \end{align*}
\end{defn}

The Betti numbers $\beta_0, \beta_1 , \beta_2, ...$ of a simplicial complex characterise its homology. $\beta_0$ is the number of connected components, $\beta_1$ is the number of $2$ dimensional holes, $\beta_2$ is the number of $3$ dimensional voids, and the remaining $\beta_k$ represent higher dimensional voids. The Betti numbers of a simplicial complex are the coefficients of a polynomial called the Poincar\'{e} Polynomial.

\begin{defn}
    (Poincar\'{e} polynomial) The Poincar\'{e} polynomial of a simplicial complex $S$ is defined
    \begin{align*}
        p[X] = \sum_{i} \beta_i X^i.
    \end{align*}
\end{defn}

\subsubsection{Persistent homology}

In the previous section we introduced the homology group of a single simplicial complex - but we can do better! We will now present the persistent homology group which characterises the homology of a persistence complex that persists over a range of $\epsilon$.

\begin{defn}
    (Induced homomorphism) Let $i < j$ and $\mathcal{R}_{\epsilon_i}$, $\mathcal{R}_{\epsilon_j}$ be a pair of Rips complexes from the same Rips persistence complex. Let $Z^i_k$ and $Z^j_k$ be the $k$-cycle groups of the complex $\mathcal{R}_{\epsilon_i}$ and $\mathcal{R}_{\epsilon_j}$ respectively. Let $\iota : Z^i_k \to Z^j_k$ be the inclusion map. Then $\iota_* : H^i_k \to H^j_k$ is called the induced homomorphism of $\iota$.
\end{defn}

\begin{lemma}
    Let $x$ be a $k$-cycle in $\mathcal{R}_{\epsilon_i}$ so $x \in Z^i_k$. Let $[x]_i$ denote the equivalence class of $x$ under the quotient relation
    \begin{align*}
        H^i_k = Z^i_k / B^i_k.
    \end{align*}
    Then the induced homomorphism $\iota_*$ satisfies 
    \begin{align*}
        [ \iota(x) ]_j = \iota_*([x]_i).
    \end{align*}
\end{lemma}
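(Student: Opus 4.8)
The plan is to show that the rule $[x]_i \mapsto [\iota(x)]_j$ determines a well-defined group homomorphism $H^i_k \to H^j_k$; since the assertion of the lemma is precisely that this rule is compatible with the quotient maps, establishing well-definedness \emph{is} the content, and one then simply names the resulting map $\iota_*$.

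First I would record that $\iota$ is a chain map, i.e. $\partial_k \circ \iota = \iota \circ \partial_k$ on $k$-chains of $\mathcal{R}_{\epsilon_i}$. This is the only step needing care. Note $\iota$ is essentially the identity on underlying sets of simplices: a chain of $\mathcal{R}_{\epsilon_i}$ is a set of $k$-simplices, each of which is also a $k$-simplex of $\mathcal{R}_{\epsilon_j}$, so $\iota$ is additive for $+$ trivially. Because $\mathcal{R}_{\epsilon_i}$ is closed under restriction, every $(k-1)$-face of a $k$-simplex of $\mathcal{R}_{\epsilon_i}$ already lies among the $(k-1)$-simplices of $\mathcal{R}_{\epsilon_i}$; hence the boundary of such a simplex computed inside $\mathcal{R}_{\epsilon_i}$ agrees with its boundary computed inside $\mathcal{R}_{\epsilon_j}$. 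Carrying this through the symmetric-difference sum that defines $\partial_k$ on general chains (using the homomorphism property of $\partial_k$ proved earlier) yields $\partial_k(\iota(\sigma)) = \iota(\partial_k(\sigma))$ for every chain $\sigma$.

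From the chain-map identity the two inclusions I need follow at once. If $x \in Z^i_k$, then $\partial_k(\iota(x)) = \iota(\partial_k x) = \iota(\emptyset) = \emptyset$, so $\iota(x) \in Z^j_k$ and $\iota$ restricts to a map $Z^i_k \to Z^j_k$. If $x \in B^i_k$, say $x = \partial_{k+1} y$ with $y$ a $(k+1)$-chain of $\mathcal{R}_{\epsilon_i}$, then $\iota(x) = \partial_{k+1}(\iota(y))$ with $\iota(y)$ a $(k+1)$-chain of $\mathcal{R}_{\epsilon_j}$, so $\iota(x) \in B^j_k$; thus $\iota(B^i_k) \subseteq B^j_k$.

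Finally I would check that these two facts let $[x]_i \mapsto [\iota(x)]_j$ descend to the quotients. If $[x]_i = [x']_i$ then $x + x' \in B^i_k$ (in this group every element is its own inverse, so $x - x' = x + x'$), hence $\iota(x) + \iota(x') = \iota(x + x') \in B^j_k$, so $[\iota(x)]_j = [\iota(x')]_j$ and the map is well defined. Additivity of $\iota$ then gives $\iota_*([x]_i + [x']_i) = [\iota(x+x')]_j = [\iota(x)]_j + [\iota(x')]_j$, so the map is a homomorphism. Taking $\iota_*$ to be this map gives $[\iota(x)]_j = \iota_*([x]_i)$, as claimed. I expect no genuine obstacle: the argument is the standard ``functoriality of homology'' computation, and the only thing to stay vigilant about is matching it to the paper's set-theoretic definitions of $\partial_k$ and of the chain addition $+$, which the locality remark in the second paragraph handles.
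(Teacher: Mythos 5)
Your proposal is correct, but it is worth noting that the paper does not actually prove this lemma at all: its ``proof'' consists solely of a citation to \cite{Ghrist2014}. What you have written is the standard functoriality-of-homology argument, spelled out against the thesis's own set-theoretic conventions (chains as subsets of $s_k$, addition as symmetric difference, $\partial_k(\sigma) = s_{k-1} \cap \sum_{\varsigma \in \sigma}\varsigma$). Your key observation — that because a Rips complex is closed under restriction, every $(k-1)$-face of a $k$-simplex of $\mathcal{R}_{\epsilon_i}$ already lies in $s^i_{k-1}$, so the intersection with $s^i_{k-1}$ versus $s^j_{k-1}$ in the definition of $\partial_k$ makes no difference on chains of $\mathcal{R}_{\epsilon_i}$ — is exactly what makes the chain-map identity $\partial_k \circ \iota = \iota \circ \partial_k$ hold in this formalism, and from there the facts $\iota(Z^i_k) \subseteq Z^j_k$, $\iota(B^i_k) \subseteq B^j_k$, well-definedness on the quotient (using that every element is its own inverse), and the homomorphism property all follow as you say. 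So your argument is sound and self-contained; what it buys over the paper's approach is an explicit verification that the abstract functoriality statement from the literature really does go through for the slightly nonstandard symmetric-difference chain groups used here, which the paper takes on faith.
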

\begin{proof}
    \cite{Ghrist2014}.
\end{proof}

\begin{defn}
    (Persistent Homology Group) Let $\mathcal{R}$ be a Rips persistence complex. The $(i,j)$ persistent $k$ homology group of $\mathcal{R}$ is
    \begin{align*}
        H^{i \to j}_k = \text{Im}(\iota_*).
    \end{align*}
\end{defn}

We should take a moment to celebrate. The $(i,j)$ persistent $k$ homology group $H^{i \to j}_k$ represents exactly those $(k+1)$ dimensional holes that persist over the interval $[\epsilon_i, \epsilon_j]$. For example $H^{1 \to 2}_2$ contains all $3$-dimensional voids that persist from $\epsilon = \epsilon_1$ to $\epsilon = \epsilon_2$. This group is exactly what we want.

\subsubsection{Computing the persistent homology}

We have presented an algebraic derivation of the persistent homology group. This section will provide some details about how to determine the group in practice. The key observation is that we can convert the group theory problem into a linear algebra problem.
Let $S$ be an $n$-simplex (and therefore a simplical complex).
Let the $k$th simplex in $S$ be the $k$th canonical unit vector of the $n$ dimensional vector space over the field of $2$ elements $\mathbb{F}^n_2$. i.e. the first simplex is $(1,0,0, ... , 0)$, the second is $(0,1,0, ... , 0)$ and so on.
It turns out the $k$th simplex group $S_k$ is isomorphic to a subspace of $\mathbb{F}^n_2$. The boundary map $\partial_k : S_k \to S_{k-1}$ turns out to be a linear map between subspaces. The cycle group $Z_k$ is isomorphic to a boundary space determined by computing $\text{ker}(\partial_k)$ using the standard techniques of linear algebra. The boundary group $B_k$ is just the column span of the matrix representation of $\partial_k$. The quotient
\begin{align*}
    H_k = Z_k / B_k
\end{align*}
is isomorphic to the column span of the vectors that are not in the span of the $B_k$. Using this vector space representation, we are ready to outline an algorithm for computing the persistent homology group. Suppose we have an $n$ dimensional data set $X$ comprising $m$ points. Then Algorithm \ref{persistent_homology_pseudocode} is pseudocode for computing the Rips persistent homology groups of the data set $X$. 

\begin{algorithm}
\caption{Compute $H^{i \to j}_k$}\label{persistent_homology_algorithm}
\begin{algorithmic}[1]
\State assign to every simplex in $P(X)$ a unique natural number
\State compute the distance between every pair of points in $X$
\State list the distances in ascending order
\State create a filtration of Rips complexes
\State \textbf{for} each $j$ indexing the $j$th Rips complex in the filtration
\Indent
    \State partition the simplices of the Rips complex into simplices of equal dimension $k$
    \State \textbf{for} all dimensions $k > 0$ 
    \Indent
        \State \textbf{for} each simplex in the set
        \Indent
            \State retrieve the unique natural number $\ell$ of the simplex
            \State set the $\ell$th column of matrix representing $\partial_k$ as the vector representing the boundary of the
            $\ell$th simplex
            \EndIndent
        \State find a basis for the vector space isomorphic to $Z_k = \text{ker}(\partial_k)$
        \State find a basis for the vector space isomorphic to $B_{k-1} = \text{Im}(\partial_k)$ 
        \State \textbf{if} $k > 1$
            \Indent \State find a basis for the vector space isomorphic to $H^j_{k-1} = Z_{k-1} / B_{k-1}$
            \EndIndent
        \State \textbf{for} all $i < j$
        \Indent
            \State \textbf{for} each vector $v$ in the basis of $H^i_k$
            \Indent
                \State \textbf{if} $v$ is in the span of $B^j_k$
                \Indent 
                    \State $v$ is not in the $(i,j)$ persistent $k$ homology group
                    \EndIndent
                \State \textbf{else}
                \Indent 
                    \State $v$ is in the $(i,j)$ persistent $k$ homology group
                \EndIndent
            \EndIndent
        \EndIndent
    \EndIndent
\EndIndent
\end{algorithmic}
\label{persistent_homology_pseudocode}
\end{algorithm}
This algorithm is na\"{i}ve, and if properly implemented would surely run slower than more efficient algorithms like Ripser developed by \cite{ctralie2018ripser}.

Computing the persistent homology group in practice is a confusing ordeal, so we will offer a hopefully illustrative example. Consider $6$ nodes arranged in a regular hexagon, and contemplate the resultant Rips filtration. This is illustrated in Figure \ref{example_hexagon_complex}. Tables \ref{del_2} and \ref{del_1} show the matrix representations $D_2$ and $D_1$ of the boundary maps $\partial_2$ and $\partial_1$. We can find a basis for the column span of $D_2$ and null space of $D_1$ and determine that the homology group
\begin{align*}
    H^{2}_1 = \text{ker}(\partial_1) / \text{Im}(\partial_2)
\end{align*}
is isomorphic to the space spanned by those vectors in the null space of $D_1$ not in the column span of $D_2$. We shall call this list of vectors $\alpha$. We can similarly compute a basis for a vector space isomorphic to $H^{3}_1$, which we will call $\beta$. The persistent homology group $H^{2 \to 3}_1$ is isomorphic to the span of those vectors $\alpha$ not in the span of the vectors in $\beta$. 

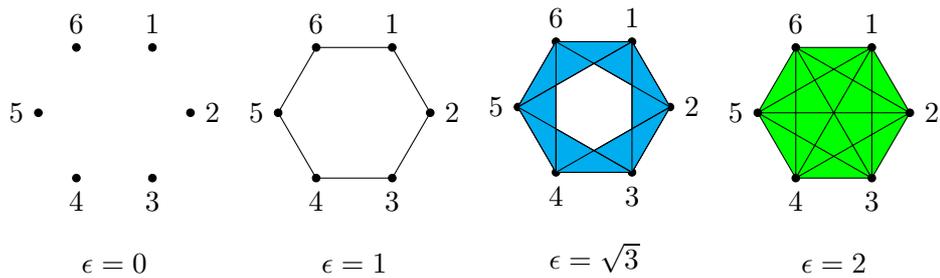
\begin{figure}
    \centering
        \begin{tikzpicture}
   \newdimen\R
   \R=1.0cm
   \draw (0:\R) \foreach \x in {60,120,...,360} {  (\x:\R) };
   \foreach \x/\l/\p in
     { 60/{1}/above,
      120/{6}/above,
      180/{5}/left,
      240/{4}/below,
      300/{3}/below,
      360/{2}/right
     }
     \node[inner sep=1pt,circle,draw,fill,label={\p:\l}] at (\x:\R) {};
     \node at (0,-2) {$\epsilon = 0$};
    \end{tikzpicture}
    \begin{tikzpicture}
   \newdimen\R
   \R=1.0cm
   \draw (0:\R) \foreach \x in {60,120,...,360} {  -- (\x:\R) };
   \foreach \x/\l/\p in
     { 60/{1}/above,
      120/{6}/above,
      180/{5}/left,
      240/{4}/below,
      300/{3}/below,
      360/{2}/right
     }
     \node[inner sep=1pt,circle,draw,fill,label={\p:\l}] at (\x:\R) {};
     \node at (0,-2) {$\epsilon = 1$};
    \end{tikzpicture}
    \begin{tikzpicture}
   \newdimen\R
   \R=1.0cm
   \draw (0:\R) \foreach \x in {60,120,...,360} {  -- (\x:\R) };
   \foreach \x/\l/\p in
     { 60/{1}/above,
      120/{6}/above,
      180/{5}/left,
      240/{4}/below,
      300/{3}/below,
      360/{2}/right
     }
     \node[inner sep=1pt,circle,draw,fill,label={\p:\l}] at (\x:\R) {};
     \draw [fill = cyan] (0.5,0.866) -- (-0.5,0.866) -- (1,0) -- (0.5,0.866);
     \draw [fill = cyan] (0.5,0.866) -- (1,0) -- (0.5,-0.866) -- (0.5,0.866);
     \draw [fill = cyan] (1,0) -- (0.5,-0.866) -- (-0.5,-0.866) -- (1,0);
     \draw [fill = cyan] (0.5,-0.866) -- (-0.5,-0.866) -- (-1,0) -- (0.5,-0.866);
     \draw [fill = cyan] (-0.5,-0.866) -- (-1,0) -- (-0.5,0.866) -- (-0.5,-0.866);
    \draw [fill = cyan] (-1,0) -- (-0.5,0.866) -- (0.5,0.866) -- (-1,0);
    \draw (-1,0) -- (0.5,0.866) -- (0.5,-0.866) -- (-1,0);
     \draw (-0.5,0.866) -- (1,0) -- (-0.5,-0.866) -- (-0.5,0.866);
     \node at (0,-2) {$\epsilon = \sqrt{3}$};
    \end{tikzpicture}
        \begin{tikzpicture}
   \newdimen\R
   \R=1.0cm
   \draw [fill = green] (0:\R) \foreach \x in {60,120,...,360} {  -- (\x:\R) };
   \foreach \x/\l/\p in
     { 60/{1}/above,
      120/{6}/above,
      180/{5}/left,
      240/{4}/below,
      300/{3}/below,
      360/{2}/right
     }
     \node[inner sep=1pt,circle,draw,fill,label={\p:\l}] at (\x:\R) {};
     \draw (-1,0) -- (0.5,0.866) -- (0.5,-0.866) -- (-1,0);
     \draw (-0.5,0.866) -- (1,0) -- (-0.5,-0.866) -- (-0.5,0.866);
     \draw (-0.5,0.866) -- (0.5,-0.866);
     \draw (0.5,0.866) -- (-0.5,-0.866);
     \draw (-1,0) -- (1,0);
     \node at (0,-2) {$\epsilon = 2$};
    \end{tikzpicture}
    \caption{A filtration of Rips complexes $\mathcal{R}_0 \xhookrightarrow{} \mathcal{R}_1 \xhookrightarrow{} \mathcal{R}_{\sqrt{3}} \xhookrightarrow{} \mathcal{R}_{2}.$
    }
    \label{example_hexagon_complex}
\end{figure}

\begin{table}
\begin{center}
 \begin{tabular}{||c c c c c c c||} 
 \hline
  & (123) & (234) & (345) & (456) & (561) & (612) \\ [0.5ex] 
 \hline\hline
 (12) & 1 & 0 & 0 & 0 & 0 & 1 \\ 
 (23) & 1 & 1 & 0 & 0 & 0 & 0 \\
 (34) & 0 & 1 & 1 & 0 & 0 & 0 \\
 (45) & 0 & 0 & 1 & 1 & 0 & 0 \\
 (56) & 0 & 0 & 0 & 1 & 1 & 0 \\ 
 (61) & 0 & 0 & 0 & 0 & 1 & 1 \\ 
  (13) & 1 & 0 & 0 & 0 & 0 & 0 \\ 
  (35) & 0 & 0 & 1 & 0 & 0 & 0 \\ 
  (51) & 0 & 0 & 0 & 0 & 1 & 0 \\ 
  (62) & 0 & 0 & 0 & 0 & 0 & 1 \\ 
  (24) & 0 & 1 & 0 & 0 & 0 & 0 \\ 
  (46) & 0 & 0 & 0 & 1 & 0 & 0 \\ 
  \hline
\end{tabular}
\end{center}
\caption{For $\sqrt{3} < \epsilon < 2$, the Rips complex in Figure \ref{example_hexagon_complex} has $6$ traingular faces (2-simplices) each denoted by listing the 3 nodes they contain, e.g. (123). The boundary map $\partial_2$ sends a face to a set of 3 edges ($1$-simplices). For example $\partial_2$ sends the face (123) to the set of edges \{ (12),(23),(13) \}. We can therefore represent $\partial_2$ as a matrix with $ij$th entry the $ij$th entry of this table.}
\label{del_2}
\end{table}

\begin{table}
\begin{center}
 \begin{tabular}{||c c c c c c c c c c c c c||}
 \hline
  & (12) & (23) & (34) & (45) & (56) & (61) & (13) & (35) & (51) & (62) & (24) & (46) \\ [0.5ex] 
 \hline\hline
 (1) & 1 & 0 & 0 & 0 & 0 & 1 & 1 & 0 & 1 & 0 & 0 & 0 \\ 
 (2) & 1 & 1 & 0 & 0 & 0 & 0 & 0 & 0 & 0 & 1 & 1 & 0 \\ 
 (3) & 0 & 1 & 1 & 0 & 0 & 0 & 1 & 1 & 0 & 0 & 0 & 0 \\ 
 (4) & 0 & 0 & 1 & 1 & 0 & 0 & 0 & 0 & 0 & 0 & 1 & 1 \\ 
 (5) & 0 & 0 & 0 & 1 & 1 & 0 & 0 & 1 & 1 & 0 & 0 & 0 \\ 
 (6) & 0 & 0 & 0 & 0 & 1 & 1 & 0 & 0 & 0 & 1 & 0 & 1 \\ 
 \hline
\end{tabular}
\end{center}
\caption{For $\sqrt{3} < \epsilon < 2$, the Rips complex in Figure \ref{example_hexagon_complex} has $12$ edges (1-simplices) each denoted by listing the 2 nodes they contain, e.g. (12). The boundary map $\partial_1$ sends an edge to a set of 2 nodes ($0$-simplices). For example $\partial_1$ sends the face (12) to the set of nodes \{ (1),(2) \}. We can therefore represent $\partial_1$ as a matrix with $ij$th entry the $ij$th entry of this table.}
\label{del_1}
\end{table}
At any Rips complex in the filtration, a basis element that has not featured in any previous complex may appear in an associated homology group. This basis element represents a hole or void. At some later point in the filtration, the element representing the hole or void will disappear. We can record the ordered pair of $\epsilon$-values at which the void is born and dies. Then, for a particular $k$, we can take all birth-death pairs for persistent homology groups of order $k$ and plot them on a 2-dimensional plane, called a persistence diagram.

\subsection{Persistent homology of the Lorenz attractor}

We compared the homology groups of the Lorenz attractor to the persistent homology groups of the autonomous and driven attractors.
We followed the lead of \cite{GARLAND201649} who computed the persistent homology of the Lorenz system reconstructed from a sequence of 1D observations of a Lorenz trajectory using the delay observation map described in Takens' Theorem. The authors used the open source software Javaplex created by \cite{Javaplex} to find the Witness Complex (outside the scope of this thesis) for the delay embedded Lorenz attractor and computed the homology of the complex. They discuss a few subtleties that arise, in particular that the Lorenz attractor is a fractal, whose structure cannot be reconstructed exactly from any finite number of sample points. The authors therefore satisfied themselves by approximating the Lorenz attractor with a branched manifold model presented by \cite{PMIHES_1979__50__73_0} which has the homology of the \emph{double loop} (which is topologically a drawing of the number 8). We made the same approximation, and expected to find that the application of persistent homology to the Lorenz system, driven ESN dynamics, and autonomous ESN dynamics would reveal that all three have the double loop homology groups. In particular the persistence diagrams of these three systems would exhibit a pair of $H_1$ persistent homology groups floating well above the diagonal. To verify this, we produced persistence diagrams using the open source software Ripser produced by \cite{ctralie2018ripser} and plotted the results in Figure \ref{Persistence_fig}.

In the Figure we plot the $H_1$ persistence diagrams of the driven ESN dynamics, autonomous ESN dynamics, and Lorenz dynamics as blue circles, red downward triangles, and purple upward triangles. Each symbol represents a homology group. The $x$-coordinate of the symbol represents the length $\epsilon$ at which a homology group appears (is born) and the $y$-coordinate represents the length at which the homology group vanishes (dies). All symbols are above the diagonal because all homology groups die after they are born. Symbols close to the diagonal represent homology groups that die shortly after they are born, and are the result of noise, or an artefact of finite data. Symbols far above the diagonal represent homology groups that persist long after they are born and therefore represent persistent homology groups.

We can see a pair of blue circles, red downward triangles, and purple upward triangles floating well above the diagonal, suggesting the existence of 2 persistent homology groups for the driven ESN dynamics, autonomous ESN dynamics, and Lorenz dynamics. This is consistent with our expectation that the driven ESN dynamics, autonomous ESN dynamics, and Lorenz dynamics all adopt the topology of the double loop, which of course has 2 holes. The blue circles, red downward triangles, and purple upward triangles are not exactly in the same location because the driven ESN dynamics, autonomous ESN dynamics, and Lorenz dynamics are not isometric.

The reader may wonder why we would use persistent homology to show that the Lorenz system, driven ESN dynamics, and autonomous ESN dynamics all have the homology of the double loop when this can clearly be seen in Figure \ref{fixed_point_full.fig}.  The homology of a 3D system is usually apparent from a plot, but persistent homology can reveal the holes, voids and higher dimensional hypervoids of high dimensional systems that cannot be easily visualised. For example \cite{MULDOON19931} computed the homology of a delay embedded time series from a fluid dynamics experiment, which could in general be of much higher dimension.

\begin{figure}
  \centering
    \includegraphics[width=0.9\textwidth]{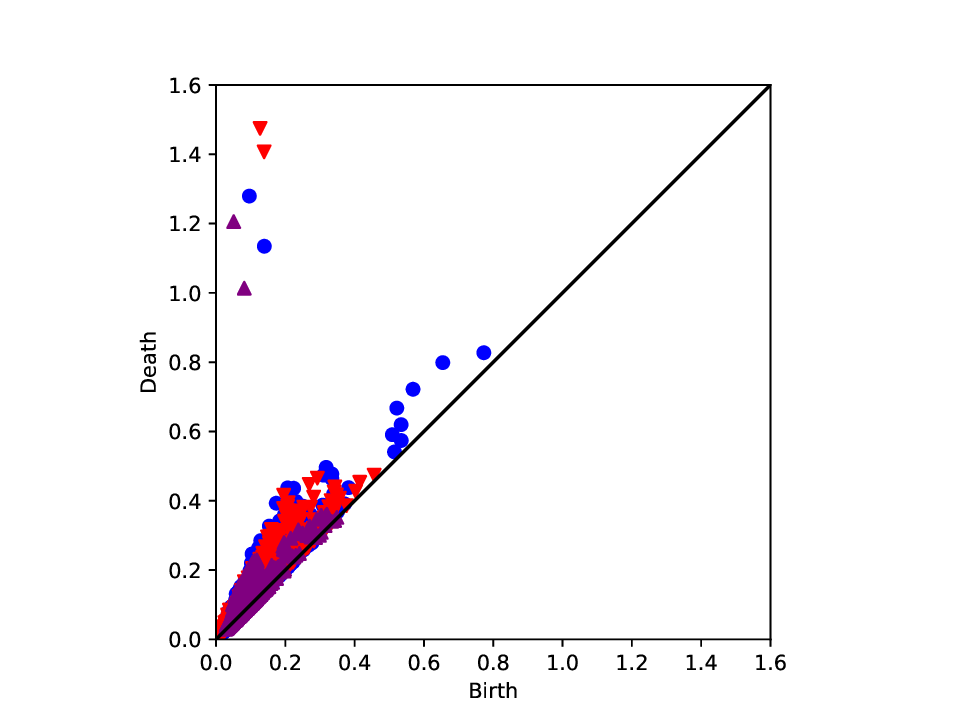}
    \caption{We have plotted the $H_1$ persistence diagrams of the driven ESN dynamics, autonomous ESN dynamics, and Lorenz dynamics as blue circles, red downward triangles, and purple upward triangles. If a homology group appears (is born) at time $\epsilon_0 > 0$ and then vanishes (dies) at time $\epsilon_1 > \epsilon_0 > 0$ then we plot this homology group at the point $(\epsilon_0,\epsilon_1) \in \mathbb{R}^2$ on a so-called persistence diagram. Points that are close to the diagonal die shortly after they are born, and are therefore transient homology groups, which we view as artefacts. Points far from the diagonal die long after they are born and are therefore persistent homology groups which we interpret as truly describing the topology of the underlying object. We can see that each of these 3 objects has a pair of points floating well above the diagonal, suggesting each has 2 holes. This is consistent with our expectation that all three adopt the topology of the double loop.}
    \label{Persistence_fig}
\end{figure}

\chapter{Generalised Synchronisation (GS)} %Include abbreviation in the title so that the abbreviation appears in the contents list at the top of the document.

\label{chapter::SSMs}

\section{Background}

In this Chapter we will examine the generalised synchronisation (GS) i.e. the map $f$ from an underlying dynamical system $(M,\phi)$ to the reservoir space $\mathbb{R}^N$. GSs appear in the seminal papers by \cite{PhysRevE.51.980}, \cite{PhysRevLett.76.1816}, \cite{doi:10.1063/1.166278}, \cite{stark_1999}, \cite{BOCCALETTI20021}, and a recent paper by \cite{doi:10.1080/00107514.2017.1345844}. Work on GS appears broadly in mathematics and physics \citep{PhysRevLett.64.821}, \citep{PhysRevA.44.2374}, \citep{doi:10.1080/00107514.2017.1345844}, and extensively in reservoir computing \citep{doi:10.1063/1.5022276}, \citep{Verzelli}, \citep*{chaos_on_compacta}, \citep{doi:10.1063/1.5128898}.

For the purpose of this thesis we will define the GS (in discrete time) in terms of a drive and response system.

\begin{defn}
    (Drive and response systems) Let $M$ be a smooth manifold and $\phi \in \text{Diff}^1(M)$ a diffeomorphism. This is called the \emph{drive system}. Let $G : \mathbb{R}^N \times M \to \mathbb{R}^N$, and define the system
    \begin{align*}
        x_{k+1} = G(x_k,\phi^k(m_0)).
    \end{align*}
    This is called the \emph{response system}.
\end{defn}

A reservoir computer is essentially a drive-response system where the underlying process $(M,\phi)$ is the drive system, and the evolution of the reservoir states (the reservoir dynamics) is the response system. We are interested in finding a map $f : M \to \mathbb{R}^N$ relating the drive to the response, and this is called a generalised synchronisation (GS).

\begin{defn}
    (Generalised Synchronisation) 
    Let $V \subseteq \mathbb{R}^N$.
    A generalised synchronisation (GS) between the drive system and response system is a map $f : M \to \mathbb{R}^N$ such that for any $x_0 \in V \subseteq \mathbb{R}^N$ and any $m_0 \in M$ we have $f\circ\phi^{k}(m_0) = G(x_k,\phi^{k}(m_0))$ for all $k \in \mathbb{Z}$. If $V$ is a proper subset of $\mathbb{R}^N$ then $f$ is called a \emph{local GS}, and if $V = \mathbb{R}^N$ then $f$ is called a \emph{global GS}.
\end{defn}

We will see in section \ref{section::ESP} that the concept of GS is closely related the \emph{Echo State Property} (ESP), which was coined by \citep{Jaeger2001}, and is discussed extensively in the reservoir computing world. The name evokes the image of past observations influencing the present moment with a fading intensity - just as the intensity of an echo fades with time.

\section{Echo State Property}
\label{section::ESP}

The existence of the GS $f$ is closely related to the Echo State Property (ESP) first introduced in the seminal paper by \cite{Jaeger2001}. There are several different formulations of the ESP, including in \cite{Jaeger2001}, \cite{YILDIZ20121}, and \cite{GRIGORYEVA2018495}. We will proceed with one inspired by \cite{YILDIZ20121} for now. To be as general as possible we introduce an arbitrary reservoir map $F : \mathbb{R}^N \times \mathbb{R}^d \to \mathbb{R}^N$ which reduces to an ESN as a special case when 
\begin{align*}
    F(x,z) = \sigma(Ax + Cz + b).
\end{align*}
\begin{defn}
    (Global Echo State Property) We say that a reservoir map $F: \mathbb{R}^N \times \mathbb{R}^d \to \mathbb{R}^N$ has the \emph{global ESP} if, given any sequence of inputs $(z_k \in \mathbb{R}^d)_{k \in \mathbb{N}}$, and initial reservoir states $x_0,y_0 \in \mathbb{R}^N$ the sequences 
    \begin{align*}
        x_{k+1} = F(x_k,z_k) \qquad \text{and} \qquad y_{k+1} = F(y_k,z_k)
    \end{align*}
    satisfy $\lVert x_{k+1} - y_{k+1} \rVert \to 0$ as $k \to \infty$.
\end{defn}

We can ensure that a reservoir map $F$ has the global ESP if $F$ is globally state contracting.

\begin{defn}
    A reservoir map $F:\mathbb{R}^N \times \mathbb{R}^d \to \mathbb{R}^N$ is called globally state contracting if there exists a $c \in (0,1)$ such that for any $x,y \in \mathbb{R}^N$ and $z \in \mathbb{R}^d$ it follows
    \begin{align*}
        \lVert F(x,z) - F(y,z) \rVert \leq c\lVert x - y \rVert.
    \end{align*}
\end{defn}

\begin{theorem}
    If a reservoir map $F:\mathbb{R}^N \times \mathbb{R}^d \to \mathbb{R}^N$ is globally state contracting, then $F$ has the global ESP.
\end{theorem}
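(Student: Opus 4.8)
The plan is to exploit the state-contraction hypothesis directly through an elementary induction on the time index, reducing the statement to the elementary fact that a geometric sequence with ratio in $(0,1)$ tends to zero.

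First I would fix an arbitrary input sequence $(z_k)_{k \in \mathbb{N}}$ and arbitrary initial reservoir states $x_0, y_0 \in \mathbb{R}^N$, and form the two driven trajectories $x_{k+1} = F(x_k, z_k)$ and $y_{k+1} = F(y_k, z_k)$. The crucial point is that at every step the two trajectories are driven by the \emph{same} input $z_k$, so the defining inequality of a globally state contracting map applies with that common second argument:
\[
    \lVert x_{k+1} - y_{k+1} \rVert = \lVert F(x_k, z_k) - F(y_k, z_k) \rVert \le c \lVert x_k - y_k \rVert .
\]
Next I would iterate this bound: a one-line induction on $k$ yields $\lVert x_k - y_k \rVert \le c^k \lVert x_0 - y_0 \rVert$ for all $k \ge 0$. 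Since $c \in (0,1)$ we have $c^k \to 0$, and therefore $\lVert x_k - y_k \rVert \to 0$ as $k \to \infty$; as $(z_k)$, $x_0$ and $y_0$ were arbitrary, this is precisely the global ESP.

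There is no real obstacle here: the lemma is a clean consequence of a Banach-type contraction estimate, and the only point worth flagging is that the constant $c$ in the definition of \emph{globally state contracting} is uniform both in the state variables and in the input variable, which is exactly what makes the single inequality above applicable at every step irrespective of which $z_k$ is fed in. (Were the contraction rate allowed to depend on $z$, or to hold only locally in state space, one would first need to extract a uniform bound, but that complication does not arise under the stated hypothesis.) I would also note in passing that the argument delivers more than asymptotic convergence: it gives an explicit exponential rate, $\lVert x_k - y_k \rVert = O(c^k)$.
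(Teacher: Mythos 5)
Your argument is correct and is essentially identical to the paper's own proof: both apply the uniform contraction inequality at each step with the common input $z_k$, iterate to obtain $\lVert x_k - y_k \rVert \leq c^k \lVert x_0 - y_0 \rVert$, and conclude from $c \in (0,1)$ that the difference tends to zero. Your additional remarks about the uniformity of $c$ and the explicit exponential rate are accurate but go beyond what the paper records.
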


\begin{proof}
    Consider that
    \begin{align*}
        \lVert x_{k+1} - y_{k+1} \rVert = \lVert F(x_k,z_k) - F(y_k,z_k) \rVert
        \leq c\lVert x_k - y_k \rVert
    \end{align*}
    so we can show by recursion that
    \begin{align*}
        \lVert x_{k+1} - y_{k+1} \rVert \leq c^k \lVert x_0 - y_0 \rVert
    \end{align*}
    so $\lVert x_{k+1} - y_{k+1} \rVert \to 0$ as $k \to \infty$.
\end{proof}

With this, we are ready to introduce our first major result. Suppose the inputs are a sequence of scalar observations from a dynamical system i.e. $z_k = \omega\circ\phi^{k}(m_0)$ where 
\begin{itemize}
    \item $m_0 \in M$ is an initial point on a smooth compact $q$-manifold $M$.
    \item $\phi \in \text{Diff}^1(M)$ is a smooth diffeomorphism on $M$
    \item $\omega \in C^1(M,\mathbb{R})$ is a smooth observation function.
\end{itemize}
Suppose further that $F$ is globally state contracting. Then, there is a continuously differentiable GS $f \in C^1(M,\mathbb{R}^N)$ synchronising the drive system system (represented by $\phi$) to dynamics in the reservoir space $\mathbb{R}^N$.

\section{A theorem for global GS}

Before we state the theorem, we will need some formalities. First we insist that the manifold $M$ is compact, connected, second countable and Hausdorff so that it can be endowed with a Riemannian metric $g$ \citep{Riemannian_Geometry}. Then for any $f \in C^1(M,\mathbb{R}^N)$ we define
\begin{align*}
    \lVert Df \rVert_{\infty} = \sup_{m \in M} \lVert Df(m) \rVert_2, \qquad \lVert Df(m) \rVert_2 = \sup_{v \in T_m M / \{0\}} \frac{\lVert Df(m) v \rVert_2}{\sqrt{g(m)(v,v)}}
\end{align*}
with $T_m M$ the tangent space of $M$ at $m$ and $Df(m)$ the differential of $f$ at $m$. Analogously for $\phi \in \text{Diff}^1(M)$ we define
\begin{align*}
    \lVert T\phi \rVert_{\infty} = \sup_{m \in M} \lVert T_m\phi \rVert, \qquad \lVert T_m\phi \rVert = \sup_{v \in T_m M / \{0\} } \sqrt{\frac{g(\phi(m))(T_m\phi v ,T_m\phi v)}{g(m)(v,v)}}
\end{align*}
where $T_m \phi$ is the tangent map for $\phi$ at $m$, and $(v,w)$ is the inner product on $T_m$. 

\begin{theorem}
    (\cite*{chaos_on_compacta}) Let $M$ be a compact, smooth, $q$-manifold (i.e a $q$-dimensional manifold) and $\phi \in \text{Diff}^1(M)$. Let $\omega \in C^1(M,\mathbb{R}^d)$ be the observation function on $M$. Suppose that the reservoir map $F \in C^2(\mathbb{R}^N \times \mathbb{R}^d, \mathbb{R}^N)$ is globally state contracting. Furthermore, suppose that
    \begin{align*}
        c_{x} := \sup_{(x,z) \in \mathbb{R}^N \times \mathbb{R}^d} \lVert D_x F(x,z) \rVert_2
    \end{align*}
    (where $D_x$ is the partial derivative with respect to $x$) satisfies
    \begin{align*}
        c_x < 1, \qquad c_x \lVert T\phi^{-1} \rVert_{\infty} < 1.
    \end{align*}
    Then, there exists a unique solution $f \in C^1(M,\mathbb{R}^N)$ of the equation
        \begin{align*}
            f = F( f \circ \phi^{-1} , \omega )
        \end{align*}
        such that for all initial $x_0 \in \mathbb{R}^N$ and $m_0 \in M$ the sequence
        \begin{align*}
            x_{k+1} = F(x_k, \omega \circ \phi^k(m_0))
        \end{align*}
        converges to $f \circ \phi^k(m_0)$ as $k \to \infty$.
        \label{special_SSM_thm}
\end{theorem}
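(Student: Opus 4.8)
The plan is to obtain $f$ as the unique fixed point of a contraction on a Banach space of continuous functions, to read off the convergence of the reservoir dynamics from the same contraction estimate, and then to bootstrap $f$ from $C^0$ to $C^1$ by a fibre-contraction argument in which the second smallness hypothesis plays the decisive role.

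\emph{Step 1 ($C^0$ existence and the GS equation).} Since $M$ is compact, $C^0(M,\mathbb{R}^N)$ with the supremum norm $\lVert\cdot\rVert_\infty$ is a Banach space. Define $\mathcal{T}\colon C^0(M,\mathbb{R}^N)\to C^0(M,\mathbb{R}^N)$ by $(\mathcal{T}g)(m)=F\big(g(\phi^{-1}(m)),\omega(m)\big)$; this is well defined because $\phi$ is a $C^1$ diffeomorphism, $\omega\in C^1(M,\mathbb{R}^d)$, and $F$ is continuous. Applying the mean value inequality to $x\mapsto F(x,z)$ together with $c_x<1$ shows that $F$ is globally state contracting with constant $c_x$, so for any $g_1,g_2$ and any $m$ we get $\lVert(\mathcal{T}g_1)(m)-(\mathcal{T}g_2)(m)\rVert\le c_x\lVert g_1(\phi^{-1}(m))-g_2(\phi^{-1}(m))\rVert\le c_x\lVert g_1-g_2\rVert_\infty$; hence $\mathcal{T}$ is a contraction and, by the Banach fixed point theorem, possesses a unique fixed point $f\in C^0(M,\mathbb{R}^N)$ solving $f=F(f\circ\phi^{-1},\omega)$.

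\emph{Step 2 (convergence of the reservoir dynamics).} Substituting $m=\phi^{k}(m_0)$ into the functional equation gives $f(\phi^{k}(m_0))=F\big(f(\phi^{k-1}(m_0)),\omega\circ\phi^{k}(m_0)\big)$, so the sequence $a_k:=f(\phi^{k-1}(m_0))$ solves the driven recursion $a_{k+1}=F(a_k,\omega\circ\phi^{k}(m_0))$, with $a_{k+1}=f\circ\phi^{k}(m_0)$. Since $x_{k+1}$ obeys the same recursion with the same inputs, $\lVert x_{k+1}-a_{k+1}\rVert\le c_x\lVert x_k-a_k\rVert\le c_x^{k+1}\lVert x_0-a_0\rVert\to0$, which is precisely the asserted convergence of $x_{k+1}$ to $f\circ\phi^{k}(m_0)$ (alternatively, invoke the global ESP supplied by the earlier theorem).

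\emph{Step 3 ($C^1$ regularity, the main obstacle).} Differentiating the functional equation formally suggests that the derivative of $f$ should be a continuous section $g$ of $\operatorname{Hom}(TM,\mathbb{R}^N)$ satisfying
\[
g(m) = D_xF\big(f(\phi^{-1}(m)),\omega(m)\big)\circ g(\phi^{-1}(m))\circ T_m\phi^{-1} + D_zF\big(f(\phi^{-1}(m)),\omega(m)\big)\circ D\omega(m).
\]
Because $M$ is compact and carries a Riemannian metric, $C^0(M,\operatorname{Hom}(TM,\mathbb{R}^N))$ with the fibrewise operator norm is a Banach space; since $F\in C^2$, the coefficients above are continuous, so the right-hand side defines an operator $\mathcal{S}_f$ on that space, and $\lVert\mathcal{S}_f g_1-\mathcal{S}_f g_2\rVert_\infty\le c_x\lVert T\phi^{-1}\rVert_\infty\lVert g_1-g_2\rVert_\infty$. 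The hypothesis $c_x\lVert T\phi^{-1}\rVert_\infty<1$ is exactly what makes $\mathcal{S}_f$ a contraction, giving a unique fixed point $g$. To identify $g$ with $Df$, apply the Hirsch--Pugh fibre contraction theorem to the skew map $(h,\gamma)\mapsto(\mathcal{T}h,\mathcal{S}_h\gamma)$: the base map $\mathcal{T}$ is a contraction, the fibre map $\gamma\mapsto\mathcal{S}_h\gamma$ is a contraction with constant $c_x\lVert T\phi^{-1}\rVert_\infty$ uniformly in $h$, and $\mathcal{S}_h$ depends continuously on $h$ (again using $F\in C^2$). Starting from a $C^1$ map $f_0$ (say a constant) with $\gamma_0=Df_0$, the chain rule gives $f_{n+1}=\mathcal{T}f_n\in C^1$ and $Df_{n+1}=\mathcal{S}_{f_n}(Df_n)$, so $\gamma_n=Df_n$ along the orbit; the fibre contraction theorem yields $f_n\to f$ and $Df_n\to g$ uniformly, and since a uniform limit of $C^1$ maps with uniformly convergent derivatives is $C^1$ with the limiting derivative, $f\in C^1(M,\mathbb{R}^N)$ with $Df=g$. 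Uniqueness in $C^1$ is inherited from uniqueness in $C^0$. The delicate points are all concentrated here: one needs compactness of $M$ to have Banach spaces of continuous sections and uniform bounds, the $C^2$ assumption on $F$ so that the linearised operator depends continuously on its data, and the contraction condition $c_x\lVert T\phi^{-1}\rVert_\infty<1$ to close the fibre estimate — a naive attempt to make $\mathcal{T}$ a contraction directly in the $C^1$ norm fails because differentiating $\mathcal{T}h$ produces a term involving $D_xF(h(\cdot))-D_xF(g(\cdot))$ whose size is controlled only by $\lVert h-g\rVert_\infty\lVert Dg\rVert_\infty$, not by a small multiple of $\lVert h-g\rVert_{C^1}$.
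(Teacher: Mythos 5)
Your proof is correct, but it takes a genuinely different route from the one in the thesis. The thesis does not prove Theorem \ref{special_SSM_thm} directly: it is obtained as the special case $V=\mathbb{R}^N$ of the local result (Theorem \ref{SSM_thm}), which is proved by a single contraction argument for $\Psi(h)=F(h\circ\phi^{-1},\omega)$ in the weighted norm $\lVert h\rVert_{C^1(\delta)}=\lVert h\rVert_\infty+\delta\lVert Dh\rVert_\infty$, restricted to the closed invariant set $\Omega(r,\omega(M))=\{h: \lVert Dh\rVert_\infty\le r\}$ with $r>c_z\lVert D\omega\rVert_\infty/(1-c_x\lVert T\phi^{-1}\rVert_\infty)$; making that work requires the second-derivative sups $c_{xx},c_{xz}$, the mean value theorem on a convex $V$, and a small enough $\delta_0$ to absorb exactly the troublesome term you identify at the end of Step 3. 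Your two-stage argument — Banach fixed point in $C^0$, then a Hirsch–Pugh fibre contraction on $C^0(M,\operatorname{Hom}(TM,\mathbb{R}^N))$ with the linearised operator $\mathcal{S}_h$, identifying its fixed point with $Df$ via iteration from a $C^1$ seed — reaches the same conclusion from the same decisive inequality $c_x\lVert T\phi^{-1}\rVert_\infty<1$, but it only uses continuity of $D_xF,D_zF$ evaluated on compact sets rather than global Lipschitz-type bounds. That is a genuine advantage here: for $V=\mathbb{R}^N$ the hypotheses of the stated theorem do not actually guarantee that $c_z$, $c_{xx}$, $c_{xz}$ taken over $\mathbb{R}^N\times\omega(M)$ are finite, so the "special case" reduction in the thesis implicitly needs such finiteness, whereas your argument sidesteps it (and would in fact go through with $F\in C^1$). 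The trade-off is that the thesis's one-shot weighted-norm contraction delivers the quantitative invariant bound $\lVert Df\rVert_\infty\le r$ and handles the local setting $V\subsetneq\mathbb{R}^N$ uniformly, while your fibre-contraction route is the cleaner and more economical path to the global statement as posed.
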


The proof of Theorem \ref{special_SSM_thm} is omitted because the theorem is a special case of Theorem \ref{SSM_thm} (obtained by setting $V = \mathbb{R}^N$) which we will state and prove in section \ref{section::local_GS}. The more general result proved in section \ref{section::local_GS} concerns the case where the reservoir map $F$ admits multiple generalised synchronisations. To build some intuition for this, we will proceed with an example. Our example is closely related to the work by \cite{GREBOGI1984261} which studies \emph{strange attractors that are not chaotic}.

\section{Motivating example}

Let $F : \mathbb{R} \times \mathbb{R} \to \mathbb{R}$ be defined by
\begin{align*}
    F(x,z) = \tanh(2x + z).
\end{align*}
Under the constant driving input $z_k = 0$ for all $k$, we have the autonomous system $F(x,0) = \tanh(2x)$, which has 3 fixed points at $-x^*,0,x^*$ where $x^* \approx 0.958$ illustrated in Figure \ref{graph} below. $-x^*$ and $x^*$ are stable nodes, while $0$ is an unstable node.
\begin{figure}[b!]
\centering
\begin{tikzpicture}
\begin{axis}[
    width = 10cm,
    height = 8cm,
    axis x line=center,
    axis y line=none,
    ticks = none,
    xmin=-1.4,
    xmax=1.4,
    ymin=-2.2,
    ymax=1.2]
\addplot[color=black, samples = 100]{tanh(2*x)};
\addplot[color=black, samples = 100]{x};
\addplot[color=black, samples = 100, dotted]{1};
\addplot[color=black, samples = 100, dotted]{-1};
\filldraw [color = black] (axis cs:0,0) circle (2pt);
\filldraw [color = black] (axis cs:-0.958,-0.958) circle (2pt);
\filldraw [color = black] (axis cs:0.958,0.958) circle (2pt);
\filldraw [color = black] (axis cs:0,-2) circle (2pt) node[anchor=north] (un) {Untable Node};
\filldraw [color = black] (axis cs:-0.958,-2) circle (2pt) node[anchor=north] (snn) {Stable Node};
\filldraw [color = black] (axis cs:0.958,-2) circle (2pt) node[anchor=north] (snp) {Stable Node};
\draw [->] (axis cs: -0.08,-2) -- (axis cs: -0.44,-2);
\draw [->] (axis cs: -0.48,-2) -- (axis cs: -0.90,-2);
\draw [->] (axis cs: -1.4,-2) -- (axis cs: -1.01,-2);
\draw [->] (axis cs: 0.08,-2) -- (axis cs: 0.44,-2);
\draw [->] (axis cs: 0.48,-2) -- (axis cs: 0.90,-2);
\draw [->] (axis cs: 1.4,-2) -- (axis cs: 1.01,-2);
\end{axis}
\end{tikzpicture}
\caption{A graph of $y = \tanh(2x)$ intersecting $y = x$ at 3 points. These 3 points are fixed points of the map $F(x,0) = \tanh(2x)$.}
\label{graph}
\end{figure}
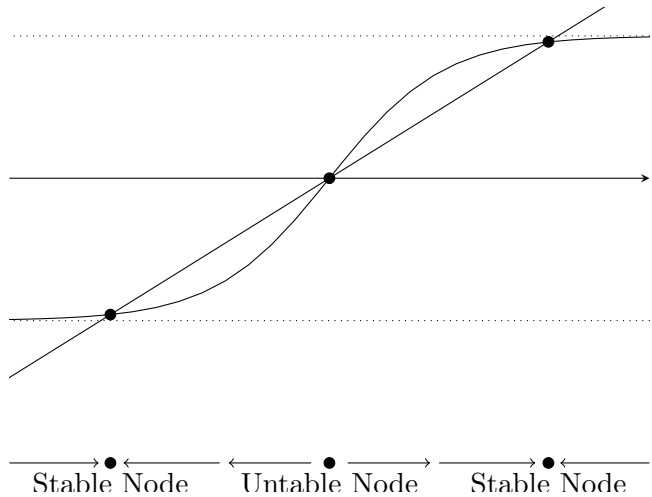
Multiple equilibria prove that $F$ does not have the global ESP for this input. Now, let us consider the behavior of $F$ under a more interesting driving input. Let $\phi \in \text{Diff}^1(S^1)$ be a diffeomorphism on the circle defined by $\phi(m) = m + \epsilon$ for some fixed $\epsilon > 0$. Let $\omega \in C^1(S^1,\mathbb{R})$ be defined to be $\omega(m) = \frac{1}{2}\sin(m)$. Now suppose the driving input $u_k = \omega \circ \phi^k(m_0)$ for some initial $m_0$. Now, any GS $f:S^1 \to \mathbb{R}$ satisfies, \ $\forall m \in S^1$,
\begin{align}
    f(m) &= F(f \circ \phi^{-1}(m) , \omega(m) ) \nonumber  \\
    &= \tanh\bigg(2f(m + \epsilon) + \frac{1}{2}\sin(m)\bigg). \label{SSM_relation}
\end{align}
Suppose we choose $\epsilon = 2\pi /100$, and take an initial $x_0 \in \mathbb{R}$, then compute $x_{k+1} = F(x_k,z_k)$, and plot the pairs $(\phi^k(m_0), x_k) \in \mathbb{R}^2$. It appears from numerical experiments, that for any initial $x_0$ in the vicinity of the fixed point $-x^*$, after a short transient period, we arrive at the same graph $G_-$ of the pairs $(\phi^k(m_0), x_k) \in \mathbb{R}^2$. Furthermore, the graph plausibly satisfies the relation \eqref{SSM_relation} of being a GS. Intriguingly, if we take any $x_0$ in the vicinity of $x^*$ this time, then after a short transient period, we arrive at a different graph $G_+$ of the pairs $(\phi^k(m_0), x_k) \in \mathbb{R}^2$.  This suggests the existence of (at least) two distinct (locally) stable GSs with respect to the input $z_k$. The graphs $G_-$ and $G_+$ are shown in Figure \ref{SSM_graphs}.

\begin{figure}
    \centering
    \begin{subfigure}[b]{\textwidth}
        \includegraphics[width=\textwidth]{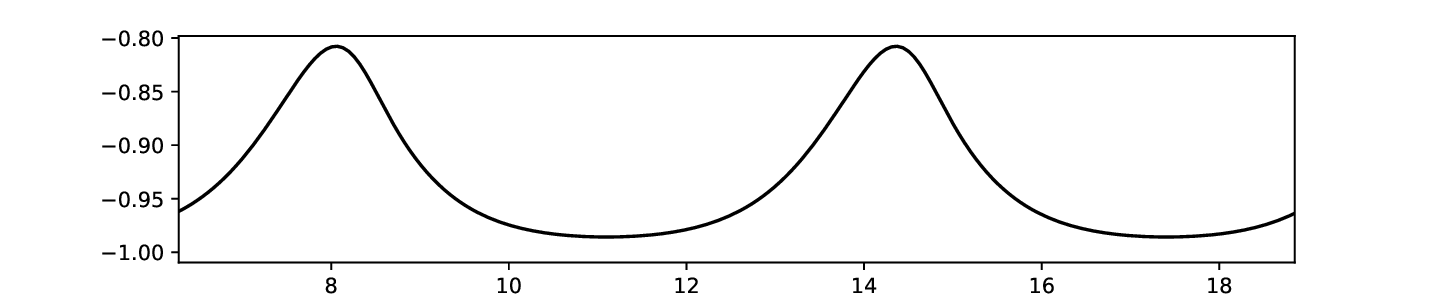}
        \caption{$G_-$}
    \end{subfigure}
    \begin{subfigure}[b]{\textwidth}
        \includegraphics[width=\textwidth]{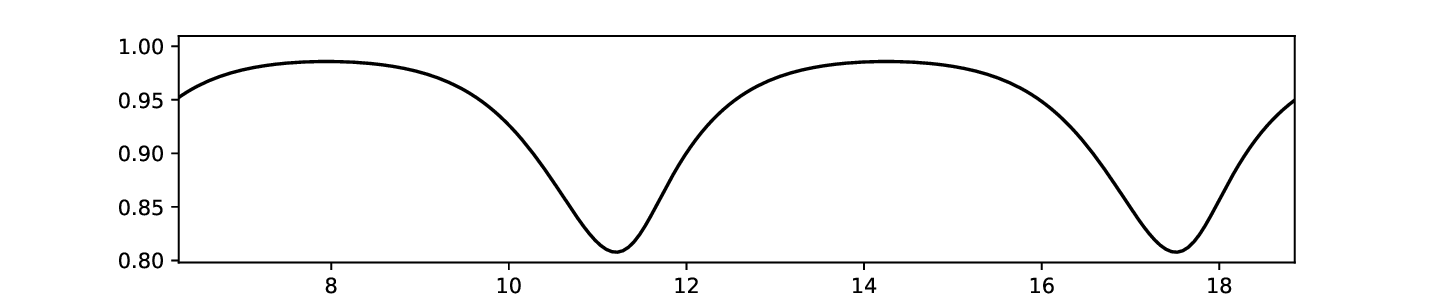}
        \caption{$G_+$}
    \end{subfigure}
    \caption{The graphs $G_-$ and $G_+$ of the points $(\phi^k(m_0),x_k)$.}\label{SSM_graphs}
\end{figure}

\section{Theorems for local GS}
\label{section::local_GS}

This numerical experiment suggests that there exist reservoir maps $F$ that do not have global ESP, and admit multiple GSs. To start to make sense of this, we will introduce the concept of local ESP with admits the global ESP as a special case.

\begin{defn}
    (Local ESP) Let $V \subset \mathbb{R}^N$ be closed under the Euclidean topology. We say that a reservoir map $F: \mathbb{R}^N \times \mathbb{R}^d \to \mathbb{R}^N$ has the $(V,W)$-local ESP if, given any sequence of inputs $(z_k \in W \subset \mathbb{R}^d)_{k \in \mathbb{N}}$, and initial reservoir states states $x_0,y_0 \in V$ the sequences 
    \begin{align*}
        x_{k+1} = F(x_k,z_k) \qquad \text{and} \qquad y_{k+1} = F(y_k,z_k)
    \end{align*}
    are contained in $V$ and $\lVert x_{k+1} - y_{k+1} \rVert \to 0$ as $k \to \infty$.
\end{defn}

We observe first of all that it is possible for $F$ to have $(V_i,W)$-local ESP on disjoint sets $V_i$ with $i = 1, ... , n$.
We can also ensure that a reservoir map $F$ has the $(V,W)$-local ESP by demanding that $F$ has the $(V,W)$-local state contracting property. 

\begin{defn}
    (Local state contracting) A reservoir map $F:\mathbb{R}^N \times \mathbb{R}^d \to \mathbb{R}^N$ is called $(V,W)$-locally state contracting if there exists a $c \in (0,1)$ such that for any $x,y \in V$ and $z \in W \subset \mathbb{R}^d$ it follows that
    \begin{itemize}
        \item $\lVert F(x,z) - F(y,z) \rVert \leq c\lVert x - y \rVert$ 
        \item $F(x,z) \in V$. 
    \end{itemize}
\end{defn}

\begin{theorem}
    If a reservoir map $F:\mathbb{R}^N \times \mathbb{R}^d \to \mathbb{R}^N$ is $(V,W)$-locally state contracting, then $F$ has the $(V,W)$-local ESP.
\end{theorem}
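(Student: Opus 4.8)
The plan is to mirror the proof of the global analogue (the earlier theorem that globally state contracting implies the global ESP), but to track carefully that the two orbits never leave the set $V$, so that the local contraction estimate remains applicable at every step. Fix a sequence of inputs $(z_k)_{k \in \mathbb{N}}$ with each $z_k \in W$, and initial states $x_0, y_0 \in V$; form the sequences $x_{k+1} = F(x_k, z_k)$ and $y_{k+1} = F(y_k, z_k)$.

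First I would establish by induction that $x_k \in V$ and $y_k \in V$ for every $k \in \mathbb{N}$. The base case is the hypothesis $x_0, y_0 \in V$. For the inductive step, assuming $x_k \in V$, the second clause of the $(V,W)$-locally state contracting property applied with the pair $(x_k, z_k) \in V \times W$ gives $x_{k+1} = F(x_k, z_k) \in V$; the same argument handles $y_{k+1}$. This is the step that is genuinely doing the ``local'' work, and it is the only place where one must be slightly careful — without the invariance clause $F(x,z) \in V$ in the definition of locally state contracting, the contraction estimate could fail to apply after the first iterate.

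Next, since $x_k, y_k \in V$ and $z_k \in W$ for all $k$, the first clause of the locally state contracting property yields
\begin{align*}
    \lVert x_{k+1} - y_{k+1} \rVert = \lVert F(x_k, z_k) - F(y_k, z_k) \rVert \leq c \lVert x_k - y_k \rVert
\end{align*}
for the fixed constant $c \in (0,1)$. Iterating this inequality (a routine induction, exactly as in the global case) gives $\lVert x_{k+1} - y_{k+1} \rVert \leq c^{k+1} \lVert x_0 - y_0 \rVert$, and since $c \in (0,1)$ the right-hand side tends to $0$ as $k \to \infty$. Together with the invariance established above, this is precisely the statement that $F$ has the $(V,W)$-local ESP, so there is no serious obstacle here: the content is entirely in noticing that the invariance clause of local state contraction keeps both orbits inside $V$ so that the Lipschitz estimate can be chained.
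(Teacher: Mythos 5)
Your proof is correct and takes essentially the same route as the paper's: first use the invariance clause to confirm both orbits remain in $V$, then chain the local Lipschitz estimate to get geometric decay of $\lVert x_k - y_k \rVert$. You are slightly more explicit about the induction keeping the orbits in $V$, and your exponent $c^{k+1}$ is in fact the sharper bookkeeping compared to the paper's $c^k$, but the argument is the same.
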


\begin{proof}
    First of all, since $F(x,z) \in V$, it follows that the sequence 
    \begin{align*}
        x_{k+1} = F(x_k,z_k)
    \end{align*}
    is contained by $V$. Next, for any $x_0,y_0 \in V$ we have
    \begin{align*}
        \lVert x_{k+1} - y_{k+1} \rVert = \lVert F(x_k,z_k) - F(y_k,z_k) \rVert
        \leq c\lVert x_k - y_k \rVert
    \end{align*}
    so we can show by recursion that
    \begin{align*}
        \lVert x_{k+1} - y_{k+1} \rVert \leq c^k \lVert x_0 - y_0 \rVert
    \end{align*}
    so $\lVert x_{k+1} - y_{k+1} \rVert \to 0$ as $k \to \infty$.        
\end{proof}

Now we are ready to state two theorems for the existence of local GS. The first result establishes the existence of a continuous GS $f$ on a topological space, while the second establishes a differentiable GS $f$ on a smooth manifold. A less general version of these results (with a less rigorous proof) appears in \cite{embedding_and_approximation_theorems} (Theorem 2.2.2) and a fully rigorous more general version appears in \cite*{chaos_on_compacta} (Theorem III).

\begin{theorem}
    (\cite*{chaos_on_compacta}) Let $M$ be a compact topological space and $\phi \in \text{Hom}(M)$. Let $\omega \in C^0(M,\mathbb{R}^d)$ be the observation function on $M$. Let $V \subset \mathbb{R}^N$ be closed and suppose that the reservoir map $F \in C^0(V \times \omega(M), \mathbb{R}^N)$ is $(V,\omega(M))$-locally state contracting with contraction coefficient $c$.
    Then, there exists a unique solution $f \in C^0(M,V)$ of the equation
        \begin{align*}
            f = F( f \circ \phi^{-1} , \omega )
        \end{align*}
        such that for all initial $x_0 \in V$ and $m_0 \in M$ the sequence
        \begin{align*}
            x_{k+1} = F(x_k, \omega \circ \phi^k(m_0))
        \end{align*}
        converges to $f \circ \phi^k(m_0)$ as $k \to \infty$.
        \label{SSM_weak_thm}
\end{theorem}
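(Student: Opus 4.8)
The plan is to read the functional equation $f = F(f\circ\phi^{-1},\omega)$ as a fixed-point equation for an operator on a complete metric space of continuous maps, and then apply the Banach fixed-point theorem. First I would take $X := C^0(M,V)$ with the uniform metric $d(f,g):=\sup_{m\in M}\lVert f(m)-g(m)\rVert$. Because $M$ is compact, every continuous map has compact, hence bounded, image, so $d$ is finite; uniform limits of continuous maps are continuous, so $C^0(M,\mathbb{R}^N)$ with this metric is complete, and since $V$ is closed, $X$ is a closed subset of it and therefore itself a complete metric space (and nonempty, since any constant map valued in $V$ belongs to it).

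Next I would define $\mathcal{T}\colon X\to X$ by $\mathcal{T}(f):=F(f\circ\phi^{-1},\omega)$ and verify it is a well-defined self-map. For $f\in X$ the assignment $m\mapsto\bigl(f(\phi^{-1}(m)),\omega(m)\bigr)$ is continuous from $M$ into $V\times\omega(M)$ — here $\phi\in\text{Hom}(M)$ makes $\phi^{-1}$ continuous, and $\omega(M)\subset\mathbb{R}^d$ is compact — so $\mathcal{T}(f)\in C^0(M,\mathbb{R}^N)$; and the $(V,\omega(M))$-local state-contracting hypothesis gives $F(x,z)\in V$ whenever $(x,z)\in V\times\omega(M)$, so in fact $\mathcal{T}(f)\in C^0(M,V)=X$. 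Then, for $f,g\in X$ and $m\in M$, writing $m':=\phi^{-1}(m)$,
\begin{align*}
  \bigl\lVert \mathcal{T}(f)(m)-\mathcal{T}(g)(m)\bigr\rVert
  &=\bigl\lVert F\bigl(f(m'),\omega(m)\bigr)-F\bigl(g(m'),\omega(m)\bigr)\bigr\rVert\\
  &\le c\,\lVert f(m')-g(m')\rVert\ \le\ c\,d(f,g),
\end{align*}
so $d(\mathcal{T}(f),\mathcal{T}(g))\le c\,d(f,g)$ with $c\in(0,1)$. Banach's theorem then supplies a unique $f\in X=C^0(M,V)$ with $\mathcal{T}(f)=f$, which is exactly the asserted existence and uniqueness of the solution of $f=F(f\circ\phi^{-1},\omega)$.

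For the convergence statement I would evaluate the fixed-point identity along an orbit: $f(\phi^{k}(m_0))=F\bigl(f(\phi^{k-1}(m_0)),\omega(\phi^{k}(m_0))\bigr)$, which (up to the bookkeeping of which observation index is fed at each step) says precisely that $y_k:=f(\phi^k(m_0))\in V$ is itself a solution of the reservoir recursion driven by the observations $\omega\circ\phi^{k}(m_0)$. Comparing the orbit $x_k$ started from an arbitrary $x_0\in V$ with $y_k$, the local state-contracting property keeps both orbits in $V$ and yields $\lVert x_{k+1}-y_{k+1}\rVert=\lVert F(x_k,z_k)-F(y_k,z_k)\rVert\le c\lVert x_k-y_k\rVert$, hence $\lVert x_k-y_k\rVert\le c^{k}\lVert x_0-y_0\rVert\to 0$, i.e. $x_k\to f\circ\phi^k(m_0)$; this last step is essentially the argument already used to show that local state contraction implies the local ESP. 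I do not expect a deep analytic obstacle: the contraction estimate is immediate from the hypothesis, so the substance is just choosing the right space. The points needing care are (i) confirming that $C^0(M,V)$ is genuinely complete given that $M$ is only assumed to be a compact topological space, not a metric space, and (ii) tracking time indices carefully enough that the orbit of $f$ really is the distinguished solution of the recursion in the statement — a harmless index shift here is the most likely place the write-up will need a remark.
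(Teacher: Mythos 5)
Your proposal follows the same route as the paper: treat $\Psi(h) = F(h\circ\phi^{-1},\omega)$ as a self-map of the complete metric space $C^0(M,V)$, show it is a $c$-contraction in the uniform norm, invoke the Banach fixed-point theorem for existence and uniqueness, and then obtain the convergence statement from the local ESP by comparing $x_k$ with the orbit of $f$. The one thing worth tidying — which you already flagged — is that your choice $y_k := f(\phi^k(m_0))$ satisfies $y_{k+1}=F(y_k,\omega\circ\phi^{k+1}(m_0))$, which is driven by a one-step-shifted observation sequence; to compare directly with $x_{k+1}=F(x_k,\omega\circ\phi^{k}(m_0))$ one should take $y_0=f\circ\phi^{-1}(m_0)$, so that $y_{k+1}=f\circ\phi^{k}(m_0)$ obeys exactly the same recursion, as the paper does.
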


\begin{proof}
    Define the map $\Psi : C^0(M,V) \to C^0(M,V)$ by
    \begin{align*}
        \Psi(h) = F(h\circ \phi^{-1}, \omega).
    \end{align*}
    Notice that
    \begin{align*}
        \lVert \Psi(h) - \Psi(g) \rVert_{\infty} &= \lVert F(h\circ \phi^{-1}, \omega) - F(g\circ \phi^{-1}, \omega) \rVert_{\infty} \\
        &\leq c\lVert h\circ \phi^{-1} - g\circ \phi^{-1} \rVert_{\infty} = c \lVert h - g \rVert_{\infty}
    \end{align*}
by the definition of $c$. So $\Psi$ is a contraction mapping on the Banach space $C^0(M,V)$ and therefore (by the \cite{Banach1922} Fixed Point Theorem) admits a unique fixed point $f \in C^0(M,V)$ such that
    \begin{align*}
        \Psi(f) = F(f\circ\phi^{-1},\omega) = f.
    \end{align*}
    Furthermore, $F$ is $(V,\omega(M))$-locally contracting and therefore has the $(V,\omega(M))$-local ESP, so for all initial $x_0 \in V$ and $m_0 \in M$ and $y_0 = f\circ\phi^{-1}(m_0)$ the sequences
    \begin{align*}
        x_{k+1} = F(x_k, \omega \circ \phi^k(m_0)) \qquad \text{and} \qquad y_{k+1} = F(y_k, \omega \circ \phi^k(m_0))
    \end{align*}
    satisfy $\lVert x_{k+1} - y_{k+1} \rVert \to 0$ as $k \to \infty$. Since $y_{k+1} = f\circ\phi^k(m_0)$ it follows that $x_{k+1} \to f\circ\phi^k(m_0)$ and this completes the proof.
\end{proof}

\begin{theorem}
    (\cite*{chaos_on_compacta}) Let $M$ be a compact, smooth, $q$-manifold and $\phi \in \text{Diff}^1(M)$. Let $\omega \in C^1(M,\mathbb{R}^d)$ be the observation function on $M$. Let $V \subset \mathbb{R}^N$ be closed and convex. Suppose that the reservoir map $F \in C^2(V \times \omega(M), \mathbb{R}^N)$ is $(V,\omega(M))$-locally state contracting. Furthermore, suppose that
    \begin{align*}
        c_{x} := \sup_{(x,z) \in V \times \omega(M)} \lVert D_x F(x,z) \rVert_2
    \end{align*}
    (where $D_x$ is the partial derivative with respect to $x$) satisfies
    \begin{align*}
        c_x < 1, \qquad and \qquad c_x \lVert T\phi^{-1} \rVert_{\infty} < 1.
    \end{align*}
    Then, there exists a unique solution $f \in C^1(M,V)$ of the equation
        \begin{align*}
            f = F( f \circ \phi^{-1} , \omega )
        \end{align*}
        such that for all initial $x_0 \in V$ and $m_0 \in M$ the sequence
        \begin{align*}
            x_{k+1} = F(x_k, \omega \circ \phi^k(m_0))
        \end{align*}
        converges to $f \circ \phi^k(m_0)$ as $k \to \infty$.
        \label{SSM_thm}
\end{theorem}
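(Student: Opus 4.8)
\emph{Proof idea.} The plan is to reduce to the continuous case already settled in Theorem~\ref{SSM_weak_thm} and then bootstrap regularity by a fiber contraction argument. First I would apply Theorem~\ref{SSM_weak_thm}: since $M$ is in particular a compact topological space, $\phi\in\text{Diff}^1(M)\subset\text{Hom}(M)$, $\omega\in C^1(M,\mathbb{R}^d)\subset C^0(M,\mathbb{R}^d)$, and $F\in C^2(V\times\omega(M),\mathbb{R}^N)\subset C^0(V\times\omega(M),\mathbb{R}^N)$ is $(V,\omega(M))$-locally state contracting, there is a unique $f\in C^0(M,V)$ with $f=F(f\circ\phi^{-1},\omega)$, and for every $x_0\in V$ and $m_0\in M$ the driven orbit $x_{k+1}=F(x_k,\omega\circ\phi^k(m_0))$ converges to $f\circ\phi^k(m_0)$. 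Both the convergence statement and uniqueness among $C^0$ solutions (a fortiori among $C^1$ solutions) are thus already in hand; all that remains is to show that this particular $f$ is $C^1$, and then the differentiated functional equation for $Df$ holds automatically.

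To upgrade regularity I would invoke the fiber contraction theorem of Hirsch and Pugh over the base contraction $\Psi(h)=F(h\circ\phi^{-1},\omega)$ on $C^0(M,V)$. The space $C^0(M,V)$ is complete because $V$ is closed, and $\Psi$ is a contraction there with the coefficient $c<1$ coming from the local state contracting hypothesis (this is exactly the estimate in the proof of Theorem~\ref{SSM_weak_thm}; convexity of $V$ lets one alternatively run the fundamental-theorem-of-calculus estimate along segments staying inside $V$, so that $c_x<1$ itself serves as a Lipschitz constant). The fiber is the Banach space $\mathcal{E}$ of continuous sections $m\mapsto\eta(m)$ with $\eta(m)$ a linear map $T_mM\to\mathbb{R}^N$, equipped with the supremum norm induced by the Riemannian metric $g$ and the Euclidean norm on $\mathbb{R}^N$; $\mathcal{E}$ is complete because $M$ is compact. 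On $C^0(M,V)\times\mathcal{E}$ I would define the formal tangent map dictated by the chain rule,
\begin{align*}
    \widehat{\Psi}(h,\eta)(m) = D_xF\big(h(\phi^{-1}(m)),\omega(m)\big)\circ\eta(\phi^{-1}(m))\circ T_m\phi^{-1} + D_zF\big(h(\phi^{-1}(m)),\omega(m)\big)\circ D\omega(m).
\end{align*}

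Two verifications are then needed. First, for each fixed $h\in C^0(M,V)$ the map $\eta\mapsto\widehat{\Psi}(h,\eta)$ is a contraction of $\mathcal{E}$: the difference $\widehat{\Psi}(h,\eta_1)-\widehat{\Psi}(h,\eta_2)$ is $D_xF(\cdot)\circ(\eta_1-\eta_2)(\phi^{-1}(m))\circ T_m\phi^{-1}$, whose operator norm is at most $c_x\,\lVert T\phi^{-1}\rVert_\infty\,\lVert\eta_1-\eta_2\rVert_\infty$ (using that $h$ takes values in $V$, the set over which $c_x$ is a supremum), and $c_x\lVert T\phi^{-1}\rVert_\infty<1$ by hypothesis — crucially a single constant independent of $h$. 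Second, the skew product $(h,\eta)\mapsto(\Psi(h),\widehat{\Psi}(h,\eta))$ is continuous; continuity in $\eta$ is clear since $\widehat{\Psi}(h,\cdot)$ is affine, and continuity in $h$ follows from uniform continuity of $D_xF$ and $D_zF$ on compact subsets of $V\times\omega(M)$ together with the fact that a uniformly convergent sequence of maps $M\to V$ has images in a common compact set. The fiber contraction theorem then yields a globally attracting fixed point $(f,\eta^{\ast})$, with $\eta^{\ast}$ the unique fixed point of $\widehat{\Psi}(f,\cdot)$.

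Finally I would exploit the compatibility $D(\Psi(h))=\widehat{\Psi}(h,Dh)$, valid for any $h\in C^1(M,V)$ by the chain rule. Starting from any $h_0\in C^1(M,V)$ — a constant map will do, $V$ being nonempty — with $\eta_0=Dh_0$, induction shows $\Psi^n(h_0)$ is $C^1$ with derivative $\widehat{\Psi}^{\,n}(h_0,\eta_0)$; the theorem gives $\Psi^n(h_0)\to f$ uniformly and $D(\Psi^n(h_0))\to\eta^{\ast}$ uniformly. By the standard fact that a uniform limit of $C^1$ maps whose derivatives converge uniformly is $C^1$ with the limiting derivative, $f\in C^1(M,V)$ and $Df=\eta^{\ast}$, which completes the proof. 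The main obstacle is the technical bookkeeping in the two verifications above: controlling the $m$-dependent operator norms on the spaces of linear maps $T_mM\to\mathbb{R}^N$ through the metric $g$, and establishing continuity of $\widehat{\Psi}$ in the base variable, which is precisely where the $C^2$ regularity of $F$ (so that $D_xF,D_zF$ are continuous) and the convexity of $V$ are genuinely used.
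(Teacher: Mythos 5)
Your proposal is correct, but it proves the theorem by a genuinely different route from the paper. The paper works directly in the Banach space $C^1(M,\mathbb{R}^N)$ equipped with the weighted norm $\lVert h \rVert_{C^1(\delta)} = \lVert h \rVert_\infty + \delta\lVert Dh\rVert_\infty$: it first shows that $\Psi$ preserves the closed set $\Omega(r,\omega(M))$ of maps with $\lVert Dh\rVert_\infty\le r$ for $r$ chosen as in \eqref{r_condition}, and then shows $\Psi$ is a contraction in $\lVert\cdot\rVert_{C^1(\delta_0)}$ for small $\delta_0$, which is exactly where the $C^2$ hypothesis enters (through the Lipschitz constants $c_{xx},c_{xz}$ of $D_xF,D_zF$, obtained by the mean value theorem, and this is the only place convexity of $V$ is used); Banach's fixed point theorem then gives a $C^1$ fixed point in one stroke, with uniqueness a priori only inside $\Omega(r,\omega(M))$. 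You instead take the $C^0$ existence, uniqueness and convergence from Theorem~\ref{SSM_weak_thm} and upgrade regularity by the Hirsch--Pugh fiber contraction theorem, with the formal tangent map $\widehat\Psi$ as fiber map, uniform fiber contraction constant $c_x\lVert T\phi^{-1}\rVert_\infty<1$, and the classical theorem on uniform convergence of derivatives to conclude $f\in C^1$ with $Df=\eta^*$. Your verifications (completeness of the section space, uniformity of the fiber constant in $h$, continuity of $\widehat\Psi$ via uniform continuity of $D_xF,D_zF$ on a common compact containing the images, and the compatibility $D(\Psi(h))=\widehat\Psi(h,Dh)$ starting from a constant $C^1$ seed) are all sound. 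What each approach buys: the paper is self-contained modulo Banach's theorem but pays for it with the auxiliary constant $r$, the $\delta_0$-weighted norm, and the second-derivative bookkeeping; your route outsources the hard work to the fiber contraction theorem and, contrary to your closing remark, does not genuinely need $C^2$ regularity or convexity of $V$ at all --- continuity of $D_xF$ and $D_zF$ (i.e.\ $F\in C^1$) suffices for continuity of $\widehat\Psi$, and the base contraction constant $c$ comes directly from the local state contracting hypothesis --- so your argument in fact establishes the conclusion under weaker hypotheses, and it yields uniqueness in all of $C^0(M,V)$ rather than only within a derivative-bounded subset.
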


The proof of the second result is rather more technical, but relies on a similar argument. We will need to first introduce for each $\delta > 0$ a norm $\lVert \cdot \rVert_{C^1(\delta)}$ on $C^1(M,V)$ defined by
\begin{align*}
    \lVert f \rVert_{C^1(\delta)} := \lVert f \rVert_{\infty} + \delta \lVert Df \rVert_{\infty}.
\end{align*}
It is shown in Chapter 2 of \cite{https://doi.org/10.1002/zamm.19690490315} that each of these endows $C^1(M,V)$ with a Banach space structure. Furthermore, \cite{https://doi.org/10.1002/zamm.19690490315} show that all of these norms induce the same topology, and that these topologies coincide with the $C^1$ topology described in \cite{Invariant_Manifolds}. With this, we will now prove 2 technical lemmas which are used to prove Theorem \ref{SSM_thm}.

\begin{lemma}
    Let $r > 0$ and $W \subset \mathbb{R}^{N}$ be closed in the Euclidean topology. Then, for any $\delta > 0$, the set
    \begin{align*}
        \Omega(r,W) = \{ f \in C^1(M,W) \ | \ \lVert Df \rVert_{\infty} \leq r \}
    \end{align*}
    is closed in the topology induced by the $\lVert \cdot \rVert_{C^1(\delta)}$ norm.
\end{lemma}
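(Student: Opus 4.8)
The plan is to show that $\Omega(r,W)$ is closed by taking an arbitrary convergent sequence inside it and checking that the limit still belongs to $\Omega(r,W)$. Since all the norms $\lVert\cdot\rVert_{C^1(\delta)}$ induce the same topology (as remarked before the lemma), it suffices to work with one fixed $\delta>0$. So suppose $f_n \in \Omega(r,W)$ and $f_n \to f$ in $\lVert\cdot\rVert_{C^1(\delta)}$. By definition of this norm, $\lVert f_n - f\rVert_\infty \to 0$ and $\lVert Df_n - Df\rVert_\infty \to 0$; in particular $f_n \to f$ uniformly and $Df_n \to Df$ uniformly, and $f$ is automatically $C^1$ since $C^1(M,V)$ is complete under this norm.

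The first thing to verify is that the limit $f$ actually maps into $W$. For each $m \in M$ we have $f_n(m) \in W$ and $f_n(m) \to f(m)$ in $\mathbb{R}^N$; since $W$ is closed in the Euclidean topology, $f(m) \in W$. Hence $f \in C^1(M,W)$.

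The second thing is the derivative bound. For each $m \in M$ and each tangent vector $v \in T_m M$, $\lVert Df_n(m) v\rVert_2 \le r\,\sqrt{g(m)(v,v)}$ because $\lVert Df_n\rVert_\infty \le r$. Passing to the limit, using that $Df_n(m) \to Df(m)$ (uniform convergence of the derivatives gives pointwise convergence, and evaluation at a fixed $v$ is continuous), we get $\lVert Df(m) v\rVert_2 \le r\,\sqrt{g(m)(v,v)}$ for every $v$, hence $\lVert Df(m)\rVert_2 \le r$ by taking the supremum over $v$, and then $\lVert Df\rVert_\infty \le r$ by taking the supremum over $m$. Therefore $f \in \Omega(r,W)$, which shows $\Omega(r,W)$ is closed.

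I do not expect any serious obstacle here; the only mild subtlety is making sure the pointwise limit of the differentials really is the differential of the limit, which is handled by completeness of $(C^1(M,V),\lVert\cdot\rVert_{C^1(\delta)})$ cited from \cite{https://doi.org/10.1002/zamm.19690490315} (so no independent $\varepsilon$–$\delta$ argument with the mean value theorem is needed), together with the observation that all the $\lVert\cdot\rVert_{C^1(\delta)}$ norms are topologically equivalent so the choice of $\delta$ is immaterial. Everything else is a routine "closed set is preserved under pointwise limits" argument applied in the two slots (values in $W$, and the norm bound on $Df$).
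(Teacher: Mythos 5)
Your proof is correct and follows essentially the same route as the paper: both hinge on the fact that convergence in $\lVert \cdot \rVert_{C^1(\delta)}$ controls both $\lVert \cdot \rVert_{\infty}$ and $\lVert D(\cdot) \rVert_{\infty}$, so the two defining conditions (values in the closed set $W$, and $\lVert Df \rVert_{\infty} \leq r$) pass to the limit; the paper merely packages this as $\Omega(r,W) = \Omega(r) \cap C^1(M,W)$, proving $\Omega(r)$ closed by exhibiting an open ball in its complement and $C^1(M,W)$ closed by the same sequential argument you give. One minor remark: since closedness is tested inside the ambient space $C^1(M,\mathbb{R}^N)$ equipped with the $C^1(\delta)$ norm, the limit $f$ is already a $C^1$ map by hypothesis, so your appeal to completeness is unnecessary (though harmless).
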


\begin{proof}
    $\Omega(r,W) = \Omega(r) \cap C^1(M,W)$ where
    \begin{align*}
        \Omega(r) = \{ f \in C^1(M,\mathbb{R}^N) \ | \ \lVert Df \rVert_{\infty} \leq r \}
    \end{align*}
    so it suffices to the show that both $\Omega(r)$ and $C^1(M,W)$ are closed. We will show $\Omega(r)$ is closed by proving that the complement $C^1(M,\mathbb{R}^N) \backslash \Omega(r)$ is open. We will show that for an arbitrary point in the complement $f \in C^1(M,\mathbb{R}^N) \backslash \Omega(r)$ there exists an $\epsilon > 0$ such that the ball of radius $\epsilon$ centred at $f$ denoted $B_{C^1(\delta)}(f,\epsilon)$ is a subset of the complement $C^1(M,\mathbb{R}^N) \backslash \Omega(r)$. We fix
    \begin{align*}
        \epsilon = \delta(\lVert Df \rVert_{\infty} - r)
    \end{align*}
    and then for any $g \in C^1(M,\mathbb{R}^N) / \Omega(r)$
    \begin{align*}
        \lVert Df \rVert_{\infty} &= \lVert Df + Dg - Dg \rVert_{\infty} \\
        &\leq \lVert Df - Dg \rVert_{\infty} + \lVert Dg \rVert_{\infty} \\
        &\leq \lVert Df - Dg \rVert_{\infty} + \frac{1}{\delta} \lVert f - g \rVert_{\infty} + \lVert Dg \rVert_{\infty} \\
        &= \frac{1}{\delta} \lVert f - g \rVert_{C^1(\delta)} + \lVert Dg \rVert_{\infty} \\
        &< \frac{\epsilon}{\delta} + \lVert Dg \rVert_{\infty} \\
        &= \lVert Df \rVert_{\infty} - r + \lVert Dg \rVert_{\infty}
    \end{align*}
    using the definition of the $C^1(\delta)$ norm directly. So $\lVert Dg \rVert_{\infty} > r$ hence $B_{C^1(\delta)}(f,\epsilon) \subset C^1(M,\mathbb{R}^N) \backslash \Omega(r)$. Next, we will show that $C^1(M,W)$ is closed by showing that for an arbitrary sequence $(f_n \in C^1(M,W))_{n \in \mathbb{N}}$ that converges to $f^* \in C^1(M,\mathbb{R}^N)$ it follows that $f^* \in C^1(M,W)$. If $(f_n \in C^1(M,W))_{n \in \mathbb{N}}$ converges to $f^* \in C^1(M,\mathbb{R}^N)$ then for all $\epsilon > 0$ there exists $N(\epsilon) \in \mathbb{N}$ such that, for any $m \in M$
    \begin{align*}
        \rVert f(m) - f^*(m) \lVert &\leq \rVert f - f \lVert_{\infty} \\
        &\leq \rVert f - f^* \lVert_{\infty} + \delta \lVert Df - Df^* \rVert_{\infty} \\
        &= \lVert f - f^* \rVert_{C^1(\delta)} < \epsilon
    \end{align*}
    so $f(m) \to f^*(m)$ and $W$ is closed so $f^*(m) \in W$. This holds for arbitrary $m \in M$ so it follows that $f^* \in C^1(M,W)$. 
\end{proof}

In what follows we will use the notation $D_x F(x,z)$, $D_z F(x,z)$ to denote the partial derivatives of the reservoir map $F$ with respect to $x$ and $z$, respectively.

\begin{lemma}
    Let $F \in C^1(V \times \omega(M), \mathbb{R}^N)$ be a $(V,\omega(M))$-locally contracting reservoir map. Let
    \begin{align*}
        c_x := \sup_{(x,z) \in V \times \omega(M)} \lVert D_x F(x,z) \rVert_2, \qquad c_z := \sup_{(x,z) \in V \times \omega(M)} \lVert D_z F(x,z) \rVert_2,
    \end{align*}
    and and choose $r$ such that
    \begin{align}
        r > \frac{c_z\lVert D\omega \rVert_{\infty}}{1-c_x \lVert T\phi^{-1}\rVert_{\infty}} . \label{r_condition}
    \end{align}
    Let $\Psi : \Omega(r,\omega(M)) \to C^1(M,\mathbb{R}^N)$ be defined by
    \begin{align*}
        \Psi(h) = F(h\circ\phi^{-1},\omega).
    \end{align*}
    Then $\Psi(\Omega(r,\omega(M))) \subset \Omega(r,\omega(M))$ and hence the map $\Psi : \Omega(r,\omega(M)) \to \Omega(r,\omega(M))$ is well defined 
\end{lemma}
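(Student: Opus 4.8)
The statement to establish is that $\Psi$ carries the set $\Omega(r,V)=\{f\in C^1(M,V):\lVert Df\rVert_\infty\le r\}$ into itself (this is the natural invariant set for $\Psi$, since $h\circ\phi^{-1}$ must land in the domain $V$ of $F(\cdot,\cdot)$ in its first slot, and by the previous lemma, applied with the closed set $V$, it is a closed convex subset of the Banach space $C^1(M,V)$). Membership in $\Omega(r,V)$ asks for two things — that $\Psi(h)$ be a $C^1$ map into $V$, and that $\lVert D\Psi(h)\rVert_\infty\le r$ — and the plan is to check these in turn. The first is essentially free: $\Psi(h)\colon m\mapsto F\bigl(h\circ\phi^{-1}(m),\omega(m)\bigr)$ is a composition of $C^1$ maps ($\phi^{-1}\in\text{Diff}^1(M)$, $h\in C^1(M,V)$, and $\omega\in C^1(M,\mathbb{R}^d)$) post-composed with $F\in C^1(V\times\omega(M),\mathbb{R}^N)$, hence is itself $C^1$; and since $F$ is $(V,\omega(M))$-locally state contracting it satisfies $F(x,z)\in V$ for every $(x,z)\in V\times\omega(M)$, so $\Psi(h)$ does take values in $V$. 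Only the derivative bound requires real work.

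For the derivative bound I would differentiate by the chain rule: for $m\in M$ and $v\in T_mM$,
\[
D\Psi(h)(m)\,v = D_xF\bigl(h\circ\phi^{-1}(m),\omega(m)\bigr)\,Dh\bigl(\phi^{-1}(m)\bigr)\,T_m(\phi^{-1})\,v + D_zF\bigl(h\circ\phi^{-1}(m),\omega(m)\bigr)\,D\omega(m)\,v .
\]
Passing to operator norms — each taken with respect to the Riemannian metric on the relevant tangent spaces and the Euclidean norm on $\mathbb{R}^N$ — and using submultiplicativity together with the defining suprema $c_x,c_z$, the hypothesis $\lVert Dh\rVert_\infty\le r$, the estimate $\lVert T_m(\phi^{-1})\rVert\le\lVert T\phi^{-1}\rVert_\infty$, and $\lVert D\omega(m)\rVert_2\le\lVert D\omega\rVert_\infty$, one obtains, for every $m$,
\[
\lVert D\Psi(h)(m)\rVert_2 \le c_x\,r\,\lVert T\phi^{-1}\rVert_\infty + c_z\,\lVert D\omega\rVert_\infty ,
\]
and hence $\lVert D\Psi(h)\rVert_\infty \le c_x r\lVert T\phi^{-1}\rVert_\infty + c_z\lVert D\omega\rVert_\infty$.

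Finally I would invoke condition \eqref{r_condition}. Recalling that $c_x\lVert T\phi^{-1}\rVert_\infty<1$ (a running assumption, which is exactly what makes the denominator in \eqref{r_condition} positive), that condition rearranges to $c_z\lVert D\omega\rVert_\infty < r\bigl(1-c_x\lVert T\phi^{-1}\rVert_\infty\bigr)$, i.e. to $c_x r\lVert T\phi^{-1}\rVert_\infty + c_z\lVert D\omega\rVert_\infty < r$; combining with the previous display gives $\lVert D\Psi(h)\rVert_\infty < r$, so $\Psi(h)\in\Omega(r,V)$, as required. I do not anticipate a deep obstacle: the argument is a chain-rule computation followed by submultiplicativity of operator norms and an algebraic rearrangement. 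The only point genuinely calling for care is the norm bookkeeping inside the chain rule — one must be sure each factor's operator norm is measured against the correct inner products on $T_mM$, $T_{\phi^{-1}(m)}M$ and $\mathbb{R}^N$ so that the product bound $\lVert Dh(\phi^{-1}(m))\,T_m\phi^{-1}\rVert_2 \le \lVert Dh\rVert_\infty\,\lVert T\phi^{-1}\rVert_\infty$ is legitimate — together with the minor check that $F\in C^1$ guarantees $D_xF$ and $D_zF$ are continuous, so that $\Psi(h)$ really is of class $C^1$.
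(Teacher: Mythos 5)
Your proposal is correct and follows essentially the same route as the paper: a chain-rule computation of $D\Psi(h)$, submultiplicative operator-norm bounds giving $\lVert D\Psi(h)\rVert_{\infty}\leq c_x r\lVert T\phi^{-1}\rVert_{\infty}+c_z\lVert D\omega\rVert_{\infty}$, and the rearranged condition \eqref{r_condition} to conclude this is $<r$, together with the locally state contracting property to keep the values of $\Psi(h)$ in the admissible set. Your reading of the target set as $\Omega(r,V)$ rather than the paper's literal $\Omega(r,\omega(M))$ (a notational slip, since $\Psi(h)$ takes values in $V\subset\mathbb{R}^N$) is the correct interpretation and is exactly what the paper's own argument uses when it asserts membership via the $(V,\omega(M))$-local contraction property.
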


\begin{proof}
    Let $h \in \Omega(r,\omega(M))$. It suffices to show that $\Psi(h)(m) \in \omega(M)$ for all $m \in M$ and that $\lVert D(\Psi(h)) \rVert_{\infty} \leq r$. We easily verify that $\Psi(h)(m) \in \omega(M)$ for all $m \in M$ because $F$ is $(V,\omega(M))$-locally contracting. Now, by direct computation of the derivative
    \begin{align*}
        &\lVert D(\Psi(h)) \rVert_{\infty} = \lVert DF(h\circ\phi^{-1},\omega) \rVert_{\infty} \\
        &= \sup_{m \in M}\lVert D_x F(h\circ\phi^{-1}(m),\omega(m)) (Dh)\circ\phi^{-1}(m)T_m\phi^{-1} + D_z F(h\circ\phi^{-1}(m),\omega(m)) D\omega(m) \rVert_2 \\
        &\leq c_x \lVert Dh \rVert_{\infty} \lVert T\phi^{-1} \rVert_{\infty} + c_z \lVert D\omega \rVert_{\infty} \\
        &= c_x r \lVert T\phi^{-1} \rVert_{\infty} + c_z \lVert D\omega \rVert_{\infty} < r. \text{ (by \eqref{r_condition})}
    \end{align*}
\end{proof}

We are almost ready to prove the result. For what follows we let $D_{xx}F(x,z)$ and $D_{xz}F(x,z)$ denote the second partial derivatives of $F$ and let
\begin{align*}
    c_{xx} := \sup_{(x,z) \in V \times \omega(M)} \lVert D_{xx} F(x,z) \rVert_2, \qquad \textrm{and} \qquad c_{xz} := \sup_{(x,z) \in V \times \omega(M)} \lVert D_{xz} F(x,z) \rVert_2.
\end{align*}
Then it follows from the mean value theorem, which applies because $V$ is convex, that for any $x,y \in V$ and $z \in \omega(M)$ that
\begin{align*}
    \lVert D_x(F(x,z) - F(y,z)) \rVert \leq c_{xx}\lVert x - y \rVert, \qquad \textrm{and} \qquad \lVert D_z(F(x,z) - F(y,z)) \rVert \leq c_{xz}\lVert x - y \rVert.
\end{align*}

\begin{proof}
    (Proof of Theorem \ref{SSM_thm}) We have shown that $\Psi : \Omega(r,\omega(M)) \to \Omega(r,\omega(M))$ is a well defined map on a closed subset of the Banach space $C^1(M,\mathbb{R}^N)$. We will now show that for sufficiently small $\delta_0 > 0$ the map $\Psi$ is a contraction in the $\lVert \cdot \rVert_{C^1(\delta_0)}$ norm, and therefore admits a unique fixed point $f \in \Omega(r,\omega(M)) \subset C^1(M,\mathbb{R}^N)$ by the \cite{Banach1922} Fixed Point Theorem. The fixed point $f$ satisfies
    \begin{align*}
        \Psi(f) = F(f\circ\phi^{-1},\omega) = f.
    \end{align*}
    Furthermore, $F$ is $(V,\omega(M))$-locally contracting and therefore has the $(V,\omega(M))$-local ESP, so for all initial $x_0 \in V$ and $m_0 \in M$ and $y_0 = f\circ\phi^{-1}(m_0)$ the sequences
    \begin{align*}
        x_{k+1} = F(x_k, \omega \circ \phi^k(m_0)) \qquad y_{k+1} = F(y_k, \omega \circ \phi^k(m_0))
    \end{align*}
    satisfy $x_{k+1} - y_{k+1} \to 0$ as $k \to \infty$. Now $y_{k+1} = f\circ\phi^k(m_0)$ so $x_{k+1} \to f\circ\phi^k(m_0)$. All that remains to show is that $\Psi$ is a contraction in $\lVert \cdot \rVert_{C^1(\delta_0)}$ for sufficiently small $\delta_0$. We do this by considering the two parts of the $C^1(\delta_0)$ norm separately. First, we observe that
    \begin{align*}
                \lVert \Psi(h) - \Psi(g) \rVert_{\infty} &= \lVert F(h\circ \phi^{-1}, \omega) - F(g\circ \phi^{-1}, \omega) \rVert_{\infty} \\
        &\leq c\lVert h\circ \phi^{-1} - g\circ \phi^{-1}) \rVert_{\infty} \\
        &= c \lVert h - g \rVert_{\infty}.
    \end{align*}
Second, we compute directly that
    \begin{align*}
        &\qquad \lVert D\Psi(g) -D\Psi(h) \rVert_{\infty} \\
        &= \lVert DF(g\circ\phi^{-1},\omega) -DF(h\circ\phi^{-1},\omega) \rVert_{\infty} \\
        &= \sup_{m \in M} \lVert D_xF(g\circ\phi^{-1}(m),\omega(m))(Dg)\circ\phi^{-1}(m)T_m\phi^{-1} + D_zF(g\circ\phi^{-1}(m),\omega(m))D\omega(m) \\
        &\qquad - D_xF(h\circ\phi^{-1}(m),\omega(m))(Dh)\circ\phi{-1}(m)T_{m}\phi^{-1} - D_zF(h\circ\phi^{-1}(m),\omega(m))D\omega(m) \rVert_{2} \\
        &\leq \sup_{m \in M}\lVert D_xF(g\circ\phi^{-1}(m),\omega(m))(Dg)\circ\phi^{-1}(m)T_{m}\phi^{-1} \\ 
        &\qquad - D_xF(h\circ\phi^{-1}(m),\omega(m))(Dh)\circ\phi^{-1}(m)T_{m}\phi^{-1} \rVert_{2} \\
        &\qquad + \sup_{m \in M}\lVert D_zF(g\circ\phi^{-1}(m),\omega(m))D\omega(m) - D_zF(h\circ\phi^{-1}(m),\omega(m))D\omega(m) \rVert_{2} \\
        &\leq \sup_{m \in M}\lVert D_xF(g\circ\phi^{-1}(m),\omega(m))(Dg)\circ\phi^{-1}(m)T_{m}\phi^{-1} \\
        &\qquad - D_xF(h\circ\phi^{-1}(m),\omega(m))(Dh)\circ\phi^{-1}(m)T_{m}\phi^{-1} \rVert_{2} \\ 
        &\qquad + \lVert D_z(F(g\circ\phi^{-1},\omega) - F(h\circ\phi^{-1},\omega)) \rVert_{\infty} \lVert D\omega \rVert_{\infty} \\
        &\leq \sup_{m \in M}\lVert D_xF(g\circ\phi^{-1}(m),\omega(m))(Dg)\circ\phi^{-1}(m)T_m\phi^{-1} \\ 
        &\qquad - D_xF(h\circ\phi^{-1}(m),\omega(m))(Dh)\circ\phi^{-1}(m)T_m\phi^{-1} \rVert_{2} \\ 
        &\qquad + c_{xz} \lVert g - h \rVert_{\infty} \lVert D\omega \rVert_{\infty} \\
        &= \sup_{m \in M}\lVert D_xF(g\circ\phi^{-1}(m),\omega(m))(Dg)\circ\phi^{-1}(m)T_{m}\phi^{-1} \\ 
        &\qquad-  D_xF(h\circ\phi^{-1}(m),\omega(m))(Dh)\circ\phi^{-1}(m)T_{m}\phi^{-1} \\
        &\qquad + D_xF(g\circ\phi^{-1}(m),\omega(m))(Dh)\circ\phi^{-1}(m)T_{m}\phi^{-1} \\ &\qquad - D_xF(g\circ\phi^{-1}(m),\omega(m))(Dh)\circ\phi^{-1}(m)T_m\phi^{-1} \rVert_{2} \\
        &\qquad + c_{xz} \lVert g - h \rVert_{\infty} \lVert D\omega \rVert_{\infty} \\
        &\leq \sup_{m \in M}\lVert D_xF(g\circ\phi^{-1}(m),\omega(m))(Dg)\circ\phi^{-1}(m)T_{m}\phi^{-1} \\
        &\qquad - D_xF(g\circ\phi^{-1}(m),\omega(m))(Dh)\circ\phi^{-1}(m)T_m\phi^{-1} \rVert_{2} \\
        &\qquad \sup_{m \in M}\lVert D_xF(h\circ\phi^{-1}(m),\omega(m))(Dh)\circ\phi^{-1}(m)T_{m}\phi^{-1} \\
        &\qquad - D_xF(g\circ\phi^{-1}(m),\omega(m))(Dh)\circ\phi^{-1}(m)T_{m}\phi^{-1}
        \rVert_{2} \\
        &\qquad + c_{xz} \lVert g - h \rVert_{\infty} \lVert D\omega \rVert_{\infty}
\end{align*}
\begin{align*}
        &\leq \lVert D_xF(g\circ\phi^{-1},\omega) \rVert_{\infty}\lVert Dg - Dh \rVert_{\infty} \lVert T\phi^{-1} \rVert_{\infty} \\
        &\qquad + \lVert D_x(F(h \circ \phi^{-1},\omega) -F(g \circ \phi^{-1},\omega)) \rVert_{\infty} \lVert Dh \rVert_{\infty} \lVert T\phi^{-1} \rVert_{\infty} \\
        &\qquad + c_{xz} \lVert g - h \rVert_{\infty} \lVert D\omega \rVert_{\infty} \\
        &\leq c_x \lVert Dg - Dh \rVert_{\infty} \lVert T\phi^{-1} \rVert_{\infty} \\
        &\qquad + c_{xx} \lVert h - g \rVert_{\infty} \lVert Dh \rVert_{\infty} \lVert T\phi^{-1} \rVert_{\infty} \\
        &\qquad + c_{xz} \lVert g - h \rVert_{\infty} \lVert D\omega \rVert_{\infty} \\
        &\leq c_x \lVert Dg - Dh \rVert_{\infty} \lVert T\phi^{-1} \rVert_{\infty} \\
        &\qquad + c_{xx} \lVert h - g \rVert_{\infty} r \lVert T\phi^{-1} \rVert_{\infty} \\
        &\qquad + c_{xz} \lVert g - h \rVert_{\infty} \lVert D\omega \rVert_{\infty} \\
        &= (r c_{xx} \lVert T\phi^{-1} \rVert_{\infty} + c_{xz} \lVert D\omega \rVert_{\infty})\lVert g - h \rVert_{\infty} + c_x \lVert T\phi^{-1} \rVert_{\infty} \lVert Dg - Dh \rVert_{\infty}. \\
    \end{align*}
    Now we choose
    \begin{align*}
        \delta_0 < \frac{1-c}{rc_{xx}\lVert T\phi^{-1} \rVert_{\infty} + c_{xz}\lVert D\omega \rVert_{\infty}}
    \end{align*}
    so that, combining the two estimates above we obtain
    \begin{align*}
        \lVert \Psi(g) -\Psi(h) \rVert_{C^1(\delta_0)} &= \lVert \Psi(g) -\Psi(h) \rVert_{\infty} + \delta_0 \lVert D\Psi(g) -D\Psi(h) \rVert_{\infty} \\
        &= (c+\delta_0(rc_{xx}\lVert T\phi^{-1} \rVert_{\infty} + c_{xz}\lVert D\omega \rVert_{\infty}))\lVert g - h \rVert_{\infty} \\ 
        &\qquad + \delta_0 c_x \lVert T\phi^{-1} \rVert_{\infty} \lVert Dg - Dh \rVert_{\infty} \\
        &\leq (c+\delta_0(rc_{xx}\lVert T\phi^{-1} \rVert_{\infty} + c_{xz}\lVert D\omega \rVert_{\infty})) \lVert g - h \rVert_{C^1(\delta_0)}.
    \end{align*}
    By our choice of $\delta_0$ the constant $(c+\delta_0(rc_{xx}\lVert T\phi^{-1} \rVert_{\infty} + c_{xz}\lVert D\omega \rVert_{\infty})) \in (0,1)$, so we have shown that $\Psi$ is a contracting map and the proof is complete.
\end{proof}

\section{Curious examples}

To create some further intuition for Theorem \ref{SSM_thm}, we present four more examples of reservoir maps $F$ which are locally state contracting and admit local GSs. These examples are related to the work by \cite{GREBOGI1984261}. In the first example (which appears in \cite*{chaos_on_compacta}) we consider the Lorenz system \eqref{eqn::Lorenz} once again. This time we feed the $\xi$ observations into a reservoir map that admits 8 distinct GSs.

\subsubsection{1. Finitely many GSs}

We simulated a 4000 point (40 time units) trajectory of the Lorenz system originating from the initial point $m_0 = (0,1,1.05)$ with time-step $h = 0.01$. Figure \ref{Lorenz_system} shows this trajectory for times $t \in (20,40)$. 
\begin{figure}
\centering
    \includegraphics[width=0.9\textwidth]{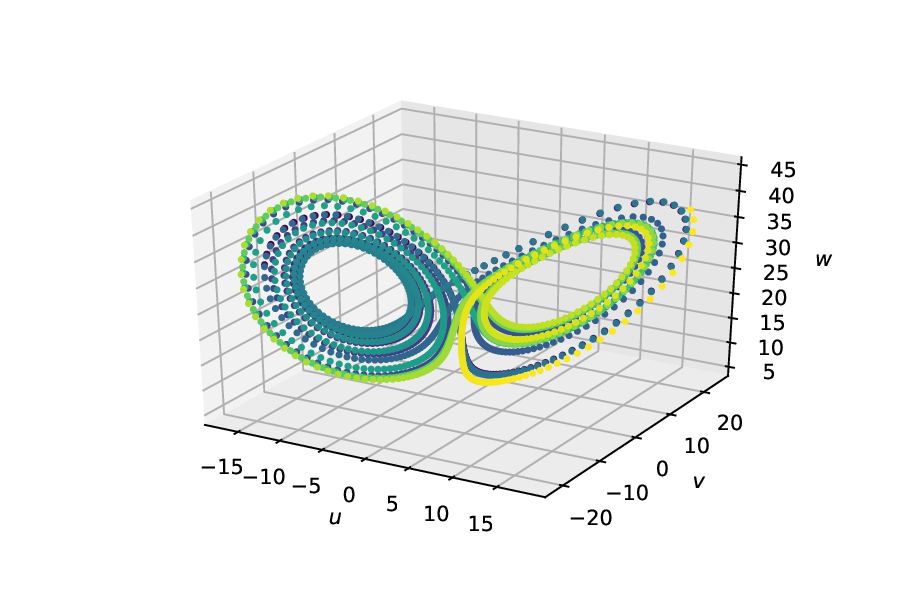}
\caption{\label{Lorenz_system}A trajectory of the Lorenz system. $(u,v,w)$ correspond to the variables $(\xi,\upsilon,\zeta)$ used in equations \eqref{eqn::Lorenz}.}
\end{figure}
If we observe only the $\xi$-component of this trajectory, then the observation function is $\omega(\xi,\upsilon,\zeta) = \xi$. The corresponding observed trajectory is illustrated in Figure \ref{x_trajectory}.

\begin{figure}
    \centering
    \includegraphics[width=0.9\textwidth]{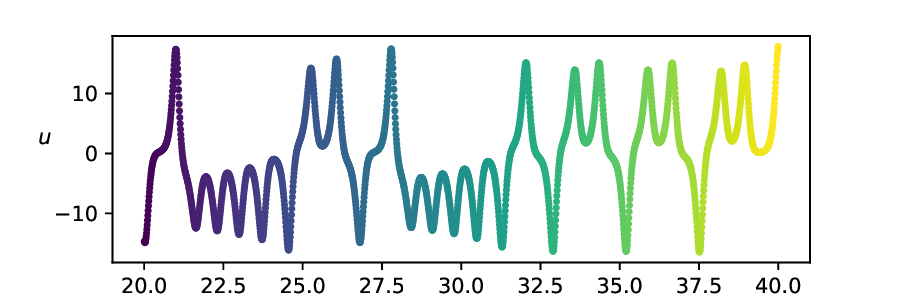}
\caption{\label{x_trajectory}$\xi$-component of a trajectory of the Lorenz system. The horizontal axis is time $t \in (20,40)$.}
\end{figure}

We now define the reservoir map  $F : \mathbb{R}^3 \times \mathbb{R} \to \mathbb{R}^3$ given by
\begin{align*}
    F(u,v,w;z)=(\mathrm{sgn}(u)|u|^\alpha,\mathrm{sgn}(v)|v|^\alpha,\mathrm{sgn}(w)|w|^\alpha) + \lambda(\sin(k z),\cos(k z), \sin^2(k z))
\end{align*}
with $\lambda, k \geq 0$ and $\alpha \in (0,1)$. If we set $\lambda = 0$ then the reservoir map is autonomous and has 8 stable fixed points at $(\pm 1, \pm 1, \pm 1)$ taking all sign combinations.
A cross section of the phase portrait of this autonomous system at the $w = 1$ plane is shown in Figure \ref{phase_portrait}. 

\begin{figure}
\includegraphics[width=0.9\textwidth]{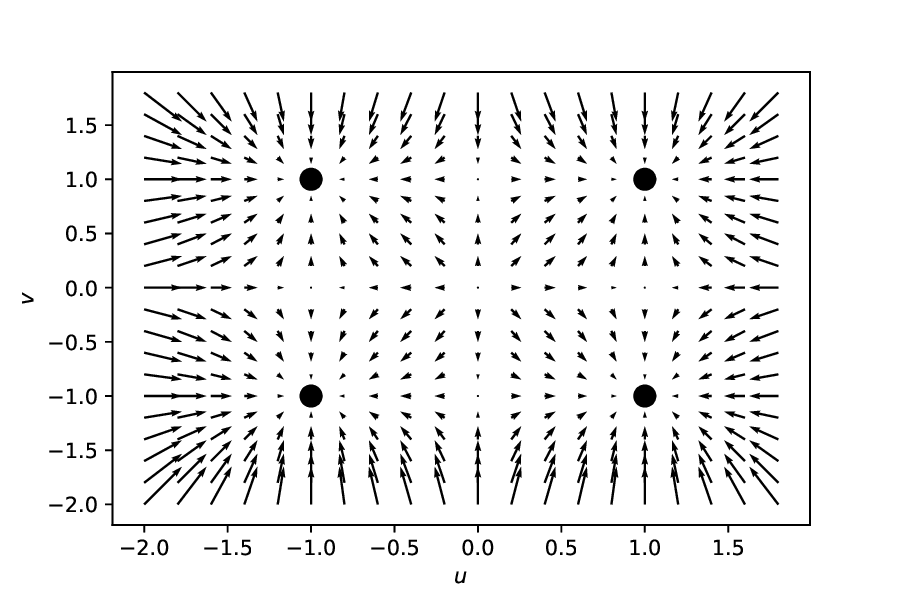}
\caption{\label{phase_portrait}
A phase portrait of the autonomous system $F(u,v,w;z) = (u^{\alpha}, v^{\alpha}, w^{\alpha})$ at the cross section $w = 1$. The four stable fixed points in this cross section are shown.}\end{figure}

If we now choose $\lambda = 0.009$, $k = 0.1$ and $\alpha = 0.9$, then we can construct a box $V_1 = [0.9,1.1] \times [0.9,1.1] \times [0.9,1.1]$ containing the fixed point $(1,1,1)$. We can create a similar box about each of the other seven fixed points and denote them $V_2, V_3, \ldots, V_8$. We have by construction for each $i = 1, \ldots , 8$ that
$
    F(V_i,\omega(\mathbb{R}^3)) \subset V_i
$
and that $F$ is state contracting on each box $V_i$. Thus, by Theorem \ref{SSM_thm} the reservoir map $F:\mathbb{R}^N \times \mathbb{R}^d \to \mathbb{R}^N $ is $(V_i,\omega(M))$-locally state contracting for each $i$, so there exist GSs $f_i \in C^1(M,V_i)$ for each $i$.

To observe the images of the GSs $f_i \in C^1(M,V_i)$ we computed the states
\begin{align}
    x_{k+1} = F(x_k,\omega(\phi^k(m_0))), \qquad y_{k+1} = F(y_k,\omega(\phi^k(m_0))), \label{eqn:ssequences}
\end{align}
from two different initial states $x_0 = (1,1,1) \in V_1$ and $y_0 = (-1,1,1) \in V_2$. For any other initial points $x_0' \in V_1$ and $y_0' \in V_2$ we expect that the sequences \eqref{eqn:ssequences} will converge to $f_1 \circ \phi^k(m_0)$ and $f_2 \circ \phi^k(m_0)$ respectively. Figure \ref{reservoir_lorenz} illustrates the states \eqref{eqn:ssequences} 
after a burn-in time  $t = (0,20)$ so that the state sequences very closely approximate $f_1 \circ \phi^k(m_0)$ and $f_2 \circ \phi^k(m_0)$ respectively.

\begin{figure}
\centering
    
  \begin{subfigure}[b]{0.95\textwidth}
        \includegraphics[width=\textwidth]{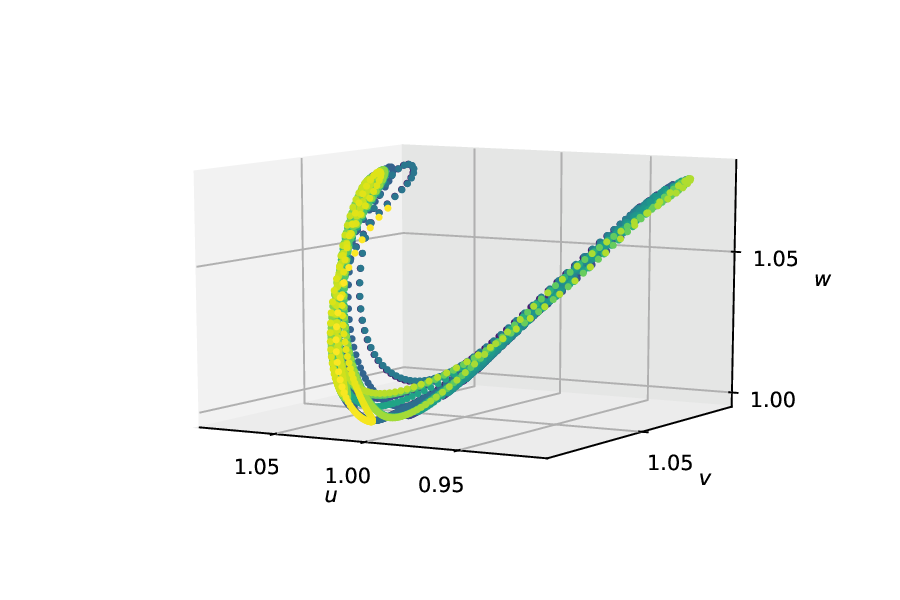}
        \caption{Image of the GS that contains the point $ (1,1,1)$ in its image.}
        \label{reservoir_lorenz_111}
  \end{subfigure}
  \begin{subfigure}[b]{0.95\textwidth}
        \includegraphics[width=\textwidth]{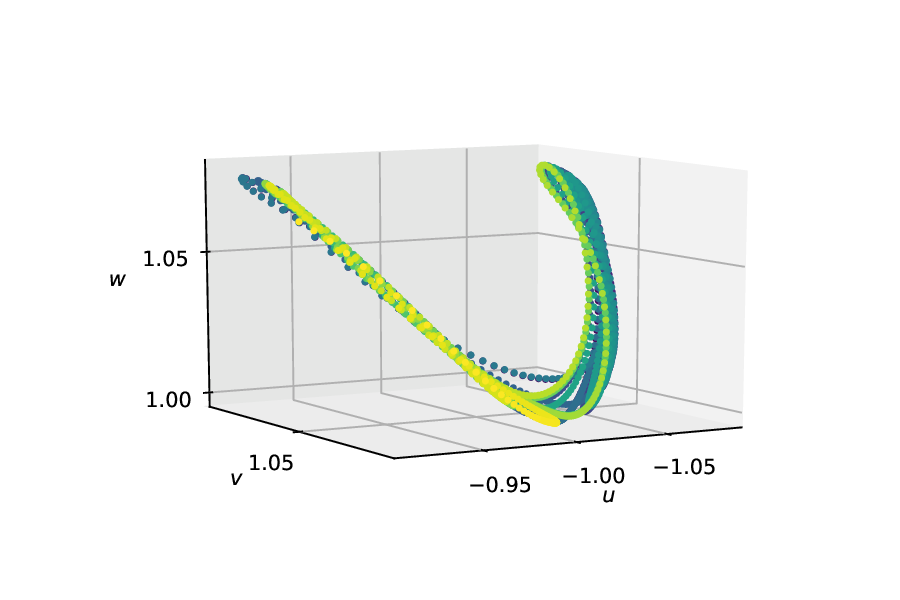}
        \caption{Image of the GS that contains the point $ (-1,1,1)$ in its image.}
        \label{reservoir_lorenz_-111}
  \end{subfigure}
\caption{\label{reservoir_lorenz}Image of the Lorenz solution under two different GSs.}
\end{figure}

\subsubsection{2. Uncountably many GSs}

Suppose $F : \mathbb{R}^2 \times \mathbb{R} \to \mathbb{R}^2$ is defined by
\begin{align*}
    F(\rho,\theta; z) = (\sqrt{\rho} + z , \theta + \delta)
\end{align*}
where $(\rho,\theta)$ is the polar coordinate representation of the state vector $x \in \mathbb{R}^2$, and $\delta \in \mathbb{R}$. Under the constant driving input $z_k = 0$ the state space system has a unstable node at the origin, and admits an attracting invariant set $\{ \rho = 1 \}$. A phase portrait of the dynamics is shown in Figure \ref{fig:stable_orbit}.

\begin{figure}
    \centering
    \begin{subfigure}[b]{0.48\textwidth}
        \includegraphics[width=\textwidth]{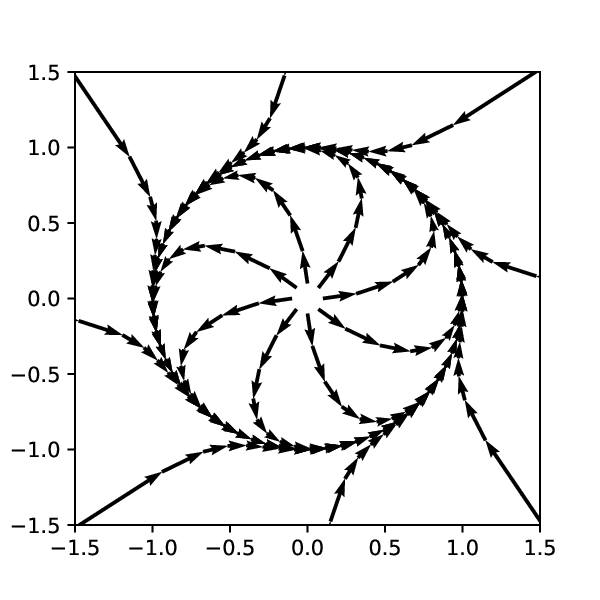}
        \caption{$F(\rho,\theta; z) = (\sqrt{\rho} + z , \theta + \delta)$.}
        \label{fig:stable_orbit}
    \end{subfigure}
    ~
    \begin{subfigure}[b]{0.48\textwidth}
        \includegraphics[width=\textwidth]{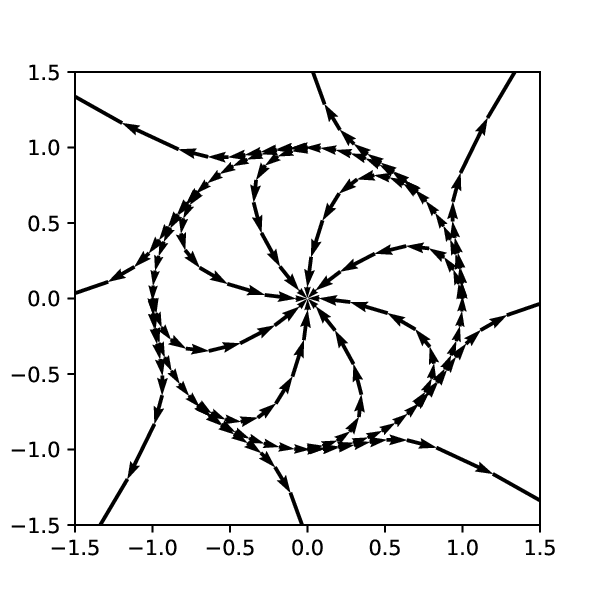}
        \caption{$F(\rho,\theta; z) = (\rho^2 + z , \theta + \delta)$.}
        \label{fig:unstable_orbit}
    \end{subfigure}
    \caption{Phase portraits of 2 reservoir maps $F : \mathbb{R}^2 \times \mathbb{R} \to \mathbb{R}^2$ under the constant driving input $z_k = 0$ for all $k$.}
    \label{fig:phase_portraits}
\end{figure}

Once again, suppose the driving input arises from $\phi \in \text{Diff}^1(S^1)$ a diffeomorphism on the circle defined by $\phi(m) = m + \epsilon$ for $\epsilon > 0$. This time, let the observation function $\omega \in C^1(S^1,\mathbb{R})$ be defined $\omega(m) = \frac{1}{5}\sin(m)$.
Suppose first of all that $\delta = 0$. For each $\theta_0 \in [-\pi,\pi)$ let
\begin{align*}
    V(\theta_0) = \{ (\rho,\theta) \ | \ \rho \geq 1/2 \ , \theta = \theta_0  \}.
\end{align*}
Note that $V(\theta_0) \subset \mathbb{R}^2$ is closed and that for all $x,y \in V(\theta_0)$ and $z \in \omega(S^1)$
    \begin{enumerate}
        \item $\exists \, c \in (0,1)$ such that $\lVert F(x,z) - F(y,z) \rVert \leq c \lVert x - y \rVert $
        \item $F(x,z) \in V(\theta_0)$
    \end{enumerate}
so we have the $(V(\theta_0),\omega(S^1))$-local state contraction property.
Hence, by Theorem \ref{SSM_thm}, there exists a unique solution $f_{\theta_0} \in C^1(S^1,V(\theta_0))$ of the equation
        \begin{align*}
            f_{\theta_0} = F( f_{\theta_0} \circ \phi^{-1} , \omega )
        \end{align*}
        such that for all initial $x_0 \in V_{\theta_0}$ and $m_0 \in S^1$ the sequence
        \begin{align*}
            x_{k+1} = F(x_k, \omega \circ \phi^k(m_0))
        \end{align*}
        converges to $f_{\theta_0} \circ \phi^k(m_0)$ as $k \to \infty$. 
This holds for each $\theta_0 \in [\pi,\pi)$, so there is an uncountable set of GSs parametrised by $\theta_0$.
\label{sec:uncountably_many_SSMs}

\subsubsection{3. A bounded trajectory that does not admit a GS}

Let $n \in \mathbb{N}$, $\epsilon = 2\pi / n$ and $\delta = 1$. Notice $\phi^n(m_0) = m_0$. If $x_0$ represents the polar angle of an initial reservoir state $x_0$, and proceeding states are defined in the usual manner
\begin{align*}
    x_{k+1} = F(x_k,\omega\circ\phi^k(m_0))
\end{align*}
then the polar angle of the $n$th reservoir state $x_n$ is $\theta_n = \theta_0 + n \delta$.

We will show that no GS $f$ exists for initial points
on $\{ \rho = 1 \}$. Suppose for contradiction that such a GS does exist. Then $f(m_0) = f \circ \phi^{n}(m_0)$ for all $m_0$, so the polar angles $\theta_0$ and $\theta_n$ of $f(m_0)$, and $f \circ \phi^{n}(m_0)$ must be equal. This leaves us with a contradiction, because $n \delta$ is not a multiple of $2\pi$. We can see however that the trajectory $\{x_k\}_{k \in \mathbb{N}}$ remains bounded.

\subsubsection{4. Some initial conditions converge to the image of a GS}

Suppose $F : \mathbb{R}^2 \times \mathbb{R} \to \mathbb{R}^2$ is defined by
\begin{align*}
    F(\rho,\theta; z) = (\rho^2 + z , \theta + \delta)
\end{align*}
where $\delta > 0$. A phase portrait of this system under the constant driving input $z_k = 0$ for all $k$ is shown in Figure \ref{fig:unstable_orbit}. Once more, suppose the driving input arises from $\phi \in \text{Diff}^1(S^1)$ a diffeomorphism on the circle defined by $\phi(m) = m + \epsilon$ for $\epsilon > 0$, and let the observation function $\omega \in C^1(S^1,\mathbb{R})$ be defined by $\omega(m) = \frac{1}{5}\sin(m)$. Let 
\begin{align*}
    V = \{ (\rho,\theta) \ | \ \rho \leq 1/2 \}.
\end{align*}
Note that $V \subset \mathbb{R}^2$ is closed and that for all $x,y \in V$ and $z \in \omega(S^1)$
    \begin{enumerate}
        \item $\exists \, c \in (0,1)$ such that $\lVert F(x ; z) - F(y ; z) \rVert \leq c \lVert x - y \rVert$, 
        \item $F(x;z) \in V$,
    \end{enumerate}
so we have the $(V,\omega(M))$-local state contracting property.
Hence, by Theorem \ref{SSM_thm}, there exists a unique solution $f \in C^1(S^1,V)$ of the equation
        \begin{align*}
            f = F( f \circ \phi^{-1} , \omega )
        \end{align*}
        such that for all initial $x_0 \in V$ and $m_0 \in S^1$ the sequence
        \begin{align*}
            x_{k+1} = F(x_k, \omega \circ \phi^k(m_0))
        \end{align*}
        converges to $f \circ \phi^k(m_0)$ as $k \to \infty$. 
This $f$ is a GS. Now suppose we choose an initial reservoir state $x_0$ such that
\begin{align*}
    \rho_0 := \lVert x_0 \rVert > 2.
\end{align*}
Then, for any initial point $m_0$, the reservoir states $x_k$ that satisfy
\begin{align*}
    x_{k+1} = F(x_k, \omega \circ \phi^k(m_0))
\end{align*}
grow without bound. Thus any GS with image intersecting
\begin{align*}
    V = \{ (\rho, \theta) \ | \ \rho > 2 \}
\end{align*}
must be unbounded. But this is impossible, as the GS is continuous and defined on the compact manifold $S^1$. Thus we must conclude that any GS that exists cannot have an image that intersects $V$. In other words, a sequence of states originating in $V$ will not converge to the image of a GS. 

\section{Relationship between GS and the Echo Index}

We were inspired to generalise the results surrounding global GSs to local GSs after reading a paper by \cite{CENI2020132609}. The authors present \emph{uniformly attracting entire solution} (UAES) to reservoir maps and introduce the \emph{echo index}, which is closely related to local GSs. We will present just enough theory to link the GS to the Echo Index and recommend the interested reader consult \cite{CENI2020132609} for further details.

\cite{CENI2020132609} consider reservoir maps 
\begin{align}
    F : X \times U \to X \label{eqn::echo_index_system}
\end{align}
that satisfy 3 assumptions
\begin{enumerate}
    \item $F$ is continuously differentiable i.e. $F \in C^1(X \times U, X)$.
    \item For all $z \in \mathbb{R}^d$ the map $F(\cdot,z) : X \to X$ is a local diffeomorphism onto its image.
    \item $U \subset \mathbb{R}^d$ is compact and $X \subset \mathbb{R}^N$ is usually the compact closure of a $N$-dimensional Cartesian product of real intervals.
\end{enumerate}
The authors introduce the shift map $T : U^\mathbb{Z} \to U^\mathbb{Z}$ defined $T(z)_k = z_{k+1}$ for $k \in \mathbb{Z}$, the canonical projection $\pi :  U^\mathbb{Z} \to U$ defined by $\pi(z) = z_0$, and the cocycle mapping which we define below.
\begin{defn}
    (Cocycle mapping) The reservoir map \eqref{eqn::echo_index_system} can be described using a cocycle mapping $\Phi : \mathbb{Z}_0^+ \times U^{\mathbb{Z}} \times X \to X$ defined by
    \begin{align*}
        \Phi(0,z,x_0) &:= x_0  &\forall z \in U^{\mathbb{Z}}, \ x_0 \in X, \\
        \Phi(n,z,x_0) &:= F(\Phi(n-1,z,x_0),\pi\circ T^n(z)) & \forall z \in U^{\mathbb{Z}}, \ x_0 \in X, n \geq 1.
    \end{align*}
\end{defn}
\begin{defn}
    (Entire Solutions) An entire solution for the reservoir map \eqref{eqn::echo_index_system} with input $z \in \mathbb{Z}$ is a bi-infinite sequence of states $x \in X^{\mathbb{Z}}$ that satisfies \eqref{eqn::echo_index_system}. In other words 
    \begin{align}
        \Phi(s,T^m(z),x_m) = x_{m+s}
    \end{align}
    for all $m \in \mathbb{Z}$ and $s \in \mathbb{Z}_0^+$
\end{defn}

\begin{defn}
    (Positively Invariant Nonautonomous Sets) A family of nonempty compact subsets $B$ is called a positively invariant nonautonomous set for input $z \in U^{\mathbb{Z}}$ (or simply a $z$-positively invariant set) if
    \begin{align*}
        \Phi(s,T^m(z),B_m) \subset B_{s+m}
    \end{align*}
    for all $m \in \mathbb{Z}$ and $s \in \mathbb{Z}_0^+$.
\end{defn}

\begin{defn}
    (Uniformly Attracting Entire Solution) Consider a fixed input sequence $z \in U^{\mathbb{Z}}$, and entire solution $x$, and a $z$-positively invariant nonautonomous set $B$ composed of compact sets.
    \begin{enumerate}
        \item If 
        \begin{align*}
            \lim_{k \to \infty} \bigg( \sup_{j \in \mathbb{Z}} h(\Phi(k,T^j(z),B_j),x_{j+k}) \bigg) = 0
        \end{align*}
        (where $h$ is the Hausdorff semi-distance) then we say $B$ is uniformly attracted to $x$.
        \item We say that $x$ is a UAES if there is a neighbourhood $B$ of $x$ that is uniformly attracted to $x$.
    \end{enumerate}
\end{defn}

\begin{defn}
\label{defn::decomposition}
    (Decompositions) We say that the reservoir map \eqref{eqn::echo_index_system} with input $z \in U^{\mathbb{Z}}$ admits a decomposition into $n \geq 1$ UAESs if there are $n$ UAESs $x^1, \ldots , x^n$ such that for all $\eta > 0$ and $i = 1, \ldots, n$ there are neighbourhoods $B^\eta_i$ uniformly attracted by $x$ and
    \begin{align*}
        \mu\bigg( \mathbb{R}^N - \bigcup_{i=1}^n (B_{i}^{\eta})_k \bigg) < \eta \qquad \forall k \in \mathbb{Z}
    \end{align*}
    where $\mu$ is the Lebesgue measure. We say this is a \emph{proper decomposition} if in addition
    \begin{align*}
        \inf_{k \in \mathbb{Z}}\lVert x^i_k - x^j_k \rVert > 0 \qquad \forall i \neq j.
    \end{align*}
\end{defn}

\begin{defn}
    (Echo Index) We say that the reservoir map \eqref{eqn::echo_index_system} driven by input $z \in U^{\mathbb{Z}}$ has echo index $n \geq 1$ and write 
    \begin{align*}
        \mathcal{I}(z) = n
    \end{align*}
    if it admits a proper decomposition into $n$ UAESs. In this case we say \eqref{eqn::echo_index_system} has the $n$-ESP for input $z$.
\end{defn}

We can see that the work in \cite{CENI2020132609} does not reference an underlying dynamical system $\phi : M \to M$ and therefore holds for a larger class of input sequences than those generated by observations of $\phi$. GSs on the other hand are defined explicitly in terms of the dynamical system $\phi$ and observation function $\omega$, so we will need to introduce these in order to connect GSs to the Echo Index.

\begin{theorem}
    Let $M$ be a compact topological space and $\phi \in \text{Hom}(M)$. Let $\omega \in C^0(M,\mathbb{R}^d)$ be the observation function on $M$. Suppose that \eqref{eqn::echo_index_system} admits a GS $f : M \to V \subset X \subset \mathbb{R}^N$. Then for each $m_0 \in M$, the sequence $(x_k)_{k \in \mathbb{Z}} = (f \circ \phi^k(m_0))_{k \in \mathbb{Z}}$ is a UAES attracted by the constant sequence $B_k = V$ for all $k$.
    \label{SSM_imples_UAES}
\end{theorem}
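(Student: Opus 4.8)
The plan is to verify, for a fixed $m_0 \in M$ and the input sequence $z = (z_k)_{k\in\mathbb{Z}}$ with $z_k := \omega\circ\phi^k(m_0)$, the three ingredients in the definition of a UAES: that $x = (x_k)_{k\in\mathbb{Z}}$ with $x_k := f\circ\phi^k(m_0)$ is an entire solution; that the constant family $B_k := V$ is a $z$-positively invariant nonautonomous set composed of compact sets; and that $B$ is uniformly attracted to $x$. Throughout I work in the setting that makes a GS into $V$ meaningful, namely that of Theorem~\ref{SSM_weak_thm}: $F$ is $(V,\omega(M))$-locally state contracting with some coefficient $c\in(0,1)$, and $V$ is compact (being a closed subset of the compact state space $X$), so that $d_V := \sup_{a,b\in V}\lVert a-b\rVert < \infty$ and $\omega(M)\subseteq U$. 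Since $\phi\in\text{Hom}(M)$ is invertible, $z_k$ is defined for every $k\in\mathbb{Z}$ and lies in $\omega(M)$.

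\emph{Entire solution and positive invariance.} Evaluating the GS fixed-point equation $f = F(f\circ\phi^{-1},\omega)$ at the point $\phi^{k+1}(m_0)$ gives $x_{k+1} = F(x_k, z_{k+1})$, which by induction on $s$ yields $\Phi(s, T^m(z), x_m) = x_{m+s}$ for all $m\in\mathbb{Z}$ and $s\in\mathbb{Z}_0^+$; hence $x$ is an entire solution. Because $F(V\times\omega(M))\subseteq V$ and each $z_k\in\omega(M)$, a similar induction on $s$ shows $\Phi(s, T^m(z), V)\subseteq V = B_{m+s}$, so $B_k\equiv V$ is $z$-positively invariant and composed of compact sets; moreover $x_k = f\circ\phi^k(m_0)\in f(M)\subseteq V$, so $B$ contains $x$ (and is a neighbourhood of $x$ precisely when $f(M)$ lies in the interior of $V$, as in the contracting-box constructions of the previous section).

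\emph{Uniform attraction --- the main step.} The key observation is that, since $x$ is an entire solution, $x_{j+k} = \Phi(k, T^j(z), x_j)$ with $x_j\in V$; thus for any $y_0\in B_j = V$ the two sequences $\Phi(\cdot, T^j(z), y_0)$ and $\Phi(\cdot, T^j(z), x_j)$ are orbits of the \emph{same} non-autonomous system, driven by the inputs $z_{j+1}, z_{j+2}, \dots, z_{j+k}\in\omega(M)$, started from the two points $y_0, x_j\in V$. By positive invariance every intermediate state of both orbits remains in $V$, so the state-contraction inequality applies at each of the $k$ steps and telescopes to
\begin{align*}
    \bigl\lVert \Phi(k, T^j(z), y_0) - x_{j+k} \bigr\rVert \;\le\; c^{\,k}\,\lVert y_0 - x_j\rVert \;\le\; c^{\,k}\, d_V .
\end{align*}
Taking the supremum over $y_0\in V$ gives $h\bigl(\Phi(k, T^j(z), B_j), x_{j+k}\bigr)\le c^{\,k} d_V$ for every $j\in\mathbb{Z}$, and hence $\sup_{j\in\mathbb{Z}} h\bigl(\Phi(k, T^j(z), B_j), x_{j+k}\bigr)\le c^{\,k} d_V \to 0$ as $k\to\infty$, since $c\in(0,1)$ and $d_V<\infty$. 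This is exactly the uniform-attraction condition, so $B_k\equiv V$ is uniformly attracted to $x$ and $x$ is a UAES.

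I expect the only real obstacle to be bookkeeping rather than depth: one must (i) spot that the entire-solution identity rewrites $x_{j+k}$ as a cocycle orbit started from $x_j\in V$, so that uniform attraction reduces to comparing two orbits of one non-autonomous system from initial points in $V$ and applying the one-step contraction $k$ times \emph{uniformly} in $j$; and (ii) keep track of where boundedness enters --- it is compactness of $V$ (hence $d_V<\infty$) that lets the supremum over initial conditions $y_0\in V$ be absorbed into the estimate, and it is forward-invariance of $V$ that licenses applying the contraction at every intermediate step.
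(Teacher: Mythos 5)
Your proof is correct, and it is substantially more rigorous than what appears in the paper. The paper's proof is a single sentence citing Theorem~\ref{SSM_weak_thm}; that theorem, however, only asserts that for each individual initial condition $x_0\in V$ and $m_0\in M$ the forward orbit converges to $f\circ\phi^k(m_0)$ --- it does not state the uniformity over $j\in\mathbb{Z}$ that the definition of uniform attraction requires. You supply exactly the missing ingredients: (i) the observation that the entire-solution identity $x_{j+k}=\Phi(k,T^j(z),x_j)$ lets you compare $\Phi(k,T^j(z),y_0)$ and $x_{j+k}$ as two orbits of the same non-autonomous system started from points of $V$; (ii) the $k$-fold telescoping of the one-step contraction, which is licensed because positive invariance keeps every intermediate state in $V$; and (iii) the boundedness $d_V<\infty$, coming from compactness of $V\subset X$, which is what makes the final estimate $c^k d_V$ independent of $j$. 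Both arguments rest on the same machinery, but yours makes the uniform rate explicit where the paper only implicitly relies on it.

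You also flag, correctly, two points the paper leaves tacit. First, the hypothesis ``admits a GS $f:M\to V$'' does not by itself supply a contraction; you reasonably read the statement in the setting of Theorem~\ref{SSM_weak_thm} (local state contraction on $V\times\omega(M)$), which is clearly what is intended since the paper's own proof cites that theorem. Second, the definition of UAES requires the attracted family $B$ to be a neighbourhood of the entire solution; this holds only if $f(M)$ lies in the interior of $V$, and neither the theorem statement nor the paper's proof addresses this. Noting it as a caveat, as you do, is the honest reading --- in the intended applications $V$ is a strictly larger contracting box around $f(M)$, so the condition holds, but it is a genuine unstated hypothesis.
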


\begin{proof}
    For any $m_0 \in M$, let $(z_k)_{k \in \mathbb{Z}} = (\omega \circ \phi^k(m))_{k \in \mathbb{Z}}$. The constant sequence $B_k = V$ for all $k$ is uniformly attracted to $(x_k)_{k \in \mathbb{Z}} = (f_i^{\eta} \circ \phi^k(m))_{k \in \mathbb{Z}}$ by Theorem \ref{SSM_weak_thm}.
\end{proof}

\begin{theorem}
Let $M$ be a compact topological space and $\phi \in \text{Hom}(M)$. Let $\omega \in C^0(M,\mathbb{R}^d)$ be the observation function on $M$. Suppose there are exactly $n$ GSs $f_i : M \to f_i(M) \subset \mathbb{R}^N$, such that, for all $\eta > 0$, there exist $n$ sets $V^{\eta}_i \supset f_i(M)$ such that $f_i^\eta : M \to V^{\eta}_i$ is a GS and
    \begin{align}
        \mu\bigg(\mathbb{R}^N - \bigcup_{i=1}^n V^{\eta}_i\bigg) < \eta, \label{less_eta}
    \end{align}
    where $\mu$ is the Lebesgue measure.    
    Then for any $m_0 \in M$, the sequence $(z_k)_{k \in \mathbb{Z}} = (\omega \circ \phi^k(m_0))_{k \in \mathbb{Z}}$ has echo index $\mathcal{I}[z] = n$.
\end{theorem}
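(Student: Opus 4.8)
The plan is to produce, for the input sequence $z = (\omega\circ\phi^k(m_0))_{k\in\mathbb{Z}}$, an explicit \emph{proper} decomposition into exactly $n$ UAESs in the sense of Definition \ref{defn::decomposition}; by the definition of the echo index this establishes $\mathcal{I}[z]=n$. The natural candidates for the $n$ UAESs are the orbits of the generalised synchronisations: for $i=1,\dots,n$ set $x^i := (f_i\circ\phi^k(m_0))_{k\in\mathbb{Z}}$. Since a GS valued in a given closed state-contracting region is unique (Theorem \ref{SSM_weak_thm}) and $f_i(M)\subset V_i^\eta$, the map $f_i^\eta:M\to V_i^\eta$ must coincide with $f_i$, so $x^i$ does not depend on $\eta$; and by Theorem \ref{SSM_imples_UAES} applied with the region $V_i^\eta$, each $x^i$ is a UAES which is uniformly attracted by the constant nonautonomous set $(B_i^\eta)_k := V_i^\eta$.

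Next I would check the remaining requirements of Definition \ref{defn::decomposition}, all of which are essentially immediate once the candidates are fixed. For each $\eta>0$ and each $i$ the constant family $(B_i^\eta)_k = V_i^\eta$ is $z$-positively invariant (it is constant in $k$ and $F(V_i^\eta,\omega(M))\subset V_i^\eta$, which is part of the local state-contraction property on $V_i^\eta$), it is compact (a closed subset of the compact reservoir state space of the echo-index framework), it is a neighbourhood of $x^i$ (Theorem \ref{SSM_imples_UAES}; full-dimensionality of $V_i^\eta$ is forced by the measure condition), and it is uniformly attracted to $x^i$. Finally $\mu\big(\mathbb{R}^N-\bigcup_{i=1}^n (B_i^\eta)_k\big) = \mu\big(\mathbb{R}^N-\bigcup_{i=1}^n V_i^\eta\big) < \eta$ for every $k$, since the left-hand side is constant in $k$ and equals hypothesis \eqref{less_eta}. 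Thus $\{x^i\}_{i=1}^n$ is a decomposition into $n$ UAESs.

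The hard part is properness, i.e. $\inf_{k\in\mathbb{Z}}\lVert x^i_k-x^j_k\rVert>0$ for $i\neq j$. I would fix one value of $\eta$ and first prove the geometric claim that $V_i^\eta\cap V_j^\eta=\emptyset$ for $i\neq j$. Suppose the intersection were nonempty. It is closed, and $F$ is $(V_i^\eta\cap V_j^\eta,\omega(M))$-locally state contracting: it contracts on the subset $V_i^\eta\cap V_j^\eta$ of $V_i^\eta$, and $F(V_i^\eta\cap V_j^\eta,\omega(M))\subset F(V_i^\eta,\omega(M))\cap F(V_j^\eta,\omega(M))\subset V_i^\eta\cap V_j^\eta$. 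Then Theorem \ref{SSM_weak_thm} yields a unique GS $g\in C^0(M,V_i^\eta\cap V_j^\eta)$; but $g$ is then a GS valued in $V_i^\eta$ and a GS valued in $V_j^\eta$, so uniqueness forces $g=f_i^\eta=f_i$ and $g=f_j^\eta=f_j$, i.e. $f_i=f_j$, contradicting that there are \emph{exactly} $n$ distinct GSs. Hence the $V_i^\eta$ are pairwise disjoint, and being compact they are pairwise at positive distance $\rho_{ij}:=\mathrm{dist}(V_i^\eta,V_j^\eta)>0$. Since $x^i_k\in f_i(M)\subset V_i^\eta$ and $x^j_k\in V_j^\eta$ for all $k$, we get $\lVert x^i_k-x^j_k\rVert\ge\rho_{ij}$, so $\inf_k\lVert x^i_k-x^j_k\rVert\ge\min_{i\neq j}\rho_{ij}>0$. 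This gives properness, so $\{x^i\}_{i=1}^n$ is a proper decomposition into $n$ UAESs and $\mathcal{I}[z]=n$.

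Two points will need to be stated carefully rather than glossed. First, I would read the hypothesis ``$f_i^\eta:M\to V_i^\eta$ is a GS'' as carrying the $(V_i^\eta,\omega(M))$-local state-contraction property — this is the reading consistent with Theorems \ref{SSM_weak_thm}--\ref{SSM_thm}, and it is exactly what makes the GS on $V_i^\eta$ (and on the intersections) unique and the regions positively invariant; without it the disjointness step can fail. Second, one wants the $V_i^\eta$ to be closed and bounded (contained in the compact reservoir domain of the echo-index setting) for the positive-distance conclusion, and $f_i(M)$ to lie in the interior of $V_i^\eta$ for ``$V_i^\eta$ is a neighbourhood of $x^i$''; both belong to the standard setup. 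Granting these, the only real content of the proof is the disjointness claim and its use to force uniform separation of the orbits, and that is the step I expect to be the crux.
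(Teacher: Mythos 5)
You follow the same route as the paper: the UAES candidates are the orbits $x^i=(f_i\circ\phi^k(m_0))_{k\in\mathbb{Z}}$, Theorem \ref{SSM_imples_UAES} gives that each is a UAES uniformly attracted by the constant family $(B_i^\eta)_k=V_i^\eta$, and the measure condition \eqref{less_eta} supplies the decomposition. The one place you genuinely diverge is the properness step, and it is a divergence in your favour. The paper's proof simply asserts that ``the images $f_i(M)$ are disjoint compact manifolds'' and deduces the uniform separation from that, with no argument given for the disjointness. You instead prove it: a nonempty $V_i^\eta\cap V_j^\eta$ would be closed, positively invariant and state-contracting, hence by Theorem \ref{SSM_weak_thm} would support a unique GS which, by uniqueness on $V_i^\eta$ and on $V_j^\eta$ separately, must equal both $f_i$ and $f_j$, contradicting that the $n$ GSs are distinct; compactness then gives $\mathrm{dist}(V_i^\eta,V_j^\eta)>0$ and hence properness. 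This supplies the argument the paper leaves implicit, so your proof is actually more complete than the one it is being measured against. You are also right to flag the two reading conventions on which the argument rests --- that ``$f_i^\eta:M\to V_i^\eta$ is a GS'' carries the $(V_i^\eta,\omega(M))$-local state-contraction property (so that uniqueness on $V_i^\eta$ is available and $f_i^\eta=f_i$), and that the $V_i^\eta$ live inside the compact reservoir domain $X$ of the echo-index framework (so that they are compact and the positive-distance conclusion is licit); the paper's proof needs both of these just as much but does not say so.
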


\begin{proof}
    Fix $m_0 \in M$. For each GS $f_i^{\eta} : M \to V^{\eta}_i$ the sequence $(x^i_k)_{k \in \mathbb{Z}} = (f^i_{\eta} \circ \phi^k(m_0))_{k \in \mathbb{Z}}$ is a UAES attracted by the constant sequence $(B_i^{\eta})_k = V_i^{\eta}$ for all $k$. This follows from Theorem \ref{SSM_imples_UAES}. Now, with respect to the input sequence $(z_k)_{k \in \mathbb{Z}} = (\omega \circ \phi^k(m_0))_{k \in \mathbb{Z}}$, the reservoir map clearly admits a \emph{decomposition} into $n$ UAESs, so all that remains is to show that this decomposition is \emph{proper}. The images $f_i(M)$ are disjoint compact manifolds and each sequence $(x^i_k)_{k \in \mathbb{Z}} = (f_i \circ \phi^k(m_0))_{k \in \mathbb{Z}}$ lies in exactly one of these images. Consequently 
    \begin{align*}
        \inf_{k \in \mathbb{Z}}\lVert x^i_k - x^j_k \rVert > 0 \qquad \forall \, i \neq j.
    \end{align*}
 which is precisely the requirement for the decomposition to be \emph{proper} in the sense of definition \ref{defn::decomposition}.
\end{proof}

\section{Biological interpretation}

These results about GSs admit a biological interpretation. We use the ESN reservoir sequence
\begin{align}
    x_{k+1} = \sigma(Ax_k + Cz_k + b) \label{ESN::reservoir}
\end{align}
to represent the dynamics of a brain, composed of neurons, that are connected together by dendrites and axons. Neuron $n$ has dendrites, which receive information from neighbouring neurons, causing neuron $n$ to fire a sequence of action potentials, thus influencing the other neurons \citep{izhikevich2007dynamical}. We (over)simplify this complex interaction by representing the neurons with nodes, and the connections between them with arcs. We represent the connection strength between neurons $i$ and $j$ with the entry $A_{ij}$ of the reservoir matrix $A$. If neurons $i$ and $j$ are not directly connected then $A_{ij} = 0$. Furthermore, the connections need not be symmetric, so $A_{ij} \neq A_{ji}$ in general. 

The $i$th neuron has an \emph{excitation level} at time $k$ given by the $i$th component of the reservoir state $x_k$. The excitation level of neuron $i$ governs the frequency of action potentials fired by neuron $i$. It therefore follows from \eqref{ESN::reservoir} that the excitation of each neuron depends on the excitation of its neighbours. If $\sigma = \tanh$ or some other saturating/squashing/sigmoidal function, then the contribution of a neuron's neighbours obeys a law of diminishing returns.

We view the inputs $z_k$ as electrical signals induced in the brain from an external source. These signals could arise in the body or from external sensory organs. The inputs interact with the neurons via connections $C$, and influence the excitation levels of the neurons in the brain. The results we have presented in this chapter suggest that, as long as the source of the inputs $z_k$ evolves deterministically (and the ESN has ESP) then the state of the brain (represented by the state vectors $x_k$) evolves in synchrony with the source system. We illustrate this interplay between the input connections and reservoir connections in Figure \ref{fig::biological1} which illustrates an ESN with 3 neurons.

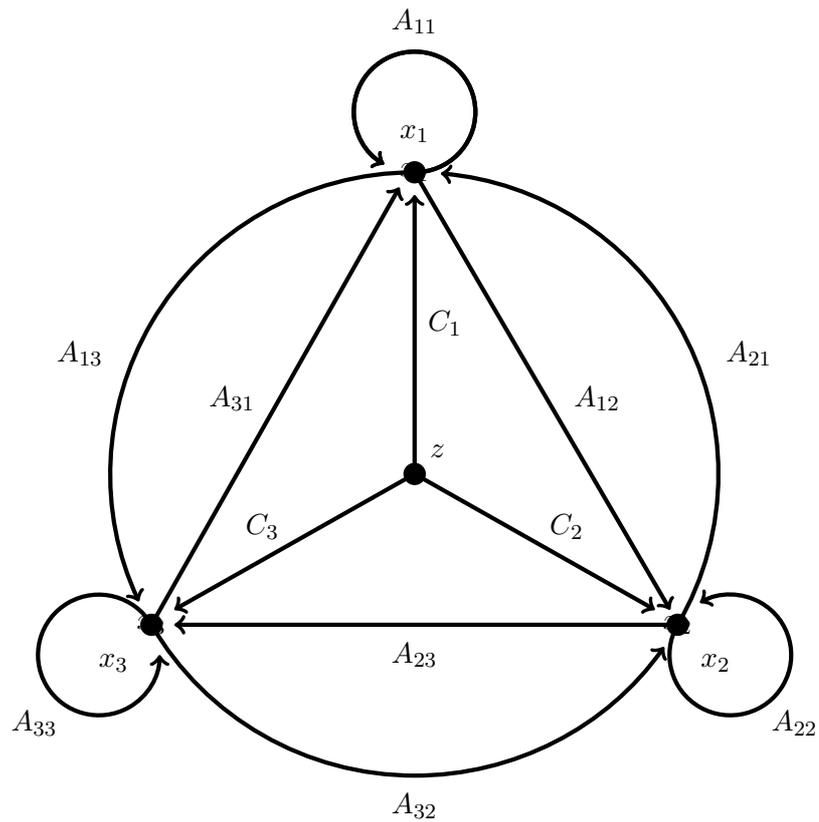
\begin{figure}
  \centering
   \begin{tikzpicture}
        \draw[ultra thick, ->] (0,4) -- (3.36,-1.8);
        \draw[ultra thick, ->] (3.46,-2) -- (-3.16,-2);
        \draw[ultra thick, ->] (-3.46,-2) -- (-0.2,3.8);
        \draw[ultra thick, ->] (0,4) arc (90:205:4);
        \draw[ultra thick, ->] (-3.46,-2) arc (210:325:4);
        \draw[ultra thick, ->] (3.46,-2) arc (330:445:4);
        \draw[ultra thick, ->] (0,4) arc (-90:240:0.8);
        \draw[ultra thick, ->] (0,4) arc (-90:240:0.8);
        \draw[ultra thick, ->] (3.46,-2) arc (150:480:0.8);
        \draw[ultra thick, ->] (-3.46,-2) arc (30:360:0.8);
        \filldraw (0,4) circle (4pt) node[label={[xshift=0, yshift=0.5]$x_1$}] {$x_1$};
        \filldraw (3.46,-2) circle (4pt) node[label={[xshift=0.5cm, yshift=-1cm]$x_2$}] {$x_2$};
        \filldraw (-3.46,-2) circle (4pt) node[label={[xshift=-0.5cm, yshift=-1cm]$x_3$}] {$x_3$};
        \filldraw (0,0) circle (4pt);
        \draw[ultra thick, ->] (0,0) -- (0,3.7);
        \draw[ultra thick, ->] (0,0) -- (3.16,-1.8);
        \draw[ultra thick, ->] (0,0) -- (-3.16,-1.8);
        \node(draw) at (0.3, 0.3) {$z$};
        \node(draw) at (2.4, 1) {$A_{12}$};
        \node(draw) at (-2.4, 1) {$A_{31}$};
        \node(draw) at (0, -2.4) {$A_{23}$};
        \node(draw) at (4.4, 1.6) {$A_{21}$};
        \node(draw) at (-4.4, 1.6) {$A_{13}$};
        \node(draw) at (0, -4.4) {$A_{32}$};
        \node(draw) at (0, 6) {$A_{11}$};
        \node(draw) at (5, -3.3) {$A_{22}$};
        \node(draw) at (-5, -3.3) {$A_{33}$};
        \node(draw) at (0.4, 2) {$C_1$};
        \node(draw) at (2, -0.7) {$C_2$};
        \node(draw) at (-2, -0.7) {$C_3$};
    \end{tikzpicture}
    \caption{An ESN driven by input $z$. The input communicates to three nodes with states $x_1,x_2,$ and $x_3$ via connections $C_1,C_2,$ and $C_3$. The nodes communicate with eachother via the three by three connectivity matrix $A$ with connection between node $i$ and $j$ given by $A_{ij}$. The direction of the arrows denotes the direction of communication.}
    \label{fig::biological1}
\end{figure}

Interestingly, if the ESN does not have ESP then the vectors $x_k$ may evolve erratically, and fail to synchronise with the source system. Some authors \citep{manjunath2017evolving} have suggested that this loss of the ESP describes an epileptic seizure, triggered by an input process $z_k$ that is outside a tolerated range (e.g rapidly flashing lights). We might imagine that a brain with $(V,W)$-local ESP undergoes an epileptic seizure if exposed to inputs outside of $W$. The existence of multiple GSs for different sets of inputs $W$ and sets of initial reservoir states $V$ suggests that the brain may adopt qualitatively different states under different conditions, that are each locally stable. This is all very speculative, but in future, it would be intriguing to explore these ideas in the context of careful neuroscience.

\chapter{Embeddings}
\label{chapter::embedding}

\section{Background}

Theorems \ref{SSM_weak_thm} and \ref{SSM_thm} establish conditions under which an evolution operator $\phi \in \text{Diff}^1(M)$, observation function $\omega : M \to \mathbb{R}^d$ and reservoir map $F : \mathbb{R}^N \times \mathbb{R}^d \to \mathbb{R}^N$ admit an associated GS $f_{(\phi,\omega,F)}$. Theorem \ref{SSM_thm} establishes conditions under which the GS $f_{(\phi,\omega,F)}$ is continuously differentiable. In this chapter we will establish additional conditions, which ensure that $f_{(\phi,\omega,F)}$ is a continuously differentiable \emph{embedding}. When $f_{(\phi,\omega,F)}$ is an embedding, the dynamics in the image of $f_{(\phi,\omega,F)}$ (the response system) replicate the dynamics of the drive system $(M,\phi)$ so that we have a chance of learning arbitrary target functions $g : M \to \mathbb{R}^s$ from observations alone. Out of the collection of all possible target maps, a target map in which we are particularly interested is the next step map $g := \omega \circ \phi$ which is well defined when $s = d$. For the rest of this chapter we assume for simplicity, and without loss of generality, that $d = 1$ so we have scalar observations. This chapter is based on \cite*{arXiv:2108.05024}, which builds on the earlier work in \cite{embedding_and_approximation_theorems}.

We will now define what it means for a map to be an embedding, and the closely related property of being an immersion.

\begin{defn}
    (Immersion) A map $f : M \to N$ between smooth manifolds $M,N$ is called an immersion at the point $m \in M$ if the tangent map at $m$ denoted $T_m f : T_m M \to T_{f(m)} N$ is injective. If $f$ is an immersion at all points $m \in M$ then $f$ is called an immersion.
\end{defn}

\begin{defn}
    (Embedding) A map $f : M \to N$ between smooth manifolds $M,N$ is called an embedding if $f$ is an injective immersion and a diffeomorphism onto its image.
\end{defn}

Notably, if $M$ is compact and $f : M \to N$ is an injective immersion then $f$ is an embedding. Hence we restrict ourselves to compact manifolds and consider the conditions on $\phi,\omega,F$ ensure that $f_{(\phi,\omega,F)}$ is an injective immersion. This problem is difficult in general, so we will start by recalling some fundamental theorems. The first is Whitney's embedding theorem.

\begin{theorem}
    \citep{whitney_embedding_thm}. Let $M$ be a compact $q$-manifold (i.e a manifold of dimension $q$) and $N \in \mathbb{N}$ such that $N > 2q$. Then a generic $f \in C^1(M,\mathbb{R}^N)$ is an embedding.
    \label{WWET}
\end{theorem}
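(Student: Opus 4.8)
The plan is to reduce Theorem~\ref{WWET} to two independent genericity statements about a map $f \in C^1(M,\mathbb{R}^N)$, namely ``$f$ is an immersion'' and ``$f$ is injective'', and then to combine them using the fact (already noted after the definition of embedding) that an injective immersion of a \emph{compact} manifold is automatically an embedding, together with the Baire category theorem. The first thing to record is that the reduction target is density: the set $\mathrm{Imm}(M,\mathbb{R}^N)$ of immersions is open in $C^1(M,\mathbb{R}^N)$ because, $M$ being compact, ``$Df(m)$ has full rank $q$ for every $m$'' is a uniform open condition; and the set of embeddings is likewise open, since a sufficiently $C^1$-small perturbation of an injective immersion of a compact manifold is again an injective immersion. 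Hence it suffices to prove that the embeddings are \emph{dense} in $C^1(M,\mathbb{R}^N)$.

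First I would prove density of immersions, which needs only $N \ge 2q$. Cover $M$ by finitely many charts $(U_i,\kappa_i)$, $i = 1,\dots,r$, with compact $K_i \subset U_i$ still covering $M$. Perturbing one chart at a time by $f \mapsto f + \beta_i\,(A\kappa_i)$, where $\beta_i$ is a bump function supported in $U_i$ and $A$ ranges over small $N \times q$ matrices, the derivative on $K_i$ becomes $Df + A$ up to lower-order terms made negligible by taking the perturbation small. The set of $N \times q$ matrices of rank $\le q-1$ has codimension $N - q + 1 > q = \dim M$ when $N \ge 2q$, so Thom's parametric transversality theorem (i.e.\ Sard's theorem applied to the parameter $A$) shows that for almost every small $A$ the perturbed map is an immersion on $K_i$. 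Iterating over $i = 1,\dots,r$ with successively smaller perturbations, using that the immersion conditions already achieved on $K_1,\dots,K_{i-1}$ are open, produces an immersion arbitrarily $C^1$-close to $f$.

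Next I would prove that the injective maps form a residual subset, which is where $N > 2q$ is essential. For $\delta > 0$ set $D_\delta = \{(m,m') \in M \times M : d(m,m') \ge \delta\}$, a compact subset of dimension $2q$, and let $A_\delta = \{f : f(m) \ne f(m')\ \text{for all}\ (m,m') \in D_\delta\}$, which is open; one checks that $\bigcap_{n \ge 1} A_{1/n}$ is exactly the set of injective maps. To see each $A_\delta$ is dense, fix $f$ and consider $(m,m') \mapsto f(m) - f(m')$ on $D_\delta$ together with a finite-dimensional perturbation family rich enough to be submersive in the parameter at every point of $D_\delta$ --- for instance the component functions of a fixed smooth embedding of $M$ into some large $\mathbb{R}^K$, or finitely many bump-times-coordinate perturbations subordinate to the chart cover. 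Since the target $\mathbb{R}^N$ has dimension $N > 2q = \dim D_\delta$, parametric transversality forces $0 \in \mathbb{R}^N$ to be missed on $D_\delta$ for generic small perturbation, placing the perturbed map in $A_\delta$ (and the perturbation can be kept small enough to stay inside whatever open set is under consideration). Thus each $A_{1/n}$ is open and dense and, by Baire, the injective maps are residual hence dense. Assembling: given $f$ and $\varepsilon > 0$, choose an immersion $g$ with $\|f-g\|_{C^1} < \varepsilon/2$; a $C^1$-ball $B$ about $g$ consists of immersions; density of the injective maps gives an injective map in $B$, which is an injective immersion, hence (compactness of $M$) an embedding within $\varepsilon$ of $f$. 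So embeddings are dense, and being open they are generic.

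I expect the main obstacle to be the density steps, specifically exhibiting a perturbation family that is simultaneously finite-dimensional (so Sard/parametric transversality applies cleanly), globally submersive in the parameter along all of $K_i$ or all of $D_\delta$, and $C^1$-small so that the open conditions already secured are not destroyed. The careful bookkeeping of the chart-by-chart order in the immersion step, and the precise place where the strict inequality $N > 2q$ (rather than merely $N \ge 2q$) is needed --- to beat the $2q$-dimensional set $D_\delta$ in the injectivity step --- are the points demanding the most attention.
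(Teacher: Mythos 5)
The paper itself does not prove Theorem~\ref{WWET}: it is quoted as a classical fact with a citation to Whitney, and the embedding results the thesis actually proves (Theorems~\ref{embedding_thm} and~\ref{random_thm}) follow Huke's proof of Takens' theorem, not Whitney's. So there is no in-paper argument to compare against; judged on its own terms, your proposal is the standard Sard/parametric-transversality proof of the weak Whitney embedding theorem, and its architecture is sound. Immersions are open for compact $M$, and dense when $N \ge 2q$ because the rank-deficient $N \times q$ matrices have codimension $N-q+1 > q$; the sets $A_{1/n}$ of maps separating points at distance $\ge 1/n$ are open, and dense because $\dim D_\delta = 2q < N$ lets a finite-dimensional perturbation family (your linear-maps-composed-with-a-fixed-injection family works, and constructing such an injection of a compact manifold into some $\mathbb{R}^K$ by charts and bumps is elementary, so no circularity) push $f(m)-f(m')$ off $0$ for almost every small parameter; Baire then gives density of injective immersions, and openness of embeddings on a compact manifold (the same facts from Hirsch's book the paper cites in Chapter~\ref{chapter::embedding}) upgrades density plus openness to genericity --- indeed to the stronger statement that embeddings are open and dense.

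One technical caveat: in the immersion step you apply parametric transversality to $(u,A) \mapsto D(f\circ\kappa_i^{-1})(u) + A$, but for $f$ merely $C^1$ this map is only continuous in $u$, and the Sard-type argument (equivalently, the Hausdorff-dimension bookkeeping showing that $\bigcup_{u}\bigl(\Sigma - Df(u)\bigr)$ is Lebesgue-null) needs at least $C^1$ dependence on $u$, hence $f \in C^2$. The standard repair is one line: smooth first, since $C^\infty$ maps are $C^1$-dense on a compact manifold and embeddings form an open set, so it suffices to approximate smooth $f$, for which your transversality arguments apply verbatim. Your injectivity step, by contrast, is fine at $C^1$ regularity, because with target $\{0\}$ the smoothness demanded by parametric transversality is $r > \max(0,\,2q-N) = 0$. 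With that amendment the proof is complete.
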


To make sense of this theorem, we need to understand the term generic. Roughly speaking, a property is generic in some topological space if it holds for most elements, but excludes a small subset of exceptions. The concept of genericity is a topological one, which is analogous to (but not identical to) the concepts of \emph{almost everywhere} and \emph{prevalence} that appear in measure theory. 

\cite{Ghrist2014} provides some examples of genericity. In the appropriate topology:
\begin{itemize}
    \item a generic smooth curve $\gamma:\mathbb{R} \to \mathbb{R}^2$ self-intersects countably many times,
    \item a generic smooth curve $\gamma:\mathbb{R} \to \mathbb{R}^3$ does not self-intersect,
    \item a generic square matrix is invertible.
\end{itemize}

We state the formal definition of genericity below.

\begin{defn}
    (Generic property) Let $(X,\tau)$ be a topological space. A property that holds on a countable intersection of dense open subsets of $X$ is called a generic property of $(X,\tau)$.
    \label{defn::generic_property}
\end{defn}

Notably if $X$ is compact, then a property that holds on a dense open subset of $X$ is generic. We can now see that Whitney's embedding theorem implies that if we set $N > 2q$, where $M$ is a $q$ dimensional manifold, and we impose conditions on $\phi,\omega,F$ such that $f_{(\phi,\omega,F)} \in C^1(M,\mathbb{R}^N)$, then $f_{(\phi,\omega,F)}$ is either an embedding, or very close to an embedding. Heuristically, we expect that $f_{(\phi,\omega,F)}$ is very likely to be an embedding unless $\phi,\omega,F$ are deviously (or unfortunately) chosen such that the resulting GS $f_{(\phi,\omega,F)}$ is an exceptional map in $C^1(M,\mathbb{R}^N)$ that is not an embedding. This heuristic is quite compelling, but is not a proof that $f$ is an embedding for generic choices $\phi,\omega,F$. To progress toward this result, we will recall Takens' celebrated embedding theorem. The specific formulation and proof of the result is by \cite{Hukes_thm}, and it is this formulation which we will use in great detail in this Chapter.

\begin{theorem}
    \citep{TakensThm}, \citep{Hukes_thm}. Let $M$ be a compact manifold of dimension $q$. Suppose $\phi \in \text{Diff}^2(M)$ has the following two properties:
    \begin{itemize}
    \item[(1)] $\phi$ has only finitely many periodic points with periods
    less than or equal to $2q$.
    \item[(2)] For each period point $m \in M$ with period $n < 2q$ then the eigenvalues of the derivative $T_m \phi^n$ at $m$ are distinct. $\phi^n$ denotes the $n$-fold composition of $\phi$ with itself.
    \end{itemize}
    Then for a generic $C^2$ observation function $\omega \in C^2(M,\mathbb{R})$ the $(2q+1)$ delay observation map $\Phi_{(\phi,\omega)}:M \to \mathbb{R}^{2q+1}$ defined by
\begin{align*}
    \Phi_{(\phi,\omega)}(m) := ( \omega(m) , \omega \circ \phi (m) , \omega \circ \phi^2 (m) , \ldots , \omega \circ \phi^{2q}(m) )
\end{align*}
is an embedding.
\end{theorem}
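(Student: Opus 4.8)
The plan is to follow the transversality-theoretic strategy of \cite{Hukes_thm}. Since $M$ is compact, by the remark following the definition of embedding it suffices to show that $\Phi_{(\phi,\omega)}$ is an injective immersion for a generic $\omega \in C^2(M,\mathbb{R})$. First I would record that the set of good $\omega$ is \emph{open}: the assignment $\omega \mapsto \Phi_{(\phi,\omega)}$ is (even linear, hence) continuous from $C^2(M,\mathbb{R})$ into $C^1(M,\mathbb{R}^{2q+1})$, and for compact $M$ the injective immersions form an open subset of $C^1(M,\mathbb{R}^{2q+1})$; the preimage of this open set is therefore open. So it is enough to prove density, after which the set is open and dense, hence generic in the sense of Definition \ref{defn::generic_property}. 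For density I would treat the immersion condition and the injectivity condition separately, applying the parametric (Thom) jet transversality theorem in each case, and $C^2$ regularity of $\phi$ and $\omega$ is exactly what is needed for the $1$-jets and the transversality machinery.

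\emph{Immersion.} Here $T_m\Phi_{(\phi,\omega)}(v) = \bigl(d\omega_{\phi^j(m)}(T_m\phi^j\,v)\bigr)_{j=0}^{2q}$, so $\Phi_{(\phi,\omega)}$ fails to be an immersion at $m$ precisely when some nonzero $v \in T_mM$ lies in the common kernel of all $2q+1$ functionals $v \mapsto d\omega_{\phi^j(m)}(T_m\phi^j v)$. In the $1$-jet bundle $J^1(M,\mathbb{R}^{2q+1})$, the stratified set $\Sigma$ of jets whose linear part has rank $\le q-1$ has codimension $(2q+1-(q-1))\cdot 1 = q+2 > q = \dim M$. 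On the open subset $M_0 \subseteq M$ where the orbit segment $m,\phi(m),\ldots,\phi^{2q}(m)$ meets at least $q$ distinct points, small perturbations of $\omega$ localised near those distinct points act independently on the corresponding functionals, which makes $(\omega,m)\mapsto j^1\Phi_{(\phi,\omega)}(m)$ transverse to $\Sigma$ over $M_0$; the parametric transversality theorem then yields a residual set of $\omega$ whose jet misses $\Sigma$ on $M_0$. By hypothesis (1), $M\setminus M_0$ lies inside the orbits of the finite set $P$ of periodic points of period $\le 2q$, and there I would argue by hand: hypothesis (2) (distinct eigenvalues of $T_m\phi^n$) lets one choose the $1$-jet of $\omega$ along each such orbit so that the immersion condition holds, an open condition that survives small further perturbation and so can be intersected with the residual set above.

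\emph{Injectivity.} Consider $\Phi_{(\phi,\omega)}\times\Phi_{(\phi,\omega)}$ on $(M\times M)\setminus\Delta$; injectivity means this map avoids the diagonal $\Delta_{2q+1}\subset\mathbb{R}^{2q+1}\times\mathbb{R}^{2q+1}$, which has codimension $2q+1 > 2q = \dim(M\times M)$. Away from a neighbourhood of $\Delta$ and away from the finitely many pairs of points lying on short periodic orbits, perturbing $\omega$ near the distinct orbit points $\phi^j(m),\phi^j(m')$ again gives enough freedom to make the relevant evaluation map transverse to $\Delta_{2q+1}$, so a residual set of $\omega$ has $\Phi_{(\phi,\omega)}$ injective there; points near $\Delta$ are covered by the immersion statement since an immersion is locally injective, and the exceptional periodic pairs are dealt with by the same direct perturbation argument as above using (1)--(2). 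Intersecting the immersion and injectivity residual sets gives a generic $\omega$ for which $\Phi_{(\phi,\omega)}$ is an injective immersion, hence, $M$ being compact, an embedding.

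\emph{Main obstacle.} The genuinely delicate part is the behaviour at periodic points: along a periodic orbit of period $\le 2q$ the delay construction does not supply independent perturbations of $\omega$ at the successive orbit points, so the naive parametric transversality argument breaks down and one must instead localise to the finitely many such orbits and use the distinct-eigenvalue hypothesis to verify the immersion and injectivity conditions directly, checking carefully that $N = 2q+1$ delays leave exactly enough room. Managing the stratification of $\Sigma$, and ensuring the open conditions constructed separately on the orbits of $P$ are compatible with the residual set obtained on $M_0$, is the technical heart of the argument.
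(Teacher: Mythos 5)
The paper does not actually prove this theorem: it is stated as a cited classical result (Takens; Huke's formulation), and the only proof in the thesis along these lines is the generalisation in Chapter \ref{chapter::embedding} (Theorem \ref{embedding_thm}), whose proof explicitly follows Huke. Measured against that, your outline is the same standard architecture: openness from continuity of $\omega \mapsto \Phi_{(\phi,\omega)}$ together with openness of injective immersions on a compact manifold, then density by treating immersivity and injectivity separately, handling the finitely many short periodic orbits by hand via the distinct-eigenvalue hypothesis, and disposing of the rest by a codimension count ($q+2>q$ for the rank-deficiency stratum, $2q+1>2q$ for avoiding the diagonal), with points near the diagonal covered by local injectivity of an immersion. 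The main difference is packaging: you invoke the parametric (Thom) jet transversality theorem, whereas the paper (following Huke) builds the perturbations explicitly with bump functions and gets density from the elementary fact that the image of a $C^1$ map from a lower-dimensional manifold has dense complement (\cite[Ch.~3, Prop.~1.2]{Hirsch:book}), plus a submersion-and-projection argument for global injectivity. Your route buys brevity at the cost of hiding exactly the place where transversality can fail, namely where the perturbations of $\omega$ at the orbit points are not independent.

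One point needs repair: your set $M_0$, defined by the orbit segment meeting ``at least $q$ distinct points'', is the wrong threshold. A point lies outside the independent-perturbation regime precisely when $m,\phi(m),\ldots,\phi^{2q}(m)$ are not all distinct, i.e.\ when $m$ is periodic of period $\le 2q$; with your definition, periodic points of period between $q$ and $2q$ land in $M_0$, where the claim that localised perturbations act independently on the $2q+1$ delay coordinates is false (perturbing $\omega$ at $\phi^j(m)$ simultaneously moves rows $j$ and $j+n$ of the Jacobian, the latter twisted by $T_m\phi^n$). Your closing paragraph identifies the correct exceptional set (all periodic orbits of period $\le 2q$, finite by hypothesis (1), handled via hypothesis (2)), so the fix is simply to define $M_0$ as the complement of those orbits; with that adjustment the plan is the standard and correct one.
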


Takens Theorem essentially states that: if we take equally spaced (in time) scalar observations from a dynamical system of dimension $q$, and the system $(M,\phi)$ satisfies conditions $(1)$ and $(2)$, then a vector formed from $2q + 1$ (or more) sequential observations fully captures the state of the dynamical system.

\section{Summary of novel results}

The connection between reservoir computing and Takens' Theorem becomes clear when we observe that the reservoir map $F : \mathbb{R}^N \times \mathbb{R} \to \mathbb{R}^N$ defined by
\begin{align*}
    F(x,z) = Ax + Cz
\end{align*}
admits as a GS the Takens delay map
\begin{align*}
    f_{(\phi,\omega,F)} = \Phi_{(\phi,\omega)}
\end{align*}
in the special case that $A \in \mathbb{M}_{N \times N}(\mathbb{R})$ is the lower shift matrix 
\begin{align*}
    A = 
    \begin{bmatrix}
    0 & 0 & 0 & \ldots & 0 \\
    1 & 0 & 0 & \ldots & 0 \\
    0 & 1 & 0 & \ldots & 0 \\
    \vdots & \vdots & \ddots & \vdots & \vdots \\
    0 & 0 & 0 & 1 & 0
    \end{bmatrix}
\end{align*}
and $C \in \mathbb{R}^N$ is defined to be $C = (1,0, \ldots, 0)$. This has been observed by \cite{4118282}, \cite{embedding_and_approximation_theorems}, \cite{doi:10.1063/5.0024890}, \cite*{arXiv:2108.05024} and others. It therefore seems natural to consider more general conditions on $A$ and $C$ for which the associated GS $f_{(\phi,\omega,F)}$ is an embedding for generic observation functions $\omega \in C^2(M,\mathbb{R})$. Such conditions are established in the next result.

\begin{theorem}
    \label{embedding_thm}
    Let $M$ be a compact manifold of dimension $q$. Suppose $\phi \in \text{Diff}^2(M)$ has the following two properties:
    \begin{itemize}
        \item[(1)] $\phi$ has only finitely many periodic points.
        \item[(2)] For each periodic point $m \in M$ with period $n$ the eigenvalues of the derivative $T_m \phi^n$ at $m$ are distinct.
    \end{itemize}
    Let $F : \mathbb{R}^N \times \mathbb{R}^d \to \mathbb{R}^N$ be defined by
    \begin{align*}
        F(x,z) = Ax + Cz
    \end{align*}
    for $A \in \mathbb{M}_{N \times N}(\mathbb{R})$ and $C \in \mathbb{R}^N$ satisfying the following properties:
    \begin{itemize}
        \item[(A)] $N > \max\{2q, \ell\}$ where $\ell \in \mathbb{N}$ is the lowest common multiple of the periods of all periodic points.
        \item[(B)] $\lambda_{\text{max}} \, \rho(A^{n_\text{min}}) < 1$ where $n_\text{min}$ is the minimal period over all periodic points and $\lambda_\text{max}$ is the maximal absolute value over all eigenvalues of all derivatives $T_m \phi^n$.
        \item[(C)] For each periodic point $m \in M$ with period $n$ the set of vectors
        \begin{align*}
            \bigg\{ (I-\lambda_j A^n)^{-1}(I - A)^{-1}(I-A)^n C \bigg\}_{j = 1, \ldots, q}
        \end{align*}
        where $\{ \lambda_j \}_{j = 1, \ldots q}$ are the eigenvalues of $T_m\phi^n$, are linearly independent.
        \item[(D)] The vectors $\{ A^j C \}_{j = 0, \ldots N-1}$
        are linearly independent.
    \end{itemize}
    Then there exists an associated GS $f_{(\phi,\omega,F)} \in C^2(M,\mathbb{R}^N)$ with explicit form
    \begin{align*}
        f_{(\phi,\omega,F)}(m) = \sum_{k=0}^{\infty}A^k C \omega \circ \phi^{-k}(m).       
    \end{align*}
    Furthermore, for generic $\omega \in C^2(M,\mathbb{R})$ the GS $f_{(\phi,\omega,F)}$ is an embedding.
\end{theorem}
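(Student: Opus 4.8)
The plan is to follow the strategy of Takens' embedding theorem in the formulation of Huke, adapted to the weighted, semi-infinite ``delay'' map $f_{(\phi,\omega,F)}(m)=\sum_{k\ge 0}A^kC\,\omega\circ\phi^{-k}(m)$. Since $M$ is compact, it is enough to show that for generic $\omega\in C^2(M,\mathbb{R})$ the map $f_{(\phi,\omega,F)}$ is simultaneously an injective immersion; the existence of the GS and the validity of the explicit formula follow from Theorem~\ref{SSM_thm} together with a direct Neumann-series computation, so I would treat that part as settled and concentrate on genericity. Throughout, $C^2(M,\mathbb{R})$ carries the Whitney $C^2$ topology, which makes it a Baire space, and I note that $\omega\mapsto f_{(\phi,\omega,F)}$ is \emph{linear} and continuous into $C^2(M,\mathbb{R}^N)$; the relevant sets of $\omega$ will be produced as countable intersections of dense open sets, i.e. as generic sets in the sense of Definition~\ref{defn::generic_property}.

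\textbf{Immersion.} First I would prove that the set of $\omega$ for which $f_{(\phi,\omega,F)}$ is an immersion is generic. Fix $m\in M$. If $m$ is \emph{not} periodic, the backward orbit $m,\phi^{-1}(m),\dots,\phi^{-(N-1)}(m)$ consists of $N$ distinct points, so using bump functions one can prescribe the $1$-jet of $\omega$ at each of them independently; condition (D), i.e. that $\{A^kC\}_{k=0}^{N-1}$ spans $\mathbb{R}^N$, then makes the induced linear map from these jets to the $1$-jet $(f_{(\phi,\omega,F)}(m),T_mf_{(\phi,\omega,F)})$ surjective. Hence $(\omega,m)\mapsto$ (the $1$-jet of $f_{(\phi,\omega,F)}$ at $m$) is submersive over the open set of non-periodic points, and Thom's parametric transversality theorem applies: because $N>2q$ (condition (A)) the locus of $1$-jets of rank $<q$ has codimension at least $N-q+1>q=\dim M$, so for generic $\omega$ the $1$-jet prolongation of $f_{(\phi,\omega,F)}$ misses it over every non-periodic point. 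At a periodic point $m$ of period $n$ only the finitely many orbit values and derivatives of $\omega$ are available, so arbitrary jets cannot be realized; here I would diagonalize $T_m\phi^n$ — its eigenvalues are distinct by (2) — group the series $\sum_{k\ge 0}$ by residues modulo $n$, and evaluate $T_mf_{(\phi,\omega,F)}$ on each eigenvector, obtaining vectors of resolvent type built from $A^n$ and the eigenvalues of $T_m\phi^n$. These resolvents are well defined precisely because of (B), and condition (C) states exactly that the resulting $q$ vectors are linearly independent; for a generic choice of $D\omega$ along the (finite) orbit the remaining scalar coefficients are non-zero, so $T_mf_{(\phi,\omega,F)}$ has rank $q$. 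There are only finitely many periodic orbits, so intersecting these finitely many open dense conditions with the generic set found above gives a generic set on which $f_{(\phi,\omega,F)}$ is an immersion everywhere; compactness upgrades this to an open dense set.

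\textbf{Injectivity.} Next I would show the set of $\omega$ with $f_{(\phi,\omega,F)}$ injective is generic. Consider $g_\omega:M\times M\to\mathbb{R}^N$, $g_\omega(m,m')=f_{(\phi,\omega,F)}(m)-f_{(\phi,\omega,F)}(m')$; injectivity means $g_\omega$ never vanishes off the diagonal $\Delta$. On a compact collar of $\Delta$ the immersion property just established already gives local injectivity. On the compact set $K_\varepsilon=\{(m,m'):d(m,m')\ge\varepsilon\}$ I would argue as for the immersion: whenever the backward orbits of $m$ and $m'$ are disjoint and infinite — the generic situation among distinct non-periodic points — one can perturb $\omega$ near the two orbits independently and, by (D), move $(f_{(\phi,\omega,F)}(m),f_{(\phi,\omega,F)}(m'))$ to any value, so $(\omega,(m,m'))\mapsto g_\omega(m,m')$ is submersive; since $\dim(M\times M)=2q<N$ by (A), parametric transversality makes $0$ a non-value for generic $\omega$ on $K_\varepsilon$. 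The exceptional configurations — $m$ and $m'$ on a common orbit (backward orbits eventually coincide), or one or both of them periodic, or both lying on the finitely many periodic orbits — form finitely many or low-dimensional families, and on them one again reduces to a finite linear-algebra problem governed by the resolvent vectors of (C) and the cyclicity of $C$ from (D), with $N>\ell$ (condition (A), $\ell$ the lowest common multiple of the periods) ensuring that the blocks $\{A^jC\}$ separate distinct periodic orbits. Taking $\varepsilon=1/s\to 0$ and intersecting over $s$ yields a generic set of $\omega$ making $f_{(\phi,\omega,F)}$ injective.

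Intersecting the immersion and injectivity sets gives a generic (hence dense, in particular nonempty) set of $\omega\in C^2(M,\mathbb{R})$ for which $f_{(\phi,\omega,F)}$ is an injective immersion of the compact manifold $M$ into $\mathbb{R}^N$, and therefore an embedding, which is the claim. \textbf{The main obstacle} I anticipate is the analysis on the periodic orbits: away from them, infinitely many independent perturbation ``handles'' make the transversality arguments nearly automatic, but on the finitely many periodic orbits only finitely many jets of $\omega$ are available, so one must verify by hand that (B), (C), (D) together with $N>\max\{2q,\ell\}$ are exactly what force the finite-dimensional linear maps controlling $T_mf_{(\phi,\omega,F)}$ and $g_\omega$ on orbits to have full rank — including the bookkeeping for complex-conjugate eigenvalues of $T_m\phi^n$ and for pairs of points lying on the same, or on different, periodic orbits.
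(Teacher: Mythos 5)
Your proposal follows essentially the same route as the paper's proof: existence, the explicit series, and openness come from the continuity of $\omega\mapsto f_{(\phi,\omega,F)}$ (the paper's Lemma \ref{generalized synch with spectral radius}); immersion at the finitely many periodic points is forced by the resolvent vectors of condition (C) (with (B) justifying the Neumann series $(I-\lambda_j A^n)^{-1}=\sum_k\lambda_j^kA^{nk}$); immersion and injectivity elsewhere are obtained by bump-function perturbations whose effect on the jets is made surjective by (D), together with the codimension counts $N>2q$ and $N>\ell$ — and the paper's explicit ``submersion of $\Psi_n$ plus density of the complement of the projected zero set'' argument is exactly your parametric transversality carried out by hand. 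The one point you gloss over is that a perturbation supported near the first $N$ backward iterates of $m$ also feeds into the tail $\sum_{t\ge T(n)}A^tC(\cdots)$ whenever the orbit re-enters the supports, so the finite linear map given by (D) is only the dominant part of the derivative; the paper controls this residual by shrinking the balls so that $T(n)\ge N$ and by using $\rho(A)<1$ with uniform $C^1$ bounds on the bump functions, and your transversality argument needs the same estimate to keep the parametrized jet map surjective.
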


\begin{remark}
    Condition $(1)$ appearing in Theorem \ref{embedding_thm} is stronger than condition $(1)$ appearing in Takens Theorem, so it may be possible to relax the former condition.
\end{remark}

\begin{remark}
    Condition $(D)$ appears to have interesting connections to controllability \citep{katsuhiko2010modern} and Krylov spaces \citep{nevanlinna1993convergence}.
\end{remark}

The proof proceeds in three major stages, and closely follows the proof of Takens Theorem presented by \cite{Hukes_thm}. We prove first of all that

\begin{enumerate}
    \item The associated GS $f_{(\phi,\omega,F)} \in C^2(M,\mathbb{R}^N)$ exists and has explicit form
    \begin{align*}
        f_{(\phi,\omega,F)}(m) = \sum_{k=0}^{\infty}A^k C \omega \circ \phi^{-k}(m).        
    \end{align*}
    \item Next, we show that the set of $\omega \in C^2(M,\mathbb{R})$ for which the associated GS $f_{(\phi,\omega,F)}$ is an embedding is an open subset of $C^2(M,\mathbb{R})$.
    \item Then all that remains is to show that a dense subset of the observation functions $C^2(M,\mathbb{R})$ yields an embedding. This process is split into 4 substages.
    \begin{itemize}
     \item We show that for arbitrary $\omega \in C^2(M,\mathbb{R})$ we can take an arbitrarily small perturbation $\omega' \in C^2(M,\mathbb{R})$ such that the perturbed GS $f_{(\phi,\omega',F)}$ is an immersion on the periodic points. It immediately follows that $f_{(\phi,\omega',F)}$ is also an immersion on some neighbourhood of the periodic points by the immersion theorem (Theorem \ref{theorem::immersion}).
    \item Immersions form an open subset of $C^1(M,\mathbb{R}^N)$, so we can create a second perturbation $\omega'' \in C^2(M,\mathbb{R})$ sufficently small that the open neighbourhood of the periodic points remains immersed, while immersing all remaining points. Then we have that $f_{(\phi,\omega'',F)}$ is a global immersion. All that remains is to establish injectivity.
    \item We take another perturbation $\omega'''$ sufficiently small that the immersion is preserved while ensuring that $f_{(\phi,\omega''',F)}$ is injective on the periodic points. By continuity of $f_{(\phi,\omega''',F)}$, it follows that there is a neighbourhood of the periodic points on which $f_{(\phi,\omega''',F)}$ is injective.
    \item We take one final perturbation $\omega''''$ sufficiently small that the immersion is preserved and $f_{(\phi,\omega'''',F)}$ is globally injective. This completes the proof.
    \end{itemize}
\end{enumerate}

The full proof is detailed in \cite*{arXiv:2108.05024} and section \ref{section::novel_results} of this thesis.
It is also shown in \cite*{arXiv:2108.05024} and in section \ref{section::system_isomorphism} of this thesis that conditions $(A)-(D)$ are invariant under \emph{system isomorphism}. Roughly speaking, we say that two reservoir systems with the echo state property are system isomorphic if when given the same input sequence the two reservoir systems (asymptotically) produce the same output sequence. This result is expressed in terms of \emph{filters} and \emph{functions} in Theorem \ref{linear_isomorphism} which appears in Chapter \ref{chapter::stochastic}.

In practice, the reservoir matrix $A$ and input vector $C$ are randomly generated, so we are interested in random variables $A,C$ that yield an embedding almost surely. We establish this in the following theorem. Before we state the result, we recall the following definition: that a real-matrix-valued random variable $X$ is non-singular if $\mathbb{P}[X \in \{A\}] = 0$ for all real matrices $A$.

\begin{theorem}
\label{random_thm}
    If $A \in \mathbb{M}_{N \times N}(\mathbb{R})$ and $C \in \mathbb{R}^N$ are randomly drawn from non-singular distributions then conditions $(C),(D)$ in Theorem \ref{embedding_thm} hold almost surely. 
\end{theorem}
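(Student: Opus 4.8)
The plan is to show that each of conditions $(C)$ and $(D)$ fails only on a set that is contained in the zero set of a nontrivial polynomial in the entries of $A$ and $C$, and then invoke the fact that a non-singular distribution assigns zero probability to any proper algebraic subvariety of $\mathbb{M}_{N\times N}(\mathbb{R}) \times \mathbb{R}^N$. The key observation throughout is that both conditions are \emph{algebraic} in $(A,C)$ once we fix the finite data coming from $\phi$: by hypothesis $(1)$ of Theorem \ref{embedding_thm}, $\phi$ has only finitely many periodic points, and for each such point $m$ with period $n$ the eigenvalues $\lambda_1,\dots,\lambda_q$ of $T_m\phi^n$ are fixed complex numbers not depending on $A$ or $C$.

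First I would handle condition $(D)$, which is the cleaner case. The vectors $\{A^jC\}_{j=0,\dots,N-1}$ are linearly independent if and only if the $N\times N$ Krylov matrix $K(A,C) = [\,C \mid AC \mid \cdots \mid A^{N-1}C\,]$ has nonzero determinant. This determinant is a polynomial in the entries of $A$ and $C$, so it suffices to exhibit \emph{one} pair $(A_0,C_0)$ for which it is nonzero; then the polynomial is not identically zero, its zero set is a proper subvariety, and a non-singular distribution gives it measure zero. For the witness I would take $A_0$ to be the companion matrix of any degree-$N$ polynomial with distinct roots (or simply a diagonalisable matrix with $N$ distinct eigenvalues) and $C_0$ a cyclic vector for $A_0$ — e.g. $C_0=(1,0,\dots,0)^\top$ in the companion basis — which makes $K(A_0,C_0)$ a Vandermonde-type nonsingular matrix. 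Hence $(D)$ holds almost surely.

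Next I would treat condition $(C)$. Fix a periodic point $m$ of period $n$; there are finitely many, so a finite union of null sets is still null, and it is enough to deal with one. The condition asserts linear independence of the $q$ vectors $v_j := (I-\lambda_j A^n)^{-1}(I-A)^{-1}(I-A)^n C$ for $j=1,\dots,q$. The factors $(I-\lambda_j A^n)^{-1}$ and $(I-A)^{-1}$ have entries that are \emph{rational} functions of the entries of $A$ (ratios of polynomials, with denominators $\det(I-\lambda_j A^n)$ and $\det(I-A)$), so after clearing denominators the statement ``the $q\times q$ minors of the matrix $[v_1\mid\cdots\mid v_q]$ all vanish'' becomes a system of polynomial equations; the complement of the exceptional set is defined by the non-vanishing of at least one such minor together with the non-vanishing of the relevant determinants. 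Again it suffices to produce a single $(A_1,C_1)$ making some $q\times q$ minor nonzero while keeping $\det(I-A_1)\neq0$ and $\det(I-\lambda_j A_1^n)\neq0$ for all $j$ — the latter is generic since the $\lambda_j$ are fixed. A convenient choice is to diagonalise: pick $A_1$ with $q$ distinct real eigenvalues $\mu_1,\dots,\mu_q$ (chosen so that none of the forbidden determinants vanish) on an invariant $q$-dimensional subspace, and choose $C_1$ to have a nonzero component along each corresponding eigenvector. Then $v_j$ restricted to that subspace has $i$th eigen-component proportional to $(1-\lambda_j\mu_i^n)^{-1}(1-\mu_i)^{-1}(1-\mu_i)^n c_i$ with $c_i\neq0$, and checking linear independence reduces to the nonsingularity of a Cauchy-type matrix with entries $(1-\lambda_j\mu_i^n)^{-1}$, which is nonzero provided the $\lambda_j$ are distinct (true by hypothesis $(2)$) and the $\mu_i^n$ are distinct (true by our choice). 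This gives a witness, so $(C)$ holds almost surely.

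Finally I would combine the two: the set where $(C)$ or $(D)$ fails is a finite union of proper algebraic subvarieties (one for $(D)$, one for each of the finitely many periodic points for $(C)$), hence still a proper subvariety, hence null under the product of the two non-singular distributions — and since non-singularity of $X$ means $\mathbb{P}[X=A]=0$ for every fixed matrix, the argument extends to null sets of nontrivial polynomials by the standard Fubini/Lebesgue argument (a nontrivial polynomial's zero set has Lebesgue measure zero, and non-singular distributions are in particular required to be absolutely continuous on such sets — or one argues directly that a non-singular law cannot be supported on a proper subvariety). The main obstacle I anticipate is the bookkeeping in step for $(C)$: verifying that the witness $(A_1,C_1)$ simultaneously avoids \emph{all} the finitely many denominator-vanishing loci $\det(I-\lambda_j A_1^n)=0$ and $\det(I-A_1)=0$ for \emph{every} periodic point at once, and making the Cauchy-matrix nonsingularity argument fully rigorous when some $\lambda_j$ may be complex (one works over $\mathbb{C}$ and notes that linear independence over $\mathbb{C}$ of the complexified vectors is what is needed, the polynomial identities having real coefficients). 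Everything else is routine algebraic-geometry-of-measure-zero reasoning.
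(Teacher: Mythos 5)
Your overall strategy matches the paper's: both reduce conditions $(C)$ and $(D)$ to the non-vanishing of explicit polynomials in the entries of $(A,C)$ and then exhibit a single witness pair to show those polynomials are not identically zero (your Krylov/Vandermonde witness for $(D)$ is the paper's diagonal-plus-all-ones witness in different clothing). For $(C)$ you differ in the details: you clear the denominators $\det(I-A)$, $\det(I-\lambda_j A^n)$ and build a diagonal witness whose independence check is a Cauchy-matrix determinant, whereas the paper multiplies the vectors by the invertible matrix $\prod_{i}(I-\lambda_i A)$, uses $(I-A)^{-1}(I-A^N)=\sum_{k=0}^{N-1}A^k$ to rewrite them as $p_j(A)C$ for explicit polynomials $p_j$, checks that the $p_j$ are linearly independent by evaluating at $x=1/\lambda_k$, and then invokes a general lemma (Lemma \ref{polynomial_lemma}) saying $p_1(A)C,\dots,p_q(A)C$ are a.s.\ independent for linearly independent $p_j$. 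Both routes are viable; yours trades the paper's reduction-to-one-lemma for a direct witness computation, which is fine provided you also record (as the paper does via Lemma \ref{first lemma polynomial} applied to characteristic polynomials) that $1,\lambda_1,\dots,\lambda_q\notin\sigma(A)$ almost surely so the vectors are defined.

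The genuine gap is the measure-theoretic bridge at the end. In this paper ``non-singular'' only means the matrix- or vector-valued law has no atoms, and an atomless law can perfectly well be supported on a proper algebraic subvariety: for instance $A=tI$ with $t$ uniform on $[1,2]$ and $C$ uniform on a segment are both non-singular in this sense, yet $AC$ is always parallel to $C$, so $(D)$ fails almost surely. Consequently neither of the justifications you offer — that non-singular laws are absolutely continuous, or that they cannot charge a proper subvariety — is correct as stated, and the ``standard Fubini/Lebesgue argument'' does not apply to the joint law without further hypotheses. What actually makes the conclusion true, and what the paper proves as Lemma \ref{first lemma polynomial}, is that the entries of $A$ and $C$ are \emph{independent} atomless real random variables; under that hypothesis the zero set of any non-constant polynomial is null for the product law, by induction on the number of variables with a Fubini-type disintegration (the base case being that a non-constant one-variable polynomial has finitely many roots). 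Your proof needs to invoke this entrywise-independence assumption explicitly and prove or cite that lemma; once that is in place, the rest of your argument (failure of $(C)$ or $(D)$ is contained in a finite union of zero sets of non-trivial polynomials, including the determinant loci) goes through.
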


We thank Friedrich Philipp for answering \href{https://math.stackexchange.com/questions/3774475/what-conditions-on-x-and-w-ensures-the-vectors-v-i-sum-infty-k-0}{\emph{this}} \footnote{Click the word \href{https://math.stackexchange.com/questions/3774475/what-conditions-on-x-and-w-ensures-the-vectors-v-i-sum-infty-k-0}{\emph{this}} to follow a link to the question} question we posted on \url{https://math.stackexchange.com}, which made the proof possible. The proof uses standard techniques in random matrix theory, and proceeds in three stages. 

\begin{enumerate}
    \item We show that a polynomial in $n$ variables, which is not identically zero, is non-zero almost everywhere.
    \item This implies that for linearly independent polynomials $p_1, p_2 , \ldots , p_n$ the random vectors
    \begin{align*}
        p_1(A)C,p_2(A)C, \ldots , p_n(A)C
    \end{align*}
    are linearly independent almost surely.
    \item Then the vectors described in $(C)$ and $(D)$ are manipulated until we can apply item 2. to yield the result.
\end{enumerate}

The proof of this result is detailed in \cite*{arXiv:2108.05024} and section \ref{section::novel_results} of this thesis.

\begin{remark}
    The results in this chapter apply to linear reservoir maps of the form $F(x,z) = Ax + Cz$; and we can easily extend the results to affine reservoir maps of the form $F(x,z) = Ax + Cz + b$. 
\end{remark}

A natural question is now: can prove similar results for more general reservoir maps like ESNs with nonlinear activations $F(x,z) = \sigma(Ax + Cz + b)$? We believe such a result exists by the weight of numerical evidence. For now, the answer is no. The techniques used is this present Chapter exploit the linearity of the reservoir map, as well as the explicit form of the GS
\begin{align*}
    f_{(\phi,\omega,F)}(m) = \sum^{\infty}_{k=0} A^k C \omega \circ \phi^{-k}(m)
\end{align*}
which we have not obtained for reservoir maps in general. To prove that a more general reservoir map admits an embedding GS, a different line of attack is required.

\section{An embedding allows for learning}

We have stated conditions under which a reservoir maps of the form 
\begin{align*}
    F(x,z) = Ax + Cz
\end{align*}
with randomly generated $A,C$ admits a GS $f_{(\omega,\phi,F)}$ which is an embedding. If an embedding is achieved, then we have sufficient conditions for learning. We recall that an embedding on a compact manifold is injective and immersive, and both of these properties have an interpretation in the context of learning. \cite{Verzelli} identified that when the GS $f_{(\omega,\phi,F)}$ is injective the reservoir map has learned the pointwise properties of the system, which allows pointwise approximation of target functions, such as the next step map for forecasting. We observe in this chapter that if the GS $f_{(\omega,\phi,F)}$ is immersive, then the reservoir map will learn deeper properties of the system, such as the eigenvalues of the linearisation of fixed points. 

Having achieved an embedding with a linear reservoir map, we are now in a position to approximate an arbitrary target function, for example, the next step map used to forecast the future time series of observations. We could, for example, train a feedforward neural to map the reservoir states $\{x_k\}_{k = 1, \ldots , \ell-1}$ onto the observations $\{z_{k+1}\}_{k = 1, \ldots, \ell}$. Training a feedforward neural network is a nonlinear optimisation problem, which is undesirable in the reservoir computing paradigm where training is usually linear regression. To avoid nonlinear training we can pass the reservoir states $\{x_k\}_{k = 1, \ldots , \ell-1}$ through a nonlinear function $g : \mathbb{R}^N \to \mathbb{R}^T$ (called a nonlinear kernel) and then find $W^* \in \mathbb{R}^T$ that solves the linear least squares problem
\begin{align*}
    W^* = \argmin_{W \in \mathbb{R}^T}\sum_{k=0}^{\ell-1} \lVert W^{\top} g(x_k) - z_{k+1} \rVert^2 + \lambda\lVert W \rVert^2.
\end{align*}
This is exactly the approach taken by \cite{doi:10.1063/5.0024890} and \cite{arXiv:2106.07688} in what the authors call \emph{next generation reservoir computing}. The authors demonstrate empirically that a polynomial kernel $g$ is suitable for forecasting the future trajectory of chaotic dynamical systems.

\section{Novel results in detail}
\label{section::novel_results}

\subsection{Existence of $C^2$ GS}

In this section we will prove Theorems \ref{embedding_thm} and \ref{random_thm}. The proofs closely follow those in \cite*{arXiv:2108.05024}. The proofs involve calculus on manifolds so we will start by reviewing some relevant theory of calculus on manifolds. Recall that if $f \in C^r(M,\mathbb{R}^N)$ then $Df : TM \to T(\mathbb{R}^N)$ is the differential of $f$, and $TM$ is the tangent bundle of $M$, and $T(\mathbb{R}^N) = \mathbb{R}^N \times \mathbb{R}^N$ is the tangent bundle of $\mathbb{R}^N$. Furthermore, if $\phi \in \text{Diff}^r(M)$ then $T\phi : TM \to TM$ is the tangent map of $\phi$. 

The differential $Df$ and tangent map $T\phi$ are generalisations of the first derivative.
Since the tangent bundle $TM$ of $M$ and the tangent bundle of $\mathbb{R}^N$ are themselves manifolds, we can define the second order differential
\begin{align*}
    D^2 f := D(Df) : T(TM) \to T(T(\mathbb{R}^N)),
\end{align*}
as the differential of the differential $Df$. Here $T(TM)$ is the tangent bundle of the tangent bundle of $M$, and $T(T(\mathbb{R}^N))$ is the tangent bundle of the tangent bundle of $\mathbb{R}^N$. Likewise, we can define the second order tangent map
\begin{align*}
    T^2 \phi := T(T\phi) : T(TM) \to T(TM)
\end{align*}
as the tangent map of the tangent map. We can proceed recursively to define for each $i \in \{ 1, \ldots , r \}$ the differential and tangent map of order $i$ denoted $D^i f$ and $T^i \phi$ respectively.

We are now ready to establish a lemma stating that the GS $f_{(\phi,\omega,F)}$ associated to the linear reservoir map exists, is $C^2$, and, for an open subset of observations functions, is an embedding.

\begin{lemma}
\label{generalized synch with spectral radius}
Let $\phi \in {\rm Diff}^1(M)$ be a dynamical system on the smooth manifold $M$ (not necessarily compact) and consider the observation map $\omega \in C ^1(M, \mathbb{R}) $. Let $F: \mathbb{R}^N \times \mathbb{R} \to \mathbb{R}^N  $ be a linear state map  given by $F(x, z):=A x+ C z $ with  $A \in \mathbb{M}_{N,N}(\mathbb{R})$, $C \in \mathbb{R}^N$, $N \in \mathbb{N}$.
\begin{description}
\item [(i)] If the spectral radius of $A$ satisfies that $\rho(A)<1$ and $\omega$ maps into a bounded subset of $\mathbb{R} $ then the GS $f_{(\phi, \omega,F)}: M \to \mathbb{R}^N$ associated to $F$ exists and is continuous.
\item [(ii)] Additionally, let $r \in \mathbb{N}$ and suppose that $\phi \in {\rm Diff}^r(M)$ and that there exist constants $k _1, \ldots, k _r \in \mathbb{N}  $ such that $\left\|A^{k _i}\right\| \left\|T ^i \phi ^{-k _i}\right\|_{\infty}<1   $, $\left\|T^i \phi ^{-1}\right\|_{\infty}< \infty$ for all $i \in  \left\{1, \ldots, r\right\}$.
Then for any $\omega \in C ^r(M, \mathbb{R}) $ such that  $\left\|D^i\omega\right\|_{\infty}< \infty $, for all $i \in  \left\{1, \ldots, r\right\}$, the map $f_{(\phi, \omega,F)} $ belongs to $ C^i(M, \mathbb{R}^N) $ and the higher order derivatives are given by:
\begin{equation}
\label{higher order derivatives of GS}
D^if_{(\phi, \omega,F)}(m)=\sum _{j=0}^{\infty}A ^j C D^i \left(\omega \circ \phi^{-j}\right) (m),  \mbox{ for all $i \in  \left\{1, \ldots, r\right\}$.}
\end{equation}
\item [(iii)] Suppose now that $M$ is compact. Under the hypotheses of points {\bf (i)} and {\bf (ii)} above, the map
\begin{equation}
\label{functor gs}
\begin{array}{cccc}
\Theta_{(\phi, F)}: &C ^r(M, \mathbb{R}) & \to & C ^r(M, \mathbb{R}^N)\\
	& \omega &\longmapsto & f_{(\phi, \omega,F)}
\end{array}
\end{equation}
is continuous. Moreover, the subsets $\Omega_{\text{imm}}$ and $\Omega_{\text{emb}}$ of $C ^r(M, \mathbb{R})$ for which the corresponding GSs are immersions and embeddings, respectively, are open.
\end{description}
\end{lemma}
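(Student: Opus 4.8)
The plan is to handle the three parts in order, using the contraction-mapping / geometric-series structure of the linear reservoir map throughout. For part \textbf{(i)}, since $F(x,z) = Ax + Cz$ and $\rho(A) < 1$, I would first pick a submultiplicative operator norm in which $\|A\| < 1$ (possible by Gelfand's formula / the standard fact that one can renorm so that $\|A\|$ is arbitrarily close to $\rho(A)$). Given a trajectory $(\omega \circ \phi^{-j}(m))_{j \geq 0}$ which is bounded (say by $B$) because $\omega$ maps into a bounded set, the candidate series $f_{(\phi,\omega,F)}(m) := \sum_{j=0}^\infty A^j C\, \omega \circ \phi^{-j}(m)$ converges absolutely and uniformly in $m$, since $\|A^j C \,\omega\circ\phi^{-j}(m)\| \leq \|A\|^j \|C\| B$ and $\sum_j \|A\|^j < \infty$. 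Uniform convergence of continuous partial sums gives continuity of $f_{(\phi,\omega,F)}$, and a direct substitution shows $f_{(\phi,\omega,F)} = F(f_{(\phi,\omega,F)} \circ \phi^{-1}, \omega) = A (f_{(\phi,\omega,F)} \circ \phi^{-1}) + C\omega$, so it solves the GS fixed-point equation; the ESP-type convergence statement then follows exactly as in the proof of Theorem \ref{SSM_weak_thm}.

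For part \textbf{(ii)}, I would proceed by induction on $i \in \{1,\dots,r\}$. Formally differentiating the series term by term suggests $D^i f_{(\phi,\omega,F)}(m) = \sum_{j=0}^\infty A^j C\, D^i(\omega \circ \phi^{-j})(m)$, so the real work is justifying that this differentiated series converges uniformly, which legitimises the interchange of $D^i$ and $\sum_j$. The chain rule on manifolds expresses $D^i(\omega \circ \phi^{-j})$ in terms of $D^k \omega$ (for $k \leq i$) composed with $\phi^{-j}$ and the tangent maps $T^k\phi^{-j}$; using the Faà di Bruno / higher chain-rule bookkeeping together with the submultiplicativity $\|T^k\phi^{-j}\| \leq (\text{const})\,\|T^k\phi^{-1}\|_\infty^{\,?}$ — more carefully, one bounds $\|T^i\phi^{-j}\|$ by a polynomial in $j$ times $\|T^i\phi^{-1}\|_\infty^{\,j}$ by iterating the chain rule, but the cleanest route is to use the hypothesis $\|A^{k_i}\|\,\|T^i\phi^{-k_i}\|_\infty < 1$ directly. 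The point of that hypothesis is exactly that, after grouping the series into blocks of length $k_i$, each block contributes a factor strictly less than $1$, so the $i$-th derivative series is dominated by a convergent geometric-type series with the lower-order derivative norms $\|D^k\omega\|_\infty$ (all finite by assumption) absorbed into the constant. Hence each differentiated series converges uniformly, $f_{(\phi,\omega,F)} \in C^i(M,\mathbb{R}^N)$, and \eqref{higher order derivatives of GS} holds; taking $i = r$ gives $C^r$ regularity. I expect this block-grouping estimate controlling $\|T^i\phi^{-j}\|$ and combining it with $\|A^j\|$ to be the main technical obstacle, since one must track the combinatorial growth in $j$ coming from the higher chain rule and check it is killed by the geometric decay.

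For part \textbf{(iii)}, compactness of $M$ makes $C^r(M,\mathbb{R})$ and $C^r(M,\mathbb{R}^N)$ Banach spaces and, crucially, makes $\Theta_{(\phi,F)}$ linear in $\omega$: the map $\omega \mapsto \sum_j A^j C\, (\omega \circ \phi^{-j})$ and its derivative analogues \eqref{higher order derivatives of GS} are all linear in $\omega$ and its derivatives. Thus to prove continuity of $\Theta_{(\phi,F)}$ it suffices to prove boundedness, and the estimates from part (ii) — $\|D^i f_{(\phi,\omega,F)}\|_\infty \leq K_i \sum_{k \leq i}\|D^k\omega\|_\infty$ for explicit constants $K_i$ built from the block-geometric sums — give exactly $\|\Theta_{(\phi,F)}(\omega)\|_{C^r} \leq K \|\omega\|_{C^r}$. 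Finally, for the openness of $\Omega_{\text{imm}}$ and $\Omega_{\text{emb}}$: it is a standard fact (on a compact manifold, in the $C^1$, hence $C^r$ for $r\geq 1$, topology) that the set of immersions is open in $C^1(M,\mathbb{R}^N)$ and the set of embeddings is open in $C^1(M,\mathbb{R}^N)$ — the latter because on a compact manifold an injective immersion is an embedding and the injective-immersion property is $C^1$-open. Since $\Theta_{(\phi,F)}$ is continuous, $\Omega_{\text{imm}} = \Theta_{(\phi,F)}^{-1}(\{\text{immersions}\})$ and $\Omega_{\text{emb}} = \Theta_{(\phi,F)}^{-1}(\{\text{embeddings}\})$ are preimages of open sets under a continuous map, hence open. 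I would cite \cite{Invariant_Manifolds} or \cite{Hukes_thm} for the openness of immersions and embeddings in the $C^1$ topology rather than reprove it.
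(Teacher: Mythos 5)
Your approach matches the paper's almost exactly: Weierstrass M-test with a geometric series for (i), block-grouping via the hypothesis $\|A^{k_i}\|\,\|T^i\phi^{-k_i}\|_\infty<1$ for (ii), and continuity of $\Theta_{(\phi,F)}$ plus openness of immersions/embeddings (Hirsch, Chapter~2, Theorems~1.1 and~1.4) for (iii). The one genuine difference in presentation is part (i), where you renorm so $\|A\|<1$ while the paper keeps the ambient norm and writes $j = l(j)k_0 + s$; both are standard and equivalent. In (iii), your observation that $\Theta_{(\phi,F)}$ is linear in $\omega$ --- so continuity reduces to boundedness with $\|\Theta(\omega)\|_{C^r}\leq K\|\omega\|_{C^r}$ --- is a slightly cleaner route than the paper's sequential-continuity check, though both use the same estimate from (ii).

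The one place where you talk yourself into imaginary trouble is the higher chain rule in (ii). You worry about ``Faà di Bruno bookkeeping'' and ``combinatorial growth in $j$.'' But in the formalism the paper (and this thesis in general) uses, $D^i f := D(D^{i-1}f)$ is a map between iterated tangent bundles, and $T^i\phi := T(T^{i-1}\phi)$ likewise. In this setting the chain rule is purely compositional at every order: $D(\omega\circ\phi^{-j}) = D\omega\circ T\phi^{-j}$, hence inductively
\begin{equation*}
D^i(\omega\circ\phi^{-j}) = D^i\omega\circ T^i\phi^{-j},
\end{equation*}
with no lower-order correction terms. The iterated tangent bundle absorbs all the lower-order data that the coordinate Faà di Bruno formula makes explicit. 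Consequently the bound
\begin{equation*}
\left\|A^j C\, D^i(\omega\circ\phi^{-j})(m)\right\| \;\leq\; \bigl(\|A^{k_i}\|\,\|T^i\phi^{-k_i}\|_\infty\bigr)^{l(j)} \, \|C\|\, K_A^i\, K_{T^i\phi^{-1}}\,\|D^i\omega\|_\infty
\end{equation*}
drops out immediately from $\|T^i\phi^{-j}\|\leq\|T^i\phi^{-k_i}\|^{l(j)}\cdot K_{T^i\phi^{-1}}$, and the M-test closes the argument. So the ``main technical obstacle'' you flagged does not exist once you commit to the iterated-tangent-map definition of $D^i$; if you instead worked with coordinate partial derivatives you would indeed face the combinatorics you describe, and the proof would be noticeably messier.
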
 

\begin{proof}
(i) This statement is obtained by combining the Weierstrass M-test and Gelfand's formula for the spectral radius, that is, 
$\lim\limits_{k \rightarrow \infty}\left\|{A ^k}\right\|^{1/k}=\rho(A)$. Since by hypothesis $\rho(A)<1 $, we can guarantee the existence of a number $k _0 \in \mathbb{N}$ such that $\left\|A ^{k _0}\right\|<1 $, for all $k\geq k _0 $. Consider now the series $\sum _{j=0}^{\infty}A ^j C \omega(\phi^{-j}(m))$ that defines $f_{(\phi, \omega,F)}(m)  $. Given that for any $j \in \mathbb{N}  $ there exist $l(j) \in \mathbb{N}  $  and $i \in \left\{0, \ldots, k _0-1\right\} $ such that $A ^j=A^{l(j) k _0+i}$, we then have that,
\begin{equation}
\label{sum for M}
\left\|A ^j C \omega(\phi^{-j}(m))\right\|\leq \left\|A^{k _0}\right\| ^{l(j)} \left\|C\right\|K _A K _\omega,
\end{equation}
with $K _A=\max \left\{1, \left\|A\right\|, \ldots, \left\|A^{k _0 -1}\right\| \right\}$ and $K _\omega \in \mathbb{R}  $ a constant that satisfies $\lVert \omega (m)\rVert \leq K _\omega  $ for any $m \in M $ and is available by the boundedness hypothesis on $\omega (M) $.

The inequality \eqref{sum for M} and the Weierstrass M-test guarantee that the series $\sum _{j=0}^{\infty}A ^j C \omega(\phi^{-j}(m)) $ converges absolutely and uniformly on $M$ and that
\begin{align*}
\left\|f_{(\phi, \omega,F)} (m)\right\|= \left\|\sum _{j=0}^{\infty}A ^j C \omega(\phi^{-j}(m))\right\| &\leq \sum _{j=0}^{\infty}\left\|A^{k _0}\right\| ^{l(j)} \left\|C\right\|K _A K _\omega \\ 
&= k_0 \sum _{l=0}^{\infty}\left\|A^{k _0}\right\| ^{l} \left\|C\right\|K _A K _\omega = \frac{k_0 \left\|C\right\|K _A K _\omega}{1-\left\|A^{k _0}\right\|}.
\end{align*}

%\jd{I'm still not completely convinced by this. If it turned out that there were 2 values $j$ that had the same value for $\ell(j)$ then there would be only one term for that value of $\ell$ in the expression after the line break (the sum over $\ell$), but 2 terms in the sum over $j$. So it might not be true that the sum over $\ell$ is always larger ?}

Finally, since each of the summands in the series is a continuous function then so is $f_{(\phi, \omega,F)}$.

\medskip

\noindent {\bf (ii)} The result that we just proved guarantees that if the differentials $D^if_{(\phi, \omega,F)}(m)$, $i \in  \left\{1, \ldots, r\right\}$, exist then they are given by the series $\sum _{j=0}^{\infty}A ^j C D^i \left(\omega \circ \phi^{-j}\right) (m)$ that, using again the Weierstrass M-test and the hypotheses in the statement, will be now shown to uniformly converge to a continuous map. Indeed, using again the decomposition $A ^j=A^{l(j) k _i+s}$ in terms of the element $k _i \in \mathbb{N}$ such that $\left\|A^{k _i}\right\| \left\| T^i \phi ^{-k _i}\right\|_{\infty}<1$ we can conclude that each summand of this series satisfies that
\begin{equation}
\label{bounds for derivatives}
\left\|A ^j C D^i \left(\omega \circ \phi^{-j}\right)(m)\right\|\leq \left(\left\|A^{k _i}\right\| \left\|T^i \phi^{-k _i}\right\| _{\infty} \right)^{l(j)} \left\|C\right\|K _A^i K _{T^i \phi^{-1}} \left\|D^i \omega\right\|_{\infty},
\end{equation}
with 
\begin{align*}
K _A^i:=\max \left\{1, \left\|A\right\|, \ldots, \left\|A^{k _i -1}\right\| \right\}, \text{ and } K _{T^i \phi^{-1}}:= \max \left\{1,\left\|T^i \phi^{-1}\right\|_{\infty}, \ldots,  \left\|T^i \phi^{-1}\right\|_{\infty}^{k_i-1}\right\}, 
\end{align*}
hence
\begin{equation}
\label{bounds for derivatives 2}
\left\|D^if_{(\phi, \omega,F)}(m)\right\|= \left\|\sum _{j=0}^{\infty}A ^j C D^i \left(\omega \circ \phi^{-j}\right) (m)\right\|\leq \frac{k_i \left\|C\right\|K _A^i K _{T^i \phi^{-1}} \left\|D^i \omega\right\|_{\infty}}{1-\left\|A^{k _i}\right\| \left\|T^i \phi^{-k _i}\right\| _{\infty} }. 
\end{equation}
which proves the desired convergence and that $D^if_{(\phi, \omega,F)}(m)=\sum _{j=0}^{\infty}A ^j C D^i \left(\omega \circ \phi^{-j}\right) (m)$. Moreover,

\medskip

\noindent {\bf (iii)} We start by noting that if the map \eqref{functor gs} is  continuous then the subsets $\Omega_{\text{imm}}$ and $\Omega_{\text{emb}}$ are open because by Theorems 1.1 and 1.4 in \cite{Hirsch:book} the immersions and the embeddings in $C ^r(M, \mathbb{R}^N) $ are open and hence $\Omega_{\text{imm}}$ and $\Omega_{\text{emb}}$ are the preimages of those open sets by the continuous map $\Theta_{(\phi, F)} $. We establish now the continuity of $\Theta_{(\phi, F)} $ by showing that if the sequence $\left\{\omega _n\right\}_{n \in \mathbb{N}} $  in $C ^r(M, \mathbb{R}) $ converges to some element $\omega \in C ^r(M, \mathbb{R})  $ then so does $\left\{\Theta_{(\phi, F)} (\omega _n)\right\}_{n \in \mathbb{N}} \subset C ^r(M, \mathbb{R}^N) $ with respect to  $\Theta_{(\phi, F)} (\omega) \in C ^r(M, \mathbb{R}^N)  $. Indeed, if $\omega_n \to \omega $ then, using the notation introduced in \eqref{bounds for derivatives 2}, we have that for a given $\epsilon> 0 $  and for $n$ sufficiently large 
\begin{equation*}
\frac{k_i \left\|C\right\|K _A^i K _{T^i \phi^{-1}}}{1-\left\|A^{k _i}\right\| \left\|T^i \phi^{-k _i}\right\| _{\infty} }  \left\|D^i \omega_n- D^i \omega\right\|_{\infty}< \epsilon/r.
\end{equation*}
Then,
\begin{multline*}
\left\|\Theta_{(\phi, F)} (\omega_n)- \Theta_{(\phi, F)} (\omega)\right\|_{C ^r(M, \mathbb{R}^N) }=\sum _{i=0} ^r \left\| D^if_{(\phi, \omega _n,F)}-  D^if_{(\phi, \omega ,F)} \right\| _{\infty}  \\
= \sum _{i=0} ^r
\left\|\sum _{j=0}^{\infty}A ^j C D^i \left((\omega _n- \omega) \circ \phi^{-j}\right) (m)\right\|\leq
\sum _{i=0} ^r \frac{k_i \left\|C\right\|K _A^i K _{T^i \phi^{-1}}}{1-\left\|A^{k _i}\right\| \left\|T^i \phi^{-k _i}\right\| _{\infty} }  \left\|D^i \omega_n- D^i \omega\right\|_{\infty} \\ 
<\frac{\epsilon}{r}+ \cdots+\frac{\epsilon}{r}= \epsilon,
\end{multline*}
as required.
\end{proof}

\subsection{Immersive GS}

Next we will prove that for generic observation functions $\omega$ the GS $f_{(\phi,\omega,F)}$ is an immersion.

\begin{theorem}
\label{Theorem immersion}
Let $\phi \in {\rm Diff}^2(M)$ be a dynamical system on a compact manifold $M$ of dimension $q$ that has only finitely many periodic orbits. Let $F: \mathbb{R}^N \times \mathbb{R} \to \mathbb{R}^N  $ be a linear state map with $N \geq 2q $ whose connectivity matrix satisfies that $\rho(A)<1 $ and such that for any observation map $\omega \in C ^2(M, \mathbb{R})$ the corresponding generalized synchronization  $f_{(\phi, \omega,F)} \in C ^2(M, \mathbb{R}^N) $ and, moreover, the map $\Theta_{(\phi, F)}: C ^2(M, \mathbb{R})  \to  C ^2(M, \mathbb{R}^N)$ introduced in \eqref{functor gs} is continuous. Suppose also that the two following conditions hold:
\begin{description}
\item [(i)] For each periodic orbit $m$ of $\phi $ with period $n \in \mathbb{N} $, the derivative $T_m \phi^{-n} $ has $q$ distinct eigenvalues $\lambda_1, \lambda _2, \ldots, \lambda _q $. Let $\lambda _{{\rm max}} $ be the eigenvalue with the highest absolute value among the eigenvalues of all those linear maps and let $n_{ {\rm min}} $ be the smallest period. Suppose that $\lambda _{{\rm max}} \rho(A^{n_{ {\rm min}}})<1  $ and that for any periodic point $m$, the vectors
\begin{equation}
\label{condition on A C for later}
\left\{\left(I- \lambda_j A ^n \right) ^{-1} \left(I - A\right)^{-1} \left(I- A ^n\right)C\right\}_{j \in  \left\{1, \ldots, q\right\}}, \  \mbox{ with $\lambda_j$ eigenvalue of  $ T_m \phi^{-n}$} 
\end{equation}
form a linearly independent set.
\item [(ii)] The vectors $\left\{A ^j C\right\}_{j \in \left\{0,1, \ldots, N-1 \right\}}$ form a linearly independent set.
\end{description}
Then, for generic $\omega \in C ^2(M, \mathbb{R}^N)$ the generalized synchronization $f_{(\phi, \omega,F)} \in C ^2(M, \mathbb{R}) $ is an immersion.
\end{theorem}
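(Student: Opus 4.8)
The plan is to follow the strategy of Hukes' proof of Takens' theorem adapted to the linear reservoir map, using a transversality/jet-perturbation argument on the finitely many periodic orbits first, then extending by openness. Since immersions form an open set in $C^2(M,\mathbb{R}^N)$ and $\Theta_{(\phi,F)}$ is continuous (Lemma \ref{generalized synch with spectral radius}), it suffices to show that the set $\Omega_{\text{imm}}$ of observation functions giving an immersive GS is \emph{dense} in $C^2(M,\mathbb{R})$. The argument splits into two stages: first perturb $\omega$ so that $f_{(\phi,\omega,F)}$ is an immersion at every periodic point (there are only finitely many periodic orbits by hypothesis, so this is a finite list of constraints); then, because immersivity is open, a small further perturbation extends the immersion from a neighbourhood of the periodic set to all of $M$ using the classical jet-transversality argument (Thom's theorem, as used by Hukes), exploiting $N\geq 2q$ so that the bad jet set has codimension exceeding $\dim M$.

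First I would compute the differential of the GS at a periodic point. Using the explicit series $f_{(\phi,\omega,F)}(m)=\sum_{k=0}^\infty A^kC\,\omega\circ\phi^{-k}(m)$, one gets $Df_{(\phi,\omega,F)}(m)=\sum_{k=0}^\infty A^kC\,D(\omega\circ\phi^{-k})(m) = \sum_{k=0}^\infty A^kC\,D\omega(\phi^{-k}(m))T_m\phi^{-k}$. At a point $m$ of period $n$, the orbit is $\{m,\phi^{-1}(m),\dots,\phi^{-(n-1)}(m)\}$, so grouping the sum into blocks of length $n$ and summing the resulting geometric series in $A^n$ (which converges because $\rho(A^n)\leq\rho(A^n)\lambda_{\max}<1$ wait — more carefully because $\rho(A)<1$) produces a closed form $Df_{(\phi,\omega,F)}(m)=\sum_{r=0}^{n-1}(I-A^n)^{-1}A^r C\, D\omega(\phi^{-r}(m))\,T_m\phi^{-r}$. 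Diagonalising $T_m\phi^{-n}$ with distinct eigenvalues $\lambda_1,\dots,\lambda_q$ and a careful rearrangement of the double sum over eigenspaces should let one express $Df_{(\phi,\omega,F)}(m)$ acting on the $j$th eigenvector as a scalar (built from the values $D\omega(\phi^{-r}(m))$ applied to the eigendirections) times the vector $(I-\lambda_j A^n)^{-1}(I-A)^{-1}(I-A^n)C$. Condition \eqref{condition on A C for later} says exactly that these $q$ target vectors are linearly independent, so $Df_{(\phi,\omega,F)}(m)$ is injective on $T_mM$ provided the $q$ scalar coefficients are all nonzero — and each coefficient is a nonzero linear functional of the $2$-jet of $\omega$ along the orbit, hence can be made nonzero by an arbitrarily small perturbation of $\omega$ supported near the (finitely many) periodic orbits.

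I would then argue that these perturbations at distinct periodic orbits are independent (disjoint supports), so finitely many together give a single perturbed $\omega'$ with $f_{(\phi,\omega',F)}$ immersive on the whole periodic set; by the immersion theorem (continuity of $m\mapsto$ rank and openness of immersions), $f_{(\phi,\omega',F)}$ is then immersive on an open neighbourhood $U$ of the periodic set. Outside $U$, the points are non-periodic, and one runs the standard transversality argument: the family of perturbations $\omega\mapsto\omega+\sum c_i\psi_i$ for suitable bump functions produces enough freedom that, for generic parameters, the $1$-jet of $f_{(\phi,\omega,F)}$ avoids the codimension-$(N-q+1)>q$ submanifold of non-immersive jets (here condition \textbf{(ii)}, $\{A^jC\}$ linearly independent, guarantees the requisite genericity/surjectivity of the perturbation-to-jet map, playing the role that the shift structure plays in Takens). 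Patching and using openness once more yields a dense set of $\omega$ with immersive GS, completing the proof.

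The main obstacle, I expect, is the bookkeeping in stage one: extracting the closed form for $Df_{(\phi,\omega,F)}(m)$ at a periodic orbit, correctly tracking how the geometric resummation in $A^n$ interacts with the eigendecomposition of $T_m\phi^{-n}$, and verifying that the coefficient multiplying each vector $(I-\lambda_j A^n)^{-1}(I-A)^{-1}(I-A^n)C$ is a genuinely nonzero (and perturbable) functional of the jet of $\omega$ — rather than something that might vanish identically for structural reasons. The transversality step in stage two is essentially Hukes' argument verbatim, with condition \textbf{(ii)} substituting for the delay-coordinate structure, so I anticipate it going through routinely; the delicate point is confirming that condition \textbf{(ii)} really does supply enough perturbation directions to hit every non-immersive jet transversally.
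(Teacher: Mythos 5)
Your overall two-stage structure — perturb at the finitely many periodic orbits, then extend over the non-periodic remainder using openness of immersions and a codimension count — matches the paper's Step 1/Step 2 organisation. Your Stage 2 sketch is vague but in the right spirit: the paper does run a transversality-style argument based on the density statement for the complement of the image of a $C^1$ map into a higher-dimensional target (Hirsch's Proposition 1.2), with condition \textbf{(ii)} supplying the perturbation directions via the invertible Krylov matrix $Q=(C,AC,\ldots,A^{N-1}C)$.

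There is, however, a genuine gap in your Stage 1. You claim that $Df_{(\phi,\omega,F)}(m)\mathbf{v}_j$ equals a scalar times the hypothesis vector $(I-\lambda_j A^n)^{-1}(I-A)^{-1}(I-A^n)C$, so that immersivity at $m$ reduces to checking that $q$ scalar functionals of the jet of $\omega$ are nonzero. That factorisation is false. The correct resummation (Lemma~\ref{condition for linind} in the paper) gives
\[
Df_{(\phi,\omega,F)}(m)\mathbf{v}_j = (I-\lambda_j A^n)^{-1}\sum_{k=0}^{n-1}A^kC\,D(\omega\circ\phi^{-k})(m)\mathbf{v}_j,
\]
and the inner sum is a linear combination of $C,AC,\ldots,A^{n-1}C$ whose coefficients $D(\omega\circ\phi^{-k})(m)\mathbf{v}_j$ depend on $k$; it collapses to a scalar times $(I-A)^{-1}(I-A^n)C$ \emph{only if those coefficients are the same for every} $k$. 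The role of condition \textbf{(i)} is therefore different from what you assume: the vectors $(I-\lambda_j A^n)^{-1}(I-A)^{-1}(I-A^n)C$ are not (proportional to) $Df(m)\mathbf{v}_j$, but rather the \emph{directions in which the perturbation shifts} $Df(m)\mathbf{v}_j$. Concretely, the paper builds a perturbation $\omega'=\omega+\sum_l\psi_l$ with bump functions constrained so that $D(\psi_l\circ\phi^{-l})(m)=\varepsilon\mathbf{v}^{\top}$ for a single fixed vector $\mathbf{v}$ (chosen so that $\mathbf{v}^{\top}\mathbf{v}_j=1$ for all $j$), which makes the perturbation of the inner sum independent of $k$ and thus equal to $\varepsilon(I-A)^{-1}(I-A^n)C$ for every $j$. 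One then invokes a separate elementary lemma (Lemma~\ref{invertible perturbation}): if $B$ is invertible, then for small $\varepsilon>0$ the matrix $A-\varepsilon B$ is invertible, hence adding small multiples of a linearly independent set of vectors to an arbitrary set yields a linearly independent set. That lemma, not a ``make a scalar nonzero'' argument, is what closes Step 1. Without it your argument would need the unperturbed coefficients to already align in a rank-one way across $k$, which there is no reason to expect.

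One smaller point: your aside ``wait --- more carefully because $\rho(A)<1$'' suggests you think $\rho(A)<1$ alone controls the geometric resummation $\sum_l(\lambda_j A^n)^l$. It does not, since $|\lambda_j|$ may exceed $1$. The hypothesis $\lambda_{\max}\rho(A^{n_{\min}})<1$ is exactly what is needed (combined with $\rho(A)<1$), and the paper proves $\rho(\lambda_jA^n)<1$ for every periodic point and eigenvalue in a preparatory lemma before the resummation is used.
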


\begin{proof}
    We proceed in two steps. In the first one we show that $f_{(\phi, \omega,F)} \in C ^2(M, \mathbb{R}) $ is an immersion at periodic points and in the second one we take care of the remaining points. We emphasize that equilibria can be seen as periodic points with period $1$.

\medskip

\noindent {\bf Step 1. Immersion at periodic points.} We start this part with three preparatory lemmas.

\begin{lemma}
Consider a connectivity matrix that satisfies the conditions $\rho(A)<1 $ and also that $\rho(\lambda _{{\rm max}} A^{n_{ {\rm min}}})<1$ as in part {\bf (i) } of the statement of the theorem. Then, for any periodic point $m$ with period $n$ and any  eigenvalue $\lambda_j$ of  $ T_m \phi^{-n}$, we have that $\rho(\lambda _j A^{n})<1$ and 
\begin{equation}
\label{funny inverse for per}
(I - \lambda_j A^n)^{-1}=\sum_{k=0} ^{\infty} \lambda _j^kA^{nk}.
\end{equation}
\end{lemma}

\begin{proof} Firstly, recall that for any square matrix  $B$ such that $\rho(B)< 1  $ then $\left(I-B\right) ^{-1}=\sum_{j=0}^{\infty} B ^j $. Let now $m$ be a periodic point with period $n$ and let $\lambda_j$ be an  eigenvalue  of  $ T_m \phi^{-n}$. 
 This implies that in order for \eqref{funny inverse for per} to hold we just need to show that $\rho(\lambda _j A^{n})<1$. This is indeed true since  any element in the spectrum of $\lambda _j A^{n} $ can be written as $\lambda _j \mu _k ^n $ with $\mu_k \in \mathbb{C}$ an eigenvalue of $A$. Moreover, let $c < 1$ such that $\left| \lambda _j \right|=c \left| \lambda _{{\rm max}}  \right| $. Then
\begin{equation*}
\left| \lambda _j \mu _k ^n \right|=c\left|\lambda _{{\rm max}} \mu _k ^{n_{{\rm min}}}  \right|\left|  \mu _k ^{n-n_{{\rm min}}} \right|< 1, 
\end{equation*} 
as required. Notice that in the last inequality we used that $\rho(\lambda _{{\rm max}} A^{n_{ {\rm min}}})<1$ and that $\rho(A)<1 $.
\end{proof}

\begin{lemma}
\label{condition for linind}
In the hypotheses of the statement of the theorem, let $m \in M $ be a periodic point of $\phi \in {\rm Diff}^2(M)$ with period $n \in \mathbb{N} $. Let $\left\{\mathbf{v} _1, \ldots, \mathbf{v} _p\right\} $ be a basis of eigenvectors associated to the distinct eigenvalues $\lambda_1, \lambda _2, \ldots, \lambda _q $.  Suppose that the set
\begin{equation}
\label{linind condition}
\left\{(I - \lambda_j A^n)^{-1} \sum^{n-1}_{k = 0} A^k C D (\omega \circ  \phi^{-k})(m) \mathbf{v}_j\right\}_{j \in \left\{1, \ldots,q\right\}}
\end{equation}
is linearly independent.
Then $f_{(\phi, \omega,F)}$ is an immersion at the periodic point $m$ for generic $\omega \in C ^2(M, \mathbb{R}^N)$.
\end{lemma}

\begin{proof} Since the eigenvalues $\lambda_j$ are distinct and the eigenvectors $\mathbf{v}_j$ are hence linearly independent, it is enough to show that the set $\left\{Df_{(\phi, \omega,F)}(m) \mathbf{v}_j\right\}_{j \in \left\{1, \ldots,q\right\}}$ is linearly independent in order to conclude that $Df_{(\phi, \omega,F)}(m)$ is injective. Now, by the expression \eqref{higher order derivatives of GS}:
\begin{align*}
Df_{(\phi, \omega,F)}(m)\mathbf{v} _j &= \sum^{\infty}_{l = 0} A^l C D (\omega \circ \phi^{-l})(m) \mathbf{v} _j \\
&= \sum^{\infty}_{l = 0} \sum_{k = 0}^{n-1} A^{ln+k} C D(\omega \circ \phi^{-(ln+k)})(m)\mathbf{v} _j \\
&= \sum^{\infty}_{l = 0} \sum_{k = 0}^{n-1} A^{ln+k} C D(\omega \circ \phi^{-k})(m) [T\phi^{-n}(m)]^l \mathbf{v} _j \\
&= \sum^{\infty}_{l = 0} \sum_{k = 0}^{n-1} A^{ln+k} C D(\omega \circ  \phi^{-k})(m) \lambda^l_j \mathbf{v}_j \\
&= \sum^{\infty}_{l = 0} (\lambda_jA^n)^l \sum_{k = 0}^{n-1} A^k C D(\omega \circ  \phi^{-k})(m)  \mathbf{v}_j \\ 
&=  
(I - \lambda_j A^n)^{-1} \sum^{n-1}_{k = 0} A^k C D (\omega \circ  \phi^{-k})(m) \mathbf{v}_j,
\end{align*}
and so, since by assumption the set of vectors of the form of the RHS are linearly independent, so are the vectors of the form of the LHS,
which proves the statement.
\end{proof}

\begin{lemma}
\label{invertible perturbation}
Let $A$ and $B$ two square matrices of the same size such that $\det (A)=0 $ and $\det (B)\neq 0  $. Then, there exists $\varepsilon>0  $ such that 
\begin{equation*}
\det (A- \varepsilon B)\neq 0.
\end{equation*}
\end{lemma}

\begin{proof} Consider the singular matrix $B ^{-1}A $ and let $\lambda_0  $ be its non-zero eigenvalue that has the smallest absolute value. Then, for any $0< \varepsilon< | \lambda_0| $ we necessarily have that $\det  \left(B ^{-1}A - \varepsilon I \right)\neq 0 $ because otherwise $\varepsilon $ would be an eigenvalue of $B ^{-1}A $ which is impossible by the minimality of $\lambda_0 $. This implies that $C:= B ^{-1}A - \varepsilon I $ is invertible and hence so is $BC=A- \varepsilon B $, as required.

\end{proof}

\medskip

As a corollary of this lemma we can conclude that if $\mathcal{V}:= \left\{\mathbf{v}_1, \ldots, \mathbf{v}_n\right\} $ and $\mathcal{W}:= \left\{\mathbf{w}_1, \ldots, \mathbf{w}_n\right\} $ are two sets of vectors in ${\mathbb R}^n $, then there exists $\varepsilon>0  $ such that the set $\left\{\mathbf{v}_1+ \varepsilon \mathbf{w} _1, \ldots, \mathbf{v}_n+ \varepsilon \mathbf{w} _n\right\} $ is made of linearly independent vectors. This fact is used at the end of the proof of Step 1 of Theorem \ref{Theorem immersion}.

\medskip

We now use this result to show that, for generic $\omega \in C ^2(M, \mathbb{R}^N)$, the generalized synchronization $f_{(\phi, \omega,F)} \in C ^2(M, \mathbb{R}) $ is an immersion at the periodic points of $\phi$. Let $m _1, \ldots, m _P \in M$ be the distinct periodic points of $\phi$, having periods $n _1, \ldots, n _P \in \mathbb{N} $, respectively (the equilibria of $\phi$ are included in this list with periods equal to one). The term {\it distinct} means that none of those points are in the orbits of the others. We now choose $P$ disjoint open neighborhoods $B _i $ that contain each of the distinct periodic points $m _i $. Since there is a finite number of periodic points, the open sets $B _i $ can be chosen small enough so that, additionally, all the open sets
\begin{equation*}
\phi^{-t}(B _i) \quad \mbox{for all $t \in \left\{0, \ldots, n _i\right\} $ and $i \in \left\{1, \ldots, P\right\}$}
\end{equation*}
are disjoint.

Now, given any of the distinct periodic points $ m _i \in M $ on the list, we show that  $f_{(\phi, \omega,F)} $ for generic $\omega \in C ^2(M, \mathbb{R}^N)$, that is, the set of observation maps $\omega$ for which $f_{(\phi, \omega,F)} $ is an immersion at $m _i $ is open and dense in $C ^2(M, \mathbb{R}^N)$. The openness is a consequence of the hypothesis on the continuity of the map $\Theta_{(\phi, F)} $ and of an argument identical to the beginning of the proof of part {\bf (iii)} of Proposition \ref{generalized synch with spectral radius}. Regarding the density, we show that  if $f_{(\phi, \omega,F)} $ is not an immersion at $m _i  $, then there is a perturbation $\omega' $ of $\omega $ in $C^2(M, \mathbb{R})$ for which $f_{(\phi, \omega',F)} $ is an immersion at $m _i $. Indeed, set 
\begin{equation}
\label{perturbed observation map 1}
    \omega' = \omega + \sum_{l = 0}^{n _i-1}   \psi_l ^i
\end{equation}
where  $\psi_l ^i \in C^{\infty}(M,\mathbb{R})$ are bump functions whose  supports are contained in  $\phi^{-l}(B_i)$ and, additionally, are chosen to satisfy
\begin{align*}
D(\psi_l^i \circ \phi^{-l})(m _i) = \varepsilon \mathbf{v}^{\top}, \quad \mbox{$l \in \left\{0, \ldots, n _i-1\right\}$,}
\end{align*}
for some small constant $\varepsilon> 0$ and $\mathbf{v} \in \mathbb{R} ^p $  the unique vector that solves the linear system
\begin{equation}
\label{linear system for later}
\left(
\begin{array}{c}
\mathbf{v}_1^{\top}\\
\vdots\\
\mathbf{v}_p^{\top}
\end{array}
\right) \mathbf{v}=
\left(
\begin{array}{c}
1\\
\vdots\\
1
\end{array}
\right),
\end{equation}
with $\left\{\mathbf{v} _1, \ldots, \mathbf{v} _p\right\} $ a basis of eigenvectors of $T_{m _i}\phi^{-n _i} $. Note that by construction and for any $l \in \left\{0, \ldots, n _i-1\right\}$,
\begin{equation}
\label{another inter 1}
D(\omega'\circ \phi^{-l})(m _i) = D(\omega\circ \phi^{-l})(m _i) + D(\psi_l^i \circ  \phi^{-l}) = D(\omega\circ \phi^{-l})(m _i) + \varepsilon \mathbf{v}^{\top}.
\end{equation}
We now consider the vectors \eqref{linind condition} in Lemma \ref{condition for linind} with respect to the perturbed observation map in \eqref{perturbed observation map 1}. Indeed, by \eqref{another inter 1} and the way in which the vector $\mathbf{v}  $ has been constructed in \eqref{linear system for later}:
\begin{multline*}
(I - \lambda_j A^{n_i})^{-1} \sum^{n_i-1}_{k=0} A^k C D(\omega' \circ \phi^{-k})(m _i)\mathbf{v}_j \\
= (I - \lambda_j A^{n_i})^{-1} \sum^{n_i-1}_{k=0} A^k C D(\omega \circ  \phi^{-k})(m _i)\mathbf{v}_j + \varepsilon(I - \lambda_j A^{n_i})^{-1} \sum^{n_i-1}_{k=0} A^k C  \mathbf{v}^{\top} \mathbf{v}_j \\
= (I - \lambda_j A^{n_i})^{-1} \sum^{n_i-1}_{k=0} A^k C D(\omega \circ  \phi^{-k})(m _i)\mathbf{v}_j 
+ \varepsilon(I - \lambda_j A^{n_i})^{-1} (I - A)^{-1}(I - A^{n_i})C.
\end{multline*}
Given that when we vary $j \in \left\{1, \ldots, p \right\}$ in the previous expression the vectors in the second summand form by hypothesis a linearly independent set, we can use Lemma \ref{invertible perturbation} to choose $\varepsilon> 0 $ so that the family $\left\{(I - \lambda_j A^{n_i})^{-1} \sum^{n_i-1}_{k=0} A^k C D(\omega' \circ \phi^{-k})(m _i)\mathbf{v}_j\right\}_{j \in \left\{1, \ldots, p \right\}}$ forms a linearly independent set and, at the same time, $\omega'$ is as close to $\omega $ in $C^2(M, \mathbb{R})$ as desired. This shows by Lemma \ref{condition for linind} that $f_{(\phi, \omega',F)} $ is an immersion at $m _i $. 

The choice of the open sets $B _i $ implies that we can keep perturbing $\omega$ in order to make $f_{(\phi, \omega',F)} $ immersive at the other periodic points without spoiling that condition for the previous ones. This shows in particular that a perturbation of the type
\begin{equation}
\label{perturbed observation map final}
    \omega' = \omega + \sum_{i=1}^P\sum_{l = 0}^{n _i-1}   \psi_l ^i
\end{equation}
can be constructed so that $f_{(\phi, \omega',F)} $ is immersive at all the periodic points of $\phi $, as required.

\medskip

Before we continue, we will state the Immersion Theorem, which will be used several times in the proofs that follow.

\begin{theorem}
\label{theorem::immersion}
    \cite[Immersion Theorem, Theorem 3.5.7]{mta} Let $M,M'$ be smooth manifolds, and $f \in C^r(M,M')$ for $r \geq 1$. Then the following are equivalent:
    \begin{itemize}
        \item $f$ is an immersion at $m \in M$;
        \item there is a neighbourhood $U$ of $m$ such that $f(U)$ is a submanifold of $M'$ and $f$ restricted to $U$ is a diffeomorphism of $U$ onto $f(U)$. 
    \end{itemize}
\end{theorem}

\noindent {\bf Step 2. Immersion at the remaining points.} Having just proved that for generic $\omega \in C ^2(M, \mathbb{R}^N)$ the generalized synchronization $f_{(\phi, \omega,F)} \in C ^2(M, \mathbb{R}) $ is an immersion at the periodic points of $\phi$, the Immersion Theorem (Theorem \ref{theorem::immersion}) guarantees that the same holds for the open set  formed by the union of certain open neighborhoods around those points.

Now we let $\mathcal{M} \subset M$ be the compact subset of $M$ obtained by removing that immersed open set. Our goal is now to show that $f_{(\phi, \omega,F)} \in C ^2(M, \mathbb{R}) $ is also an immersion at $\mathcal{M}$ for generic $\omega \in C ^2(M, \mathbb{R}^N)$. 

The hypotheses that we imposed on $M$ in Chapter \ref{chapter::SSMs} imply that $M$ can be endowed with a Riemannian metric which makes $M$ a complete metric space by the Hopf and Rinow Theorem (see \cite[Theorem 7.7]{boothby2003introduction}). This implies in turn that the compact subset ${\cal M}\subset M $ is also a complete metric space which allows us to define open balls $B _r(m) $ of radius $r > 0$  around each point $m \in {\cal M} $. Using this notation, in the next paragraphs we show that for any $\omega \in C ^2(M, \mathbb{R}^N)$ and $m  \in {\cal M} $ we can find a $n (m) \in \mathbb{N}  $  and a perturbation $\omega' \in C ^2(M, \mathbb{R}^N)$ as close to $\omega$ as desired such that the restriction of  $f_{(\phi, \omega',F)} $ to $B _{2^{-n(m)}}(m) $ is an immersion.

Indeed, take an arbitrary $m \in \mathcal{M}$ and define a collection of balls $B_{2^{-n}}(m)$ centered at $m$ with radius $2^{-n}$,  $n \in \mathbb{N}$. For a fixed $n$ consider the infinite trajectory $\phi^{-t}(B_{2^{-n}}(m))$, $t  \in \mathbb{N}$.  Choose now $n_1 \in \mathbb{N}$ large enough so that, for any $n > n_1$ the balls $\phi^{-t}(B_{2^{-n}}(m))$   are disjoint for $t = 0, \ldots , N-1$ and $B_{2^{-n}}(m) \subset U$ where $(U, h) $ is an admissible chart of $M$. Given that $\phi \in {\rm Diff}^2(M)$, we note that the family $(U _t, h _t)$, $t \in \mathbb{N} $,  defined by $U _t= \phi^{-t}(U) $ and  $h _t :=h \circ \phi ^t  $ is made of admissible charts and that $\phi^{-t}(B_{2^{-n}}(m))\subset U_t$, for all $n> n _1 $. Let $T(n)$ denote the largest integer such that $\phi^{-t}(B_{2^{-n}}(m))$ are disjoint  for $t = 0, \ldots, T (n)-1$.

Now, for each $n > n _1$ and $t = 0, \ldots, N-1$ we define functions $\psi_{tn} \in C^{\infty}(M,\mathbb{R})$ that have their support included in $\phi^{-t}(B_{2^{-n}}(m))$ and satisfy
\begin{equation}
\label{condition on the derivative}
\frac{\partial (\psi_{tn}h_t^{-1})}{\partial u_j} = 1
\end{equation}
on $h_t\left(\phi^{-t}(B_{2^{-(n+1)}}(m))\right) =h\left( B_{2^{-(n+1)}}(m)\right)$. We impose further that $\psi_{tn} = \psi_{t(n+1)}$ on $\phi^{-t}(B_{2^{-(n+2)}}(m))$ for all $n> n _1$, and that there is some $\kappa > 0$ independent of $n$ and $t$ such that $\lVert \psi_{tn} h_t^{-1} \rVert_{C^1} \leq \kappa$.
These functions can be constructed by setting
\begin{align*}
    \psi_{tn}(m) = \lambda_{tn}(m) \sum_{j = 1}^{q}\xi_j(m)
\end{align*}
where $\xi_j$ is the $j$-th coordinate map for the chart $h _t$ and $\lambda_{tn} \in C^{\infty}(M,\mathbb{R})$ are bump functions that have support included in $\phi^{-t}(B_{2^{-n}}(m))$ and satisfy $\lambda_{tn}|_{\phi^{-t}(B_{2^{-(n+1)}}(m))} = 1$. Define now the perturbation $\omega _n $ of $\omega$ by
\begin{align}
\label{definition of omega perturb}
    \omega_n = \omega + \sum_{t = 0}^{N-1} \varepsilon_t \psi_{tn},
\end{align}
where $\varepsilon _t  $  are the components of a vector $\boldsymbol{\varepsilon} \in \mathbb{R}^N  $ with positive entries. By construction, for any $m' \in B_{2^{-n}}(m)$ and $t = 0 , \ldots,  N-1$, we have  that
\begin{align*}
    \omega_n \phi^{-t}(m') = \omega \phi^{-t}(m') + \varepsilon_t \psi_{tn}(m') 
\end{align*}
and moreover by \eqref{condition on the derivative} and for any $m' \in B_{2^{-(n+1)}}(m)$:
\begin{equation}
\label{component of delay}
    \frac{\partial (\omega_n \phi^{-t} h^{-1})}{\partial u_j}(h(m')) = 
    \frac{\partial (\omega \phi^{-t} h^{-1})}{\partial u_j}(h(m')) + \varepsilon_t.
\end{equation}
Let $\Phi: M \to \mathbb{R}^N$ be a backwards version of the Takens delay map, that is, 
\begin{align*}
    \Phi(m) := \left( \omega(m) , \omega\circ  \phi^{-1}(m) , \ldots, \omega\circ\phi^{-(N-1)}(m) \right)^{\top},
\end{align*}
and let $\Phi_n : M \to \mathbb{R}^N$ be its perturbed version defined by
\begin{align*}
    \Phi_n(m) := \left( \omega_n(m) , \omega_n\circ \phi^{-1}(m) , \ldots, \omega_n\circ\phi^{-(N-1)}(m) \right)^{\top}.
\end{align*}
Using these objects, we can rewrite \eqref{component of delay} in vector form as
\begin{equation}
\label{derivative in vector form bis}
\frac{\partial (\Phi_n h^{-1})}{\partial u_j}(h(m')) = \frac{\partial (\Phi h^{-1})}{\partial u_j} (h(m'))+ \boldsymbol{\varepsilon},
\end{equation}
for any $m' \in B_{2^{-(n+1)}}(m)$ and where $\boldsymbol{\varepsilon} \in \mathbb{R}^N$. Next, for any $t = N , \ldots, T(n) - 1$ notice that
\begin{equation}
\label{derivative in vector form bis1}
    \omega_n \phi^{-t}(m) = \omega \phi^{-t}(m).
\end{equation}
Finally, if $t \geq T(n)$ then 
\begin{equation}
\label{derivative in vector form bis2}
    \omega_n \phi^{-t}(m) = \omega \phi^{-t}(m) + \sum^{N-1}_{\tau = 0} \varepsilon_\tau \psi_{\tau n}\phi^{-t}(m).
\end{equation} 
We now consider the perturbed generalized synchronization $f_{(\phi, \omega_n,F)} :M \to\mathbb{R} ^N$  given by
\begin{align*}
    f_{(\phi, \omega_n,F)} &= \sum_{t = 0}^{\infty} A^t C \omega_n \phi^{-t} = \sum_{t = 0}^{N-1} A^t C \omega_n \phi^{-t} + \sum_{t = N}^{\infty} A^t C \omega_n \phi^{-t} = Q \Phi_n + \sum_{t = N}^{\infty} A^t C \omega_n\phi^{-t}
\end{align*}
where $Q$ is the $N \times N$ real matrix with $(t+1)$-th column $A^t C$. Now we take the partial derivatives with respect to $u_j$ at points in $h(B_{2^{-(n+1)}}(m))$ and observe that by \eqref{derivative in vector form bis}, \eqref{derivative in vector form bis1}, and \eqref{derivative in vector form bis2}:	
\begin{align}
    & \ \ \frac{\partial (f_{(\phi, \omega_n,F)} h^{-1})}{\partial u_j} \notag \\ &= Q \frac{\partial(\Phi_n h^{-1})}{\partial u_j} + \sum_{t = N}^{\infty} A^t C \frac{\partial (\omega_n\phi^{-t}h^{-1} )}{\partial u_j} \notag \\ 
    &= Q \frac{\partial(\Phi h^{-1})}{\partial u_j} + Q \boldsymbol{\varepsilon} + \sum^{\infty}_{t = N} A^t C \frac{\partial (\omega_n \phi^{-t}h^{-1})}{\partial u_j}\notag \\
    &=
    Q \frac{\partial(\Phi h^{-1})}{\partial u_j} + Q \boldsymbol{\varepsilon} + \sum^{T(n)-1}_{t = N} A^t C \frac{\partial (\omega_n \phi^{-t} h^{-1})}{\partial u_j} + 
    \sum^{\infty}_{t = T(n)} A^t C \frac{\partial (\omega_n \phi^{-t} h^{-1})}{\partial u_j}\notag \\
    &= Q \frac{\partial(\Phi h^{-1})}{\partial u_j} + Q \boldsymbol{\varepsilon} + \sum^{T(n)-1}_{t = N} A^t C \frac{\partial (\omega \phi^{-t} h^{-1})}{\partial u_j} + 
    \sum^{\infty}_{t = T(n)} A^t C \frac{\partial (\omega_n \phi^{-t} h^{-1})}{\partial u_j}\notag \\
    &= Q \frac{\partial(\Phi h^{-1})}{\partial u_j} + Q \boldsymbol{\varepsilon} + \sum^{T(n)-1}_{t = N} A^t C \frac{\partial (\omega \phi^{-t}h^{-1})}{\partial u_j} + 
    \sum^{\infty}_{t = T(n)} A^t C \frac{\partial (\omega \phi^{-t}h^{-1})}{\partial u_j}\notag \\
    &+ \sum^{\infty}_{t = T(n)} A^t C \bigg( \sum^{N-1}_{\tau = 0} \varepsilon_{\tau} \frac{\partial( \psi_{\tau n} \phi^{-t} h^{-1})}{\partial u_j} \bigg)\notag \\
    &= Q \frac{\partial(\Phi h^{-1})}{\partial u_j} + Q \boldsymbol{\varepsilon} + \sum^{\infty}_{t = N} A^t C \frac{\partial( \omega \phi^{-t}h^{-1})}{\partial u_j} + \sum^{\infty}_{t = T(n)} A^t C \bigg( \sum^{N-1}_{\tau = 0} \varepsilon_{\tau} \frac{\partial (\psi_{\tau n} \phi^{-t} h^{-1})}{\partial u_j} \bigg)\notag \\
    &= \sum_{t = 0}^{N-1}A^t C \frac{\partial (\omega \phi^{-t}h^{-1})}{\partial u_j} + Q \boldsymbol{\varepsilon} + \sum^{\infty}_{t = N} A^t C \frac{\partial (\omega \phi^{-t}h^{-1})}{\partial u_j} + \sum^{\infty}_{t = T(n)} A^t C \bigg( \sum^{N-1}_{\tau = 0} \varepsilon_{\tau} \frac{\partial (\psi_{\tau n} \phi^{-t} h^{-1})}{\partial u_j} \bigg)\notag \\
    &= \sum_{t = 0}^{\infty}A^t C \frac{\partial (\omega \phi^{-t}h^{-1})}{\partial u_j} + Q \boldsymbol{\varepsilon} + \sum^{\infty}_{t = T(n)} A^t C \bigg( \sum^{N-1}_{\tau = 0} \varepsilon_{\tau} \frac{\partial (\psi_{\tau n} \phi^{-t} h^{-1})}{\partial u_j} \bigg)\notag \\
    &= \frac{\partial(f_{(\phi, \omega,F)} h^{-1})}{\partial u_j} + Q \boldsymbol{\varepsilon} + \sum^{\infty}_{t = T(n)} A^t C \bigg( \sum^{N-1}_{\tau = 0} \varepsilon_{\tau} \frac{\partial (\psi_{\tau n} \phi^{-t} h^{-1})}{\partial u_j} \bigg)
\label{important identity for derivatives}.
\end{align}
In order to prove that  $f_{(\phi, \omega,F)}$ is an immersion  at the  points in $h(B_{2^{-(n+1)}}(m))$ for a generic observation $\omega$, we shall find an arbitrarily small vector $\boldsymbol{\varepsilon}$ for which the vectors corresponding to the $\boldsymbol{\varepsilon}$-perturbed observation $\omega _n$ 
\begin{align*}
    \left\{\frac{\partial \left(f_{(\phi, \omega_n,F)}  h^{-1}\right)}{\partial u_j} \right\}_{j \in  \left\{1, \ldots, q\right\}}
\end{align*}
are a linearly independent family. We will proceed inductively by showing that if we assume for some $s$ satisfying $1 \leq s < q$ that the vectors
\begin{align}
\label{proceeding_vectors}
    \left\{\frac{\partial \left(f_{(\phi, \omega,F)}  h^{-1}\right)}{\partial u_j} \right\}_{j \in  \left\{1, \ldots, s\right\}}
\end{align}
are linearly independent, then we can choose an arbitrarily small vector $\boldsymbol{\varepsilon}$ such that the family corresponding to the perturbed observation $\omega_n$ defined in  \eqref{definition of omega perturb} satisfies that 
\begin{align*}
    \left\{\frac{\partial \left(f_{(\phi, \omega_n,F)}  h^{-1}\right)}{\partial u_j} \right\}_{j \in  \left\{1, \ldots, s+1\right\}}
\end{align*}
is a linearly independent family.  To this end, we define the map $\Psi : \mathbb{R}^s \times h(U) \to \mathbb{R}^{N}$ as
\begin{align*}
    \Psi(\boldsymbol{\alpha} , {\bf u}) = \sum^s_{j = 1}\alpha_j \frac{\partial \left(f_{(\phi, \omega,F)}  h^{-1}\right)}{\partial u_j} -  \frac{\partial \left(f_{(\phi, \omega,F)}  h^{-1}\right)}{\partial u_{s+1}}.
\end{align*}
The hypothesis on the statement of the theorem about $f_{(\phi, \omega,F)} \in C ^2(M, \mathbb{R}^N) $ for any observation map $\omega \in C ^2(M, \mathbb{R})$ implies that $\Psi $ is of class $C^1$ and maps a manifold  of dimension  $s + q$ to a manifold of dimension $N$. 
Since by hypothesis $s + q< 2q\leq N$,  then the set  $\mathbb{R}^N \setminus   \Psi(\mathbb{R}^s \times h(U))$ is dense in $\mathbb{R}^N  $ (see \cite[Chapter 3, Proposition 1.2]{Hirsch:book}). This implies that we can choose an arbitrarily small vector 
$ \boldsymbol{\delta} \in \bigg(\mathbb{R}^N \setminus \Psi(\mathbb{R}^s \times h (U))\bigg)$ such that  if we set $\boldsymbol{\varepsilon} := Q^{-1}\boldsymbol{\delta} $ then we have that the vector
\begin{align*}
    \frac{\partial(f_{(\phi, \omega,F)} h^{-1})}{\partial u_{s+1}} + Q \boldsymbol{\varepsilon}
\end{align*}
is independent of the vectors in \eqref{proceeding_vectors} when evaluated at any point in $h(U) $. Since the linear independence is stable under small perturbations we can choose $\boldsymbol{\varepsilon} $ small enough so that it is actually the family
\begin{align*}
    \left\{\frac{\partial \left(f_{(\phi, \omega,F)}  h^{-1}\right)}{\partial u_j}+ Q \boldsymbol{\varepsilon} \right\}_{j \in  \left\{1, \ldots, s+1\right\}}
\end{align*}
that is linearly independent. Now, in view of the identity \eqref{important identity for derivatives} we note that the value $n \in \mathbb{N}$ can be chosen large enough so that the residual terms
\begin{align*}
    \sum^{\infty}_{t = T(n)} A^t C \bigg( \sum^{N-1}_{\tau = 0} \varepsilon_{\tau} \frac{\partial (\psi_{\tau n} \phi^{-t} h^{-1})}{\partial u_j} \bigg), \quad \mbox{$j \in  \left\{1, \ldots, s+1\right\}$,}
\end{align*} 
are small enough so that the family 
\begin{align*}
    &\left\{\frac{\partial \left(f_{(\phi, \omega,F)}  h^{-1}\right)}{\partial u_j}+ Q \boldsymbol{\varepsilon}+  \sum^{\infty}_{t = T(n)} A^t C \bigg( \sum^{N-1}_{\tau = 0} \varepsilon_{\tau} \frac{\partial (\psi_{\tau n} \phi^{-t} h^{-1})}{\partial u_j} \bigg)\right\}_{j \in  \left\{1, \ldots, s+1\right\}} \\
    =
  &\left\{\frac{\partial \left(f_{(\phi, \omega_n,F)}  h^{-1}\right)}{\partial u_j}\right\}_{j \in  \left\{1, \ldots, s+1\right\}}
\end{align*}
is linearly independent, as required. Notice that this equality is a consequence of \eqref{important identity for derivatives}. The possibility to shrink the residual term comes from the convergence of the series
\begin{align*}
    \sum^{\infty}_{t = 0} A^t C \bigg( \sum^{N-1}_{\tau = 0} \varepsilon_{\tau} \frac{\partial (\psi_{\tau n} \phi^{-t} h^{-1})}{\partial u_j} \bigg), \quad \mbox{$j \in  \left\{1, \ldots, s+1\right\}$,}
\end{align*} 
which is guaranteed by the hypothesis on the differentiability of $f_{(\phi, \omega,F)} $ for any observation map $\omega \in C ^2(M, \mathbb{R})$ and the expression \eqref{higher order derivatives of GS}. In this case the bump functions play the role of the observations for which we assumed the existence of a uniform bound $\kappa$ over $\tau$ and $n$ such that $\lVert \psi_{\tau n} \rVert_{C^1} < \kappa$.

If we recursively apply this procedure, we can conclude the existence of a small perturbation $\omega_n $ of $\omega $ obtained as a sequence of perturbations of the type \eqref{definition of omega perturb} for which the family 
\begin{equation*}
  \left\{\frac{\partial \left(f_{(\phi, \omega_n,F)}  h^{-1}\right)}{\partial u_j}\right\}_{j \in  \left\{1, \ldots, q\right\}}
\end{equation*}
is linearly independent when evaluated at $m \in {\cal M} $, which proves that 
$f_{(\phi, \omega_n,F)} \in C ^2(M, \mathbb{R} ^N ) $ is an immersion at $m \in {\cal M}$.

Finally, observe that we just showed that for any $m \in \mathcal{M}$, there exists an $n(m) \in \mathbb{N}$ such that the restriction of the perturbation $f_{(\phi, \omega_{n(m)},F)} $ to $B_{2^{-n(m)}}(m)$ is an immersion. We note that the union 
\begin{align*}
    \bigcup_{m \in \mathcal{M}} B_{2^{-n}}(m)
\end{align*}
is clearly an open cover of $\mathcal{M}$. Since $\mathcal{M}$ is compact, it admits a finite subcover. The finite subcover comprises sets for which, one at a time, we can construct an immersion using the procedure described earlier in this proof. For each set, we ensure that the perturbation is sufficiently small not to spoil the immersion on any other set. 

This argument completes the proof of the  immersion of the GS at the points of $\mathcal{M}$ and therefore, together with the Step 1, shows that there exists a small perturbation of $f_{(\phi, \omega_n,F)} \in C ^2(M, \mathbb{R} ^N ) $ of $f_{(\phi, \omega,F)}$ that is an immersion at all the points in $M$, as required.
\end{proof}

\subsection{Embedding GS}

Now that we have established immersivity, we can complete the proof of Theorem \ref{embedding_thm} by establishing injectivity.

\begin{proof}
    As in the previous theorem, we proceed in two steps.

\medskip

\noindent {\bf Step 1. Injectivity around the periodic set.} We start by showing that the observations corresponding to the globally immersive generalized synchronizations whose existence we proved in Theorem \ref{Theorem immersion} can be slightly perturbed in $ C ^2(M, \mathbb{R})$ so that the resulting GS is injective in an open subset $V _P  $ that includes all the periodic points of $\phi$. We start this part of the discussion with a preparatory lemma.

\begin{lemma}
\label{injectivity_lemma}
In the hypotheses of the theorem, let $m _1, \ldots, m _P \in M$ be the distinct periodic points of $\phi$, each of which have periods $n _1, \ldots, n _P \in \mathbb{N} $, respectively. Let $\ell \in \mathbb{N} $ be the lowest common multiple of all the periods and denote by $M _P $ the set of all periodic points of $\phi$ (that is, the set that comprises $ \left\{m _1, \ldots, m _P\right\} $ and all the corresponding orbits). Then, the restriction $\left.f_{(\phi, \omega,F)}\right|_{M _P} $ of a generalized synchronization $f_{(\phi, \omega,F)} \in C ^2(M, \mathbb{R}^N) $ to $M _P $  is injective if and only if the map $g_{(\phi, \omega,F)} : M _P \to \mathbb{R}^N$ defined by
    \begin{align*}
        g_{(\phi, \omega,F)} = \sum_{k = 0}^{\ell - 1}A^k C \left(\omega \circ  \phi^{-k}\right)
    \end{align*}
    is injective.
\end{lemma}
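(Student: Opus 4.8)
The plan is to exploit the periodicity of the points of $M_P$ to collapse the infinite series that defines $f_{(\phi, \omega,F)}$ into the finite sum $g_{(\phi, \omega,F)}$, premultiplied by an invertible matrix. Recall from Lemma \ref{generalized synch with spectral radius}(i) that, since $\rho(A)<1$ and $\omega$ maps into a bounded set, the series $f_{(\phi, \omega,F)}(m)=\sum_{j=0}^{\infty}A^jC\,\omega\circ\phi^{-j}(m)$ converges absolutely and uniformly on $M$; this is exactly what licenses the regrouping performed below.

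First I would note that every point $m\in M_P$ lies on a periodic orbit whose period is one of $n_1,\dots,n_P$, each of which divides $\ell$; hence $\phi^{-\ell}(m)=m$ for all $m\in M_P$, and consequently $\phi^{-(j\ell+k)}(m)=\phi^{-k}(m)$ for every integer $j\geq 0$ and every $k\in\{0,\dots,\ell-1\}$. Grouping the absolutely convergent series into consecutive blocks of length $\ell$ and using linearity of the inner finite sum gives, for any $m\in M_P$,
\begin{align*}
f_{(\phi, \omega,F)}(m)
&=\sum_{j=0}^{\infty}\sum_{k=0}^{\ell-1}A^{j\ell+k}C\,\omega\circ\phi^{-(j\ell+k)}(m)
=\sum_{j=0}^{\infty}A^{j\ell}\sum_{k=0}^{\ell-1}A^{k}C\,\omega\circ\phi^{-k}(m)\\
&=\Bigl(\sum_{j=0}^{\infty}A^{j\ell}\Bigr)g_{(\phi, \omega,F)}(m).
\end{align*}
Since $\rho(A)<1$ we have $\rho(A^{\ell})=\rho(A)^{\ell}<1$, so $I-A^{\ell}$ is invertible with $\sum_{j=0}^{\infty}A^{j\ell}=(I-A^{\ell})^{-1}$; therefore $f_{(\phi, \omega,F)}(m)=(I-A^{\ell})^{-1}g_{(\phi, \omega,F)}(m)$ for all $m\in M_P$.

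Finally, since $(I-A^{\ell})^{-1}\in\mathbb{M}_{N\times N}(\mathbb{R})$ is a linear isomorphism of $\mathbb{R}^N$, the map $m\mapsto (I-A^{\ell})^{-1}g_{(\phi, \omega,F)}(m)$ is injective on $M_P$ if and only if $g_{(\phi, \omega,F)}|_{M_P}$ is injective, which yields the stated equivalence. There is no real obstacle beyond checking that the block-regrouping of the series is valid, and this is immediate from the absolute convergence established in Lemma \ref{generalized synch with spectral radius}(i); I would also remark that the only property of $\ell$ actually used is that $\phi^{-\ell}$ fixes every periodic point, so any common multiple of the periods would do, the least common multiple being chosen merely for definiteness.
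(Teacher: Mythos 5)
Your proof is correct and follows essentially the same route as the paper: both arguments use $\phi^{-\ell}(m)=m$ on $M_P$ to block the absolutely convergent series into groups of length $\ell$, obtain $f_{(\phi,\omega,F)}=(I-A^\ell)^{-1}g_{(\phi,\omega,F)}$ on $M_P$, and conclude from invertibility of $(I-A^\ell)^{-1}$. The only cosmetic difference is that the paper phrases the reduction as a chain of equivalent equalities $f(m_1)=f(m_2)\iff g(m_1)=g(m_2)$ rather than stating the identity $f=(I-A^\ell)^{-1}g$ up front.
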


\begin{proof}
Let $m _1, m _2 \in M _P $ be such that  $f_{(\phi, \omega,F)}(m_1) = f_{(\phi, \omega,F)}(m_2)$. This equality is equivalent to the following expressions:
\begin{align*}
\sum_{t = 0}^{\infty} A^t C \omega \phi^{-t}(m_1) &= \sum_{t = 0}^{\infty} A^t C \omega \phi^{-t}(m_2), \\
\sum_{t = 0}^{\infty} \sum_{k = 0}^{\ell-1} A^{t\ell + k} C \omega \phi^{-(t\ell+k)}(m_1) &= \sum_{t = 0}^{\infty} \sum_{k = 0}^{\ell-1} A^{t\ell + k} C \omega \phi^{-(t\ell+k)}(m_2), \\
\sum_{t = 0}^{\infty} (A^\ell)^t \sum_{k = 0}^{\ell-1} A^{k} C \omega \phi^{-k}(m_1) &= \sum_{t = 0}^{\infty} (A^\ell)^t \sum_{k = 0}^{\ell-1} A^{k} C \omega \phi^{-k}(m_2). 
\end{align*}
Given that $\rho(A)<1 $ then $\rho(A^{\ell})<1 $ necessarily and hence this equality can be rewritten as
\begin{equation*}
(I - A^\ell)^{-1} \sum_{k = 0}^{\ell-1} A^{k} C \omega \phi^{-k}(m_1) =  (I - A^\ell)^{-1} \sum_{k = 0}^{\ell-1} A^{k} C \omega \phi^{-k}(m_2),
\end{equation*}
which is equivalent to 
$\sum_{k = 0}^{\ell-1} A^{k} C \omega \phi^{-k}(m_1) = \sum_{k = 0}^{\ell-1} A^{k} C \omega \phi^{-k}(m_2) $ and hence, by definition, to $g_{(\phi, \omega,F)}(m_1) = g_{(\phi, \omega,F)}(m_2)$, which proves the statement.
\end{proof}

\medskip

If we now define  $\Phi_{(\ell, \omega)}: M \to \mathbb{R}^l$ as 
\begin{align*}
    \Phi_{(\ell, \omega)}(m) := \left( \omega(m) , \omega\circ  \phi^{-1}(m) , \ldots, \omega\circ\phi^{-(\ell-1)}(m) \right)^{\top},
\end{align*}
we note that the map $g_{(\phi, \omega,F)}  $  can be rewritten as $g_{(\phi, \omega,F)} = Q \Phi_{(\ell, \omega)}  $,  where $Q \in \mathbb{M}_{N, \ell} $ is a matrix whose $(k+1) $th-column is set to the vector $A ^k C $. The hypotheses on the vectors $\left\{A ^j C\right\}_{j \in \left\{0,1, \ldots, N-1 \right\}}$ forming a linearly independent set and that $N> \ell $ guarantee that ${\rm rank}\, Q = N $ and hence that the associated linear map $Q: \mathbb{R}^{\ell}\to \mathbb{R}^N$ is injective. With this notation we now show that if $g_{(\phi, \omega,F)}  $ is not injective in $M _P $ then a perturbation $\omega ' \in C ^2(M, \mathbb{R}^N) $ of $\omega  $ can be chosen so that $g_{(\phi, \omega',F)}  $ is. More specifically, define
\begin{equation}
\label{perturbation for mp}
\omega':= \omega+\sum_{i=1}^P\sum_{j=1}^{n _i}\varepsilon_{ij}\Psi _{ij},
\end{equation}
where $\Psi _{ij}$ are bump functions with non-intersecting supports $U _{ij} $ such that $m _{ij}:=\phi^{-(j-1)}(m _i) \in U _{ij} $ and, moreover, $\Psi _{ij} \left(\phi^{-(j-1)}(m _i)\right)= \Psi _{ij}(m _{ij})=1/ \mathcal{L}(i,j)$. The symbol $\mathcal{L}(i,j) \in \mathbb{N} $  denotes the ordinal of the pair $(i,j) $ in lexicographic order.

We now show that the constants $\varepsilon_{ij} $ can be chosen so that $\omega'  $ is as close as we want to $\omega  $ and, at the same time,  $g_{(\phi, \omega',F)}  $ is injective. Firstly, it is easy to see that, by construction,
\begin{align*}
    \Phi_{(\ell, \omega')}(m) :=\Phi_{(\ell, \omega)}(m) +\sum_{i=1}^P\sum_{j=1}^{n _i}\varepsilon_{ij} \left( \Psi _{ij}(m) , \Psi _{ij}\circ  \phi^{-1}(m) , \ldots, \Psi _{ij}\circ\phi^{-(\ell-1)}(m) \right)^{\top}.
\end{align*}
Second, if $m_{i _1j _1} $ and $m_{i _2j _2} $ are two different periodic points then
\begin{equation}
\label{injectivity condition before lexi}
g_{(\phi, \omega',F)} (m_{i _1j _1})-g_{(\phi, \omega',F)} (m_{i _2j _2})=g_{(\phi, \omega,F)} (m_{i _1j _1})-g_{(\phi, \omega,F)} (m_{i _2j _2})+ Q\left(\varepsilon_{i _1 j _1}\mathbf{v}_{i _1 j _1}-\varepsilon_{i _2 j _2}\mathbf{v}_{i _2 j _2}\right) ,
\end{equation}
where the vectors $\mathbf{v}_{i _1 j _1} \in \mathbb{R}^{{\rm Card} M _P} $ have entries equal to zero except at the slots that are multiples of the period of the corresponding periodic point. More specifically, if the periodic point  $m_{i j }  $ has period $n _{ij}$, then
\begin{equation}
\label{definition of vs}
 (\mathbf{v}_{i  j }) _i:= \left\{
\begin{array}{l}
1/ \mathcal{L}(i,j) \quad \mbox{when $i=1$ or $i-1 $ is a multiple of $n_{ij}$,}\\
0 \quad \mbox{otherwise.}
\end{array}
\right.
\end{equation}
Using  the injectivity of $Q$ and Lemma \ref{injectivity_lemma}, we now show that we can choose the perturbation constants $\varepsilon_{i  j }$   so that the restriction of $g_{(\phi, \omega',F)} $ to $M _P $ is injective. Let
\begin{equation}
\label{choice of vareps}
\varepsilon_{i  j }:= \epsilon \|g_{(\phi, \omega,F)} (m_{i j })\|, \quad \mbox{for some constant $\epsilon>0$.} 
\end{equation}
We now show that if $\epsilon>0$ is chosen so that 
\begin{multline}
\label{choice of epsil}
\epsilon \max_{(i_1,j_1), (i_2,j_2)} \left\{\left\| Q\left(\left\|g_{(\phi, \omega,F)} (m_{i_1 j_1) }\right\|\mathbf{v}_{i _1 j _1}-\left\|g_{(\phi, \omega,F)} (m_{i_2 j_2} )\right\|\mathbf{v}_{i _2 j _2}\right)  \right\|\right\}\\
<\min_{(i_1,j_1), (i_2,j_2)} \left\{\left\| g_{(\phi, \omega,F)} (m_{i_1 j_1} )-g_{(\phi, \omega,F)} (m_{i_2 j_2} ) \right\|   \right\}
\end{multline}
then the injectivity of $g_{(\phi, \omega',F)} \mid _{M _P} $ is guaranteed. Indeed, consider first the case of two distinct periodic points $m_{i _1j _1} $  and  $m_{i _2j _2}$ for which $g_{(\phi, \omega,F)} $ fails to be injective, that is, $g_{(\phi, \omega,F)} (m_{i _1j _1})= g_{(\phi, \omega,F)} (m_{i _2j _2}) $. In that case, by \eqref{injectivity condition before lexi} and \eqref{choice of vareps} we have that
\begin{equation}
\label{second step}
g_{(\phi, \omega',F)} (m_{i _1j _1})-g_{(\phi, \omega',F)} (m_{i _2j _2})=
\epsilon \left\|g_{(\phi, \omega,F)} (m_{i _1j _1})\right\|Q\left(\mathbf{v}_{i _1 j _1}-\mathbf{v}_{i _2 j _2}\right).
\end{equation}
Given that $\mathbf{v}_{i _1 j _1}-\mathbf{v}_{i _2 j _2} \neq {\bf 0}$ (notice, for instance, that $\left(\mathbf{v}_{i _1 j _1}-\mathbf{v}_{i _2 j _2}\right)_1=1/ \mathcal{L}(i _1, j _1)- 1/ \mathcal{L}(i _2, j _2)\neq 0$) and $Q$ is injective then  $Q \left(\mathbf{v}_{i _1 j _1}-\mathbf{v}_{i _2 j _2}\right)\neq {\bf 0}$ and hence $g_{(\phi, \omega',F)} (m_{i _1j _1})\neq g_{(\phi, \omega',F)} (m_{i _2j _2}) $ necessarily by \eqref{second step}. In the case $g_{(\phi, \omega,F)} (m_{i _1j _1})\neq g_{(\phi, \omega,F)} (m_{i _2j _2}) $ the same conclusion can be drawn because the choice of $\epsilon>0$ in \eqref{choice of epsil} guarantees  that
\begin{equation*}
\left\|Q\left(\varepsilon_{i _1 j _1}\mathbf{v}_{i _1 j _1}-\varepsilon_{i _2 j _2}\mathbf{v}_{i _2 j _2}\right)\right\|<   \left\|g_{(\phi, \omega,F)} (m_{i _1j _1})-g_{(\phi, \omega,F)} (m_{i _2j _2})\right\|
\end{equation*}
which by \eqref{injectivity condition before lexi} ensures that, again, $g_{(\phi, \omega',F)} (m_{i _1j _1})\neq g_{(\phi, \omega',F)} (m_{i _2j _2}) $, as required.

We now show that if $\left. f_{(\phi, \omega,F)}\right|_{M _P}$ is injective then there exists an open set $V_P$ such that $M _P \subset V _P  $ and $\left. f_{(\phi, \omega,F)}\right|_{\overline{V _P}}$ is also injective.
By the Immersion Theorem, we know that there exists $n \in \mathbb{N}  $ such that the balls $B_{2^{-n}}(m _{ij})$ do not intersect and that the restriction of $f_{(\phi, \omega,F)} $ to each of them is a collection of injective maps. It could still be, however, that the images of different balls intersect. The continuity of $f_{(\phi, \omega,F)} $ and the fact that $\left. f_{(\phi, \omega,F)}\right|_{M _P}$ is injective implies that $n$ can be chosen sufficiently large so that this does not happen. Indeed, if this was not the case for the balls around the periodic points, say, $m_{i _1j _1} $ and $m_{i _2j _2} $, then it would be possible to construct two sequences $\{m_{i _1j _1,l}\}_{l \in \mathbb{N}} $ and $\{m_{i _2j _2, l}\}_{l \in \mathbb{N}} $ with limits $m_{i _1j _1} $ and $m_{i _2j _2} $ for which $f_{(\phi, \omega,F)}(m_{i _1j _1,l})=f_{(\phi, \omega,F)}(m_{i _2j _2,l}) $ for each $ l \in \mathbb{N}  $. By continuity this implies that $f_{(\phi, \omega,F)}(m_{i _1j _1})=f_{(\phi, \omega,F)}(m_{i _2j _2}) $ which is in contradiction with the injectivity of $\left. f_{(\phi, \omega,F)}\right|_{M _P}$ and hence proves the injectivity of $ f_{(\phi, \omega,F)} $ restricted to $V _P = \bigcup_{ij}B_{2^{-n}}(m _{ij}) $, with $n$ chosen so that the properties of the corresponding balls designed above are satisfied. Notice that by doubling $n$, if necessary, it is also easy to ensure the injectivity of $\left. f_{(\phi, \omega,F)}\right|_{\overline{V _P}}$.

\medskip

\noindent {\bf Step 2: Global injectivity.} This final step is quite involved and appeals to several results in differential topology. We will state the results here:

\begin{lemma}
\label{lemma1 with r}
If $M$ is a compact differentiable manifold endowed with a metric $d$ and $f : M \to \mathbb{R}^N$ is an immersion, then there exists a constant $r > 0$ such that for any $m \in M$ the restriction $f\rvert_{B_r(m)}$ of $f$ to the open ball $B_r(m) \subset M$ of radius $r$ and center $m$ is injective.
\end{lemma}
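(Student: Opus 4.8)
The plan is to promote the \emph{pointwise} local injectivity already supplied by the Immersion Theorem (Theorem~\ref{theorem::immersion}) to a \emph{uniform} statement, using compactness of $M$. Since $f$ is an immersion at every $m \in M$, Theorem~\ref{theorem::immersion} gives, for each $m$, an open neighbourhood $U_m$ on which $f$ restricts to a diffeomorphism onto its image, and in particular to an injective map. The metric $d$ on $M$ (the Riemannian distance fixed in Chapter~\ref{chapter::SSMs}) induces the manifold topology, so each $U_m$ contains an open metric ball $B_{\rho(m)}(m)$ on which $f$ is injective. The goal is then to extract a single radius $r$ that works at every centre simultaneously.

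First I would cover $M$ by the family $\{B_{\rho(m)}(m)\}_{m \in M}$ and, by compactness, pass to a finite subcover $B_{\rho(m_1)}(m_1), \dots, B_{\rho(m_k)}(m_k)$. Applying the Lebesgue number lemma to this finite open cover of the compact metric space $(M,d)$ yields a number $\delta > 0$ such that every subset of $M$ of diameter less than $\delta$ lies in one of the $B_{\rho(m_i)}(m_i)$. Setting $r := \delta/2$, the triangle inequality gives $d(p,q) < 2r = \delta$ for all $p, q \in B_r(m)$, so $B_r(m)$ is contained in some $B_{\rho(m_i)}(m_i)$, whence $f|_{B_r(m)}$ inherits injectivity; since $m$ was arbitrary this $r$ is uniform. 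An alternative that sidesteps the Lebesgue number lemma is a direct contradiction argument: were no such $r$ to exist, then for each $n$ there would be a centre $m_n$ and distinct $p_n, q_n \in B_{1/n}(m_n)$ with $f(p_n) = f(q_n)$; compactness gives $p_n \to p$ along a subsequence, and $d(p_n, q_n) < 2/n \to 0$ forces $q_n \to p$ as well, contradicting injectivity of $f$ on a neighbourhood of $p$. I would present whichever is cleaner in context.

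I do not expect a genuine obstacle here: this is essentially a compactness packaging of local injectivity. The only points needing a moment's care are (i) that the metric $d$ is compatible with the manifold topology, so that the neighbourhoods furnished by the Immersion Theorem really do contain metric balls and the family above is an open cover, and (ii) the elementary bound $\operatorname{diam} B_r(m) \le 2r$. Everything else is routine.
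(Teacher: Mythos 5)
Your argument is correct and is essentially the paper's proof: the Immersion Theorem supplies neighbourhoods on which $f$ is injective, these cover $M$, and the Lebesgue number lemma applied to this cover gives $\delta>0$ with $r=\delta/2$ (your detour through a finite subcover is harmless but unnecessary, since the Lebesgue number lemma applies to any open cover of a compact metric space). The alternative contradiction argument you sketch would also work, but the route you lead with matches the paper's.
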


\begin{proof}
The Immersion Theorem (\cite[Theorem 3.5.7]{mta}) implies that  each $m \in M$ has an open neighborhood $U_m \subset M$ such that $f\rvert_{U_m}$ is injective. The collection of sets $\{ U_m\}_{ m \in M }$ forms an open cover of $M$. Then, by Lebesgue's number lemma \cite[Lemma 27.5]{Munkres:topology}, there exists a $\delta > 0$ such that every set of diameter $\delta$ is contained in some set in the family $\{ U_m\}_{ m \in M }$. The lemma is proved by choosing  $r = \delta/2$. 
\end{proof}

\medskip

\begin{defn}
\label{defn::reg_val}
\cite[Regular Value, Definition 3.5.3]{mta}
    Suppose $M$ and $M$ are smooth manifolds and $f \in C^r(M,M')$ for $r \geq 1$. A point $m' \in M'$ is called a \emph{regular value} of $f$ if for each $m \in f^{-1}\{m'\}$, the tangent map $T_m f$ is surjective with split kernel.
\end{defn}

\begin{theorem}
\label{theorem::submersion}
    \cite[Submersion Theorem, Theorem 3.5.4]{mta} Let $M,M'$ be smooth manifolds and $f \in C^{\infty}(M,M')$. Let $m'$ be a regular value. Then the level set
    \begin{align*}
        f^{-1}(m') = \{ m \in M \ | \ f(m) = m'  \}
    \end{align*}
    is a closed submanifold of $M$.
\end{theorem}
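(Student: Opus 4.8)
The plan is to establish the Submersion Theorem by a local argument: near each point of the level set I will produce a chart of $M$ in which $f^{-1}(m')$ coincides with a coordinate subspace, which is precisely the defining property of a submanifold. The only genuine analytic ingredient is the implicit function theorem (equivalently, the local normal form for a submersion).

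First I would dispose of closedness. Since $M'$ is a manifold it is Hausdorff, so $\{m'\}$ is closed in $M'$; as $f$ is continuous, $f^{-1}(m')$ is closed in $M$. It therefore remains only to show that $f^{-1}(m')$ is a submanifold. Write $n = \dim M$ and $k = \dim M'$. Fix $m \in f^{-1}(m')$ and choose admissible charts $(U,\varphi)$ of $M$ with $\varphi(m) = 0 \in \mathbb{R}^n$ and $(V,\psi)$ of $M'$ with $\psi(m') = 0 \in \mathbb{R}^k$, shrinking $U$ so that $f(U) \subseteq V$. The local representative $\widehat f := \psi \circ f \circ \varphi^{-1} : \varphi(U) \to \mathbb{R}^k$ is smooth, and because $m'$ is a regular value the differential $T_m f$ is surjective with split kernel, so $D\widehat f(0) : \mathbb{R}^n \to \mathbb{R}^k$ is onto; in finite dimensions its kernel automatically has a complement, so after a linear change of coordinates identifying $\mathbb{R}^n$ with $\mathbb{R}^k \times \mathbb{R}^{n-k}$ I may assume that the first block of $D\widehat f(0)$ is an invertible $k \times k$ matrix.

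Next I would apply the implicit function theorem to the smooth map $\Xi(x,y) := (\widehat f(x,y), y)$, whose derivative at $0$ is block triangular with invertible diagonal blocks, hence invertible; thus $\Xi$ restricts to a smooth diffeomorphism from a neighbourhood of $0$ onto an open set, and by construction it carries $\widehat f^{-1}(0)$, near $0$, onto an open subset of $\{0\} \times \mathbb{R}^{n-k}$. Composing with $\varphi$ yields a submanifold chart of $M$ around $m$ adapted to $f^{-1}(m')$. Carrying this out at every point of $f^{-1}(m')$ exhibits it as a submanifold of dimension $n-k$; combined with closedness, this gives that $f^{-1}(m')$ is a closed submanifold of $M$, as claimed.

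The step carrying all the weight is the implicit function theorem. In the Banach-manifold generality of \cite{mta} it must be invoked along the closed complement of the kernel of $T_m f$ rather than along a coordinate block, which is exactly why the definition of a regular value (Definition \ref{defn::reg_val}) requires the kernel to split; for the finite-dimensional manifolds used throughout this thesis this subtlety disappears, and the only remaining work is the routine check that the charts produced above are mutually smoothly compatible and induce the subspace topology on $f^{-1}(m')$.
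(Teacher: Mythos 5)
Your proof is correct. Note, however, that the paper does not prove this statement at all: it is quoted as a background result with a citation to \cite[Theorem 3.5.4]{mta}, and is only ever used as a black box in the global-injectivity step of the embedding theorem. Your argument — closedness from continuity plus the Hausdorff property of $M'$, and the submanifold property from the local normal form obtained by applying the inverse/implicit function theorem to $\Xi(x,y)=(\widehat f(x,y),y)$ after splitting off the kernel of $T_mf$ — is exactly the standard proof given in the cited source (there carried out in Banach-manifold generality along a closed complement of the split kernel, which is why Definition \ref{defn::reg_val} demands a split kernel; in the finite-dimensional setting of the thesis this is automatic, as you say). The only cosmetic remark is that the invertibility of $D\Xi(0)$ is invoked through the inverse function theorem rather than the implicit function theorem proper, but the two are equivalent and your block-triangular computation is right, so nothing is missing.
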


\begin{theorem}
\label{theorem::hirsch}
    \cite[Chapter 3, Proposition 1.2]{Hirsch:book} Let $M,M'$ be smooth manifolds with $\text{dim(M)} < \text{dim}(M')$. If $f \in C^1(M,M')$ then $M' \backslash f(M)$ is dense in $M'$.
\end{theorem}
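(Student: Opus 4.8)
The plan is to reduce the statement to a local Euclidean fact and then patch it together, using that the manifolds here are second countable. Since being dense is a local property and $M'$ admits a countable atlas $\{(W_j,\psi_j)\}_{j\in\mathbb{N}}$, it is enough to show that for each $j$ the image $\psi_j\big(f(M)\cap W_j\big)$ has empty interior in $\psi_j(W_j)\subseteq\mathbb{R}^n$, where $n=\dim M'$; in fact I will show it has Lebesgue measure zero, which is stronger. Fixing $j$ and covering $M$ by a countable atlas $\{(V_i,\varphi_i)\}_{i\in\mathbb{N}}$, we have $f(M)\cap W_j=\bigcup_i\big(f(V_i\cap f^{-1}(W_j))\cap W_j\big)$, so after composing with the chart maps it suffices to establish the following local lemma: if $U\subseteq\mathbb{R}^m$ is open, $m<n$, and $g\in C^1(U,\mathbb{R}^n)$, then $g(U)$ has Lebesgue measure zero in $\mathbb{R}^n$.

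To prove the local lemma I would exhaust $U$ by a countable family of compact cubes $Q$ and estimate the measure of each $g(Q)$ (which is compact, hence measurable). On such a $Q$ the map $g$ is Lipschitz with some constant $L$, because $Dg$ is continuous and therefore bounded on $Q$ and $Q$ is convex; consequently $g$ carries any subcube of side $s$, which has diameter $s\sqrt m$, into a set of diameter at most $Ls\sqrt m$, hence into a Euclidean ball of radius $Ls\sqrt m$ and volume at most $c_n(Ls\sqrt m)^n$. Subdividing $Q$ of side $\delta$ into $k^m$ subcubes of side $\delta/k$ therefore gives
\[
\mu_n\big(g(Q)\big)\ \le\ k^m\, c_n\!\left(\frac{L\sqrt m\,\delta}{k}\right)^{\!n}\ =\ c_n\big(L\sqrt m\,\delta\big)^n\, k^{\,m-n},
\]
and since $m<n$ the right-hand side tends to $0$ as $k\to\infty$; thus $g(Q)$ is null, and so is the countable union $g(U)=\bigcup_Q g(Q)$.

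Assembling the pieces, each set $f(M)\cap W_j$ is, read in the chart $\psi_j$, a countable union of null sets and hence null, so it has empty interior and $W_j\setminus f(M)$ is dense in $W_j$. A subset of $M'$ that meets every member of an open cover in a dense subset of that member is itself dense in $M'$, which yields the density of $M'\setminus f(M)$. I do not expect a serious obstacle; the only genuine content is the $C^1$ (equivalently, local Lipschitz) control used in the local lemma — mere continuity would fail, as space-filling curves show — together with the careful use of second countability to keep every union countable. One could alternatively observe that every point of $M'$ is automatically a critical value of $f$, since $T_mf$ cannot be surjective when $\dim M<\dim M'$, and invoke Sard's theorem; but its proof in this range of dimensions is precisely the argument above, so I would present the direct version.
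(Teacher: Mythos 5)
Your proof is correct, and the paper itself offers no argument for this statement -- it simply cites \cite[Chapter 3, Proposition 1.2]{Hirsch:book}, whose proof is exactly the ``easy case of Sard'' argument you give: chart-by-chart reduction using second countability, Lipschitz control of a $C^1$ map on compact cubes, and the $k^{m-n}\to 0$ subdivision estimate showing the image is Lebesgue null, hence has empty interior. So your proposal supplies, correctly and along essentially the same lines as the cited source, the proof the paper leaves to the reference.
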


We are now ready to prove global injectivity, and complete the proof.
Since $M$ is compact and $f_{(\phi, \omega, F)} : M \to \mathbb{R}^N$ is an immersion, Lemma \ref{lemma1 with r} implies the existence of a constant $r > 0$ such that for any $m \in M$ the restriction $f_{(\phi, \omega, F)}\rvert_{B_r(m)}$ of $f_{(\phi, \omega, F)}$ to the open ball $B_r(m)$ is  injective.
We now define the set $W \subset M \times M$ as follows using the open set $V_P$ whose existence we proved in Step 1.

\begin{equation*}
W := \{ (m_1 , m_2) \in (M \times M) \setminus (V_P \times V_P) \mid \ d(m_1,m_2) \geq r \}.
\end{equation*}
The set $W$ comprises pairs $(m_1,m_2) \in M$ whose entries satisfy one of two conditions:
\begin{enumerate}
\item Neither $m_1$ nor $m_2$ are in $V_P$.
\item One of $m_1$ and $m_2$ is in $V_P$ and the other is not.
\end{enumerate}
In view of this, the injectivity of $f_{(\phi, \omega, F)}\rvert_{V_P}$ proved in the Step 1 together with Lemma \ref{lemma1 with r} imply that if we show that $f_{(\phi, \omega, F)}(m _1)\neq f_{(\phi, \omega, F)}(m _2)$ for all $(m _1, m _2) \in W $ then $f_{(\phi, \omega, F)} $ is globally injective and the proof is concluded.

We start the proof of this fact by first defining, for each $m \in M$,  a collection of nested balls $\{ B_{2^{-n}}(m) \mid n \in \mathbb{N} \}$ centered at $m$ with radius $2^{-n}$. Let $(m_1,m_2) \in W$, and assume from now on without loss of generality that $m_1 \in W \setminus V_p$. Let $T(n,m_1,m_2)$ denote the largest integer such that the following two properties hold. Firstly, the sets
\begin{align*}
    \{B_{2^{-n}}(\phi^{-t}(m_1))\}_{t=0, \ldots, T(n,m_1,m_2)-1}
\end{align*}
are disjoint and secondly
\begin{align*}
    B_{2^{-n}}(\phi^{-t}(m_1)) \cap B_{2^{-n}}(\phi^{-s}(m_2)) = \emptyset \quad \mbox{for all} \quad t,s \in \{ 0 , \ldots , T(n,m_1,m_2) - 1\}.
\end{align*}
Notice now that by the continuity of $\phi$, for each $n \in \mathbb{N}$ and pair $(m_1,m_2) \in W$ there is an open neighbourhood $U_{(m_1,m_2)} \subset M \times M$ of $(m_1,m_2)$ such that $T(n,m_1',m_2') = T(n,m_1'',m_2'')$ for all $(m_1',m_2'),(m_1'',m_2'') \in U_{(m_1,m_2)}$. The collection $\{ U_{(m_1,m_2)} \ | \ (m_1,m_2) \in W \}$ covers $W$ and since it is a compact set we can extract a finite subcover $\{ U_a \ | \ a \in \mathcal{A} \}$, where $\mathcal{A}$ is a finite set. Then we can choose one pair $(m_1^a,m_2^a) \in U_a$ for each $a \in \mathcal{A}$ and notice that 
\begin{align*}
    \min_{(m_1,m_2) \in W} \{T(n,m_1,m_2)\} = \min_{\{(m_1^a,m_2^a) \ | \ a \in A\}} \{T(n,m_1^a,m_2^a)\}.
\end{align*}
The importance of this equality is that, since $\mathcal{A}$ is a finite set, the minimum on the right hand side is realized by a pair $(m_1^*,m_2^*) \in W$. Let $T(n) = T(n,m_1^*,m_2^*) = \min_{(m_1,m_2) \in W} T(n,m_1,m_2)$.
Observe that as $n \to \infty$ the families $\{ B_{2^{-n}}(\phi^{-t}(m_1^*)) \}_{t \in \mathbb{N}}$ and $\{ B_{2^{-n}}(\phi^{-t}(m_2^*)) \}_{t \in \mathbb{N}}$ converge to $\{\phi^{-t}(m_1^*)\}_{t \in \mathbb{N}}$ and $\{\phi^{-t}(m_2^*)\}_{t \in \mathbb{N}}$ respectively. The point $m_1^*$ is not periodic so the infinite orbit $\{\phi^{-t}(m_1^*)\}_{t \in \mathbb{N}}$ of singletons is disjoint, and furthermore does not intersect any point in $\{\phi^{-t}(m_2^*)\}_{t \in \mathbb{N}}$. This allows us to conclude that $T(n) \to \infty$ as $n \to \infty$.

The fact that we just proved guarantees the existence of a $\nu \in \mathbb{N}$ such that $T(\nu) = N$. Thus for all pairs $(m_1,m_2) \in W$, the collection
\begin{align*}
    \{B_{2^{-\nu}}(\phi^{-t}(m_1))\}_{t=0, \ldots, N-1}
\end{align*}
is disjoint and 
\begin{align*}
    B_{2^{-n}}(\phi^{-t}(m_1)) \cap B_{2^{-n}}(\phi^{-s}(m_2)) = \emptyset \quad \mbox{for all} \quad t,s \in \{ 0 , \ldots , N - 1\}.
\end{align*}
Now for any $n > \nu$ the collection
\begin{align*}
    \mathcal{C}_n = \{ B_{2^{-(n+1)}}(m) \ | \ m \in M \}
\end{align*}
forms on open cover of $M$ from which we can extract a finite subcover $\{B_i \ | \ i \in J_n\}$ for $J$ a finite set with cardinality  $\ell(n) \in \mathbb{N}$. Now define a partition of unity $\{ \lambda_i \ | \ i \in J_n \}$ subordinate to $\{B_i \ | \ i \in J_n\}$. We impose on this partition of unity the special property that for each $m \in M$ there exists an $i \in J_n$ such that $\lambda_i(m) \geq 1/2$. Now we define the perturbed observation function
\begin{align*}
    \omega_n = \omega + \sum^{\ell(n)}_{i=1} \epsilon_i \lambda_i
\end{align*}
where $\epsilon_i \in \mathbb{R}$ is the $i$th component of a vector $\boldsymbol{\epsilon} \in \mathbb{R}^{\ell(n)}$ with positive entries. Then we define $\Psi_n : M \times M \times \mathbb{R}^{\ell(n)} \to \mathbb{R}^N$ by
\begin{equation}
\label{perturbation lambda}
\Psi_n(m_1,m_2,\epsilon) = f_{(\phi, \omega_n, F)}(m_1) - f_{(\phi, \omega_n, F)}(m_2).
\end{equation}
Let $\Delta = \{ (m,m) \in M\times M \mid m \in M \}$ be the diagonal set. Given an arbitrary open neighborhood $\mathcal{N} \subset C^1(M,\mathbb{R})$ of the observation function $\omega \in C^2(M,\mathbb{R})$
our goal is to find $\boldsymbol{\epsilon} \in \mathbb{R}^{\ell(n)}$ such that $\omega_n \in \mathcal{N}$ and that for all $(m_1,m_2) \in (M \times M) \setminus \Delta$ we have that $\Psi_n(m_1,m_2,\boldsymbol{\epsilon}) \neq {\bf 0}$.

First of all, we observe that for any pair $(m_1,m_2) \in (M \times M) \setminus W$ either $d(m_1,m_2) < r$ or both $m_1,m_2 \in V_P$. In the former case, $\Psi_n(m_1,m_2,{\bf 0}) \neq  {\bf 0}$ unless $(m_1,m_2) \in \Delta$ by Lemma \ref{lemma1 with r}, and in the latter case, $\Psi_n(m_1,m_2,{\bf 0}) \neq {\bf 0}$ unless $(m_1,m_2) \in \Delta$ because $f_{(\phi, \omega, F)}\rvert_{V_P}$ is injective by the Step 1. Now $\Psi_n$ is continuous so there is an open neighbourhood $U_{{\bf 0}} \subset \mathbb{R}^{\ell(n)}$ of ${\bf 0} \in \mathbb{R}^{\ell(n)}$ such that for all $\boldsymbol{\epsilon} \in U_{{\bf 0}}$ we have $\Psi_n(m_1,m_2,\boldsymbol{\epsilon}) \neq {\bf 0}$ for all $(m_1,m_2) \in (M \times M) \setminus W$ unless $(m_1,m_2) \in \Delta$. So all that remains is to find  $\boldsymbol{\epsilon} \in U_{{\bf 0}} \subset \mathbb{R}^{\ell(n)}$ such that $\Psi_n(m_1,m_2,\boldsymbol{\epsilon}) \neq 0$ for all $(m_1,m_2) \in W$.

We start by noting that if ${\bf 0} \in \mathbb{R} ^N $  is not in the image of $\Psi_n \rvert_{W \times \{{\bf 0}\}}$ then we are done so we shall assume the opposite. In that case we proceed by showing that  $\Psi_n \rvert_{W \times \{{\bf 0}\}}$ is a submersion. If that is the case, then for some open set $X \subset (M \times M \times \mathbb{R}^{\ell(n)})$ containing $W \times \{{\bf 0}\}$ then the restriction $\Psi_n \rvert_{X}$ is also a submersion. Then by the Submersion Theorem \ref{theorem::submersion} the inverse image $\Psi_n\lvert_{X}^{-1}\left({\bf 0}\right)$ is a closed submanifold of dimension $2q+\ell(n) - N$ of the open submanifold $X\subset M \times  M \times \mathbb{R}^{\ell}$.

Moreover if $\pi : M \times M \times \mathbb{R}^{\ell(n)} \to \mathbb{R}^{\ell(n)}$ is the canonical projection defined by $\pi(m_1,m_2,\boldsymbol{\epsilon}) := \boldsymbol{\epsilon}$,  in these circumstances the complement $\mathbb{R}^{\ell(n)} \setminus \pi\left(\Psi_n\rvert_{X}^{-1}({\bf 0})\right)$ is a dense subset of $\mathbb{R}^{\ell(n)}$. Indeed, since $\pi$ is a continuously differentiable map, then so is its restriction  $\pi\rvert_{\Psi_n\lvert_{X}^{-1}\left({\bf 0}\right)\times \mathbb{R}^{\ell}}: \Psi_n\lvert_{X}^{-1}\left({\bf 0}\right)\to \mathbb{R}^{\ell} $ which by Theorem \ref{theorem::hirsch} guarantees the density of $\mathbb{R}^{\ell(n)} \setminus \pi\left(\Psi_n\rvert_{X}^{-1}({\bf 0})\right)$.  This implies that we can choose $\boldsymbol{\epsilon} \in \left(\mathbb{R}^{\ell(n)} \setminus \pi \left(\Psi_n\rvert_{X}^{-1}({\bf 0})\right)\right)$ as small as we want so that $\boldsymbol{\epsilon} \in U_{{\bf 0}}$ and $\omega_n \in \mathcal{N}$. We fix this $\epsilon$ and see that for any $(m_1,m_2) \in W$ the map $\Psi_n(m_1,m_2,\boldsymbol{\epsilon}) \neq  0 $, as required. 
Consequently, all that remains to be done is to find $n$ sufficiently large so that $\Psi_n \rvert_{W \times \{{\bf 0}\}}$ is a submersion, and then the proof will be complete.

We start by observing that by \eqref{perturbation lambda} 
\begin{equation*}
\omega_n \circ \phi^{-t} = \omega \circ \phi^{-t} + \sum_{i = 1}^{\ell(n)} \epsilon_i \lambda_i \circ \phi^{-t}
\end{equation*}
hence 
\begin{equation*}
\frac{\partial (\omega_n\circ  \phi^{-t})}{\partial \epsilon_j} = \lambda_j \circ \phi^{-t}.
\end{equation*}
Now we consider an arbitrary $(m_1,m_2) \in W$ assuming once again without loss of generality that $m_1 \in M \setminus V_p.$ For each point in the orbit $\{ \phi^{-t}(m_1) \}_{t = 0, \ldots , T(n)-1}$ there exists a $j(t) \in J_n$ such that $\lambda_{j(t)}\left( \phi^{-t}(m_1) \right)\geq 1/2$ by the special property that we imposed earlier  on the partition of unity $\{ \lambda_i \ | \ i \in J _n \}$. Now the support of $\lambda_{j(t)}$ is a ball $B_{j(t)}$ of radius $2^{-(n+1)}$ which contains $\phi^{-t}(m_1)$. Hence the ball $B_{j(t)} \subset B_{2^{-n}}(\phi^{-t}(m_1))$. Now since the sets in the family $\{B_{2^{-n}}(\phi^{-t}(m_1))\}_{t = 0 , \ldots , T(n) - 1}$ are disjoint then so are $\{ B_{j(t)} \}_{t = 0 , \ldots , T(n) - 1}$. 
Furthermore, since $B_{2^{-n}}(\phi^{-t}(m_1)) \cap B_{2^{-n}}(\phi^{-s}(m_2)) = \emptyset$ for all $t,s \in \{ 0 , \ldots , T(n) - 1\}$ hence $\lambda_{j(t)}(\phi^{-t}(m_2)) = 0$ for $t \in \{ 0, \ldots, T(n) - 1\}$. Thus
\begin{align*}
\frac{\partial (\omega_n\circ  \phi^{-t})}{\partial \epsilon_{j(t)}}(m_2) = 0.
\end{align*}
Now,
\begin{align*}
\Psi_n(m_1,m_2,\epsilon) &= \sum_{t = 0}^{T(n)-1} A^t C (\omega_n (\phi^{-t}(m_1)) - \omega_n (\phi^{-t}(m_2)) ) \\ &+
\sum_{t = T(n)}^{\infty} A^t C (\omega_n( \phi^{-t}(m_1)) - \omega_n( \phi^{-t}(m_2) ))
\end{align*}
hence for $t = 0, \ldots, T(n) - 1$
\begin{align*}
\frac{\partial \Psi_n}{\partial \epsilon_{j(t)}}(m_1,m_2,\epsilon) &= A^t C (\lambda_{j(t)} (\phi^{-t}(m_1))) \\ &+
\sum_{t = T(n)}^{\infty} A^t C (\lambda_{j(t)} (\phi^{-t}(m_1)) - \lambda_{j(t)}( \phi^{-t}(m_2) ) ). 
\end{align*}
By assumption $\{A^t C\}_{t = 0, ... , N-1}$ are linearly independent, hence the vectors 
\begin{align*}
    \{A^t C (\lambda_{j(t)} (\phi^{-t}(m_1)))\}_{t \in \{ 0 , \ldots, T(n)-1\}}
\end{align*}

necessarily span $\mathbb{R}^N$ because since $n> \nu  $  then $T(n)\geq N $. Crucially, for any $n$ the property $\lambda_{j(t)} (\phi^{-t}(m_1)) \geq 1/2$ holds and therefore, the residual term
\begin{align*}
\sum_{t = T(n)}^{\infty} A^t C (\lambda_{j(t)} \phi^{-t}(m_1) - \lambda_{j(t)} \phi^{-t}(m_2) )
\end{align*}
may only spoil the spanning property of the vectors $\{A^t C (\lambda_{j(t)} \phi^{-t}(m_1))\}_{t = 0, \ldots , N-1}$ if it is sufficiently large. Since by hypothesis $\rho(A)<1 $, the residual term converges uniformly over $(m_1,m_2) \in W$ to $0$ as $n$ grows. We choose consequently $n$ large enough so that for all $(m_1,m_2) \in W$ the residual term is too small to spoil the spanning property of $\{A^t C (\lambda_{j(t)} \phi^{-t}(m_1))\}_{t \in \{ 0, \ldots , N-1\}}$. With this choice of $n$ we have that $\Psi_n\rvert_{W \times \{ 0 \}}$ is a submersion and the proof is complete.
\end{proof}

\subsection{An embedding almost surely}

In this subsection we will prove Theorem \ref{random_thm}. 

\begin{proof}
    First we will establish two lemmas about random matrices. 
    \begin{lemma}
\label{first lemma polynomial}
Let $X_1, \ldots , X_n$ be independent real-valued non-singular random variables and let $p$ be a non-constant polynomial in $n$ complex variables. Then
\begin{align*}
    \mathbb{P}\left(p(X_1, \ldots, X_n) = 0\right) = 0.
\end{align*}
\label{non_zero_polynomial_lemma}
\end{lemma}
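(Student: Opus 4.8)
The plan is to prove the statement by induction on the number of variables $n$, using only the fact that a non-singular real-valued random variable is atomless, i.e. $\mathbb{P}(X_i = a) = 0$ for every $a \in \mathbb{R}$. The base case $n = 1$ is immediate: a non-constant polynomial $p$ in one variable has at most $\deg p$ complex roots, hence finitely many real roots $r_1, \dots, r_s$, so $\mathbb{P}(p(X_1) = 0) \leq \sum_{i=1}^s \mathbb{P}(X_1 = r_i) = 0$ by atomlessness.

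For the inductive step I would assume the claim for polynomials in $n-1$ variables and let $p$ be a non-constant polynomial in $n$ variables. Since $p$ is non-constant it depends on at least one variable, and after relabelling I may assume it depends on $x_n$, so
\begin{align*}
p(x_1, \dots, x_n) = \sum_{j=0}^{d} q_j(x_1, \dots, x_{n-1})\, x_n^j, \qquad d = \deg_{x_n} p \geq 1,
\end{align*}
with $q_d$ not the zero polynomial. Using independence of $X_n$ from $(X_1, \dots, X_{n-1})$ together with Tonelli's theorem applied to the non-negative measurable function $\mathbf{1}_{\{p = 0\}}$, one writes
\begin{align*}
\mathbb{P}\big(p(X_1, \dots, X_n) = 0\big) = \int \mathbb{P}\big(p(x_1, \dots, x_{n-1}, X_n) = 0\big)\, d\mu_{(X_1, \dots, X_{n-1})}(x_1, \dots, x_{n-1}).
\end{align*}
For fixed $(x_1, \dots, x_{n-1})$ with $q_d(x_1, \dots, x_{n-1}) \neq 0$, the map $x_n \mapsto p(x_1, \dots, x_{n-1}, x_n)$ is a degree-$d$ polynomial with finitely many roots, so the integrand vanishes by the base-case argument. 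It then remains only to observe that the set $\{(x_1, \dots, x_{n-1}) : q_d(x_1, \dots, x_{n-1}) = 0\}$ is $\mu_{(X_1, \dots, X_{n-1})}$-null: if $q_d$ is a (necessarily non-zero) constant this set is empty, and if $q_d$ is non-constant this is precisely the inductive hypothesis. Hence the integrand is $0$ almost everywhere and the integral is $0$.

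The complex coefficients require only a one-line remark: writing $p = p_R + i p_I$ with $p_R, p_I$ real, at least one of $p_R, p_I$ is non-constant, and $\{p(X) = 0\} \subseteq \{p_R(X) = 0\} \cap \{p_I(X) = 0\}$ is contained in the zero set of that non-constant real polynomial, to which the argument above applies (alternatively one runs the induction verbatim over $\mathbb{C}$, reading ``finitely many roots'' as finitely many complex, hence real, roots). I expect the only genuine care to be the measure-theoretic bookkeeping — checking that $\{p(X) = 0\}$ is measurable (immediate, since $p$ is continuous and $(X_1, \dots, X_n)$ is a random vector) and justifying the disintegration of the joint law as a product in the Tonelli step via independence; the algebraic content is just the repeated use of ``a non-constant univariate polynomial has finitely many roots'', one variable at a time. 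This is, in effect, the statement that the zero set of a non-zero polynomial is null for any product of atomless measures.
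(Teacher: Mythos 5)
Your proof is correct and takes essentially the same route as the paper's: induction on $n$ combined with a Fubini/Tonelli disintegration of the product of the (atomless) laws, reducing everything to the fact that a non-zero univariate polynomial has finitely many roots. The only cosmetic difference is which variable is peeled off: the paper fixes $x_1$ and applies the induction hypothesis to the specialised polynomial $p_{x_1}$ in the remaining $n-1$ variables (with a finite exceptional set where $p_{x_1}\equiv 0$), whereas you fix $(x_1,\dots,x_{n-1})$, use the univariate base case in $x_n$, and apply the induction hypothesis to the leading coefficient $q_d$ — interchangeable bookkeeping, with your handling of the constant-$q_d$ case and of complex coefficients being, if anything, slightly more careful than the paper's.
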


\begin{proof}
    Define $\mu_j(\cdot) := \mathbb{P}(X_j \in  \cdot),$ $j = 1, \ldots, n$, and let $Z = \{ \mathbf{x} \in \mathbb{C}^n \mid \ p(\mathbf{x}) = 0 \}$ be the set of complex roots of the polynomial $p$. Then, since  $X_1, \ldots , X_n$ are independent we have that
\begin{align*}
    \mathbb{P}\left(p(X_1, \ldots , X_n) = 0\right) = \mathbb{P}\left((X_1 , \ldots , X_n) \in Z\right) = (\mu_1 \ldots  \mu_n)(Z).
\end{align*}
We now proceed by induction over $n$. For $n = 1$ we have that $\mathbb{P}\left(p(X_1) = 0\right) = \mu_1(Z) = 0$ since $Z$ is finite and $X_1$ is non-singular. Let the claim be true for $n-1$. For fixed $x_1 \in \mathbb{C}$ set $p_{x_1}(x_2 , \ldots , x_n) := p(x_1 , \ldots , x_n)$ and let
\begin{align*}
    Z_{x_1} := \{ (x_2 , \ldots , x_n) \in \mathbb{C}^{n-1} \mid \ p_{x_1}(x_2 , \ldots , x_n) = 0\}.
\end{align*}
The set $F := \{ x_1 \in \mathbb{R} \mid \ p_{x_1} \equiv 0 \}$ is a finite set, so
\begin{equation*}
    \mathbb{P}\left(p(X_1, \ldots , X_n) = 0\right) = \int_{\mathbb{C}}(\mu_2  \ldots  \mu_n)(Z_{x_1}) d \mu_1(x_1) = \int_{\mathbb{C} - F} (\mu_2  \ldots  \mu_n)(Z_{x_1}) d \mu_1(x_1) = 0,
\end{equation*}
since we assumed that $X_1$ is non-singular and $(\mu_2  , \ldots ,  \mu_n)(Z_{x_1}) = 0$ for $x_1 \notin F$ by the induction hypothesis.
\end{proof}

\begin{lemma}
\label{polynomial_lemma}
Let $N \in \mathbb{R}^N $, let  $A$ be a real $N \times N$ matrix, and let $C$ be a random vector in $\mathbb{R}^N$.  Assume the entries of $A$ and $C$ have been drawn using independent non-singular real valued random variables. Moreover, let $p_1 , \ldots , p_n \in \mathbb{C}[x]$ be linearly independent polynomials in one variable of degree at most $n-1$. Then
\begin{align*}
    \mathbb{P}(\det\left(p_1(A)C, p_2(A)C, \cdots, p_n(A)C\right) = 0) = 0.
\end{align*}
Equivalently, the vectors $p_1(A)C , p_2(A)C , \ldots , p_n(A)C$ are linearly independent almost surely.
\end{lemma}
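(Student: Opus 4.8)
The plan is to reduce the statement to Lemma~\ref{first lemma polynomial}. Since the entries of $A$ and $C$ are drawn from independent non-singular real random variables, and each $n\times n$ minor of the $N\times n$ matrix $M(A,C):=\bigl[\,p_1(A)C\mid p_2(A)C\mid\cdots\mid p_n(A)C\,\bigr]$ is a polynomial in those entries (with complex coefficients, since the $p_i$ may have complex coefficients --- exactly the generality allowed by Lemma~\ref{first lemma polynomial}), it suffices to exhibit a single pair of real matrices $(A_0,C_0)$ for which the columns $p_1(A_0)C_0,\ldots,p_n(A_0)C_0$ are linearly independent. Indeed, at such a pair some fixed $n\times n$ minor of $M(A_0,C_0)$ is non-zero, so that minor, as a polynomial in the entries of $A$ and $C$, is not identically zero (and is non-constant, since it vanishes whenever $C=0$); Lemma~\ref{first lemma polynomial} then forces it to be non-zero almost surely, whence the columns of $M(A,C)$ are linearly independent almost surely and $\det\bigl(p_1(A)C,\ldots,p_n(A)C\bigr)\neq 0$ a.s. Here we use $n\le N$, which holds in all applications of interest.

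To construct the witness, I would first record the elementary linear-algebra fact that if $C_0,A_0C_0,\ldots,A_0^{n-1}C_0$ are linearly independent in $\mathbb{C}^N$, then so are $p_1(A_0)C_0,\ldots,p_n(A_0)C_0$. The reason is that the $p_i$ are $n$ linearly independent polynomials of degree at most $n-1$, hence a basis of the $n$-dimensional space $\mathbb{C}[x]_{\le n-1}$; writing $p_i(x)=\sum_{j=0}^{n-1}P_{ij}x^j$, the coefficient matrix $P=(P_{ij})$ is invertible, and $p_i(A_0)C_0=\sum_{j=0}^{n-1}P_{ij}A_0^jC_0$, so the $p_i(A_0)C_0$ are the images of a basis under the linear map $\mathbb{C}[x]_{\le n-1}\to\mathbb{C}^N$, $p\mapsto p(A_0)C_0$, which is injective precisely by the assumed independence of $C_0,A_0C_0,\ldots,A_0^{n-1}C_0$.

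It then remains to pick $(A_0,C_0)$ making $C_0,A_0C_0,\ldots,A_0^{n-1}C_0$ independent. I would take $A_0=\operatorname{diag}(\mu_1,\ldots,\mu_N)$ with $\mu_1,\ldots,\mu_N$ distinct real numbers and $C_0=(1,1,\ldots,1)^{\top}\in\mathbb{R}^N$. Then $A_0^{\,j}C_0=(\mu_1^{\,j},\ldots,\mu_N^{\,j})^{\top}$, and the top $n$ rows of $\bigl[\,C_0\mid A_0C_0\mid\cdots\mid A_0^{n-1}C_0\,\bigr]$ form the Vandermonde matrix on the distinct nodes $\mu_1,\ldots,\mu_n$, which is invertible; hence these $n$ columns are linearly independent. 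Combining the three steps completes the proof.

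I do not expect a serious obstacle. The one point requiring a little care is bookkeeping with complex coefficients: the $p_i$ lie in $\mathbb{C}[x]$ while $A$ and $C$ are real, so $M(A,C)$ is a complex matrix whose minors are complex polynomials in the real entries --- but this is precisely the setting of Lemma~\ref{first lemma polynomial}. A secondary point is the abuse of notation ``$\det(v_1,\ldots,v_n)$'' when $n<N$; I would interpret it (as the lemma's own restatement does) as linear independence of the $v_i$, i.e.\ non-vanishing of at least one $n\times n$ minor, and argue with that minor throughout.
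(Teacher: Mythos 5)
Your proposal is correct and follows essentially the same route as the paper's proof: reduce to Lemma~\ref{first lemma polynomial} by showing the relevant determinant (or minor) is not the zero polynomial, witnessed by $A_0=\operatorname{diag}(\mu_1,\ldots,\mu_N)$ with distinct entries and $C_0=(1,\ldots,1)^\top$, with the Vandermonde matrix and the linear independence of the coefficient vectors of the $p_i$ doing the work. The only slight difference is cosmetic: you phrase the Krylov-to-$p_i(A_0)C_0$ step as injectivity of the linear map $p\mapsto p(A_0)C_0$ on $\mathbb{C}[x]_{\le n-1}$, whereas the paper spells it out as the explicit factorisation $\sum_j c_j p_j(A_0)C_0 = V(\sum_j c_j\gamma_j)$, and you are a bit more careful about the case $n<N$ by arguing via minors rather than a square determinant.
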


\begin{proof} 
The expression $\det\left(p_1(A)C, \cdots, p_n(A)C \right)$ is a polynomial $p$ in the $n^2 + n$ variables $a_{ij}$ and $b_j$, $i,j \in \left\{ 1 , \ldots , n\right\}$ that constitute the entries of $A$ and $C $, respectively, and which in turn are by hypothesis non-singular random variables. As long as the polynomial $p$ is not identically zero, the result follows directly from the Lemma \ref{non_zero_polynomial_lemma}. So all that remains is to show that $p$ is not identically zero, that is, that there exist particular choices of $A$ and $C $ such that $\det\left(p_1(A)C, \cdots, p_n(A)C\right)$ is non-zero. So, we choose $C = (1 , \ldots , 1)^{\top}$ and $A = \text{diag}(a_1 , \ldots , a_n)$ with distinct real numbers $a_1 , \ldots , a_n$. We expand the polynomials $p_j$ in terms of their coefficients $\gamma_{j1} , \ldots , \gamma_{jn}$ so
\begin{align*}
    p_j(x) = \sum_{k = 0}^{n-1} \gamma_{jk}x^k.
\end{align*}
The vectors $\gamma_j = (\gamma_{j1}, \ldots , \gamma_{jn})^{\top}$, $j \in \left\{1, \ldots , n\right\}$, are by hypothesis linearly independent. We now show that with these choices, the vectors $p_1(A)C , p_2(A)C , \ldots , p_n(A)C$ are linearly independent and hence $\det\left(p_1(A)C, p_2(A)C, \cdots, p_n(A)C\right) \neq 0$, as required. Indeed, let $c_1 , \ldots , c_n \in \mathbb{R}$ and suppose that $\sum_{j=1}^n c_j p_j(A) C ={\bf 0}$. Additionally, we can write
\begin{align*}
    \sum_{j=1}^n c_j p_j(A) C = \sum_{j=1}^n c_j \sum_{k=0}^{n-1} \gamma_{kj} A^k C = \sum_{k=0}^{n-1} \bigg( \sum^n_{j=1} c_j \gamma_{jk} \bigg) 
    \begin{bmatrix}
        a_1^k \\ a_2^k \\ \vdots \\ a_n^k
    \end{bmatrix}
    = Vx
\end{align*}
where 
\begin{equation*}
    V = 
\left(
\begin{array}{ccccc}
        1 & a_1 & a_1^2 & \ldots & a_1^{n-1} \\
        1 & a_2 & a_2^2 & \ldots & a_2^{n-1} \\
        \vdots & \vdots & \vdots & \ddots & \vdots \\
        1 & a_n & a_n^2 & \ldots & a_n^{n-1}
\end{array}
\right)
\quad \mbox{and} \quad
    x = \sum^n_{j=1} c_j \gamma_{j}.
\end{equation*}
Since the diagonal entries of $A$ are all different and the determinant of the Vandermonde matrix $V$ is given by 
\begin{equation*}
\det (V)=\prod_{1\leq i<j\leq n} \left(a _i- a _j\right)
\end{equation*}
we can conclude that $V$ is invertible and hence the identity $\sum_{j=1}^n c_j p_j(A) C =V x = {\bf 0}$ implies that $x= {\bf 0} $. By the linear independence of the vectors $\gamma_1, \ldots, \gamma _n $ we have that $c _1, \ldots, c _n=0 $ necessarily.
It follows that the vectors $p_1(A)C , p_2(A)C , \ldots , p_n(A)C$ are linearly independent, as required. 
\end{proof}

Now we may complete the proof of Theorem \ref{random_thm}. The vectors $C, AC, A^2 C, \ldots , A^{N-1}C$ are linearly independent if and only if
\begin{align*}
    \text{det}\left(C, AC, A^2C,  \cdots , A^{N-1}C\right) = 0
\end{align*} 
which, in the notation of Lemma \ref{polynomial_lemma}, can be written as
\begin{align*}
    \text{det}\left(p_0(A)C,p_1(A)C,p_2(A)C, \ldots , p_{N-1}(A)C\right) = 0
\end{align*}
 using the linearly independent polynomials $p_j(A) := A^j$, $j \in \left\{0, \ldots, N-1\right\}$. Part {\bf (i)} of the statement hence follows directly from Lemma \ref{polynomial_lemma}. Now we turn our attention to part {\bf (ii)}. First of all, $\lambda_j$ is an eigenvalue of $A$ if and only if $\lambda_j$ is a root of the characteristic polynomial of $A$. This event has probability $0$ by Lemma \ref{first lemma polynomial} and hence $1, \lambda_1 , \ldots , \lambda_q \notin \sigma(A)$ almost surely. On this event, the inverses $(I - \lambda_j A)^{-1}$ and $(I - A)^{-1}$ exist. Furthermore, the product
 \begin{align*}
     \prod_{i=1}^{q}(I - \lambda_i A)
 \end{align*}
 is an invertible matrix. Therefore, the vectors 
 \begin{align*}
      (I - \lambda_j A)^{-1}(I - A)^{-1}(I - A^N)C, \quad \mbox{with  $j = 1, ... , m$,} 
 \end{align*}
are linearly independent if and only if 
 \begin{align}
\label{vecs}
     &\prod_{i=1}^{q}(I - \lambda_i A)(I - \lambda_j A)^{-1}(I - A)^{-1}(I - A^N)C,  \quad \mbox{with  $j = 1, ... , m$,} 
 \end{align}
are linearly independent.
We can now rewrite the vectors in \eqref{vecs} as
 \begin{align*}
    \prod_{i=1}^{q}(I - \lambda_i A)(I - \lambda_j A)^{-1}(I - A)^{-1}(I - A^N)C 
    &=\prod_{i \neq j}^{q}(I - \lambda_i A)(I - A)^{-1}(I - A^N)C \\
     &= \prod_{i \neq j}^{q}(I - \lambda_i A)\sum_{k=0}^{N-1} A^k C,
 \end{align*}
where we used the relation 
\begin{equation*}
(I - A^N)=(I-A)\sum_{k=0}^{N-1}A ^k \quad \mbox{and hence that} \quad (I-A)^{-1}(I - A^N)=\sum_{k=0}^{N-1}A ^k.
\end{equation*}
Now, if we are able to show that the family
 \begin{align*}
     p_j(x) = \prod_{i \neq j}^{q}(1- \lambda_i x)\sum_{k=0}^{N-1} x^k , \quad \mbox{with } \quad j \in \left\{ 1, \ldots , m\right\}
 \end{align*}
is linearly independent, then we can conclude by Lemma \ref{polynomial_lemma} that the vectors \eqref{vecs} are linearly independent almost surely, which would complete the proof. This is indeed the case because if $\mu_1, \ldots, \mu_n \in \mathbb{R} $ are such that 
\begin{equation*}
\sum_{j=1}^n \mu _jp _j(x)=0 \quad \mbox{then} \quad  \left(\sum_{k=0}^{N-1} x^k\right)\left(\sum_{j=1}^n \mu _j\prod_{i \neq j}^{q}(1- \lambda_i x)\right)=0.
\end{equation*}
Given that the polynomial $\sum_{k=0}^{N-1} x^k $ is non-zero, the previous equality is equivalent to $\sum_{j=1}^n \mu _j\prod_{i \neq j}^{m}(1- \lambda_i x)=0 $ which, evaluated at $x=1/ \lambda _k  $, implies that
\begin{equation*}
0=\sum_{j=1}^n \mu _j\prod_{i \neq j}^{q}\left(1- \lambda_i \frac{1}{\lambda_k}\right)=\mu _k\prod_{i \neq k}^{q}\left(1- \lambda_i \frac{1}{\lambda_k}\right).
\end{equation*}
Given that, by hypothesis, the values $\lambda_1, \ldots, \lambda_q \in \mathbb{C}$ are all different, we can conclude that $\prod_{i \neq k}^{q}\left(1-  \frac{\lambda_i}{\lambda_k}\right)\neq 0 $ and hence $\mu_k=0 $, necessarily. Since procedure can be repeated to obtain that $\mu_1, \ldots, \mu_n =0$, the result follows. 
\end{proof}

After all this work, the proofs are complete. In summary: we have shown that reservoir maps of the form
\begin{align*}
    F(x,z) = Ax + Cz + b
\end{align*}
with randomly generated weights $A,C$ and biases $b$ admit a GS $f_{(\phi,\omega,F)}$ when driven by observations of a discrete time dynamical system $(M,\phi)$. If the dynamical system $(M,\phi)$ is well behaved at the fixed points, then for generic observation functions $\omega \in C^2(M,\mathbb{R})$, the GS $f_{(\phi,\omega,F)}$ is an embedding almost surely.

\chapter{Universal Approximation}
\label{chapter::universal_approximation}

We saw in Chapter \ref{chapter::introduction} that an ESN does a good job predicting the future trajectory of the Lorenz system, whose evolution is nonlinear. Furthermore, there is a large literature attesting to the performance of ESNs when tasked with learning highly nonlinear relationships between datasets and targets \citep{TANAKA2019100}. This suggests that ESNs may have some sort of universal approximation property. There is recent literature analysing the universal approximation of ESNs and reservoir maps more generally -  with different authors taking different approaches. One strand of work developed in \cite{GRIGORYEVA2018495}, \cite{GONON2020132721}, \cite{Gonon2020}, and \cite{GONON202110} presents universal approximation results in terms of filters and functionals (which are presented in this thesis in Chapter \ref{chapter::stochastic}). This builds upon the work by \cite{Matthews1993} who establishes the universal approximation of fading memory systems supported by neural networks. Many of these results hold for arbitrary input sequences, without any reference to the process that generated them. This is in contrast to the results that will appear in this chapter, which will hold for input sequences generated by an ergodic and determinstic dynamical system. Many of the results presented here also hold for stationary and ergodic random processes - and this is explored in Chapter \ref{chapter::stochastic}. The presentation of this present chapter closely follows \cite{embedding_and_approximation_theorems} and \cite{HART2021132882}.

\section{Universal Approximation Theorems}

Many (perhaps all) of the major approximation results for Echo State Networks and similar reservoir maps are built upon Hornik's (1990) universal approximation theorem for feedforward neural networks. The result states, under mild assumptions on the activation function $\sigma$, that feedforward neural networks are dense in the set of differentiable functions. To make this formal, we'll first introduce the mild assumption on $\sigma$.

\begin{defn}
    ($\ell$-finite) Let $\ell \in \mathbb{N}_0$. Then we say an $\ell$-times differentiable scalar function $\sigma \in C^{\ell}(\mathbb{R})$ is $\ell$-finite if
    \begin{align*}
        0 < \int_{\mathbb{R}} \bigg| \frac{d^{\ell} \sigma}{d x^{\ell}} \bigg| dx < \infty. 
    \end{align*}
\end{defn}

\begin{theorem}
    (Universal Approximation Theorem, \cite{HORNIK1990551}) Let $K \subset \mathbb{R}^n$ be a compact subset. If the activation function $\sigma$ is $\ell$-finite, $W_1, \ldots W_N \in \mathbb{R}$ and $C_1, \ldots , C_N \in \mathbb{R}^n$, and $b_1, \ldots, b_N \in \mathbb{R}$  then, for all $0 \leq r \leq \ell$, the set of functions $g : K \to \mathbb{R}$ of the form
    \begin{align*}
        g(x) = \sum_{j = 1}^N W_j \sigma ( C_j^{\top}x + b_j)
    \end{align*}
    are dense in $C^{r}(K,\mathbb{R})$.
\end{theorem}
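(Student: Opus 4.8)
The plan is to give the classical Fourier-analytic proof, in the spirit of \cite{HORNIK1990551}: reduce the multivariate $C^r$ statement to a one-dimensional density statement, and then exploit the hypothesis $0<\int_{\mathbb R}|\mathrm d^\ell\sigma/\mathrm dx^\ell|\,\mathrm dx<\infty$ there. \emph{Step 1 (reduce to trigonometric ridge functions).} First I would note that finite linear combinations of the functions $x\mapsto\cos(w^\top x)$ and $x\mapsto\sin(w^\top x)$, $w\in\mathbb R^n$, are dense in $C^r(K,\mathbb R)$: given $g\in C^r(K)$, multiply a $C^r$ extension of $g$ by a cut-off to obtain $\tilde g\in C^r_c(\mathbb R^n)$, mollify $\tilde g$ to land in $C^\infty_c(\mathbb R^n)$ while staying $C^r$-close, and then approximate the resulting Schwartz function by Riemann sums of its Fourier-inversion integral; these are trigonometric polynomials and, thanks to the rapid decay of the Fourier transform, they converge in $C^r$ uniformly on compacta. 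It therefore suffices to approximate each $\cos(w^\top x)$ and $\sin(w^\top x)$ in $C^r(K)$ by functions of the prescribed network form. The case $w=0$ is immediate: $\sigma\not\equiv0$ because $\sigma^{(\ell)}\not\equiv0$, so $\sigma(b)\neq0$ for some $b$ and constants already lie in the span.

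\emph{Step 2 (reduce to one dimension).} Writing $\cos(w^\top x)=\psi(u^\top x)$ with $u=w/\lVert w\rVert$ a unit vector and $\psi(t)=\cos(\lVert w\rVert t)$ a $C^\infty$ function of a single real variable (and similarly for $\sin$), and observing that the linear map $x\mapsto u^\top x$ carries the compact set $K$ into a compact interval $I$ and induces a bounded composition operator $C^r(I)\to C^r(K)$, the problem reduces to the following one-dimensional claim: for every compact interval $I$ and every $0\le r\le\ell$, the functions $t\mapsto\sum_jW_j\sigma(c_jt+b_j)$ are dense in $C^r(I,\mathbb R)$.

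\emph{Step 3 (one dimension, where $\ell$-finiteness enters).} Since $C^\infty$ functions are $C^r$-dense in $C^r(I)$ and $C^\ell$-closeness is stronger than $C^r$-closeness, it suffices to treat $r=\ell$. Let $V$ be the closure of $\operatorname{span}\{\sigma(c\,\cdot+b):c,b\in\mathbb R\}$ in the Fr\'{e}chet space $C^\ell_{\mathrm{loc}}(\mathbb R)$; $V$ is invariant under translations and dilations of the argument, and it contains the iterated difference $\Phi:=\sum_{j=0}^{\ell}\binom{\ell}{j}(-1)^{\ell-j}\sigma(\cdot+j)$. Two properties of $\Phi$ drive the argument: (i) $\Phi\not\equiv0$, for otherwise $\sigma$ would be a polynomial of degree $<\ell$ and $\sigma^{(\ell)}$ would vanish, contradicting $\int|\sigma^{(\ell)}|>0$; and (ii) expressing each $\Phi^{(k)}$, $0\le k\le\ell$, as an iterated integral of $\sigma^{(\ell)}\in L^1$ over a bounded window and estimating by Fubini yields $\Phi,\Phi',\dots,\Phi^{(\ell)}\in L^1(\mathbb R)$, so that $\widehat\Phi$ is continuous, tends to $0$ at infinity, and is not identically zero. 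Picking $\xi_0$ with $\widehat\Phi(\xi_0)\neq0$, I would then form continuous superpositions $\Phi*\rho_n$ of translates of $\Phi$ — which remain in $V$ by Riemann-sum approximation — with Schwartz densities $\rho_n$ chosen so that the Fourier transform of $\Phi*\rho_n$ is a smooth bump (supported where $\widehat\Phi\neq0$) concentrating at $\{\xi_0,-\xi_0\}$; these converge in $C^\ell_{\mathrm{loc}}$ to a nonzero multiple of $\cos(\xi_0\,\cdot)$, and with an odd, purely imaginary bump instead, to a nonzero multiple of $\sin(\xi_0\,\cdot)$. Dilation invariance of $V$ then gives $\cos(w\,\cdot),\sin(w\,\cdot)\in V$ for all $w\in\mathbb R$, and since trigonometric polynomials are $C^\ell$-dense on $I$ we conclude $V|_I=C^\ell(I)$. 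Unwinding Steps 2 and 1 finishes the proof.

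\emph{Expected main obstacle.} The delicate part is Step 3. The hypothesis supplies only $0<\int|\sigma^{(\ell)}|<\infty$, not the non-vanishing of $\int\sigma^{(\ell)}$ (indeed $\widehat\Phi(0)=0$ automatically), so no dilate of $\sigma^{(\ell)}$ is an approximate identity and one cannot simply mollify with $\sigma$; the construction must be carried out on the Fourier side. The real work is verifying that the superpositions $\Phi*\rho_n$ and their Riemann-sum approximants converge in the $C^\ell$ topology and not merely in $C^0$ — and this is precisely where the integrability of \emph{all} the derivatives $\Phi^{(k)}$, guaranteed by $\ell$-finiteness, gets used.
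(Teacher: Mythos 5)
The paper itself offers no proof of this statement: it is quoted from Hornik--Stinchcombe--White \cite{HORNIK1990551} and used as a black box to derive the Random Universal Approximation Theorem, so there is no in-paper argument to compare yours against. Judged on its own merits, your Fourier-analytic reconstruction is essentially sound and follows the classical route: reduction to trigonometric ridge functions, reduction to one variable, and then exploitation of $\ell$-finiteness on the Fourier side. Steps 1 and 2 are standard and correct, and your property (ii) --- that every $\Phi^{(k)}$, $0\le k\le\ell$, is integrable (indeed bounded and continuous), via the Peano-kernel/B-spline representation of the $\ell$-th finite difference as $\sigma^{(\ell)}$ convolved against a compactly supported kernel --- is exactly what makes your Riemann-sum and bump-concentration arguments converge in $C^{\ell}_{\mathrm{loc}}$ rather than merely uniformly.

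One justification is wrong as stated, although the claim it supports is true and the fix is already in your toolkit. You assert that $\Phi=\Delta_1^{\ell}\sigma\equiv 0$ would force $\sigma$ to be a polynomial of degree $<\ell$. For a \emph{fixed} step size this is false: $\Delta_1\sigma\equiv 0$ only says $\sigma$ is $1$-periodic (e.g.\ $\sigma(x)=\sin 2\pi x$), and $\Delta_1^{\ell}\sigma\equiv 0$ likewise admits many non-polynomial solutions. What rescues (i) is the hypothesis you are already using: since $\Phi$ is the convolution of $\sigma^{(\ell)}\in L^1\setminus\{0\}$ with a (reflected) cardinal B-spline $N_\ell$, and $\widehat{N_\ell}$ vanishes only on $2\pi\mathbb{Z}\setminus\{0\}$, the factorisation $\widehat{\Phi}=\widehat{\sigma^{(\ell)}}\cdot\widehat{\tilde N_\ell}$ shows that $\widehat{\Phi}\equiv 0$ would force the continuous function $\widehat{\sigma^{(\ell)}}$ to vanish off a discrete set, hence everywhere, contradicting $\int_{\mathbb{R}}|\sigma^{(\ell)}|>0$. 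This argument also hands you directly a nonzero $\xi_0$ with $\widehat{\Phi}(\xi_0)\neq 0$, so you may as well prove $\widehat{\Phi}\not\equiv 0$ outright rather than $\Phi\not\equiv 0$. Two smaller points to attend to in a write-up: the bump construction must respect the reality constraint $\hat\rho_n(-\xi)=\overline{\hat\rho_n(\xi)}$ (or accept a limit of the form $\cos(\xi_0\cdot+\phi)$ and recover $\cos(\xi_0\cdot)$ and $\sin(\xi_0\cdot)$ from translates), and the degenerate case $\ell=0$, where $\Phi=\sigma$, should be noted separately --- the argument still runs. With these repairs your proposal is a complete proof.
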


There are many other universal approximation theorems found throughout the literature \citep{Cybenko1989,NIPS2017_32cbf687} which typically show that neural networks (of some variety) equipped with an activation function (that satisfies mild conditions) are dense in some set of functions (with certain regularity conditions) between topological spaces (that satisfy reasonable conditions - usually subsets of $\mathbb{R}^n$). There are many novel results \citep{Gonon2020,arXiv:1910.03344,arXiv:2006.02341,arXiv:2101.05390,Gonon2021} which determine approximation bounds for the number of neurons required to approximate a sufficiently regular function. The bounds are often explicit up to a constant, which is sometimes computed exactly. Throughout this thesis we do not work with approximation bounds. This simplifies the presentation considerably, but comes at the cost of leaving us with no idea how many neurons are required for a given task.

For the results that follow we use Hornik's seminal result because it allows the pointwise approximation of a function $g$ in addition to its derivatives up to any order. This smooth approximation is of crucial importance for the topological results to come; but it is over-engineered for others. The first result we will present states that, if the internal weights $C_j$ and biases $b_j$ are randomly generated, there exists a choice of outer weights $W_j$ so that the network $g$ will approximate the target function as closely as is required. This result and preliminary lemmas appear in \cite{embedding_and_approximation_theorems}. A variant of the result with approximation bounds appears in \cite{Gonon2020}. 
    \begin{lemma}
\label{RUAT_lemma}
    Let $(X_j)_{j \in \mathbb{N}}$ be a sequence of i.i.d. random variables and $S_1, \ldots, S_{\ell}$ be a list of $\ell$ events, and suppose that for each $i$ (and for any $j$ since they are i.i.d.) there exists $\theta_i$ such that $\mathbb{P}(X_j \in S_i) = \theta_i > 0$. Then for all $\alpha \in (0,1)$ there exists $N \in \mathbb{N}$ such that
    \begin{align*}
        \mathbb{P}\big( \exists \text{ injective } \phi : \{ 1, \ldots, \ell \} \to \{ 1, \ldots, N \} : X_{\phi(i)} \in S_i, \ \ \forall \ i \in \{ 1, \ldots, \ell \} \big) > \alpha.
    \end{align*}
\end{lemma}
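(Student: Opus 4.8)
The plan is to think of each index $j$ as offering $\ell$ independent ``colours'', where we say index $j$ \emph{hits} event $S_i$ if $X_j \in S_i$. The quantity we want to control is the probability that, after $N$ draws, we can find a \emph{system of distinct representatives} — an injection $\phi$ assigning to each of the $\ell$ events a distinct index that hits it. The key simplifying observation is that it suffices to find, for each $i \in \{1,\ldots,\ell\}$, its \emph{own private block} of indices, at least one of which hits $S_i$; if the blocks are disjoint then picking one hitting index from each block automatically gives an injection. So I would split $\{1,\ldots,N\}$ into $\ell$ consecutive blocks $B_1,\ldots,B_\ell$, each of size $n := N/\ell$ (taking $N$ a multiple of $\ell$), and bound the failure probability by a union bound over the blocks.

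First I would fix $\alpha \in (0,1)$ and set $\theta := \min_{1 \le i \le \ell} \theta_i > 0$. For a single block $B_i$ of size $n$, since the $X_j$ are i.i.d.\ and $\mathbb{P}(X_j \in S_i) = \theta_i \ge \theta$, the probability that \emph{no} index in $B_i$ hits $S_i$ is $(1-\theta_i)^n \le (1-\theta)^n$. By the union bound,
\begin{align*}
    \mathbb{P}\big(\exists\, i : \text{no index in } B_i \text{ hits } S_i\big) \le \sum_{i=1}^{\ell} (1-\theta_i)^n \le \ell\,(1-\theta)^n.
\end{align*}
On the complementary event, each block $B_i$ contains at least one index $\phi(i) \in B_i$ with $X_{\phi(i)} \in S_i$; since the blocks are pairwise disjoint, $\phi$ is injective, and it maps into $\{1,\ldots,N\}$. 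Hence
\begin{align*}
    \mathbb{P}\big(\exists \text{ injective } \phi : \{1,\ldots,\ell\} \to \{1,\ldots,N\} : X_{\phi(i)} \in S_i \ \forall i\big) \ge 1 - \ell\,(1-\theta)^n.
\end{align*}
Since $0 < \theta \le 1$ we have $0 \le 1-\theta < 1$, so $\ell\,(1-\theta)^n \to 0$ as $n \to \infty$; choose $n$ large enough that $\ell\,(1-\theta)^n < 1-\alpha$, and set $N = \ell n$. This $N$ works, completing the proof.

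There is essentially no hard step here: the only thing to be careful about is the edge case $\theta_i = 1$ for some $i$ (then that event is hit by every index, trivially), and the fact that the injection must land in $\{1,\ldots,N\}$ with $N$ possibly larger than strictly necessary — which is fine, as the statement only asks for existence of some such $N$. If one wanted the cleanest write-up I would not even bother optimising $N$; the blockwise union bound is the natural argument and the geometric decay does all the work. The only place a reader might want more detail is the remark that distinct blocks force $\phi$ to be injective, which is immediate from disjointness.
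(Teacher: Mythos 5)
Your proof is correct and takes essentially the same route as the paper: both arguments dedicate a disjoint block of indices to each event $S_i$, so that one hitting index per block automatically yields the required injection. The only difference is bookkeeping — the paper sizes block $i$ according to $\theta_i$ and multiplies the blockwise success probabilities (via independence) so each factor exceeds $\alpha^{1/\ell}$, whereas you use uniform block sizes with $\theta = \min_i \theta_i$ and a union bound on the failure event; both choices make the geometric decay do the work.
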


\begin{proof}
        First, fix $\alpha \in (0,1)$.
        %and let $\mathbb{P}(X_j \in S_i) := \theta_i \ \forall j$.
        Then define the set $\{n_0, \ldots, n_\ell\}$ as follows.
        Set $n_0 = 0$ and for any $i \in \{ 1 , ... , \ell \}$ let
        \begin{align*}
            n_i - n_{i-1} := \text{ceil}\bigg( \frac{\log(1 - \alpha^{1 /     \ell})}{\log(1-\theta_i)} \bigg) + 1. 
        \end{align*}
        Finally, set $N=n_{\ell}$. Then we can calculate that
        \ba 
        & & \mathbb{P} \big( \exists \text{ injective } \phi 
        %: \{ 1, \ldots, \ell \} \to \{ 1, \ldots, n \} 
        : X_{\phi(i)} \in S_i \ \forall i \in \{1, \ldots, \ell \} \big) 
        \nn \\
        %\phantom{a} \hspace{-2.5cm} 
        & > &
        \mathbb{P} \big(\forall \ i \in \{ 1, \ldots, \ell \} \ \exists \ j \in \{1 + n_{i-1}, \ldots, n_i \} : X_j \in S_i \big) \nn \\ 
        & = &
        \prod_{i=1}^{\ell} \mathbb{P}\big( \exists j \in \{1 + n_{i-1}, \ldots, n_i \} : X_j \in S_i \big) \nn \\ 
        & = &
        \prod_{i=1}^{\ell} 1 - \mathbb{P}\big( X_j \notin S_i \ \forall j \in \{1 + n_{i-1}, \ldots, n_i \} \big) \nn \\ 
        & \geq & 
        \prod_{i=1}^{\ell} 1 - (1 - \theta_i)^{n_i - n_{i-1}} \nn \\ 
        & = &
        \prod_{i=1}^{\ell} 1 - (1 - \theta_i)^{\text{ceil}\big(\log(1 - \alpha^{1 / \ell}) / \log(1-\theta_i) \big) + 1} \nn \\ 
        & > &
        \prod_{i=1}^{\ell} 1 - (1 - \theta_i)^{\big(\log(1 - \alpha^{1 / \ell}) / \log(1-\theta_i) \big)} \nn \\ 
        & = & 
        \prod_{i=1}^{\ell} 1 - \exp\bigg(\frac{\log(1 - \alpha^{1 / \ell})}{\log(1-\theta_i)}\log(1 - \theta_i)\bigg) \nn \\ 
        & = & \prod_{i=1}^{\ell} 1 - (1 - \alpha^{1/\ell}) = \prod_{i=1}^{\ell} \alpha^{1/\ell} = \alpha. \nn
    \ea
    
\end{proof}

\begin{theorem} (Random Universal Approximation Theorem)
    Let $K \subset \mathbb{R}^n$ be a compact subset and $f \in C^\ell(K , \mathbb{R})$.
    Let $\sigma \in C^\ell(\mathbb{R})$ be $\ell$-finite, and let $(b_j \in \mathbb{R})_{j \in \mathbb{N}}$, $(C_j \in \mathbb{R}^n)_{j \in \mathbb{N}}$ be sequences of i.i.d. random variables with full support.  Then for any $0 \leq r \leq \ell$, and $\alpha \in (0,1)$ and $\epsilon > 0$ there exists some natural number $N \in \mathbb{N}$ such that, with probability greater than $\alpha$, there exist real numbers $W_1, \ldots, W_N \in \mathbb{R}$ such that the \emph{random neural network} $g : K \to \mathbb{R}$ defined by
    \begin{align}
        g(x) = \sum_{j=1}^{N} W_j \sigma (C_j^{\top} x + b_j) \nn 
    \end{align}
    satisfies
    \begin{align}
    \lVert f - g \rVert_{C^r} < \epsilon. \nn
    \end{align}
    \label{theorem::RUAT}
\end{theorem}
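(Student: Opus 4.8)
## Proof proposal for Theorem \ref{theorem::RUAT}

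The plan is to combine the deterministic universal approximation theorem of Hornik with the combinatorial probability estimate in Lemma \ref{RUAT_lemma}. The key observation is that Hornik's theorem guarantees the \emph{existence} of some finite collection of weight triples $(W_j, C_j, b_j)$, $j = 1, \ldots, m$, realising a network $\widetilde g(x) = \sum_{j=1}^m W_j \sigma(C_j^\top x + b_j)$ with $\lVert f - \widetilde g \rVert_{C^r} < \epsilon$; we must upgrade this to a statement about a network whose inner parameters $(C_j, b_j)$ are the \emph{given random} ones. First I would fix the approximating network $\widetilde g$ from Hornik's theorem, with its $m$ inner parameters $p_i := (C_i, b_i) \in \mathbb{R}^{n+1}$. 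By continuity of the map $(C,b) \mapsto \sigma(C^\top x + b)$ in the $C^r(K,\mathbb{R})$ topology (which holds because $\sigma \in C^\ell$ and $r \le \ell$, and $K$ is compact), for each $i$ there is an open ball $S_i \subset \mathbb{R}^{n+1}$ around $p_i$ such that replacing $p_i$ by any $p_i' \in S_i$ changes the corresponding basis function by less than $\epsilon'$ in $C^r$, for a suitable small $\epsilon' > 0$; but since we are allowed to re-choose the outer weights $W_j$ afterwards, it is cleaner to argue as follows.

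The cleaner route: once the inner parameters are fixed at \emph{any} points $p_1', \ldots, p_m'$ lying in sufficiently small neighbourhoods $S_1, \ldots, S_m$ of $p_1, \ldots, p_m$, the functions $x \mapsto \sigma((C_i')^\top x + b_i')$ remain linearly independent in $C^r(K,\mathbb{R})$ and span a subspace that is $C^r$-close (in the appropriate Grassmannian sense) to the span of the original basis functions; hence there exist new outer weights $W_1', \ldots, W_m'$ so that $\sum_i W_i' \sigma((C_i')^\top x + b_i')$ still approximates $f$ to within $\epsilon$ in $C^r$. To make this rigorous without invoking Grassmannians, I would instead just note: the map sending $(p_1', \ldots, p_m', W_1', \ldots, W_m') \mapsto \lVert f - \sum_i W_i' \sigma((C_i')^\top x + b_i') \rVert_{C^r}$ is continuous, and it takes a value $< \epsilon$ at $(p_1, \ldots, p_m, W_1, \ldots, W_m)$; hence there are open neighbourhoods $S_i$ of each $p_i$ such that for \emph{every} choice $p_i' \in S_i$ there still exist outer weights (e.g.\ the original $W_i$, if the $S_i$ are small enough) keeping the approximation error below $\epsilon$.

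Now the random part. The inner parameters are drawn i.i.d.\ with full support, so $\theta_i := \mathbb{P}((C_j, b_j) \in S_i) > 0$ for each $i \in \{1, \ldots, m\}$. Apply Lemma \ref{RUAT_lemma} with $\ell := m$ and events $S_1, \ldots, S_m$: for the given $\alpha \in (0,1)$ there is $N \in \mathbb{N}$ such that with probability exceeding $\alpha$ there is an injection $\phi : \{1, \ldots, m\} \to \{1, \ldots, N\}$ with $(C_{\phi(i)}, b_{\phi(i)}) \in S_i$ for all $i$. On this event, set $W_{\phi(i)}$ equal to the outer weight guaranteed above (using the realised inner parameters $p_{\phi(i)}' = (C_{\phi(i)}, b_{\phi(i)}) \in S_i$) and set $W_j = 0$ for every index $j \notin \mathrm{image}(\phi)$. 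Then $g(x) = \sum_{j=1}^N W_j \sigma(C_j^\top x + b_j) = \sum_{i=1}^m W_{\phi(i)} \sigma(C_{\phi(i)}^\top x + b_{\phi(i)})$ satisfies $\lVert f - g \rVert_{C^r} < \epsilon$, which is exactly the claim.

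The main obstacle is the middle step: making precise that perturbing the inner parameters slightly still allows a good $C^r$ approximation after re-optimising the outer weights. The slick formulation via joint continuity of the error functional sidesteps any linear-algebra bookkeeping, but one must be a little careful that the $S_i$ can be chosen \emph{independently} of each other (which is fine, since one can shrink all of them simultaneously using uniform continuity on the compact product of small closed balls) and that full support of the distribution of $(C_j,b_j)$ genuinely forces $\mathbb{P}((C_j,b_j)\in S_i)>0$ for open $S_i$ — which is the definition of full support. Everything else is a direct citation of Hornik's theorem and Lemma \ref{RUAT_lemma}.
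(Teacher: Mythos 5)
Your proposal is correct and follows essentially the same route as the paper's proof: invoke Hornik's theorem for a deterministic network, use Lemma \ref{RUAT_lemma} (with full support giving each $S_i$ positive probability) to land random inner parameters in small neighbourhoods of the deterministic ones, keep the original outer weights on those indices and set the rest to zero. The only difference is cosmetic — the paper defines the $S_i$ quantitatively (each basis function within $\epsilon/(2N'\max_k \hat W_k)$ of its target in $C^r$) and closes with an explicit triangle-inequality estimate, whereas you obtain the $S_i$ from continuity of the $C^r$ error functional; both hinge on the same continuity of $(C,b)\mapsto\sigma(C^{\top}\cdot+b)$ on the compact set $K$.
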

\begin{proof}
    First, fix $r \geq 0$. Then by the Universal Approximation Theorem we know that for any $\epsilon > 0$ there exists a neural network $\hat{g} : K \to \mathbb{R}$ with $N'$ neurons defined by
\begin{align}
        \hat{g} (x) = \sum_{i = 1}^{N'} \hat{W}_i \sigma(\hat{C}_i^{\top} x + \hat{b}_i) \nn 
\end{align}
    such that
    \begin{align}
    \lVert f - \hat{g} \rVert_{C^r} < \frac{\epsilon}{2}. \label{eqn:fghat} 
    \end{align}
Now, consider two sequences of i.i.d. random variables $(b_j)_{j \in \mathbb{N}}$ and $(C_j)_{j \in \mathbb{N}}$ with full support, and let $X_j := (b_j , C_j)$. Fix $\epsilon>0$  and define a collection of $N'$ events $S_{1} , ... , S_{N'}$ by
\begin{align}
    S_i := \bigg\{ (b,C) \in \mathbb{R} \times \mathbb{R}^n : \lVert \sigma(\hat{C}_i^{\top} \cdot + \hat{b}_i) - \sigma(C^{\top} \cdot + b) \rVert_{C^r} < \frac{\epsilon}{2 N' \max_k(\hat{W}_k)} \bigg\}, \nn 
\end{align}
where the weights $\hat{W}_k$ are given by the form of the network $\hat{g}$.
Observe that each of the $S_i$ have strictly positive measure, so there exists
$\theta_i>0$ such that $\mathbb{P}(X_j \in S_i) > \theta_i > 0 \ \forall j \in \mathbb{N}$. 
Hence it follows by Lemma~\ref{RUAT_lemma} that for all $\alpha \in (0,1)$ there exists $N \in \mathbb{N}$ such that
    \begin{align}
        \mathbb{P}\big( \exists \text{ injective } \phi : \{ 1, \ldots, N' \} \to \{ 1, \ldots, N \} : X_{\phi(i)} \in S_i \ \forall i \in \{1, \ldots, N' \} \big) > \alpha. \nn 
    \end{align}
Now, assuming the event 
\begin{align}
    \exists \text{ injective } \phi : \{ 1, \ldots, N' \} \to \{ 1, \ldots, N \} : X_{\phi(i)} \in S_i \ \forall i \in \{1, \ldots, N' \} \nn 
\end{align}
occurs, we define
\begin{align}
    W_j := 
    \begin{cases}
        \hat{W}_i \text{ if } \phi(i) = j \\
        0 \text{ otherwise }
    \end{cases} \nn 
\end{align}
for all $j \in \{ 1, \ldots, N \}$, and define the \emph{random neural network} $g:K \to \mathbb{R}$ by
\begin{align}
    g(x) = \sum^{N}_{j=1}W_j \sigma( C_j^{\top} x + b_j ). \nn
\end{align}
Now observe that
\ba
\lVert \hat{g} - g \rVert_{C^1}
    & = & \bigg\lVert \sum_{i = 1}^{N'} \hat{W}_i \sigma(\hat{C}_i^{\top} \cdot + \hat{b}_i) - \sum_{j = 1}^{N}W_j\sigma(C^{\top}_j \cdot + b_j) \bigg\rVert_{C^r} \nn \\
    & = & \bigg\lVert \sum_{i = 1}^{N'} \hat{W}_i \big( \sigma ( \hat{C}_i^{\top} \cdot + \hat{b}_i ) - \sigma \big( C_{\phi(i)}^{\top} \cdot + b_{\phi(i)} \big)  \big) \bigg\rVert_{C^r} \nn \\ 
    & \leq & \sum_{i = 1}^{N'} \hat{W}_i \bigg\lVert \big( \sigma ( \hat{C}_i^{\top} \cdot + \hat{b}_i ) - \sigma \big( C_{\phi(i)}^{\top} \cdot + b_{\phi(i)} \big)  \big) \bigg\rVert_{C^r} \nn \\
    & < & \sum_{i = 1}^{N'} \frac{\hat{W}_i \epsilon}{2 N' \max_k(\hat{W}_k)} < \frac{\epsilon}{2}. \nn
\ea
Combining this with~\eqref{eqn:fghat} and using the triangle inequality we obtain
\ba 
    \lVert f - g \rVert_{C^1}
    \leq \lVert f - \hat{g} \rVert_{C^r} + \lVert \hat{g} - g \rVert_{C^r}  < \frac{\epsilon}{2} + \frac{\epsilon}{2} = \epsilon, \nn
\ea 
which completes the proof.
\end{proof}

The above result states that if we randomly generate the internal weights of a feedforward neural network with sufficiently many neurons, then there exists a choice of outer weights $W \in \mathbb{R}^N$ that will yield a sufficiently good approximation. This existence result is non constructive in the sense that 
\begin{enumerate}
    \item We do not know how many neurons $N$ are required for a given error $\epsilon$ or for a given probability $\alpha$.
    \item We do not know how to choose the output weights $W \in \mathbb{R}^N$.
\end{enumerate}
\cite{Gonon2020} have made significant progress on item $1.$, so in what follows, we will focus on items $2.$ In particular, we will consider an input sequence arising from the scalar observations of an ergodic dynamical system evolving on a manifold $M$. We will prove that for $W \in \mathbb{R}^N$ obtained by linear regression, as in Chapter \ref{chapter::introduction}, the approximation can be made arbitrarily good by increasing the number of neurons $N \in \mathbb{N}$ and data points $\ell \in \mathbb{N}$. The presentation in this chapter closely follows \cite{HART2021132882}. The results rely on ergodic theory - which we will introduce next.

\section{Ergodic Theory}
\label{Training_Thm_for_ESNs}

We can view an ergodic process, whether it is deterministic or random, as a process where the time average of some feature through phase space converges toward the spatial average of that feature throughout the phase space, almost surely.

For example, one could imagine a room that is heated to some constant, spatially inhomogeneous temperature. Perhaps the top of the room is warmer than the bottom as a consequence of heat rising, and one side, perhaps with single glazed windows, is cooler than the other. Nevertheless, the room has a well defined, constant, spatial average temperature. Now, we can view, under idealised assumptions, the average temperature of the room as proportional to root mean square velocity of the particles the room. If the particle dynamics are ergodic, we may observe a single arbitrary particle and track its velocity as it traverses the room. We expect the particle to explore the whole room, moving with greater velocity in the warm areas, and lower velocity in cool areas, such that the particle's root mean square velocity over time converges toward the spatial temperature average of the room. This convergence of the \emph{time average} of the particle's velocity to the \emph{space average} over all particles in the room is the crucial property of ergodic systems that we will exploit. In this chapter we are interested in deterministic ergodic systems, and in Chapter \ref{chapter::stochastic} we will extend the results the stationary and ergodic random processes.

We require that the underlying dynamical system is ergodic so that minimising the mean square differences between observations and targets closely approximates the mapping between the observation space and target space. The ergodicity ensures that that training data generated from a trajectory initialised at almost any point $m_0 \in M$ will represent all dynamics on $M$. To make this formal, we will introduce the definition of ergodicity and the celebrated Ergodic Theorem. 

\begin{defn}
    ($\mu$-generic point, \cite{genericpts}, Definition 4.4.1) Suppose $\phi : M \to M$ is a measure preserving map with respect to the probability space $(M,\Sigma,\mu)$. We say that $m_0$ is a $\mu$-generic point in $M$ if
    \begin{align}
        \lim_{\ell \to \infty}\frac{1}{\ell} \sum_{k=0}^{\ell-1} s \circ \phi^{k}(m_0) = \int_{M} s \ d\mu \nn
    \end{align}
    for all $s \in C^0(\mu)$.
\end{defn}

\begin{remark}
    A $\mu$-generic point is not to be confused with a generic property defined in Definition \ref{defn::generic_property}. The overlap in terminology is indeed annoying.
\end{remark}

\begin{defn}
    (Ergodic) Let $\phi : M \to M$ be a measure preserving transformation on the probability space $(M,\Sigma,\mu)$. Then $\phi$ is ergodic if for every $\sigma \in \Sigma$ with $\phi^{-1}(\sigma) = \sigma$ either $\mu(\sigma) = 0$ or $\mu(\sigma) = 1$.
\end{defn}

\begin{theorem}
    (Ergodic Theorem \citep{Birkhoff656}) Suppose $\phi : M \to M$ is ergodic with respect to the probability space $(M,\Sigma,\mu)$ and $s \in L^1(\mu)$. Then $\mu$-almost all $m_0 \in M$ are $\mu$-generic hence for $\mu$-almost all $m_0 \in M$
    \begin{align}
        \lim_{\ell \to \infty}\frac{1}{\ell} \sum_{k=0}^{\ell-1} s \circ \phi^{k}(m_0) = \int_{M} s \ d\mu. \label{Ergodic_Theorem_eqn}
    \end{align}
\end{theorem}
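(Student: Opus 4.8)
The statement is Birkhoff's pointwise ergodic theorem, so the plan is the classical two-stage route: first establish almost-everywhere convergence of the Birkhoff averages $A_\ell s := \tfrac{1}{\ell}\sum_{k=0}^{\ell-1} s\circ\phi^k$ for an arbitrary measure-preserving $\phi$ (not yet using ergodicity), and then invoke ergodicity to identify the limit. Throughout, write $S_\ell s := \sum_{k=0}^{\ell-1} s\circ\phi^k$, so that $A_\ell s = S_\ell s/\ell$.

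\textbf{Step 1: the Maximal Ergodic Theorem.} The engine of the whole argument is the maximal inequality. For $g\in L^1(\mu)$, set $M_n g := \max_{0\le j\le n} S_j g$ with the convention $S_0 g := 0$, let $E_n := \{M_n g > 0\}$, and $E := \bigcup_n E_n = \{\sup_{n\ge 1} S_n g > 0\}$; then $\int_{E_n} g\, d\mu \ge 0$, and hence $\int_E g\, d\mu \ge 0$. I would prove this by the standard observation that on $E_n$ one has $S_j g = g + (S_{j-1}g)\circ\phi \le g + (M_n g)\circ\phi$ for $1\le j\le n$, hence $M_n g \le g + (M_n g)\circ\phi$ there; integrating over $E_n$ and using $M_n g \ge 0$ on $E_n$, $(M_n g)\circ\phi\ge 0$ everywhere, and $\int (M_n g)\circ\phi\, d\mu = \int M_n g\, d\mu$ by measure-preservation, yields $\int_{E_n} g\, d\mu \ge 0$; then let $n\to\infty$ by dominated convergence. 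This step (Hopf's lemma) is where essentially all of the work lies; everything that follows is bookkeeping.

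\textbf{Step 2: almost-everywhere convergence.} Set $s^* := \limsup_\ell A_\ell s$ and $s_* := \liminf_\ell A_\ell s$; both are $\phi$-invariant since $A_\ell s\circ\phi - A_\ell s = \tfrac{1}{\ell}(s\circ\phi^\ell - s)\to 0$ $\mu$-a.e. For rationals $\alpha>\beta$ the set $D_{\alpha,\beta} := \{s_* < \beta\}\cap\{s^* > \alpha\}$ is $\phi$-invariant, so for $g := (s-\alpha)\mathbf{1}_{D_{\alpha,\beta}}$ the functions $g\circ\phi^k$ vanish off $D_{\alpha,\beta}$, while on $D_{\alpha,\beta}$ the condition $s^* > \alpha$ forces $\sup_n S_n g > 0$; thus the maximal set of $g$ equals $D_{\alpha,\beta}$ up to a null set, and Step 1 gives $\int_{D_{\alpha,\beta}}(s-\alpha)\, d\mu \ge 0$. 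The same argument with $\beta - s$ in place of $s - \alpha$ gives $\int_{D_{\alpha,\beta}}(\beta - s)\, d\mu \ge 0$, and adding the two inequalities forces $(\alpha-\beta)\mu(D_{\alpha,\beta}) \le 0$, hence $\mu(D_{\alpha,\beta}) = 0$. Taking the countable union over all rational pairs $\alpha > \beta$ shows $\mu(\{s_* < s^*\}) = 0$, so $\bar s := \lim_\ell A_\ell s$ exists $\mu$-a.e.; a truncation argument together with the identity $\int A_\ell s\, d\mu = \int s\, d\mu$ (again measure-preservation) and Fatou's lemma then shows $\bar s\in L^1(\mu)$ with $\int_M \bar s\, d\mu = \int_M s\, d\mu$.

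\textbf{Step 3: ergodicity pins down the limit, and genericity.} Since $\bar s$ is $\phi$-invariant, for every $c\in\mathbb{R}$ the set $\{\bar s > c\}$ satisfies $\phi^{-1}\{\bar s > c\} = \{\bar s > c\}$, so ergodicity forces $\mu(\{\bar s > c\})\in\{0,1\}$; letting $c$ range over the rationals shows $\bar s$ equals a constant $\mu$-a.e., and since $\int_M\bar s\, d\mu = \int_M s\, d\mu$ and $\mu(M) = 1$ that constant must be $\int_M s\, d\mu$, which is exactly \eqref{Ergodic_Theorem_eqn}. To obtain that $\mu$-almost every $m_0$ is generic, apply the above to a countable family $\{s_i\}_{i\in\mathbb{N}}$ dense in $L^1(\mu)$ (or, if $M$ carries a topology, to a countable dense family of continuous functions): the associated full-measure sets intersect in a full-measure set on which $A_\ell s_i\to\int_M s_i\, d\mu$ for every $i$, and the maximal inequality of Step 1 promotes this to $A_\ell s\to\int_M s\, d\mu$ for every $s\in L^1(\mu)$, which is precisely the assertion that such $m_0$ is $\mu$-generic. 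The main obstacle, as noted, is Step 1; the reduction to the constant value via invariance plus ergodicity in Step 3 is routine once Steps 1 and 2 are in place.
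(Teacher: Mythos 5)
The paper does not prove this statement; it cites \cite{Birkhoff656} and moves on, treating Birkhoff's pointwise ergodic theorem as a standard result. Your proposal supplies the classical proof in full outline, and the main argument is correct: Hopf's maximal ergodic lemma in Step~1, the two-sided comparison of $\limsup$ and $\liminf$ via the invariant sets $D_{\alpha,\beta}$ in Step~2, and the identification of the limit via ergodicity plus $\int_M \bar s\,d\mu=\int_M s\,d\mu$ in Step~3 all go through as you describe. The key algebraic facts you use implicitly in Step~2 also check out: since $D_{\alpha,\beta}$ is $\phi$-invariant, $g\circ\phi^k=(s\circ\phi^k-\alpha)\mathbf 1_{D_{\alpha,\beta}}$, so $S_n g=S_n s-n\alpha$ on $D_{\alpha,\beta}$ and $S_n g\equiv 0$ off it, giving exactly $\{\sup_n S_n g>0\}=D_{\alpha,\beta}$.

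One step in your Step~3 is imprecise, though it does not affect the headline claim \eqref{Ergodic_Theorem_eqn}: from a family $\{s_i\}$ dense in $L^1(\mu)$ and the maximal inequality you cannot conclude that, for $\mu$-a.e.\ fixed $m_0$, the averages $A_\ell s(m_0)$ converge to $\int_M s\,d\mu$ for \emph{every} $s\in L^1(\mu)$. The Banach-principle argument you gesture at gives, for each individual $s\in L^1(\mu)$, a.e.\ convergence with a null exceptional set depending on $s$; it does not interchange the quantifiers, and indeed the ``for all $s\in L^1$'' statement is meaningless pointwise since $L^1$ elements are only defined up to null sets. The correct route to genericity is exactly your parenthetical aside: when $M$ is a compact metric space, take a countable sup-norm-dense family $\{s_i\}\subset C(M)$, intersect the corresponding full-measure sets, and extend to all of $C(M)$ by uniform approximation (no maximal inequality needed). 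This recovers precisely the paper's Definition of a $\mu$-generic point (uniform distribution of the orbit), which is a statement about continuous test functions, not $L^1$ ones. Finally, the passage from Fatou's inequality $\int_M\bar s\,d\mu\le\int_M s\,d\mu$ (for $s\ge 0$) to the equality $\int_M\bar s\,d\mu=\int_M s\,d\mu$ does require the truncation/uniform-integrability argument you allude to; it is standard but worth stating that this is where the maximal inequality is genuinely reused, not in the genericity step.
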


The left hand side of \eqref{Ergodic_Theorem_eqn} is called the \emph{time average} taken from the initial point $m_0 \in M$, and the right hand side is called the \emph{space average}. The Ergodic Theorem then states that the time average taken from almost all initial points equals the space average.

\section{A Training Theorem for Echo State Networks}
\label{training_theorem}
\subsection{Regularised Least Squares Regression}

Suppose we have have an ergodic dynamical system $\phi : M \to M$, and we observe the dynamics via an observation map $g : M \to \mathbb{R}^P$ for some fixed $P \in \mathbb{N}$ and target map $u : M \to \mathbb{R}$. A trajectory originating from a $\mu$-generic point $m_0 \in M$ will ergodically explore the space $M$ and yield a sequence of observations $g \circ \phi^{k}(m_0)$ and targets $u \circ \phi^{k}(m_0)$ for $k = 0, 1 , 2 , ... , \ell$. 

Suppose we compute the vectors $W_{\ell} \in \mathbb{R}^P$ minimising the regularised least squares difference between the mapping of the observations $W^{\top} g \circ \phi^k(m_0)$ and the targets $u \circ \phi^k(m_0)$. We prove in the next lemma that as the number of data points $\ell$ grows large, the least squares solution $W_{\ell}$ minimises the ergodic average difference between the mapping on the observations $W^{\top} g \circ \phi^k(m_0)$ and the targets $u \circ \phi^k(m_0)$.

\begin{lemma}
    Let $(M,\Sigma)$ be a measurable space, and suppose that $\phi : M \to M$ is ergodic with invariant measure $\mu$. Let $m_0$ be a $\mu$-generic point in $M$. Let $g \in L^2(\mu)(M,\mathbb{R}^P)$ be an observation function and suppose that $u \in L^2(\mu)(M, \mathbb{R})$ is a target function we wish to approximate. 
        
    Let $\Lambda \in \mathbb{M}_{P \times P}(\mathbb{R})$. Define the sequence $(W_{\ell})_{\ell \in \mathbb{N}}$ such that, for each $\ell \in \mathbb{N}$, the vector $W_{\ell} \in \mathbb{R}^P$ is the unique minimiser of the regularised least squares difference
    \begin{align*}
        \frac{1}{\ell} \bigg( \sum_{k = 0}^{\ell-1} \rVert W^\top g \circ \phi^k(m_0) - u \circ \phi^k(m_0) \lVert^2 + \lVert \Lambda W \rVert^2 \bigg).
    \end{align*}
    Then, the sequence $(W_{\ell})_{\ell \in \mathbb{N}}$ converges to
    \begin{align*}
        W^* &= \bigg( \int_M g(m)g(m)^{\top} \ d \mu(m) + \Lambda \Lambda^{\top} \bigg)^{-1} \int_M u(m)g(m) \ d \mu(m)
    \end{align*}
    which is the unique minimiser (over $W \in \mathbb{R}^P$) of
    \begin{align*}
            \lVert W^{\top} g - u \rVert_{L^2(\mu)}^2 + \lVert \Lambda W \rVert^2.
    \end{align*}
    \label{W_ell_lemma}
\end{lemma}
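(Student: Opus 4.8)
The plan is to recast the regularised least squares problem as a linear system whose coefficients are empirical time averages, and then let $\ell\to\infty$ using the $\mu$-genericity of $m_0$. Write $g_k := g\circ\phi^{k}(m_0)\in\mathbb{R}^{P}$ and $u_k := u\circ\phi^{k}(m_0)\in\mathbb{R}$. The objective is a convex quadratic in $W$, and it is strictly convex because the regulariser $\|\Lambda W\|^{2}$ contributes a positive definite term to the Hessian (here $\Lambda$ is implicitly invertible, which is also what makes $W^{*}$ well defined). Differentiating and setting the gradient to zero, the unique minimiser $W_\ell$ is characterised by the normal equations
\[
 A_\ell W_\ell = b_\ell, \qquad A_\ell := \frac{1}{\ell}\sum_{k=0}^{\ell-1} g_k g_k^{\top} + \Lambda\Lambda^{\top}, \qquad b_\ell := \frac{1}{\ell}\sum_{k=0}^{\ell-1} u_k\, g_k .
\]
Since $\tfrac{1}{\ell}\sum_k g_k g_k^{\top}$ is positive semidefinite and $\Lambda\Lambda^{\top}$ positive definite, $A_\ell$ is invertible for every $\ell$, so $W_\ell = A_\ell^{-1} b_\ell$ is well defined.

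Next I would invoke the genericity of $m_0$: for every $s\in L^{1}(\mu)$ the time average $\tfrac{1}{\ell}\sum_{k=0}^{\ell-1} s\circ\phi^{k}(m_0)$ converges to $\int_M s\,d\mu$. The $(i,j)$-entry of $A_\ell$ is exactly such a time average of $m\mapsto g_i(m) g_j(m)$, and the $i$-th entry of $b_\ell$ is the time average of $m\mapsto u(m) g_i(m)$; both integrands belong to $L^{1}(\mu)$ by the Cauchy-Schwarz inequality, since $g_1,\dots,g_P,u\in L^{2}(\mu)$. As only finitely many entries are involved ($P$ is fixed), this yields entrywise, hence in any matrix or vector norm,
\[
 A_\ell \longrightarrow G + \Lambda\Lambda^{\top}, \qquad b_\ell \longrightarrow b, \qquad \text{where}\quad G := \int_M g\,g^{\top}\,d\mu, \quad b := \int_M u\,g\,d\mu .
\]

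To conclude, note that $G + \Lambda\Lambda^{\top}$ is invertible (again because $\Lambda\Lambda^{\top}$ is positive definite and $G$ positive semidefinite), and matrix inversion is continuous on the open set of invertible matrices, so $A_\ell^{-1} \to (G+\Lambda\Lambda^{\top})^{-1}$ and hence $W_\ell = A_\ell^{-1} b_\ell \to (G+\Lambda\Lambda^{\top})^{-1} b = W^{*}$. Finally, to identify $W^{*}$ as the claimed $L^{2}(\mu)$ minimiser, expand $\|W^{\top} g - u\|_{L^{2}(\mu)}^{2} + \|\Lambda W\|^{2} = W^{\top}(G+\Lambda\Lambda^{\top})W - 2 b^{\top} W + \|u\|_{L^{2}(\mu)}^{2}$, which is a strictly convex quadratic whose unique stationary point, and therefore unique minimiser, is $W = (G+\Lambda\Lambda^{\top})^{-1}b = W^{*}$.

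I do not expect a substantive obstacle; the argument is essentially bookkeeping around the Ergodic Theorem. The points requiring a little care are: (i) checking that $g_ig_j$ and $ug_i$ really lie in $L^{1}(\mu)$ so the ergodic averages converge, which is immediate from $g,u\in L^{2}(\mu)$ and Cauchy-Schwarz; (ii) that all the averages converge along the one orbit of $m_0$, which is automatic because $m_0$ is $\mu$-generic, so convergence holds simultaneously for every $L^{1}(\mu)$ integrand, and only finitely many appear; and (iii) passing the limit through the matrix inverse, for which it suffices that the limiting regularised Gram matrix $G + \Lambda\Lambda^{\top}$ be invertible. The regulariser is precisely what secures this, and it is also what makes each $W_\ell$, and $W^{*}$, a well-defined \emph{unique} minimiser in the first place.
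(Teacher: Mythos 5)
Your proof is correct, and it takes a slightly different and arguably cleaner route than the paper. The paper first defines the limiting objective $\Psi(W)=\lVert W^{\top}g-u\rVert_{L^2(\mu)}^2+\lVert\Lambda W\rVert^2$, solves $D\Psi=0$ to obtain the formula for $W^{*}$, and then introduces an abstract map $\Phi$ that sends a strictly convex $C^1$ function to its unique minimiser, claiming $\Phi$ is continuous so that $W_\ell=\Phi(y_\ell)\to\Phi(\lim y_\ell)=W^{*}$ by the Ergodic Theorem. You instead write down the normal equations explicitly, obtaining $W_\ell=A_\ell^{-1}b_\ell$, then pass to the limit entrywise via the Ergodic Theorem and conclude by continuity of matrix inversion on the set of invertible matrices. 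Your route has the advantage of sidestepping the paper's continuity-of-argmin claim, which as stated is rather loose (the argmin functional is not automatically continuous on the space of all strictly convex $C^1$ functions; it works here only because the problem is a uniformly positive definite quadratic, which your explicit computation makes transparent). Both arguments rely implicitly on $\Lambda$ being invertible, so that $\Lambda\Lambda^{\top}$ is positive definite and the regularised Gram matrices (empirical and limiting) are invertible; you flag this explicitly, whereas the paper's statement leaves it implicit. Your remark (ii), that genericity of $m_0$ delivers convergence simultaneously for all $L^{1}(\mu)$ integrands, matches the paper's own Proposition on $\mu$-generic points and is used to the same effect there.
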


\begin{proof}
    Consider the map $\Psi : \mathbb{R}^P \to \mathbb{R}$ defined by
    \begin{align*}
        \Psi(W) &= \lVert W^{\top} g - u \rVert_{L^2(\mu)}^2 + \lVert \Lambda W \rVert^2
        = \int_M \lVert W^{\top} g(m) - u(m) \rVert^2 \ d \mu(m) + \lVert \Lambda W \rVert^2.
    \end{align*}
    The minimiser of $\Psi$ satisfies $D \Psi = 0$ where $D$ is the derivative operator, so we consider
    \begin{align*}
        0 &= (D\Psi)(W) \\
        &= D \bigg( \int_M \lVert W^{\top} g(m) - u(m) \rVert^2 \ d \mu(m) + \lVert \Lambda W \rVert^2 \bigg) \\
        &= \int_M D \lVert W^{\top} g(m) - u(m) \rVert^2 \ d \mu(m) + D \lVert \Lambda W \rVert^2 \\
        &= \int_M 2( W^{\top} g(m) - u(m) )g(m)^{\top} \ d \mu(m) + 2 W^{\top} \Lambda \Lambda^{\top} \\
        &= \int_M ( W^{\top} g(m) - u(m) )g(m)^{\top} \ d \mu(m) + W^{\top} \Lambda \Lambda^{\top} \\
        &= W^{\top} \int_M g(m)g(m)^{\top} \ d \mu(m) - \int_M u(m)g(m)^{\top} \ d \mu(m)
        + W^{\top} \Lambda \Lambda^{\top} \\
        &= W^{\top}\bigg( \int_M g(m)g(m)^{\top} \ d \mu(m) + \Lambda \Lambda^{\top} \bigg)
        - \int_M u(m)g(m)^{\top} \ d \mu(m),
    \end{align*}
    which upon rearrangement yields 
    \begin{align*}
        W &= \bigg( \int_M g(m)g(m)^{\top} \ d \mu(m) + \Lambda \Lambda^{\top} \bigg)^{-1}
        \int_M u(m)g(m) \ d \mu(m).
    \end{align*}
    Since this is the unique solution to $0 = D\Psi(W)$, this stationary point is unique, and we will denote it $W^*$. We can see it is a minimum because the Hessian $H\Psi$ is positive definite. Next, define the map
    \begin{align*}
        \Phi : \{ y \in C^1(\mathbb{R}^P,\mathbb{R}) \ | \ y \text{ is strictly convex} \} \to \mathbb{R}^P
    \end{align*}
    as the mapping on the strictly convex $C^1$ functions that returns their unique minimum. We can see that $\Phi$ is continuous with respect to the $C^1$ topology and Euclidean topology on $\mathbb{R}$ respectively. We consider the family of functions $y_{\ell} \in \{ y \in C^1(\mathbb{R}^P, \mathbb{R}) \ | \ y \text{ is strictly convex}\}$ defined by
    \begin{align*}
        y_{\ell}(W) =
        \frac{1}{\ell} \bigg( \sum_{k = 0}^{\ell-1} \rVert W^\top g \circ \phi^k(m_0) - u \circ \phi^k(m_0) \lVert^2 + \lVert \Lambda W \rVert^2 \bigg),
    \end{align*}
    so that by definition
    $W_{\ell} = \Phi(y_{\ell}(W))$ and hence
        \begin{align*}
        \lim_{\ell \to \infty} W_{\ell} &= \lim_{\ell \to \infty} \Phi(y_{\ell}(W)) \\
        &= \Phi\bigg(\lim_{\ell \to \infty} y_{\ell}(W) \bigg) \\
        &= \Phi\bigg( \lVert W^{\top} g - u \rVert_{L^2(\mu)}^2 + \lVert \Lambda W \rVert^2 \bigg)
        = W^*
    \end{align*}
where we have used, respectively, continuity of $\Phi$ and the Ergodic Theorem.
\end{proof}

    \begin{remark}
        The expression
        \begin{align*}
            W^* &= \bigg( \int_M g(m)g(m)^{\top} \ d \mu(m) + \Lambda \Lambda^{\top} \bigg)^{-1} \int_M u(m)g(m) \ d \mu(m)
        \end{align*}
        is reminiscent of the Gauss normal equations for least squares regression.
    \end{remark}

\subsection{Linear Universal Approximators}

In order to approximate the arbitrary dynamics of $\phi$ via the observation function $\omega$ using state space systems, we require that
the state space maps $F$ possess some sort of universal approximation property. Thus, we will define a class of \emph{linear universal approximators} with respect to an arbitrary complete norm $\lVert \cdot \rVert$. Every class of linear universal approximators contains maps, which after composition with another suitable map, forms a state map.

\begin{defn}
    Let $\mathcal{F}$ be a sequence of maps $\{F_P\} : \mathbb{R}^N \times \mathbb{R}^d \to \mathbb{R}^P$. Let $C \subset \mathbb{R}^N$ and $K \subset \mathbb{R}^d$ be vectors and let $\Omega(C \times K, \mathbb{R})$ be a Banach space of real valued functions on $C \times K$, with norm denoted $\lVert \cdot \rVert_{\Omega}$. If, for any $g \in \Omega(C \times K,\mathbb{R})$ and any $\epsilon > 0$ there exists an $P_0 \in \mathbb{N}$ such that for any $P > P_0$ there exists a $W \in \mathbb{R}^P$ such that
    \begin{align*}
        \rVert W^{\top}F_P - g \lVert_{\Omega} < \epsilon
    \end{align*}
    then we say that $\mathcal{F}$ is a class of \emph{linear universal approximators} on $\Omega(C \times K, \mathbb{R})$
\end{defn}

A widely used class of linear universal approximators is the class of Echo State Networks with randomly initialised internal weights, as shown by the following result.

\begin{theorem}
    Let $\mathcal{F}$ denote the sequence of maps $\{ F_P \} : \mathbb{R}^N \times \mathbb{R}^d \to \mathbb{R}^P$ defined by
    \begin{align*}
        F_P(x,z) = \sigma(Ax + Cz + b)
    \end{align*}
    where
    \begin{itemize}
        \item $\sigma \in C^{1}(\mathbb{R})$ is $1$-finite
        \item $A$ is a $P \times N$ random matrix, where $P >N$ and the first $N$ rows of $A$ form an $N \times N$ random submatrix with 2-norm less than 1 almost surely. The $j^{\mathrm{th}}$ row of $A$ (where $j > N$), denoted $A_j$, is a random variable with full support on $(\mathbb{R}^N)^{\top}$
        \item $C$ is a $P \times d$ random matrix with $j^\mathrm{th}$ row $C_j$, a random variable with full support on $(\mathbb{R}^d)^{\top}$
        \item $b$ is a random $P$-vector with $j\mathrm{th}$ entry $b_j$, a random variable with full support on $\mathbb{R}$.
    \end{itemize}
    Let $C \times K$ be an arbitrary compact subset of $\mathbb{R}^N \times \mathbb{R}^d$. Then, almost surely, $\mathcal{F}$ is  a class of linear universal approximators on $L^2(C \times K, \mathbb{R})$.
    \label{ESNs_are_universal}
\end{theorem}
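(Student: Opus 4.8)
The plan is to reduce the claim about the echo state network class $\mathcal{F}$ to the Random Universal Approximation Theorem (Theorem~\ref{theorem::RUAT}) by exhibiting a suitable decomposition of the reservoir map. The key observation is that the first $N$ coordinates of $F_P(x,z) = \sigma(Ax+Cz+b)$ behave like a contracting reservoir map in their own right: writing $A$ in block form with top-left $N\times N$ block $A'$ of $2$-norm less than one, the map $F'(x,z) = \sigma(A'x + C'z + b')$ (using the first $N$ rows of $C$ and $b$) is globally state contracting since $\tanh$ is $1$-Lipschitz and $\|A'\|_2 < 1$. Hence, for an input sequence $z_k = \omega\circ\phi^k(m_0)$ arising from scalar observations of the ergodic system, the reservoir states $x_k \in \mathbb{R}^N$ converge (by Theorem~\ref{special_SSM_thm}, or rather its continuous analogue) to a generalised synchronisation $f\circ\phi^k(m_0)$, so asymptotically the first $N$ coordinates of the driven state live on a compact set $C \subset \mathbb{R}^N$.

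First I would fix a compact $C\times K \subset \mathbb{R}^N\times\mathbb{R}^d$ and a target $g\in L^2(C\times K,\mathbb{R})$, together with $\epsilon>0$. The remaining $P-N$ rows of $A$, $C$, $b$ give i.i.d.\ random variables $X_j := (A_j, C_j, b_j)$ with full support on $(\mathbb{R}^N)^\top \times (\mathbb{R}^d)^\top \times \mathbb{R}$; the corresponding coordinates of $F_P(x,z)$ are precisely $\sigma(A_j^\top x + C_j^\top z + b_j)$, i.e.\ exactly the single-neuron functions appearing in a feedforward network with input $(x,z) \in C\times K \subset \mathbb{R}^{N+d}$. Since $L^2$ approximation on a compact set is weaker than $C^0$ approximation and $\sigma$ being $1$-finite is in particular $0$-finite, Hornik's theorem gives a finite feedforward network $\hat g(x,z) = \sum_{i=1}^{N'}\hat W_i \sigma(\hat C_i^\top(x,z) + \hat b_i)$ with $\|\hat g - g\|_{L^2(C\times K)} < \epsilon/2$, where each $\hat C_i$ is split as $(\hat A_i, \hat{\hat C}_i)$ acting on the $x$ and $z$ components respectively.

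Next I would run the argument of Theorem~\ref{theorem::RUAT} verbatim, with $K$ replaced by $C\times K$ and the i.i.d.\ sequence $(b_j,C_j)$ replaced by $(A_j,C_j,b_j)$: define events $S_i$ as the sets of $(A,C,b)$ for which $\|\sigma(\hat A_i^\top\cdot + \hat{\hat C}_i^\top\cdot + \hat b_i) - \sigma(A^\top\cdot + C^\top\cdot + b)\|_{L^2(C\times K)}$ is smaller than $\epsilon/(2N'\max_k|\hat W_k|)$. Each $S_i$ has positive probability because the relevant map $(A,C,b)\mapsto \sigma(A^\top\cdot+C^\top\cdot+b)$ is continuous into $L^2(C\times K)$ and the $X_j$ have full support, so Lemma~\ref{RUAT_lemma} supplies, for any $\alpha\in(0,1)$, a $P_0 = N + N(\alpha)$ such that with probability exceeding $\alpha$ there is an injection $\phi$ matching the $N'$ needed neurons among the $P-N$ random rows; setting $W_j = \hat W_i$ when $\phi(i)=j$ (and $W_j = 0$ on the first $N$ coordinates and all unmatched rows) yields $\|W^\top F_P - g\|_{L^2(C\times K)} < \epsilon$ for every $P > P_0$, which is the definition of a class of linear universal approximators. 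I would then remark that, since $\alpha$ may be taken as close to $1$ as desired and the only randomness lives in finitely many rows per fixed $P$, a standard Borel--Cantelli / diagonal argument over a countable dense family of targets and a sequence $\epsilon_m\to 0$, $\alpha_m\to 1$ upgrades this to an almost sure statement.

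The main obstacle I anticipate is purely bookkeeping rather than conceptual: one must be careful that the "full support" hypotheses are on the \emph{rows} $A_j, C_j, b_j$ individually (as stated), so that the joint variable $X_j$ has full support on the product space, and that the positivity of $\mathbb{P}(X_j\in S_i)$ genuinely follows from continuity of $(A,C,b)\mapsto \sigma(A^\top\cdot+C^\top\cdot+b)$ as a map into $L^2(C\times K,\mathbb{R})$ — this continuity is where the compactness of $C\times K$ and boundedness of $\sigma$ are used. A secondary subtlety is that the theorem as stated quantifies over an \emph{arbitrary} compact $C\times K$, so the almost sure conclusion should be phrased for each fixed compact set (the null set may depend on it); alternatively one exhausts $\mathbb{R}^N\times\mathbb{R}^d$ by a countable family of compacta. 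Neither difficulty is deep, and the heart of the proof is simply recognising that the ``extra'' neurons of an ESN with a contracting core are indistinguishable, for approximation purposes, from the neurons of a random feedforward network.
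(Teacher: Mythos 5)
Your proposal is correct and takes essentially the same route as the paper: the paper's proof simply invokes the Random Universal Approximation Theorem (Theorem \ref{theorem::RUAT}) on the compact set $C \times K \subset \mathbb{R}^{N+d}$, with the rows $(A_j,C_j,b_j)$ for $j>N$ playing the role of the i.i.d.\ full-support random neurons, and then notes that since the conclusion holds with probability exceeding every $\alpha \in (0,1)$ it holds almost surely. Your explicit re-run of the matching argument, the zeroing of the output weights on the first $N$ (norm-constrained) rows, and the countable-density upgrade to the almost sure statement are just more detailed versions of those same steps, while your opening paragraph on the contracting core and generalised synchronisation is not actually needed for this theorem.
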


\begin{proof}
    Fix $g \in L^2(C \times K, \mathbb{R})$ and $\epsilon > 0$. Then for any $\alpha \in (0,1)$, it follows from the Random Universal Approximation (Theorem \ref{theorem::RUAT}) that there exists a $P_0 \in \mathbb{N}$ such that for any $T > T_0$ there exists a $W \in \mathbb{R}^P$ such that with probability at least $\alpha$,
    \begin{align*}
        \rVert W^\top F_P - g \lVert_{L^2} < \epsilon,
    \end{align*}
    hence $\mathcal{F}$ is a class of linear universal approximators. Since $\mathcal{F}$ is a class of linear universal approximators for any $\alpha \in (0,1)$, $\mathcal{F}$ is almost surely a class of linear universal approximators.
\end{proof}

To construct such an ESN in practice, we create a reservoir map $F : \mathbb{R}^P \times \mathbb{R}^d \to \mathbb{R}^P$ by defining
\begin{align*}
    F(x,z) = \sigma \big( [A,0]x + Cz + b \big)
\end{align*}
where $[A,0]$ is the $P \times P$ matrix where the first $N$ columns form the matrix $A$ and the remaining $P-N$ columns are 0. Suppose we truncate at $N$ the state vectors $x \in \mathbb{R}^P$ by applying the canonical projection $\pi : \mathbb{R}^P \to \mathbb{R}^N$ onto the first $N$ coordinates, ignoring the remainder, and denote the truncation $\pi(x) = \bar{x} \in \mathbb{R}^N$. The dynamics of the truncated vectors $\bar{x}$ are given by the (state contracting) state space system $\pi \circ F_P : \mathbb{R}^N \times \mathbb{R}^d \to \mathbb{R}^N$, which is also an ESN and is defined by
\begin{align*}
    \pi \circ F_P(\bar{x},z) = \sigma(\bar{A}\bar{x} + \bar{C}z + \bar{b}).
\end{align*}
Here, the $N \times N$ reservoir matrix $\bar{A}$ is created by truncating at $N$ the rows and columns of $A$. The $N \times d$ input matrix $\bar{C}$ is created by truncating at $N$ the rows of $C$. The $N$-vector $\bar{b}$ is created by truncating at $N$ the entries of $b$.
We conclude that Echo State Networks with (appropriately chosen) randomly generated internal weights are a class of linear universal approximators that each give rise to a state synchronisation map.

We demanded that the $P \times P$ reservoir matrix take the form $[A,0]$, whereas in practice, the reservoir matrix does not have this structure. We imposed this condition to simplify the proofs, but we believe, based on numerical evidence in the literature, that this choice of shape is not necessary. 

There is one more technical lemma we will include here before presenting the main theorem (Theorem \ref{least_sqs_thm}) of this section. Recall that topological spaces have a natural Borel sigma algebra and are therefore measurable spaces. On such spaces we can integrate real valued functions. If $A$ and $B$ are homeomorphic topological spaces, then integration on $A$ is essentially the same as integration on $B$. We use this observation in Theorem \ref{least_sqs_thm} to move between integration on the topological space $M$ to integration on the image $f(M)$. This demands the highly non-trivial assumption that the GS $f$ is a homeomorphism. The observation is made formal in the following lemma.

\begin{lemma}
    (Change of variables) Let $A,B$ be homeomorphic topological spaces and suppose $y \in \text{Hom}(A,B)$. The topologies on $A,B$ induce Borel sigma algebras $\mathscr{A},\mathscr{B}$ on $A,B$ respectively. Let $\mu_A$ be a measure on $A$ and $\mu_B$ a measure on $B$ (called the pushforward measure) defined by $\mu_B(b) = \mu_A(y^{-1}(b))$ for all $b \in \mathscr{B}$. Then for any $\mu_B$-measurable function $g : B \to \mathbb{R}$ it follows that
    \begin{align*}
        \int_A g \circ y \ d \mu_A = \int_B g \ d \mu_B.
    \end{align*}
    \label{gy_lemma}
\end{lemma}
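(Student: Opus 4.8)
The plan is to prove the change of variables formula by the standard "build it up from simple functions" argument, exploiting the fact that $y \in \text{Hom}(A,B)$ is a bijection so that the pushforward measure $\mu_B = y_* \mu_A$ is well defined, and that the measure is literally transported by $y$: for $b \in \mathscr{B}$ we have $\mu_B(b) = \mu_A(y^{-1}(b))$. The crucial observation is that $g \circ y$ is $\mu_A$-measurable precisely because $y$ is continuous (hence Borel measurable) and $g$ is $\mu_B$-measurable, so the left-hand integral makes sense.

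First I would verify the formula for indicator functions: if $g = \mathbf{1}_b$ for some $b \in \mathscr{B}$, then $g \circ y = \mathbf{1}_{y^{-1}(b)}$, and
\begin{align*}
    \int_A \mathbf{1}_{y^{-1}(b)} \ d\mu_A = \mu_A(y^{-1}(b)) = \mu_B(b) = \int_B \mathbf{1}_b \ d\mu_B,
\end{align*}
which is exactly the defining property of the pushforward measure. By linearity of the integral, the formula then extends immediately to non-negative simple functions $g = \sum_{i=1}^n c_i \mathbf{1}_{b_i}$ with $c_i \geq 0$, since $g \circ y = \sum_{i=1}^n c_i \mathbf{1}_{y^{-1}(b_i)}$.

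Next I would pass to general non-negative measurable $g : B \to [0,\infty]$ by approximating $g$ from below by an increasing sequence of non-negative simple functions $s_n \uparrow g$ (which exists by the standard measurable-function approximation lemma). Then $s_n \circ y \uparrow g \circ y$ pointwise on $A$, and the Monotone Convergence Theorem applied on both sides gives
\begin{align*}
    \int_A g \circ y \ d\mu_A = \lim_{n \to \infty} \int_A s_n \circ y \ d\mu_A = \lim_{n \to \infty} \int_B s_n \ d\mu_B = \int_B g \ d\mu_B.
\end{align*}
Finally, for a general (signed, or real-valued integrable) $g$, I would split $g = g^+ - g^-$ into positive and negative parts, note $(g \circ y)^{\pm} = g^{\pm} \circ y$, apply the non-negative case to each, and subtract.

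I do not expect a serious obstacle here — the result is a textbook change-of-variables / pushforward identity, and the homeomorphism hypothesis is actually stronger than needed (Borel measurability of $y$ would suffice for the forward direction). The only mild subtlety worth a sentence is confirming that $g \circ y$ is $\mathscr{A}$-measurable: this follows since for any Borel set $E \subseteq \mathbb{R}$, $(g \circ y)^{-1}(E) = y^{-1}(g^{-1}(E))$, and $g^{-1}(E) \in \mathscr{B}$ by measurability of $g$ while $y^{-1}$ of a Borel set is Borel by continuity of $y$. With that remark in place the three-step ladder (indicators, simple functions, MCT, signed splitting) completes the proof.
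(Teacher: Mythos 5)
Your argument is correct and complete: the indicator--simple--MCT--signed-splitting ladder is the standard proof of the pushforward change-of-variables identity, your measurability remark (that $(g\circ y)^{-1}(E)=y^{-1}(g^{-1}(E))$ with $g^{-1}(E)\in\mathscr{B}$ and $y^{-1}$ of a Borel set Borel by continuity) closes the only point that needs checking, and you are right that only Borel measurability of $y$, not a full homeomorphism, is used. The paper does not actually write out a proof at all --- it disposes of the lemma in one line by citing it as a special case of Theorem 3.6.1 in Bogachev --- so your proposal differs only in being self-contained: you reproduce the textbook argument that underlies the cited theorem rather than deferring to the reference. What your route buys is independence from an external source and transparency about exactly which hypotheses are used; what the paper's route buys is brevity, since the result is standard. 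There is no gap.
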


\begin{proof}
    This is a special case of Theorem 3.6.1 in \cite{Bogachev}.
\end{proof}

\subsection{Linear Universal Approximators Trained by Least Squares}

Before we finally plunge into the statement and proof of the main theorem, we will describe the result in words.
Suppose we have an ergodic dynamical system $\phi : M \to M$, which we observe via the function $\omega : M \to \mathbb{R}^d$ and that our goal is to approximate a target function $u : M \to \mathbb{R}$.
Suppose we have at our disposal a class $\mathcal{F}$ of linear universal approximating state maps. For example, $\mathcal{F}$ could be a collection of arbitrarily high dimensional ESNs. Make the additional (and non trivial) assumption that the state maps give rise to a GS that is homeomorphic onto its image. Suppose then that the state map $F$ is driven with observations of a trajectory $z_k = \omega \circ \phi^k(m_0)$ originating from a $\mu$-generic point $m_0$. This creates a sequence of reservoir states $x_k$ that satisfy
\begin{align*}
    x_{k+1} = F(x_k,z_k).
\end{align*}
We also have a sequence of scalar targets $u \circ \phi^k(m_0)$, which are called \emph{labels} in the supervised learning parlance.

Suppose we use regularised least squares regression to minimise the difference between the linear mapping on the reservoir states $W^{\top} x_k$ (which are effectively our observations) and the targets $u \circ \phi^k(m_0)$. Then we can conclude that the ergodic average difference between the mapping on the data and the target map $u$ can be made smaller than the arbitrary threshold $\epsilon$. This requires that the trajectory length $\ell$ and state map dimension $P$ are sufficiently large, while ensuring the regularisation parameter $\lambda > 0$ is sufficiently small.

Lastly, before stating the theorem (Theorem \ref{least_sqs_thm}), we remark that a notable weakness of the result is its non-constructive nature; the actual values for $\ell$, $P$ and $\lambda$ are not computed in terms of $\epsilon$.

In the remainder of this subsection we state and prove the theorem itself.

\begin{theorem}
    Let $M$ be a topological space, and suppose that $\phi\in \text{Hom}(M)$ is ergodic with invariant measure $\mu$. Let $m_0$ be a $\mu$-generic point in $M$. Let $\omega \in C^0(M,\mathbb{R}^d)$ be the observation function and suppose that $u \in L^2(\mu)(M, \mathbb{R})$ is the target function we wish to approximate. 
    
    Suppose that $\mathcal{F} = \{ F_P \}_{P \in \mathbb{N}}$ is a class of linear universal approximators on $L^2(C \times K, \mathbb{R})$ on every compact $C \subset \mathbb{R}^N,K \subset \mathbb{R}^d$. Let $(s_P)_{P \in \mathbb{N}} : \mathbb{R}^P \to \mathbb{R}^N$ be a sequence of maps. Suppose (for each large enough $P$) that the reservoir map $s_P \circ F_P : \mathbb{R}^N \times \mathbb{R}^d \to \mathbb{R}^N$ admits a GS $f \in \text{Hom}(M,f(M))$.
    For each $P, \ell \in \mathbb{N}$, let $\Lambda \in \mathbb{M}_{P \times P}(\mathbb{R})$, let $\lambda > 0$, and let $W_{\ell} \in \mathbb{R}^P$ be the vector obtained by minimising the regularised least squares difference
    \begin{align*}
        \sum_{k = 0}^{\ell-1} \rVert W^\top F_P(f \circ \phi^{k-1}(m_0) , \omega \circ \phi^k(m_0)) - u \circ \phi^k(m_0) \lVert^2
        + \lambda \lVert \Lambda W \rVert^2.
    \end{align*}
    Then, for any $\epsilon > 0$, there exists $\lambda^* > 0$ and $\ell_0, P_0 \in \mathbb{N}$ such that for all $\lambda \in (0,\lambda^*)$ and $\ell > \ell_0, P > P_0 $
    \begin{align*}
        \lVert W_{\ell}^{\top} F_P(f \circ \phi^{-1}, \omega) - u \rVert_{L^2(\mu)}^2 < \epsilon.
    \end{align*}
    \label{least_sqs_thm}
\end{theorem}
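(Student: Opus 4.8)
The strategy is to decompose the target error into three contributions — a regression error, a universal-approximation error, and a change-of-variables bookkeeping step — and to control each one separately by choosing, in order, $P$ large, then $\lambda$ small, then $\ell$ large. First I would apply Lemma \ref{gy_lemma} (change of variables) with the homeomorphism $f : M \to f(M)$ to transport every integral against $\mu$ on $M$ to an integral against the pushforward measure $\nu := f_*\mu$ supported on the compact set $f(M) \subset \mathbb{R}^N$. Writing $G_P := F_P$ restricted to the compact set $C \times K$ with $C \supset f(M)$ and $K \supset \omega(M)$, and noting that the composed quantity $F_P(f\circ\phi^{-1}, \omega)$ is a genuine $L^2(\nu)$ function of the pair $(f\circ\phi^{-1}(m), \omega(m))$, the problem becomes: approximate $u$ (viewed as a function on $f(M)$) in $L^2(\nu)$ by a linear readout of $F_P$. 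This is exactly the setting in which the hypothesis ``$\mathcal F$ is a class of linear universal approximators on $L^2(C\times K,\mathbb R)$'' bites.

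Next I would handle the regression limit. Fix $P$ for the moment. By Lemma \ref{W_ell_lemma}, applied with observation function $g(m) := F_P(f\circ\phi^{-1}(m),\omega(m))$ (which lies in $L^2(\mu)$ since $F_P$ is continuous on a compact set and $M$ is compact), the minimisers $W_\ell$ of the empirical regularised least-squares functional converge as $\ell\to\infty$ to
\begin{align*}
    W^*_{P,\lambda} = \Big(\textstyle\int_M g g^\top \, d\mu + \lambda\Lambda\Lambda^\top\Big)^{-1}\int_M u\, g \, d\mu,
\end{align*}
which is the unique minimiser over $W$ of $\lVert W^\top g - u\rVert_{L^2(\mu)}^2 + \lambda\lVert\Lambda W\rVert^2$. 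Since $W\mapsto \lVert W^\top g - u\rVert_{L^2(\mu)}^2$ is continuous and the penalty $\lambda\lVert\Lambda W\rVert^2$ vanishes pointwise as $\lambda\to 0^+$, a standard $\Gamma$-convergence / direct argument shows
\begin{align*}
    \lVert (W^*_{P,\lambda})^\top g - u\rVert_{L^2(\mu)}^2 \;\longrightarrow\; \inf_{W\in\mathbb R^P}\lVert W^\top g - u\rVert_{L^2(\mu)}^2 \quad\text{as }\lambda\to 0^+.
\end{align*}
(Concretely: the value at $W^*_{P,\lambda}$ is sandwiched between the unpenalised infimum and the penalised value at any fixed near-minimiser of the unpenalised problem, and the latter tends to the former.) So given $\epsilon>0$ I first pick $P_0$ using the universal approximation hypothesis so that for all $P>P_0$ there is some $W$ with $\lVert W^\top g - u\rVert_{L^2(\mu)}^2 < \epsilon/3$; hence $\inf_W \lVert W^\top g - u\rVert_{L^2(\mu)}^2 < \epsilon/3$. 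Then I pick $\lambda^*>0$ so that for $\lambda\in(0,\lambda^*)$ the penalised minimiser satisfies $\lVert (W^*_{P,\lambda})^\top g - u\rVert_{L^2(\mu)}^2 < 2\epsilon/3$. Finally I pick $\ell_0$ so that for $\ell>\ell_0$, $W_\ell$ is close enough to $W^*_{P,\lambda}$ in $\mathbb R^P$ that $\lVert W_\ell^\top g - u\rVert_{L^2(\mu)}^2 < \epsilon$, using continuity of $W\mapsto\lVert W^\top g - u\rVert_{L^2(\mu)}^2$ together with Lemma \ref{W_ell_lemma}. Unwinding $g = F_P(f\circ\phi^{-1},\omega)$ gives the claimed bound.

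**Main obstacle.** The delicate point is the interchange of the order of limits: the universal approximation statement is uniform in $P$ only in a one-sided sense (``there exists $P_0$ such that for all $P>P_0$...''), and the constant $\lambda^*$ and the threshold $\ell_0$ a priori depend on $P$. This is fine because the quantifier structure demanded by the theorem is exactly ``$\exists\lambda^*,\ell_0,P_0$ such that $\forall\lambda<\lambda^*,\ell>\ell_0,P>P_0$'', so I only need to fix one admissible $P$ (say $P=P_0+1$) to calibrate $\lambda^*$ and $\ell_0$, and then check that enlarging $P$ does not hurt — which holds because enlarging the reservoir only enlarges the span available to the linear readout, so the unpenalised infimum over $W\in\mathbb R^P$ is non-increasing in $P$, and the penalised-minimiser bound degrades controllably. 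Making this monotonicity-in-$P$ argument airtight (in particular checking that the bound on $\lVert(W^*_{P,\lambda})^\top g - u\rVert$ obtained for $P=P_0+1$ persists for larger $P$, or else re-deriving $\lambda^*$ as an infimum over the relevant $P$) is the one place requiring genuine care; everything else is an assembly of Lemmas \ref{W_ell_lemma} and \ref{gy_lemma} with the universal-approximation hypothesis.
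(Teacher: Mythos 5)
Your proposal follows the paper's proof essentially step for step: pull back to the pushforward measure via the GS homeomorphism (the paper uses $y = (f\circ\phi^{-1},\omega)$ and $y_*\mu$ rather than your $f_*\mu$, but the intent is the same), invoke the universal-approximation hypothesis to produce $\bar W$ with small unpenalised loss, invoke Lemma~\ref{W_ell_lemma} for $W_\ell \to W^*_{P,\lambda}$, and control the penalty by the sandwich $\lVert W^{*\top}g - u\rVert^2 \le \lVert W^{*\top}g - u\rVert^2 + \lambda\lVert\Lambda W^*\rVert^2 \le \lVert \bar W^\top g - u\rVert^2 + \lambda\lVert\Lambda\bar W\rVert^2$ — which is exactly the $\Gamma$-convergence argument you sketch, made explicit via the paper's choice $\lambda^* = \epsilon/(3\lVert\bar W\rVert^2)$. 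The ``main obstacle'' you flag (uniformity of $\lambda^*$ and $\ell_0$ over $P > P_0$) is real but is also not addressed in the paper's proof, which fixes a single $P > P_0$ and lets $\bar W$, hence $\lambda^*$, and hence $\ell_0$ depend on it; your nesting-of-spans heuristic would repair this only for reservoir families with nested readout spans (as in Theorem~\ref{ESNs_are_universal}), not for an arbitrary class satisfying the definition.
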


\begin{proof}
    Let $y : M \to y(M) \subset \mathbb{R}^N \times \mathbb{R}^d$ be defined by
    \begin{align*}
        y(m) = ( f \circ \phi^{-1}(m) , \omega(m) ) \quad \forall \, m \in M
    \end{align*}
    and note that $F_P(f \circ \phi^{-1}, \omega) = F_P \circ y$ and that
    $y \in \text{Hom}(M, y(M) )$ because $f \in \text{Hom}(M , f(M))$. Now fix $\epsilon > 0$. Let $\mu'$ be a measure defined on $y(M) \subset \mathbb{R}^N \times \mathbb{R}^d$ by $\mu'(\sigma) = \mu(y^{-1}(\sigma))$ for all measurable subsets $\sigma$ of $f(M)$.
    Using the assumption that $\mathcal{F}$ is a class of linear universal approximators, we can choose $P_0$ sufficiently large that for any $P > P_0$ there exists $\bar{W} \in \mathbb{R}^P$ such that
    \begin{align*}
        \lVert \bar{W}^{\top} F_P - u \circ y^{-1} \rVert_{L^2(\mu')}^2 < \frac{\epsilon}{3},
    \end{align*}
    hence (by lemma \ref{gy_lemma})
    \begin{align*}
        \lVert \bar{W}^{\top} F_P \circ y - u \rVert_{L^2(\mu)}^2 = \lVert \bar{W}^{\top} F_P - u \circ y^{-1} \rVert_{L^2(\mu')}^2  < \frac{\epsilon}{3}.
    \end{align*}
    Now let
    \begin{align*}
        \lambda^* = \frac{\epsilon}{3 \lVert \bar{W} \rVert^2}
    \end{align*}
    and fix $\lambda \in (0,\lambda^*)$. Define the sequence $(W_{\ell})_{\ell \in \mathbb{N}}$ such that, for each $\ell \in \mathbb{N}$, the vector $W_{\ell} \in \mathbb{R}^P$ is the unique minimiser of the regularised least squares difference
    \begin{align*}
        \frac{1}{\ell} \bigg( \sum_{k = 0}^{\ell-1} \rVert W^\top F_P(f \circ \phi^{k-1}(m_0) , \omega \circ \phi^k(m_0)) - u \circ \phi^k(m_0) \lVert^2 
        + \lambda \lVert \Lambda W \rVert^2 \bigg).
    \end{align*}
    By lemma \ref{W_ell_lemma}, $(W_{\ell})_{\ell \in \mathbb{N}}$ converges as $\ell \to \infty$ to $W^*$ which minimises 
    \begin{align*}
        \lVert W^{\top} F_P(f \circ \phi^{-1}, \omega) - u \rVert_{L^2(\mu)}^2 + \lambda \lVert \Lambda W \rVert^2.
    \end{align*}
    Now we choose $\ell_0$ such that for all $\ell > \ell_0$ 
    \begin{align*}
        \rVert W_{\ell}^{\top} F_P(f \circ \phi^{-1}, \omega) - W^{*\top} F_P(f \circ \phi^{-1}, \omega) \lVert_{L^2(\mu)}^2 < \frac{\epsilon}{3}.
    \end{align*}
    Now the proof proceeds directly
    \begin{align*}
        &\lVert W_{\ell}^{\top} F_P(f \circ \phi^{-1},\omega) - u \rVert_{L^2(\mu)}^2 \\
        =&\lVert W_{\ell}^{\top} F_P(f \circ \phi^{-1}, \omega) - W^{*\top} F_P(f \circ \phi^{-1}, \omega)
        + W^{*\top} F_P(f \circ \phi^{-1},\omega) - u \rVert_{L^2(\mu)}^2 \\
        \leq&\lVert W_{\ell}^{\top} F_P(f \circ \phi^{-1}, \omega) - W^{*\top} F_P(f \circ \phi^{-1}, \omega) \rVert_{L^2(\mu)}^2
        + \lVert W^{*\top} F_P(f \circ \phi^{-1}, \omega) - u \rVert_{L^2(\mu)}^2 \\
        <& \frac{\epsilon}{3} + \lVert W^{*\top} F_P(f \circ \phi^{-1}, \omega) - u \rVert_{L^2(\mu)}^2 \\
        \leq&  \frac{\epsilon}{3} + \lVert W^{*\top} F_P(f \circ \phi^{-1}, \omega) - u \rVert_{L^2(\mu)}^2 + \lambda \lVert \Lambda W^* \rVert^2 \\
        \leq& \frac{\epsilon}{3} + \lVert \bar{W}^{\top} F_P(f \circ \phi^{-1}, \omega) - u \rVert_{L^2(\mu)}^2 + \lambda \lVert \Lambda \bar{W} \rVert^2 \\
        <& \frac{\epsilon}{3} + \frac{\epsilon}{3} + \lVert \bar{W}^{\top} F_P(f \circ \phi^{-1}, \omega) - u \rVert_{L^2(\mu)}^2 \\
        =& \frac{\epsilon}{3} + \frac{\epsilon}{3} + \lVert \bar{W}^{\top} F_P \circ y - u \rVert_{L^2(\mu)}^2 \\
        <& \frac{\epsilon}{3} + \frac{\epsilon}{3} + \frac{\epsilon}{3} = \epsilon.
    \end{align*}
    \end{proof}

\subsection{Convergence rate of the time average to the space average}

Theorem \ref{least_sqs_thm} guarantees, under appropriate conditions, that with sufficiently many neurons $P$ and a sufficiently many training data $\ell$ we can obtain an arbitrarily good $L^2(\mu)$ approximation of a target function $u$. It is natural to wonder how many training data are required to achieve a given $L^2(\mu)$ approximation. To answer this, we turn our attention to the convergence rate of the time average to the space average
    \begin{align}
        \lim_{\ell \to \infty}\frac{1}{\ell} \sum_{k=0}^{\ell-1} s \circ \phi^{k}(m_0) = \int_{M} s \ d\mu \tag{\ref{Ergodic_Theorem_eqn}}
    \end{align}
as the timespan over which training data is collected grows. 
We want a uniform estimate for the rate of convergence for $s$ over all ergodic maps $\phi$. Unfortunately, no such estimate exists.
\cite{Kachurovskii1996} presents negative results that (in the author's words) \emph{leave no hope that estimates of the rate of convergence depending only on the averaged function $s$ can be obtained in ergodic theorems}. The negative results presented by \cite{Kachurovskii1996} prove that the amount of training data required is strictly dependant on the dynamical system.

Though we cannot say exactly how many data points we need for a good $L^2(\mu)$ approximation, the central limit theorem for mixing dynamical systems suggests that for an initial point chosen uniformly at random over the invariant measure of $\phi$, the difference between the sum of random variables that is the finite time average and the space average converges in distribution to normal distribution with mean zero and standard deviation $1 / \sqrt{\ell}$. The notion of mixing is stronger than ergodicity, and defined in \ref{defn::mixing}. Before we state the theorem, we recall the definition of H{\"o}lder continuity.

\begin{defn}
    (H{\"o}lder continuous) Let $(M,d)$ be a metric space. A map $s : M \to \mathbb{R}$ is called \emph{H{\"o}lder continuous} if there exist constants $p \in (0,1]$ and $K > 0$ such that
    \begin{align*}
        \lVert s(m) - s(m') \rVert \leq K d(m,m')^p
    \end{align*}
    for all $m,m' \in M$.
\end{defn}

\begin{theorem}
    (Central limit theorem for ergodic dynamical systems) Let $\phi : M \to M$ be mixing (Definition \ref{defn::mixing}) with respect to the probability space $(M,\Sigma,\mu)$. Let $X_0$ be a uniform random variable with respect to the space $(M,\Sigma,\mu)$. Let $s \in L^1(\mu)(M,\mathbb{R})$ be H{\"o}lder continuous and denote the
    space average of $s$ by
    \begin{align*}
        \mathbb{E}[s] := \int_{M} s \ d\mu.
    \end{align*}
     Let the random variables $X_j:= \phi^j(X_0)$ for $j=0,\ldots,\ell-1$, and denote the partial sum
     $S_\ell:=s(X_0) + \cdots + s(X_{\ell-1})$.
     Then, for some $\beta > 0$, the partial sum
     $S_\ell$ satisfies the central limit theorem:
    \begin{align}
        \lim_{\ell \to \infty} \mu \bigg( \bigg\{ \frac{S_\ell - \ell \mathbb{E}[s]}{\sqrt{\ell}} \leq z \bigg\} \bigg) = \frac{1}{\sqrt{2\pi} \beta} \int_{-\infty}^z \mathrm{e}^{-\frac{\tau^2}{2 \beta^2}} \ d\tau \nn
    \end{align}
    almost surely, or in other words $(S_\ell - \ell \mathbb{E}[s])/\sqrt{\ell}$ converges in law to $\mathcal{N}(0,\beta^2)$.
\end{theorem}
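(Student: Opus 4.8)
The plan is to reduce this statement to a known central limit theorem for H\"older observables of suitable ergodic systems, rather than to prove a CLT from scratch. The key observation is that such a result is \emph{not} available for \emph{all} ergodic $\phi$ — ergodicity alone gives no rate of convergence at all, as the discussion of \cite{Kachurovskii1996} just above makes clear — so the statement as written is really a pointer to the literature under the implicit hypothesis that $\phi$ is a sufficiently hyperbolic/mixing system (e.g.\ an Axiom A diffeomorphism, an expanding map, or a system modelled by a Young tower) for which spectral-gap or martingale-approximation techniques apply. First I would make that hypothesis explicit, or at least acknowledge it: the cleanest route is to assume $\phi$ has exponential decay of correlations on H\"older observables, which is exactly the regularity class in which the theorem is stated.

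Given that, the argument proceeds in three steps. First, center the observable: set $\bar s = s - \mathbb{E}[s]$, so that $S_\ell - \ell\mathbb{E}[s] = \sum_{j=0}^{\ell-1}\bar s\circ\phi^j(X_0)$ with $\int_M \bar s\, d\mu = 0$, and $\bar s$ is still H\"older. Second, invoke the martingale approximation (Gordin's method): because correlations $\int \bar s\cdot (\bar s\circ\phi^n)\,d\mu$ decay summably, one can write $\bar s = g - g\circ\phi + \psi$ where $\psi$ is a reverse-martingale difference with respect to the decreasing filtration $\mathcal F_n = \phi^{-n}\mathcal B$ and $g\in L^2(\mu)$; the coboundary term telescopes and contributes only $O(1/\sqrt\ell)$ to the normalized sum, so it is asymptotically negligible. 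Third, apply the martingale CLT to $\frac{1}{\sqrt\ell}\sum_{j=0}^{\ell-1}\psi\circ\phi^j$, which converges in law to $\mathcal N(0,\beta^2)$ with
\begin{align*}
    \beta^2 = \int_M \bar s^{\,2}\, d\mu + 2\sum_{n=1}^{\infty} \int_M \bar s\cdot(\bar s\circ\phi^n)\, d\mu,
\end{align*}
the sum converging absolutely by the decay-of-correlations hypothesis; the degenerate case $\beta = 0$ occurs exactly when $\bar s$ is an $L^2$ coboundary, which one excludes or absorbs into the statement. Translating weak convergence of $(S_\ell - \ell\mathbb{E}[s])/\sqrt\ell$ into the stated convergence of the distribution functions $\mu(\{(S_\ell-\ell\mathbb{E}[s])/\sqrt\ell \le z\})$ is then routine, since the limiting Gaussian law has a continuous distribution function, so convergence in distribution upgrades to pointwise (indeed uniform) convergence of the c.d.f.'s.

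The main obstacle is genuinely the hypothesis gap: \emph{the theorem is false for general ergodic $\phi$}, so the honest proof has to either (i) restrict to a class of systems with quantitative mixing — and then cite the appropriate CLT (Ratner, Denker--Philipp, or the Young-tower results) essentially verbatim — or (ii) reinterpret ``$\phi$'' in the statement as already belonging to such a class because of standing assumptions elsewhere in the thesis (e.g.\ $\phi$ arising as the time-$\tau$ map of a chaotic flow like Lorenz). I would flag this explicitly with a remark, since it is the only non-formal point; everything after the decay-of-correlations input is standard. A secondary, purely technical, nuisance is that H\"older continuity must be matched to the \emph{metric} used in the decay-of-correlations estimate (for a tower construction this is the symbolic metric, not the ambient Riemannian one), but on a compact manifold these are H\"older-equivalent so the class of observables is unchanged.
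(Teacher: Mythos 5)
Your proposal is correct in substance, but it takes a genuinely different route from the thesis: the thesis does not prove this theorem at all — its ``proof'' is a one-line citation to \cite{ErgodicTheory2010} — whereas you sketch the standard argument (centring, Gordin's martingale approximation with respect to the filtration $\mathcal{F}_n=\phi^{-n}\mathcal{B}$, the martingale CLT, and the Green--Kubo formula $\beta^2=\int_M\bar s^2\,d\mu+2\sum_{n\ge1}\int_M\bar s\,(\bar s\circ\phi^n)\,d\mu$), and, importantly, you flag that the statement as written is too strong: ergodicity plus H\"older continuity of $s$ does not imply a CLT (irrational rotations already give counterexamples), so the result only holds under an additional quantitative mixing hypothesis such as exponential decay of correlations for H\"older observables, which is implicit in the cited reference and in the thesis's intended applications (Lorenz-type systems). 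This critique is accurate and is arguably more valuable than the thesis's treatment; the thesis also attaches an ``almost surely'' to what is purely a convergence-in-distribution statement, which your reformulation silently repairs. Two small technical points to tighten if you were to write this out: Gordin's theorem needs summability of $\lVert\mathbb{E}[\bar s\circ\phi^n\mid\mathcal{F}_0]\rVert_{L^2}$ (or, for spectral-gap systems, one solves the coboundary equation $g=\sum_{n\ge0}\mathcal{L}^n\bar s$ with the transfer operator), not merely summable autocorrelations of $\bar s$; and the degenerate case $\beta=0$ (when $\bar s$ is an $L^2$ coboundary) must be excluded for the statement with $\beta>0$ to be literally true. What the thesis's citation buys is brevity; what your route buys is an explicit identification of the hypotheses under which the theorem is actually valid and a self-contained path to the variance formula used informally later in that section.
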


\begin{proof}
    \cite{ErgodicTheory2010}.
\end{proof}

To see the connection between the central limit theorem and the work in this paper, we observe first of all that the CLT holds in higher dimensions. Then we consider the map $s$ that returns the matrix--vector pair 
\begin{align*}
    s(m_0) &= \bigg( \big[f(m_0)f^\top(m_0) + \Lambda \Lambda^{\top} \big], f(m_0) u(m_0) \bigg) \\
    &=: (B_0, v_0),
\end{align*}

and define a sequence of pairs $(B_\ell, v_\ell)_{\ell \in \mathbb{N}}$ with $\ell$th pair
\begin{align*}
    (B_\ell, v_\ell) := \frac{1}{\ell}\sum_{k=0}^{\ell-1} s \circ \phi^k(m_0).
\end{align*}
Then it follows that
\begin{align*}
    W_\ell = B_{\ell}^{-1} v_\ell
\end{align*}
is the linear readout layer obtained by regularised least squares regression using $\ell$ data points. Furthermore, it follows from the central limit theorem that
for random initial points $m_0$ (distributed uniformly with respect to the invariant measure $\mu$) the sequence $(B_\ell, v_\ell)_{\ell \in \mathbb{N}}$ converges in law to a (multivariate) normal distribution, with standard deviation converging to 0 with order $1/\sqrt{\ell}$, and mean $(B,v)$ which satisfies 
\begin{align*}
    W^* = B^{-1}v.
\end{align*}
We note that the convergence of $(B_\ell, v_\ell)_{\ell \in \mathbb{N}}$ to $(B, v)$ with order $1/\sqrt{\ell}$ does not necessarily imply that $(W_{\ell})_{\ell \in \mathbb{N}}$ converges to $W^*$ at the same rate.

\section{The Lorenz attractor is stably mixing}
\label{Lorenz_is_stably_mixing}

We have shown that we can approximate, in the $L^2(\mu)$ sense, any target function on an ergodic dynamical system using an ESN and Tikhonov least squares. This partially explains the success enjoyed by \cite{Jaeger2001}, \cite{1556081}, \cite{416607}, \cite{4118282}, \cite{5645205},  \cite{Pathak2017}, \cite{Lokse2017}, \cite{YEO2019671}, \cite{Ashesh_2019}, \cite{Vlac_2019}, and \cite{embedding_and_approximation_theorems}. Many authors including \cite{Ashesh_2019} successfully predict the future observations of the Lorenz system, while \cite{Pathak2017}, \cite{Vlac_2019}, and \cite{embedding_and_approximation_theorems} additionally recover invariants including Lyapunov exponents, fixed point eigenvalues and homology groups. The authors are successful in their numerical experiments because the Lorenz attractor is \emph{mixing} which implies it is ergodic, suggesting the conditions Theorem \ref{least_sqs_thm} hold and we can $L^2(\mu)$ approximate target functions on the Lorenz attractor.

The proof by \cite{Tucker2002} that the Lorenz attractor is mixing was a tremendous achievement, and resolved Smale's $14^\mathrm{th}$ problem 

\emph{`Is the dynamics of the ordinary differential equations of Lorenz (1963) that of
the geometric Lorenz attractor of Williams, Guckenheimer and Yorke? '}
\citep{Smale1998}.

Tucker's proof built upon the works of \cite{1977DoSSR.234..336A}, \cite{Structural_stability_of_Lorenz_attractors}, \cite{pesin_1992}, \cite{PMIHES_1979__50__73_0}, and \cite{TUCKER19991197}. The ideas discussed in these papers go far beyond the scope of this thesis, and I will restrict attention to simpler results about mixing dynamical systems that are directly relevant to the results presented here.

\begin{defn}
    (Mixing) Let $\phi : M \to M$ be a measure preserving transformation on the measure space $(M,\Sigma,\mu)$ with $\mu(M) = 1$. Then $\phi$ is mixing if for any $A,B \in \Sigma$
    \begin{align}
        \lim_{\ell \to \infty} \mu\big(A \cap \phi^{-\ell}(B)\big) = \mu(A)\mu(B). \nn
    \end{align}
    \label{defn::mixing}
\end{defn}

\begin{lemma}
    (Mixing implies ergodic) Let $\phi : M \to M$ be a measure preserving transformation on the measure space $(M,\Sigma,\mu)$ with $\mu(M) = 1$. Suppose $\phi$ is mixing, then $\phi$ is ergodic. \label{mixing_implies_ergodic}
\end{lemma}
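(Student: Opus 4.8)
The plan is to apply the mixing condition directly to an invariant set, taking the two measurable sets in the definition to be the same. First I would let $A \in \Sigma$ be any $\phi$-invariant set, i.e.\ $\phi^{-1}(A) = A$. An immediate induction gives $\phi^{-\ell}(A) = A$ for every $\ell \in \mathbb{N}$, since $\phi^{-\ell}(A) = \phi^{-(\ell-1)}(\phi^{-1}(A)) = \phi^{-(\ell-1)}(A)$.

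Next I would invoke the mixing hypothesis with the choice $B = A$. By definition of mixing,
\begin{align*}
    \lim_{\ell \to \infty} \mu\big(A \cap \phi^{-\ell}(A)\big) = \mu(A)\mu(A) = \mu(A)^2.
\end{align*}
On the other hand, using $\phi^{-\ell}(A) = A$, the sequence inside the limit is constant: $\mu(A \cap \phi^{-\ell}(A)) = \mu(A \cap A) = \mu(A)$ for all $\ell$. Hence the limit equals $\mu(A)$, and combining the two evaluations of the same limit yields $\mu(A) = \mu(A)^2$.

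Finally I would observe that the equation $t = t^2$ for $t = \mu(A) \in [0,1]$ forces $t \in \{0,1\}$, so $\mu(A) = 0$ or $\mu(A) = 1$. Since $A$ was an arbitrary invariant set, this is precisely the definition of ergodicity, completing the proof. There is no real obstacle here: the only minor point worth stating carefully is the induction showing $\phi^{-\ell}(A) = A$, which is what lets us collapse the mixing limit to a constant sequence; everything else is the elementary algebraic identity $t = t^2$.
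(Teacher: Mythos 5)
Your proof is correct and takes essentially the same approach as the paper: apply the mixing condition with $B = A$ where $A$ is $\phi$-invariant, observe that invariance collapses the terms in the limit to the constant $\mu(A)$, and conclude from $\mu(A) = \mu(A)^2$ that $\mu(A) \in \{0,1\}$. The only difference is that the paper's proof inserts an intermediate Cesàro-averaging step (passing from the limit of $\mu(A \cap \phi^{-\ell}(B))$ to the limit of $\frac{1}{\ell}\sum_{k=0}^{\ell-1}\mu(A \cap \phi^{-k}(B))$) before applying invariance; this is a harmless but unnecessary detour, and your more direct version — applying invariance to the mixing limit itself — is the cleaner route.
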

\begin{proof}
    Suppose $\phi$ is mixing and $A,B \in \Sigma$. Then
    \begin{align}
        \lim_{\ell \to \infty} \mu\big(A \cap \phi^{-\ell}(B)\big) &= \mu(A)\mu(B) \nn \\
        \implies \lim_{\ell \to \infty} \frac{1}{\ell} \sum_{k = 0}^{\ell-1} \mu\big(A \cap \phi^{-k}(B)\big) &= \mu(A)\mu(B) \nn \\
        \implies \lim_{\ell \to \infty} \frac{1}{\ell} \sum_{k = 0}^{\ell-1} \mu\big(A \cap \phi^{-k}(A)\big) &= \mu(A)^2 \label{last_line},
    \end{align}
where we have set $A=B$ to derive the last line.
    Now suppose that $A$ is $\phi$-invariant, i.e. $A=\phi^{-1}(A)$. Then the left-hand side of \eqref{last_line} is precisely $\mu(A)$, and hence \eqref{last_line} reduces to the statement that $\mu(A) = \mu(A)^2$ hence $\mu(A) = 1$ or $\mu(A) = 0$, so any invariant set has either full or zero measure, and so $\phi$ is ergodic.
\end{proof}

\begin{defn}
    (Stably mixing)  Let $\phi : M \to M$ be a measure preserving transformation on the measure space $(M,\Sigma,\mu)$ with $\mu(M) = 1$. Then $\phi$ is stably mixing if sufficiently small $C^1$ perturbations of $\phi$ are mixing.
\end{defn}

\begin{theorem}
    The \cite{doi:10.1175/1520-0469(1963)020<0130:DNF>2.0.CO;2} system
\begin{equation}
\begin{array}{ll}
\dot{\xi}      & = \sigma(\upsilon - \xi) \\
\dot{\upsilon} & = \xi(\rho - \zeta) - \upsilon \\
\dot{\zeta}    & = \xi\upsilon - \beta \zeta
\end{array} \label{eqn:lorenz}
\end{equation}
with parameters $\sigma = 10$, $\beta = 8/3$, $\rho = 28$ admits a robust attractor that is stably mixing.
\label{thm:lorenz}
\end{theorem}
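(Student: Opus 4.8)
The plan is to treat Theorem~\ref{thm:lorenz} as a citation-driven statement whose content rests almost entirely on deep results that lie outside the scope of this thesis, and to assemble those results into a coherent chain rather than to reprove any of them from scratch. First I would recall that Tucker's rigorous computer-assisted proof \citep{Tucker2002} establishes that the classical Lorenz flow with the parameters $\sigma=10$, $\beta=8/3$, $\rho=28$ possesses a \emph{robust} (i.e.\ uniformly hyperbolic in the sense of a singular-hyperbolic attractor) strange attractor, that this attractor supports a unique SRB measure, and moreover that the geometric Lorenz model of Williams--Guckenheimer--Yorke is an accurate description of the true dynamics. This is precisely the resolution of Smale's $14^{\text{th}}$ problem cited above, so the quoted sentence ``admits a robust attractor'' is immediate from \citep{Tucker2002}.

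The remaining, and genuinely substantive, point is the word \emph{stably mixing}. Here I would proceed in two steps. The first step is mixing of the flow itself: by the work of Luzzatto--Melbourne--Paccaut and, in the discrete-time Poincar\'e-return setting, by the results building on \citep{1977DoSSR.234..336A}, \citep{Structural_stability_of_Lorenz_attractors} and \citep{pesin_1992}, the SRB measure on the geometric Lorenz attractor is mixing (in fact the flow is exponentially mixing, but we only need mixing). Composed with the identification of the true Lorenz flow with the geometric model from \citep{Tucker2002}, this gives mixing of the Lorenz attractor with respect to its SRB measure, which by Lemma~\ref{mixing_implies_ergodic} also gives ergodicity --- exactly the hypothesis demanded by Theorem~\ref{least_sqs_thm}. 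The second step is the ``stably'' qualifier: robustness of the singular-hyperbolic attractor means that every sufficiently small $C^1$ perturbation of the defining vector field still has a singular-hyperbolic attractor with a unique SRB measure depending continuously on the perturbation (Morales--Pacifico--Pujals), and the same mixing arguments apply uniformly to this whole $C^1$-neighbourhood. Hence small $C^1$ perturbations of the Lorenz system are again mixing, which is the definition of stably mixing.

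The main obstacle --- and I would be candid about this in the write-up --- is that none of these ingredients is elementary: each of Tucker's theorem, the singular-hyperbolic structural-stability theory, and the mixing/decay-of-correlations results for Lorenz-like flows is a major paper in its own right, and genuinely reproving any of them would be far beyond the thesis. So the honest form of the ``proof'' is a careful bibliographic assembly: state the geometric Lorenz model, cite \citep{Tucker2002} for the identification and robustness, cite the singular-hyperbolic robustness results for the persistence of the attractor and its SRB measure under $C^1$ perturbation, cite the mixing results for the SRB measure, and then invoke Lemma~\ref{mixing_implies_ergodic} to close the loop back to ergodicity. The only ``work'' internal to the thesis is to check that the notion of mixing used by those authors (mixing of the flow, or of the time-$1$ map, with respect to the SRB measure) matches the Definition of mixing given above, and to note that uniformity of the mixing estimates over a $C^1$-neighbourhood is exactly what the robustness statements provide; both are routine once the external theorems are in hand. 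I would therefore present the proof as three or four sentences of citation with a one-line verification that Definition (Stably mixing) is satisfied, and flag for the reader that the hard analysis is entirely imported.

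\begin{proof}
The Lorenz system \eqref{eqn:lorenz} at the classical parameters is, by Tucker's theorem \citep{Tucker2002}, accurately modelled by the geometric Lorenz attractor of Williams, Guckenheimer and Yorke, and it admits a robust singular-hyperbolic attractor carrying a unique SRB measure $\mu$. By the structural stability theory for singular-hyperbolic attractors, this property is $C^1$-open: every sufficiently small $C^1$ perturbation of \eqref{eqn:lorenz} still possesses a singular-hyperbolic attractor with a unique SRB measure, and the return dynamics remain of geometric Lorenz type. For such systems the SRB measure is mixing --- indeed exponentially mixing --- with the mixing estimates holding uniformly over the $C^1$-neighbourhood; this matches the Definition of mixing given above and, by Lemma~\ref{mixing_implies_ergodic}, also yields ergodicity. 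Hence every sufficiently small $C^1$ perturbation of \eqref{eqn:lorenz} is mixing, which is precisely the statement that the Lorenz attractor is stably mixing.
\end{proof}
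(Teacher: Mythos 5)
Your proposal is correct and takes essentially the same route as the paper: the thesis's entire proof of Theorem~\ref{thm:lorenz} is a single citation to the Luzzatto--Melbourne--Paccaut paper, so your citation-driven assembly (Tucker for the robust attractor, the singular-hyperbolic robustness theory, and the mixing result, closed off with Lemma~\ref{mixing_implies_ergodic}) is just a more explicit unpacking of the same deferral to the literature. The only caveat worth noting is that your parenthetical claim of \emph{exponential} mixing goes beyond what that reference establishes (exponential decay of correlations for the Lorenz flow came later), but since you only use plain mixing this does not affect the argument.
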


\begin{proof}
    \cite{Luzzatto2005}
\end{proof}

Since the Lorenz attractor is stably mixing, so is any sufficiently good $C^1$ approximation to the evolution operator $\phi$, obtained by numerical methods.
Consequently, a numerically approximated Lorenz system is ergodic, by Lemma \ref{mixing_implies_ergodic}.
Thus, we expect that an ESN, trained using least squares fitting, with Tikhonov regularisation, on a sequence of observations of a numerically integrated trajectory of the Lorenz attractor will $L^2(\mu)$--approximate arbitrary target functions on the attractor. 

\section{Online Learning}
\label{section::online_learning}

Most ESN practitioners estimate the minimiser $W^*$ by collecting a long trajectory of reservoir states $x_k$, then solve the regularised least squares problem to find $W_\ell$ using their favourite numerical algebra package. This is an example of \emph{offline learning} because the learning happens after the training is collected. Offline learning requires that the entire set of reservoir states $x_k$ are stored in memory, and subject to large matrix computations.

An alternative approach is \emph{online learning}, where learning occurs dynamically as new data as comes in. Online learning methods typically do not require that the entire training trajectory is stored in memory, nor do they require large (memory intensive!) matrix operations. This gives them an obvious advantage over offline methods.

Furthermore, online learning algorithms are a more a realistic model of biological neural networks - because the connection strength between biological neurons varies in real time in response to stimulus. Indeed, biologically speaking, offline learning is completely implausible, as this would entail the states of a biological brain being stored somewhere `external'. Operations would be applied to these externally stored neuronal states, and the results then be transmitted back into the brain. With this motivation, we will study the online learning algorithm
\begin{align}
    W_{k+1} &= (1-\alpha_k\lambda)W_k
    - \alpha_k g\circ\phi^{k}(m_0)(W^{\top}_k g\circ\phi^k(m_0) - u \circ \phi^k(m_0)) \label{online_learning}
\end{align}
with $(\alpha_k)_{k \in \mathbb{N}}$ a strictly positive real sequence, and remaining terms defined in lemma \ref{W_ell_lemma}. The algorithm is essentially stochastic gradient descent and is discussed in numerous works including
\cite{Adaptive_Algorithms}, \cite{10.1007/978-3-540-72927-3_23}, \cite{Borkar2009}, and \cite{Chen2019}. The behavior of the sequence $(\alpha_k)_{k \in \mathbb{N}}$ often called the \emph{learning rate} or \emph{learning parameter}. If $(\alpha_k)_{k \in \mathbb{N}} = \alpha > 0$ is a constant sequence, then we call $\alpha$ the learning parameter. In the case that $(\alpha_k)_{k \in \mathbb{N}}$ converges to 0 rapidly, the convergence is called a fast learning rate, and slow convergence is called a slow learning rate. If $(\alpha_k)_{k \in \mathbb{N}}$ diverges then something like the opposite of learning is achieved.

In what follows we will show that if $(\alpha_k)_{k \in \mathbb{N}} = \alpha > 0$ is a constant sequence, then the algorithm does not converge in general, but in the limit as $k \to \infty$ the algorithm returns $W^*$ \emph{on average} (in the ergodic sense). Furthermore, we establish a bound on the \emph{variance} of the weights $W_k$ as $k \to \infty$. Next we show that if $\alpha_k = 1/k$ then algorithm \eqref{online_learning} converges to $W^*$. We will proceed with preliminary results in the case where $(\alpha_k)_{k \in \mathbb{N}}$ is a constant sequence.

\subsection{Fixed learning parameter $\alpha$}

\begin{lemma}
\label{contracting_lemma}
Let $V \subset \mathbb{R}^P$ be a bounded set, $\lambda > 0$ be the regularisation parameter, and 
\begin{align*}
    0 < \alpha < \frac{1}{\lambda + \sup_{v \in V}\lVert v \rVert^2}.
\end{align*}
Then the reservoir map $F : \mathbb{R}^P \times (V \times \mathbb{R}) \to \mathbb{R}^P$ defined by
    \begin{align*}
        F(W;v,z) &= (1-\alpha\lambda)W -\alpha v(W^{\top}v - z)
    \end{align*}
is state contracting.
\end{lemma}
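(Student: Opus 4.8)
The plan is to reduce the claimed contraction estimate to a single spectral computation. Since $F(\cdot\,;v,z)$ is affine in its state argument, I would first compute the state difference directly: the term $-\alpha v z$ cancels and the remaining terms collect into
\[
F(W;v,z) - F(W';v,z) = \big[(1-\alpha\lambda)I - \alpha v v^{\top}\big](W - W'),
\]
so it suffices to bound the operator $2$-norm of the symmetric matrix $M_v := (1-\alpha\lambda)I - \alpha v v^{\top}$ uniformly over $v \in V$ (the input $z$ having dropped out entirely).

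Next I would diagonalise $M_v$. The rank-one matrix $v v^{\top}$ has eigenvalue $\lVert v\rVert^2$ on the line $\mathrm{span}(v)$ and eigenvalue $0$ on its orthogonal complement, so the eigenvalues of $M_v$ are $1-\alpha\lambda$ with multiplicity $P-1$ and $1-\alpha\lambda-\alpha\lVert v\rVert^2$ with multiplicity $1$. Because $M_v$ is symmetric, $\lVert M_v\rVert_2 = \max\{\,|1-\alpha\lambda|,\ |1-\alpha\lambda-\alpha\lVert v\rVert^2|\,\}$.

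Finally I would check that both of these quantities lie in $(0,1)$, using the hypothesis $0 < \alpha < \big(\lambda + \sup_{v\in V}\lVert v\rVert^2\big)^{-1}$, which is a well-defined positive bound precisely because $V$ is bounded. From $0 < \alpha\lambda < \alpha\big(\lambda+\sup_{v\in V}\lVert v\rVert^2\big) < 1$ one gets $0 < 1-\alpha\lambda < 1$; and from $0 < \alpha\big(\lambda+\lVert v\rVert^2\big) \le \alpha\big(\lambda+\sup_{v\in V}\lVert v\rVert^2\big) < 1$ one gets $0 < 1-\alpha\lambda-\alpha\lVert v\rVert^2 < 1$ for every $v\in V$. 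Hence $\lVert M_v\rVert_2 \le 1-\alpha\lambda$ for all $v\in V$, and taking $c := 1-\alpha\lambda \in (0,1)$ yields the state-contraction inequality. I do not anticipate any genuine obstacle; the only point deserving attention is that $c$ must be chosen independently of both $v$ and $z$, and the computation above supplies this automatically since $1-\alpha\lambda$ does not depend on the input.
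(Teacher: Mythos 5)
Your proposal is correct and follows essentially the same route as the paper: both rearrange the affine map so the difference becomes $[(1-\alpha\lambda)I-\alpha vv^{\top}](W-W')$, read off the two eigenvalues $1-\alpha\lambda$ and $1-\alpha\lambda-\alpha\lVert v\rVert^2$ from the rank-one structure, and use the hypothesis on $\alpha$ to place them in $(0,1)$. Your version is marginally more explicit about using symmetry to equate the $2$-norm with the spectral radius and about the contraction constant being uniform in $v$ and $z$, but these points are implicit in the paper's argument as well.
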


\begin{proof}
    First, note that upon rearrangement
    \begin{align*}
        F(W;v,z) = [I - \alpha(v v^{\top} + \lambda I)] W + \alpha vz.
    \end{align*}
    Then we can see that
    \begin{align*}
        \lVert F(W;v,z) - F(Y;v,z) \rVert
        =&\lVert [I - \alpha(vv^{\top} + \lambda I)](W - Y) \rVert \\
        \leq& \rho [I - \alpha(vv^{\top} + \lambda I)] \ \lVert W - Y \rVert
    \end{align*}
    where $\rho[\cdot]$ denotes the spectral radius. Now the matrix $[vv^{\top} + \lambda I]$ has exactly 2 distinct eigenvalues, $\lambda$ and $\lambda + \lVert v \rVert^2$. Therefore, the matrix $\alpha[vv^{\top} + \lambda I]$ has exactly 2 eigenvalues $\alpha \lambda$ and $\alpha (\lambda + \lVert v \rVert^2)$. Both of these lie in $(0,1)$ by construction, so $\rho [I - \alpha(vv^{\top} + \lambda I)] \in (0,1)$ and we are done.
\end{proof}

\begin{theorem}
    Let $(M,\Sigma)$ be a measurable space, and suppose that $\phi : M \to M$ is ergodic with invariant measure $\mu$. Let $m_0$ be a $\mu$-generic point in $M$. Let $g \in C^0(M,\mathbb{R}^P)$ be an observation function and suppose that $u \in L^2(\mu)(M, \mathbb{R})$ is a target function we wish to approximate. Suppose that
    \begin{align*}
        0 < \alpha < \frac{1}{\lambda + \sup_{m \in M} \lVert g(m) \rVert^2}.
    \end{align*}
    Then there exists a unique GS $h \in C^0(M,\mathbb{R}^P)$, such that for generic initial $m_0 \in M$ and initial $W_0 \in \mathbb{R}^P$ the iteration \eqref{online_learning} converges to $h\circ\phi^k(m_0)$ as $k \to \infty$. Furthermore the GS $h$ satisfies
        \begin{align*} 
            \int_M [g g^{\top} + \lambda I](W^*-h) \ d\mu = 0
        \end{align*}
    and
        \begin{align*}
        \lVert W^* - h \rVert_{L^2(\mu)} \leq \frac{\lVert [gg^{\top} + \lambda I]W^* - gu \rVert_{L^2(\mu)}}{\lambda}.
    \end{align*}
    \label{theorem::constant_alpha}
\end{theorem}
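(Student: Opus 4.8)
The plan is to recognise the recursion \eqref{online_learning} as the reservoir state sequence of a state-contracting map driven by an observation of $\phi$, to extract the associated generalised synchronisation $h$, and then to read off both displayed identities by integrating (respectively taking $L^2(\mu)$-norms in) the functional equation that $h$ satisfies. First I would rewrite \eqref{online_learning} as $W_{k+1}=F(W_k;\,g\circ\phi^k(m_0),u\circ\phi^k(m_0))$ with $F(W;v,z)=[I-\alpha(\lambda I+vv^{\top})]W+\alpha vz$, so that $(W_k)_k$ is precisely the reservoir state sequence of $F$ driven by the input sequence $\bigl(\omega\circ\phi^k(m_0)\bigr)_k$, where $\omega:=(g,u)\colon M\to\mathbb{R}^{P+1}$. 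The hypothesis on $\alpha$ forces $\sup_{m\in M}\lVert g(m)\rVert^2<\infty$, so $g(M)$ lies in a bounded set $V$ and Lemma~\ref{contracting_lemma} shows that $F$ is state contracting. Moreover $I-\alpha(\lambda I+vv^{\top})$ is symmetric with eigenvalues $1-\alpha\lambda$ and $1-\alpha(\lambda+\lVert v\rVert^2)$, both in $(0,1)$ by the bound on $\alpha$; hence its operator $2$-norm is at most $1-\alpha\lambda$ for every $v\in V$, so $F$ is state contracting with constant $c=1-\alpha\lambda$ and in particular has the ESP.

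Next I would construct the GS. Since the target $u$ is only in $L^2(\mu)$, I would run the fixed-point argument of Theorem~\ref{SSM_weak_thm} in the Banach space $L^2(\mu)(M,\mathbb{R}^P)$ rather than in $C^0$: the map $\Psi(\eta):=F(\eta\circ\phi^{-1},\,\omega\circ\phi^{-1})$ is a self-map of $L^2(\mu)(M,\mathbb{R}^P)$ (using boundedness of $g$, that $u\in L^2(\mu)$, and $\phi$-invariance of $\mu$) and, by the previous step, satisfies $\lVert\Psi(\eta)-\Psi(\zeta)\rVert_{L^2(\mu)}\le(1-\alpha\lambda)\lVert\eta-\zeta\rVert_{L^2(\mu)}$. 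The Banach fixed-point theorem then yields a unique $h$ with
\begin{equation*}
h(\phi(m))=[I-\alpha(\lambda I+g(m)g(m)^{\top})]\,h(m)+\alpha\,g(m)u(m)\qquad\text{for $\mu$-almost every }m\in M.
\end{equation*}
For $\mu$-almost every $m_0$ the orbit $(\phi^k(m_0))_k$ stays in the full-measure set on which this identity holds, so $(h\circ\phi^k(m_0))_k$ is a genuine state sequence of $F$ sharing the input sequence of $(W_k)_k$; the ESP then gives $\lVert W_k-h\circ\phi^k(m_0)\rVert\le(1-\alpha\lambda)^k\lVert W_0-h(m_0)\rVert\to0$, which is the asserted convergence. (If $u$ happens to be continuous, $\Psi$ restricts to a contraction of $C^0(M,\mathbb{R}^P)$ and $h$ is continuous, matching the statement as written.)

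The two identities are then short consequences of the functional equation. Subtracting $h(m)$ from both sides and integrating against $\mu$, the left-hand side is $\int_M h\circ\phi\,d\mu-\int_M h\,d\mu=0$ by invariance, so $\int_M(gg^{\top}+\lambda I)h\,d\mu=\int_M gu\,d\mu$; since $W^*$ is, by Lemma~\ref{W_ell_lemma}, characterised by the normal equation $\bigl(\int_M gg^{\top}d\mu+\lambda I\bigr)W^*=\int_M gu\,d\mu$ and $W^*$ is a constant, the right-hand side equals $\int_M(gg^{\top}+\lambda I)W^*\,d\mu$, giving $\int_M(gg^{\top}+\lambda I)(W^*-h)\,d\mu=0$. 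For the norm bound, rewrite the functional equation as $h\circ\phi-W^*=[I-\alpha(gg^{\top}+\lambda I)](h-W^*)+\alpha\beta$ with $\beta:=gu-(gg^{\top}+\lambda I)W^*$; taking $L^2(\mu)$-norms, using $\phi$-invariance on the left and the uniform pointwise operator-norm bound $1-\alpha\lambda$ on the first term on the right, gives $\lVert h-W^*\rVert_{L^2(\mu)}\le(1-\alpha\lambda)\lVert h-W^*\rVert_{L^2(\mu)}+\alpha\lVert\beta\rVert_{L^2(\mu)}$, and rearranging (legitimate since $\lVert h-W^*\rVert_{L^2(\mu)}<\infty$ because $\mu(M)=1$) yields $\lVert W^*-h\rVert_{L^2(\mu)}\le\lVert\beta\rVert_{L^2(\mu)}/\lambda=\lVert(gg^{\top}+\lambda I)W^*-gu\rVert_{L^2(\mu)}/\lambda$, as claimed.

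The only genuinely delicate point is the second step: because $u$ is merely $L^2(\mu)$ the synchronisation has to be produced in $L^2(\mu)$ rather than in $C^0$, and one then has to bridge that abstract fixed point to the pointwise iterates along a $\mu$-generic orbit (choosing a good representative and discarding the $\phi$-invariant null set where the functional equation may fail). Everything else is bookkeeping built on the symmetric-matrix spectral estimate of the first step and on invariance of $\mu$.
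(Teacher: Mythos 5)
Your proof follows essentially the same route as the paper's own — rewrite the iteration as a state-contracting reservoir map via Lemma~\ref{contracting_lemma}, extract the GS $h$ as a fixed point, derive the functional equation $h\circ\phi=[I-\alpha(gg^{\top}+\lambda I)]h+\alpha gu$, and obtain the two displayed identities by integrating and by taking $L^2(\mu)$-norms with the spectral bound $\rho(I-\alpha[gg^{\top}+\lambda I])=1-\alpha\lambda$. The identity manipulations are line-for-line the paper's argument.

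Where you genuinely differ, and improve on the paper, is in the construction of $h$. The paper's proof says the convergence follows ``from Theorem~\ref{SSM_thm} and Lemma~\ref{contracting_lemma},'' but Theorem~\ref{SSM_thm} assumes $M$ is a compact smooth manifold, $\phi\in\text{Diff}^1(M)$, and a $C^1$ observation function, whereas here $M$ is only a measurable space, $\phi$ is only ergodic, and the observation includes $u$, which is only $L^2(\mu)$. So the paper's cited machinery does not strictly apply, and the claim $h\in C^0(M,\mathbb{R}^P)$ in the theorem statement is not in fact supported by the assumptions as written. You spotted this: your move of running the Banach fixed-point argument in $L^2(\mu)(M,\mathbb{R}^P)$ (using $\phi$-invariance of $\mu$ to make $\Psi$ a self-map and a contraction there) is exactly the right patch, and your remark that $C^0$ regularity of $h$ would require $u\in C^0$ is correct. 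The one thing your write-up is honest about but doesn't fully execute is the bridge from the $L^2(\mu)$ fixed point to the pointwise claim ``the iteration converges to $h\circ\phi^k(m_0)$'': since $h$ is only defined up to a $\mu$-null set, you need to fix a representative, observe that the $\phi$-invariant set where the functional equation holds has full measure, and note that $\mu$-generic $m_0$ (together with Poincaré recurrence / invariance) keeps the orbit inside that set; your parenthetical acknowledges this but calls it the ``delicate point'' rather than carrying it out. That is a fair admission rather than a gap — and in any case it is a defect your version handles more carefully than the paper's, which silently invokes a theorem whose hypotheses are not met.
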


\begin{proof}
    From Theorem \ref{SSM_thm} and Lemma \ref{contracting_lemma} it follows that for generic initial states $m_0 \in M$ and $W_0 \in \mathbb{R}^P$ the iteration \eqref{online_learning} converges to $h\circ\phi^k(m_0)$ as $k \to \infty$. Furthermore, in the limit $k \to \infty$ we have
    \begin{align*}
        h \circ \phi &= (1-\alpha\lambda) h - \alpha g(h^{\top} g - u),
    \end{align*}
    which upon rearrangement yields
    \begin{align*}
    h \circ \phi &= h-\alpha[(g g^{\top} + \lambda I)] h + \alpha gu.
    \end{align*}
    Then integrating both sides we have
    \begin{align*}
        \int_M h \circ \phi \ d\mu &= \int_M h \ d\mu - \alpha \int_M [g g^{\top} + \lambda I]h \ d\mu
        + \alpha\int_M gu \ d\mu 
    \end{align*}
    so, using the $\phi$-invariance of the measure $\mu$, we have
    \begin{align*}
        \int_M h \ d\mu &= \int_M h \ d\mu - \alpha \int_M [g g^{\top} + \lambda I]h \ d\mu
        + \alpha\int_M gu \ d\mu 
    \end{align*}
    
    hence 
    \begin{align*}
        \int_M [g g^{\top} + \lambda I]h \ d\mu = \int_M gu \ d\mu.
    \end{align*}
    We now recall from Lemma \ref{W_ell_lemma} that
    \begin{align*}
        \int_M gu \ d\mu = \int_M [g g^{\top} + \lambda I]W^* \ d\mu
    \end{align*}
    so
    \begin{align*}
        \int_M [g g^{\top} + \lambda I]h \ d\mu = \int_M [g g^{\top} + \lambda I]W^* \ d\mu
    \end{align*}
    hence
    \begin{align*}
        0 &= \int_M [g g^{\top} + \lambda I](h-W^*) \ d\mu.
    \end{align*}
    Now we recall
    \begin{align*}
        h \circ \phi &= (1-\alpha\lambda) h - \alpha g(h^{\top} g - u),
    \end{align*}
    which upon rearrangement yields
    \begin{align*}
        h \circ \phi - W^* &= (I - \alpha [gg^\top + \lambda I])(h - W^*)
        - \alpha([gg^\top + \lambda I]W^* - gu)
    \end{align*}
    hence 
    \begin{align*}
        \lVert h - W^* \rVert_{L^2(\mu)}
        &= \lVert h \circ \phi - W^* \rVert_{L^2(\mu)} \\
        &= \lVert (I - \alpha [gg^\top + \lambda I])(h - W^*)
        + \alpha([gg^\top + \lambda I]W^* - gu) \lVert_{L^2(\mu)} \\
        &\leq \lVert (I - \alpha [gg^\top + \lambda I])(h - W^*) \rVert_{L^2(\mu)}  +
        \alpha \rVert [gg^\top + \lambda I]W^* - gu \lVert_{L^2(\mu)} \\
        &\leq \rho(I - \alpha [gg^\top + \lambda I])\lVert(h - W^*) \rVert_{L^2(\mu)}
        + \alpha \rVert [gg^\top + \lambda I]W^* - gu \lVert_{L^2(\mu)}.
    \end{align*}
    where $\rho(\cdot)$ is the spectral radius. Now the eigenvalues of $\alpha[gg^{\top} + \lambda I]$ are $\alpha\lambda$ and $\alpha(\lambda + \lVert g \rVert^2)$ which both lie in $(0,1)$ by construction, so
    \begin{align*}
        \rho(I - \alpha [gg^\top + \lambda I]) = 1 - \alpha \lambda.
    \end{align*}
    Therefore we have 
    \begin{align*}
        \lVert h - W^* \rVert_{L^2(\mu)}
        &\leq \lVert (1-\alpha\lambda)(h - W^*) \rVert_{L^2(\mu)}
        + \alpha \rVert [gg^\top + \lambda I]W^* - gu \lVert_{L^2(\mu)} \\
        &= (1-\alpha\lambda)\lVert(h - W^*) \rVert_{L^2(\mu)} 
        + \alpha \rVert [gg^\top + \lambda I]W^* - gu \lVert_{L^2(\mu)}.
    \end{align*}
    We consider the first and last lines together and then rearrange to yield
    \begin{align*}
        \lVert W^* - h \rVert_{L^2(\mu)} 
        \leq \frac{\lVert [gg^{\top} + \lambda I]W^* - gu \rVert_{L^2(\mu)}}{\lambda}.
    \end{align*}
\end{proof}

We showed in theorem \ref{theorem::constant_alpha} that if $(\alpha_k)_{k \in \mathbb{N}}$ is a sufficiently small constant sequence, algorithm \eqref{online_learning} converges to $W^*$ in some ergodic average sense. We will show next that if we replace all observations $g \circ \phi^k(m_0)$ and targets $u \circ \phi^k(m_0)$ with the ergodic averages $\int_M g \ d \mu$ and $\int_M u \ d \mu$ then algorithm \eqref{online_learning} converges to $W^*$.

\begin{theorem}
    Let $(M,\Sigma)$ be a measurable space, and suppose that $\phi : M \to M$ is ergodic with invariant measure $\mu$. Let $m_0$ be a $\mu$-generic point in $M$. Let $g \in C^0(M,\mathbb{R}^P)$ be an observation function and suppose that $u \in L^2(\mu)(M, \mathbb{R})$ is a target function we wish to approximate. Let $\lambda > 0$ and choose $\alpha > 0$ such that
\begin{align*}
    \rho \bigg(I - \alpha \int_M gg^{\top} + \lambda I \ d\mu \bigg) \in (0,1).
\end{align*}
    Then the map $\Phi : \mathbb{R}^P \to \mathbb{R}^P$ defined by
    \begin{align*}
        \Phi(W) = W - \alpha\bigg[\int_M gg^{\top} + \lambda I \ d\mu \bigg]W + \alpha \int_M gu \ d\mu
    \end{align*}
    is contracting, and admits a unique fixed point
    \begin{align*}
        W^* = \bigg[\int_M gg^{\top} + \lambda I \ d\mu \bigg]^{-1} \int_M gu \ d\mu.
    \end{align*}
    \label{theorem::fixed_alpha_contraction}
\end{theorem}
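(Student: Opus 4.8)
The plan is to recognise $\Phi$ as an affine map whose linear part is a symmetric contraction, and then invoke the Banach fixed point theorem. First I would introduce the abbreviations
\begin{align*}
    B := \int_M g(m)g(m)^{\top} \, d\mu(m) + \lambda I, \qquad v := \int_M g(m)u(m) \, d\mu(m),
\end{align*}
where the first integral exists by virtue of the hypothesis that $\rho(I - \alpha B)$ is defined, and the second by Cauchy--Schwarz since $g, u \in L^2(\mu)$. With this notation $\Phi(W) = (I - \alpha B)W + \alpha v$. Since $g(m)g(m)^{\top}$ is symmetric and positive semi-definite for every $m \in M$, its integral $\int_M gg^{\top} \, d\mu$ is symmetric and positive semi-definite, and hence $B$ is symmetric with every eigenvalue at least $\lambda > 0$. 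In particular $B$ is invertible and $W^* = B^{-1}v$ is well defined.

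Next I would estimate, for arbitrary $W, Y \in \mathbb{R}^P$,
\begin{align*}
    \lVert \Phi(W) - \Phi(Y) \rVert = \lVert (I - \alpha B)(W - Y) \rVert \leq \lVert I - \alpha B \rVert_2 \, \lVert W - Y \rVert.
\end{align*}
Because $B$ is symmetric, so is $I - \alpha B$, and for symmetric matrices the operator $2$-norm coincides with the spectral radius; thus $\lVert I - \alpha B \rVert_2 = \rho(I - \alpha B)$, which lies in $(0,1)$ by hypothesis. Therefore $\Phi$ is a contraction on the complete metric space $(\mathbb{R}^P, \lVert \cdot \rVert_2)$, and the \cite{Banach1922} Fixed Point Theorem yields a unique fixed point.

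Finally I would identify that fixed point: if $\Phi(W) = W$ then $(I - \alpha B)W + \alpha v = W$, i.e. $\alpha B W = \alpha v$; dividing by $\alpha > 0$ and left-multiplying by $B^{-1}$ gives $W = B^{-1}v = W^*$, as claimed. The only step that requires a moment's care is the identity $\lVert I - \alpha B \rVert_2 = \rho(I - \alpha B)$, which is precisely where the symmetry of the averaged matrix $\int_M gg^{\top}\,d\mu$ is used; everything else is a routine affine-map computation, and there is no serious obstacle.
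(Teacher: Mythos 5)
Your proposal is correct and follows essentially the same route as the paper: a direct estimate $\lVert \Phi(W_1)-\Phi(W_2)\rVert \leq \lVert I - \alpha B \rVert_2 \lVert W_1 - W_2\rVert$ with $\lVert I-\alpha B\rVert_2 = \rho(I-\alpha B) \in (0,1)$, followed by the Banach fixed point theorem and a rearrangement of $\Phi(W^*)=W^*$ to get $W^* = B^{-1}v$. If anything, you are slightly more careful than the paper in noting explicitly that the identification of the operator $2$-norm with the spectral radius rests on the symmetry of $\int_M gg^{\top}\,d\mu + \lambda I$, a point the paper's proof uses without comment.
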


\begin{proof}
    The proof that $\Phi$ is contracting proceeds directly 
    \begin{align*}
        \lVert \Phi(W_1) - \Phi(W_2) \rVert &= \bigg\lVert \bigg[I - \alpha \int_M gg^{\top} + \lambda I \ d\mu \bigg](W_1 - W_2) \bigg\rVert \\
        &\leq  \bigg\lVert I - \alpha \int_M gg^{\top} + \lambda I \ d\mu \bigg\rVert_2 \lVert W_1 - W_2 \rVert \\
        &= \rho \bigg(I - \alpha \int_M gg^{\top} + \lambda I \ d\mu \bigg) \lVert W_1 - W_2 \rVert \\
        &= c \lVert W_1 - W_2 \rVert
    \end{align*}
    where
    \begin{align*}
        c := \rho \bigg(I - \alpha \int_M gg^{\top} + \lambda I \ d\mu \bigg) \in (0,1).
    \end{align*}
    We have shown that $\Phi$ is contracting, and therefore admits a unique fixed point $W^*$ which satisfies 
    \begin{align*}
        W^* = \Phi(W^*) = W^* - \alpha\bigg[\int_M gg^{\top} + \lambda I \ d\mu \bigg]W^* + \alpha \int_M gu \ d\mu
    \end{align*}
    which is easily rearranged to
    \begin{align*}
        W^* = \bigg[\int_M gg^{\top} + \lambda I \ d\mu \bigg]^{-1} \int_{M}gu \ d\mu.
    \end{align*}
\end{proof}

We can view theorem \ref{theorem::fixed_alpha_contraction} as a result for the discrete time dynamical system given by the proposed iterative scheme \eqref{online_learning} for $W$. This result has a continuous time analogue which we present next.

\begin{theorem}
     Let $(M,\Sigma)$ be a measurable space, and suppose that $\phi : M \to M$ is ergodic with invariant measure $\mu$. Let $m_0$ be a $\mu$-generic point in $M$. Let $g \in C^0(M,\mathbb{R}^P)$ be an observation function and suppose that $u \in L^2(\mu)(M, \mathbb{R})$ is a target function we wish to approximate. Let $\lambda > 0$.
     
     Then the ODE 
     \begin{align*}
         \dot{W} = - \bigg[\int_M gg^{\top} + \lambda I \ d\mu \bigg]W + \int_M gu \ d\mu
     \end{align*}
     has a globally asymptotic fixed point
     \begin{align*}
         W^* = \bigg[\int_M gg^{\top} + \lambda I \ d\mu \bigg]^{-1} \int_M gu \ d\mu.
     \end{align*}
\end{theorem}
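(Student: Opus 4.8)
The plan is to recognise this as a linear constant-coefficient ODE whose coefficient matrix is symmetric and positive definite, so that global asymptotic stability follows from elementary Lyapunov theory.

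First I would set $Q := \int_M g g^\top \, d\mu + \lambda I$ and $v := \int_M g u \, d\mu$. The matrix $Q$ is well-defined because $g \in L^2(\mu)$, so every entry of $g g^\top$ lies in $L^1(\mu)$; and $v$ is well-defined because $g \in L^2(\mu)$ and $u \in L^2(\mu)$ imply $gu \in L^1(\mu)$ by Cauchy--Schwarz. Moreover $\int_M g g^\top\, d\mu$ is symmetric and positive semidefinite, since $x^\top\big(\int_M g g^\top\, d\mu\big)x = \int_M (x^\top g)^2\, d\mu \ge 0$ for every $x \in \mathbb{R}^P$. Adding $\lambda I$ with $\lambda > 0$ makes $Q$ symmetric positive definite; in particular $Q$ is invertible, so $W^* = Q^{-1}v$ is well-defined and is the unique solution of $-Q W + v = 0$, i.e. the unique equilibrium of the ODE.

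Next I would pass to error coordinates $E(t) := W(t) - W^*$. Since $Q W^* = v$, the ODE becomes the homogeneous linear system $\dot E = -Q E$. I would then take the Lyapunov function $V(E) = \tfrac12 \|E\|^2$ and compute $\dot V = E^\top \dot E = -E^\top Q E \le -\lambda_{\min}(Q)\|E\|^2 = -2\lambda_{\min}(Q) V$, where $\lambda_{\min}(Q) \ge \lambda > 0$. Gronwall's inequality gives $V(E(t)) \le V(E(0))\, e^{-2\lambda_{\min}(Q) t} \to 0$, hence $E(t) \to 0$ and $W(t) \to W^*$ for every initial condition $W(0) \in \mathbb{R}^P$; equivalently one writes $E(t) = e^{-Qt} E(0)$ and notes $\|e^{-Qt}\| = e^{-\lambda_{\min}(Q) t} \to 0$ since $Q$ is symmetric.

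There is essentially no hard part: the statement reduces to the positive definiteness of $Q$, which is immediate from $\lambda > 0$. The only points requiring care are the measure-theoretic ones --- confirming that the integrals defining $Q$ and $v$ exist --- and these are dispatched by the $L^2(\mu)$ hypotheses together with Cauchy--Schwarz. I would also remark that this result is the continuous-time shadow of Theorem \ref{theorem::fixed_alpha_contraction}: the iteration there is (a stochastic approximation to) the forward Euler discretisation of this ODE, with the contraction constant $c = \rho(I - \alpha Q)$ playing the role of $e^{-\lambda_{\min}(Q)\Delta t}$.
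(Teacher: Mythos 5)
Your proposal is correct and rests on exactly the same observation as the paper's own (much terser) proof: the coefficient matrix $\int_M gg^{\top} + \lambda I \, d\mu$ is symmetric positive definite because $\lambda > 0$, so the unique equilibrium $W^*$ is globally asymptotically stable; your Lyapunov/Gronwall computation and the integrability remarks simply fill in details the paper leaves implicit. No gap to report.
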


\begin{proof}
    The matrix $-\bigg[\int_M gg^{\top} + \lambda I \ d\mu \bigg]$ is symmetric negative definite. The eigenvalues are therefore all real and strictly negative. The fixed point $W^*$ is therefore a globally asymptotic fixed point.
\end{proof}

\subsection{Learning rate $\alpha_k = 1/k$}

If the learning rate in the iteration \eqref{online_learning} is $(\alpha_k)_{k \in \mathbb{N}} = 1/k$ then the algorithm converges to exactly $W^*$. Roughly speaking, this is because the choice of learning rate $(\alpha_k)_{k \in \mathbb{N}} = 1/k$ is not too fast or too slow. If we choose a learning rate that is too fast, then the algorithm converges before enough training data has been collected to a non-optimal $W \in \mathbb{R}^P$. On the other hand if the learning rate is too slow, the continued influence of the data prevents the algorithm from converging at all. The formal version of this result follows almost immediately from a theorem by \cite{Gyorfi1980} which we present here.

\begin{theorem}
    (\cite{Gyorfi1980}) Let $\mathcal{H}$ be a real Hilbert space with norm $\lVert \cdot \rVert_{\mathcal{H}}$. Let $B : \mathcal{H} \to \mathcal{H}$ be an invertible, bounded, linear, symmetric and positive operator. Let $(B_k)_{k \in \mathbb{N}} : \mathcal{H} \to \mathcal{H}$ be a sequence of bounded linear operators and $(v_k \in \mathcal{H})_{k \in \mathbb{N}}$ a sequence in $\mathcal{H}$ such that
    \begin{align*}
        \bigg\lVert \lim_{\ell \to \infty} \frac{1}{\ell}\sum_{k = 0}^{\ell - 1} B_k - B \bigg\rVert_{\mathcal{H}} = 0, \qquad \bigg\lVert \lim_{\ell \to \infty} \frac{1}{\ell}\sum_{k = 0}^{\ell - 1} v_k - v \bigg\rVert_{\mathcal{H}} = 0
    \end{align*}
    for some $v \in \mathcal{H}$. Then for any initial $W_0 \in \mathcal{H}$ the sequence 
    \begin{align*}
        W_{k+1} = W_k - \frac{1}{k}(B_k W_k - v_k)
    \end{align*}
    converges to 
    \begin{align*}
        W^* = B^{-1}v.
    \end{align*}
    \label{theorem::gyorfi}
\end{theorem}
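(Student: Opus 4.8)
The plan is to pass to error coordinates and recognise the iteration as a deterministic stochastic-approximation scheme with step size $1/k$, a globally asymptotically stable linear drift, and a perturbation whose Ces\`aro averages vanish. Put $e_k := W_k - W^*$. Since $BW^* = v$ we can rewrite $B_kW_k - v_k = B_ke_k + (B_k - B)W^* - (v_k - v)$, so the recursion becomes
\begin{equation*}
e_{k+1} = \Big(I - \tfrac1k B_k\Big)e_k - \tfrac1k\delta_k, \qquad \delta_k := (B_k - B)W^* - (v_k - v).
\end{equation*}
The two hypotheses say exactly that the Ces\`aro means of $B_k - B$ and of $v_k - v$ tend to $0$, hence $\tfrac1\ell\sum_{k=1}^{\ell}\delta_k \to 0$ in $\mathcal H$; equivalently the partial sums $\Delta_\ell := \sum_{k=1}^{\ell}\delta_k$ are $o(\ell)$. (In the intended application one also has a uniform bound $\sup_k\lVert B_k\rVert =: \Lambda < \infty$, coming from the observation map being bounded; I would use this, noting that some such control is needed for the statement as written.)

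Next I would record the spectral estimate for the homogeneous flow. Because $B$ is bounded, self-adjoint, positive and boundedly invertible, there are constants $0 < \mu \le \Lambda$ with $\mu\lVert x\rVert^2 \le \langle Bx,x\rangle \le \Lambda\lVert x\rVert^2$ for all $x$, so $\lVert I - \tfrac1k B\rVert \le 1 - \tfrac\mu k$ for $k$ large. Writing $\Pi_{m,k} := \prod_{j=m}^{k-1}(I - \tfrac1j B)$ (the factors commute), one gets $\lVert\Pi_{m,k}\rVert \le \exp\!\big(-\mu\sum_{j=m}^{k-1}\tfrac1j\big) \le C\,(m/k)^{\mu}$, which for each fixed $m$ tends to $0$ as $k\to\infty$. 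Splitting $B_k = B + (B_k - B)$ and solving the affine recursion by discrete variation of constants gives
\begin{equation*}
e_k = \Pi_{1,k}e_1 \;-\; \sum_{m=1}^{k-1}\Pi_{m+1,k}\,\tfrac1m\big((B_m - B)e_m + \delta_m\big).
\end{equation*}
The first term vanishes. For the $\delta_m$ contribution I would use Abel summation together with $\Delta_\ell = o(\ell)$ and the kernel bounds $\sum_{m<k}\lVert\Pi_{m+1,k}\rVert\tfrac1m \le C$ and $\sum_{m<k}\big\lVert\Pi_{m+1,k}\tfrac1m - \Pi_{m+2,k}\tfrac1{m+1}\big\rVert\,m \le C$ (both uniform in $k$, easily checked from $\lVert\Pi_{m,k}\rVert \le C(m/k)^\mu$); this is a Silverman--Toeplitz/Kronecker-type lemma showing that a Ces\`aro-null sequence integrated against such a kernel produces a null sequence.

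The delicate part, and the one I expect to be the genuine obstacle, is the multiplicative term $\sum_m\Pi_{m+1,k}\tfrac1m(B_m - B)e_m$. One cannot control it by the crude bound $\lVert I - \tfrac1k B_k\rVert \le 1 - \tfrac\mu k + \tfrac1k\lVert B_k - B\rVert$, because $\lVert B_k - B\rVert$ need not be summably small and the resulting products may diverge; the whole point of the Ces\`aro hypothesis is that $\sum_k(B_k - B)$ enjoys cancellation, not that $\lVert B_k - B\rVert$ is small. So the strategy is: (i) first prove an a priori bound $\sup_k\lVert e_k\rVert < \infty$ by a perturbed-Lyapunov/Gronwall argument run over dyadic blocks $\{m,\dots,2m\}$, on which $\sum_{k}\tfrac1k(B_k - B) = o(1)$ follows from $\Delta_\ell = o(\ell)$ by Abel summation, so the stabilising term $-\tfrac2k\langle Be_k,e_k\rangle$ dominates; (ii) with $\lVert e_m\rVert$ bounded and $\lVert e_{m+1} - e_m\rVert \le C/m$ read off from the recursion, treat $(B_m - B)e_m$ by summation by parts in $m$, pairing the bounded-variation factor $\tfrac1m e_m$ against the kernel and the Ces\`aro-null partial sums of $B_m - B$, which again yields $0$ in the limit. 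An equivalent, cleaner packaging is the ODE method: the scheme $W_{k+1} = W_k + \tfrac1k(h(W_k) + \xi_k)$ with linear drift $h(W) = -(BW - v)$, whose ODE $\dot W = -(BW - v)$ is globally asymptotically stable at $W^*$, and noise $\xi_k = (B - B_k)W_k + (v_k - v)$ with vanishing running averages once $\{W_k\}$ is shown bounded; the standard convergence theorems then deliver $W_k \to W^*$. Either way, combining the vanishing of the homogeneous term, the $\delta$-term and the multiplicative term yields $e_k \to 0$, i.e. $W_k \to W^* = B^{-1}v$.
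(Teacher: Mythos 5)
The paper does not supply a proof here; it cites \cite{Gyorfi1980} and uses the result as a black box, so there is no in-paper argument to compare against. Your decomposition into error coordinates, the discrete variation-of-constants formula, the kernel estimate $\lVert\Pi_{m,k}\rVert \le C(m/k)^\mu$, and the Abel/Toeplitz step for the $\delta_m$-sum are all sound, and you are right that the statement as printed is missing a boundedness (in fact positivity) hypothesis on the $B_k$: in the corollary that invokes this theorem one has $B_k = g(\phi^k m_0)g(\phi^k m_0)^\top + \lambda I$ with $g$ continuous on a compact manifold, so $\lambda I \le B_k \le \Lambda I$ uniformly.

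The obstacle you single out, the term $\sum_m \Pi_{m+1,k}\tfrac1m(B_m-B)e_m$, is created by your choice to build the homogeneous product out of $B$ alone. Take instead $\Pi_{m,k} := \prod_{j=m}^{k-1}\big(I - \tfrac1j B_j\big)$; then variation of constants gives simply $e_k = \Pi_{1,k}e_1 - \sum_{m<k}\Pi_{m+1,k}\tfrac1m\delta_m$, with no multiplicative remainder at all. Under the hypothesis $B_k \ge \mu' I$ uniformly (needed, and available in the application), $\lVert\Pi_{m,k}\rVert\le C(m/k)^{\mu'}$ just as before, the difference kernel satisfies $\Pi_{m+1,k}/m - \Pi_{m+2,k}/(m+1) = \Pi_{m+2,k}(I - B_{m+1})/(m(m+1))$ so its norm is $O\big((m/k)^{\mu'}/m^2\big)$, and the Abel/Toeplitz argument you already wrote for the $\delta$-term closes the proof with no a priori boundedness of the iterates required. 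If you do insist on the theorem with only $B>0$ positive, the dyadic bootstrap in your step (i) is where all the real work sits: a termwise Lyapunov estimate fails for exactly the reason you give, and harvesting the cancellation in $\sum(B_k-B)$ by freezing $e_k\approx e_m$ over a block presupposes the boundedness you are trying to prove, so it must be run as an induction on block scale. You have correctly located the difficulty but not resolved it; the route above sidesteps it entirely.
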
 

\begin{corollary}
    Let $(M,\Sigma)$ be a measurable space, and suppose that $\phi : M \to M$ is ergodic with invariant measure $\mu$. Let $m_0$ be a $\mu$-generic point in $M$. Let $g \in C^0(M,\mathbb{R}^P)$ be an observation function and suppose that $u \in L^2(\mu)(M, \mathbb{R})$ is a target function we wish to approximate. Then, for any initial $W_0$, the iteration \eqref{online_learning} with $\alpha_k = 1/k$
    \begin{align*}
        W_{k+1} &= (1-\alpha_k\lambda)W_k
    - \alpha_k g\circ\phi^{k}(m_0)(W^{\top}_k g\circ\phi^k(m_0) - u \circ \phi^k(m_0))
    \end{align*}
    converges to
    \begin{align*}
        W^* = \bigg[\int_M gg^{\top} + \lambda I \ d\mu \bigg]^{-1} \int_M gu \ d\mu.
    \end{align*}
\end{corollary}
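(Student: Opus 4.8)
The plan is to recognise the iteration \eqref{online_learning} with $\alpha_k = 1/k$ as an instance of the stochastic approximation scheme in Theorem \ref{theorem::gyorfi}, and then to check its hypotheses using the Ergodic Theorem. First I would rewrite the update rule. Writing $g_k := g\circ\phi^k(m_0)$ and $u_k := u\circ\phi^k(m_0)$, a direct rearrangement of \eqref{online_learning} gives
\begin{align*}
    W_{k+1} = W_k - \frac{1}{k}\Big( (g_k g_k^{\top} + \lambda I) W_k - g_k u_k \Big),
\end{align*}
which is exactly the form appearing in Theorem \ref{theorem::gyorfi} with Hilbert space $\mathcal{H} = \mathbb{R}^P$, operators $B_k := g_k g_k^{\top} + \lambda I$ and vectors $v_k := g_k u_k$. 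Each $B_k$ is a bounded linear operator on $\mathbb{R}^P$ since it is represented by a finite matrix.

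Second, I would identify the candidate limits $B := \int_M gg^{\top} + \lambda I \, d\mu$ and $v := \int_M gu \, d\mu$ and verify that the Cesàro averages of $(B_k)$ and $(v_k)$ converge to them. Since $g \in L^2(\mu)$ (which holds automatically when $M$ is compact, and is in any case implicit in $W^*$, $B$ and $v$ being well defined), the entries of $gg^{\top}$ lie in $L^1(\mu)$, and by the Cauchy--Schwarz inequality so do the entries of $gu$. Applying the Ergodic Theorem coordinatewise to $\phi$ at the $\mu$-generic point $m_0$ then yields
\begin{align*}
    \lim_{\ell\to\infty} \frac{1}{\ell}\sum_{k=0}^{\ell-1} B_k = B, \qquad \lim_{\ell\to\infty} \frac{1}{\ell}\sum_{k=0}^{\ell-1} v_k = v,
\end{align*}
in the operator norm on $\mathbb{R}^{P\times P}$ and the Euclidean norm on $\mathbb{R}^P$ respectively, which is precisely the convergence hypothesis required of $(B_k)$ and $(v_k)$ in Theorem \ref{theorem::gyorfi}.

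Third, I would check that $B$ is invertible, bounded, linear, symmetric and positive. Linearity and boundedness are immediate; symmetry holds because $\int_M gg^{\top}\, d\mu$ is a limit of symmetric matrices and $\lambda I$ is symmetric. Positivity is where $\lambda > 0$ is essential: for any $w \in \mathbb{R}^P \setminus \{0\}$ one has $w^{\top} B w = \int_M (w^{\top} g)^2 \, d\mu + \lambda \lVert w \rVert^2 \geq \lambda \lVert w \rVert^2 > 0$, so $B$ is symmetric positive definite and hence invertible. Theorem \ref{theorem::gyorfi} then applies and gives, for any initial $W_0$, that $W_k \to B^{-1} v = \big[\int_M gg^{\top} + \lambda I \, d\mu\big]^{-1} \int_M gu \, d\mu = W^*$, which is the claim. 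The argument is essentially bookkeeping once Theorem \ref{theorem::gyorfi} is in hand; the only genuine points of care are the integrability needed to invoke the Ergodic Theorem for $(B_k)$ and $(v_k)$ — reducing to $g \in L^2(\mu)$ — and the role of $\lambda > 0$ in making the limiting operator $B$ invertible, since without regularisation $\int_M gg^{\top}\, d\mu$ could be singular and the scheme would fall outside the scope of Györfi's theorem.
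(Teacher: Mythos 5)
Your proposal is correct and follows essentially the same route as the paper: rewrite the iteration as $W_{k+1} = W_k - \frac{1}{k}(B_k W_k - v_k)$ with $B_k = g_k g_k^{\top} + \lambda I$ and $v_k = g_k u_k$, invoke the Ergodic Theorem at the $\mu$-generic point $m_0$ to obtain the Ces\`aro convergence of $(B_k)$ and $(v_k)$ to $B$ and $v$, and apply Theorem \ref{theorem::gyorfi} to conclude $W_k \to B^{-1}v = W^*$. Your explicit verification that $B$ is symmetric positive definite (via $\lambda > 0$) and of the integrability needed for the Ergodic Theorem is a welcome extra level of care that the paper's proof leaves implicit.
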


\begin{proof}
    Let
    \begin{align*}
        B := \bigg[\int_M gg^{\top} + \lambda I \ d\mu \bigg], \qquad v := \int_M gu \ d\mu,
    \end{align*}
    and 
    \begin{align*}
        B_k := \bigg[g\circ\phi^k(m_0)(g \circ \phi^k(m_0))^{\top} + \lambda I \bigg], \qquad v_k := g\circ\phi^k(m_0)u\circ\phi^k(m_0),
    \end{align*}
    and notice that 
    \begin{align*}
        \bigg\lVert \lim_{\ell \to \infty} \frac{1}{\ell}\sum_{k = 0}^{\ell - 1} B_k - B \bigg\rVert_{\mathbb{R}^P} = 0, \qquad \text{and} \qquad \bigg\lVert \lim_{\ell \to \infty} \frac{1}{\ell}\sum_{k = 0}^{\ell - 1} v_k - v \bigg\rVert_{\mathbb{R}^P} = 0,
    \end{align*}
    by the ergodic theorem. Then the iteration \eqref{online_learning} can be written as
    \begin{align*}
        W_{k+1} &= W_k + \alpha_k\bigg(\bigg[g\circ\phi^k(m_0)(g \circ \phi^k(m_0))^{\top} + \lambda I \bigg] W_k - g\circ\phi^k(m_0)u\circ\phi^k(m_0)\bigg) \\
        &= W_k + \frac{1}{k} (B_k W_k - v_k)
    \end{align*}
    which converges to 
    \begin{align*}
        W^* = B^{-1}v = \bigg[\int_M gg^{\top} + I \lambda \ d\mu \bigg]^{-1} \int_M gu \ d\mu
    \end{align*}
    by Theorem \ref{theorem::gyorfi}.
\end{proof}

In summary: the online learning algorithm (iteration \eqref{online_learning}) discussed in section \ref{section::online_learning} updates the weights $W_k$ at each time step $k$ according to the reservoir state $x_k$ and observation $z_k$ made at time $k$. This is in contrast to offline methods, like the SVD, where every past reservoir state $x_k$ is stored in memory, and the estimate for $W^*$ is computed using a single matrix computation involving every past reservoir state. In machine learning problems, offline methods are typically more time-efficient than online methods, while online methods demand much less memory.

\section{Value Functions and Control}

In the preceding section we have shown that whether we use online or offline methods we can find a $W^*$ such that $W^{*\top}g$ is a good approximation to the target function $u$. If $f$ is the GS associated to a reservoir map, we can choose $g = f$. Alternatively we could choose $g = f - \gamma f \circ \phi$ where $\gamma \in [0,1)$. To understand the significance of this, we consider the (so called) value function $V : M \to \mathbb{R}$ defined by
\begin{align*}
    V :&= \sum_{k=0}^{\infty} \gamma^k u \circ \phi^k
\end{align*}
which returns the present value of a collection of future observations, with a discount factor $\gamma$ which indicates that future observations further away in time are valued less than observations nearer in the future. The value function satisfies the following functional relationship

\begin{align*}
    V &= \sum_{k=0}^{\infty} \gamma^k u \circ \phi^k
    = u + \sum_{k=1}^{\infty} \gamma^k u \circ \phi^k 
    = u + \sum_{k=0}^{\infty} \gamma^{k+1} u \circ \phi^{k+1} \\
    &= u + \gamma \sum_{k=0}^{\infty} \gamma^{k} u \circ \phi^{k+1}
    = u + \gamma \bigg(\sum_{k=0}^{\infty} \gamma^{k} u \circ \phi^k \bigg) \circ \phi
    = u + \gamma V \circ \phi.
\end{align*}

Hence, the vector $W$, which we call $W^*$, that minimises the following
\begin{align*}
    \lVert W^{\top}g - u \rVert_{L^2(\mu)}^2 + \lambda \lVert W \rVert^2 
    =&\lVert W^{\top}(f - \gamma f \circ \phi) - u \rVert_{L^2(\mu)}^2 + \lambda \lVert W \rVert^2  \\ =&\lVert W^{\top}f - (\gamma W^{\top} f \circ \phi + u) \rVert_{L^2(\mu)}^2 + \lambda \lVert W \rVert^2  
\end{align*}
results in a function $W^{*\top}f$ which is a good approximation to the value function $V$. This value function $V$ is intimately related to infinite horizon, discrete time control theory. We do not develop the deterministic theory further here, but in Chapter \ref{chapter::stochastic} we develop a stochastic analogue to control theory more detail.

\section{Biological interpretation}

In this section we are interested in finding the learning paradigm that best describes the information processing of neurons in biological organisms. The offline process is a completely implausible model, as this would entail the states of a biological brain, represented by the reservoir states $x_k$ being stored somewhere \emph{external to the brain}. These externally stored states would be subject to computations and the resulting weights $W^*$ then would be transmitted back into the brain.

The online process on the hand seems much more plausible. The connection weights, which are the components of $W_k$, represent the connections strength between the reservoir neurons and an output neuron, which might influence motor function or some other physiological output \citep{izhikevich2007dynamical}. The weights $W_k$ change gradually in response to connected neurons firing more or less frequently. This is consistent with the Hebbian learning maxim that \emph{neurons that fire together wire together} \citep{lowel1992selection}. The states of the brain are represented by the reservoir states $x$, which influence the output weights $W$ connecting the brain $A$ to the output neuron $u$.

This interpretation is highly speculative - and quite vague - and would certainly benefit from further development with a careful application of neuroscience. That said, the general results about reservoir computing suggest that, whatever the details, a biological nervous system may satisfy the definitions of a reservoir map, and perform a type of online learning which admits a universal approximation property.

\section{Learning Diffeomorphic Dynamics}

The results in previous sections explain our success in Chapter \ref{chapter::introduction} in approximating the $\zeta$ component of the Lorenz system. Building on this success in approximating `past' values of the $\zeta$ component, we turn our attention to approximating the next step map $g := \omega \circ \phi$ in the $L^2(\mu)$ norm, then feeding these predictions into the autonomous ESN \eqref{eqn::auto} in order to generate a future trajectory. Since the errors are bounded in $L^2(\mu)$, we can guarantee a good $L^2(\mu)$ approximation over any finite time horizon.

That said, we cannot rule out the possibility that over a sufficiently long time horizons, the $L^2(\mu)$ errors will accumulate causing the predicted trajectory to diverge from the true trajectory. The predicted trajectory could grow without bound, or even exhibit blow up to infinity in a finite time. 

These outcomes are undesirable, but remarkably, do not often occur in practice. In fact, we saw in Chapter \ref{chapter::introduction} that the autonomous ESN dynamics appear to be diffeomorphic to the Lorenz dynamics, and the future components appear qualitatively similar to the true trajectory, even though the predictions and truth do diverge after a transient period of close approximation, as we would expect from the existence of a positive Lyapunov exponent. We will show that this qualitative similarity is a consequence of the structural stability of the Lorenz system. A structurally stable dynamical system is robust up to small $C^1$ perturbations. Another way to phrase this is that a structurally stable dynamical system is diffeomorphic to itself under sufficiently small $C^1$ perturbations. Therefore if we use an ESN to approximate the dynamics very closely in the $C^1$ norm, the autonomous ESN is guaranteed to follow diffeomorphic dynamics for all future time. 

Theorem \ref{least_sqs_thm} guarantees that the $W$ that we obtain through the linear least squares minimisation yields an approximation in the $L^2(\mu)$ norm, which is sadly weaker than the $C^1$ norm. That is to say, a sequence which converges in $C^1$ also converges in $L^2(\mu)$, but the converse does not hold in general. Theorem \ref{least_sqs_thm} is therefore too weak for the purpose of guaranteeing approximation by the ESN in the $C^1$ norm. That said, Theorem \ref{theorem::RUAT} does at least guarantee the existence of a set of weights $W$ which yields an arbitrarily good $C^1$ approximation. With this motivation, we will prove an existence result here, stating that if the evolution operator $\phi$ is structurally stable then there exist weights $W$ such that the autonomous ESN with this set of weights will exhibit dynamics diffeomorphic to $\phi$.

We will first prove preliminary results which are somewhat technical, so we now carefully explain their purpose before they are stated and proved. First, we assume that the GS $f \in C^1(M,\mathbb{R}^P)$ is an embedding. Then the dynamics on $M$ described by $\phi \in \text{Diff}^1(M)$ are diffeomorphic to the dynamics on the image $f(M)$ described by $f \circ \phi \circ f^{-1}$. Now, we require readout weights $W$ such that $W^\top f$ closely approximates the next step map $\omega \circ \phi$ in the $C^1$ norm. Since we are dealing only with approximations, the vector $W^{\top}$ will act on points in $\mathbb{R}^P$ that are not strictly contained in the embedded manifold $f(M) \subset \mathbb{R}^P$. In order to prevent an accumulation of errors causing the autonomous ESN trajectory to fly away from the manifold $f(M)$, we need that $W^{\top}$ sends points in $\mathbb{R}^P$ that are close to $f(M)$ back onto $f(M)$. To achieve this, we define a discrete time dynamical system $\eta$, i.e. a map, on an open set $\Omega \subset \mathbb{R}^P$ which admits $f(M)$ as a normally hyperbolic attracting submanifold, on which the dynamics of $\eta$ are equal to $f \circ \phi \circ f^{-1}$. Such a manifold attracts nearby points such that trajectories originating near the manifold are attracted to it and move increasingly in directions tangential to $M$. We will choose $W$ so that the autonomous ESN closely approximates the dynamical system $\eta$ in the $C^1$ norm. To make this idea rigorous, we will first introduce a normally hyperbolic attracting submanifold.

\begin{defn}
    (Normally Hyperbolic Attracting Submanifold) Let $\phi \in \text{Diff}^1(M)$. A $\phi$-invariant submanifold $\Lambda \subset M$ is a normally hyperbolic attracting submanifold if the restriction to $\Lambda$ of the tangent bundle of $M$ admits a splitting into a direct sum of two $T\phi$-invariant sub-bundles, the tangent bundle of $\Lambda$, and the stable bundle $E^s$. Furthermore, with respect to some Riemannian metric on $M$, the restriction of $T\phi$ to $E^s$ must be a contraction, and must be relatively neutral on $T\Lambda$. Thus, there exist constants $0 < \lambda < \mu^{-1} < 1$ and $c > 0$ such that 
    \begin{align}
        T_{\Lambda}M &= T\Lambda \oplus E^s \nn \\
        (T\phi)_mE^s_m &= E^s_{\phi(m)} \ \forall \ m \in \Lambda \nn \\
        \lVert T\phi^k v \rVert &\leq c\lambda^k \lVert v \rVert \ \forall \ v \in E^s, \quad \text{and} \quad \forall \ k \in \mathbb{N} \nn  \\
        \lVert T \phi^k v \rVert &\leq c \mu^{\lvert k \rvert} \lVert v \rVert \ \forall \ v \in T\Lambda, \quad \text{and} \quad \forall \ k \in \mathbb{Z} \nn.
    \end{align}
\end{defn}

Next we will prove that there exists a $C^1$ evolution operator $\eta$ defined on $\mathbb{R}^P$ that admits a normally hyperbolic attracting submanifold on which the dynamics of $\eta$ are diffeomorphic to $\phi$. The existence of this map $\eta$ is guaranteed by standard topological machinery which we recall briefly here, and which is presented in detail by \cite{WarnerManifolds}.

\begin{defn}
    (Cubic centred chart) A chart $(V,\varphi)$ belonging to a $P$-manifold is called a cubic chart if $\varphi(V)$ is an open cube centred about the origin in $\mathbb{R}^P$. If $x \in V$ and $\varphi(x) = 0$, then the chart $(V,\varphi)$ is centred at $x$.
\end{defn}

\begin{defn}
    (Slice coordinates) Suppose that $(V,\varphi)$ is a chart on a $P$-manifold $N$ with coordinate functions $\xi_1 , ... , \xi_P$ and that $q$ is an integer $0 \leq q \leq P$. Let $a \in \varphi(V)$ and let
    \begin{align}
        S = \{ x \in V \mid \xi_i(x) = a_i , i = q+1 , ... , P \}. \nn
    \end{align}
    The subspace $S$ of $N$ together with coordinate maps $\xi\rvert_{S}$ for $j = 1 , ... , m$
    forms a submanifold of $N$, called a slice of the chart $(V,\varphi)$.
\end{defn}

\begin{lemma}
    (Slice Lemma) Let $M$ be a compact $q$-manifold, let $f : M \to \mathbb{R}^P$ be an immersion, and let $m \in M$. Then there exists a cubic centred chart $(V,\varphi)$ about $f(m)$ and a neighbourhood $U$ of $m$ such that $f\rvert_{U}$ is injective and $f(U)$ is a slice of $(V,\varphi)$. 
\end{lemma}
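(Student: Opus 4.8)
The plan is to reduce the statement to a local problem in Euclidean space and then straighten out the image of $f$ by the inverse function theorem, following the standard proof of the immersion normal form. First I would pick a chart $(W,\psi)$ of $M$ around $m$, set $p := \psi(m)$, and consider $g := f \circ \psi^{-1} : \psi(W) \to \mathbb{R}^P$. Since $f$ is an immersion at $m$, the derivative $Dg(p)$ has rank $q$, so after permuting the coordinates of $\mathbb{R}^P$ (a linear change of coordinates, which preserves cubic charts and slices) I may assume that the top $q\times q$ block of $Dg(p)$ is invertible. Writing $g = (g_1,g_2)$ with $g_1 : \psi(W) \to \mathbb{R}^q$ and $g_2 : \psi(W) \to \mathbb{R}^{P-q}$, the matrix $Dg_1(p)$ is then invertible.

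Next I would introduce the auxiliary map $G : \psi(W)\times\mathbb{R}^{P-q} \to \mathbb{R}^P$ defined by $G(x,y) = (g_1(x),\, g_2(x)+y)$. Its derivative at $(p,0)$ is the block lower-triangular matrix $\left(\begin{smallmatrix} Dg_1(p) & 0 \\ Dg_2(p) & I \end{smallmatrix}\right)$, which is invertible because $Dg_1(p)$ is; hence by the inverse function theorem $G$ restricts to a $C^1$ diffeomorphism from some neighbourhood $\mathcal{O}$ of $(p,0)$ onto a neighbourhood $\mathcal{O}'$ of $G(p,0)=g(p)=f(m)$ in $\mathbb{R}^P$. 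Composing $G^{-1}$ with the translation $(x,y)\mapsto(x-p,y)$ and shrinking $\mathcal{O}'$ so that its image is an open cube about the origin yields a cubic chart $(V,\varphi)$ centred at $f(m)$. The key identity is $G(x,0) = (g_1(x),g_2(x)) = g(x)$, which shows that points of $M$ near $m$ are carried by $\varphi\circ f$ exactly into the coordinate plane $\{y=0\}$, i.e. into a slice with $a = 0$.

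Finally I would verify the two assertions and tie the neighbourhoods together. For injectivity, note that $Dg_1(p)$ invertible forces $g_1$ to be a local diffeomorphism, hence injective on a neighbourhood of $p$; since the first $q$ components of $g$ are $g_1$, the map $g$ is injective there, and as $\psi$ is a homeomorphism the corresponding restriction $f|_U$ is injective. To get $f(U)$ equal to the \emph{whole} slice $S = \{\, z\in V : \xi_{q+1}(z) = \cdots = \xi_P(z) = 0 \,\}$ rather than merely a subset, I would \emph{define} $U$ by pulling back the slice: with $\pi : \mathbb{R}^q\times\mathbb{R}^{P-q}\to\mathbb{R}^q$ the projection, set $U := \psi^{-1}\big( g_1^{-1}(\pi(\varphi^{-1}(S))) \big)$ after a further shrinkage of $V$ ensuring this set lies inside the inverse-function-theorem neighbourhoods; the parametrisation $x\mapsto G(x,0)$ being a diffeomorphism onto its image then makes $f(U)=S$ exact. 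The main obstacle is precisely this bookkeeping of nested shrinkings: the inverse function theorem supplies only unspecified neighbourhoods, and one must shrink $\mathcal{O}'$ (and hence $U$) simultaneously so that the resulting chart is a genuine open cube, $f|_U$ remains injective, and $f(U)$ is the full slice and not a proper piece of it. Everything else is routine linear algebra together with an application of the inverse function theorem (equivalently, of the Immersion Theorem, Theorem \ref{theorem::immersion}).
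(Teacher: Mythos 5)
Your proof is correct and is essentially the argument in the source the paper cites (Warner, Prop.\ 1.35): extend $g$ to the full-rank map $G(x,y)=(g_1(x),g_2(x)+y)$, invoke the inverse function theorem, and read off the slice. One minor slip in the final formula: you write $U := \psi^{-1}\bigl(g_1^{-1}(\pi(\varphi^{-1}(S)))\bigr)$, but $\varphi^{-1}$ maps from the cube back into $V$ while $S$ already lies in $V$, so $\varphi^{-1}(S)$ does not parse; the intended pullback is $U := \psi^{-1}\bigl(g_1^{-1}(\pi(S))\bigr)$, since for $z=g(x)\in S$ one has $\pi(z)=g_1(x)$. With that correction the claim $f(U)=S$ follows exactly as you describe.
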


\begin{proof}
    \cite{WarnerManifolds} page 28 prop 1.35.
\end{proof}

\begin{lemma}
    Let $P > m$ and $M$ be a compact $q$-manifold. Let $\phi \in \text{Diff}^1(M)$. Suppose $f \in C^1(M,\mathbb{R}^N)$ is a $C^1$ embedding. Then there is an open subset $\Omega \subset \mathbb{R}^P$ and $\eta \in \text{Diff}^1(\Omega)$ with $f(M)$ a normally hyperbolic attracting submanifold such that $\eta\lvert_{f(M)} = f \circ \phi \circ f^{-1}$ (where we have defined $f^{-1}$ on the image of $f$).
    \label{manifold_lemma}
\end{lemma}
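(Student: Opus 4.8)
The plan is to realise $\eta$ as the suspension of $\phi$, transported by $f$, along a tubular neighbourhood of $f(M)$ and made contracting in the normal directions. Write $V:=f(M)$, a compact $C^1$ submanifold of $\mathbb{R}^P$ of dimension $q$ (since $f$ is a $C^1$ embedding; here $N$ in the hypothesis is the ambient dimension $P$), and write $\bar\phi:=f\circ\phi\circ f^{-1}:V\to V$ for the transported dynamics, which is a $C^1$-diffeomorphism of $V$ because $f^{-1}$ is $C^1$ on $V$. First I would invoke the tubular neighbourhood theorem — which can be assembled from the Slice Lemma by a partition-of-unity argument — to obtain an open $\Omega_0\supset V$ and a $C^1$-diffeomorphism identifying $\Omega_0$ with the $\varepsilon$-disc subbundle $\nu_\varepsilon$ of the normal bundle $\nu$ of $V$ in $\mathbb{R}^P$, carrying $V$ to the zero section.

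Next, over $V$ one needs a fibrewise-linear $C^1$ bundle morphism $\widehat\eta:\nu\to\nu$ covering $\bar\phi$ whose restriction to each fibre has norm a fixed small constant $\lambda$. Producing such a morphism is equivalent to exhibiting a bundle isomorphism $\Theta:\nu\xrightarrow{\ \sim\ }\bar\phi^{\,*}\nu$ over $V$ (then set $\widehat\eta=\lambda\cdot(\mathrm{taut.}\circ\Theta)$, where $\mathrm{taut.}:\bar\phi^{\,*}\nu\to\nu$ is the canonical map covering $\bar\phi$). This isomorphism exists because $\phi$ is a diffeomorphism: $T\phi$ is a fibrewise isomorphism covering $\phi$, so $TM\cong\phi^{\,*}TM$, hence $TV\cong\bar\phi^{\,*}TV$; combining this with $TV\oplus\nu\cong\underline{\mathbb{R}^P}\cong\bar\phi^{\,*}(TV\oplus\nu)=\bar\phi^{\,*}TV\oplus\bar\phi^{\,*}\nu$ and cancelling the common summand $TV$ (legitimate in the regime $P-q>q$ relevant throughout this thesis) gives $\nu\cong\bar\phi^{\,*}\nu$. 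Transporting $\widehat\eta$ through the tubular chart yields a $C^1$ map $\eta_0$ on a neighbourhood of $V$ with $\eta_0|_V=\bar\phi$ and with normal derivative along $V$ equal, up to the chart distortion, to $\lambda$ times an isomorphism onto $\nu_{\bar\phi(x)}$.

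Then I would fix the constants. Since $M$ is compact, $\mu:=\max\{\sup_m\|T_m\phi\|,\ \sup_m\|T_m\phi^{-1}\|,\ 2\}$ is finite and exceeds $1$; taking the normal contraction rate $\lambda:=\tfrac12\mu^{-1}$ gives $0<\lambda<\mu^{-1}<1$, and with the stable bundle chosen to be $E^s:=\nu$ the four conditions in the definition of a normally hyperbolic attracting submanifold hold for $\eta_0$ along $V$: $TV$ is preserved because $\eta_0|_V=\bar\phi$, $E^s$ is preserved by construction of $\widehat\eta$, the tangential bound $\|T\eta_0^k v\|\le c\mu^{|k|}\|v\|$ follows from the choice of $\mu$ (with $c$ absorbing the distortion between the metric induced on $V$ and the one in which $\widehat\eta$ is a $\lambda$-contraction), and the normal bound $\|T\eta_0^k v\|\le c\lambda^k\|v\|$ follows since $\widehat\eta$ contracts fibres by $\lambda$. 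Finally, to upgrade the germ $\eta_0$ to an honest $\eta\in\text{Diff}^1(\Omega)$, interpolate via a bump function supported near $V$ between $\eta_0$ close to $V$ and the identity farther out; for a sufficiently gently varying bump the interpolant is $C^1$-close to the identity off a neighbourhood of $V$, hence, after shrinking to a suitable open $\Omega$, is a $C^1$-diffeomorphism of $\Omega$ that still equals $f\circ\phi\circ f^{-1}$ on $f(M)$ and still has $f(M)$ as a normally hyperbolic attracting submanifold (both being conditions confined to a neighbourhood of $V$).

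I expect the main obstacle to be the bundle-theoretic step: constructing a contracting bundle endomorphism of the normal bundle covering $\bar\phi$, i.e. the isomorphism $\nu\cong\bar\phi^{\,*}\nu$. It is here that one genuinely uses that $\phi$ — and hence $\bar\phi$ — is a diffeomorphism rather than merely a $C^1$ map, together with a vector-bundle cancellation fact that needs the ambient dimension large enough; a careless "just contract the normal fibres" construction that ignores this transport fails to supply a $T\eta$-invariant stable bundle. A secondary, more routine but still non-trivial point is the passage in the last step from a map defined only near $f(M)$ to a bona fide diffeomorphism of an open subset of $\mathbb{R}^P$.
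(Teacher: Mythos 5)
Your core construction is sound, but it takes a genuinely different route from the paper's. The paper argues locally: using the Slice Lemma it picks, around each point of $f(M)$, a cubic chart in which $f(M)$ is a slice, defines there a local map that applies $f\circ\phi\circ f^{-1}$ in the slice coordinates and halves the complementary coordinates, and then glues these local maps with a partition of unity subordinate to the covering charts, asserting normal hyperbolicity ``by construction''. You instead work globally: tubular neighbourhood of $V=f(M)$, a fibrewise-linear contraction of the normal bundle covering $\bar\phi=f\circ\phi\circ f^{-1}$, manufactured from a bundle isomorphism $\nu\cong\bar\phi^{\,*}\nu$ via stable-range cancellation (both bundles are stably trivial and have rank $P-q>q$). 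That cancellation argument is correct, and what your route buys is precisely what the paper's gluing leaves unverified: an explicit $T\eta$-invariant stable bundle $E^s$ and explicit constants $0<\lambda<\mu^{-1}<1$. The price is the hypothesis $P>2q$, which is stronger than the lemma's literal (garbled) assumption ``$P>m$'' but is the regime in which the surrounding embedding results operate anyway; you flag this honestly.

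The one step that does not work as written is the final interpolation with the identity. If $\eta=(1-\beta)\,\mathrm{id}+\beta\,\eta_0$ with a bump $\beta$ equal to $1$ near $f(M)$ and $0$ farther out, then in the transition region
\begin{align*}
D\eta=(1-\beta)I+\beta\,D\eta_0+(\eta_0-\mathrm{id})\otimes D\beta .
\end{align*}
The displacement $\eta_0(x)-x$ is of the order of the diameter of $f(M)$ (points are carried around $V$ by $\bar\phi$), so no ``gently varying'' $\beta$ makes the last term small; and even discarding it, the convex combination $(1-\beta)I+\beta D\eta_0$ can be singular for intermediate $\beta$, e.g.\ whenever $D\eta_0$ has spectrum meeting the negative real axis (orientation-reversing or rotating $\bar\phi$). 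So the interpolant need not be an immersion, let alone injective. Fortunately the step is unnecessary: your fibrewise-linear model maps the $\varepsilon$-disc bundle into itself and is injective (fibrewise $\lambda$ times an isomorphism over the injective base map $\bar\phi$), so after transporting through the tubular chart you can simply take $\Omega$ to be the tube and $\eta=\eta_0$, a $C^1$ diffeomorphism of $\Omega$ onto its image, preserving $f(M)$ with the required normally hyperbolic attraction. This is exactly the (weak) sense in which the paper's own $\eta$ is a ``diffeomorphism of $\Omega$'': its glued map is likewise only defined near $f(M)$ and is not a surjective self-map, and that weaker statement is all that the later invariant-manifold argument uses.
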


\begin{proof}
    We will make a similar argument to \cite{WarnerManifolds} in the proof of his Proposition 1.36, on page 29. First let $m \in M$. Then by the Slice Lemma there exists a cubic centred chart $(V_m,\varphi_m)$ about $f(m)$ and a neighbourhood $U_m$ of $m$ such that $f(U_m)$ is a slice $(V_m,\varphi_m)$.    
    Let $\xi_1, \ldots, \xi_q$ be the slice coordinates in the chart $(V_m,\varphi_m)$ of points in $f(U_m)$.
    Then we can define a map $\eta_m \in \text{Diff}^1(V_m,\mathbb{R}^d)$ applying the map $f \circ \phi \circ f^{-1}$ on the slice co-ordinates and dividing the remaining co-ordinates by 2.   
    We can make this argument for every $m \in M$ hence define a collection of maps $\{ \eta_m \}$ over a collection of open sets $\{ V_m \}$ which cover $f(M)$. Now we let $\{ \alpha_j \ | \ j \in \mathbb{N} \}$ form a partition of unity subordinate to the cover $\{ V_m \}$. We take a subsequence $\{ \alpha_k \}$ such that $\text{supp}(\alpha_k) \cap f(M) \neq \emptyset$ and denote the collection of sets to which $\{ \alpha_k \}$ is subordinate
    by $\{ V_k \}$. We then define a map $\eta$ on a neighbourhood $\Omega : = \cup_{k} V_k$ of $f(M)$ by
    \begin{align}
        \eta = \sum_k \alpha_k \eta_m. \nn
    \end{align}
    By construction, $\eta\rvert_{f(M)} = f \circ \phi \circ f^{-1}$ and $\eta$ has a normally hyperbolic attracting submanifold $f(M)$.
    \end{proof}

Not only does the dynamical system $\eta$ exist, but, importantly, its normally hyperbolic attracting submanifold is preserved by any sufficiently good approximation. This is made formal in the Invariant Manifold Theorem, which we will use in the proof of the ESN Approximation Theorem.

\begin{theorem}
    (Invariant Manifold Theorem) Let $K$ be a compact manifold and $\eta \in \text{Diff}^1(K)$ with normally hyperbolic attracting submanifold $\Lambda$. Then, $\exists \ \epsilon > 0$ such that for any $u \in \text{Diff}^1(K)$ with $\lVert \eta - u \rVert_{C^1} < \epsilon$, the diffeomorphism $u$ has a normally hyperbolic attracting submanifold $U$ such that $\lVert U - \Lambda \rVert_{C^1} < \epsilon$.
    \label{IMT}
\end{theorem}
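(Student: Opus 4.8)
The plan is to recognise this statement as the classical persistence theorem for normally hyperbolic invariant manifolds, proved in full generality in \cite{Invariant_Manifolds}, and to obtain it as a direct consequence of that theory; in the thesis I would ultimately simply cite that reference, but it is worth sketching how the argument runs so that the hypotheses in our definition of a normally hyperbolic attracting submanifold are seen to be exactly what is needed.

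First I would pass to a tubular neighbourhood of $\Lambda$. Since $\Lambda$ is a compact normally hyperbolic attracting submanifold of $K$, the splitting $T_\Lambda K = T\Lambda \oplus E^s$ together with the exponential map of an adapted Riemannian metric identifies a neighbourhood $\mathcal{T}$ of $\Lambda$ in $K$ with a neighbourhood of the zero section of the bundle $E^s \to \Lambda$; a point of $\mathcal{T}$ becomes a pair $(x,v)$ with $x \in \Lambda$ and $v \in E^s_x$. In these coordinates $\eta$ takes the skew-product form $\eta(x,v) = (\phi(x) + \rho(x,v),\, L_x v + \tau(x,v))$ with $\rho,\tau = o(|v|)$, where $L_x := T\phi|_{E^s_x}$ obeys $\lVert L_x \rVert \leq c\lambda$ uniformly and $\phi|_\Lambda$ is relatively neutral with $\lVert T\phi^{\pm 1} \rVert \leq c\mu$ on $T\Lambda$. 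The strict gap $\lambda < \mu^{-1} < 1$ is precisely the spectral separation exploited in the next step.

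Next I would run the Hadamard graph transform. Let $\mathrm{Sec}$ be the complete metric space of continuous sections $\sigma : \Lambda \to E^s$ with $\lVert \sigma \rVert_{C^0} \leq \delta$ and Lipschitz constant at most $1$, with $\delta$ small enough that $\mathrm{graph}(\sigma) \subset \mathcal{T}$ throughout. For $u$ with $\lVert \eta - u \rVert_{C^1} < \epsilon$ and $\epsilon$ small, $u$ still has the approximate skew-product structure: its base map is $C^1$-close to $\phi|_\Lambda$ and remains relatively neutral, and its fibre part is still a $c\lambda$-contraction. Define $\Gamma_u(\sigma)$ to be the section whose graph is $u(\mathrm{graph}(\sigma))$; using that the fibre direction contracts by $\lambda$ while the base map and its inverse expand by at most $\mu$, one checks that $\Gamma_u$ maps $\mathrm{Sec}$ into itself and is a contraction in the $C^0$ metric with a factor bounded by a quantity close to $\lambda\mu < 1$. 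The Banach Fixed Point Theorem (\cite{Banach1922}) then yields a unique $\sigma_u \in \mathrm{Sec}$, and $U := \mathrm{graph}(\sigma_u)$ is a $u$-invariant topological submanifold; since $\Lambda$ is $\eta$-invariant we have $\sigma_\eta \equiv 0$.

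Finally I would upgrade regularity and extract the closeness estimate. A standard application of the fibre contraction theorem — differentiate the graph transform and observe that the induced cocycle on derivatives also contracts, again because $\lambda < \mu^{-1}$ — shows $\sigma_u \in C^1$, so $U$ is a $C^1$ submanifold, and normal hyperbolicity of $U$ for $u$ follows from persistence of the hyperbolic splitting under the $C^1$-small perturbation. Because $\Gamma_u$ depends Lipschitz-continuously on $u$ in the $C^1$ topology and the fixed point of a uniform family of contractions depends continuously on the parameter, $\lVert \sigma_u - \sigma_\eta \rVert_{C^1} = \lVert \sigma_u \rVert_{C^1} \to 0$ as $\lVert \eta - u \rVert_{C^1} \to 0$; interpreting $\lVert U - \Lambda \rVert_{C^1}$ via these graph representations and shrinking $\epsilon$ if necessary gives $\lVert U - \Lambda \rVert_{C^1} < \epsilon$. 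The one genuinely delicate point is the $C^1$ (as opposed to merely $C^0$) control of the graph transform and its fixed point, which is exactly where the fibre contraction theorem and the strict inequality $\lambda < \mu^{-1}$ are indispensable; since this is carried out carefully in \cite{Invariant_Manifolds}, the thesis proof reduces to that citation.
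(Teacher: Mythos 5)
Your proposal is correct and matches the paper, whose entire proof of Theorem \ref{IMT} is the citation to \cite{Invariant_Manifolds}; your concluding reduction to that reference is exactly what the thesis does. The graph-transform sketch you give (tubular neighbourhood, Hadamard graph transform, fibre contraction for $C^1$ regularity, continuous dependence of the fixed point on the perturbation) is a faithful outline of the argument in that reference, so nothing further is needed.
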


\begin{proof}
    \cite{Invariant_Manifolds}.
\end{proof}

\begin{lemma}
    Let $M$ be a smooth compact $q$-manifold and $\phi \in \text{Diff}^1(M)$. Let $\omega, u \in C^1(M,\mathbb{R})$ denote the scalar observation function and scalar target function respectively.
    Let $\mathcal{F}$ denote the sequence of maps $F_P : \mathbb{R}^N \times \mathbb{R} \to \mathbb{R}^P$ defined by
    \begin{align*}
        F_P(x,z) = \sigma(Ax + Cz + b)
    \end{align*}
    where
    \begin{itemize}
        \item $\sigma \in C^{1}(\mathbb{R})$ is $1$-finite
        \item $A$ is a $P \times N$ random matrix, where $P > N$ and the first $N$ rows of $A$ form an $N \times N$ random submatrix with 2-norm less than 1 almost surely. The $j^{\mathrm{th}}$ row of $A$ (where $j > N$), denoted $A_j$, is a random variable with full support on $(\mathbb{R}^N)^{\top}$
        \item $C$ is a random $P$-vector with $j^\mathrm{th}$ entry $C_j$ a random variable with full support on $\mathbb{R}$
        \item $b$ is a random $P$-vector with $j\mathrm{th}$ entry $b_j$, a random variable with full support on $\mathbb{R}$.
    \end{itemize}
    Suppose further that the reservoir map $\pi_N \circ F_P : \mathbb{R}^N \times \mathbb{R} \to \mathbb{R}^N$ admits a GS $f \in C^1(M,\mathbb{R}^N)$ which is an embedding.
    
    Then for all $\epsilon > 0$ and $\alpha \in (0,1)$ there exists a $P_0 \in \mathbb{N}$ such that for all $P > P_0$, there exists a $W \in \mathbb{R}^P$ such that 
    \begin{align*}
        \lVert W^{\top}F_P(f\circ\phi^{-1},\omega) - u \rVert_{C^1} < \epsilon
    \end{align*}
    with probability at least $\alpha$.
    \label{lemma::conjugacy}
\end{lemma}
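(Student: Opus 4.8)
The plan is to reduce the statement to the Random Universal Approximation Theorem (Theorem~\ref{theorem::RUAT}), using that $\sigma$ is $1$-finite so that this theorem produces approximations in the $C^1$ norm, and that the neurons indexed $j>N$ — whose internal weights have full support and are independent of the weights that determine $f$ — suffice to realise those approximations inside the readout layer. First I would set $y\colon M\to\mathbb{R}^N\times\mathbb{R}$, $y(m):=(f\circ\phi^{-1}(m),\omega(m))$, so that $F_P(f\circ\phi^{-1},\omega)=F_P\circ y$. Since $f$ is a $C^1$ embedding and $\phi^{-1}\in\text{Diff}^1(M)$, the map $f\circ\phi^{-1}$ is already a $C^1$ embedding of $M$ into $\mathbb{R}^N$, and appending the coordinate $\omega\in C^1(M,\mathbb{R})$ destroys neither injectivity of the map nor injectivity of its tangent map; as $M$ is compact, $y$ is therefore a $C^1$ diffeomorphism onto the compact $C^1$ submanifold $y(M)\subset\mathbb{R}^{N+1}$. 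Fix a closed ball $K\subset\mathbb{R}^{N+1}$ containing $y(M)$ in its interior.

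Next I would extend the target off the submanifold. The function $u\circ y^{-1}$ is of class $C^1$ on $y(M)$; composing with the $C^1$ retraction of a tubular neighbourhood of $y(M)$ and multiplying by a $C^1$ bump function (equivalently, by the Whitney extension theorem) produces $\tilde u\in C^1(K,\mathbb{R})$ with $\tilde u|_{y(M)}=u\circ y^{-1}$, hence $\tilde u\circ y=u$ on $M$. Because $y$ is a fixed $C^1$ diffeomorphism onto its image and $y(M)\subset K$, there is a constant $\kappa\ge1$ depending only on $\phi,\omega,f$ (and the fixed Riemannian metric on $M$) such that $\lVert h\circ y\rVert_{C^1(M)}\le\kappa\,\lVert h\rVert_{C^1(K)}$ for every $h\in C^1(K,\mathbb{R})$: restriction to the submanifold does not increase the $C^1$ seminorm, and precomposition with $y$ contributes only $\lVert y\rVert_\infty$ and $\lVert Dy\rVert_\infty$, neither of which involves $P$.

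Then I would apply Theorem~\ref{theorem::RUAT} with $\ell=r=1$ on $K\subset\mathbb{R}^{N+1}$ to $\tilde u$, with error threshold $\epsilon/\kappa$ and confidence $\alpha$, using only the neurons indexed $j>N$. For these indices the internal weight $(A_j,C_j)\in\mathbb{R}^{N+1}$ acting on the input $(x,z)$ and the bias $b_j$ are i.i.d.\ across $j$ with full support, and they are independent of the first $N$ rows and entries of $A,C,b$, hence of $f$ and of the (assumed) event that $f$ is an embedding, on which we work throughout; so conditionally on that event these neurons still have i.i.d.\ full-support weights. Theorem~\ref{theorem::RUAT} then yields a number $\mathcal{N}\in\mathbb{N}$ of such neurons for which, with probability at least $\alpha$, there exist output coefficients making $\sum_j W_j\,\sigma\bigl((A_j,C_j)^\top\cdot+b_j\bigr)$ lie within $\epsilon/\kappa$ of $\tilde u$ in $C^1(K)$. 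Put $P_0:=N+\mathcal{N}$; for any $P>P_0$ there are at least $\mathcal{N}$ usable neurons, so choosing these coefficients for $\mathcal{N}$ of the indices $j\in\{N+1,\dots,P\}$ and $W_j=0$ otherwise gives $W\in\mathbb{R}^P$ with $W^\top F_P=\sum_{j>N}W_j\,\sigma\bigl((A_j,C_j)^\top\cdot+b_j\bigr)$ and $\lVert W^\top F_P-\tilde u\rVert_{C^1(K)}<\epsilon/\kappa$ with probability at least $\alpha$. Composing with $y$, on that same event $\lVert W^\top F_P(f\circ\phi^{-1},\omega)-u\rVert_{C^1(M)}=\lVert(W^\top F_P-\tilde u)\circ y\rVert_{C^1(M)}\le\kappa\cdot\epsilon/\kappa=\epsilon$, as required.

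The main obstacle is not deep: it is the bookkeeping in the middle step — extending $u\circ y^{-1}$ to a $C^1$ function on an ambient compact ball and verifying that passing between $C^1(K)$, $C^1(y(M))$ and $C^1(M)$ costs only a fixed constant $\kappa$ independent of $P$ — together with the (routine) observation that the contracting first $N$ rows of $A$, which carry the embedding hypothesis through $f$, are stochastically independent of the full-support neurons $j>N$ that actually perform the approximation.
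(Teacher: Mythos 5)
Your proposal is correct and follows essentially the same route as the paper: define $y(m)=(f\circ\phi^{-1}(m),\omega(m))$, apply the Random Universal Approximation Theorem to the target $u\circ y^{-1}$, and pull the $C^1$ estimate back through composition with $y$ (the paper phrases your constant $\kappa$ as continuity of the composition operator $s\mapsto s\circ y$ at $u\circ y^{-1}$). Your additional care --- extending $u\circ y^{-1}$ to an ambient compact set and performing the approximation only with the full-support neurons indexed $j>N$, which are independent of the rows determining $f$ --- merely makes explicit details the paper's proof leaves implicit.
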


\begin{proof}
    Let $y \in C^1(M,\mathbb{R}^{N} \times \mathbb{R})$ be defined by
    \begin{align*}
        y(m) = (f \circ \phi^{-1}(m), \omega(m)).
    \end{align*}
    Note that $y$ is an embedding because $f$ is an embedding, and $y$ therefore admits an inverse on its image $y(M) \subset \mathbb{R}^N \times \mathbb{R}$. Let $\Phi : C^1(\mathbb{R}^{N+1},\mathbb{R}) \to C^1(M,\mathbb{R})$ be defined 
    \begin{align*}
        \Phi(s) = s \circ y.
    \end{align*}
    Now $\Phi$ is continuous at $u \circ y^{-1}$ so for all $\epsilon > 0$ there exists a $\delta > 0$ such that for any $v \in C^1(\mathbb{R}^{N+1},\mathbb{R})$
    \begin{align*}
        \lVert v - u \circ y^{-1} \rVert_{C^1} < \delta \implies \lVert \Phi(v) - \Phi(u \circ y^{-1}) \rVert_{C^1} < \epsilon.
    \end{align*}
    Now let $g : \mathbb{R}^{N+1} \to \mathbb{R}^P$ be defined by
    \begin{align*}
        g(x) = \sigma([A,C]x + b)
    \end{align*}
    where $[A,C]$ is the random matrix obtained by appending the vector $C$ to the right of $A$. Now we fix $\alpha \in (0,1)$. Then by Theorem \ref{theorem::RUAT}, for all $\delta > 0$ there exists $P_0 \in \mathbb{N}$ such that for all $P > P_0$ there exists a $W \in \mathbb{R}^P$ such that
    \begin{align*}
        \lVert W^{\top}g - u \circ y^{-1} \rVert_{C^1} < \delta
    \end{align*}
    with probability at least $\alpha$.
    Then we proceed directly to estimate that
    \begin{align*}
        &\lVert W^{\top}F_P(f\circ\phi^{-1},\omega) - u \rVert_{C^1} \\ 
        = &\lVert W^{\top}\sigma(Af\circ\phi^{-1} + C\omega + b) - u \rVert_{C^1} \\
        = &\bigg\lVert W^{\top}\sigma\bigg(
        \begin{bmatrix}
            A & C
        \end{bmatrix}
        \begin{bmatrix}
            f \circ \phi^{-1} \\
            \omega
        \end{bmatrix}
        +b\bigg) - u \bigg\rVert_{C^1} \\
        = &\lVert W^{\top}\sigma([A,C]y + b) - u \rVert_{C^1} \\
        = &\lVert W^{\top}g \circ y - u \rVert_{C^1} \\
        = &\lVert \Phi(W^{\top}g) - \Phi(u \circ y^{-1}) \rVert_{C^1} < \epsilon.
    \end{align*} 
\end{proof}

\begin{theorem}
    Adopt the same assumptions as in lemma \ref{lemma::conjugacy}, and the further assumption that $\phi$ is structurally stable. Let $[A,0] \in \mathbb{M}_{P \times P}(\mathbb{R})$ denote the matrix whose first $N$ columns are those of $A$ and the remaining columns are $0$. For all $\epsilon > 0$ and $\alpha \in (0,1)$ there exists a $P_0 \in \mathbb{N}$ such that for all $P > P_0$, there exists with probability at least $\alpha$ a $W \in \mathbb{R}^P$ such that the autonomous ESN $\psi \in C^1(\mathbb{R}^P)$ defined
    \begin{align*}
        \psi(x) = \sigma([A,0]x + C(W^{\top}x) + b)
    \end{align*}
    admits a normally hyperbolic attracting submanifold $V$, of dimension $q$, such that $\psi\rvert_{V} \cong \phi$. The symbol $\cong$ denotes equivalence under diffeomorphism.
    \label{theorem::topological_isomorphiccy}
\end{theorem}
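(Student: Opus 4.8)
The plan is to realise an embedded copy of the pair $(M,\phi)$ inside $\mathbb{R}^P$, build around it an auxiliary diffeomorphism $\eta$ for which that copy is a normally hyperbolic attracting submanifold, choose $W$ so the autonomous ESN $\psi$ is $C^1$-close to $\eta$, and then invoke the Invariant Manifold Theorem (Theorem \ref{IMT}) together with the structural stability of $\phi$. Concretely, let $\hat f := F_P(f\circ\phi^{-1},\omega) \in C^1(M,\mathbb{R}^P)$ be the generalised synchronisation of the full reservoir map $F_P$. Since $\pi_N\circ\hat f = f$ and $f$ is a $C^1$ embedding, $\hat f$ is a $C^1$ embedding of the compact manifold $M$ into $\mathbb{R}^P$ (injectivity and the immersion property descend from $f$ through $\pi_N$). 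Applying Lemma \ref{manifold_lemma} to $\hat f$ produces an open $\Omega\subset\mathbb{R}^P$ and $\eta\in\text{Diff}^1(\Omega)$ with $\hat f(M)$ a normally hyperbolic attracting submanifold and $\eta|_{\hat f(M)} = \hat f\circ\phi\circ\hat f^{-1}$; thus $\eta$ restricted to $\hat f(M)$ is already conjugate to $\phi$ via $\hat f$. I would keep the transverse contraction rate of $\eta$ as a free parameter, to be matched to $\psi$ later.

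Next I would choose $W$. The autonomous ESN satisfies $\psi(x) = F_P(\pi_N x, W^\top x)$, so the synchronisation identities $\psi(\hat f(m)) = F_P(f(m), W^\top\hat f(m))$ and $\hat f(\phi(m)) = F_P(f(m), (\omega\circ\phi)(m))$ show that $\psi$ and $\eta$ agree on $\hat f(M)$ up to an error controlled by $\lVert W^\top\hat f - \omega\circ\phi\rVert_{C^1(M)}$, and $W^\top\hat f = W^\top F_P(f\circ\phi^{-1},\omega)$ is exactly the quantity estimated in Lemma \ref{lemma::conjugacy}. Taking the target there to be $u = \omega\circ\phi$, for every $\delta>0$ there is a $P_0$ such that for all $P > P_0$ there is, with probability at least $\alpha$, a $W$ with $\lVert W^\top F_P(f\circ\phi^{-1},\omega) - \omega\circ\phi\rVert_{C^1(M)} < \delta$. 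I would then upgrade this to a full $C^1$ estimate $\lVert\psi - \eta\rVert_{C^1(K)} < \epsilon'$ on a compact tubular neighbourhood $K$ of $\hat f(M)$: extend $\omega\circ\phi\circ\hat f^{-1}$ off $\hat f(M)$ through the tubular retraction, fix the transverse contraction rate of $\eta$ from the first paragraph to equal the one the linear readout forces on $\psi$, and extend both $\psi$ and $\eta$ to a common fixed compact manifold (e.g. a large closed ball, using $\psi(\mathbb{R}^P)\subset(-1,1)^P$) so that Theorem \ref{IMT} literally applies.

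With $\epsilon'$ small enough, Theorem \ref{IMT} applied to $\eta$ yields a normally hyperbolic attracting submanifold $V$ of $\psi$ with $\lVert V - \hat f(M)\rVert_{C^1} < \epsilon'$; being a $C^1$ graph over $\hat f(M)$, $V$ is $q$-dimensional and $C^1$-diffeomorphic to $M$. Let $h: V\to\hat f(M)$ be the accompanying $C^1$ diffeomorphism (essentially the tubular projection) and put $\Phi_V := (\hat f^{-1}\circ h)\circ\psi|_V\circ(\hat f^{-1}\circ h)^{-1} \in\text{Diff}^1(M)$. Since $\psi\to\eta$ in $C^1$, $V\to\hat f(M)$ and $h$ tends to the inclusion while $\hat f$ is a fixed $C^1$ embedding, one gets $\lVert\Phi_V - \phi\rVert_{C^1}\to 0$ as $\delta,\epsilon'\to 0$, i.e. as $P\to\infty$. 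Structural stability of $\phi$ then supplies a conjugacy between $\Phi_V$ and $\phi$ for $P$ large, and composing it with $\hat f^{-1}\circ h$ yields $\psi|_V\cong\phi$, completing the proof.

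The main obstacle is the neighbourhood-wide $C^1$ estimate $\lVert\psi-\eta\rVert_{C^1}$ in the second paragraph. Lemma \ref{lemma::conjugacy} only controls $W^\top$ — hence $\psi$ — on $\hat f(M)$ and in its tangential directions, because $x\mapsto W^\top x$ is linear and cannot be prescribed transverse to $\hat f(M)$; one must therefore build $\eta$ with transverse contraction matching that of $\psi$, which forces one to show that $D\psi$ is uniformly contracting transverse to $\hat f(M)$. That estimate rests on the saturating nature of $\sigma=\tanh$ — the diagonal factor $I-\mathrm{diag}(\hat f(\phi(m)))^2$ appearing in $D\psi(\hat f(m))$ must dominate the possibly large lower rows of $A$ and the possibly large readout $W$ — and may in addition require rescaling the rows $A_j$ with $j>N$ or constraining $\lVert W\rVert$. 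The remaining technical points — extending $\eta$ and $\psi$ to a common compact manifold so Theorem \ref{IMT} applies, and checking that $\psi$ is a genuine diffeomorphism (not merely an immersion) near $\hat f(M)$ by a uniform local-injectivity argument in the spirit of Lemma \ref{lemma1 with r} — are routine by comparison.
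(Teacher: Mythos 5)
Your proposal follows essentially the same route as the paper's proof: set $\varphi_P = F_P(f\circ\phi^{-1},\omega)$ (your $\hat f$), build the auxiliary diffeomorphism $\eta$ with $\varphi_P(M)$ normally hyperbolic attracting via Lemma \ref{manifold_lemma}, choose $W$ via Lemma \ref{lemma::conjugacy} with target $u=\omega\circ\phi$, apply the Invariant Manifold Theorem \ref{IMT}, and conclude with structural stability of $\phi$. The obstacle you single out -- that Lemma \ref{lemma::conjugacy} only controls $\psi$ and its derivatives along $\varphi_P(M)$, not transverse to it, so the neighbourhood-wide $C^1$ estimate required by Theorem \ref{IMT} does not follow automatically -- is genuine, and in fact the paper's own proof passes over exactly this point: the step from $\lVert \psi\rvert_{\varphi_P(M)} - \eta\rvert_{\varphi_P(M)} \rVert_{C^1} < \epsilon$ to $\lVert \psi\rvert_{U} - \eta\rvert_{U} \rVert_{C^1} < \epsilon$ on an open $U$ is asserted with a ``hence'', even though the restricted norm (computed through $\Phi(s)=s\circ\varphi_P$) only sees tangential derivatives, while $\eta$'s transverse contraction (a factor $1/2$ by construction in Lemma \ref{manifold_lemma}) need not match the transverse part of $D\psi$. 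Your proposed repairs -- tuning the transverse rate of $\eta$ to that of $\psi$, or establishing uniform transverse contraction of $D\psi$ from the saturation of $\tanh$, possibly at the cost of constraining $\lVert W\rVert$ or the lower rows of $A$ -- are exactly the kind of additional argument needed, so on this point your write-up is, if anything, more careful than the published proof.
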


\begin{proof}
    For brevity, we will write for each $P \in \mathbb{N}$ 
    \begin{align*}
        \varphi_P := F_P(f\circ\phi^{-1},\omega).
    \end{align*}
    By lemma \ref{manifold_lemma} there is an open $\Omega \subset \mathbb{R}^P$ that contains $\varphi_P(M)$ such that the dynamical system $\eta \in \text{Diff}^1(\Omega)$ admits $\varphi_P(M)$ as a normally hyperbolic invariant submanifold such that $\eta\rvert_{\varphi_P(M)} = \varphi_P \circ \phi \circ \varphi_P^{-1}$.
    
    Let $K$ be a compact submanifold on $\mathbb{R}^N$ that contains $\varphi_P(M)$. By Theorem \ref{IMT} (Invariant Manifold Theorem) there exists $\epsilon > 0$ such that for any $u \in \text{Diff}^1(K)$ with $\lVert \eta - u \rVert_{C^1} < \epsilon$, the diffeomorphism $u$ has a normally hyperbolic attracting submanifold $V$ such that $\lVert V - \varphi_P(M) \rVert_{C^1} < \epsilon$. 
    Now let $\Phi : C^1(M,\mathbb{R}^P) \to C^1(\mathbb{R}^P,\mathbb{R}^P)$ and $\Sigma : C^1(M,\mathbb{R}^P) \to C^1(M,\mathbb{R}^P)$ be defined by
    \begin{align*}
        \Phi(s) = s \circ \varphi_P, \qquad \Sigma(u) = \sigma \circ s
    \end{align*}
    Since $\Phi \circ \Sigma$ is continuous at $[A,0]\varphi_P + C\omega\circ\phi + b$ there exists a $\delta > 0$ such that for any $v \in C^1(M,\mathbb{R}^P)$
    \begin{align*}
        \lVert v - [A,0]\varphi_P + C\omega\circ\phi + b \rVert_{C^1} &< \delta \\
        \implies \lVert \Phi \circ \Sigma(v) - \Phi \circ \Sigma([A,0]\varphi_P + C\omega\circ\phi + b) \rVert_{C^1} &< \epsilon.
    \end{align*}
    Fix $\alpha \in (0,1)$. Then by lemma \ref{lemma::conjugacy} there exists $P_0 \in \mathbb{N}$ such that for all $P > P_0$, there exists a $W \in \mathbb{R}^P$ such that 
    \begin{align*}
        \lVert W^{\top}\varphi_P - \omega \circ \phi \rVert_{C^1} < \delta 
    \end{align*}
    with probability at least $\alpha$.
   On this event we now have that
    \begin{align*}
        \lVert \psi\rvert_{\varphi_P(M)} - \eta\rvert_{\varphi_P(M)} \rVert_{C^1}
        &= \lVert \psi\rvert_{\varphi_P(M)} - \varphi_P \circ \phi \circ \varphi_P^{-1} \rVert_{C^1} \\
        &= \lVert \Phi(\psi \circ \varphi_P) - \Phi(\varphi_P \circ \phi) \rVert_{C^1} \\
        &= \lVert \Phi \circ \Sigma([A,0]\varphi + C(W^{\top}\varphi_P) + b) - \Phi \circ \Sigma([A,0]\varphi_P + C\omega\circ\phi + b) \rVert_{C^1} \\
        &< \epsilon.
    \end{align*}
    Hence, there is an open set $U \subset \mathbb{R}^P$ with $\varphi_P(M) \subset U \subset K$
    \begin{align*}
        \lVert \psi\rvert_{U} - \eta\rvert_{U} \rVert_{C^1} < \epsilon.
    \end{align*}
   Furthermore, $\psi\rvert_U$ admits a normally hyperbolic attracting submanifold $V$ such that $\lVert V - \varphi(M) \rVert_{C^1} < \epsilon$. Then by structural stability of $\phi$, and therefore of $\eta$, there exists $h \in \text{Diff}^1(\varphi_P(M),V)$ such that
   \begin{align*}
       \psi\rvert_V = h \circ \eta\rvert_{\varphi_P(M)} \circ h^{-1}\rvert \cong \eta\rvert_{\varphi_P(M)}
    \end{align*}
   hence
    \begin{align*}
        \psi\rvert_V = h \circ \eta\rvert_{\varphi_P(M)} \circ h^{-1} = h \circ \varphi_P \circ \phi \circ \varphi_P^{-1} \circ h^{-1} \cong \phi.
    \end{align*}
\end{proof}

The proof is now complete.

To conclude this chapter, we will summarise the major result. If $(M,\phi)$ is a structurally stable discrete time dynamical system, and we take a trajectory of scalar observations, and feed these into an ESN, then there exists a choice of weights $W$ such that the autonomous ESN dynamics are diffeomorphic to $(M,\phi)$. The diffeomorphism ensures that the topological and geometrical features of $(M,\phi)$ like homology groups, Lyapunov exponents, and linearisation of fixed points are learned by the ESN autonomous dynamics. This holds even if the ESN autonomous and $(M,\phi)$ trajectories diverge. We note that the exponential divergence of trajectories is unavoidable for systems such as the Lorenz system that have a positive Lyapunov exponent.

\chapter{Stochastic Dynamical Systems and Reinforcement Learning}
\label{chapter::stochastic}

In this chapter we consider ESNs, and reservoir maps more generally, which are trained on a realisation of a stationary ergodic stochastic process. This is in contrast to previous chapters where the inputs were deterministically generated. In the deterministic case we defined a GS $f : M \to \mathbb{R}^N$, and this was the fundamental object of study. It is not clear how to define a GS in the stochastic context, so the fundamental objects in the stochastic case are \emph{filters} and \emph{functionals}.
The work in the chapter is based on the paper by \cite{arXiv:2102.06258}.

\section{Filters and Functionals}

We will first introduce filters and functionals. A filter $U$ is a map that takes a bi-infinite sequence of real vectors (that is, a sequence of real vectors indexed by $\mathbb{Z}$) and returns another bi-infinte sequence of real vectors (possibly of different dimension). A functional $H$ takes a bi-infinite sequence of real vectors and returns a single real vector.

\begin{defn}
    (Filters and functionals) For any $d,N \in \mathbb{N}$ and $D_d \subset \mathbb{R}^d$ a filter is a map $U : D_d^{\mathbb{Z}} \to (\mathbb{R}^N)^{\mathbb{Z}}$ and a functional is a map $H : D_d^{\mathbb{Z}} \to \mathbb{R}^N$.
\end{defn}

As we will soon see, an ESN can be viewed either as a filter from the space of input sequences to the space of reservoir sequences, or as a functional from the space of input sequences to the current reservoir state. An example of a filter, which we will use extensively in this chapter, is the \emph{time shift operator} which shifts a sequence one step forward in time.

\begin{defn}
    (Time shift operator) The time shift operator $T_N : (\mathbb{R}^N)^{\mathbb{Z}} \to (\mathbb{R}^N)^{\mathbb{Z}}$ is defined by $T_N(z)_k := z_{k+1}$ for all $k \in \mathbb{Z}$.
\end{defn}

In many realistic scenarios we require that the output of a filter depends only on observations from the past and present, and not on the future. That is we require that the filter or functional depends on terms indexed by $k = \ldots n-2,n-1,n$ and does not depend on the terms indexed by $k = n+1, n+2, \ldots$. A filter with this property is called causal because the output is not influenced by the future observations; this would violate the usual notion of causality.

\begin{defn}
    (Causal) We say that a filter $U : D_d^{\mathbb{Z}} \to (\mathbb{R}^N)^{\mathbb{Z}}$ is causal if for all $v,w \in D^{\mathbb{Z}}$ 
    \begin{align*}
        v_k = w_k \ \forall \ k \leq n \implies U(v)_n = U(w)_n.
    \end{align*}
\end{defn}

Furthermore, we will work with filters that are time invariant.

\begin{defn}
    (Time invariant) We say that a filter $U : D_d^{\mathbb{Z}} \to (\mathbb{R}^N)^{\mathbb{Z}}$ is time invariant if it commutes with the time shift operator i.e. $T_N \circ U = U \circ T_d$.
\end{defn}

If a filter is both causal and time invariant then we refer to the filter as a causal time invariant (CTI) filter. It turns out that there is a bijection between the space of CTI filters and the space of functionals.

\begin{theorem}
    \citep{JMLR:v20:19-150} There is a bijection between the space of CTI filters $U : D_d^{\mathbb{Z}} \to (\mathbb{R}^N)^{\mathbb{Z}}$ and the space of functionals $H : D_d^{\mathbb{Z}} \to \mathbb{R}^N$. In particular, for any CTI filter $U$ we can define the functional $H_U := p_0 \circ U$ where $p_0 : (\mathbb{R}^d)^{\mathbb{Z}} \to \mathbb{R}^N$ is the natural projection onto the $0$th entry. For any functional $H$ we can define the filter $U_H$ by defining $U_H(z)_k := H\circ T^k(z)$ for all $k \in \mathbb{Z}$.
\end{theorem}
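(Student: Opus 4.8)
The plan is to verify that the two assignments $U \mapsto H_U := p_0 \circ U$ and $H \mapsto U_H$, where $U_H(z)_k := H\circ T^k(z)$ and $T$ denotes the shift on $D_d^{\mathbb{Z}}$ (so $T = T_d$), are mutually inverse maps between the set of CTI filters $D_d^{\mathbb{Z}} \to (\mathbb{R}^N)^{\mathbb{Z}}$ and the set of functionals $D_d^{\mathbb{Z}} \to \mathbb{R}^N$. I would split this into three short steps: first, that $H \mapsto U_H$ actually produces a causal, time-invariant filter; second, that $H_{U_H} = H$; third, that $U_{H_U} = U$.

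For the first step, that $U_H$ maps $D_d^{\mathbb{Z}}$ into $(\mathbb{R}^N)^{\mathbb{Z}}$ is immediate from the definition. Time invariance is a one-line check: for any $z \in D_d^{\mathbb{Z}}$ and $k \in \mathbb{Z}$,
\[
(T_N \circ U_H)(z)_k = U_H(z)_{k+1} = H\big(T^{k+1}(z)\big) = H\big(T^k(T(z))\big) = (U_H \circ T)(z)_k ,
\]
hence $T_N \circ U_H = U_H \circ T$. For causality, suppose $v,w \in D_d^{\mathbb{Z}}$ satisfy $v_j = w_j$ for all $j \le n$; then $(T^n v)_j = v_{n+j} = w_{n+j} = (T^n w)_j$ for every $j \le 0$, so $T^n v$ and $T^n w$ agree on all non-positive indices. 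Using that a functional in this framework is determined by the restriction of its argument to the past — this is the point at which the domain conventions of \citep{JMLR:v20:19-150} enter — we conclude $U_H(v)_n = H(T^n v) = H(T^n w) = U_H(w)_n$, so $U_H$ is causal.

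The two composition identities are then essentially immediate. On the one hand, $H_{U_H}(z) = p_0(U_H(z)) = U_H(z)_0 = H(T^0(z)) = H(z)$, so $H_{U_H} = H$. On the other hand, iterating the time invariance of $U$ (and using invertibility of the shift on bi-infinite sequences) gives $U \circ T^k = T_N^k \circ U$ for all $k \in \mathbb{Z}$, so
\[
U_{H_U}(z)_k = H_U\big(T^k(z)\big) = p_0\big(U(T^k(z))\big) = \big(T_N^k(U(z))\big)_0 = U(z)_k ,
\]
whence $U_{H_U} = U$. Combining this with $H_{U_H} = H$ shows the two assignments are inverse to one another, so each is a bijection, as claimed.

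I do not anticipate a genuine obstacle: the argument is a short manipulation of the shift operators and the projection $p_0$. The only place that calls for care is the causality check in the first step, where one must make sure the space of functionals is set up so that $U_H$ really lands among the \emph{causal} time-invariant filters; the time-invariance side and both composition identities go through unconditionally.
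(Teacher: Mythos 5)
The paper gives no proof of this statement at all---it simply defers to the cited reference---so there is no internal argument to compare against; your direct verification is the standard one and it is correct. The three steps go through exactly as you wrote them: time invariance of $U_H$ is immediate, $H_{U_H}=H$ follows from evaluating at the $0$th coordinate, and $U_{H_U}=U$ follows by iterating $T_N\circ U = U\circ T_d$ (with negative powers supplied by invertibility of the shift on bi-infinite sequences). The one genuine subtlety is precisely the one you flagged: with functionals read literally as arbitrary maps $D_d^{\mathbb{Z}}\to\mathbb{R}^N$, the filter $U_H$ need not be causal, so the statement only becomes a true bijection under the convention of the cited reference that a functional depends only on the coordinates at non-positive times (equivalently, is defined on semi-infinite sequences $(D_d)^{\mathbb{Z}_-}$). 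Under that convention your causality check is valid, and it is worth adding the symmetric remark that $H_U$ itself lies in this class of functionals, since causality of $U$ at $n=0$ shows $p_0\circ U$ depends only on the past; with that observation the two assignments are inverse bijections between the stated spaces.
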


%\jd{[Some comment on the proof of this theorem would be useful. I can't imagine the proof is more than just a few lines long, indeed. So perhaps include it here for completeness?]}

The local Echo State Property (ESP) has an important characterisation in terms of CTI filters. This appears in \cite{Jaeger2001} and in \cite{YILDIZ20121}.

\begin{theorem}
    (\citep{YILDIZ20121}, Theorem 2.1) Let $V \subset \mathbb{R}^N$ and $W \subset \mathbb{R}^d$ be compact. A continuous reservoir map $F : \mathbb{R}^N \times \mathbb{R}^d \to \mathbb{R}^N$ has the $(V,W)$-local Echo State Property (ESP) if and only if, for any $(z_k)_{k \in \mathbb{Z}} \in W^{\mathbb{Z}}$ there exists a unique $(x_k)_{k \in \mathbb{Z}} \in V^{\mathbb{Z}}$ that satisfies the equation
    \begin{align*}
        x_{k+1} = F(x_k,z_k).
    \end{align*}
    The reservoir map $F$ therefore has local ESP if and only if it has an associated CTI filter $U_F : W^{\mathbb{Z}} \to V^{\mathbb{Z}}$, and associated functional $H_F : W^{\mathbb{Z}} \to V$.
\end{theorem}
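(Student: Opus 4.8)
The statement to prove is the characterisation of the local ESP in terms of CTI filters, attributed to Yildiz et al. The plan is to prove the two directions of the "if and only if" separately, and then observe that the filter/functional correspondence follows immediately from the bijection theorem stated just above.

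First I would prove the forward direction: suppose $F$ has the $(V,W)$-local ESP, and fix an arbitrary input sequence $(z_k)_{k \in \mathbb{Z}} \in W^{\mathbb{Z}}$. I need to produce a unique $(x_k)_{k \in \mathbb{Z}} \in V^{\mathbb{Z}}$ satisfying $x_{k+1} = F(x_k, z_k)$. For existence, I would fix a large negative time $-m$, pick any seed $y \in V$, set $x^{(m)}_{-m} := y$ and iterate forward, $x^{(m)}_{k+1} = F(x^{(m)}_k, z_k)$; since $F$ maps $V \times W$ into $V$, each such truncated orbit lies in $V^{[-m,\infty)}$. Using compactness of $V$ and a diagonal argument over $m \to \infty$, I would extract a subsequence along which the truncated solutions converge pointwise (in $k$) to a bi-infinite sequence $(x_k)_{k \in \mathbb{Z}}$ in $V^{\mathbb{Z}}$; continuity of $F$ ensures this limit satisfies the recursion at every $k \in \mathbb{Z}$. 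For uniqueness, suppose $(x_k)$ and $(y_k)$ are two bi-infinite solutions in $V^{\mathbb{Z}}$ for the same input. For each fixed $n$, restrict attention to times $k \ge -m$: both sequences, viewed as forward orbits started at time $-m$ from $x_{-m}$ and $y_{-m}$ respectively, are driven by the same input, so the $(V,W)$-local ESP forces $\lVert x_n - y_n \rVert = \lim_{m \to \infty} \lVert x_n - y_n \rVert$ where the convergence $\lVert x_n - y_n\rVert \to 0$ comes from letting the starting time recede to $-\infty$; hence $x_n = y_n$ for all $n$.

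Next I would prove the converse: assume that for every $(z_k)_{k\in\mathbb{Z}} \in W^{\mathbb{Z}}$ there is a unique $(x_k)_{k\in\mathbb{Z}}\in V^{\mathbb{Z}}$ with $x_{k+1}=F(x_k,z_k)$, and deduce the $(V,W)$-local ESP. Given a one-sided input $(z_k)_{k\in\mathbb{N}}$ and two initial states $x_0,y_0 \in V$, I would argue by contradiction: if $\lVert x_k - y_k \rVert \not\to 0$, then there is $\varepsilon>0$ and a subsequence $k_j \to \infty$ with $\lVert x_{k_j} - y_{k_j}\rVert \ge \varepsilon$. Shifting time back by $k_j$ and using compactness of $V^{\mathbb{Z}}$ (with the product topology, which is compact by Tychonoff) and of $W^{\mathbb{Z}}$, I would pass to a limit to obtain two distinct bi-infinite solutions of the recursion driven by a common limiting bi-infinite input, contradicting the assumed uniqueness. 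This shows the two one-sided orbits synchronise, which is exactly the $(V,W)$-local ESP.

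Finally, the last sentence of the statement is then immediate: the bijection between CTI filters and functionals (the theorem stated immediately before this one) plus the existence-and-uniqueness just established lets us define $U_F : W^{\mathbb{Z}} \to V^{\mathbb{Z}}$ by sending $(z_k)$ to the unique solution $(x_k)$, and $H_F := p_0 \circ U_F$; one checks $U_F$ is causal (the solution at time $n$ depends only on inputs up to time $n$, again by the uniqueness/synchronisation argument applied to a truncated input) and time invariant (it commutes with the shift because the defining recursion is autonomous in $k$). I expect the main obstacle to be the compactness/diagonal extraction argument in the forward direction and, dually, the limiting argument in the converse: one must be careful that the product topology on $V^{\mathbb{Z}}$ is the right one, that limits of solutions are solutions (this uses continuity of $F$ together with the fact that convergence in the product topology is coordinatewise), and that the uniqueness hypothesis really does bite against the limiting objects. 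The synchronisation-to-uniqueness equivalence is the conceptual heart; everything else is bookkeeping.
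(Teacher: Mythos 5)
The paper does not actually prove this statement; it simply attributes it to Yildiz, Jaeger, and Kiebel (2012), Theorem~2.1, so there is no in-paper argument to compare against. Your existence step (iterate forward from a seed far in the past, use compactness of $V$ and a diagonal extraction), your converse direction (shift back by $k_j$, pass to a coordinatewise limit in $V^{\mathbb{Z}}\times W^{\mathbb{Z}}$, use continuity of $F$ to see the limit is a solution, contradict uniqueness), and your final paragraph deducing the CTI filter and functional from the bijection theorem are all sound in outline.

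The uniqueness step in the forward direction, however, has a genuine gap. You write that ``the convergence $\lVert x_n - y_n\rVert \to 0$ comes from letting the starting time recede to $-\infty$,'' but the local ESP applied from start time $-m$ yields only that $\lVert x_{-m+j} - y_{-m+j}\rVert \to 0$ as $j\to\infty$ for that fixed $m$. Since $(x_k)$ and $(y_k)$ are fixed bi-infinite sequences, this conclusion is literally ``$\lVert x_k - y_k\rVert\to 0$ as $k\to +\infty$,'' the same statement for every $m$; increasing $m$ gives no new information about the fixed number $\lVert x_n - y_n\rVert$. That number is the $(m+n)$-th term of a sequence tending to zero, but nothing in the ESP guarantees that $m+n$ is already past the point where the sequence has become small, because the ESP as defined gives no uniformity of the synchronisation rate over initial pairs and inputs. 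To close the gap you would either need a uniform ESP on the compact set $V\times V\times W^{\mathbb{N}}$ (which requires its own argument), or run the same shift-and-compactness contradiction you already use for the converse: assume $\lVert x_0-y_0\rVert=\delta>0$, shift the pair of solutions, extract a subsequential limit, and produce two forward orbits from a common start whose distance is bounded away from zero, contradicting the ESP. This subtlety is not academic: the original 2001 argument of Jaeger slipped at precisely this uniformity point, and the theorem you cite from Yildiz et al.\ was written in part to repair it.
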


The purpose of this reformulation of the ESP is to recognise that an ESN with ESP can be expressed as a functional. In the remainder of this chapter we will use the fact that ESNs are universal approximators in the space of functionals \citep{GRIGORYEVA2018495,Gonon2020} to show that ESNs trained by least squares on stationary ergodic processes can learn arbitrary target functions; which are often in this context called reward functions.

\begin{defn}
    (ESN filter and functional) If an ESN 
    \begin{align*}
        F(x,z) = \sigma(Ax + Cz + b)
    \end{align*}
    has the local ESP then we will write $H^{A,C,b}$ to denote the reservoir functional associated to the ESN. We will also write $H^{A,C,b}_W$ to denote the output functional $W^{\top} H^{A,C,b}$ (defined by left multiplication of $H^{A,C,b}$ by the linear readout layer).
    %so that our notation is consistent with \cite{Gonon2020}.
\end{defn}

\section{System isomorphism}
\label{section::system_isomorphism}

We will now take a slight detour and introduce the notion of a system isomorphism. We do not use this explicitly in the remainder of this chapter, but system isomorphisms are interesting in their own right, and have a curious connection to the linear reservoir systems studied in Chapter \ref{chapter::embedding}.
We say that two reservoir systems are system isomorphic if they define the same input-output systems.

\begin{defn}
    (System Isomorphic) A reservoir map $F : \mathbb{R}^N \times \mathbb{R}^d \to \mathbb{R}^N$ along with output map $h : \mathbb{R}^N \to \mathbb{R}^s$ is a pair $(F,h)$ called a reservoir system. Two reservoir systems $(F,h)$ and $(\bar{F},\bar{h})$ both with local ESP have associated CTI filters $U_F,U_{\bar{F}}$. We say that $(F,h)$ and $(\bar{F},\bar{h})$ are system isomorphic if for any input sequence $z \in (\mathbb{R}^d)^{\mathbb{Z}}$ 
    \begin{align*}
        h ( U_F(z)_k ) = \bar{h} ( U_{\bar{F}}(z)_k ) \ \forall k \in \mathbb{Z} 
    \end{align*}
\end{defn}

In the case of linear reservoir maps $F : \mathbb{R}^N \times \mathbb{R} \to \mathbb{R}^N$ of the form $F(x,z) = Ax + Cz$ for $A \in \mathbb{M}_{N \times N}(\mathbb{R})$ and $C \in \mathbb{R}^N$ the system isomorphisms admit an exact form.

\begin{theorem}
\label{linear_isomorphism}
    Two reservoir systems $(F,h)$ and $(\bar{F},\bar{h})$ such that
    \begin{align*}
        F(x,z) = Ax + Cz, \qquad \bar{F}(x,z) = \bar{A}x + \bar{C}z
    \end{align*}
    where $A,\bar{A} \in \mathbb{M}_{N \times N}(\mathbb{R})$ and $C,\bar{C} \in \mathbb{R}^N$
    are system isomorphic if and only if there exists an invertible $P \in \mathbb{M}_{N \times N}(\mathbb{R})$ such that
    \begin{align*}
        A = P\bar{A}P^{-1}, \qquad C = P\bar{C}
    \end{align*}
    and $h = \bar{h} \circ P^{-1}$.
\end{theorem}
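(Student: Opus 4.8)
The plan is to prove both directions of the equivalence directly, using the explicit formula for the generalised synchronisation of a linear reservoir map established in Lemma \ref{generalized synch with spectral radius}. Assuming implicitly (as elsewhere in this chapter) that $\rho(A)<1$ and $\rho(\bar A)<1$ so that both systems have the ESP and well-defined CTI filters, Lemma \ref{generalized synch with spectral radius} gives the closed form of the associated functionals: for an input sequence $z\in(\mathbb{R})^{\mathbb{Z}}$,
\begin{align*}
    H_F(z) = \sum_{k=0}^{\infty} A^k C z_{-k}, \qquad H_{\bar F}(z) = \sum_{k=0}^{\infty} \bar A^k \bar C z_{-k},
\end{align*}
and the corresponding filter entries are $U_F(z)_n = \sum_{k\geq 0} A^k C z_{n-k}$, similarly for $\bar F$. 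The output sequences are then $h(U_F(z)_n) = W^\top \sum_{k\geq 0} A^k C z_{n-k}$ and $\bar h(U_{\bar F}(z)_n) = \bar W^\top \sum_{k\geq 0}\bar A^k \bar C z_{n-k}$, where here I take $h=W^\top\cdot$ and $\bar h = \bar W^\top\cdot$; for the general (affine or nonlinear) $h,\bar h$ I would argue that system isomorphism of the linear filters forces $h$ and $\bar h$ to agree on the relevant reachable subspaces, reducing to the linear readout case.

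For the ``if'' direction, suppose $A=P\bar A P^{-1}$, $C=P\bar C$, and $h = \bar h\circ P^{-1}$. Then $A^k C = P\bar A^k P^{-1} P \bar C = P \bar A^k \bar C$, so $U_F(z)_n = P\, U_{\bar F}(z)_n$ term by term, and hence $h(U_F(z)_n) = \bar h(P^{-1} P\, U_{\bar F}(z)_n) = \bar h(U_{\bar F}(z)_n)$ for all $n$ and all inputs $z$; this is exactly system isomorphism. For the ``only if'' direction I would exploit the freedom to choose inputs: feeding in the sequence $z$ with $z_0 = 1$ and $z_k = 0$ for $k\neq 0$ (and its time shifts) shows system isomorphism is equivalent to $h(A^k C) = \bar h(\bar A^k \bar C)$ for all $k\geq 0$, i.e. $W^\top A^k C = \bar W^\top \bar A^k \bar C$ for all $k\geq 0$. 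I then invoke condition $(D)$-type reachability: the hypotheses of this chapter guarantee $\{A^k C\}_{k=0,\ldots,N-1}$ and $\{\bar A^k\bar C\}_{k=0,\ldots,N-1}$ are each bases of $\mathbb{R}^N$ (the systems are \emph{reachable}/controllable). Define $P\in\mathbb{M}_{N\times N}(\mathbb{R})$ to be the unique linear map sending $\bar A^k\bar C \mapsto A^k C$ for $k=0,\ldots,N-1$; then $P$ is invertible, $PC = C$ requires care --- rather I set $P\bar C = C$ from the $k=0$ case --- and $P\bar A^k\bar C = A^k C$ for all $k$ in the basis range. Applying $A$ and using Cayley--Hamilton to express $A^N C$, $\bar A^N\bar C$ in terms of lower powers (the characteristic polynomials coincide because the input-output transfer function $k\mapsto W^\top A^k C$ coincides, which pins down the minimal polynomial on the reachable subspace), one checks $PA = A P$ on the spanning set $\{\bar A^k\bar C\}$, hence $A = P\bar A P^{-1}$. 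Finally $h = \bar h\circ P^{-1}$ follows because $W^\top A^k C = \bar W^\top \bar A^k\bar C = \bar W^\top P^{-1} A^k C$ for all $k$, and $\{A^k C\}$ spans, so $W^\top = \bar W^\top P^{-1}$.

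\textbf{Main obstacle.} The delicate point is the ``only if'' direction when the reservoir is not minimal/reachable --- if $\{A^k C\}$ does not span $\mathbb{R}^N$, then $P$ is only determined on the reachable subspace and one must extend it to an invertible matrix compatibly with the $A$-conjugacy and with $h$; this is the classical realisation-theory subtlety (uniqueness of state-space realisations up to isomorphism holds only for minimal realisations). The cleanest route is to state the theorem under the standing reachability assumption of this chapter (so $\{A^kC\}_{k<N}$ is a basis, likewise for the barred system), under which $P$ is uniquely and globally determined; I would remark that without minimality one gets system isomorphism up to a change of basis on a common reachable quotient rather than on all of $\mathbb{R}^N$. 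The other routine-but-nontrivial step is justifying that matching the scalar transfer sequence $\{W^\top A^k C\}_{k\geq 0}$ forces the matrix conjugacy: this is where Cayley--Hamilton and the equality of characteristic (minimal) polynomials on the reachable subspace enter, and I would spell that out carefully rather than wave at it.
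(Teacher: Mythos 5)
Your proof of the ``if'' direction is essentially the same as the paper's: both insert $A = P\bar A P^{-1}$, $C = P\bar C$ and $h = \bar h \circ P^{-1}$ into the explicit series formula for the filter and telescope the $P$s away. Note, though, that the paper's printed proof stops there --- it only verifies the ``if'' direction, even though the theorem is stated as an equivalence. You are right that the ``only if'' direction is the substantive half, and right to recognise it as a realization-theory statement.

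Where your proposal goes wrong is in claiming that the chapter's reachability hypothesis (condition (D), $\{A^k C\}_{k<N}$ a basis of $\mathbb{R}^N$) suffices to make the ``only if'' direction work. It does not: you also need observability of the pairs $(A, h)$ and $(\bar A, \bar h)$. A two-dimensional counterexample: take $A = \mathrm{diag}(1/2, 1/3)$, $\bar A = \mathrm{diag}(1/2, 1/4)$, $C = \bar C = (1,1)^\top$, and $h(x) = \bar h(x) = x_1$. Both pairs $(A,C)$ and $(\bar A, \bar C)$ are reachable (the controllability matrices are Vandermonde and hence invertible), and the output sequences agree for every input since $h(A^k C) = (1/2)^k = \bar h(\bar A^k \bar C)$ for all $k\geq 0$; yet $A$ and $\bar A$ have different spectra, so no similarity $P$ can exist. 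The failure lies exactly where you hand off to Cayley--Hamilton: matching the scalar sequence $\{h(A^k C)\}_{k\geq 0}$ does not determine the characteristic polynomial of $A$ unless $(A, h)$ is observable, so the verification that $PA = A P$ holds on the top basis vector $\bar A^{N-1}\bar C$ does not go through. The right hypothesis is minimality (reachability \emph{and} observability), which you name once in your final paragraph but then drop in favour of reachability alone. There is a second gap for genuinely nonlinear $h, \bar h$: system isomorphism constrains $h$ only on the bounded reachable set $\{U_F(z)_0\}$, so $h = \bar h \circ P^{-1}$ on all of $\mathbb{R}^N$ cannot be inferred without further regularity assumptions, and your remark that this ``reduces to the linear readout case'' passes over the issue. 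Both are real obstructions to the ``only if'' direction as stated; the paper's own proof avoids confronting them only by omitting that direction entirely.
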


\begin{proof}
    The proof proceeds directly.
    \begin{align*}
        h( U_F(z)_k ) &= h \bigg( \sum_{k'=0}^\infty A^k C z_{k - k'} \bigg) \\
        &= \bar{h} \circ P^{-1} \bigg( \sum_{k'=0}^\infty P \bar{A} P^{-1} P \bar{C} z_{k - k'} \bigg) \\
        &= \bar{h}\bigg( \sum_{k'=0}^\infty \bar{A}^k \bar{C} z_{k - k'} \bigg) \\
        &= \bar{h}( U_{\bar{F}}(z)_k ).
    \end{align*}
\end{proof}

We can observe a connection with Chapter 3 by noting that conditions $(A)-(D)$ in Theorem \ref{embedding_thm} (repeated below in theorem~\ref{embedding_thm_conditions}) for ease of presentation are invariant under system isomorphism.

\begin{theorem}
\label{embedding_thm_conditions}
    The conditions that appear in Theorem \ref{embedding_thm} 
    \begin{itemize}
        \item[(A)] $N > \max\{2q, \ell\}$ where $\ell \in \mathbb{N}$ is the lowest common multiple of the periods of all periodic points,
        \item[(B)] $\lambda_{\text{max}}\rho(A^{n_\text{min}}) < 1$ where $n_\text{min}$ is the minimal period over all periodic points and $\lambda_\text{max}$ is the maximal absolute value over all eigenvalues of all derivatives $T_m \phi^n$,
        \item[(C)] For each periodic point $m \in M$ with period $n$ the vectors
        \begin{align*}
            \bigg\{ (I-\lambda_j A^n)^{-1}(I - A)^{-1}(I-A)^n C \bigg\}_{j = 1, \ldots, q}
        \end{align*}
        where $\{ \lambda_j \}_{j = 1, \ldots q}$ are the eigenvalues of $T_m\phi^n$, are linearly independent,
        \item[(D)] The vectors $\{ A^j C \}_{j = 0, \ldots N-1}$
        are linearly independent,
    \end{itemize}
    are invariant under system isomorphism.
\end{theorem}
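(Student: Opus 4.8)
The plan is to substitute the system isomorphism relations $A = P\bar{A}P^{-1}$, $C = P\bar{C}$ into each of the four conditions and show that the condition for $(A,C)$ is equivalent to the condition for $(\bar{A},\bar{C})$, exploiting the fact that $P$ is invertible and that conjugation by $P$ is a linear isomorphism of $\mathbb{R}^N$ (hence preserves linear independence) and preserves the spectrum of a matrix. Note first the key algebraic identity: for any polynomial $p$, we have $p(A) = p(P\bar{A}P^{-1}) = P\, p(\bar{A})\, P^{-1}$, and more generally for any rational function of $A$ that is defined (such as $(I - \lambda_j A^n)^{-1}$, which exists precisely when $\lambda_j \notin \sigma(A^n)^{-1}$-type condition holds), the same conjugation rule applies: $(I-\lambda_j A^n)^{-1} = P(I - \lambda_j \bar{A}^n)^{-1}P^{-1}$, and likewise $(I-A)^{-1} = P(I-\bar{A})^{-1}P^{-1}$.

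For condition $(A)$, this is immediate: the integers $N$, $q$, $\ell$ depend only on $M$, $\phi$, and $N$, and not on $A$ or $C$ at all, so invariance is trivial. For condition $(B)$, we use that $\rho(A^{n_{\min}}) = \rho((P\bar{A}P^{-1})^{n_{\min}}) = \rho(P\bar{A}^{n_{\min}}P^{-1}) = \rho(\bar{A}^{n_{\min}})$ since conjugate matrices have equal spectra, and $\lambda_{\max}$ depends only on $\phi$; hence $\lambda_{\max}\rho(A^{n_{\min}}) < 1 \iff \lambda_{\max}\rho(\bar{A}^{n_{\min}}) < 1$. For condition $(D)$, observe $A^j C = (P\bar{A}P^{-1})^j P\bar{C} = P\bar{A}^j P^{-1}P\bar{C} = P\bar{A}^j\bar{C}$, so $\{A^j C\}_{j=0,\ldots,N-1} = \{P\bar{A}^j\bar{C}\}_{j=0,\ldots,N-1}$; since $P$ is invertible, this family is linearly independent if and only if $\{\bar{A}^j\bar{C}\}_{j=0,\ldots,N-1}$ is.

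Condition $(C)$ is the one requiring the most care, and is where I expect the only genuine (though still routine) bookkeeping. We compute, using the conjugation rules above and the identity $(I-A)^{-1}(I-A^n) = \sum_{k=0}^{n-1}A^k$ that appears in the proof of Theorem \ref{random_thm},
\begin{align*}
    (I-\lambda_j A^n)^{-1}(I-A)^{-1}(I-A^n)C &= P(I-\lambda_j\bar{A}^n)^{-1}P^{-1}\cdot P(I-\bar{A})^{-1}(I-\bar{A}^n)P^{-1}\cdot P\bar{C} \\
    &= P\Big[(I-\lambda_j\bar{A}^n)^{-1}(I-\bar{A})^{-1}(I-\bar{A}^n)\bar{C}\Big].
\end{align*}
One subtlety to address: the eigenvalues $\lambda_j$ appearing in condition $(C)$ are eigenvalues of $T_m\phi^n$ (or $T_m\phi^{-n}$, depending on the formulation), which depend only on $\phi$ and $m$, not on $A$; so the same set $\{\lambda_j\}$ is used for both systems, and the existence of the inverses $(I-\lambda_j A^n)^{-1}$ versus $(I-\lambda_j\bar{A}^n)^{-1}$ is itself a spectral condition preserved by conjugation. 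Thus the vector family in $(C)$ for $(A,C)$ is exactly $P$ applied to the vector family for $(\bar{A},\bar{C})$, and invertibility of $P$ gives equivalence of linear independence. Assembling the four cases completes the proof; the main obstacle is simply keeping the conjugations and the rational-function-of-a-matrix manipulations straight, which the identity $p(PBP^{-1}) = Pp(B)P^{-1}$ handles uniformly.
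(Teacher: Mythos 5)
Your proposal is correct and follows essentially the same route as the paper: condition (A) is vacuous, (B) follows from similarity preserving the spectrum, (D) from $A^jC = P\bar{A}^j\bar{C}$, and (C) from conjugating the matrix inverses and using invertibility of $P$ to preserve linear independence. The only cosmetic difference is that the paper justifies the conjugation of $(I-\lambda_j A^n)^{-1}$ and $(I-A)^{-1}$ by expanding them as Neumann series, whereas you invoke the identity $p(P\bar{A}P^{-1}) = P\,p(\bar{A})\,P^{-1}$ directly; these are the same argument.
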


\begin{proof}
    Condition $(A)$ clearly invariant because $A$ and $\bar{A}$ are both of size $N$, while condition $(B)$ is invariant because the transformation $\bar{A} = P^{-1} A P$ preserves the spectral radius. To show that $(C)$ is invariant we observe that
    \begin{align*}
        &(I-\lambda_j \bar{A}^n)^{-1}(I - \bar{A})^{-1}(I-\bar{A})^n \bar{C} \\
        =& \bigg( \sum_{k=0}^{\infty} \lambda_j^k \bar{A}^{kn}\bigg)\bigg( \sum_{k=0}^{\infty} \bar{A}^k \bigg) (I-\bar{A})^n \bar{C} \\
        =& P^{-1} \bigg( \sum_{k=0}^{\infty} \lambda_j^k A^{kn}\bigg) P P^{-1} \bigg( \sum_{k=0}^{\infty} A^k \bigg) P (I-P^{-1} A P)^n P^{-1} C \\
        =& P^{-1} (I-\lambda_j A^n)^{-1}(I - A)^{-1}(I-A)^n C
    \end{align*}
 %\jd{[these edits are to align this calculation precisely with your definition of $P$ in the statement of the theorem]}
    and note that the vectors
    \begin{align*}
        \bigg\{ (I-\lambda_j A^n)^{-1}(I - A)^{-1}(I-A)^n C \bigg\}_{j = 1, \ldots, q}
    \end{align*}
    are linearly independent if and only if the vectors 
    \begin{align*}
        \bigg\{ P^{-1} (I-\lambda_j A^n)^{-1}(I - A)^{-1}(I-A)^n C \bigg\}_{j = 1, \ldots, q}
    \end{align*}
    are linearly independent.
    Finally we can see that if the vectors $\{ \bar{A}^j\bar{C} \}_{j = 0, \ldots , N-1}$ are linearly independent then so are the vectors
    \begin{align*}
        \{ P \bar{A}^j \bar{C} \}_{j = 0, \ldots , N-1}
    \end{align*} and then noting that
    \begin{align*}
        P \bar{A}^j \bar{C} = P \bar{A}^j P^{-1} P \bar{C} = A^jC
    \end{align*}
    completes the proof.
\end{proof}

\section{Supervised learning on stationary ergodic processes}

For an ESN to successfully learn from a stochastic process, the process must satisfy mild conditions. The following definition that appears in \cite{Gonon2020} outlines these conditions. For the remainder of this chapter, we will use boldface for random variables.

\begin{defn}
    (Admissible input process) A $(\mathbb{R}^d)^\mathbb{Z}$-valued random variable $\boldsymbol{Z}$ is called an admissible process if for any $T_0 \in \mathbb{N}$ there exists $M_{T_0}>0$ such that for all $k \in \mathbb{Z}$ the $d \times (T_0+1)$ random matrix $[\boldsymbol{Z}_{k-T_0} , \boldsymbol{Z}_{k-T_0+1} , \ldots  , \boldsymbol{Z}_k]$ satisfies
    \begin{align}
        \label{admissible}
        \lVert \boldsymbol{Z}_{k-T_0} , \boldsymbol{Z}_{k-T_0+1} , \ldots  , \boldsymbol{Z}_k \rVert \leq M_{T_0}
    \end{align}
    Lebesgue-almost surely.
\end{defn}

Furthermore, to ensure the learning is successful the matrices $\boldsymbol{A},\boldsymbol{C},\boldsymbol{b}$ must be drawn from appropriate distributions. In the following we describe the procedure introduced by \cite{Gonon2020} by which $\boldsymbol{A},\boldsymbol{C},\boldsymbol{b}$ are randomly generated. 

\begin{proc}
    Let $n,T_0 \in \mathbb{N}$, $R > 0$ be the input parameters for the procedure. Suppose that $\boldsymbol{Z}$ is an admissible input process. Consequently, for any $T_0 \in \mathbb{N}$ there exists $M_{T_0}$ such that (setting $k=0$ in (\ref{admissible}))
    \begin{align*}
        \lVert \boldsymbol{Z}_{-T_0} , \boldsymbol{Z}_{-T_0+1} , \ldots  , \boldsymbol{Z}_0 \rVert \leq M_{T_0}
    \end{align*}
    Lebesgue-almost surely.
    Then, for a given $T_0$, we initialise the ESN reservoir matrix $\boldsymbol{A}$, input matrix $\boldsymbol{C}$, and biases $\boldsymbol{b}$ according to the following procedure.
    \begin{enumerate}
        \item Draw $n$ i.i.d. samples $\boldsymbol{A}_1 , \ldots  , \boldsymbol{A}_n$ from the uniform distribution on $B_R \subset \mathbb{R}^{d(T_0+1)}$ where $B_R$ is the ball of radius $R$ and centre 0, and draw $N$ i.i.d. samples $\boldsymbol{b}_1 , \ldots  \boldsymbol{b}_N$ from the uniform distribution on $[-\max(M_{T_0} R, 1) , \max(M_{T_0} R, 1)]$.
        \item Let $S$ and $c$ be shift matrices defined by
        \begin{align*}
            S = 
            \begin{bmatrix}
                0_{d,dT_0} & 0_{d,d} \\
                I_{dT_0} & 0_{dT_0,d}
            \end{bmatrix}
            \qquad 
            c = 
            \begin{bmatrix}
                I_{d} \\
                0_{dT_0,d}
            \end{bmatrix}
        \end{align*}
       where $I_{dT_0}$ and $I_d$ denote the $dT_0 \times dT_0$ and $d \times d$ identity matrices, respectively, and the dimensions of the other (rectangular or square) matrices are given by each pair of subscripts,
        and set
        \begin{align*}
            \boldsymbol{a} =
            \begin{bmatrix}
                \boldsymbol{A}_1^{\top} \\
                \boldsymbol{A}_2^{\top} \\
                \vdots \\
                \boldsymbol{A}_n^{\top}
            \end{bmatrix}
            \qquad 
            \boldsymbol{\bar{A}} =
            \begin{bmatrix}
                S & 0_{d(T_0+1),n} \\
                \boldsymbol{a}S & 0_{n,n} 
            \end{bmatrix}
                \\
            \boldsymbol{\bar{C}} =
            \begin{bmatrix}
                c \\
                \boldsymbol{a}c 
            \end{bmatrix}
            \qquad
            \boldsymbol{\bar{\zeta}} = 
            \begin{bmatrix}
                0_{d(T_0+1)} \\
                \boldsymbol{b}_1 \\
                \vdots \\
                \boldsymbol{b}_n
            \end{bmatrix}
        \end{align*}
        so that we can define
        \begin{align*}
            \boldsymbol{A} = 
            \begin{bmatrix}
                \boldsymbol{\bar{A}} & -\boldsymbol{\bar{A}} \\
                -\boldsymbol{\bar{A}} & \boldsymbol{\bar{A}}
            \end{bmatrix}
            \qquad
            \boldsymbol{C} = 
            \begin{bmatrix}
                \boldsymbol{\bar{C}} \\
                -\boldsymbol{\bar{C}}
            \end{bmatrix}
            \qquad
            \boldsymbol{b} = 
            \begin{bmatrix}
                \boldsymbol{\bar{b}} \\
                -\boldsymbol{\bar{b}}
            \end{bmatrix}.
        \end{align*}
    \end{enumerate}
    \label{proc:procedure}
\end{proc}

With everything set up we are ready to introduce a result by \cite{Gonon2020} which we will build on later in the chapter. The result roughly assumes that we have an admissible process, and a target function we wish to approximate to a tolerance $\epsilon$. Then for large enough $N$, any randomly generated ESN of dimensional $N$ admits an output layer $W \in \mathbb{R}^N$ with which we can approximate the target function to within the tolerance $\epsilon$.

\begin{theorem}
[\cite{Gonon2020}] Suppose that $\boldsymbol{Z}$ is an admissible input process. Let $\mathcal{R} : (D_d)^{\mathbb{Z}} \to \mathbb{R}$ (where $D_{d}$ is a compact subset of $\mathbb{R}^d$) be causal and measurable with respect to some measure $\mu$ such that $\mathbb{E}_{\mu}[|\mathcal{R}(\boldsymbol{Z})|^2] < \infty$.

Then for any $\epsilon > 0$ and $\delta \in (0,1)$ there exist $n,T_0 \in \mathbb{N}$, and $R > 0$ such that with probability $(1-\delta)$ the ESN with parameters $\boldsymbol{A},\boldsymbol{C},\boldsymbol{b}$ generated by procedure~\ref{proc:procedure} (with inputs $n,T_0,R$) has the local ESP and admits a readout layer $W \in \mathbb{R}^{2(d(T_0+1)+n)}$ such that
\begin{align*}
\bigg(\mathbb{E}_{\mu}\left[ \left. \left\lVert H^{\boldsymbol{A},\boldsymbol{C},\boldsymbol{b}}_W(\boldsymbol{Z}) - \mathcal{R}(\boldsymbol{Z}) \right\rVert^2 \, \right| \, \boldsymbol{A} , \boldsymbol{C} , \boldsymbol{b} \right]\bigg)^{1/2}
\hspace{-0.25cm} :=
\bigg( \int_{(\mathbb{R}^d)^{\mathbb{Z}}} \left\lVert H^{\boldsymbol{A},\boldsymbol{C},\boldsymbol{b}}_W(z) - \mathcal{R}(z)\right\rVert^2 d\mu(z) \bigg)^{1/2}
\hspace{-0.25cm} < \epsilon .
\end{align*}
    \label{general_ESN_approximation_theorem}
\end{theorem}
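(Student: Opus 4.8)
The plan is to reduce the statement to Hornik's universal approximation theorem applied in an appropriate finite‑dimensional setting, exactly as in the proof strategy of \cite{Gonon2020}. The key idea is that, since $\mathcal{R}$ is causal and $L^2(\mu)$, it can be approximated arbitrarily well in $L^2(\mu)$ by a functional that depends only on finitely many past inputs: that is, for any $\epsilon>0$ there exists $T_0 \in \mathbb{N}$ and a measurable map $R_{T_0} : (D_d)^{T_0+1} \to \mathbb{R}$ such that $\mathbb{E}_\mu[|\mathcal{R}(\boldsymbol{Z}) - R_{T_0}(\boldsymbol{Z}_{-T_0},\ldots,\boldsymbol{Z}_0)|^2]^{1/2} < \epsilon/3$. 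This is the content of a (standard) conditional‑expectation / martingale‑convergence argument: the conditioning sigma‑algebras generated by finite windows of the past increase to the full past sigma‑algebra (by causality), so the conditional expectations converge in $L^2$; one then approximates the resulting bounded‑window functional by a continuous one on the compact set where the admissible process lives Lebesgue‑almost surely.

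First I would fix such a $T_0$ and the compact window set $\mathcal{K}_{T_0} := \{ (z_{-T_0},\ldots,z_0) : \lVert z_{-T_0},\ldots,z_0 \rVert \le M_{T_0} \} \subset \mathbb{R}^{d(T_0+1)}$ on which the admissible process $\boldsymbol{Z}$ almost surely takes values. Next I would invoke Hornik's Universal Approximation Theorem to obtain a single‑layer network $g(v) = \sum_{j=1}^{n'} \hat W_j \sigma(\hat A_j^\top v + \hat b_j)$ with $\lVert g - R_{T_0} \rVert_{C^0(\mathcal{K}_{T_0})}$, and hence the corresponding $L^2(\mu)$ error, less than $\epsilon/3$. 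The architecture produced by Procedure \ref{proc:procedure} is engineered precisely so that its reservoir functional $H^{\boldsymbol{A},\boldsymbol{C},\boldsymbol{b}}$ reproduces, in its state components, the stacked delay vector $(\boldsymbol{Z}_{-T_0},\ldots,\boldsymbol{Z}_0)$ together with coordinates of the form $\sigma(\boldsymbol{A}_i^\top(\text{delay vector}) + \boldsymbol{b}_i)$ — the shift matrices $S,c$ implement the delay line and the $\boldsymbol{a}$ block applies the random hidden units. So the task becomes: choose the random hidden directions $\boldsymbol{A}_i$ and biases $\boldsymbol{b}_i$ (uniform on $B_R$ and on $[-\max(M_{T_0}R,1),\max(M_{T_0}R,1)]$) so that, with probability at least $1-\delta$, enough of them land close to the finitely many targets $(\hat A_j, \hat b_j)$ that a readout $W$ can match $g$.

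The main obstacle — and the step I would treat most carefully — is this last probabilistic covering argument: showing that for $n$ large enough the $n$ i.i.d.\ samples $(\boldsymbol{A}_i,\boldsymbol{b}_i)$ contain, with probability $\ge 1-\delta$, an injective sub‑collection approximating the $n'$ network parameters $(\hat A_j,\hat b_j)$ closely enough (in the $C^0$‑over‑$\mathcal{K}_{T_0}$ sense of the activation functions) that the induced output functional error is below $\epsilon/3$. This is exactly the mechanism of Lemma \ref{RUAT_lemma} and Theorem \ref{theorem::RUAT} above: each target parameter has a neighbourhood of strictly positive probability under the uniform law (here one must check $R$ is taken large enough that $B_R$ contains the relevant $\hat A_j$, and likewise for the bias interval), and then a coupon‑collector estimate gives the required $n$. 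Assembling the three $\epsilon/3$ bounds by the triangle inequality, together with the fact that Procedure \ref{proc:procedure} yields the local ESP (so that $H^{\boldsymbol{A},\boldsymbol{C},\boldsymbol{b}}$ is well defined as a functional), and noting the dimension count $2(d(T_0+1)+n)$ for the readout $W$, completes the argument. The only genuinely new ingredient beyond \cite{Gonon2020} that I would need to spell out is the finite‑window truncation of a general causal $L^2$ functional, which I would state as a preliminary lemma and prove via the $L^2$ martingale convergence theorem applied to the filtration generated by $\{\boldsymbol{Z}_{-T_0},\ldots,\boldsymbol{Z}_0\}_{T_0 \in \mathbb{N}}$.
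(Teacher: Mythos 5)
This theorem is not proved in the thesis at all: it is stated as an imported result from \cite{Gonon2020} and the paper supplies no proof of it, so there is no ``paper's own proof'' to compare against. Your reconstruction is a plausible route, but it differs from the argument actually used in the cited reference in the one step that matters. Gonon, Grigoryeva and Ortega do not use a coupon-collector/density argument of the type in Lemma \ref{RUAT_lemma}. After the finite-memory truncation (your first step, which matches theirs), they write the truncated window functional via a Barron-type integral representation over the parameter space and then apply a Maurey/Monte-Carlo concentration argument to a sample of size $n$ drawn from the induced measure; this is what produces the explicit $O(1/\sqrt{n})$ approximation bounds that the thesis itself alludes to a few pages later. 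Your coupon-collector route gives only the non-quantitative existence statement, which is enough for the theorem as stated here, so it is a genuinely valid alternative --- weaker in conclusion (no rate), but more elementary and requiring fewer structural hypotheses on $\sigma$.

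Two smaller things you gloss over and should be aware of. First, the claim that ``$H^{\boldsymbol{A},\boldsymbol{C},\boldsymbol{b}}$ reproduces the stacked delay vector in its state components'' is not automatic for an ESN of the form $\sigma(Ax+Cz+b)$, since $\sigma$ is also applied to the delay coordinates; the $\pm$-doubling of $\bar A,\bar C,\bar b$ in Procedure \ref{proc:procedure} is there precisely to handle this, and a careful proof has to say why that block structure, combined with the choice of $R$ and $M_{T_0}$ and the readout $W$, lets the nonlinearity be neutralised on the delay block. Second, $R_{T_0}$ is a conditional expectation, hence only an $L^2(\mu)$ object; you correctly flag that a Lusin-type step is needed before Hornik applies, but you should also note that the resulting uniform $C^0$ error on $\mathcal{K}_{T_0}$ must be converted back into an $L^2(\mu)$ error, which is where the admissibility hypothesis (almost-sure boundedness of the window) is actually used. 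Neither of these is a fatal gap, but they are exactly the places where a naive write-up of your sketch would stall.
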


One observation we make about this result is that the output layer $W \in \mathbb{R}^N$, where $N := 2(d(T_0+1) + n)$, is shown to exist, but is not constructed. Of course, we are in practice often interested in the conditions under which the output layer obtained by regularised least squares is close the output layer $W \in \mathbb{R}^N$ that approximates the target functional. However, from the above result we cannot claim that the regularised-least-squares-minimising output layer $W$ does actually satisfy the criterion in the conclusion of the theorem above.

In general, for the least squares solution to be accurate we necessarily require that the sample trajectory is representative of the entire input process $\boldsymbol{Z}$. We can ensure that any sufficiently long sample trajectory is representative of the entire process $\boldsymbol{Z}$ by insisting that $\boldsymbol{Z}$ is stationary and ergodic.

A stochastic process being stationary is analogous to a deterministic dynamical system being autonomous, and is defined below.

\begin{defn}
    (Stationary Process; \cite{mcgoff2015}) A stochastic process $(\boldsymbol{Z}_k)_{k \in \mathbb{Z}} \equiv \boldsymbol{Z}$ is stationary if for any $k \in \mathbb{N}$ and finite subset $I \subset \mathbb{Z}$ the joint distribution $(\boldsymbol{Z}_i)_{i \in I}$ is equal to the joint distribution $(\boldsymbol{Z}_{i+k})_{i \in I}$.
\end{defn}

Building on this definition, a stationary ergodic proccess is very much like an ergodic deterministic system. The definition is stated below.

\begin{defn}
    (Stationary Ergodic Process; \cite{mcgoff2015}) A stationary stochastic process $(\boldsymbol{Z}_k)_{k \in \mathbb{Z}} \equiv \boldsymbol{Z}$ is called ergodic if for every $i \in \mathbb{N}$ and every pair of Borel sets $A,B$
    \begin{align*}
        \lim_{\ell \to \infty} \frac{1}{\ell} \sum^{\ell-1}_{k=0} &\mathbb{P}\bigg( (\boldsymbol{Z}_1 , \ldots , \boldsymbol{Z}_{i}) \in A, (\boldsymbol{Z}_k , \ldots , \boldsymbol{Z}_{i+k}) \in B \bigg) \\
        = &\mathbb{P}\bigg( (\boldsymbol{Z}_1 , \ldots , \boldsymbol{Z}_{i}) \in A \bigg) \mathbb{P}\bigg( (\boldsymbol{Z}_1 , \ldots , \boldsymbol{Z}_{i}) \in B \bigg).
    \end{align*}
\end{defn}
%\jd{[I don't understand the displayed equation above. If this is a definition for every $\ell$ then why does the limit $\ell \to \infty$ appear? And then why is the LHS then independent of $\ell$ but the RHS still depends on $\ell$?]}

Every stationary ergodic processes $\boldsymbol{Z}$ satisfies the celebrated ergodic theorem, stating that the sample average of a trajectory converges to the expectation of the invariant distribution.

\begin{theorem}
    (Ergodic Theorem) If $(\boldsymbol{Z}_k)_{k \in \mathbb{Z}} \equiv \boldsymbol{Z}$ is a stationary ergodic process then for any $i \in \mathbb{Z}$
    \begin{align*}
        \mathbb{E}_{\mu}[\boldsymbol{Z}_i] = \lim_{\ell \to \infty} \frac{1}{\ell}\sum_{k=0}^{\ell-1}  \boldsymbol{Z}_{i+k}
    \end{align*}
    almost surely.
\end{theorem}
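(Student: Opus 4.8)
The statement to prove is the Ergodic Theorem for a stationary ergodic process: if $(\boldsymbol{Z}_k)_{k \in \mathbb{Z}} \equiv \boldsymbol{Z}$ is stationary and ergodic, then for any $i \in \mathbb{Z}$ the sample average $\frac{1}{\ell}\sum_{k=0}^{\ell-1}\boldsymbol{Z}_{i+k}$ converges almost surely to $\mathbb{E}_{\mu}[\boldsymbol{Z}_i]$. The plan is to reduce this to the deterministic Birkhoff Ergodic Theorem already quoted earlier in the excerpt (the Ergodic Theorem of \cite{Birkhoff656}), applied to the canonical shift dynamical system on sequence space. First I would realise $\boldsymbol{Z}$ as the coordinate process on the probability space $\Omega = (\mathbb{R}^d)^{\mathbb{Z}}$ equipped with its product Borel $\sigma$-algebra $\Sigma$ and the law $\mu$ of $\boldsymbol{Z}$; let $\phi = T_d : \Omega \to \Omega$ be the time shift operator. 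Stationarity of $\boldsymbol{Z}$ is precisely the statement that $\mu$ is $\phi$-invariant, i.e. $\mu \circ \phi^{-1} = \mu$, and ergodicity of the process (in the sense of the definition given in the excerpt) is equivalent to $\phi$ being ergodic with respect to $\mu$, since the stated averaged-mixing condition over all pairs of cylinder sets $A,B$ forces every $\phi$-invariant set to have measure $0$ or $1$ (cylinder sets generate $\Sigma$, so it suffices to check the invariance criterion on them).

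Next I would fix $i \in \mathbb{Z}$ and, working coordinate by coordinate in $\mathbb{R}^d$, define the observable $s : \Omega \to \mathbb{R}$ by $s(\omega) := \omega_i$ (the $i$-th coordinate of the sequence, in the chosen component). Admissibility of $\boldsymbol{Z}$, or at minimum the implicit integrability hypothesis needed for the averages to make sense, gives $s \in L^1(\mu)$; in the admissible case $s$ is in fact bounded $\mu$-almost surely. Then $s \circ \phi^k(\omega) = \omega_{i+k}$, so the Birkhoff sum $\frac{1}{\ell}\sum_{k=0}^{\ell-1} s\circ\phi^k(\omega)$ is exactly $\frac{1}{\ell}\sum_{k=0}^{\ell-1}\omega_{i+k}$, whose distribution under $\mu$ is the distribution of $\frac{1}{\ell}\sum_{k=0}^{\ell-1}\boldsymbol{Z}_{i+k}$. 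Applying the Ergodic Theorem \citep{Birkhoff656} to $(\Omega,\Sigma,\mu,\phi)$ and $s$ yields that for $\mu$-almost all $\omega$,
\begin{align*}
    \lim_{\ell \to \infty} \frac{1}{\ell}\sum_{k=0}^{\ell-1} \omega_{i+k} = \int_\Omega s \, d\mu = \mathbb{E}_{\mu}[\boldsymbol{Z}_i],
\end{align*}
where the last equality is just the definition of $s$ and of the pushforward. Translating back, this says $\frac{1}{\ell}\sum_{k=0}^{\ell-1}\boldsymbol{Z}_{i+k} \to \mathbb{E}_{\mu}[\boldsymbol{Z}_i]$ almost surely. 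Repeating the argument for each of the $d$ coordinates and intersecting the (finitely many) full-measure sets gives the vector-valued conclusion.

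The only genuinely non-routine step is the equivalence between the process-level ergodicity definition stated in the excerpt and measure-theoretic ergodicity of the shift $\phi$; everything else is bookkeeping on sequence space plus a direct citation of Birkhoff. For that step I would argue that if $E \in \Sigma$ is $\phi$-invariant then, approximating $\mathbf{1}_E$ in $L^1(\mu)$ by indicators of finite unions of cylinders and using stationarity to pass the averaged condition from cylinders to $E$, one obtains $\mu(E) = \mu(E)^2$, hence $\mu(E) \in \{0,1\}$; the converse (ergodicity of $\phi$ implies the averaged-mixing condition on cylinders) follows from applying Birkhoff's theorem to $s = \mathbf{1}_A$ composed along the orbit and taking expectations against $\mathbf{1}_B$, then invoking bounded convergence. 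I would state this equivalence as a short lemma and keep its proof terse, since it is standard; the main theorem then follows in a couple of lines as above.
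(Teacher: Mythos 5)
Your proposal is correct, but there is nothing in the paper to compare it against: the paper states this result without proof, treating it as the classical ergodic theorem for stationary ergodic processes (the surrounding definitions are cited to \cite{mcgoff2015}, and the earlier deterministic version is attributed to \cite{Birkhoff656} by citation only). Your argument is the standard way to supply the missing proof: realise $\boldsymbol{Z}$ as the coordinate process on $(\mathbb{R}^d)^{\mathbb{Z}}$ with its law $\mu$, observe that stationarity is exactly shift-invariance of $\mu$ and that the paper's averaged-mixing definition of process ergodicity is equivalent to ergodicity of the shift, then apply Birkhoff to the coordinate observable $s(\omega)=\omega_i$, noting $s\circ\phi^k(\omega)=\omega_{i+k}$, and finish coordinate-wise. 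You correctly flag the two points that need care and that the paper's statement glosses over: (i) integrability of $\boldsymbol{Z}_i$, which is not part of the hypotheses as written but is implicit (and is automatic for the admissible, hence essentially bounded, inputs used elsewhere in the chapter); and (ii) the equivalence of the cylinder-set ergodicity definition with measure-theoretic ergodicity of the shift, whose sketch (approximate an invariant set by cylinders, deduce $\mu(E)=\mu(E)^2$; converse via Birkhoff plus bounded convergence) is the standard argument and is sound. So the proposal fills a gap the paper delegates to the literature rather than diverging from any argument the paper actually gives.
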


The ergodic theorem is the crucial ingredient for Theorem \ref{generalised_training_theorem}. The result roughly assumes that we have an admissible stationary ergodic process, and a target function we wish to approximate to a tolerance $\epsilon$. Then for large enough $N$ and $\ell$, any randomly generated ESN of dimensional $N$ trained by least squares regression on a sample trajectory of length $\ell$, explicitly yields an output layer $W \in \mathbb{R}^N$ which approximates the target function to tolerance $\epsilon$.

\begin{theorem}
    \label{generalised_training_theorem}
    \citep{arXiv:2102.06258} Suppose that $\boldsymbol{Z}$ is an admissible input process that is also stationary and ergodic, with invariant measure $\mu$. Let $\mathcal{R} : (D_d)^{\mathbb{Z}} \to \mathbb{R}$ (where $D_d$ is a compact subset of $\mathbb{R}^d$) be causal, $\mu$-measurable, and satisfy $\mathbb{E}_{\mu}[|\mathcal{R}(\boldsymbol{Z})|^2] < \infty$. Let $z$ be an arbitrary realisation of $\boldsymbol{Z}$.
    
    Then for any $\epsilon > 0$, and $\delta \in (0,1)$ there exist
    \begin{itemize}
        \item constants $n,T_0 \in \mathbb{N}, R, \lambda^* > 0$ and $\ell \in \mathbb{N}$,
        \item an ESN with parameters $\boldsymbol{A},\boldsymbol{C},\boldsymbol{b}$ generated by procedure \ref{proc:procedure} (with inputs $n,T_0,R$),
        \item an output layer $W^{*}_{\ell} \in \mathbb{R}^{2(d(T_0+1)+n)}$ which minimises (over $W\in \mathbb{R}^{2(d(T_0+1)+n)}$) the least squares problem
    \begin{align}
        \frac{1}{\ell}\sum_{k=0}^{\ell - 1} \left\lVert H_{W}^{\boldsymbol{A},\boldsymbol{C},\boldsymbol{b}} T^{-k}(z) - \mathcal{R} T^{-k}(z) \right\rVert^2 + \lambda \left\lVert W \right\rVert^2, \nonumber
    \end{align}
    where $\lambda \in (0,\lambda^*)$,
    \end{itemize} 
    such that, with probability $(1-\delta)$, the inequality
    \begin{align}
        \mathbb{E}_{\mu}\left[ \left. \left\lVert H_{W^*_{\ell}}^{\boldsymbol{A},\boldsymbol{C},\boldsymbol{b}}(\boldsymbol{Z}) - \mathcal{R}(\boldsymbol{Z}) \right\rVert^2 \right| \boldsymbol{A},\boldsymbol{C},\boldsymbol{b} \right] < \epsilon \nonumber
    \end{align}
    is satisfied.
\end{theorem}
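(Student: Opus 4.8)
The plan is to combine the non-constructive approximation result of Theorem \ref{general_ESN_approximation_theorem} (which produces an output layer $W$ achieving the desired $L^2(\mu)$ error) with an ergodic-averaging argument analogous to Lemma \ref{W_ell_lemma} and the proof of Theorem \ref{least_sqs_thm} (which shows that the regularised least squares solution, obtained from a single long trajectory, converges to the optimal $L^2(\mu)$ minimiser as the trajectory length grows). The key observation linking the two chapters is that once the ESN has the local ESP, it is realised as a causal time invariant filter and hence as a functional $H^{\boldsymbol{A},\boldsymbol{C},\boldsymbol{b}}$, so that $H^{\boldsymbol{A},\boldsymbol{C},\boldsymbol{b}}_W(\boldsymbol{Z})$ is a genuine $\mu$-measurable random variable and $W \mapsto \mathbb{E}_\mu[\lVert H^{\boldsymbol{A},\boldsymbol{C},\boldsymbol{b}}_W(\boldsymbol{Z}) - \mathcal{R}(\boldsymbol{Z})\rVert^2]$ is a strictly convex quadratic in $W$ (it is an inner-product-space norm-squared of an affine function of $W$, plus the regulariser). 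This lets the entire machinery of Lemma \ref{W_ell_lemma} be transported from the deterministic $(M,\phi,\mu)$ setting to the stochastic $((\mathbb{R}^d)^{\mathbb{Z}}, T, \mu)$ setting, using that $\boldsymbol{Z}$ stationary and ergodic means the shift $T$ acts ergodically on path space with invariant measure $\mu$.

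First I would fix $\epsilon > 0$ and $\delta \in (0,1)$ and apply Theorem \ref{general_ESN_approximation_theorem} with target $\mathcal{R}$ and tolerance $\sqrt{\epsilon/3}$: this yields $n, T_0 \in \mathbb{N}$, $R > 0$, an ESN with parameters $\boldsymbol{A},\boldsymbol{C},\boldsymbol{b}$ drawn by Procedure \ref{proc:procedure} which (with probability $1-\delta$) has the local ESP, and an output layer $\bar W \in \mathbb{R}^{2(d(T_0+1)+n)}$ with $\mathbb{E}_\mu[\lVert H^{\boldsymbol{A},\boldsymbol{C},\boldsymbol{b}}_{\bar W}(\boldsymbol{Z}) - \mathcal{R}(\boldsymbol{Z})\rVert^2 \mid \boldsymbol{A},\boldsymbol{C},\boldsymbol{b}] < \epsilon/3$. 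Conditioning on this probability-$(1-\delta)$ event, I would then set $\lambda^* := \epsilon/(3\lVert \bar W\rVert^2)$ and fix any $\lambda \in (0,\lambda^*)$, so that $\lambda \lVert \bar W\rVert^2 < \epsilon/3$. Next I would write $g := H^{\boldsymbol{A},\boldsymbol{C},\boldsymbol{b}}$ (a $\mu$-measurable $\mathbb{R}^P$-valued functional, $P = 2(d(T_0+1)+n)$, which lies in $L^2(\mu)$ by the admissibility bound and boundedness of the activation), and $u := \mathcal{R}$, and invoke the stochastic analogue of Lemma \ref{W_ell_lemma}: the sequence $W_\ell$ of regularised least squares minimisers of $\frac{1}{\ell}\sum_{k=0}^{\ell-1}\lVert W^\top g(T^{-k}(z)) - u(T^{-k}(z))\rVert^2 + \lambda\lVert W\rVert^2$ converges, by the Ergodic Theorem for stationary ergodic processes, to the unique minimiser $W^*$ of $\lVert W^\top g - u\rVert_{L^2(\mu)}^2 + \lambda\lVert W\rVert^2$. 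Then I would choose $\ell$ large enough that $\lVert W_\ell^\top g - W^{*\top} g\rVert_{L^2(\mu)}^2 < \epsilon/3$, and conclude with exactly the telescoping triangle-inequality chain at the end of the proof of Theorem \ref{least_sqs_thm}:
\begin{align*}
\mathbb{E}_\mu\!\left[\lVert H^{\boldsymbol{A},\boldsymbol{C},\boldsymbol{b}}_{W^*_\ell}(\boldsymbol{Z}) - \mathcal{R}(\boldsymbol{Z})\rVert^2\right]
&\le \lVert W_\ell^\top g - W^{*\top}g\rVert_{L^2(\mu)}^2 + \lVert W^{*\top}g - u\rVert_{L^2(\mu)}^2 + \lambda\lVert \Lambda W^*\rVert^2\\
&\le \tfrac{\epsilon}{3} + \lVert \bar W^\top g - u\rVert_{L^2(\mu)}^2 + \lambda\lVert \bar W\rVert^2 < \tfrac{\epsilon}{3} + \tfrac{\epsilon}{3} + \tfrac{\epsilon}{3} = \epsilon,
\end{align*}
where the middle inequality uses optimality of $W^*$ for the $\lambda$-regularised $L^2(\mu)$ problem (with $\Lambda = I$, or absorbing $\Lambda$ into the regulariser as in the statement), and the last uses the choice of $\lambda^*$ and the approximation bound for $\bar W$.

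The two places requiring genuine care, rather than routine transcription, are the following. First, I must verify that the ergodic-averaging step is legitimate: the Ergodic Theorem as quoted applies to coordinate functions $\boldsymbol{Z}_i$, but here I need $\frac{1}{\ell}\sum_{k=0}^{\ell-1} s(T^{-k}(z)) \to \mathbb{E}_\mu[s(\boldsymbol{Z})]$ for the specific integrands $s(z) = g(z)g(z)^\top$ and $s(z) = g(z)u(z)$, which are measurable functions of the whole path; this follows from Birkhoff's ergodic theorem applied to the measure-preserving ergodic system $((\mathbb{R}^d)^{\mathbb{Z}}, T, \mu)$ provided those integrands are in $L^1(\mu)$, which in turn needs the $L^2(\mu)$ membership of $g$ and $u$ together with Cauchy--Schwarz — so I would include a short lemma (mirroring the opening of Lemma \ref{W_ell_lemma}) establishing $g \in L^2(\mu)$ from admissibility of $\boldsymbol{Z}$ and the fact that $\sigma$ is bounded, hence $g$ is uniformly bounded. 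Second, I must be careful that all statements are made \emph{conditionally} on the random draw $(\boldsymbol{A},\boldsymbol{C},\boldsymbol{b})$: the event of probability $1-\delta$ (on which the ESP holds and Theorem \ref{general_ESN_approximation_theorem}'s conclusion is valid) is an event in the parameter randomness, and everything downstream — the functional $g = H^{\boldsymbol{A},\boldsymbol{C},\boldsymbol{b}}$, the quantities $W^*$, $W_\ell$, $\lambda^*$ — depends on that draw, so the final inequality is a statement about the conditional expectation $\mathbb{E}_\mu[\,\cdot \mid \boldsymbol{A},\boldsymbol{C},\boldsymbol{b}]$ holding with probability $1-\delta$ over the parameters, exactly as in the theorem statement. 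I expect this bookkeeping, rather than any deep new idea, to be the main obstacle to a clean write-up; the mathematical content is essentially the concatenation of Theorem \ref{general_ESN_approximation_theorem} with the deterministic argument of Section \ref{training_theorem} reinterpreted on path space.
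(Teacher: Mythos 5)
Your proposal is correct and is essentially the paper's argument: the paper proves this statement as the $\gamma=0$ special case of Theorem \ref{offline_q_learning}, whose proof is exactly your combination of Theorem \ref{general_ESN_approximation_theorem} (giving $\bar W$ with small conditional $L^2(\mu)$ error on a probability-$(1-\delta)$ parameter event), a choice of $\lambda^*$ making the regulariser term small, the Ergodic Theorem on path space together with continuity of $\argmin$ to pass from the empirical least-squares minimiser $W^*_\ell$ to the $L^2(\mu)$ minimiser, and a final telescoping chain (the paper uses $\epsilon/5$ pieces rather than your $\epsilon/3$). The only cosmetic caveat is that your displayed chain, like the paper's own, applies the triangle inequality to squared $L^2(\mu)$ norms, which strictly requires either working with unsquared norms or inserting a constant factor; this is a shared looseness, not a gap relative to the paper's proof.
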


\begin{proof}
    We state and prove a result later (Theorem \ref{offline_q_learning}) of which this present result is the special case (in which $\gamma = 0$).
\end{proof}

Though we have constructed $W \in \mathbb{R}^N$ explicitly, the proof fails to be fully constructive because we do not know how many neurons $N$ or sample points $\ell$ are required to provide an approximation with tolerance $\epsilon$. We consider first how the error decreases as we increase the number of sample points $\ell$. To this end we recall the central limit theorem (CLT) for stationary ergodic processes which states that the error between the time average and expectation of the invariant measure converges to a normal distribution with standard deviation $1/\sqrt{\ell}$ as the number of sample points $\ell$ grows to infinity. 

\begin{theorem}
    (Central Limit Theorem; \cite{mcgoff2015}) If $(\boldsymbol{Z}_k)_{k \in \mathbb{Z}}$ is a stationary ergodic process then there exists a covariance matrix $\Sigma$ such that for any $i \in \mathbb{Z}$ and Borel set $A$
    \begin{align*}
        \lim_{\ell \to \infty}\mathbb{P}\bigg(\frac{1}{\sqrt{\ell}}\sum_{k=0}^{\ell-1} (\boldsymbol{Z}_{i+k} - \mathbb{E}_{\mu}[\boldsymbol{Z}_i]) \in A \bigg) = \mathbb{P} \big( \mathcal{N}(0,\Sigma) \in A \big).
    \end{align*}
    In other words, the random variables
    \begin{align*}
        \frac{1}{\sqrt{\ell}}\sum_{k=0}^{\ell-1} (\boldsymbol{Z}_{i+k} - \mathbb{E}_{\mu}[\boldsymbol{Z}_i])
    \end{align*}
    converge in distribution to the multivariate normal $\mathcal{N}(0,\Sigma)$ as $\ell \to \infty$.
\end{theorem}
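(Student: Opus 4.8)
The plan is to prove the statement by Gordin's martingale-approximation method, after a standard reduction. Writing $\boldsymbol{Y}_k := \boldsymbol{Z}_k - \mathbb{E}_\mu[\boldsymbol{Z}_i]$ we obtain a stationary ergodic sequence with $\mathbb{E}_\mu[\boldsymbol{Y}_0] = 0$, and, since $\boldsymbol{Z}$ is admissible and hence bounded almost surely, $\boldsymbol{Y}_0 \in L^2(\mu)$; by the Cram\'er--Wold device it is enough to prove the scalar central limit theorem for $\langle v, \boldsymbol{Y}_k\rangle$, $v \in \mathbb{R}^d$, the matrix $\Sigma$ being recovered from the limiting variances by polarisation, while stationarity of the partial-sum law removes the dependence on $i$. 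I should stress at the outset that the conclusion as stated is \emph{not} true for an arbitrary stationary ergodic $L^2$ process --- partial sums of such a sequence need not be asymptotically Gaussian --- so the proof must import a quantitative decay-of-dependence hypothesis; the natural one here is Gordin's condition $\sum_{n\ge1}\lVert\mathbb{E}_\mu[\boldsymbol{Y}_0\mid\mathcal{F}_{-n}]\rVert_{L^2}<\infty$ with $\mathcal{F}_n := \sigma(\boldsymbol{Z}_k : k\le n)$, which holds in the regime relevant to reservoir computing (for instance when $\boldsymbol{Z}$ is a H\"older observable of a mixing hyperbolic system, paralleling the deterministic central limit theorem already quoted above).

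Under that hypothesis the first step is the Gordin decomposition. With $\theta := T^{-1}$ one sets $\boldsymbol{g} := \sum_{j\ge1}\mathbb{E}_\mu[\boldsymbol{Y}_j\mid\mathcal{F}_0]$, which converges in $L^2(\mu)$ exactly because of Gordin's condition and stationarity, and a short computation produces
\begin{equation*}
\boldsymbol{Y}_0 = \boldsymbol{M}_0 + \boldsymbol{g} - \boldsymbol{g}\circ\theta ,
\end{equation*}
where $\boldsymbol{M}_0$ is $\mathcal{F}_0$-measurable with $\mathbb{E}_\mu[\boldsymbol{M}_0\mid\mathcal{F}_{-1}] = 0$, so that $(\boldsymbol{M}_k)_{k\in\mathbb{Z}}$, $\boldsymbol{M}_k := \boldsymbol{M}_0\circ\theta^k$, is a stationary ergodic martingale-difference sequence for $(\mathcal{F}_k)$. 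Summing and telescoping,
\begin{equation*}
\frac{1}{\sqrt{\ell}}\sum_{k=0}^{\ell-1}\boldsymbol{Y}_k = \frac{1}{\sqrt{\ell}}\sum_{k=0}^{\ell-1}\boldsymbol{M}_k + \frac{1}{\sqrt{\ell}}\bigl(\boldsymbol{g} - \boldsymbol{g}\circ\theta^{\ell}\bigr),
\end{equation*}
and the last term tends to $0$ in $L^2(\mu)$ since $\lVert\boldsymbol{g}\circ\theta^{\ell}\rVert_{L^2} = \lVert\boldsymbol{g}\rVert_{L^2}$ is independent of $\ell$.

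The second step is the martingale central limit theorem applied to $\ell^{-1/2}\sum_{k=0}^{\ell-1}\boldsymbol{M}_k$. Here Birkhoff's ergodic theorem does the work: the normalised conditional variances $\frac1\ell\sum_{k=0}^{\ell-1}\mathbb{E}_\mu[\boldsymbol{M}_k\boldsymbol{M}_k^{\top}\mid\mathcal{F}_{k-1}]$ converge almost surely to $\Sigma := \mathbb{E}_\mu[\boldsymbol{M}_0\boldsymbol{M}_0^{\top}]$, and the conditional Lindeberg condition follows likewise from the ergodic theorem applied to $k\mapsto\mathbb{E}_\mu[\lVert\boldsymbol{M}_0\rVert^2\mathbf{1}_{\{\lVert\boldsymbol{M}_0\rVert>\varepsilon\sqrt{\ell}\}}\mid\mathcal{F}_{-1}]\circ\theta^k$ together with $\boldsymbol{M}_0\in L^2$. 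Billingsley's martingale CLT then gives $\ell^{-1/2}\sum_{k=0}^{\ell-1}\boldsymbol{M}_k \Rightarrow \mathcal{N}(0,\Sigma)$, and Slutsky's lemma combined with the vanishing coboundary term upgrades this to $\ell^{-1/2}\sum_{k=0}^{\ell-1}\boldsymbol{Y}_k \Rightarrow \mathcal{N}(0,\Sigma)$, which is exactly $\mathbb{P}\bigl(\ell^{-1/2}\sum_{k=0}^{\ell-1}(\boldsymbol{Z}_{i+k}-\mathbb{E}_\mu[\boldsymbol{Z}_i])\in A\bigr)\to\mathbb{P}(\mathcal{N}(0,\Sigma)\in A)$ on every continuity set $A$.

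The main obstacle is conceptual rather than computational: ergodicity alone is insufficient, so one genuinely has to bring in a mixing- or martingale-type hypothesis, and the honest course is to make that hypothesis explicit (as the cited reference does). If one prefers to assume strong $\alpha$-mixing with summable coefficients instead of Gordin's condition --- again valid for the hyperbolic examples underlying the applications in this chapter --- the alternative route is Bernstein's big-block/small-block construction: split $\{0,\dots,\ell-1\}$ into alternating blocks of lengths $p(\ell) \gg q(\ell)$ with $\ell q/p \to 0$, discard the small blocks as $o_{L^2}(\sqrt\ell)$, replace the big-block sums by independent copies at a cost governed by $\sum\alpha(q)$, and conclude with the Lindeberg--Feller CLT for triangular arrays. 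The only delicate points along either path are the $L^2$-summability of the Gordin series (respectively the calibration of the block scales) and the verification of the conditional Lindeberg condition, both of which are routine once the decay hypothesis is granted; for the purposes of this thesis it suffices to cite McGoff (2015), where the precise conditions on $\boldsymbol{Z}$ are spelled out.
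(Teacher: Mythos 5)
The thesis does not actually prove this statement: it is quoted directly from \cite{mcgoff2015} and used as a black box, so there is no internal argument for your proposal to be measured against. Your proof is therefore doing strictly more than the paper does, and its architecture is the standard, correct one: centre the process, reduce to the scalar case by Cram\'er--Wold, perform the Gordin decomposition into a stationary ergodic martingale-difference sequence plus a coboundary, observe that the coboundary is $o_{L^2}(\sqrt{\ell})$, and invoke the Billingsley--Ibragimov martingale CLT, with Birkhoff's theorem supplying convergence of the conditional variances and the conditional Lindeberg condition; the Bernstein blocking route under summable $\alpha$-mixing is an equally valid alternative. Your most important observation is also right and worth stressing: stationarity and ergodicity alone do \emph{not} imply a central limit theorem (there are classical stationary ergodic $L^2$ counterexamples), so the theorem as stated in the thesis is false without an additional decay-of-dependence hypothesis --- Gordin's condition, a martingale-difference structure, or a mixing rate --- together with finite second moments, and the cited reference does impose such conditions; your proof makes that hypothesis explicit rather than silently assuming it, which is the honest way to repair the statement. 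Two smaller points: weak convergence only yields the displayed limit for continuity sets of $\mathcal{N}(0,\Sigma)$, as you correctly restrict to, whereas the thesis asserts it for every Borel set $A$; and your use of admissibility to guarantee $\boldsymbol{Y}_0 \in L^2(\mu)$ imports an assumption not contained in the statement itself, though it is harmless in the setting where the theorem is actually applied in this chapter.
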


This suggests that the convergence of the error with $\ell$ is of order $1 / \sqrt{\ell}$, and the constant factor in the convergence estimate is related to the mixing time and the variance of the process $\boldsymbol{Z}$. It could be a fruitful direction of future work to explore this in more detail. 

%\jd{[Make sure that you mention this again in the Conclusion chapter when discussing future work!]}

We are also interested in the convergence of error as the number of neurons $N$ grows. There are several results \citep[e.g.][]{Gonon2020} which establish explicit approximation bounds of order $1/\sqrt{N}$ using the CLT for i.i.d. random variables. Such an approximation bound is likely to hold in this context too.

\section{Reinforcement learning on stationary ergodic processes}

In the previous section we considered a scenario where we use an ESN to learn a given target functional given a sample trajectory of labelled data. This is a classic supervised learning problem. In this current section we will extend these results to a reinforcement learning (RL) setting.

In the RL paradigm we have an agent that explores its environment and at each moment in time $k \in \mathbb{Z}$ executes an action $a_k$, makes an observation $\omega_k$ of its surroundings, and obtains a reward $r_k$. So at every time point the agent records a (reward, action, observation) triple $(r_k,a_k,\omega_k)$. The goal of the agent to choose a sequence of actions that will maximise the present value $\sum_{k=0}^{\infty}\gamma^kr_k $ of its future rewards, discounting rewards by a factors of $\gamma \in [0,1)$ when they occur further into the future.

For a given stochastic sequence of actions $(\boldsymbol{A}_k)_{k \in \mathbb{Z}}$ which depend on the random observations $(\boldsymbol{\Omega}_k)_{k \in \mathbb{Z}}$ and rewards $(\boldsymbol{R}_k)_{k \in \mathbb{Z}}$ we have a random process
$\boldsymbol{Z} \equiv (\boldsymbol{Z}_k)_{k \in \mathbb{Z}} = (\boldsymbol{R}_k,\boldsymbol{A}_k,\boldsymbol{\Omega}_k)_{k \in \mathbb{Z}}$. Now suppose an agent has a particular history of (reward, action, observation) triples leading up to the present moment. It now makes sense to define the \emph{value} of that history as the expectation of the discounted sum of future rewards conditional on the history of (reward, action, observation) triples. In particular we can define a causal value functional $V$ which takes a sequence of (reward, action, observation) triples $z$, and returns their value.

\begin{defn}
\label{defn::value_func}
    (Value functional) Let $D_d$ be a compact subset of $\mathbb{R}^d$ and $\mathcal{R} : (D_d)^{\mathbb{Z}} \to \mathbb{R}$ a $\mu$- measurable causal reward functional that satisfies $\mathbb{E}[\mathcal{R}(\boldsymbol{Z})^2] < \infty$. Let $T^k : \mathbb{R}^d \to \mathbb{R}^d$ denote the $k$-fold composition of the shift map with itself. We define the causal value functional $V : (D_d)^{\mathbb{Z}} \to \mathbb{R}$ (with respect to the process $\boldsymbol{Z}$) as
\begin{align*}
    V(z) &:= \mathbb{E}_{\mu}\bigg[ \sum_{k=0}^{\infty} \gamma^k \mathcal{R}T^k(\boldsymbol{Z}) \ \bigg| \ \boldsymbol{Z}_j = z_j \ \forall j \leq 0 \bigg].
\end{align*}
\end{defn}

We will see in Theorem \ref{V_is_fixed_pt} that $V$ is actually the unique fixed point of a contraction mapping called the Bellman operator, which appears in reinforcement learning and in stochastic control.   

\begin{defn}
    Let $\boldsymbol{Z}$ be a stationary ergodic process. Then we define the map $T_{\boldsymbol{Z}}$ as a CTI filter on the bi-infinite sequences $(D_d)^{\mathbb{Z}}$, which returns the random variable:
\begin{align*}
    T_{\boldsymbol{Z}}(z)_k =
    \begin{cases}
        z_{k+1} &\text{ if } k < 0 \\
        \boldsymbol{Z}_{k+1} \ | \ \boldsymbol{Z}_j = z_j \ \forall j \leq 0 &\text{ if } k \geq 0.
    \end{cases}
\end{align*}
\end{defn}

\begin{theorem}
\label{V_is_fixed_pt}
Let $\boldsymbol{Z}$ be a stationary ergodic process with invariant measure $\mu$. Consider the space of functionals $H : (D_d)^{\mathbb{Z}} \to \mathbb{R}$ that satisfy $\mathbb{E}_\mu[H(\boldsymbol{Z})^2] \leq \infty$. Now equip this space with the norm
    \begin{align*}
        \lVert H \rVert_{\mu} = \sqrt{\mathbb{E}_{\mu}[ H(\boldsymbol{Z})^2 ]}.
    \end{align*}
    Let $\mathcal{R} : (D_d)^{\mathbb{Z}} \to \mathbb{R}$ be a $\mu$ measurable causal reward functional that satisifes $\mathbb{E}_\mu[\mathcal{R}(\boldsymbol{Z})^2] < \infty$. Then for $\gamma \in [0,1)$ the operator 
    \begin{align}
    \label{Phi}
        \Phi(H)(z) = \gamma \mathbb{E}_{\mu}[HT_{\boldsymbol{Z}}(z)] + \mathcal{R}(z)
    \end{align}
   is a contraction mapping with Lipshitz constant $\gamma$ and unique fixed point $V$, which is the value functional in definition \ref{defn::value_func}. 
\end{theorem}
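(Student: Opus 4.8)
The plan is to recognise $\Phi$ as a Bellman-type operator and prove it is a contraction on the complete space
$\mathcal{H} := \{\, H : (D_d)^{\mathbb{Z}} \to \mathbb{R} \mid \mathbb{E}_\mu[H(\boldsymbol{Z})^2] < \infty \,\}$
equipped with $\lVert \cdot \rVert_\mu$, then invoke the Banach fixed point theorem, and finally verify by direct computation that the value functional $V$ of Definition \ref{defn::value_func} satisfies $\Phi(V) = V$. First I would check that $\Phi$ maps $\mathcal{H}$ into itself: the finite-second-moment bound will drop out of the contraction estimate below (applied with one argument set to $0$) together with the triangle inequality, and one notes in passing that $\Phi$ preserves causality, so the fixed point is a causal functional.

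The core step is the contraction estimate. The key observation is that, when evaluated along the process itself ($z = \boldsymbol{Z}$), the map $z \mapsto \mathbb{E}_\mu[H\,T_{\boldsymbol{Z}}(z)]$ becomes the conditional expectation $\mathbb{E}_\mu[H(T\boldsymbol{Z}) \mid \mathcal{F}_0]$, where $\mathcal{F}_0 = \sigma(\boldsymbol{Z}_j : j \leq 0)$ and $T$ is the shift; this uses the existence of regular conditional distributions for the Polish-space valued sequence $\boldsymbol{Z}$. Then for $H_1, H_2 \in \mathcal{H}$, the conditional Jensen inequality gives
\[
\lVert \Phi(H_1) - \Phi(H_2) \rVert_\mu^2 = \gamma^2 \, \mathbb{E}_\mu\!\left[ \mathbb{E}_\mu[(H_1 - H_2)(T\boldsymbol{Z}) \mid \mathcal{F}_0]^2 \right] \leq \gamma^2 \, \mathbb{E}_\mu\!\left[ (H_1 - H_2)(T\boldsymbol{Z})^2 \right],
\]
and since $\boldsymbol{Z}$ is stationary, $T\boldsymbol{Z}$ has the same law as $\boldsymbol{Z}$, so the right-hand side equals $\gamma^2 \lVert H_1 - H_2 \rVert_\mu^2$. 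As $\gamma \in [0,1)$, $\Phi$ is a contraction with Lipschitz constant $\gamma$, and completeness of $\mathcal{H}$ (an $L^2$ space) yields via the \cite{Banach1922} Fixed Point Theorem a unique fixed point $V^\ast \in \mathcal{H}$.

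It then remains to identify $V^\ast$ with $V$. First $V \in \mathcal{H}$: by conditional Jensen and stationarity, $\lVert V \rVert_\mu \leq \sum_{k=0}^\infty \gamma^k \lVert \mathcal{R}T^k(\boldsymbol{Z}) \rVert_\mu \leq \lVert \mathcal{R} \rVert_\mu / (1-\gamma) < \infty$. To show $\Phi(V) = V$, split the defining sum at $k=0$; since $\mathcal{R}$ is causal, $\mathcal{R}(\boldsymbol{Z})$ conditioned on $\{\boldsymbol{Z}_j = z_j : j \leq 0\}$ equals $\mathcal{R}(z)$, so
\[
V(z) = \mathcal{R}(z) + \gamma \, \mathbb{E}_\mu\!\left[ \sum_{k=0}^\infty \gamma^k \mathcal{R}\,T^k(T\boldsymbol{Z}) \ \bigg| \ \boldsymbol{Z}_j = z_j \ \forall j \leq 0 \right].
\]
Applying the tower property — conditioning first on $\{\boldsymbol{Z}_j = z_j : j \leq 1\}$, then integrating out $\boldsymbol{Z}_1$ given the past up to time $0$ — and recognising that the inner conditioning is exactly the definition of the random sequence $T_{\boldsymbol{Z}}(z)$, the bracketed term equals $\mathbb{E}_\mu[V\,T_{\boldsymbol{Z}}(z)]$. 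Hence $V = \Phi(V)$, and uniqueness forces $V = V^\ast$.

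The main obstacle I anticipate is the measure-theoretic bookkeeping in this last step: making precise the conditioning on a probability-zero event (via a disintegration of $\mu$ along $\mathcal{F}_0$), justifying the interchange of the infinite sum with the conditional expectation (dominated convergence, using $\sum_k \gamma^k \lVert \mathcal{R}T^k(\boldsymbol{Z}) \rVert_\mu < \infty$), and checking that the identity $\mathcal{R}\,T^{k+1} = (\mathcal{R}\,T^k)\circ T$ combines with the tower property to reproduce precisely the functional $\mathbb{E}_\mu[V\,T_{\boldsymbol{Z}}(\cdot)]$ and not something subtly different. The contraction estimate itself is short once the conditional-Jensen-plus-stationarity observation is in place.
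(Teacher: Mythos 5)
Your proposal is correct and follows essentially the same route as the paper: prove $\Phi$ is a contraction using stationarity of $\boldsymbol{Z}$, invoke the Banach fixed point theorem, and identify $V$ as the fixed point by splitting off the $k=0$ term (using causality of $\mathcal{R}$) and applying the tower property to recover $\gamma\,\mathbb{E}_{\mu}[V T_{\boldsymbol{Z}}(z)] + \mathcal{R}(z)$. Your contraction estimate via conditional Jensen (yielding $\leq \gamma$) is in fact slightly more careful than the paper's computation, which asserts equality at that step by the law of total expectation; either way the Lipschitz constant $\gamma$ and the conclusion are the same.
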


\begin{proof}
We will first show that $\Phi$ is a contraction mapping with Lipschitz constant $\gamma$. To see this observe that
\begin{align*}
    \lVert \Phi(H_1) - \Phi(H_2) \rVert_{\mu}^2 &= \mathbb{E}_{\mu}\bigg[(\mathcal{R}(\boldsymbol{Z}) + \gamma \mathbb{E}_{\mu}[H_1 T_{\boldsymbol{Z}}(\boldsymbol{Z})] - \mathcal{R}(\boldsymbol{Z}) - \gamma \mathbb{E}_{\mu}[H_2 T_{\boldsymbol{Z}}(\boldsymbol{Z})])^2\bigg] \\
    &= \gamma^2 \mathbb{E}_{\mu}\bigg[\mathbb{E}_{\mu}[H_1 T_{\boldsymbol{Z}}(\boldsymbol{Z}) - H_2 T_{\boldsymbol{Z}}(\boldsymbol{Z})]^2\bigg] \\
    &= \gamma^2 \mathbb{E}_{\mu}\big[ (H_1 T(\boldsymbol{Z}) - H_2 T(\boldsymbol{Z}))^2 \big] \text{ (by the law of total expectation)} \\
    &= \gamma^2\mathbb{E}_{\mu}\big[(H_1(\boldsymbol{Z}) - H_2(\boldsymbol{Z}))^2\big] \text{ (by stationary ergodicity of $\boldsymbol{Z}$)} \\
    &= \gamma^2\lVert H_1 - H_2 \rVert_{\mu}^2.
\end{align*}
Then by Banach's fixed point theorem the operator $\Phi$ admits a unique fixed point. We will now show that the value function $V$ is indeed this fixed point.
    Re-arranging the definition of $V(z)$, we have that:
\begin{align*}
    V(z) &= \mathbb{E}_{\mu}\bigg[ \sum_{k=0}^{\infty} \gamma^k \mathcal{R}T^k(\boldsymbol{Z}) \ \bigg| \ \boldsymbol{Z}_j = z_j \ \forall j \leq 0 \bigg] \\
    &= \mathbb{E}_{\mu}\bigg[ \sum_{k=1}^{\infty} \gamma^k \mathcal{R}T^k(\boldsymbol{Z}) \ \bigg| \ \boldsymbol{Z}_j = z_j \ \forall j \leq 0 \bigg] + \mathcal{R}(z) \\
    &= \gamma \mathbb{E}_{\mu}\bigg[ \sum_{k=0}^{\infty} \gamma^k \mathcal{R}T^{k+1}(\boldsymbol{Z}) \ \bigg| \ \boldsymbol{Z}_j = z_j \ \forall j \leq 0 \bigg] + \mathcal{R}(z) \\
    &= \gamma \mathbb{E}_{\mu}\bigg[ \sum_{k=0}^{\infty} \gamma^k \mathcal{R}T^k(\boldsymbol{Z}) \ \bigg| \ \boldsymbol{Z}_j = z_{j+1} \ \forall j < 0 \bigg] + \mathcal{R}(z) \\
\end{align*}
where we have carried out straightforward relabellings of the indexing of terms in the sum by $k$. Then by the law of total expectation we may write this last expression as
\begin{align*}    
    V(z) &=\gamma\mathbb{E}_{\mu}\bigg[\mathbb{E}_{\mu}\bigg[ \sum^{\infty}_{k=0}\gamma^k \mathcal{R}T^k(\boldsymbol{Z}) \ \bigg| \ \boldsymbol{Z}_j = T_{\boldsymbol{Z}}(z)_j \ \forall j \leq 0 \bigg]\bigg] + \mathcal{R}(z) \\
    &= \gamma \mathbb{E}_{\mu}[VT_{\boldsymbol{Z}}(z)] + \mathcal{R}(z)
    = \Phi(V)(z),
\end{align*}
which shows that $V$ is indeed a fixed point of $\Phi$, and so is the unique such, since $\Phi$ is a contraction.
\end{proof}

We are now ready to prove our next result, which generalises Theorem \ref{generalised_training_theorem} and applies to the following reinforcement learning context. We envision an agent that executes a random sequence of actions $(\boldsymbol{A}_k)_{k \in \mathbb{Z}}$ which depend on the random observations $(\boldsymbol{\Omega}_k)_{k \in \mathbb{Z}}$ and rewards $(\boldsymbol{R}_k)_{k \in \mathbb{Z}}$ hence we have a random process
$\boldsymbol{Z} \equiv (\boldsymbol{Z}_k)_{k \in \mathbb{Z}} = (\boldsymbol{R}_k,\boldsymbol{A}_k,\boldsymbol{\Omega}_k)_{k \in \mathbb{Z}}$ controlled by the actions $(\boldsymbol{A}_k)_{k \in \mathbb{Z}}$. We assume that the controlled process $\boldsymbol{Z}$ is stationary and ergodic. Then for any tolerance $\epsilon > 0$ if we train an ESN with sufficiently many neurons $N$ on a trajectory of sufficiently many sample points $\ell$, using regularised least squares, then we can approximate the fixed point of $\Phi$ (and therefore the value functional $V$) to within the tolerance $\epsilon$.

\begin{theorem}
\label{offline_q_learning} \citep{arXiv:2102.06258}
    Suppose that $\boldsymbol{Z}$ is an admissible input process, that is also stationary and ergodic with invariant measure $\mu$. Let $\mathcal{R} : (D_d)^{\mathbb{Z}} \to \mathbb{R}$ be causal, $\mu$-measurable and satisfy $\mathbb{E}[|\mathcal{R}(\boldsymbol{Z})|^2] < \infty$ and define $\Phi$ using \eqref{Phi}
    on the $\mu$-measurable functionals $H$ that satisfy $\mathbb{E}_{\mu}[|H(\boldsymbol{Z})|^2] < \infty$. Let $\gamma \in [0,1)$. Let $z$ be an arbitrary realisation of $\boldsymbol{Z}$.
    
    Then for any $\epsilon > 0$, and $\delta \in (0,1)$ there exist
    \begin{itemize}
        \item constants $n,T_0 \in \mathbb{N}, R, \lambda^* > 0$ and $\ell \in \mathbb{N}$,
        \item an ESN with parameters $\boldsymbol{A},\boldsymbol{C},\boldsymbol{b}$ generated by procedure \ref{proc:procedure} (with inputs $n,T_0,R$),
        \item an output layer $W^{*}_{\ell} \in \mathbb{R}^{2(d(T_0+1)+n)}$ minimising (over $W \in \mathbb{R}^{2(d(T_0+1)+n)}$) the least squares problem
    \begin{align}
        \frac{1}{\ell}\sum_{k = 0}^{\ell - 1} \left\lVert W^{\top} (H^{\boldsymbol{A},\boldsymbol{C},\boldsymbol{b}}T^{-k}(z) - \gamma H^{\boldsymbol{A},\boldsymbol{C},\boldsymbol{b}}T^{1-k}(z)) - \mathcal{R}(z) \right\rVert^2 + \lambda \lVert W \rVert^2 \nonumber
    \end{align}
    where $\lambda \in (0,\lambda^*)$,
    \end{itemize} 
     such that, with probability $(1-\delta)$, the inequality
    \begin{align}
        \mathbb{E}_{\mu}\left[ \left. \left\lVert H_{W^*_{\ell}}^{\boldsymbol{A},\boldsymbol{C},\boldsymbol{b}}(\boldsymbol{Z}) - \Phi H_{W^*_{\ell}}^{\boldsymbol{A},\boldsymbol{C},\boldsymbol{b}}(\boldsymbol{Z}) \right\rVert^2 \right| \boldsymbol{A},\boldsymbol{C},\boldsymbol{b} \right] < \epsilon \nonumber
    \end{align}
    is satisfied.
\end{theorem}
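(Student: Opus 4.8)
The plan is to mirror the strategy used for Theorem \ref{least_sqs_thm} in the deterministic case, with the Ergodic Theorem for stationary ergodic processes replacing its deterministic counterpart, and with the value functional $V$ — which by Theorem \ref{V_is_fixed_pt} is the unique fixed point of $\Phi$ and satisfies $\mathbb{E}_\mu[V(\boldsymbol Z)^2]<\infty$ (from $V=\Phi(V)$, $\lVert\Phi(0)\rVert_\mu=\lVert\mathcal R\rVert_\mu$ and $\gamma$-Lipschitzness, so $\lVert V\rVert_\mu\le\lVert\mathcal R\rVert_\mu/(1-\gamma)$) — playing the role that the target function $u$ played there. The decisive first step is a structural observation about $\Phi$ applied to an ESN output functional. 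Because the reservoir recursion $x_{k+1}=F(x_k,z_k)$ forces the reservoir functional $H^{\boldsymbol A,\boldsymbol C,\boldsymbol b}$ to read its input sequence only through the strictly past coordinates (indices $\le-1$, since $x_0=F(x_{-1},z_{-1})$ and backwards), the conditional shift $T_{\boldsymbol Z}$ sitting inside $\Phi$ has no effect on $H^{\boldsymbol A,\boldsymbol C,\boldsymbol b}_W$ beyond that of the ordinary shift $T$: the only coordinates of $T_{\boldsymbol Z}(z)$ that $H^{\boldsymbol A,\boldsymbol C,\boldsymbol b}_W$ sees are $(T_{\boldsymbol Z}(z))_j=z_{j+1}$ for $j\le-1$, which are deterministic and agree with $(Tz)_j$. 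Hence, for every readout $W$,
\begin{align*}
\Phi H^{\boldsymbol A,\boldsymbol C,\boldsymbol b}_W = \gamma\,H^{\boldsymbol A,\boldsymbol C,\boldsymbol b}_W\circ T + \mathcal R ,
\end{align*}
so that $H^{\boldsymbol A,\boldsymbol C,\boldsymbol b}_W(z)-\Phi H^{\boldsymbol A,\boldsymbol C,\boldsymbol b}_W(z)=W^\top\big(H^{\boldsymbol A,\boldsymbol C,\boldsymbol b}(z)-\gamma H^{\boldsymbol A,\boldsymbol C,\boldsymbol b}(Tz)\big)-\mathcal R(z)$, which, evaluated at $T^{-k}(z)$, is exactly the $k$th residual in the least squares problem (reading its target as $\mathcal R T^{-k}(z)$). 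Consequently the population ($\ell\to\infty$) limit of the least squares objective is precisely $\lVert H^{\boldsymbol A,\boldsymbol C,\boldsymbol b}_W-\Phi H^{\boldsymbol A,\boldsymbol C,\boldsymbol b}_W\rVert_\mu^2+\lambda\lVert W\rVert^2$, the quantity to be controlled; when $\gamma=0$ this collapses to the identity used implicitly in Theorem \ref{generalised_training_theorem}.

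With this in hand, fix $\epsilon>0$, $\delta\in(0,1)$, and let $\epsilon_1>0$ be a small parameter to be fixed at the end. Since $V$ is a causal, $\mu$-measurable, square-integrable functional, I would apply the ESN universal approximation theorem of Gonon (Theorem \ref{general_ESN_approximation_theorem}) with target functional $V$: there exist $n,T_0\in\mathbb N$ and $R>0$ such that, with probability at least $1-\delta$, the ESN generated by Procedure \ref{proc:procedure} has the local ESP and admits a readout $\bar W$ with $\mathbb{E}_\mu[\lVert H^{\boldsymbol A,\boldsymbol C,\boldsymbol b}_{\bar W}(\boldsymbol Z)-V(\boldsymbol Z)\rVert^2\mid\boldsymbol A,\boldsymbol C,\boldsymbol b]<\epsilon_1$. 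Because $\Phi$ is a $\gamma$-contraction fixing $V$, on this event $\lVert H^{\boldsymbol A,\boldsymbol C,\boldsymbol b}_{\bar W}-\Phi H^{\boldsymbol A,\boldsymbol C,\boldsymbol b}_{\bar W}\rVert_\mu\le(1+\gamma)\sqrt{\epsilon_1}$, i.e. $\bar W$ makes the population least squares objective small.

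Working from now on on that probability-$(1-\delta)$ event, set $\lambda^\ast:=\epsilon_1/\lVert\bar W\rVert^2$ and fix $\lambda\in(0,\lambda^\ast)$. The reservoir states of an ESN with the local ESP driven by a bounded admissible process lie in a compact set, so $H^{\boldsymbol A,\boldsymbol C,\boldsymbol b}$, and hence the function $g:=H^{\boldsymbol A,\boldsymbol C,\boldsymbol b}(\cdot)-\gamma H^{\boldsymbol A,\boldsymbol C,\boldsymbol b}(T\cdot)$ on sequence space, are bounded and therefore lie in $L^2(\mu)$; the least squares objective is a strictly convex quadratic in $W$. The proof of Lemma \ref{W_ell_lemma} uses only the Ergodic Theorem and continuity of the $\argmin$ map on strictly convex quadratics, so it transfers verbatim — with the shift on sequence space (ergodic because $\boldsymbol Z$ is) in the role of $\phi$, $g$ in the role of the observation function, $\mathcal R$ in the role of $u$, and $\Lambda=\sqrt\lambda\,I$ — to conclude that, for $\mu$-almost every realisation $z$, the empirical minimiser $W^\ast_\ell$ converges as $\ell\to\infty$ to the unique minimiser $W^\ast$ of $\lVert H^{\boldsymbol A,\boldsymbol C,\boldsymbol b}_W-\Phi H^{\boldsymbol A,\boldsymbol C,\boldsymbol b}_W\rVert_\mu^2+\lambda\lVert W\rVert^2$ (using the identity of the first step). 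Minimality of $W^\ast$ gives
\begin{align*}
\lVert H^{\boldsymbol A,\boldsymbol C,\boldsymbol b}_{W^\ast}-\Phi H^{\boldsymbol A,\boldsymbol C,\boldsymbol b}_{W^\ast}\rVert_\mu^2 \le \lVert H^{\boldsymbol A,\boldsymbol C,\boldsymbol b}_{\bar W}-\Phi H^{\boldsymbol A,\boldsymbol C,\boldsymbol b}_{\bar W}\rVert_\mu^2 + \lambda\lVert\bar W\rVert^2 \le (1+\gamma)^2\epsilon_1+\epsilon_1 .
\end{align*}
Choosing $\ell_0$ so that $\lVert H^{\boldsymbol A,\boldsymbol C,\boldsymbol b}_{W^\ast_\ell}-H^{\boldsymbol A,\boldsymbol C,\boldsymbol b}_{W^\ast}\rVert_\mu<\sqrt{\epsilon_1}$ for all $\ell>\ell_0$ (possible since $W\mapsto W^\top H^{\boldsymbol A,\boldsymbol C,\boldsymbol b}$ is continuous into $L^2(\mu)$), the triangle inequality together with the $\gamma$-contractivity of $\Phi$ yields $\lVert H^{\boldsymbol A,\boldsymbol C,\boldsymbol b}_{W^\ast_\ell}-\Phi H^{\boldsymbol A,\boldsymbol C,\boldsymbol b}_{W^\ast_\ell}\rVert_\mu\le(1+\gamma)\sqrt{\epsilon_1}+\sqrt{(1+\gamma)^2\epsilon_1+\epsilon_1}$, whose square is below $\epsilon$ once $\epsilon_1$ is chosen small enough in terms of $\gamma$ and $\epsilon$, giving the stated conditional bound.

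I expect the main obstacle to be the first step — rigorously establishing that the conditional-expectation term inside $\Phi$ collapses onto the ordinary shift when applied to a reservoir functional. This is precisely what separates the present argument from the generic (and false) assertion that "Bellman-residual minimisation from a single sampled trajectory recovers the fixed point": without the strict causality of $H^{\boldsymbol A,\boldsymbol C,\boldsymbol b}$, the population least squares objective would carry an irreducible bias $\gamma^2\,\mathbb{E}_\mu[\mathrm{Var}(H^{\boldsymbol A,\boldsymbol C,\boldsymbol b}_W\circ T\mid\sigma(\boldsymbol Z_j:j\le0))]$ and the theorem would genuinely fail. A secondary, bookkeeping-level issue is the quantifier order: $\lambda^\ast$ and $\ell_0$ depend on $\bar W$ and on the realisation $z$, so to present $\lambda^\ast$ and $\ell$ as fixed constants one should extract from the proof of Theorem \ref{general_ESN_approximation_theorem} an almost-sure bound on $\lVert\bar W\rVert$ (available from the admissibility constant $M_{T_0}$ and $R$) and invoke the genericity of $\mu$-almost every $z$; the remaining steps are then uniform.
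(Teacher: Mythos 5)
Your proof is correct and follows essentially the same route as the paper's: approximate the fixed point $V$ of $\Phi$ via Theorem \ref{general_ESN_approximation_theorem}, bound the Bellman residual of that approximant via the $\gamma$-Lipschitz triangle inequality, then use the Ergodic Theorem and continuity of the Tikhonov $\argmin$ to pass from the population minimiser to the empirical one. What you add — and it is a genuine service — is the explicit proof of the identity $\Phi H^{\boldsymbol A,\boldsymbol C,\boldsymbol b}_W = \gamma\, H^{\boldsymbol A,\boldsymbol C,\boldsymbol b}_W\circ T + \mathcal R$, which the paper invokes silently in the very first equality of its final chain of estimates; your observation that this hinges on the ESN functional depending only on strictly past coordinates, and that otherwise the sampled least-squares objective would carry the irreducible double-sampling bias $\gamma^2\,\mathbb{E}_\mu\!\left[\mathrm{Var}\!\left(H^{\boldsymbol A,\boldsymbol C,\boldsymbol b}_W\circ T \mid \sigma(\boldsymbol Z_j : j\le 0)\right)\right]$, correctly isolates the point on which the theorem actually turns.
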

\begin{proof}
    First let $V$ be the unique fixed point of the contraction mapping $\Phi$; the existence and uniqueness of this fixed point is guaranteed by Banach's fixed point theorem. Recall the Lipschitz constant of $\Phi$ is $\gamma$.
    Fix $\epsilon > 0$ and $\delta \in (0,1)$. Then by Theorem \ref{general_ESN_approximation_theorem} there exists with probability $(1-\delta)$ a linear readout layer $W \in \mathbb{R}^{2(d(T_0+1) + n)}$ such that
    \begin{align}
        \mathbb{E}_{\mu}\left[ \left. \left\lVert H_W^{\boldsymbol{A},\boldsymbol{C},\boldsymbol{b}}(\boldsymbol{Z}) - V(\boldsymbol{Z}) \right\rVert^{2} \ \right| \boldsymbol{A} , \boldsymbol{C} , \boldsymbol{b} \right] < \frac{\epsilon}{5(1 + \gamma)}. 
        \label{HW-H*}
    \end{align}
    Then it follows that
    \begin{align*}
        \mathbb{E}_{\mu}[ \lVert H^{\boldsymbol{A},\boldsymbol{C},\boldsymbol{b}}_{W} - \Phi H^{\boldsymbol{A},\boldsymbol{C},\boldsymbol{b}}_{W} \rVert^2 | \boldsymbol{A},\boldsymbol{C},\boldsymbol{b} ]  \\
         & \dawesspace = \mathbb{E}_{\mu}[ \lVert H^{\boldsymbol{A},\boldsymbol{C},\boldsymbol{b}}_{W}(\boldsymbol{Z}) - \Phi H^{\boldsymbol{A},\boldsymbol{C},\boldsymbol{b}}_{W}(\boldsymbol{Z}) + V(\boldsymbol{Z}) - V(\boldsymbol{Z}) \rVert^2 | \boldsymbol{A},\boldsymbol{C},\boldsymbol{b} ] \\
        &  \dawesspace \leq \mathbb{E}_{\mu}[ \lVert H^{\boldsymbol{A},\boldsymbol{C},\boldsymbol{b}}_{W}(\boldsymbol{Z}) - V(\boldsymbol{Z}) \rVert^2 | \boldsymbol{A},\boldsymbol{C},\boldsymbol{b} ] + \mathbb{E}_{\mu}[ \lVert V(\boldsymbol{Z}) - \Phi H^{\boldsymbol{A},\boldsymbol{C},\boldsymbol{b}}_W(\boldsymbol{Z}) \rVert^2 | \boldsymbol{A},\boldsymbol{C},\boldsymbol{b} ] \\
        &  \dawesspace = \mathbb{E}_{\mu}[ \lVert H^{\boldsymbol{A},\boldsymbol{C},\boldsymbol{b}}_{W}(\boldsymbol{Z}) - V(\boldsymbol{Z}) \rVert^2 | \boldsymbol{A},\boldsymbol{C},\boldsymbol{b} ] + \mathbb{E}_{\mu}[ \lVert \Phi V(\boldsymbol{Z}) - \Phi H^{\boldsymbol{A},\boldsymbol{C},\boldsymbol{b}}_W(\boldsymbol{Z}) \rVert^2 | \boldsymbol{A},\boldsymbol{C},\boldsymbol{b} ] \\
        &  \dawesspace \leq \mathbb{E}_{\mu}[ \lVert H^{\boldsymbol{A},\boldsymbol{C},\boldsymbol{b}}_{W}(\boldsymbol{Z}) - V(\boldsymbol{Z}) \rVert^2 | \boldsymbol{A},\boldsymbol{C},\boldsymbol{b} ] + \gamma \mathbb{E}_{\mu}[ \lVert V(\boldsymbol{Z}) - H^{\boldsymbol{A},\boldsymbol{C},\boldsymbol{b}}_W(\boldsymbol{Z}) \rVert^2 | \boldsymbol{A},\boldsymbol{C},\boldsymbol{b} ] \\
        &  \dawesspace = (1 + \gamma) \mathbb{E}_{\mu}[ \lVert V(\boldsymbol{Z}) - H^{\boldsymbol{A},\boldsymbol{C},\boldsymbol{b}}_W(\boldsymbol{Z}) \rVert^2 | \boldsymbol{A},\boldsymbol{C},\boldsymbol{b} ]\\
        &  \dawesspace < (1 + \gamma) \frac{\epsilon}{5(1+\gamma)} \text{ by \eqref{HW-H*}} \\
        &  \dawesspace < \frac{\epsilon}{5}
    \end{align*}
    which yields the estimate 
    \begin{align}
        \mathbb{E}_{\mu}[ \lVert H^{\boldsymbol{A},\boldsymbol{C},\boldsymbol{b}}_{W} - \Phi H^{\boldsymbol{A},\boldsymbol{C},\boldsymbol{b}}_{W} \rVert^2 | \boldsymbol{A},\boldsymbol{C},\boldsymbol{b} ] < \frac{\epsilon}{5}. \label{H-PhiH}
    \end{align}
    Now, we can choose $\lambda^*$ such that for any $\lambda \in (0,\lambda^*)$
    \begin{align}
        \lambda \lVert W \rVert^2 < \frac{\epsilon}{5}.
        \label{W_lambda<eps}
    \end{align}
    Next we define a sequence of vectors $(W^{*}_j)_{j \in \mathbb{N}}$ by
     \begin{align*}
         W^*_{j} = \argmin_{U \in \mathbb{R}^{2(d(T_0+1) + N)}} \bigg( \frac{1}{j} \sum_{k=0}^{j - 1} \lVert H_{U}^{\boldsymbol{A}, \boldsymbol{C}, \boldsymbol{b}} T^{-k}(z) - \gamma H^{\boldsymbol{A},\boldsymbol{C},\boldsymbol{b}}_U T^{1-k}(z) - \mathcal{R}T^{-k}(z) \rVert^2 + \lambda \lVert U \rVert^2 \bigg).
     \end{align*}
     We may view $\argmin$ as continuous map on the space of strictly convex $C^1$ functions that returns their unique minimiser. The regularised linear least squares problem is a strictly convex $C^1$ problem, so we may define $W^*_{\infty} \in \mathbb{R}^{2d(T_0+1)+n}$ by
     \begin{align*}
        W^*_{\infty} &:= \argmin_{U} \bigg( \mathbb{E}_{\mu}[ \lVert H_{U}^{A, C, \zeta} (\boldsymbol{Z}) - \gamma H^{\boldsymbol{A},\boldsymbol{C},\boldsymbol{b}}_U T(\boldsymbol{Z}) - \mathcal{R}(\boldsymbol{Z}) \rVert^2 | \boldsymbol{A},\boldsymbol{C},\boldsymbol{b} ] + \lambda \lVert U \rVert^2 \bigg) \\
        &= \argmin_{U} \lim_{j \to \infty} \bigg( \frac{1}{j} \sum_{k=0}^{j - 1} \lVert H_{U}^{\boldsymbol{A}, \boldsymbol{C}, \boldsymbol{b}} T^{-k}(z) - \gamma H^{\boldsymbol{A},\boldsymbol{C},\boldsymbol{b}}_U T^{1-k} - \mathcal{R}T^{-k}(z) \rVert^2 + \lambda \lVert U \rVert^2 \bigg) \\
        &= \lim_{j \to \infty} \argmin_{U} \bigg( \frac{1}{j} \sum_{k=0}^{j - 1} \lVert H_{U}^{\boldsymbol{A}, \boldsymbol{C}, \boldsymbol{b}} T^{-k}(z) - \gamma H^{\boldsymbol{A},\boldsymbol{C},\boldsymbol{b}}_U T^{1-k} - \mathcal{R}T^{-k}(z) \rVert^2 + \lambda \lVert U \rVert^2 \bigg) \\ 
        &= \lim_{j \to \infty} W^*_j 
     \end{align*}
     where the second and third equalities hold by the Ergodic Theorem and continuity of $\argmin$ respectively.
    Now, we may choose $\ell \in \mathbb{N}$ sufficiently large that
    \begin{multline}
        \left| \mathbb{E}_{\mu}[\lVert W^{*\top}_{\ell} ( H^{\boldsymbol{A},\boldsymbol{C},\boldsymbol{b}}(\boldsymbol{Z}) - \gamma H^{\boldsymbol{A},\boldsymbol{C},\boldsymbol{b}}T(\boldsymbol{Z}) ) - \mathcal{R}(\boldsymbol{Z}) \rVert^2 | \boldsymbol{A},\boldsymbol{C},\boldsymbol{b}] \right. \\
        \left. - \ \mathbb{E}_{\mu}[\lVert W^{*\top}_{\infty} ( H^{\boldsymbol{A},\boldsymbol{C},\boldsymbol{b}}(\boldsymbol{Z}) - \gamma H^{\boldsymbol{A},\boldsymbol{C},\boldsymbol{b}}T(\boldsymbol{Z}) ) - \mathcal{R}(\boldsymbol{Z}) \rVert^2 | \boldsymbol{A},\boldsymbol{C},\boldsymbol{b}] \right| < \frac{\epsilon}{5},
        \label{EW_l-EW_inf<eps}
    \end{multline}
    and
    \begin{multline}
        \left| \lim_{j \to \infty} \bigg( \frac{1}{j} \sum_{k = 0}^{j-1} \lVert W^{*\top}_{j}(H^{\boldsymbol{A},\boldsymbol{C},\boldsymbol{b}}T^{-k}(z) - \gamma H^{\boldsymbol{A},\boldsymbol{C},\boldsymbol{b}}T^{1-k}(z) ) - \mathcal{R}T^{-k}(z) \rVert^2 + \lambda \lVert W^*_{j} \rVert^2 \bigg) \right. \\
        \left. - \ \frac{1}{\ell} \sum_{k = 0}^{\ell-1} \lVert W^{*\top}_{\ell}(H^{\boldsymbol{A},\boldsymbol{C},\boldsymbol{b}}T^{-k}(z) - \gamma H^{\boldsymbol{A},\boldsymbol{C},\boldsymbol{b}}T^{1-k}(z) ) - \mathcal{R}T^{-k}(z) \rVert^2 + \lambda \lVert W^*_{\ell} \rVert^2 \right| < \frac{\epsilon}{5},
        \label{W-W}
    \end{multline}
    and by the Ergodic Theorem
    \begin{multline}
        \left\lvert \frac{1}{\ell} \sum_{k = 0}^{\ell-1} \lVert W^\top (H^{\boldsymbol{A},\boldsymbol{C},\boldsymbol{b}}T^{-k}(z) - \gamma H^{\boldsymbol{A},\boldsymbol{C},\boldsymbol{b}}T^{1-k}(z)) - \mathcal{R}(z) \rVert^2 \right. \\
        \left. - \lim_{j \to \infty} \frac{1}{j} \sum_{k = 0}^{j-1} \lVert W^\top (H^{\boldsymbol{A},\boldsymbol{C},\boldsymbol{b}}T^{-k}(z) - \gamma H^{\boldsymbol{A},\boldsymbol{C},\boldsymbol{b}}T^{1-k}(z)) - \mathcal{R}(z) \rVert^2 \right\rvert
        < \frac{\epsilon}{5}.
        \label{l-liml<eps}
    \end{multline}
    Now the proof proceeds directly
\begin{align*}
     \mathbb{E}_{\mu}[ \lVert H_{W^*_{\ell}}^{\boldsymbol{A},\boldsymbol{C},\boldsymbol{b}}(\boldsymbol{Z}) - \Phi H_{W^*_{\ell}}^{\boldsymbol{A},\boldsymbol{C},\boldsymbol{b}}(\boldsymbol{Z}) \rVert^2 | \boldsymbol{A},\boldsymbol{C},\boldsymbol{b} ] \nonumber \\
        &  \dawesspace = \mathbb{E}_{\mu}[ \lVert H_{W^*_{\ell}}^{\boldsymbol{A},\boldsymbol{C},\boldsymbol{b}}(\boldsymbol{Z}) - \gamma H_{W^*_{\ell}}^{\boldsymbol{A},\boldsymbol{C},\boldsymbol{b}}T(\boldsymbol{Z}) - \mathcal{R}(\boldsymbol{Z}) \rVert^2 | \boldsymbol{A},\boldsymbol{C},\boldsymbol{b} ] \\
        & \dawesspace = \mathbb{E}_{\mu}[\lVert W^{*\top}_{\ell} ( H^{\boldsymbol{A},\boldsymbol{C},\boldsymbol{b}}(\boldsymbol{Z}) - \gamma H^{\boldsymbol{A},\boldsymbol{C},\boldsymbol{b}}T(\boldsymbol{Z}) ) - \mathcal{R}(\boldsymbol{Z}) \rVert^2 | \boldsymbol{A},\boldsymbol{C},\boldsymbol{b}].
        \end{align*}
Then we apply~\eqref{EW_l-EW_inf<eps} which yields
\begin{align*}
    \mathbb{E}_{\mu}[ \lVert H_{W^*_{\ell}}^{\boldsymbol{A},\boldsymbol{C},\boldsymbol{b}}(\boldsymbol{Z}) - \Phi H_{W^*_{\ell}}^{\boldsymbol{A},\boldsymbol{C},\boldsymbol{b}}(\boldsymbol{Z}) \rVert^2 | \boldsymbol{A},\boldsymbol{C},\boldsymbol{b} ] \\
    & \dawesspace < \mathbb{E}_{\mu}[\lVert W^{*\top}_{\infty} ( H^{\boldsymbol{A},\boldsymbol{C},\boldsymbol{b}}(\boldsymbol{Z}) - \gamma H^{\boldsymbol{A},\boldsymbol{C},\boldsymbol{b}}T(\boldsymbol{Z}) ) - \mathcal{R}(\boldsymbol{Z}) \rVert^2 | \boldsymbol{A},\boldsymbol{C},\boldsymbol{b}] + \frac{\epsilon}{5}.
\end{align*}
Then we apply the Ergodic Theorem
\begin{align*}
    \mathbb{E}_{\mu}[\lVert W^{*\top}_{\infty} ( H^{\boldsymbol{A},\boldsymbol{C},\boldsymbol{b}}(\boldsymbol{Z}) - \gamma H^{\boldsymbol{A},\boldsymbol{C},\boldsymbol{b}}T(\boldsymbol{Z}) ) - \mathcal{R}(\boldsymbol{Z}) \rVert^2 | \boldsymbol{A},\boldsymbol{C},\boldsymbol{b}] + \frac{\epsilon}{5} \nonumber \\
    & \dawesspace\dawesspace = \lim_{j \to \infty} \bigg( \frac{1}{j} \sum_{k = 0}^{j-1} \lVert W^{*\top}_{\infty}(H^{\boldsymbol{A},\boldsymbol{C},\boldsymbol{b}}T^{-k}(z) - \gamma H^{\boldsymbol{A},\boldsymbol{C},\boldsymbol{b}}T^{1-k}(z) ) - \mathcal{R}T^{-k}(z) \rVert^2 \bigg) + \frac{\epsilon}{5} \\
    & \dawesspace\dawesspace \leq \lim_{j \to \infty} \bigg( \frac{1}{j} \sum_{k = 0}^{j-1} \lVert W^{*\top}_{\infty}(H^{\boldsymbol{A},\boldsymbol{C},\boldsymbol{b}}T^{-k}(z) - \gamma H^{\boldsymbol{A},\boldsymbol{C},\boldsymbol{b}}T^{1-k}(z) ) - \mathcal{R}T^{-k}(z) \rVert^2 \bigg) + \lambda \lVert W^*_{\infty} \rVert^2 + \frac{\epsilon}{5} \\
    & \dawesspace\dawesspace = \lim_{j \to \infty} \bigg( \frac{1}{j} \sum_{k = 0}^{j-1} \lVert W^{*\top}_{j}(H^{\boldsymbol{A},\boldsymbol{C},\boldsymbol{b}}T^{-k}(z) - \gamma H^{\boldsymbol{A},\boldsymbol{C},\boldsymbol{b}}T^{1-k}(z) ) - \mathcal{R}T^{-k}(z) \rVert^2 + \lambda \lVert W^*_{j} \rVert^2 \bigg) + \frac{\epsilon}{5} \\
    &\text{ then apply \eqref{W-W}} \\
    & \dawesspace\dawesspace < \frac{1}{\ell} \sum_{k = 0}^{\ell-1} \lVert W^{*\top}_{\ell}(H^{\boldsymbol{A},\boldsymbol{C},\boldsymbol{b}}T^{-k}(z) - \gamma H^{\boldsymbol{A},\boldsymbol{C},\boldsymbol{b}}T^{1-k}(z) ) - \mathcal{R}T^{-k}(z) \rVert^2 + \lambda \lVert W^*_{\ell} \rVert^2 + \frac{2\epsilon}{5} \\
    & \dawesspace\dawesspace \leq \frac{1}{\ell} \sum_{k = 0}^{\ell-1} \lVert W^{\top}(H^{\boldsymbol{A},\boldsymbol{C},\boldsymbol{b}}T^{-k}(z) - \gamma H^{\boldsymbol{A},\boldsymbol{C},\boldsymbol{b}}T^{1-k}(z) ) - \mathcal{R}T^{-k}(z) \rVert^2 + \lambda \lVert W \rVert^2 + \frac{2\epsilon}{5} \\
    &\text{ then apply \eqref{l-liml<eps}} \\
    & \dawesspace\dawesspace < \lim_{j \to \infty} \bigg( \frac{1}{j} \sum_{k = 0}^{j-1} \lVert W^{\top}(H^{\boldsymbol{A},\boldsymbol{C},\boldsymbol{b}}T^{-k}(z) - \gamma H^{\boldsymbol{A},\boldsymbol{C},\boldsymbol{b}}T^{1-k}(z) ) - \mathcal{R}T^{-k}(z) \rVert^2 \bigg) + \lambda \lVert W \rVert^2 + \frac{3\epsilon}{5} \\
    &\text{ then apply \eqref{W_lambda<eps}} \\
    & \dawesspace\dawesspace < \lim_{j \to \infty} \bigg( \frac{1}{j} \sum_{k = 0}^{j-1} \lVert W^{\top}(H^{\boldsymbol{A},\boldsymbol{C},\boldsymbol{b}}T^{-k}(z) - \gamma H^{\boldsymbol{A},\boldsymbol{C},\boldsymbol{b}}T^{1-k}(z) ) - \mathcal{R}T^{-k}(z) \rVert^2 \bigg) + \frac{4\epsilon}{5} \\
    & \dawesspace\dawesspace \text{ Then apply the Ergodic Theorem again} \\
    & \dawesspace\dawesspace = \mathbb{E}_{\mu}[ \lVert W^{\top}(H^{\boldsymbol{A},\boldsymbol{C},\boldsymbol{b}}(\boldsymbol{Z}) - \gamma H^{\boldsymbol{A},\boldsymbol{C},\boldsymbol{b}}T(\boldsymbol{Z}) - \mathcal{R}(\boldsymbol{Z})) \rVert^2 | \boldsymbol{A},\boldsymbol{C},\boldsymbol{b} ] + \frac{4\epsilon}{5} \\
    & \dawesspace\dawesspace = \mathbb{E}_{\mu}[ \lVert H^{\boldsymbol{A},\boldsymbol{C},\boldsymbol{b}}_{W} - \Phi H^{\boldsymbol{A},\boldsymbol{C},\boldsymbol{b}}_{W} \rVert^2 | \boldsymbol{A},\boldsymbol{C},\boldsymbol{b} ]  + \frac{4\epsilon}{5} \\ 
    & \dawesspace\dawesspace \text{ then apply \eqref{H-PhiH}} \\
    & \dawesspace\dawesspace < \epsilon.
    \end{align*}
\end{proof}

\section{Online learning on stationary ergodic processes}

Theorem \ref{offline_q_learning} outlines an offline learning algorithm because the least squares training occurs after all the data has been collection.
In some reinforcement learning applications, it is useful - or even essential - for the optimisation of $W$ to occur dynamically as new data comes in; such algorithms are called \emph{online} learning algorithms. We analysed a simple online learning algorithm driven by deterministic inpits in section \ref{section::online_learning}. In this present section, we will consider the same algorithm; this time driven instead by stochastic inputs. 

We will first introduce a lemma, stating that, under reasonable conditions, the ODE
    \begin{align}
        \frac{d}{dt}W = -h(W) := -\mathbb{E}_{\mu}\bigg[H^{\boldsymbol{A},\boldsymbol{C},\boldsymbol{b}}(\boldsymbol{Z})\big( H^{\boldsymbol{A},\boldsymbol{C},\boldsymbol{b}}_{W}(\boldsymbol{Z}) - \Phi H^{\boldsymbol{A},\boldsymbol{C},\boldsymbol{b}}_{W}(\boldsymbol{Z}) \big)   \bigg]
        \label{associated_ODE}
    \end{align}
converges exponentially quickly to a globally asymptotic fixed point $W^*$, for which the associated ESN functional $H_{W^*}^{\boldsymbol{A},\boldsymbol{C},\boldsymbol{b}}$ is \emph{as close as possible} to the unique fixed point of $\Phi$. By \emph{as close as possible} we mean that the orthogonal projection of $\Phi H_{W^*}^{\boldsymbol{A},\boldsymbol{C},\boldsymbol{b}}$ onto the finite dimensional vector space of functionals $\{ H_{W}^{\boldsymbol{A},\boldsymbol{C},\boldsymbol{b}} \ | \ W \in \mathbb{R}^d \}$ is $H_{W^*}^{\boldsymbol{A},\boldsymbol{C},\boldsymbol{b}}$. Unlike the previous result (Theorem \ref{offline_q_learning}) we do not need to assume that the contraction mapping satisfies $\Phi(H) = \mathcal{R} + \gamma \mathbb{E}[HT_{\boldsymbol{Z}}]$. We could (possibly) choose a Bellman operator that admits the optimal value function as the fixed point.

\begin{lemma}
    \citep{arXiv:2102.06258} Let $\boldsymbol{Z}$ be an admissible input process. Let $\boldsymbol{A},\boldsymbol{C},\boldsymbol{b}$ be $N \times N$, $N \times d$, and $N \times 1$ dimensional random matrices for the reservoir matrix, input matrix and bias vector, respectively. Let $H^{\boldsymbol{A},\boldsymbol{C},\boldsymbol{b}}$ and $H_{W}^{\boldsymbol{A},\boldsymbol{C},\boldsymbol{b}}$ denote the associated ESN functionals. Let $\Phi$ be a contraction mapping, with Lipschitz constant $0 \leq \gamma < 1$, on the space of functionals $H : (D_d)^{\mathbb{Z}} \to \mathbb{R}$ that are $\mu$-measurable and satisfy $\mathbb{E}[H(\boldsymbol{Z})^2] < \infty$. Suppose further that $0 \leq \gamma < \kappa^{-1} $ where $\kappa$ is the condition number of the autocorrelation matrix
    \begin{align*}
        \Sigma = \mathbb{E}_{\mu}\left[ \left. H^{\boldsymbol{A},\boldsymbol{C},\boldsymbol{b}}(\boldsymbol{Z})
        \left( H^{\boldsymbol{A},\boldsymbol{C},\boldsymbol{b}}(\boldsymbol{Z}) \right)^\top \ \right| \ \boldsymbol{A},\boldsymbol{C},\boldsymbol{b} \right].
    \end{align*}
    %\jd{[In the above I have just moved the $\top$ to be at the far right, and inserted large parentheses, so that the structure of $\Sigma$ is more obvious. The $\top$ symbol was rather hidden in the superscript of $H$ along with all the other superscripts, I felt.]}
    Then there exists a $\delta > 0$ such that the ODE 
    \begin{align*}
        \frac{d}{dt}W = -h(W) := -\mathbb{E}_{\mu}\bigg[H^{\boldsymbol{A},\boldsymbol{C},\boldsymbol{b}}(\boldsymbol{Z})\big( H^{\boldsymbol{A},\boldsymbol{C},\boldsymbol{b}}_{W}(\boldsymbol{Z}) - \Phi H^{\boldsymbol{A},\boldsymbol{C},\boldsymbol{b}}_{W}(\boldsymbol{Z}) \big) \ \bigg| \ \boldsymbol{A},\boldsymbol{C},\boldsymbol{b}   \bigg]
    \end{align*}
    satisfies 
    \begin{align}
        \frac{d}{dt} \lVert W - W^* \rVert \leq - \delta \lVert W - W^* \rVert
        \label{exp_conv_cond}
    \end{align}
    where $W^*$ is a globally asymptotic fixed point. $W^*$ enjoys the further property that
    \begin{align*}
        H_{W^*}^{\boldsymbol{A},\boldsymbol{C},\boldsymbol{b}} = \mathcal{P}\Phi H_{W^*}^{\boldsymbol{A},\boldsymbol{C},\boldsymbol{b}}
    \end{align*}
    where $\mathcal{P}$ denotes the $L^2(\mu)$ orthogonal projection operator on the $\mu$-measurable filters $H$ satisfying $\mathbb{E}[H(\boldsymbol{Z})^2] < \infty$ and is defined
    \begin{align*}
        \mathcal{P}H(z) := \left(H^{\boldsymbol{A},\boldsymbol{C},\boldsymbol{b}}(z)\right)^\top \Sigma^{-1} \mathbb{E}_{\mu}\left[ \left. H^{\boldsymbol{A},\boldsymbol{C},\boldsymbol{b}}(\boldsymbol{Z}) H(\boldsymbol{Z})\ \right| \ \boldsymbol{A},\boldsymbol{C},\boldsymbol{b} \right].
    \end{align*}
%\jd{[see previous comment.]}
\end{lemma}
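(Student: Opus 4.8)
The plan is to recognise that the ODE $\dot W = -h(W)$ is affine in $W$, so its global behaviour is governed entirely by the matrix appearing in front of $W$. First I would compute $h(W)$ explicitly. Writing $H_{W}^{\boldsymbol{A},\boldsymbol{C},\boldsymbol{b}}(z) = W^\top H^{\boldsymbol{A},\boldsymbol{C},\boldsymbol{b}}(z)$ and noting that $\Phi$ is a contraction with Lipschitz constant $\gamma$, the quantity $H_{W}^{\boldsymbol{A},\boldsymbol{C},\boldsymbol{b}} - \Phi H_{W}^{\boldsymbol{A},\boldsymbol{C},\boldsymbol{b}}$ decomposes, after taking expectations against $H^{\boldsymbol{A},\boldsymbol{C},\boldsymbol{b}}(\boldsymbol{Z})$, into a term $\Sigma W$ coming from the $H_W$ piece and a term $-\mathbb{E}_{\mu}[H^{\boldsymbol{A},\boldsymbol{C},\boldsymbol{b}}(\boldsymbol{Z})\,\Phi H_{W}^{\boldsymbol{A},\boldsymbol{C},\boldsymbol{b}}(\boldsymbol{Z})\mid\boldsymbol{A},\boldsymbol{C},\boldsymbol{b}]$. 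The latter is itself affine in $W$ — write it as $B W + c$ for some matrix $B$ and vector $c$ — and the contraction property of $\Phi$ together with Cauchy–Schwarz gives the operator-norm bound $\lVert \Sigma^{-1/2} B \Sigma^{-1/2} \rVert \le \gamma$, equivalently $-\gamma\Sigma \preceq B \preceq \gamma\Sigma$ in the Loewner order. Hence $h(W) = (\Sigma - B)W - c =: M W - c$ with $M = \Sigma - B$.

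Next I would show $M$ is positive definite with a quantitative spectral bound, which is exactly where the hypothesis $\gamma < \kappa^{-1}$ enters. Since $B \preceq \gamma\Sigma$ and $\Sigma \succeq \lambda_{\min}(\Sigma) I$ while $\gamma\Sigma \preceq \gamma\lambda_{\max}(\Sigma) I$, we get $M = \Sigma - B \succeq \Sigma - \gamma\Sigma = (1-\gamma)\Sigma \succeq (1-\gamma)\lambda_{\min}(\Sigma) I$; a cleaner route, giving the sharp constant, is to conjugate by $\Sigma^{-1/2}$ and observe $\Sigma^{-1/2} M \Sigma^{-1/2} = I - \Sigma^{-1/2} B \Sigma^{-1/2} \succeq (1-\gamma) I$, so $M \succeq (1-\gamma)\Sigma^{-1}\|\Sigma\|^{-1}\cdots$ — the point being simply that $M$ has all eigenvalues bounded below by a positive constant $\delta > 0$; the condition-number hypothesis $\kappa\gamma < 1$ is what guarantees this even though $M$ need not be symmetric (one uses $\langle v, Mv\rangle = \langle v,\Sigma v\rangle - \langle v, Bv\rangle \ge \langle v,\Sigma v\rangle - \gamma\langle v,\Sigma v\rangle$ since $B$ is similar, via $\Sigma^{1/2}$, to a matrix of norm $\le \gamma$, and then compare $\langle v,\Sigma v\rangle$ to $\|v\|^2$ using $\lambda_{\min}(\Sigma) \ge \kappa^{-1}\lambda_{\max}(\Sigma) \ge \cdots$). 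I would then set $W^* := M^{-1}c$, which exists and is unique since $M$ is invertible, and compute
\begin{align*}
    \frac{d}{dt}\tfrac12\lVert W - W^* \rVert^2 = \langle W - W^*, \dot W \rangle = -\langle W - W^*, M(W-W^*) \rangle \le -\delta \lVert W - W^* \rVert^2,
\end{align*}
which yields $\frac{d}{dt}\lVert W - W^* \rVert \le -\delta\lVert W - W^* \rVert$ and hence exponential convergence and global asymptotic stability of $W^*$.

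Finally I would verify the projection identity. At the fixed point $h(W^*) = 0$ means $\mathbb{E}_{\mu}[H^{\boldsymbol{A},\boldsymbol{C},\boldsymbol{b}}(\boldsymbol{Z})(H_{W^*}^{\boldsymbol{A},\boldsymbol{C},\boldsymbol{b}}(\boldsymbol{Z}) - \Phi H_{W^*}^{\boldsymbol{A},\boldsymbol{C},\boldsymbol{b}}(\boldsymbol{Z}))\mid\boldsymbol{A},\boldsymbol{C},\boldsymbol{b}] = 0$, i.e. the residual $H_{W^*}^{\boldsymbol{A},\boldsymbol{C},\boldsymbol{b}} - \Phi H_{W^*}^{\boldsymbol{A},\boldsymbol{C},\boldsymbol{b}}$ is $L^2(\mu)$-orthogonal to every coordinate of $H^{\boldsymbol{A},\boldsymbol{C},\boldsymbol{b}}$, hence to the whole subspace $\{H_W^{\boldsymbol{A},\boldsymbol{C},\boldsymbol{b}} : W \in \mathbb{R}^N\}$. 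Since $\mathcal{P}$ is defined to be the orthogonal projection onto that subspace — the formula $\mathcal{P}H(z) = (H^{\boldsymbol{A},\boldsymbol{C},\boldsymbol{b}}(z))^\top\Sigma^{-1}\mathbb{E}_{\mu}[H^{\boldsymbol{A},\boldsymbol{C},\boldsymbol{b}}(\boldsymbol{Z})H(\boldsymbol{Z})\mid\boldsymbol{A},\boldsymbol{C},\boldsymbol{b}]$ is precisely the least-squares solution, which one checks is idempotent and self-adjoint in $\langle\cdot,\cdot\rangle_\mu$ — orthogonality of the residual to the subspace gives $\mathcal{P}(H_{W^*}^{\boldsymbol{A},\boldsymbol{C},\boldsymbol{b}} - \Phi H_{W^*}^{\boldsymbol{A},\boldsymbol{C},\boldsymbol{b}}) = 0$, and since $H_{W^*}^{\boldsymbol{A},\boldsymbol{C},\boldsymbol{b}}$ already lies in the subspace, $\mathcal{P}H_{W^*}^{\boldsymbol{A},\boldsymbol{C},\boldsymbol{b}} = H_{W^*}^{\boldsymbol{A},\boldsymbol{C},\boldsymbol{b}}$, so $H_{W^*}^{\boldsymbol{A},\boldsymbol{C},\boldsymbol{b}} = \mathcal{P}\Phi H_{W^*}^{\boldsymbol{A},\boldsymbol{C},\boldsymbol{b}}$ as claimed. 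The main obstacle I anticipate is the non-symmetry of $B$ (and hence $M$): establishing the coercivity bound $\langle v, Mv\rangle \ge \delta\|v\|^2$ requires care, because a bound on $\|\Sigma^{-1/2}B\Sigma^{-1/2}\|$ controls the symmetric part cleanly but one must make sure the condition-number hypothesis is being used correctly to translate a $\Sigma$-weighted inequality into a Euclidean one; everything else is routine linear algebra and Lyapunov-function bookkeeping.
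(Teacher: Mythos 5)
Your overall strategy---coercivity of $h$, a Lyapunov argument, and reading off the projection identity from $h(W^*)=0$---is the same as the paper's, and your spectral calculation giving $\delta = \sigma - \gamma\rho$ via the condition-number hypothesis matches the paper's. But there is a genuine gap at the very first step: you assert that $\mathbb{E}_{\mu}\bigl[H^{\boldsymbol{A},\boldsymbol{C},\boldsymbol{b}}(\boldsymbol{Z})\,\Phi H_{W}^{\boldsymbol{A},\boldsymbol{C},\boldsymbol{b}}(\boldsymbol{Z})\mid \boldsymbol{A},\boldsymbol{C},\boldsymbol{b}\bigr]$ is ``itself affine in $W$ --- write it as $BW + c$''. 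That holds when $\Phi$ is the Bellman operator, which is affine in $H$ and hence in $W$, but the lemma is stated for an \emph{arbitrary} contraction $\Phi$ with Lipschitz constant $\gamma$. For general $\Phi$, the map $H_W \mapsto \Phi H_W$ is merely contractive, not affine, so $h(W)$ has no reason to take the form $MW - c$. The matrix $M$ around which your argument is built, and the explicit formula $W^* = M^{-1}c$, simply do not exist in that generality.

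The paper avoids linearising $\Phi$. Writing $h(W) = \Sigma W - \mathbb{E}_{\mu}\bigl[H^{\boldsymbol{A},\boldsymbol{C},\boldsymbol{b}}\,\Phi H_W^{\boldsymbol{A},\boldsymbol{C},\boldsymbol{b}}\mid\boldsymbol{A},\boldsymbol{C},\boldsymbol{b}\bigr]$, it bounds the inner product $(W-W^*)\cdot(h(W^*)-h(W))$ directly. The nonlinear piece $(W - W^*)\cdot \mathbb{E}_{\mu}\bigl[H(\Phi H_W - \Phi H_{W^*})\bigr]$ is rewritten as $\mathbb{E}_{\mu}\bigl[(H_W - H_{W^*})(\Phi H_W - \Phi H_{W^*})\bigr]$ and then estimated by Cauchy--Schwarz and the Lipschitz property as at most $\gamma\lVert H_W - H_{W^*}\rVert_{L^2(\mu)}^2 = \gamma(W-W^*)^\top\Sigma(W-W^*)$. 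That gives strong monotonicity $(W-W^*)\cdot(h(W^*)-h(W)) \le -(\sigma - \gamma\rho)\lVert W - W^*\rVert^2$ without ever assuming $h$ is affine; strong monotonicity together with continuity of $h$ then gives existence and uniqueness of the zero $W^*$ (Browder--Minty in $\mathbb{R}^N$), so no explicit formula for $W^*$ is needed. To repair your argument, either restrict $\Phi$ to be affine, or drop the $M,c$ decomposition and prove the monotonicity inequality pointwise for arbitrary pairs $W, W'$ as the paper does. Your concern about the non-symmetry of $B$ is well taken---$\lVert\Sigma^{-1/2}B\Sigma^{-1/2}\rVert\le\gamma$ controls only the symmetric part of $B$, so it is not equivalent to $-\gamma\Sigma\preceq B\preceq\gamma\Sigma$---but since the quadratic form $v^\top B v$ sees only the symmetric part, that part of the coercivity estimate does go through in the affine case.
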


\begin{proof}
    To show that $W^*$ is a globally asymptotic fixed point it suffices to show that there exists a $\delta > 0$ such that
     \begin{align*}
         (W - W^*) \cdot (h(W^*) - h(W)) \leq - \delta \lVert (W - W^*) \rVert ^2
     \end{align*}
     as this implies
     \begin{align*}
         \frac{d}{dt} \lVert W - W^* \rVert \leq - \delta \lVert W - W^* \rVert.
     \end{align*}
    To construct this $\delta$, we first note that
    \begin{align*}
        h(W) = \Sigma W - \mathbb{E}_{\mu} \bigg[ H^{\boldsymbol{A},\boldsymbol{C},\boldsymbol{b}}(\boldsymbol{Z}) \Phi H^{\boldsymbol{A},\boldsymbol{C},\boldsymbol{b}}_{W}(\boldsymbol{Z}) \big) \ \bigg| \ \boldsymbol{A},\boldsymbol{C},\boldsymbol{b}  \bigg]
    \end{align*}
    so, by a direct computation we have
    \begin{align*}
         (W - W^*)  \cdot  (h(W^*) - h(W))  \\
         & \dawesspace = (W - W^*) \cdot \bigg( \mathbb{E}_{\mu} \bigg[ H^{\boldsymbol{A},\boldsymbol{C},\boldsymbol{b}}(\boldsymbol{Z}) \Phi H^{\boldsymbol{A},\boldsymbol{C},\boldsymbol{b}}_{W}(\boldsymbol{Z}) \big) \ \bigg| \ \boldsymbol{A},\boldsymbol{C},\boldsymbol{b} \bigg] - \mathbb{E}_{\mu} \bigg[ H^{\boldsymbol{A},\boldsymbol{C},\boldsymbol{b}}(\boldsymbol{Z}) \Phi H^{\boldsymbol{A},\boldsymbol{C},\boldsymbol{b}}_{W^*}(\boldsymbol{Z}) \big) \ \bigg| \ \boldsymbol{A},\boldsymbol{C},\boldsymbol{b} \bigg] \bigg) \\
        & \dawesspace - (W - W^*) \cdot \bigg( \Sigma W - \Sigma W^* \bigg) \\
        & \dawesspace =  (W - W^*) \cdot \bigg( \mathbb{E}_{\mu} \bigg[ H^{\boldsymbol{A},\boldsymbol{C},\boldsymbol{b}}(\boldsymbol{Z}) \Phi H^{\boldsymbol{A},\boldsymbol{C},\boldsymbol{b}}_{W}(\boldsymbol{Z}) \big) \ \bigg| \ \boldsymbol{A},\boldsymbol{C},\boldsymbol{b} \bigg] - \mathbb{E}_{\mu} \bigg[ H^{\boldsymbol{A},\boldsymbol{C},\boldsymbol{b}}(\boldsymbol{Z}) \Phi H^{\boldsymbol{A},\boldsymbol{C},\boldsymbol{b}}_{W^*}(\boldsymbol{Z}) \big) \ \bigg| \ \boldsymbol{A},\boldsymbol{C},\boldsymbol{b} \bigg] \bigg) \\
        & \dawesspace - (W - W^*)^{\top} \Sigma \big( W - W^* \big) \\
        & \dawesspace \leq
        (W - W^*) \cdot \bigg( \mathbb{E}_{\mu} \bigg[ H^{\boldsymbol{A},\boldsymbol{C},\boldsymbol{b}}(\boldsymbol{Z}) \Phi H^{\boldsymbol{A},\boldsymbol{C},\boldsymbol{b}}_{W}(\boldsymbol{Z}) \big) \ \bigg| \ \boldsymbol{A},\boldsymbol{C},\boldsymbol{b} \bigg] - \mathbb{E}_{\mu} \bigg[ H^{\boldsymbol{A},\boldsymbol{C},\boldsymbol{b}}(\boldsymbol{Z}) \Phi H^{\boldsymbol{A},\boldsymbol{C},\boldsymbol{b}}_{W^*}(\boldsymbol{Z}) \big) \ \bigg| \ \boldsymbol{A},\boldsymbol{C},\boldsymbol{b} \bigg] \bigg) \\
        & \dawesspace - \sigma \lVert W - W^* \rVert^2 \ \text{ where $\sigma >0$ is the smallest eigenvalue of $\Sigma$} \\
        &\dawesspace = 
        (W - W^*) \cdot \bigg( \mathbb{E}_{\mu} \bigg[ H^{\boldsymbol{A},\boldsymbol{C},\boldsymbol{b}}(\boldsymbol{Z}) \Phi H^{\boldsymbol{A},\boldsymbol{C},\boldsymbol{b}}_{W}(\boldsymbol{Z}) \big) - H^{\boldsymbol{A},\boldsymbol{C},\boldsymbol{b}}(\boldsymbol{Z}) \Phi H^{\boldsymbol{A},\boldsymbol{C},\boldsymbol{b}}_{W^*}(\boldsymbol{Z}) \big) \ \bigg| \ \boldsymbol{A},\boldsymbol{C},\boldsymbol{b} \bigg] \bigg) \\
        & \dawesspace - \sigma \lVert W - W^* \rVert^2 \\
        & \dawesspace \leq 
        (W - W^*) \cdot \bigg( \mathbb{E}_{\mu} \bigg[ H^{\boldsymbol{A},\boldsymbol{C},\boldsymbol{b}}(\boldsymbol{Z}) H^{\boldsymbol{A},\boldsymbol{C},\boldsymbol{b}}_{W}(\boldsymbol{Z}) \big) - H^{\boldsymbol{A},\boldsymbol{C},\boldsymbol{b}}(\boldsymbol{Z}) H^{\boldsymbol{A},\boldsymbol{C},\boldsymbol{b}}_{W^*}(\boldsymbol{Z}) \big) \ \bigg| \ \boldsymbol{A},\boldsymbol{C},\boldsymbol{b} \bigg] \bigg) \gamma \\
        & \dawesspace - \sigma \lVert W - W^* \rVert^2 \ \text{ because $\gamma$ is the Lipschitz constant for $\Phi$} \\
        & \dawesspace = \gamma (W - W^*)^{\top} \Sigma (W - W^*) - \sigma \lVert W - W^* \rVert^2 \\
        & \dawesspace \leq \gamma \rho \lVert W - W^* \rVert^2 - \sigma \lVert W - W^* \rVert^2 \ \text{ where $\rho >0$ is the largest eigenvalue of $\Sigma$} \\
        & \dawesspace = -(\sigma - \gamma \rho) \lVert W - W^* \rVert^2,
    \end{align*}
    so we can set $\delta := \sigma - \gamma \rho$ and notice $\delta > 0$ because $0 \leq \gamma < \kappa^{-1} = \sigma / \rho$. Next, to show that
    \begin{align*}
        H_{W^*}^{\boldsymbol{A},\boldsymbol{C},\boldsymbol{b}} = \mathcal{P}\Phi H_{W^*}^{\boldsymbol{A},\boldsymbol{C},\boldsymbol{b}}
    \end{align*}
    we observe that since $W^*$ is an equilibrium point of the ODE 
    \begin{align*}
        \dot{W} = -h(W)
    \end{align*} 
    it follows that $h(W^*)=0$ and therefore
    \begin{align*}
    0 &= \mathbb{E}_{\mu}\bigg[H^{\boldsymbol{A},\boldsymbol{C},\boldsymbol{b}}(\boldsymbol{Z})\big( H^{\boldsymbol{A},\boldsymbol{C},\boldsymbol{b}}_{W^*}(\boldsymbol{Z}) - \Phi H^{\boldsymbol{A},\boldsymbol{C},\boldsymbol{b}}_{W^*}(\boldsymbol{Z}) \big) \ \bigg| \ \boldsymbol{A},\boldsymbol{C},\boldsymbol{b} \bigg] \\
    \implies 0 &= \mathbb{E}_{\mu}\bigg[H^{\boldsymbol{A},\boldsymbol{C},\boldsymbol{b}}(\boldsymbol{Z}) \left( H^{\boldsymbol{A},\boldsymbol{C},\boldsymbol{b}}(\boldsymbol{Z}) 
    \right)^\top \ \bigg| \ \boldsymbol{A},\boldsymbol{C},\boldsymbol{b} \bigg] W^* \\
    &\qquad- \mathbb{E}_{\mu}\bigg[ H^{\boldsymbol{A},\boldsymbol{C},\boldsymbol{b}}(\boldsymbol{Z}) \Phi H^{\boldsymbol{A},\boldsymbol{C},\boldsymbol{b}}_{W^*}(\boldsymbol{Z}) \ \bigg| \ \boldsymbol{A},\boldsymbol{C},\boldsymbol{b} \bigg] \\
    \implies 0 &= \Sigma W^* - \mathbb{E}_{\mu}\bigg[ H^{\boldsymbol{A},\boldsymbol{C},\boldsymbol{b}}(\boldsymbol{Z}) \Phi H^{\boldsymbol{A},\boldsymbol{C},\boldsymbol{b}}_{W^*}(\boldsymbol{Z}) \ \bigg| \ \boldsymbol{A},\boldsymbol{C},\boldsymbol{b} \bigg] \\
    \text{so, } \ \Sigma W^* 
    &= \mathbb{E}_{\mu}\bigg[ H^{\boldsymbol{A},\boldsymbol{C},\boldsymbol{b}}(\boldsymbol{Z}) \Phi H^{\boldsymbol{A},\boldsymbol{C},\boldsymbol{b}}_{W^*}(\boldsymbol{Z}) \ \bigg| \ \boldsymbol{A},\boldsymbol{C},\boldsymbol{b} \bigg] \\
          \text{so, } \ W^* 
          &= \Sigma^{-1} \mathbb{E}_{\mu}\bigg[ H^{\boldsymbol{A},\boldsymbol{C},\boldsymbol{b}}(\boldsymbol{Z}) \Phi H^{\boldsymbol{A},\boldsymbol{C},\boldsymbol{b}}_{W^*}(\boldsymbol{Z}) \ \bigg| \ \boldsymbol{A},\boldsymbol{C},\boldsymbol{b} \bigg] \\
          \text{so, } \ H^{\boldsymbol{A},\boldsymbol{C},\boldsymbol{b}}_{W^*} 
          &= H^{\boldsymbol{A},\boldsymbol{C},\boldsymbol{b} \top} \Sigma^{-1} \mathbb{E}_{\mu}\bigg[ H^{\boldsymbol{A},\boldsymbol{C},\boldsymbol{b}}(\boldsymbol{Z}) \Phi H^{\boldsymbol{A},\boldsymbol{C},\boldsymbol{b}}_{W^*}(\boldsymbol{Z}) \ \bigg| \ \boldsymbol{A},\boldsymbol{C},\boldsymbol{b} \bigg] \\
          &= \mathcal{P}\Phi(H^{\boldsymbol{A},\boldsymbol{C},\boldsymbol{b}}_{W^*}). \\
     \end{align*}
\end{proof}

One rather restrictive condition of this lemma is that the Lipschitz constant $\gamma$ of the contraction $\Phi$ must be less than the reciprocal condition number $\kappa^{-1}$. Now, we can interpret $\kappa$ as a measure of \emph{how orthonormal} the columns of the autocorrelation matrix $\Sigma$ are. In particular, if the columns are indeed orthonormal, then $\kappa = 1$ and this condition ceases to be restrictive at all. If the columns are close to being linearly dependant, then $\kappa$ is large so the requirement that $\kappa^{-1}$ is small could become troublesome. If indeed there is a linear dependence, the matrix $\Sigma$ is not even invertible and the theorem breaks down completely. If we interpret $H^{\boldsymbol{A},\boldsymbol{C},\boldsymbol{b}}(\boldsymbol{Z})$ as a vector of features, then $\kappa$ grows with the correlation between features. Higher correlation between the features imposes a greater constraint on the Lipschitz constant $\gamma$. If we have no inter-feature correlation (e.g independent features) then $\kappa = 1$ and we have no restriction at all on $\gamma$.

In the special case that $\Phi$ is the Bellman operator
\begin{align}
    \Phi(H) = \gamma \mathbb{E}_\mu[HT_{\boldsymbol{Z}}] + \mathcal{R} \tag{\ref{Phi}}
\end{align}
the Lipschitz constant $\gamma$ is the discount factor. So in this case, we conclude that high interfeature correlation, which results in small $\kappa^{-1}$, forces us to choose small $\gamma$, which strongly discounts the contributions of future rewards.

Now, to actually solve ODE \eqref{associated_ODE} we would need to compute
\begin{align}
    h(W) := \mathbb{E}_{\mu}\bigg[H^{\boldsymbol{A},\boldsymbol{C},\boldsymbol{b}}(\boldsymbol{Z})\big( H^{\boldsymbol{A},\boldsymbol{C},\boldsymbol{b}}_{W}(\boldsymbol{Z}) - \Phi H^{\boldsymbol{A},\boldsymbol{C},\boldsymbol{b}}_{W}(\boldsymbol{Z}) \big) \ \bigg| \ \boldsymbol{A},\boldsymbol{C},\boldsymbol{b}  \bigg] \label{h_definition}
\end{align}
which may, or may not, be practical. For example, if the process $\boldsymbol{Z}$ is ergodic, we can approximate \eqref{h_definition} by taking a sufficiently long time average of 
\begin{align*}
    H^{\boldsymbol{A},\boldsymbol{C},\boldsymbol{b}} T^{k}(z)\big( H^{\boldsymbol{A},\boldsymbol{C},\boldsymbol{b}}_{W} T^{k}(z) - \Phi H^{\boldsymbol{A},\boldsymbol{C},\boldsymbol{b}}_{W} T^{k}(z) \big).
\end{align*}
Alternatively, we may approach the problem of solving \eqref{associated_ODE} by first considering the explicit Euler method (with time-steps $\alpha_k > 0$)
\begin{align*}
    W_{k+1} &= W_k - \alpha_k h(W_k) \\
            &= W_k - \alpha_k \mathbb{E}_{\mu}\bigg[H^{\boldsymbol{A},\boldsymbol{C},\boldsymbol{b}}(\boldsymbol{Z})\big( H^{\boldsymbol{A},\boldsymbol{C},\boldsymbol{b}}_{W_k}(\boldsymbol{Z}) - \Phi H^{\boldsymbol{A},\boldsymbol{C},\boldsymbol{b}}_{W_k}(\boldsymbol{Z}) \big) \ \bigg| \ \boldsymbol{A},\boldsymbol{C},\boldsymbol{b}  \bigg],
\end{align*}
    then we might (heuristically) expect the algorithm (highly reminiscent of the iteration studied in section \ref{section::online_learning})
\begin{align}
    W_{k+1} = W_k - \alpha_k H^{\boldsymbol{A},\boldsymbol{C},\boldsymbol{b}} T^{k}(z)\big( H^{\boldsymbol{A},\boldsymbol{C},\boldsymbol{b}}_{W_k} T^{k}(z) - \Phi H^{\boldsymbol{A},\boldsymbol{C},\boldsymbol{b}}_{W_k} T^{k}(z) \big) 
    \label{stochastic_algorithm}
\end{align}
to converge to $W^*$, where $\alpha_k$ are positive definite real numbers that satisfy
\begin{align*}
    \sum_{k=1}^{\infty} \alpha_k = \infty \qquad \sum_{k=1}^{\infty} \alpha_k^2 < \infty.
\end{align*} 

We believe this heuristic could be made rigorous under mild assumptions, because algorithm \eqref{stochastic_algorithm} closely resembles the major algorithm extensively studied in \citep{Adaptive_Algorithms_and_Stochastic_Approximations} and \citep{Borkar2009}
for which similar results hold. Theorems 17 and 2.1.1. appearing in \citep{Adaptive_Algorithms_and_Stochastic_Approximations} and \citep{Borkar2009} respectively suggest that an algorithm much like \eqref{stochastic_algorithm} converges almost surely to $W^*$ if its associated ODE (reminiscent of  \eqref{associated_ODE}) satisfies condition \eqref{exp_conv_cond}, and the input process $\boldsymbol{Z}$ is strongly mixing. The conjecture that algorithm \eqref{stochastic_algorithm} converges to $W^*$ is also reminiscent of Theorem 3.1 by \cite{10.1007/978-3-540-72927-3_23}, and related results by \cite{Chen2019}. These results are closely related to Q-learning and stochastic gradient descent. We note that (sadly) finding the fixed point of the general contraction mapping $\Phi$ renders the estimation of $W$ a nonlinear problem.

We will close this chapter with a brief summary of its contents. We showed in the supervised learning context, that given a time series of observations and targets (rewards) realised from a stationary ergodic process, an ESN trained by linear least squares will learn the functional that relates the observations to the targets. Furthermore, in the reinforcement learning context, we showed that if an agent explores its environment using a given policy, such that the reward-action-observation triples are a stationary ergodic process, then an ESN trained by least squares will learn the value functional associated to that that policy. Learning the value functional of a given policy is a useful first step in establishing a complete reinforcement learning algorithm, which iterative improves policy, (ideally) until the optimal policy is found.

\chapter{Partial Differential Equations}
\label{chapter::PDEs}

\section{Introduction}

In previous chapters we took the view that the governing equations of our system were unknown, and that an agent must learn them using a sample trajectory. In this chapter we take a different view. We assume that the governing equations, which are partial differential equations (PDEs), are known, but that the solution is numerically expensive to compute. We will use reservoir computing to approximate the solutions. Though the setting is different, many of the ideas discussed in previous chapters apply here.

For motivation, we seek a numerical solution to Laplace's equation
\begin{align*}
    \Delta \phi = 0
\end{align*}
on a domain $\Omega \subset \mathbb{R}^d$ subject to the Dirichlet boundary condition $\phi = h$ on $\partial \Omega \subset \mathbb{R}^{d-1}$. This is known as the (inhomogeneous) Dirichlet problem for $\phi$.

We will approximate the solution $\phi(z)$ with a feedforward reservoir computer $W^{\top} \sigma(Cz + b)$. The weights and biases $C \in \mathbb{M}_{N,d}(\mathbb{R})$ and $b \in \mathbb{R}^N$ are randomly generated, and only $W \in \mathbb{R}^N$ is trained. To lighten notation, we let $f(z):= \sigma(Cz + b)$. Feedforward reservoir computers are universal approximators, so given sufficiently many neurons (i.e. sufficiently large $N$) there exists a $W$ such that $W^{\top} f$ approximates $\phi$ to the required precision. In particular, we seek $W$ such that
\begin{itemize}
    \item $\Delta (W^{\top} f) \approx 0$ on $\Omega$
    \item $W^{\top}f \approx h$ on $\partial \Omega$.
\end{itemize}
 We make the crucial observation here that $\Delta(W^{\top} f) = W^{\top} (\Delta f)$ by linearity of the Laplacian.
We begin by evaluating $\Delta f$ and $f$ on a finite set of points in the interior $\Omega$ and boundary $\partial \Omega$ respectively. We then seek $W$ that minimises the (regularised) mean square differences $\lVert W^{\top}(\Delta f) \rVert$ and $\lVert W^{\top}f - h \rVert$ over the points in the interior $\Omega$ and boundary $\partial \Omega$ respectively.

In many practical applications, we want more samples in some regions of the domain $\Omega \cup \partial \Omega = \bar{\Omega}$ than others. This is because the solution may be less regular, or more interesting, in some regions in comparison to others.
So we define probability measures $\mu$ and $\mu'$ on the measurable subsets of $\Omega$ and $\partial \Omega$. We then generate sample points $Z_0, Z_1, \ldots Z_{\ell-1} \in \Omega$ and $Z'_0, Z'_1, \ldots Z'_{\ell'-1} \in \partial \Omega$ on the interior and boundary from stationary ergodic processes $Z$ and $Z'$ with invariant measures $\mu$ and $\mu'$. The most straightforward approach is to choose $Z$ and $Z'$ as collections of i.i.d. uniform random variables with respect to $\mu$ and $\mu'$. If this is computationally infeasible (which is often the case for high dimensional problems) we can use Markov Chain Monte Carlo (MCMC) to generate $Z$ and $Z'$.

The remainder of this chapter is laid out as follows: In Section \ref{Theory} we lay out some theory which formalises the ideas in the introduction. In Section \ref{Numerics} we demonstrate the theory by numerically solving Laplace's equation on the disc using a feedforward reservoir computer, and compare this to the analytic solution. We then conclude the chapter and consider directions for future work in Section \ref{Conclusion}. 

\section{Theory}
\label{Theory}

The Dirichlet problem is posed on an open, bounded, and connected domain $\Omega \subset \mathbb{R}^d$ that satisfies the \emph{exterior ball property}. This property ensures the boundary $\partial \Omega$ is regular enough to support a unique solution to the Dirchlet problem.

\begin{defn}
    (Exterior ball property) An open set $\Omega$ has the exterior ball property at point $p \in \partial \Omega$ if there exists a ball $B \subset \mathbb{R}^d \backslash \Omega$ such that $p \in \partial B$. If $\Omega$ has the exterior ball property at every $p \in \partial B$ then we say that $\Omega$ has the exterior ball property.
\end{defn}

\begin{theorem}
    (Existence and Uniqueness for the Dirichlet Problem) Let $\Omega \subset \mathbb{R}^d$ be open, bounded, connected, and satisfy the exterior ball property. Let $h \in C^0(\partial \Omega)$. Then the Dirichlet problem 
    \begin{align*}
        \Delta \phi &= 0 \text{ on } \Omega \\
        \phi &= h \text{ on } \partial \Omega
    \end{align*}
    has a unique solution $\phi \in C^2(\Omega) \cup C^0(\bar{\Omega})$. Further if $h \in C^2(\partial \Omega)$ then $\phi \in C^2(\bar{\Omega})$
    \label{thm::Dirichlet}
\end{theorem}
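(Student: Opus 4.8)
The statement in question is Theorem \ref{thm::Dirichlet}, the classical existence and uniqueness result for the Dirichlet problem on a domain with the exterior ball property. This is a well-known theorem (Perron's method), so my plan is to assemble the standard argument rather than invent anything new.

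The plan is to proceed in three stages: uniqueness, existence, and boundary regularity. For \textbf{uniqueness}, I would invoke the maximum principle for harmonic functions: if $\phi_1,\phi_2$ both solve the problem, then $\phi_1-\phi_2$ is harmonic on $\Omega$, continuous on $\overline\Omega$, and vanishes on $\partial\Omega$; the strong (or even weak) maximum principle forces $\phi_1-\phi_2\equiv 0$. This step is short and requires only that $\Omega$ is bounded so that $\overline\Omega$ is compact and the maximum is attained on the boundary.

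For \textbf{existence}, I would use \emph{Perron's method of subharmonic functions}. First I would introduce the Perron family $\mathcal{S}_h$ of subharmonic functions on $\Omega$ that lie below $h$ on $\partial\Omega$ (in the appropriate limsup sense), and define $\phi(z):=\sup_{v\in\mathcal{S}_h}v(z)$. The core lemmas are: (i) $\mathcal{S}_h$ is nonempty and uniformly bounded (using $h\in C^0(\partial\Omega)$ and compactness of $\partial\Omega$); (ii) the harmonic lifting (Poisson modification on small balls) keeps one inside $\mathcal{S}_h$; (iii) consequently $\phi$ is harmonic on $\Omega$ — this follows from Harnack's principle applied to a sequence of harmonic liftings converging locally uniformly. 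All of this is interior theory and does not yet use the exterior ball property.

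The \textbf{main obstacle}, and the only place the exterior ball property is used, is showing that $\phi$ attains the boundary data continuously: $\lim_{z\to p}\phi(z)=h(p)$ for every $p\in\partial\Omega$. The standard device is to construct a \emph{barrier} at each boundary point $p$ — a superharmonic function $w_p$ on $\Omega$ with $w_p>0$ on $\overline\Omega\setminus\{p\}$ and $w_p(p)=0$. Here the exterior ball property pays off: if $B=B_r(y)$ is an exterior ball touching $\partial\Omega$ at $p$, then (for $d\ge 3$) $w_p(z)=r^{2-d}-|z-y|^{2-d}$ is an explicit barrier, and for $d=2$ one uses $w_p(z)=\log(|z-y|/r)$. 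Given a barrier at every boundary point, a routine squeezing argument (comparing $\phi$ from above and below with $h(p)\pm\varepsilon\pm Cw_p$) yields continuity up to the boundary, so $\phi\in C^2(\Omega)\cap C^0(\overline\Omega)$ solves the problem. Finally, for the last sentence, if $h\in C^2(\partial\Omega)$ one notes that by elliptic regularity (or directly, since $\phi$ is harmonic hence $C^\infty$ in $\Omega$) the solution extends to a $C^2$ function on $\overline\Omega$; this is a short appeal to the interior smoothness of harmonic functions together with the smoothness of the boundary data and the $C^2$ boundary. I expect the barrier construction and the boundary-continuity squeeze to be the part needing the most care; everything else is textbook potential theory that I would cite (e.g.\ Gilbarg--Trudinger) rather than reprove.
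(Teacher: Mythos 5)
The paper states Theorem~\ref{thm::Dirichlet} without giving any proof --- it is invoked as classical background for the rest of the chapter --- so there is no ``paper's proof'' for your proposal to match or diverge from. Your outline via Perron's method is the standard textbook route, and the three main components (uniqueness by the maximum principle, existence by the Perron upper envelope plus harmonic lifting, boundary continuity via barriers supplied by the exterior ball) are exactly right. Your explicit barriers $w_p(z)=r^{2-d}-|z-y|^{2-d}$ for $d\ge 3$ and $w_p(z)=\log(|z-y|/r)$ for $d=2$ are the correct ones, vanishing on $\partial B_r(y)$ and strictly positive outside.

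The one place to be careful is the final sentence of the theorem, ``if $h\in C^2(\partial\Omega)$ then $\phi\in C^2(\Omega)\cap C^2(\partial\Omega)$,'' and your treatment of it. You justify it via ``the smoothness of the boundary data and the $C^2$ boundary,'' but the theorem's hypotheses do \emph{not} include $C^2$ (or even $C^1$) regularity of $\partial\Omega$ --- only the exterior ball property, which is strictly weaker. Global boundary regularity $\phi\in C^2(\overline\Omega)$ genuinely requires more than an exterior ball (Schauder theory asks for $C^{2,\alpha}$ boundary and data; there are domains with the exterior ball property whose harmonic extensions fail to be $C^1$ up to the boundary). So either (a) the statement should be read in the weak, essentially tautological sense that $\phi$ is $C^2$ in $\Omega$ \emph{and} its boundary restriction $\phi|_{\partial\Omega}=h$ is $C^2$ as a function on the set $\partial\Omega$ --- in which case there is nothing to prove beyond the main existence claim --- or (b) it is meant as $\phi\in C^2(\overline\Omega)$, in which case it needs hypotheses the theorem does not supply, and your appeal to a $C^2$ boundary is smuggling them in. You should state which reading you are using and, if it is (b), explicitly add the required boundary regularity rather than citing elliptic regularity as if the exterior ball sufficed.
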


Now, suppose that the samples $Z_0, Z_1 , \ldots , Z_{\ell-1}$ and $Z'_0, Z'_1 , \ldots , Z'_{\ell'-1}$ on the interior $\Omega$ and boundary $\partial \Omega$ are drawn from stationary ergodic processes $Z$ and $Z'$.

\begin{lemma}
    Consider the Dirichlet problem defined in Theorem \ref{thm::Dirichlet}.
    Let $Z, Z'$ be stationary ergodic procesess on $\Omega$ and $\partial \Omega$ with invariant measures $\mu$ and $\mu'$ respectively. Let $f \in L^1(\partial\Omega,\mathbb{R}^N)$ and $g \in L^1( \Omega,\mathbb{R}^N)$. Let $\Lambda \in \mathbb{M}_{N \times N}(\mathbb{R})$ be an invertible matrix. Let $W^*_{\ell\ell'}$ be the unique minimiser (over $W \in \mathbb{R}^N$) of
        \begin{align*}
        \frac{1}{\ell}\sum_{k=0}^{\ell-1} \lVert W^{\top}g(Z_k) \rVert^2 + \frac{1}{\ell'}\sum_{k=0}^{\ell'-1}\lVert W^{\top}f(Z'_k) - h(Z'_k) \rVert^2 + \frac{2}{\ell+\ell'}\lVert \Lambda W \rVert^2.
    \end{align*}
    Then $(W^*_{\ell\ell})_{\ell \in \mathbb{N}}$ converges (almost surely) as $\ell \to \infty$ to 
    \begin{align*}
        W^* := \bigg( \int_{\Omega} gg^{\top} \ d\mu + \int_{\partial \Omega} ff^{\top} \ d\mu' + \Lambda\Lambda^{\top} \bigg)^{-1} \int_{\partial \Omega} hf \ d\mu'.
    \end{align*}

    which is the unique minimiser (over $W$) of
    \begin{align*}
        \lVert W^{\top} g \rVert_{L^2(\mu)}^2 + \lVert W^{\top} f - h \rVert_{L^2(\mu')}^2 + \lVert \Lambda W \rVert^2.
    \end{align*}
\end{lemma}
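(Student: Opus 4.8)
The plan is to mirror exactly the argument used in Lemma~\ref{W_ell_lemma} (the single-sum analogue), adapting it to the two-sum-plus-regularisation objective that arises from sampling separately on the interior $\Omega$ and on the boundary $\partial\Omega$. The key structural observation is that the objective is a strictly convex $C^1$ quadratic in $W$, so it has a unique minimiser given by setting the gradient to zero, and the empirical objective converges coefficient-by-coefficient to the population objective by the Ergodic Theorem applied to both processes $Z$ and $Z'$.

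First I would identify the population minimiser. Define $\Psi : \mathbb{R}^N \to \mathbb{R}$ by
\begin{align*}
    \Psi(W) = \lVert W^{\top} g \rVert_{L^2(\mu)}^2 + \lVert W^{\top} f - h \rVert_{L^2(\mu')}^2 + \lVert \Lambda W \rVert^2.
\end{align*}
Computing $D\Psi(W) = 0$ gives, exactly as in the proof of Lemma~\ref{W_ell_lemma},
\begin{align*}
    0 = W^{\top}\bigg( \int_{\Omega} gg^{\top}\,d\mu + \int_{\partial\Omega} ff^{\top}\,d\mu' + \Lambda\Lambda^{\top}\bigg) - \int_{\partial\Omega} hf^{\top}\,d\mu',
\end{align*}
which rearranges to the stated $W^*$; the matrix in parentheses is symmetric and, because $\Lambda$ is invertible, positive definite, so it is invertible and $W^*$ is the unique stationary point, hence the unique minimiser (the Hessian is this positive-definite matrix, independent of $W$).

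Next I would handle the convergence of the empirical minimisers. Introduce, as in Lemma~\ref{W_ell_lemma}, the continuous map $\Phi$ from the space of strictly convex $C^1$ functions on $\mathbb{R}^N$ (with the $C^1$ topology) to $\mathbb{R}^N$ that sends a function to its unique minimiser. Define
\begin{align*}
    y_{\ell,\ell'}(W) = \frac{1}{\ell}\sum_{k=0}^{\ell-1}\lVert W^{\top}g(Z_k)\rVert^2 + \frac{1}{\ell'}\sum_{k=0}^{\ell'-1}\lVert W^{\top}f(Z'_k) - h(Z'_k)\rVert^2 + \frac{2}{\ell+\ell'}\lVert\Lambda W\rVert^2,
\end{align*}
so that $W^*_{\ell\ell'} = \Phi(y_{\ell,\ell'})$. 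Taking $\ell' = \ell$ as in the statement, each coefficient of the quadratic $y_{\ell,\ell}$ is a time average of the form $\frac{1}{\ell}\sum_{k=0}^{\ell-1} s(Z_k)$ or $\frac{1}{\ell}\sum_{k=0}^{\ell-1} s(Z'_k)$ for suitable $L^1$ functions $s$ (the entries of $gg^{\top}$, $ff^{\top}$, $hf$), and the regularisation coefficient $\tfrac{2}{2\ell}\to 0$; by the Ergodic Theorem for stationary ergodic processes (stated in the excerpt) these converge almost surely to the corresponding integrals against $\mu$ and $\mu'$, and the regularisation term vanishes in the limit. Hence $y_{\ell,\ell} \to \Psi$ almost surely in the $C^1$ topology (convergence of a fixed finite list of polynomial coefficients suffices), and continuity of $\Phi$ gives $W^*_{\ell\ell} = \Phi(y_{\ell,\ell}) \to \Phi(\Psi) = W^*$ almost surely.

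The only genuine subtlety — the main obstacle, such as it is — is making sure that $\Phi$ is well-defined and continuous uniformly enough near $\Psi$: one needs the limiting quadratic to be \emph{strictly} convex so that $\Phi(\Psi)$ makes sense, which is exactly where invertibility of $\Lambda$ (forcing $\Lambda\Lambda^{\top}$, and hence the whole quadratic form, to be positive definite) is used, and one should note that for large $\ell$ the empirical quadratics $y_{\ell,\ell}$ are themselves strictly convex (their quadratic form converges to a positive-definite one, so it is eventually positive definite almost surely) so that $\Phi(y_{\ell,\ell})$ is genuinely their unique minimiser. Beyond that the argument is a routine transcription of Lemma~\ref{W_ell_lemma}; I would not grind through the Hessian computation or the coefficient bookkeeping in detail.
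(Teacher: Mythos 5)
You follow essentially the same route as the paper's own proof: the same population functional $\Psi$, the same gradient computation with positive-definite Hessian (using invertibility of $\Lambda$), the same map $\Phi$ from strictly convex $C^1$ functions to their unique minimisers, and the same coefficient-wise appeal to the Ergodic Theorem. As a transcription of Lemma \ref{W_ell_lemma} to the two-measure setting, your argument matches the paper's step for step.

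There is, however, one step that does not cohere as written, and you have in fact made it more visible than the paper does. You correctly observe that with $\ell'=\ell$ the empirical regularisation coefficient is $\tfrac{2}{\ell+\ell'}=\tfrac{1}{\ell}\to 0$, but you then conclude $y_{\ell,\ell}\to\Psi$ in the $C^1$ topology, where $\Psi$ carries $\lVert\Lambda W\rVert^2$ with coefficient one. Those two statements are incompatible: under the stated scaling the $C^1$-limit of $y_{\ell,\ell}$ is $\lVert W^{\top}g\rVert_{L^2(\mu)}^2+\lVert W^{\top}f-h\rVert_{L^2(\mu')}^2$ with no regulariser, so $\Phi$ of that limit is $\big(\int_{\Omega}gg^{\top}\,d\mu+\int_{\partial\Omega}ff^{\top}\,d\mu'\big)^{-1}\int_{\partial\Omega}hf\,d\mu'$ rather than $W^*$, and strict convexity of the limiting quadratic is then no longer supplied by invertibility of $\Lambda$ alone. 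The paper's proof asserts $\lim_{\ell} y_{\ell\ell}=\Psi$ without comment, so it carries exactly the same defect; the argument yields the stated $W^*$ only if the penalty is not scaled away with the sample size (equivalently, after multiplying the objective through by $\ell+\ell'$, the term $\lVert\Lambda W\rVert^2$ appears unnormalised), in which case your sentence about the regularisation term vanishing should simply be deleted rather than reconciled with $y_{\ell,\ell}\to\Psi$. So relative to the paper your proposal is a faithful reproduction, but that one limit identification is the step needing repair in both.
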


\begin{proof}
    Consider the map $\Psi : \mathbb{R}^N \to \mathbb{R}$ defined
    \begin{align*}
        \Psi(W) & : = \lVert W^{\top} g \rVert_{L^2(\mu)}^2 + \lVert W^{\top} f - h \rVert_{L^2(\mu')}^2 + \lVert \Lambda W \rVert^2 \\
        &= \int_\Omega \lVert W^{\top} g \rVert^2 \ d \mu + \int_{\partial \Omega} \lVert W^{\top} f - h \rVert^2 \ d \mu' + \lVert \Lambda W \rVert^2.
    \end{align*}
    The minimiser of $\Psi$ satisfies $D \Psi = 0$ where $D$ is the derivative operator, so we consider
    \begin{align*}
        0 &= (D\Psi)(W) \\
        &= D \bigg( \int_\Omega \lVert W^{\top} g \rVert^2 \ d \mu + \int_{\partial \Omega} \lVert W^{\top} f - h \rVert^2 \ d \mu' + \lVert \Lambda W \rVert^2 \bigg) \\
        &= \int_\Omega D\lVert W^{\top} g \rVert^2 \ d \mu + \int_{\partial \Omega} D\lVert W^{\top} f - h \rVert^2 \ d \mu' + D\lVert \Lambda W \rVert^2 \\
        &= \int_\Omega 2W^{\top} gg^{\top} \ d \mu + \int_{\partial \Omega} 2(W^{\top} f - h)f^{\top} \ d \mu' + 2 \Lambda^{\top}\Lambda W \\
        &= \int_\Omega W^{\top} gg^{\top} \ d \mu + \int_M (W^{\top} f - h)f^{\top} \ d \mu' + W^{\top}\Lambda\Lambda^{\top} \\
        &= W^{\top}\bigg(\int_\Omega gg^{\top} \ d\mu + \int_{\partial \Omega} ff^{\top} \ d\mu' + \Lambda\Lambda^{\top} \bigg) - \int_{\partial \Omega} hf^{\top} \ d\mu'\\
    \end{align*}
    which upon rearrangement yields 
    \begin{align*}
        W &= \bigg( \int_\Omega gg^{\top} \ d\mu + \int_{\partial \Omega} ff^{\top} \ d\mu' + \Lambda\Lambda^{\top} \bigg)^{-1} \times \int_{\partial \Omega} hf \ d\mu'.
    \end{align*}
    Since this is the unique solution to $0 = D\Psi(W)$, this stationary point is unique, and we will denote it by $W^*$. We can see it is a minimum because the Hessian $H\Psi$ is positive definite.
    Next, define the map
    \begin{align*}
        \Phi : \{ y \in C^1(\mathbb{R}^N,\mathbb{R}) \ | \ y \text{ is strictly convex} \} \to \mathbb{R}^N
    \end{align*}
    as the mapping on the strictly convex $C^1$ functions that returns their unique minimum. We can see that $\Phi$ is continuous with respect to the $C^1$ topology and Euclidean topology on $\mathbb{R}$ respectively. We consider the family of functions $y_{\ell\ell'} \in \{ y \in C^1(\mathbb{R}^N, \mathbb{R}) \ | \ y \text{ is strictly convex}\}$ defined by
    \begin{align*}
        y_{\ell\ell'}(W) =
        \frac{1}{\ell}\sum_{k=0}^{\ell-1} \lVert W^{\top}g(Z_k) \rVert^2 + \frac{1}{\ell'}\sum_{k=0}^{\ell'-1}\lVert W^{\top}f(Z'_k) - h(Z'_k) \rVert^2 + \frac{2}{\ell + \ell'}\lVert \Lambda W \rVert^2,
    \end{align*}
    so that by definition
    $W_{\ell\ell} = \Phi(y_{\ell\ell}(W))$ and hence
        \begin{align*}
        \lim_{\ell \to \infty} W_{\ell\ell} &= \lim_{\ell \to \infty} \Phi(y_{\ell\ell}(W))
        = \Phi\bigg(\lim_{\ell \to \infty} y_{\ell\ell}(W) \bigg)
        = W^*.
    \end{align*}
almost surely, where we have used, respectively, continuity of $\Phi$ and the Ergodic Theorem.
\end{proof}

\begin{defn}
    Let $\sigma : \mathbb{R}^N \to \mathbb{R}^N$ be $\ell$-finite, with $\ell = 2$. Let $C$ be an $\mathbb{M}_{N \times d}(\mathbb{R})$-valued random variable with full support, and $b$ a $\mathbb{R}^N$-valued random variable with full support. Then we define the random neural network
    $f : \mathbb{R}^d \to \mathbb{R}^N$ by
    \begin{align*}
        f(z) := \sigma(Cz + b).
    \end{align*}
    \label{defn::feedforward}
\end{defn}

\begin{lemma}
    Consider the Dirichlet problem defined in Theorem \ref{thm::Dirichlet} with boundary data $h \in C^2(\partial \Omega)$. Let $f$ be the random neural network defined in Theorem \ref{defn::feedforward}. Then for all $\alpha \in (0,1)$ and $\epsilon > 0$ there exists $N_0 \in \mathbb{N}$ such that, with probability at least $\alpha$, for all $N > N_0$ there exists a vector $\bar{W} \in \mathbb{R}^N$ such that
    \begin{align*}
        \lVert \Delta(\bar{W}^{\top}f) \rVert_{L^2(\mu)} + \lVert \bar{W}^{\top}f - h \rVert_{L^2(\mu')} < \epsilon.
    \end{align*}
\end{lemma}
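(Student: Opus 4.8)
The plan is to combine the existence-and-uniqueness result for the Dirichlet problem (Theorem \ref{thm::Dirichlet}) with Hornik's universal approximation theorem, applied not to $h$ or $\phi$ directly but to the $C^2$ solution $\phi$ of the problem. First I would invoke Theorem \ref{thm::Dirichlet}: since $\Omega$ is open, bounded, connected and has the exterior ball property, and $h \in C^2(\partial\Omega)$, there is a unique solution $\phi \in C^2(\Omega) \cap C^2(\partial\Omega)$. The key point is that $\phi$ is $C^2$ on (a neighbourhood of) $\overline{\Omega}$, so it lies in $C^2(K,\mathbb{R})$ for a suitable compact $K \supset \overline{\Omega}$, and in particular $\Delta\phi = 0$ on $\Omega$ and $\phi = h$ on $\partial\Omega$.

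Next I would apply the Random Universal Approximation Theorem (Theorem \ref{theorem::RUAT}) with $r = \ell = 2$, $n = d$, target function $\phi$, and the $\ell$-finite activation $\sigma$: for any $\alpha \in (0,1)$ and any $\delta > 0$ there is $N_0 \in \mathbb{N}$ such that for all $N > N_0$, with probability at least $\alpha$, there exists $\bar W \in \mathbb{R}^N$ with
\begin{align*}
    \lVert \bar W^{\top} f - \phi \rVert_{C^2} < \delta,
\end{align*}
where $f(z) = \sigma(Cz+b)$ is the random neural network of Definition \ref{defn::feedforward}. Here I use that $C,b$ have full support so the hypotheses of Theorem \ref{theorem::RUAT} are met, and that the $C^2$ norm dominates the $C^0$ norm and the sup-norm of all second derivatives, hence of the Laplacian.

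The final step is to convert this $C^2$ estimate into the two $L^2$ estimates in the statement. By linearity of the Laplacian, $\Delta(\bar W^{\top} f) = \bar W^{\top}(\Delta f)$, and since $\Delta\phi = 0$ on $\Omega$ we get, pointwise on $\Omega$, $|\Delta(\bar W^{\top} f)(z)| = |\Delta(\bar W^{\top} f - \phi)(z)| \le \lVert \bar W^{\top} f - \phi\rVert_{C^2} < \delta$; integrating against the probability measure $\mu$ gives $\lVert \Delta(\bar W^{\top} f)\rVert_{L^2(\mu)} < \delta$. Similarly, on $\partial\Omega$ we have $\phi = h$, so $|\bar W^{\top} f(z) - h(z)| = |(\bar W^{\top} f - \phi)(z)| \le \lVert \bar W^{\top} f - \phi\rVert_{C^2} < \delta$, and integrating against $\mu'$ gives $\lVert \bar W^{\top} f - h\rVert_{L^2(\mu')} < \delta$. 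Choosing $\delta = \epsilon/2$ from the outset yields the stated bound with probability at least $\alpha$.

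I do not expect a serious obstacle here; the only things to be careful about are (i) that $\phi$ is genuinely $C^2$ up to and including the boundary so that it can be regarded as an element of $C^2(K,\mathbb{R})$ for a compact $K$ containing $\overline\Omega$ — this is exactly the second assertion of Theorem \ref{thm::Dirichlet} under the hypothesis $h \in C^2(\partial\Omega)$ — and (ii) that both $\mu$ and $\mu'$ are probability (hence finite) measures so that a uniform pointwise bound passes to an $L^2$ bound without picking up an unbounded constant, which holds since $\mu,\mu'$ are the invariant measures of the stationary ergodic sampling processes. The mild technical annoyance is simply bookkeeping the quantifiers so that the probability-$\alpha$ event and the choice of $N_0$ match the form demanded in the statement (``there exists $N_0$ such that with probability at least $\alpha$, for all $N > N_0$ there exists $\bar W$''), which is precisely the structure delivered by Theorem \ref{theorem::RUAT}.
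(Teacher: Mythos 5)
Your proposal is correct and takes essentially the same route as the paper: solve the Dirichlet problem for $\phi$, apply the Random Universal Approximation Theorem to obtain $\bar W$ with $\bar W^\top f$ $C^2$-close to $\phi$, and then pass from the resulting uniform pointwise bounds to the $L^2(\mu)$ and $L^2(\mu')$ estimates using that $\mu$ and $\mu'$ are probability measures. If anything your version is the tighter of the two: you invoke RUAT directly with the $C^2$ norm so the Laplacian bound follows immediately, whereas the paper's written argument routes through a ``continuity of the Laplacian'' step and then quotes the RUAT conclusion only in the $\lVert\cdot\rVert_\infty$ norm, which as written does not quite trigger that step.
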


\begin{proof}
    Fix $\epsilon > 0$. Now the Laplacian $\Delta$ is continuous at $\phi$ (the unique solution of the Dirichlet problem) so there is a $\delta > 0$ such that for all $s \in C^2(\bar{\Omega})$ 
    \begin{align*}
        \lVert s - \phi \rVert_{C^2} < \delta \implies \lVert \Delta s - \Delta \phi \rVert_{\infty} < \epsilon/2.
    \end{align*}
    
    %\jd{[I'm worried about this. Function values can be close, but their gradients may still not be close. Given that this is a linear problem, I considered trying to establish the statement `given $\epsilon>0$, find a $\delta>0$ such that $\| s(x)\|_\infty<\delta$ implies $\|\Delta s\|_\infty<\epsilon$'. If you take the sequence $s_k(x):=(1/k)\sin(kx)$ on the 1D interval $x \in [0,2\pi]$ then
    %the sup-norm of the $s_k$ functions goes to zero as $k\to \infty$ but the sup-norm of their
    %second derivatives does not go to zero. Does this mean you need to work in a different space, e.g. $H^2$, to ensure that the Laplacian is bounded (implies continuous) w.r.t a different norm?]}
    
    Fix $\alpha \in (0,1)$. Then by Theorem \ref{theorem::RUAT} there exists $N_0 \in \mathbb{N}$ such that, with probability at least $\alpha$, for all $N > N_0$ there is a vector $\bar{W} \in \mathbb{R}^N$ such that
    \begin{align*}
        \lVert \bar{W}^{\top}f - \phi \rVert_{\infty} < \text{min}(\epsilon/2,\delta).
    \end{align*}
    Then we can see on the boundary that
    \begin{align*}
        \lVert \bar{W}^{\top}f - h \rVert_{L^2(\mu')} \leq \lVert \bar{W}^{\top}f - h \rVert_{\infty} = \lVert \bar{W}^{\top}f - \phi \rVert_{\infty} < \epsilon / 2.
    \end{align*}
    Furthermore we have in the interior that
    \begin{align*}
        \lVert \bar{W}^{\top}(\Delta f) \rVert_{L^2(\mu)} \leq \lVert \bar{W}^{\top}(\Delta f)\rVert_{C^2} = \lVert \Delta(\bar{W}^{\top}f) - \Delta\phi \rVert_{C^2} < \epsilon / 2.
    \end{align*}
    Then it follows that
    \begin{align*}
        \lVert \Delta(\bar{W}^{\top}f) \rVert_{L^2(\mu)} + \lVert \bar{W}^{\top}f - h \rVert_{L^2(\mu')} < \epsilon / 2 + \epsilon / 2 = \epsilon.
    \end{align*}
\end{proof}

\begin{lemma}
    Consider the Dirichlet problem defined in Theorem \ref{thm::Dirichlet} with boundary data $h \in C^2(\partial \Omega)$. Let $Z,Z'$ be stationary ergodic processes on $\Omega$ and $\partial \Omega$ with invariant measures $\mu$ and $\mu'$ respectively. Then $\phi \in C^2(\bar{\Omega})$ is the unique solution to the Dirichlet problem if and only if
    \begin{align*}
        \lVert \Delta \phi \rVert_{L^2(\mu)} + \lVert \phi - h \rVert_{L^2(\mu')} = 0
    \end{align*}
\end{lemma}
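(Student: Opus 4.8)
The statement to prove is an "if and only if" characterisation: $\phi \in C^2(\Omega) \cap C^2(\partial\Omega)$ is the unique solution to the Dirichlet problem if and only if $\lVert \Delta\phi \rVert_{L^2(\mu)} + \lVert \phi - h \rVert_{L^2(\mu')} = 0$.

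The plan is to prove both directions directly. For the forward ($\Rightarrow$) direction, suppose $\phi$ is the unique solution. Then by definition $\Delta\phi = 0$ pointwise on $\Omega$ and $\phi = h$ pointwise on $\partial\Omega$. Since $\Delta\phi$ is the zero function on $\Omega$, its $L^2(\mu)$ norm is zero; likewise $\phi - h$ is the zero function on $\partial\Omega$, so its $L^2(\mu')$ norm is zero. Summing gives $\lVert \Delta\phi \rVert_{L^2(\mu)} + \lVert \phi - h \rVert_{L^2(\mu')} = 0$. This direction is essentially immediate from the definition of a classical solution.

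For the reverse ($\Leftarrow$) direction, suppose $\lVert \Delta\phi \rVert_{L^2(\mu)} + \lVert \phi - h \rVert_{L^2(\mu')} = 0$. Since both summands are non-negative, each is individually zero: $\lVert \Delta\phi \rVert_{L^2(\mu)} = 0$ and $\lVert \phi - h \rVert_{L^2(\mu')} = 0$. Now I would use the fact that $\phi \in C^2(\Omega)$ so $\Delta\phi$ is continuous on $\Omega$, combined with the hypothesis that $Z$ is a stationary ergodic process on $\Omega$ with invariant measure $\mu$ — which (to be useful here) must have support equal to all of $\Omega$, or at least the statement implicitly assumes $\mu$ charges every nonempty open subset of $\Omega$. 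A continuous function whose $L^2$ norm against such a measure vanishes must be identically zero: if $\Delta\phi(z_0) \neq 0$ for some $z_0 \in \Omega$, continuity gives an open neighbourhood on which $|\Delta\phi| > c > 0$, and this neighbourhood has positive $\mu$-measure, contradicting $\int_\Omega |\Delta\phi|^2 \, d\mu = 0$. Hence $\Delta\phi \equiv 0$ on $\Omega$. The identical argument with $\phi - h$ continuous on $\partial\Omega$ and $\mu'$ the invariant measure of $Z'$ (with full support on $\partial\Omega$) gives $\phi = h$ on $\partial\Omega$. Therefore $\phi$ solves the Dirichlet problem, and by Theorem \ref{thm::Dirichlet} it is the unique such solution.

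The main (and really the only) subtlety is the support condition on the measures $\mu$ and $\mu'$: the continuity-to-vanishing argument only works if every nonempty relatively open subset of $\Omega$ (resp. $\partial\Omega$) has strictly positive measure. I would state this as an implicit standing assumption on the sampling processes $Z, Z'$ — it is the natural requirement ensuring the samples genuinely explore the domain, and is consistent with the running assumptions of the chapter (e.g. taking $Z, Z'$ to be i.i.d. uniform or MCMC samples targeting densities that are positive on the whole domain). With that in hand, the proof is short: the forward direction is definitional, and the reverse direction is the standard "continuous $L^2$-null function on a full-support measure is zero" lemma applied twice, followed by an appeal to uniqueness from Theorem \ref{thm::Dirichlet}.
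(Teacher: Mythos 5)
Your proof is correct and follows essentially the same route as the paper's: the forward direction is definitional, and the reverse direction uses non-negativity of the two summands plus continuity of $\Delta\phi$ on $\Omega$ and of $\phi - h$ on $\partial\Omega$ to force pointwise vanishing, after which uniqueness follows from Theorem \ref{thm::Dirichlet}. The only difference is that you state explicitly the full-support assumption on $\mu$ and $\mu'$ needed to conclude that a continuous non-negative function with zero integral vanishes identically, a hypothesis the paper uses implicitly without comment.
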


\begin{proof}
    First, suppose that $\phi \in C^2(\bar{\Omega})$ solves the Dirichlet problem, then
    \begin{align*}
        \Delta \phi = 0 \ \text{ on $\Omega$} \qquad \text{and} \qquad \phi - h = 0 \ \text{ on $\partial \Omega$}.
    \end{align*}
    so clearly 
    \begin{align*}
        \lVert \Delta \phi \rVert_{L^2(\mu)} + \lVert \phi - h \rVert_{L^2(\mu')} = 0 + 0 = 0.
    \end{align*}
    Conversely, suppose that 
    \begin{align*}
        0 &= \lVert \Delta \phi \rVert_{L^2(\mu)} + \lVert \phi - h \rVert_{L^2(\mu')} \\
        &= \lVert \Delta \phi \rVert_{L^2(\mu)}^2 + \lVert \phi - h \rVert_{L^2(\mu')}^2 \\
        &= \int_{\Omega} (\Delta \phi)^2 \ d \mu + \int_{\partial \Omega} (\phi - h)^2 \ d \mu'.
    \end{align*}
    Now we observe that $(\Delta\phi)^2 \geq 0$ and $(\phi - h)^2 \geq 0$ are continuous, so
    \begin{align*}
        (\Delta \phi)^2 = 0 \ \text{ on $\Omega$} \qquad (\phi - h)^2 = 0 \ \text{ on $\partial \Omega$}
    \end{align*}
    hence
    \begin{align*}
        \Delta \phi = 0 \ \text{ on $\Omega$} \qquad \phi - h = 0 \ \text{ on $\partial \Omega$}
    \end{align*}
    so $\phi$ is the unique solution of the Dirichlet problem.
\end{proof}

\begin{theorem}
    Consider the Dirichlet problem defined in Theorem \ref{thm::Dirichlet} with boundary data $h \in C^2(\partial \Omega)$.
    Let $Z,Z'$ be a stationary ergodic processes with invariant measures $\mu$ and $\mu'$ defined on $\Omega \subset \mathbb{R}^d$ and $\partial \Omega$ respectively. Let $f \in L^1(\Omega,\mathbb{R}^N)(\mu)$ be the feedforward random neural network in Definition \ref{defn::feedforward}. Let $\Lambda \in \mathbb{M}_{N \times N}(\mathbb{R})$ be the invertible regularisation matrix and $\lambda > 0$.
    Suppose that $W^*_{\ell\ell'}$ minimises (over $W \in \mathbb{R}^N$)
    \begin{align*}
        \frac{1}{\ell}\sum_{k=0}^{\ell-1} \lVert W^{\top}\Delta f(Z_k) \rVert^2 + \frac{1}{\ell'}\sum_{k=0}^{\ell'-1}\lVert W^{\top}f(Z'_k) - h(Z'_k) \rVert^2 + \frac{2\lambda}{\ell+\ell'}\lVert \Lambda W \rVert^2.
    \end{align*}
    Then, for all $\alpha \in (0,1)$ and $\epsilon > 0$ there exist constants $\ell_0, \lambda_0, N_0$ such that, with probability at least $\alpha$, for all $\ell > \ell_0, \lambda > \lambda_0, N > N_0$
    \begin{align*}
        \lVert \Delta(W^{*\top}_{\ell\ell}f) \rVert_{L^2(\mu)} + \lVert W^{*\top}_{\ell\ell}f - h \rVert_{L^2(\mu')} < \epsilon.
    \end{align*}
    \label{theorem::Dirichlet_approx}
\end{theorem}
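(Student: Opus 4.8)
The plan is to assemble Theorem \ref{theorem::Dirichlet_approx} by combining the three preceding lemmas — the one identifying the limit of the regularised least-squares minimisers $W^*_{\ell\ell}$, the one giving the existence (via Theorem \ref{theorem::RUAT}) of a good approximating weight vector $\bar W$, and the one characterising the exact solution $\phi$ by the vanishing of $\lVert \Delta\phi\rVert_{L^2(\mu)}+\lVert\phi-h\rVert_{L^2(\mu')}$ — via a standard $\epsilon/k$ chaining argument exactly mirroring the proof of Theorem \ref{least_sqs_thm}. First I would fix $\alpha\in(0,1)$ and $\epsilon>0$ and invoke the approximation lemma: there is $N_0$ such that with probability at least $\alpha$, for all $N>N_0$ there exists $\bar W\in\mathbb{R}^N$ with $\lVert\Delta(\bar W^\top f)\rVert_{L^2(\mu)}+\lVert\bar W^\top f-h\rVert_{L^2(\mu')}<\epsilon/3$; work on this event for the remainder of the argument. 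Using $\lVert\Delta(W^\top f)\rVert = \lVert W^\top(\Delta f)\rVert$ by linearity of the Laplacian, set $g:=\Delta f$ so that the training objective is precisely the one analysed in the first lemma.

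Next I would choose the regularisation threshold. Since $\bar W$ is now fixed (on the good event), pick $\lambda_0>0$ small enough that $\lambda\lVert\Lambda\bar W\rVert^2<\epsilon/3$ for all $\lambda\in(0,\lambda_0)$ — here I note a sign/typo caveat: the statement writes "$\lambda>\lambda_0$" but the conclusion requires small $\lambda$, so the intended reading is $\lambda\in(0,\lambda_0)$, consistent with every other theorem in the chapter; I would state the result with $\lambda\in(0,\lambda_0)$. Then apply the first lemma: the minimisers $W^*_{\ell\ell}$ converge almost surely as $\ell\to\infty$ to the minimiser $W^*$ of $\lVert W^\top g\rVert_{L^2(\mu)}^2+\lVert W^\top f-h\rVert_{L^2(\mu')}^2+\lambda\lVert\Lambda W\rVert^2$, so I can pick $\ell_0$ with
\begin{align*}
\bigl|\,\lVert \Delta(W^{*\top}_{\ell\ell}f)\rVert^2_{L^2(\mu)}+\lVert W^{*\top}_{\ell\ell}f-h\rVert^2_{L^2(\mu')} - \bigl(\lVert \Delta(W^{*\top}f)\rVert^2_{L^2(\mu)}+\lVert W^{*\top}f-h\rVert^2_{L^2(\mu')}\bigr)\,\bigr| < \epsilon/3
\end{align*}
for all $\ell>\ell_0$, using continuity of the quadratic objective in $W$.

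Then the proof proceeds by the chain: the quantity for $W^*_{\ell\ell}$ is within $\epsilon/3$ of the quantity for $W^*$; the quantity for $W^*$ is bounded above by the same quantity plus $\lambda\lVert\Lambda W^*\rVert^2$, which since $W^*$ is the regularised minimiser is at most the corresponding regularised value at $\bar W$, namely $\lVert\Delta(\bar W^\top f)\rVert^2_{L^2(\mu)}+\lVert\bar W^\top f-h\rVert^2_{L^2(\mu')}+\lambda\lVert\Lambda\bar W\rVert^2 < \epsilon/3 + \epsilon/3$; summing gives the squared-norm bound, and one passes to the sum-of-norms statement by noting $a+b\le \sqrt{2}\sqrt{a^2+b^2}$ (or simply re-running the argument with $\epsilon$ replaced by a suitable constant multiple, or phrasing the intermediate estimates directly in sum-of-norms form as the approximation lemma already does). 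A small bookkeeping point is that the approximation lemma and the least-squares lemma each implicitly demand $N>N_0$ for the same $N_0$, and the convergence in $\ell$ is for each fixed such $N$, so the quantifier order "for all $\ell>\ell_0,\ \lambda\in(0,\lambda_0),\ N>N_0$" is consistent. The main obstacle is essentially cosmetic rather than deep: reconciling the mixed use of squared $L^2$ norms (in the least-squares lemmas) with the sum-of-unsquared-norms appearing in the theorem statement, and tracking that the event of probability $\ge\alpha$ on which $\bar W$ exists is the same event on which the least-squares convergence statement is applied — everything else is a routine triangle-inequality assembly of results already proved in the excerpt.
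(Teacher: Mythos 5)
Your proposal is correct and follows essentially the same route as the paper's own proof: invoke the random universal approximation lemma to get $\bar W$ with probability at least $\alpha$ for $N>N_0$, choose $\lambda_0$ so the regularisation term at $\bar W$ is below $\epsilon/3$, use the convergence of $W^*_{\ell\ell}$ to the population minimiser $W^*$ to pick $\ell_0$, and chain via the minimality of $W^*$ against $\bar W$. Your explicit handling of the squared-versus-unsquared norm mismatch and your reading of the typo "$\lambda>\lambda_0$" as $\lambda\in(0,\lambda_0)$ are both right, and are in fact slightly more careful than the paper's own write-up, which glosses over the fact that $W^*$ minimises the sum of squares rather than the sum of norms.
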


\begin{proof}
    Fix $\epsilon > 0$. Then, by Theorem \ref{theorem::RUAT} there exists a $N_0 \in \mathbb{N}$ such that, with probability at least $\alpha$, for all $N > N_0$ the vector $\bar{W} \in \mathbb{R}^N$ satisfies
    \begin{align*}
        \lVert \Delta(\bar{W}^{\top}f) \rVert_{L^2}(\mu) + \lVert \bar{W}^{\top}f - h \rVert_{L^2}(\mu') < \frac{\epsilon}{3}.
    \end{align*}
    Let $\lambda_0 > 0$ be sufficiently small that for any $\lambda \in (0,\lambda_0)$ 
    \begin{align*}
        \lambda \lVert \Lambda \bar{W} \rVert < \frac{\epsilon}{3}.
    \end{align*}
    Now let
    \begin{align*}
        W^* = \bigg( \int_\Omega (\Delta f)(\Delta f)^{\top} \ d\mu + \int_{\partial \Omega} ff^{\top} \ d\mu' + \Lambda\Lambda^{\top} \bigg)^{-1} \int_{\partial \Omega} hf \ d\mu'.
    \end{align*}

    Then $W^*_{\ell\ell} \to W^*$ so we choose $\ell_0 \in \mathbb{N}$ sufficiently large that for any $\ell > \ell_0$
    \begin{align*}
        & \lVert \Delta(W^{*\top}_{\ell\ell}f) \rVert_{L^2(\mu)} + \lVert W^{*\top}_{\ell\ell}f - h \rVert_{L^2(\mu')} + \lambda\lVert \Lambda W^{*\top}_{\ell\ell} \rVert \\
        < & \lVert \Delta(W^{*\top}f) \rVert_{L^2(\mu)} + \lVert W^{*\top}f - h \rVert_{L^2(\mu')} + \lambda\lVert \Lambda W^* \rVert + \frac{\epsilon}{3}.
    \end{align*}
    Now the proof proceeds directly 
    \begin{align*}
        &\lVert \Delta(W^{*\top}_{\ell\ell}f) \rVert_{L^2(\mu)} + \lVert W^{*\top}_{\ell\ell}f - h \rVert_{L^2(\mu')} \\
        \leq & \lVert \Delta(W^{*\top}_{\ell\ell}f) \rVert_{L^2(\mu)} + \lVert W^{*\top}_{\ell\ell}f - h \rVert_{L^2(\mu')} + \lambda\lVert \Lambda W^{*\top}_{\ell\ell} \rVert \\
        < & \lVert \Delta(W^{*\top}f) \rVert_{L^2(\mu)} + \lVert W^{*\top}f - h \rVert_{L^2(\mu')} + \lambda\lVert \Lambda W^* \rVert + \frac{\epsilon}{3} \\
        \leq & \lVert \Delta(\bar{W}^{\top}f) \rVert_{L^2(\mu)} + \lVert \bar{W}^{\top}f - h \rVert_{L^2(\mu')} + \lambda\lVert \Lambda \bar{W} \rVert + \frac{\epsilon}{3} \\
        < & \lVert \Delta(\bar{W}^{\top}f) \rVert_{L^2(\mu)} + \lVert \bar{W}^{\top}f - h \rVert_{L^2(\mu')} + \frac{\epsilon}{3} + \frac{\epsilon}{3} \\
        < & \frac{\epsilon}{3} + \frac{\epsilon}{3} + \frac{\epsilon}{3} = \epsilon.
    \end{align*}
\end{proof}

We have shown that with a reservoir computer, and least squares regression, we can obtain $W_{\ell\ell}$ such that $W_{\ell\ell}^{*\top}f$ closely approximates the boundary data, and closely approximates the derivative condition on the interior. But does this imply that  $W_{\ell\ell}^{*\top}f$ closely approximates the unique solution $\phi$ of the Dirichlet problem i.e. can we ensure that $\lVert W_{\ell\ell}^{*\top}f - \phi \rVert_{L^2(\mu)} < \epsilon$? We are not sure. This difficulty has likely emerged because we are working in $C^2(\bar{\Omega})$, and could probably be avoided by working in the Sobolev space $H^1$. Analysing the problem in $H^1$ could be a fruitful direction of future work.

Theorem \ref{theorem::Dirichlet_approx} is an offline learning result where the reservoir computer is trained using least squares regression, once all the data has been collection. We can prove a similar online result where the weights $W_k$ are updated as new data comes in.

\begin{theorem}
    Consider the Dirichlet problem defined in Theorem \ref{thm::Dirichlet} with boundary data $h \in C^2(\partial \Omega)$.
    Let $Z,Z'$ be a stationary ergodic processes with invariant measures $\mu$ and $\mu'$ defined on $\Omega \subset \mathbb{R}^d$ and $\partial \Omega$ respectively. Let $f \in L^1(\Omega,\mathbb{R}^N)(\mu)$ be the feedforward random neural network in Definition \ref{defn::feedforward}. Let $\Lambda \in \mathbb{M}_{N \times N}(\mathbb{R})$ be the invertible regularisation matrix.
    Then for $\alpha_k = 1/k$ and any initial $W_0 \in \mathbb{R}^N$ the algorithm
    \begin{align}
        W_{k+1} &= (I - \alpha_k \Lambda \Lambda^{\top})W_k - \alpha_k\bigg( (\Delta f)(Z_k)W_k^{\top}(\Delta f)(Z_k) + f(Z'_k)(W^{\top}_kf(Z'_k) - h(Z'_k)) \bigg)
        \label{algorithm::PDE}
    \end{align}
    converges to
    \begin{align*}
        W^* = \bigg( \int_\Omega (\Delta f)(\Delta f)^{\top} \ d\mu + \int_{\partial \Omega} ff^{\top} \ d\mu' + \Lambda\Lambda^{\top} \bigg)^{-1} \times \int_{\partial \Omega} fh \ d\mu'
    \end{align*}
    \label{theorem::Dirichlet_approx_online}
    almost surely.
\end{theorem}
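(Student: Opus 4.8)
The plan is to recognise that the online algorithm \eqref{algorithm::PDE} is an instance of the Gy\"orfi--type stochastic approximation scheme from Theorem \ref{theorem::gyorfi}, exactly as in the proof of the corollary after Theorem \ref{theorem::gyorfi} in Chapter \ref{chapter::universal_approximation}. First I would define the Hilbert space $\mathcal{H} = \mathbb{R}^N$ with the Euclidean norm, and identify the linear operator
\begin{align*}
    B := \int_\Omega (\Delta f)(\Delta f)^{\top} \ d\mu + \int_{\partial \Omega} ff^{\top} \ d\mu' + \Lambda\Lambda^{\top}
\end{align*}
together with the target vector $v := \int_{\partial\Omega} fh \ d\mu'$. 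One checks that $B$ is symmetric, positive definite (hence invertible and bounded) because the two integral terms are positive semi-definite and $\Lambda\Lambda^{\top}$ is positive definite since $\Lambda$ is invertible; this is precisely the hypothesis required of $B$ in Theorem \ref{theorem::gyorfi}.

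Next I would introduce the per-step operators and vectors
\begin{align*}
    B_k := (\Delta f)(Z_k)(\Delta f)(Z_k)^{\top} + f(Z'_k)f(Z'_k)^{\top} + \Lambda\Lambda^{\top}, \qquad v_k := f(Z'_k)h(Z'_k),
\end{align*}
and rewrite the iteration \eqref{algorithm::PDE} in the form $W_{k+1} = W_k - \tfrac{1}{k}(B_k W_k - v_k)$; this requires only expanding $(I - \alpha_k\Lambda\Lambda^{\top})W_k - \alpha_k((\Delta f)(Z_k)W_k^{\top}(\Delta f)(Z_k) + f(Z'_k)(W_k^{\top}f(Z'_k)-h(Z'_k)))$ and collecting terms, noting $W_k^{\top}(\Delta f)(Z_k) = (\Delta f)(Z_k)^{\top}W_k$ as scalars so that $(\Delta f)(Z_k)W_k^{\top}(\Delta f)(Z_k) = (\Delta f)(Z_k)(\Delta f)(Z_k)^{\top}W_k$. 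Then, since $Z$ and $Z'$ are stationary ergodic with invariant measures $\mu$ and $\mu'$, the Ergodic Theorem for stationary ergodic processes (stated in Chapter \ref{chapter::stochastic}) applied to the $L^1$ functions $m \mapsto (\Delta f)(m)(\Delta f)(m)^{\top}$, $m \mapsto f(m)f(m)^{\top}$ and $m \mapsto f(m)h(m)$ gives
\begin{align*}
    \Bigl\lVert \lim_{\ell\to\infty}\frac{1}{\ell}\sum_{k=0}^{\ell-1} B_k - B \Bigr\rVert = 0, \qquad \Bigl\lVert \lim_{\ell\to\infty}\frac{1}{\ell}\sum_{k=0}^{\ell-1} v_k - v \Bigr\rVert = 0
\end{align*}
almost surely. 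Applying Theorem \ref{theorem::gyorfi} then yields $W_k \to B^{-1}v = W^*$ almost surely, which is the claimed limit.

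The main obstacle, and the place where a little care is genuinely needed, is the almost-sure convergence of the Ces\`aro averages: the Ergodic Theorem as stated applies to a single stationary ergodic process, so to average functions of both $Z_k$ and $Z'_k$ one must either treat the joint process $(Z_k, Z'_k)$ (requiring that it too be stationary and ergodic, or that $Z$ and $Z'$ be independent so that the product process is ergodic), or observe that the two sums decouple — the $B_k$ and $v_k$ each split into a part depending only on $Z$ and a part depending only on $Z'$ — and apply the Ergodic Theorem separately to each process. I would take the latter route, which avoids any joint-ergodicity assumption: write $B_k = B_k^{\Omega} + B_k^{\partial\Omega} + \Lambda\Lambda^{\top}$ with $B_k^{\Omega}$ a function of $Z_k$ alone and $B_k^{\partial\Omega}$ a function of $Z'_k$ alone, apply ergodicity of $Z$ to $\tfrac1\ell\sum B_k^{\Omega}$ and ergodicity of $Z'$ to $\tfrac1\ell\sum B_k^{\partial\Omega}$ and to $\tfrac1\ell\sum v_k$, and combine. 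A secondary technical point is that the averaging indices for the interior and boundary samples are both taken to be $k$ (i.e. $\ell = \ell'$ in the notation of the earlier lemma); this is harmless but should be flagged, matching the convention $W^*_{\ell\ell}$ used there. Once these averaging statements are in hand, the conclusion is immediate from Theorem \ref{theorem::gyorfi}, and the proof is complete.
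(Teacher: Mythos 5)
Your proposal is correct and follows essentially the same route as the paper's own proof: rewrite \eqref{algorithm::PDE} as $W_{k+1} = W_k - \tfrac{1}{k}(B_k W_k - v_k)$ with the same $B_k$, $v_k$, obtain the Ces\`aro limits $B$ and $v$ from the Ergodic Theorem, and invoke Theorem \ref{theorem::gyorfi} to conclude $W_k \to B^{-1}v = W^*$ almost surely. Your extra observation about decoupling the interior and boundary averages (so that ergodicity of $Z$ and $Z'$ can be applied separately rather than to the joint process) is a point the paper passes over silently, and is a worthwhile clarification, but it does not change the argument.
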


\begin{proof}
    Using that $\alpha_k = 1/k$ and rearranging algorithm \eqref{algorithm::PDE} we have
    \begin{align*}
        W_{k+1} &= W_k - \frac{1}{k}\bigg(\bigg[ (\Delta f)(Z_k)(\Delta f)(Z_k)^{\top} + f(Z'_k)f(Z'_k)^{\top} + \Lambda\Lambda^{\top} \bigg] W_k - f(Z'_k)h(Z'_k) \bigg) \\
        &= W_k - \frac{1}{k}(B_k W_k - v_k) 
    \end{align*}
    where 
    \begin{align*}
        B_k = (\Delta f)(Z_k)(\Delta f)(Z_k)^{\top} + f(Z'_k)f(Z'_k)^{\top} + \Lambda\Lambda^{\top}, \qquad v_k = f(Z'_k)h(Z'_k).
    \end{align*}
    Now it follows from the ergodic theorem that
    \begin{align*}
        \bigg\lVert \lim_{\ell \to \infty} \frac{1}{\ell}\sum_{k = 0}^{\ell - 1} B_k - B \bigg\rVert_{\mathbb{R}^N} = 0, \qquad \bigg\lVert \lim_{\ell \to \infty} \frac{1}{\ell}\sum_{k = 0}^{\ell - 1} v_k - v \bigg\rVert_{\mathbb{R}^N} = 0
    \end{align*}
    for 
    \begin{align*}
        B = \int_\Omega (\Delta f)(\Delta f)^{\top} \ d\mu + \int_{\partial \Omega} ff^{\top} \ d\mu' + \Lambda\Lambda^{\top}, \qquad v = \int_{\partial \Omega} fh \ d\mu'
    \end{align*}
    almost surely. Then it follows from Theorem \ref{theorem::gyorfi} that algorithm \eqref{algorithm::PDE} converges to
    \begin{align*}
        W^* = B^{-1}v = \bigg(\int_\Omega (\Delta f)(\Delta f)^{\top} \ d\mu + \int_{\partial \Omega} ff^{\top} \ d\mu' + \Lambda\Lambda^{\top}\bigg)^{-1} \int_{\partial \Omega} fh \ d\mu'
    \end{align*}
    almost surely.
\end{proof}

\section{Laplace's equation on the disc}
\label{Numerics}

\subsection{Analytic solution}

We will demonstrate the theory presented in this chapter so far by solving a Dirichlet problem on the unit disc, first analytically, then with a reservoir computer. We let 
\begin{align*}
    \Omega = \{ (r,\theta) \ | \ 0 \leq  r < 1 \} \qquad \text{and} \qquad \partial \Omega = \{ (r,\theta) \ | \ r = 1 \}
\end{align*}
and seek $\phi \in C^2(\Omega,\mathbb{R})$ that satisfies 
\begin{itemize}
    \item $\Delta \phi = 0$ on $\Omega$
    \item $\phi = h$ on $\partial \Omega$.
\end{itemize}
The general solution is
\begin{align*}
    \phi(r,\theta) = \alpha_0 + \sum_{n=1}^{\infty}\alpha_n \cos(n\theta)r^n + \sum_{n=1}^{\infty}\beta_n \sin(n\theta)r^n
\end{align*}
with 
\begin{align*}
        \alpha_0 &= \frac{1}{2\pi}\int_0^{2 \pi} h(\theta) \ d\theta,  \\
    \alpha_n &= \frac{1}{\pi}\int^{2\pi}_0 h(\theta) \cos(n \theta) \ d\theta, \\
    \beta_n &= \frac{1}{\pi}\int^{2\pi}_0 h(\theta) \sin(n \theta) \ d\theta.
\end{align*}
We will set the boundary data $h(\theta) := \cos(4\theta)$. The orthonormality of the Fourier basis implies that $\alpha_4 = 1$ and all other coefficients $\alpha_n$ and $\beta_n$ vanish so 
\begin{align*}
    \phi(r,\theta) = r^4 \cos(4\theta).
\end{align*}
A plot of the solution is shown in Figure \ref{fig::analytic_soln}.

\begin{figure}
  \centering
    \includegraphics[width=0.90\textwidth]{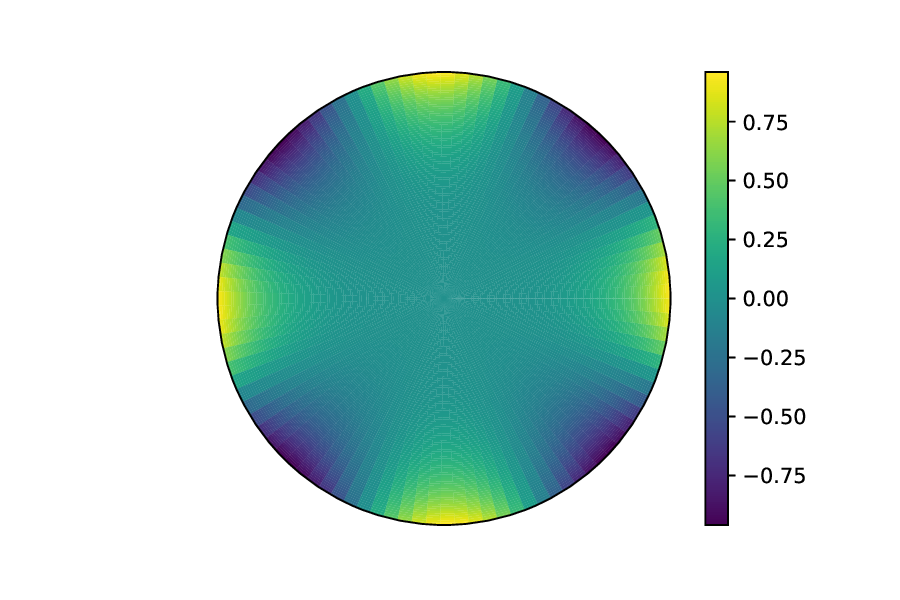}
    \caption{The analytic solution $\phi(r,\theta)=r^4 \cos 4\theta$ to the Dirichlet problem on the disc, taking the boundary data $h(\theta)=\cos 4\theta$.}
    \label{fig::analytic_soln}
\end{figure}

\subsection{Reservoir computer solution}

We define processes $Z$ and $Z'$ on the interior $\Omega$ and boundary $\partial \Omega$ of the unit disc. The terms in $Z$ and $Z'$ are i.i.d uniform distributions on the unit disc and unit circle. We take $\ell = 50$ samples on the disc, and $\ell' = 50$ samples on the circle respectively. We have maps $f, \ \Delta f : \Omega \to \mathbb{R}^N$, with $N = 50$, with $i$th components $f_i, \ (\Delta f)_i : \Omega \to \mathbb{R}$ defined by
\begin{align*}
    f_i(z) = \tanh(C_i^{\top}z + b_i), \qquad (\Delta f)_i(z) = -2\tanh(C_i^{\top} z + b_i) \, \text{sech}^2(C_i^{\top} z + b_i)
\end{align*}
respectively. The input matrix $C$ and bias vector $b$ are i.i.d uniform random variables $\sim U[-0.05,0.05]$. We set the Tikhonov regularisation parameter $\lambda$ to take the value $\lambda=10^{-6}$.

We seek $W^* \in \mathbb{R}^N$ that minimises
\begin{align*}
    \sum_{k=0}^{\ell-1} \lVert W^{\top}\Delta f(Z_k) \rVert + \sum_{k=0}^{\ell'-1}\lVert W^{\top}f(Z'_k) - h(Z'_k) \rVert + \lambda \lVert W \rVert.
\end{align*}
We can recast this as a matrix equation,
with $X^{\top}$ an $N \times (\ell+\ell')$ real matrix with $k$th column
\begin{align*}
    X^{\top}_k = 
    \begin{cases}
        \Delta f(Z_k) &\text{ if }  1 \leq k \leq \ell \\
        f(Z'_{k-\ell}) &\text{ if } \ell < k \leq \ell+\ell'
    \end{cases}
\end{align*}
and $Y^{\top}$ a real $(\ell + \ell')$-vector with $k$th component
\begin{align*}
    Y^{\top}_k = 
    \begin{cases}
        0 &\text{ if } 1 \leq k \leq \ell \\
        h(Z'_{k-\ell}) &\text{ if } \ell < k \leq \ell + \ell'
    \end{cases}
\end{align*}
so that
\begin{align*}
    W^* &= \argmin_{W \in \mathbb{R}^N}\bigg[\bigg\lVert 
    W^{\top}\bigg(
    \begin{bmatrix}
        \Delta f(Z_1)
    \end{bmatrix}
    \hdots
    \begin{bmatrix}
        \Delta f(Z_\ell)
    \end{bmatrix}
    \begin{bmatrix}
        f(Z'_1)
    \end{bmatrix}
    \hdots
    \begin{bmatrix}
        f(Z'_{\ell'})
    \end{bmatrix}
    \bigg) \\
    &\qquad - \bigg(
    \begin{bmatrix}
        0
    \end{bmatrix}
    \hdots
    \begin{bmatrix}
        0
    \end{bmatrix}
    \begin{bmatrix}
        h(Z'_1)
    \end{bmatrix}
    \hdots
        \begin{bmatrix}
        h(Z'_{\ell'})
    \end{bmatrix}
    \bigg)
    \bigg\rVert + \lambda \lVert W \rVert \bigg] \\
    &= \argmin_{W \in \mathbb{R}^N}\big[ \lVert W^{\top}X^{\top} - Y^{\top} \rVert + \lambda \rVert W \rVert \big].
\end{align*}    
Then we obtain $W^*$ by taking the singular value decomposition (SVD) of $X$
\begin{align*}
    X = U \Sigma V^{\top}.
\end{align*}
We denote the singular values by $\sigma_k$, and the columns of $U$ by $U_k$. Then the solution $W^*$ has the explicit form
\begin{align*}
    W^* = \sum_{k=1}^N \frac{\sigma_k U_k^{\top}Y}{\sigma_k^2 + \lambda}.
\end{align*}

%\jd{[There's a factor of $V$ missing on the RHS of the above, maybe as the first factor on the RHS.]}

The approximate solution to the Dirichlet problem $W^{*\top}f : \Omega \to \mathbb{R}$ is computed twice, each time under a different choice of parameters:
\begin{itemize}
    \item Number of interior points $\ell = 50$ or $\ell=500$
    \item Number of boundary points $\ell' = 50$ or $\ell' = 500$
    \item Number of neurons $N = 50$ or $N=500$
    \item Regularisation parameter $\lambda = 10^{-6}$ or $\lambda=0$
    \item For both runs the input matrix $C$ and bias vector $\zeta$ are i.i.d uniform random variables $\sim U[-0.05,0.05]$.
\end{itemize}
The outcomes are shown in Figures \ref{fig::res_comp_sol} and \ref{fig::res_comp_sol_2}. We can see that increasing the number of sample points $\ell, \ell'$ and neurons $N$ dramatically improves the quality of the solution.

\begin{figure}
    \centering
    \begin{subfigure}[b]{0.9\textwidth}
        \includegraphics[width=\textwidth]{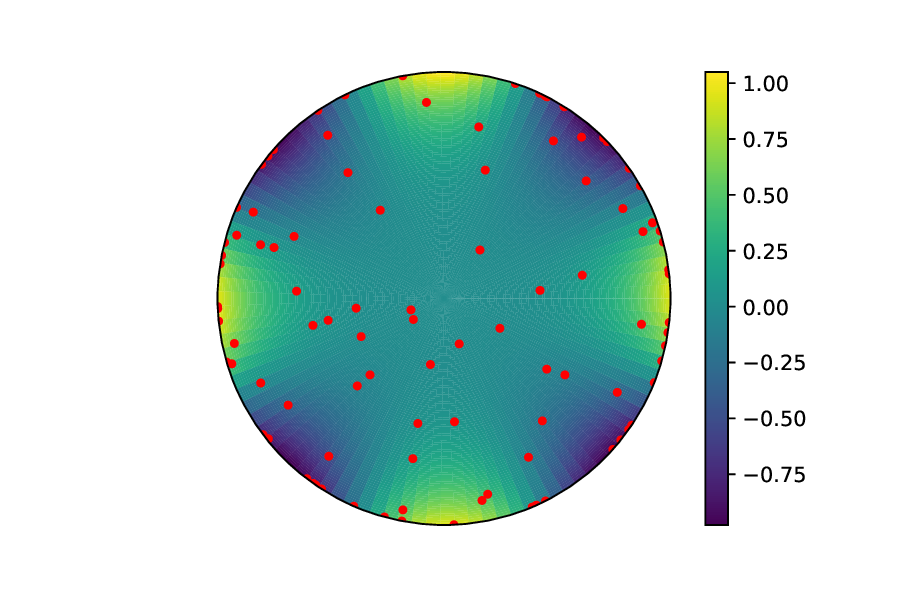}
        \caption{Approximate solution using a reservoir computer}
    \end{subfigure}
    \begin{subfigure}[b]{0.9\textwidth}
        \includegraphics[width=\textwidth]{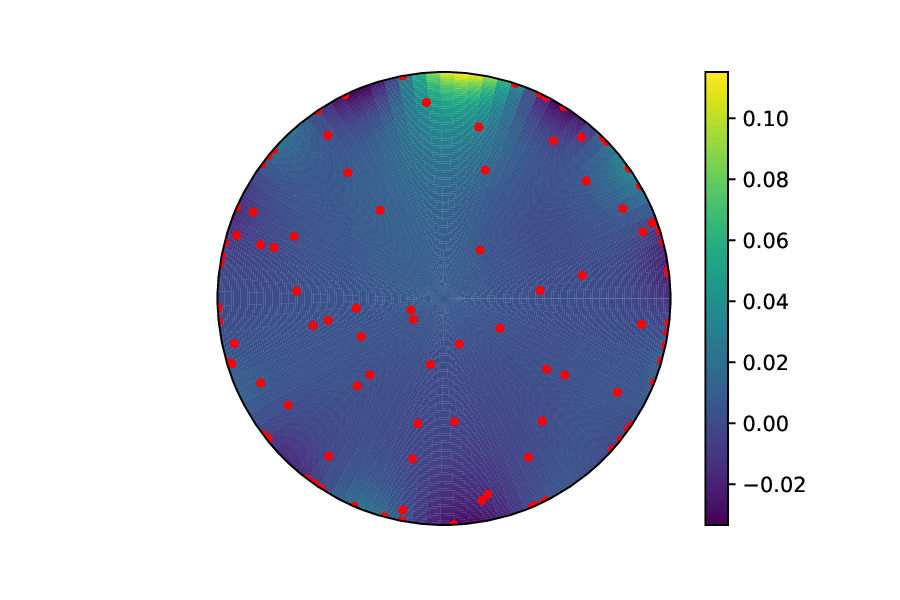}
        \caption{Difference between the analytic solution and approximate solution}
    \end{subfigure}
    \caption{Reservoir computer solution for $\ell = 50, \ \ell' = 50, \ N = 50, \ \lambda = 10^{-6}$. The red points are the sample points.}
    \label{fig::res_comp_sol}
\end{figure}

\begin{figure}
    \centering
    \begin{subfigure}[b]{0.9\textwidth}
        \includegraphics[width=\textwidth]{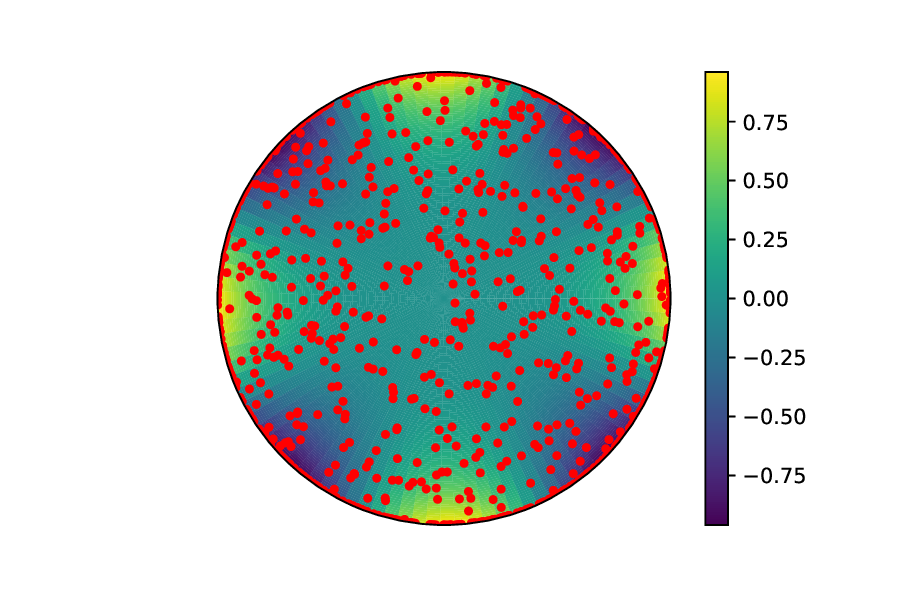}
        \caption{Approximate solution using a reservoir computer}
    \end{subfigure}
    \begin{subfigure}[b]{0.9\textwidth}
        \includegraphics[width=\textwidth]{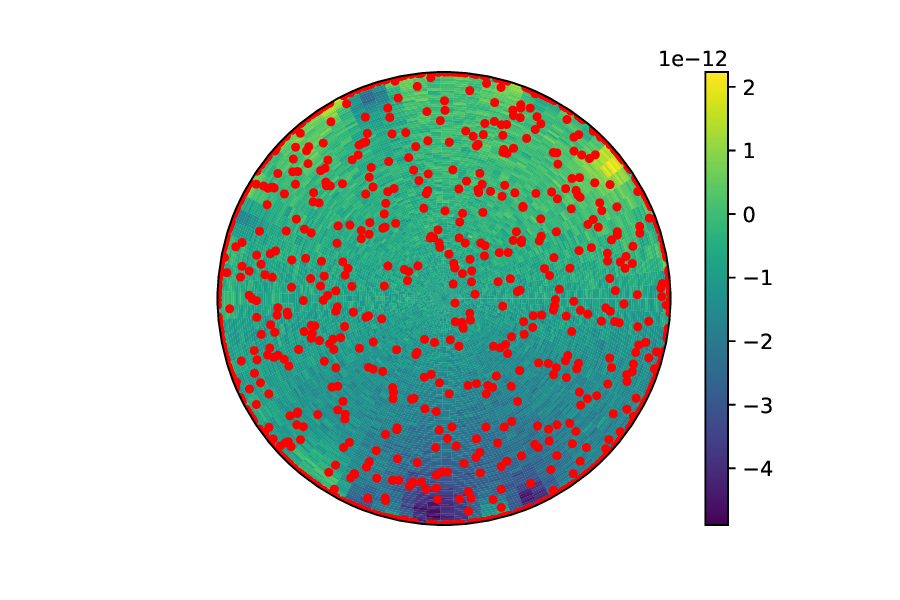}
        \caption{Difference between the analytic solution and approximate solution}
    \end{subfigure}
    \caption{Reservoir computer solution for $\ell = 500, \ \ell' = 500, \ N = 500, \ \lambda = 0$. The red points are the sample points.}
    \label{fig::res_comp_sol_2}
\end{figure}

\section{Future Work and Open Questions}
\label{Conclusion}

In this chapter we showed that we can approximate the solution to the Dirichlet problem $\phi$ using a feedforward reservoir computer $W^{\top}f$, which can be trained by offline or online methods. Our analysis exploits the linearity of the Laplacian, but does not exploit much else. This suggests that the results could be extended to the more general boundary value problem
\begin{itemize}
    \item $L(\phi) = s$ on $\Omega$
    \item $\phi = h$ on $\partial \Omega$
\end{itemize}
for continuous functions $s \in C^0(\Omega,\mathbb{R}), h \in C^0(\partial \Omega,\mathbb{R})$ where $L$ is an arbitrary linear differential operator. The results could also be extended to time dependent linear PDEs like the heat or wave equation, which need not even be posed on a bounded domain. For any such extension, the well posedness of solutions would have to be established. Moreover, the operator $L$ could be an integral operator such as convolution, or the Radon transform, that appears in problems like de-blurring or medical image reconstruction. 

It would be extremely powerful to generalise these linear results to the case of nonlinear PDEs, but this is much more complicated. We can rule out the offline method immediately because it relies explicitly on the linearity of the derivative operator. On the other hand the online method described by algorithm \eqref{algorithm::PDE} could be modified to yield the iteration
    \begin{align*}
        W_{k+1} &= (I - \alpha_k \Lambda \Lambda^{\top})W_k \\ 
        &\qquad- \alpha_k\bigg( L[f](Z_k)(L [W_k^{\top}f](Z_k) - s(Z_k)) + f(Z'_k)(W^{\top}_kf(Z'_k) - h(Z'_k)) \bigg)
    \end{align*}
for $L$ a nonlinear operator. This is highly reminiscent of the major algorithm studied in \cite{Borkar2009}, which is essentially stochastic gradient descent. Using stochastic gradient descent to train a neural network to solve (nonlinear) PDEs is not new, and is discussed in \cite{Han8505} and \cite{arXiv:1910.01547}. These authors train both the external weights $W$ and internal weights $C,b$ of a (deep) neural network, in contrast to the reservoir computing paradigm where only $W$ is trained. 

Furthermore, \cite{doi:10.1137/19M125649X}, \cite{arXiv:1809.02362}, \cite{arXiv:1809.07321} have provided an excellent reason to use deep neural networks to solve PDEs, namely that they can overcome the curse of dimensionality. The complexity of classic grid based methods like finite element and finite differences grows exponentially with the dimension of the PDE, while neural network based methods have polynomial complexity. These results are likely to hold in the case of reservoir computing; especially given the recent results by \cite{Gonon2021}. The results by Gonon show that a feedforward reservoir computer (called a random neural network by the author) trained on observations of a Black-Scholes type PDE overcomes the curse of dimensionality. The error on the approximation is shown to converge with order $1/\sqrt{N}$, and the bounding constant is computed explicitly. This suggests that random feedforward neural networks are well suited to solving very high dimensional linear PDEs, and the problem is amenable to mathematical analysis.

Another idea for future work is perhaps to establish (more easily in the linear case) uniform bounds on the number of sample points $\ell$ and neurons $N$ required for the solution of a PDE to be approximated to a given tolerance. This could be achieved using arguments inspired by \cite{Gonon2020} or \cite{Gonon2021}. Such bounds could be compared to empirical plots for the quality of the solution as the number of sample points $\ell,\ell'$ and neurons $N$ grows. 

The results that appear in this chapter could perhaps be generalised to domains $\Omega$ that are not so well behaved. The infamous $L$-shaped domain (imagine a square with a smaller square cut out of the top right hand corner) does not have the exterior ball property on the point at the inside crease of the $L$-shaped domain. The solutions to some boundary value problems are singular at this point, and cause numerical problems. It may be fruitful to explore how reservoir computing methods fare at the approximating the solutions to PDEs on domains such as these.

\chapter{Conclusions and Future Work}

In this final chapter we will review all previous chapters, and discuss directions for future work as we go.
We started in Chapter 1 with a brief introduction to neural networks, machine learning and how these are connected to reservoir computing. We then described an experiment undertaken at the start of the PhD which yielded results that motivated much of the work that appears in this thesis. This experiment was to take a trajectory of scalar observations of the Lorenz system, feed these observations into an ESN, and attempt to learn the future dynamics of the Lorenz system using least squares regression. We observed that the autonomous ESN was successful at forecasting the future, and furthermore that the autonomous ESN appeared to have learned topological and geometrical invariants of the Lorenz system.

To verify this, we spent Chapter 2 describing computational methods to obtain the linearisations of fixed points, Lyaponov exponents, and homology groups. This is all part of a larger field called computational topology which is slightly peripheral to the main focus of this PhD, but which could form an interesting area for future work. For example if a practitioner is interested in the geometrical or topological properties of a dynamical system, but is deprived of the equations, and limited only to trajectories of scalar observations, then their primary goal is obtain a good quality embedding, perhaps with reservoir computing, and then apply the tools of computational topology.

In Chapter 3, we proved that a state contracting reservoir map trained on observations of a deterministic system adopts dynamics that are synchronised to the drive system via a continuously differentiable generalised synchronisation (GS) $f : M \to \mathbb{R}^N$. We assumed that the drive system evolves on a compact manifold $M$, but we can likely relax this assumption by imposing suitable bounds on the observation function $\omega : M \to \mathbb{R}$ and evolution operator $\phi : M \to M$. It may be interesting in future to contemplate how the smoothness of the GS influences the quality of learning phase. The results by \cite{Mhaskar96neuralnetworks}, \cite{Poggio2017} establish that the number of neurons required to approximate a given target map decreases as the smoothness of the target map increases. These results strongly suggest that for many practical purposes a smoother GS is a better GS.

In Chapter 4, we showed that in the special case of a linear reservoir map 
\begin{align*}
    F(x,z) = Ax + Cz
\end{align*}
the associated GS $f : M \to \mathbb{R}^N$ is an embedding for generic observation functions $\omega : M \to \mathbb{R}$ and almost all matrices $A,C$. This result admits the \cite{TakensThm} embedding theorem as a special case when $A$ is the lower shift matrix and $C = (1,0,\ldots,0)$. Numerical experiments suggest that for a much broader class of reservoir maps, including ESNs with nonlinear activation functions
\begin{align*}
    F(x,z) = \sigma(Ax + Cz + b)
\end{align*}
the associated state synchronisation map $f : M \to \mathbb{R}^N$ is an embedding. Generalising this result to hold for nonlinear reservoir systems such as the ESN appears difficult because the proof in the linear case relies extensively on the linearity. In order to exploit the linearity as much as possible, we could perhaps linearise a nonlinear reservoir map at the fixed points of $\phi$, and prove some partial results in that regime.

The synchronisation and embedding results in Chapters 3 and 4 are set in discrete time, and it is likely possible to develop analogous results in continuous time. To see this, suppose that $\{ \phi^t \in \text{Diff}^1(M) \ | \ t \in \mathbb{R} \}$ forms a group under composition such that $\phi^{t_1 + t_2} = \phi^{t_1} \circ \phi^{t_2}$. Then we can define for each $m \in M$ the continuous time reservoir ODE
\begin{align}
    \dot{x}(t) = F(x(t),\omega\circ\phi^{-t}(m)).
    \label{linear_continuous_time}
\end{align}
In the special case that $F(x,z) = Ax + Cz$ we have
\begin{align*}
    \dot{x}(t) = Ax(t) + C\omega\circ\phi^{-t}(m)
\end{align*}
which admits for any initial $x_0 \in \mathbb{R}^N$ the unique solution
\begin{align}
    x(t) = \int_{0}^{\infty}e^{-A\tau}C\omega\circ\phi^{-\tau+t}(m) \ d\tau + e^{-At} x_0. \label{solution}
\end{align}
Now, for any point on the manifold $m \in M$ and initial reservoir state $x_0 \in \mathbb{R}^N$ the solution $x(t)$ converges to $f\circ\phi^t(m)$ for $f : M \to \mathbb{R}^N$ defined by
\begin{align}
    f(m) = \int_{0}^{\infty}e^{-A\tau}C\omega\circ\phi^{-\tau}(m) \ d\tau. \label{GS}
\end{align}
The map $f$ is therefore a continuous time GS highly analogous to the discrete time GSs discussed in this thesis. We proved in Chapter 4 that the discrete analogue of the continuous time GS in \eqref{GS} is an embedding, so it may be possible to prove that $f$ in \eqref{GS} is an embedding using similar techniques. Furthermore, the continuous time GS defined in \eqref{GS} necessarily satisfies
\begin{align}
    \frac{d}{dt}(f\circ\phi^t) = F(f\circ\phi^t,\omega\circ\phi^t).
    \label{state_ODE}
\end{align} 
This observation suggests, though we have not proved anything, that for a large class of generally nonlinear, and suitably contracting, state systems $F \in C^0(\mathbb{R}^N \times \mathbb{R},\mathbb{R}^N)$ there is a unique continuous time GS $f : M \to \mathbb{R}^N$ that satisfies \eqref{state_ODE}. Furthermore, for any $m \in M$ and $x(t)$ that solves \eqref{linear_continuous_time} it follows $x(t) \to f\circ\phi^t(m)$. Bridging the gap between discrete and continuous time could be a intriguing direction of future work.

In Chapter 5, we discussed the universal approximation capabilities of the special reservoir map called an ESN. Several results in the literature including those by \cite{GRIGORYEVA2018495}, \cite{Gonon2020}, and \cite{GONON202110} have shown that ESNs are universal approximators, in the sense that for a given target function, there exist a set of optimal weights which will approximate the target function as closely as required. Though the existence of the weights is established, the weights themselves are not explicitly constructed. We show that if the observations are drawn from an ergodic dynamical system, and the output weights $W \in \mathbb{R}^N$ are obtained using regularised least squares regression, then a sufficiently good $L^2(\mu)$ approximation will be obtained. This is analogous to the stochastic setting, where we prove in Chapter 6 that if the observations are drawn from a stationary ergodic stochastic process, and the output weights $W \in \mathbb{R}^N$ are obtained using regularised least squares regression, then a sufficiently good $L^2(\mu)$ approximation will also be obtained. 

In both the deterministic and stochastic case, we analysed the convergence of the approximation error as the number of data points $\ell$ grows. The convergence is likely of the order $1 / \sqrt{\ell}$ as a result of the central limit theorem for ergodic systems. The constant factor in the convergence estimate is related to the mixing time of the dynamical system $(M,\phi)$ or process $\boldsymbol{Z}$ respectively in the deterministic or stochastic setting. It could be a fruitful direction of future work to explore this in more detail, and establish bounds in certain cases. 

In both the deterministic and stochastic case, we discussed an online learning algorithm for finding the least squares solution $W$. The online learning algorithms do not require every reservoir state be stored in memory, and they are therefore less memory intensive then offline analogues like the SVD. Furthermore, the online methods are much more plausible descriptions of learning in biological nervous systems. The online learning algorithm is essentially stochastic gradient descent, and can therefore be applied to non-convex optimisation problems that linear regression procedure cannot solve at all. Though it is much harder to guarantee convergence to an optimal solution in the non-convex case, the online algorithms are promising methods for solving nonlinear PDEs (discussed in Chapter 7) or finding the \emph{optimal} value functional for control problems (discussed in Chapter 6)

An interesting field related to this thesis is \emph{transfer learning}, which was discussed in the context of reservoir computing recently by \cite{PhysRevE.102.043301}. In the transfer learning paradigm, one has limited real data, which is supplemented by data produced by a toy model. Another related idea is the \emph{physics informed neural network}, where the physical properties of the system are integrated with the machine learning paradigm. For example \cite{arXiv:1906.01563} demonstrate that a neural network trained to learn a system's Hamiltonian will adopt solutions that closely conserve energy. This approach is in contrast with most of the results in this thesis where there is almost no modelling of the drive system. Developing ideas like transfer learning or physics informed neural networks within the reservoir computing framework seems like a fruitful and interesting strand of future work. These connections between reservoir computing, physics, and mechanical systems, enable greater applications of reservoir computing to fields like robotics and engineering.

\bibliographystyle{agsm}
\bibliography{references}

\end{document}